\newcommand{\pointsize}{11pt}
\renewcommand\section{\@startsection
  {section}{1}{0mm}
  {-0.6\baselineskip}
  {0.8\baselineskip}
  {\bfseries\Large}}
\renewcommand\subsection{\@startsection
  {subsection}{2}{0mm}
  {-\baselineskip}
  {0.5\baselineskip}
  {\bfseries\large}}
\renewcommand\subsubsection{\@startsection
  {subsubsection}{3}{0mm}
  {-\baselineskip}
  {0.5\baselineskip}
  {\bfseries}}
   \renewcommand{\headrulewidth}{0pt} 
   \renewcommand{\headrulewidth}{0.4pt}
\numberwithin{figure}{chapter} 
\numberwithin{table}{chapter}
\numberwithin{equation}{chapter}
\numberwithin{section}{chapter}
\def\defn#1{{\color{blue}\emph{#1}}}
\def\N{\mathbb{N}}
\def\R{\mathbb{R}}
\def\Z{\mathbb{Z}}
\def\aff{\operatorname{aff}}
\def\ast{\operatorname{ast}}
\def\cone{\operatorname{cone}}
\def\conv{\operatorname{conv}}
\def\diam{\operatorname{diam}}
\def\dist{\operatorname{dist}}
\def\gcd{\operatorname{gcd}}
\def\hamm{\operatorname{hamm}}
\def\int{\operatorname{int}}
\def\lk{\operatorname{lk}}
\def\ops{\operatorname{S}}
\def\star{\operatorname{star}}
\def\supp{\operatorname{supp}}
\def\vert{\operatorname{vert}}
\def\wed{\operatorname{W}}
\def\given#1{} 
\def\@begintheorem#1#2{\par\bgroup{\sc #1\ #2. }\it\ignorespaces}
\def\@opargbegintheorem#1#2#3{\par\bgroup{\sc #1\ #2\ (#3). } \it\ignorespaces}
\def\@endtheorem{\egroup}
\newtheoremstyle{ekimcustom}
{6pt}
{3pt}
{\sl}
{}
{\sc}
{.}
{.5em}
{}
\theoremstyle{ekimcustom}
\newtheorem{theorem}{Theorem}[section]
\newtheorem{corollary}[theorem]{Corollary}
\newtheorem{lemma}[theorem]{Lemma}
\newtheorem{proposition}[theorem]{Proposition}
\newtheorem{example}[theorem]{Example}
\newtheorem{remark}[theorem]{Remark}
\newtheorem{definition}[theorem]{Definition}
\newtheorem{conjecture}[theorem]{Conjecture}
\newtheorem{openproblem}[theorem]{Open Problem}
\begin{document}
   \frontmatter

   \pagestyle{prelim}
   
   \fancypagestyle{plain}{%
      \fancyhf{}
      \cfoot{-\thepage-}
   }
\begin{center}
   \null\vfill
   \textbf{%
      Geometric Combinatorics of Transportation Polytopes\\and the Behavior of the Simplex Method
   }%
   \\
   \bigskip
   By \\
   \bigskip
   EDWARD DONG HUHN KIM \\ 
   \bigskip
   B.A. (University of California, Berkeley) 2004 \\ 
   M.A. (University of California, Davis) 2007 \\ 
   \bigskip
   DISSERTATION \\
   \bigskip
   Submitted in partial satisfaction of the requirements for the
   degree of \\
   \bigskip
   DOCTOR OF PHILOSOPHY \\
   \bigskip
   in \\
   \bigskip
   Mathematics \\
   \bigskip
   in the \\
   \bigskip
   OFFICE OF GRADUATE STUDIES \\
   \bigskip        
   of the \\
   \bigskip
   UNIVERSITY OF CALIFORNIA \\
   \bigskip
   DAVIS \\
   \bigskip
   Approved: \\
   \bigskip
   \bigskip
   \makebox[3in]{\hrulefill} \\
   Jes\'us A. De Loera (Chair) \\ 
   \bigskip
   \bigskip
   \makebox[3in]{\hrulefill} \\
   Francisco Santos \\ 
   \bigskip
   \bigskip
   \makebox[3in]{\hrulefill} \\
   Nina Amenta \\
   \bigskip
   Committee in Charge \\
   \bigskip
   2010 \\
   \vfill
\end{center}
   

   \newpage
   
~\\[7.75in] 
\centerline{
\copyright\ 2010, Edward Dong Huhn Kim. All rights reserved.
}
\thispagestyle{empty}
\addtocounter{page}{-1}

   \newpage
   
~\\[1.35in] 
\singlespacing{
\begin{center}
{\sl To Harabuji, \\
who placed the pencil in my hand}
\end{center}


   \newpage
   
   \doublespacing

   \setcounter{tocdepth}{1} 
   \tableofcontents


   \newpage
   
\addcontentsline{toc}{chapter}{Abstract}

{\singlespacing
   \begin{flushright}
      Edward D. Kim \\
      June 2010 \\
      Mathematics \\
   \end{flushright}
}

\bigskip

\begin{center}
Geometric Combinatorics of Transportation Polytopes\\ 
and the Behavior of the Simplex Method \\
\end{center}


\begin{center}
{\Large\bfseries Abstract}
\end{center}

This dissertation investigates the geometric combinatorics of convex polytopes and connections to the behavior of the simplex method for linear programming. We focus our attention on transportation polytopes, which are sets of all tables of non-negative real numbers satisfying certain summation conditions. Transportation problems are, in many ways, the simplest kind of                                                                                             
linear programs and thus have a rich combinatorial structure. First, we give new results on the diameters of certain classes of transportation polytopes and their relation to the Hirsch Conjecture, which asserts that the diameter of every $d$-dimensional convex polytope with $n$ facets is bounded above by $n-d$. In particular, we prove a new quadratic upper bound on the diameter of $3$-way axial transportation polytopes defined by $1$-marginals. We also show that the Hirsch Conjecture holds for $p \times 2$ classical transportation polytopes, but that there are infinitely-many Hirsch-sharp classical transportation polytopes.

Second, we present new results on subpolytopes of transportation polytopes. We investigate, for example, a non-regular triangulation of a subpolytope of the fourth Birkhoff polytope $B_4$. This implies the existence of non-regular triangulations of all Birkhoff polytopes $B_n$ for $n \geq 4$. We also study certain classes of network flow polytopes and prove new linear upper bounds for their diameters.

The thesis is organized as follows: Chapter~\ref{chapter:polytopes} introduces polytopes and polyhedra and discusses their connection to optimization. A survey on transportation polytopes is presented here. We close the chapter by discussing the theory behind the software {\tt transportgen}. Chapter~\ref{chapter:hirsch} surveys the state of the art on the Hirsch Conjecture and its variants. Chapter~\ref{chapter:transportation} presents our new results on the geometric combinatorics of transportation polytopes. 
Finally, a summary of the computational results of the software package {\tt transportgen} are presented in Appendix~\ref{appendix:catalogTP}.



   \newpage
   
\addcontentsline{toc}{chapter}{Acknowledgements}

\begin{center}
{\Large\bfseries Acknowledgements}
\end{center}  


There are many people I must acknowledge, since so many people helped me in so many ways. It is almost unfair that only one degree can be awarded. This thesis is the culmination of a group effort, and this degree really belongs to all of you, because I could not have made it to this point without your help. (I apologize a little for its length, but I hope that no one faults me for being too thankful.) It is very obvious whom I must thank first, the one who has taught me so much about mathematics and how to handle every aspect of it.

First and foremost, my deepest thanks go to my advisor, Jes\'us A. De Loera. You have been my greatest teacher, challenger, and cheerleader. You have been patient to teach while demanding nothing short of excellence. Thank you for always informing me of great special programs and conferences so that I could have the opportunity to stay abreast of the research frontier. Thank you for imparting your knowledge and advice on all things, mathematical and non-mathematical. You taught me so much mathematics, but you also taught me so much about how to be patient with myself as I \emph{do} mathematics. If researching mathematics were a high-dimensional polytope, then you have described for me every facet. As I research at future academic institutions, I will strive to visit the right vertices of the research polytope using the De Loera pivot rule! (Then again, I suppose that it is a polyhedron, not a polytope!) I cannot think of a better academic mentor. I hope that I can show my deepest thanks only in the years to come by trying to emulate you as an advisor one day. I also wish to thank Jes\'us' family for opening up their home for events, whether it was a celebration day or a research day. Thanks Jes\'us, for your help and support in every pivot step on the path so far. I could not have asked for a better advisor.

Next, I want to thank Francisco Santos. Paco, your mathematical discussions have always been extremely fruitful in helping me understand the essence of any problem or theorem. Even in a short discussion, I have always left our conversations feeling like I have a more intuitive understanding of polytopes. I hope that, in time, I will learn how to understand mathematics the way that you do. Thank you for your mathematical discussions, our collaboration on the survey (see~\cite{KimSantos:HirschSurvey}), and the opportunity to visit you in Santander.

Thank you, Nina Amenta, for serving on my thesis committee and explaining to me so much about computational geometry. Your example in research and in your teaching style has been a big influence for me. I appreciate the conversations that we have had for ideas on future projects and I look forward to the prospect of future collaboration.

I am grateful for the service of Roger J-B Wets. Thanks for being willing to chair my qualifying examination. You taught me that there is a place where useful and beautiful mathematics intersect. I will take your example and advice every step of the way in the rest of my research career (and hopefully I will converge to your example faster than Newton-Ralphson methods). I am honored to have played a small role in celebrating your distinguished career during the recent celebration at UC Davis.

My thanks also go to Eric Babson and Roman Vershynin for serving on my qualifying examination committee and giving me a firm grounding in algebraic topology and probability theory.

I wish to thank my collaborator Shmuel Onn. Our conversations (see~\cite{Onn:KOboundNotPolynomial}) regarding the Hirsch Conjecture have given me a deeper understanding of the research area.

Bernd Sturmfels, thanks for your many conversations about mathematics. You always provide just the right insight, and I appreciate the many doors to which you have pointed me. I realize that my words here are short, but I know that I join a long list of admirers, and with good reason.

I thank Matthias K\"oppe for wonderfully organizing and nurturing the optimization community at UC Davis. My time here has been enriched by what you have done. Thanks especially for the organization of my exit seminar. To Monica Vazirani, thanks for your advice on so many things, not least of which was helping us organize the Graduate Student Combinatorics Conference. I am very fortunate to have taken a reading course in representation theory with you.

Thank you to Art Duval, Fu Liu, Jeremy Martin, Jay Schweig, and Alex Yong for helping to connect me to more of the geometric combinatorics community. Thank you, Duane Kouba, for always showing me new techniques to becoming a better teacher.

There are several special programs that I must thank. I wish to acknowledge Henry Wolkowicz and workshop assistant Nathan Krislock for teaching the MSRI Workshop on Continuous Optimization and its Applications. I also want to thank my project collaborators Andrej Dudek, Kimia Ghobadi, Shaowei Lin, Richard Spjut, and Jiaping Zhu. I also thank David Cru and Natalie Durgin for their continued friendships after the program. I am extremely grateful to Laura Matusevich, Frank Sottile, and Thorsten Theobald for the IMA program on Applicable Algebraic Geometry at Texas A\&M University. Thanks to Frank, Thorsten, Serkan Ho\,sten, and Seth Sullivant for their lectures. I thank the program tutors Cordian Riener, Reinhard Steffens, Abraham Mart\'in del Campo, and Luis Garcia for their help and friendships. With many happy tears, I thank Marc Noy, Julian Pfeifle, Ferran Hurtado, Francisco Santos, and Antonio Guedes de Oliveira for organizing the 2009 DocCourse in Combinatorics and Geometry at the Centre de Recerca Matem\`atica. I learned an immense amount from our lecturers Ji\v{r}\'i Matou\v{s}ek and G\"unter M. Ziegler. Thanks to our problem session moderators Anastasios Sidiropoulos and Axel Werner. To my fellow program participants David Alonso Gutierrez, Victor \'Alvarez Amaya, Aaron Dall, Connie Dangelmayr, Ragnar Freij, Bernardo Gonz\'alez Merino, Marek Krc\'al, Eva Linke, Mareike Massow, Benjamin Matschke, Silke M\"oser, Noa Nitzan, Arnau Padrol Sureda, Jeong Hyeon Park, Canek Pel\'aez Vald\'es, Juanjo Ru\'e Perna, Maria Saumell, Llu\'is Vena Cros, Birgit Vogtenhuber, Ina Voigt, and Frederik von Heymann, it was an absolute pleasure to live and work with you every day. My special thanks go to Anna Gundert and Daria Schymura, for their collaboration in \cite{GundertKimSchymura:LPLM}, and Anna de Mier for her supervision of our project. I also want to give a special thanks to my wonderful roommate Matthias Henze. I am especially honored to share a role as unofficial student co-organizer with Vincent Pilaud, who is one day ``older'' than me. Vincent, your ability to organize large things is amazing. Finally, I wish to thank Frank Vallentin for encouraging me to apply for the Semidefinite Optimization seminar at the Mathematisches Forschungsinstitut Oberwolfach. Thanks to him and to the co-lecturers Sanjeev Arora, Monique Laurent, Pablo Parrilo, and Franz Rendl for their wonderful lectures.

To Jill Allard, Karen Beverlin, Diana Coombe, Carol Crabill, Connie Dani, Celia Davis, Tina Denena, Richard Edmiston, Perry Gee, Dena Gilday, Jessica Goodall, Zach Johnson, Elena Karn, Phuoc La, Leng Lai, Tracy Ligtenberg, Jessica Potts, DeAnn Roning, Alla Savrasova, and Marianne Waage, I want to express my thanks for all you have done to make the UC Davis math department such a wonderful place to study. Not only have you been wonderful staff members, but you have treated me like a good friend. Thank you.

A number of postdocs, whether here or elsewhere, have helped me in ways both mathematical and personal. Thank you, Andrew Berget, Alex Coward, Moon Duchin, Moto Fukuda, Valerie Hower, Peter Malkin, and Robert Sims.

To my academic siblings Ruriko Yoshida, Tyrrell B. McAllister, Susan Margulies, David Haws, and Mohamed Omar thanks for your advice and imparting your mathematical knowledge with me. In addition, I wish to thank my fellow graduate students in mathematics at UC Davis for their conversations and friendships. I want to especially thank Gabriel Amos, Miranda Antonelli,  Emi Arima, Shinpei Baba, Brandon Barrette, Karl Beutner, Julie Blackwood, McCartney Clark, Tom Denton, Patrick Dragon, Pierre Dueck, Eaman Fattouh, Maria Efthymiou, Creed Erickson, Jeff Ferreira, Galen Ferrel, Ben Fineman, Kristen Freeman, Katia Fuchs, Eli Goldwyn, Joshua Gooding, Ezra Gouvea, Matt Herman, Robert Hildebrand, Andrew Hodge, Thomas Hunt, Blake Hunter, Jesse Johnson, Yvonne Kemper, Corrine Kirkbride, Jaejeong Lee, Kristin Lui, Paul Mach, Leslie Marquez, Roberto Martinez, Spyridon Michalakis, Arpy Mikaelian, Adam Miller, Sonny Mohammadzadeh, Marion Moore, Lola Muldrew, Jaideep Mulherkar, Deanna Needell, Stephen Ng, Alex Papazoglou, Shad Pierson, Ram Puri, Tasia Raymer, Hillel Raz, Matthew Rodrigues, Brad Safnuk, Tami Joy Schlichter, Michael Schwemmer, Chengwu Shao, David Sivakoff, Tyler Skorczewski, Matthew Stamps, John Steinberger, Philip Sternberg, Alice Stevens, Eva Strawbridge, Michelle Stutey, Rohit Thomas, Nick Travers, Diana Webb, Brandy Wiegers, Michael Willams, Robin Wilson, Brian Wissman, Ernest Woei, Yuting Yang, and Juliette Zerick. I want to give extra mention to Yvonne Lai, who has helped me with countless amounts of good advice over the years. My experience in graduate school was enriched by your encouragment. Thanks to fellow co-organizers for the Graduate Student Combinatorics Conference 2008, well, and for being just plain awesome graduate students: Steven Pon, Chris Berg, and Sonya Berg. I thank Isaiah Lankham for teaching me about the Galois Group website. Finally, hanging out with you all has given me the chance to meet your wonderful friends and family, not the least of which include Trueda Gooding, Frances Sivakoff, and Russell Mills Campisi. Last but not least, I thank Matt Rathbun for being an awesome roommate during my final years at Davis. I don't know how it was for you, but I truly enjoyed our conversations about mathematics, politics, faith, and everything in between!

I am very grateful to Luis de la Torre for his collaboration in an undergraduate research program. I was honored to teach in the UC Davis Math Circle, and one of my highlights on a Saturday in MSB was mentoring Haoying Meng and Paul Prue.


The research contained here has been funded in part by NSF grant DMS-0608785, NSF VIGRE grants DMS-0135345 and DMS-0636297, NSF VIGRE Summer grants (see the previously-mentioned grant numbers), the UC Davis GSA Travel Award, block grants, research assistantships, teaching assistantships (thanks to all of my students over the years), and the Centre de Recerca Matem\`atica. I would also like to acknowledge software that has been particularly useful in my research: \cite{Avis:lrs}, \cite{Christof:PORTA}, \cite{Fukuda:cdd}, \cite{polymake}, and \cite{topcom}.


A number of friends have kept asking me over the years how my progress has been going in graduate school. Thanks for maintaining these friendships and having an interest in my work. This list includes Chris Anderson, Shara Anderson, Nicole Andrade, Melissa Andrew, Michael Baggett, Nichole Barlow, Tarah Bass, Jenny Bernstein, Bryan Blythe, Tiffany Bock, Jim Bosch, Brad Brennan, Bob Briggs, John Bruneau, Bob Calonico, Michelle Chan, Ming Cheng, Jason Clark, Tanya Cothran, John Creasey, Kevin Dayaratna, Jenna Dockery, John Essig, Andrew Farris, Val Fraser, Krista Frelinger, Becky Gong, Sally Graglia, Cade Grunst, Cindi Guerrero, Kelly Hamby, Chris Hauth, Rhoda Hauth, Hiro Hiraiwa, Miles Hookey, Shereen Jackson, Megan Kinninger, Sam Lam, Alison Li, Shengxi Liu, Nick Matyas, Duncan McFarland, Janice Mochizuki, Allison Moe, Amy Ng, Mike Nguyen, Chris Perry, James Pfeiffer, Duc Pham, Angela Riedel, Zach Saul, Kristen Schroeder, Allie Scrivener, Daisuke Shiraki, Anya Shyrokova, Alec Stewart, Andrew Sturges, Scott Sutherland, Carol Suveda, Peter Symonds, Travis Taylor, Karina van der Heijden, Adam von Boltenstern, and Brian Wolf. To my housemates who have been the most influential over the years, I thank you for your patience whenever I spoke about mathematics. Thanks for your interest and encouragement in the process. They include Aaron Alcal\'a-Mosley, Aaron Campbell, Helen Fong, Jaime Haletky, Chris Marbach, and Kengo Oishi. Thanks to jam session peers from the math department, including Naoki Saito and Matt Herman.

A significant number of people helped maintain my well-being by making sure that I got to dance every once in a while. Among others, thanks are due to Joan Aubin, Melanie Becker, Susan Bertuleit, Scott Blevens, Catherine Blubaugh, Ali Bollbach, Liz Boswell, Jeff Bowman, Tyler Breaux, Amanda Carlson, Shayna Carp, Mark Carpenter, Sarah Catanio, Patrick Cesarz, Jessica Chan, Sunny Chang, Elisa Chavez, Lena Chervin, Shawn Chiao, Howard Chong, Clark Churchill, Rose Connally, Erin Connolly, Lindsay Conway, Lindsay Cooper, Cortney Copeland, AnneMarie Cordeiro, Christina Crapotta, Nicole Croft, Nate Culpepper, Aurelia Darling, Suma Datta, Sherene Daya, Ria DeBiase, Elissa Dodd, Alyssa Douglas, Solomon Douglas, Melissa Drake, Dave Dranow, Yuxi Duan, Bryna Dunnells, Pep Esp\'igol, Alex Estrada, Mike Fauzy, Richard Flaig, Alice Fong, Brandon Frey, Sarah Froud, Tawnie Gadd, Cid Galicia, Christi Gamage, Glenn Gasner, Megan Getchell, Eliott Gray, Jessica Gross, Jane Halahan, Berlyn Hale, Spiro Halikas, David Hamaker, Amanda Harpold, Hunter Hastings, Patrick Haugen, Carla Heiney, Alexis Hinchcliffe, Melissa Hodson, Amber Hoffman, Joe Hopper, Angeline Huang, Casey Hutchins, Anne Johnson, Rachel Jordan, Scott Kaufman, Sarah Kesecker, Dianne King, Elizabeth Kolodziej, Scott Kraczek, Shoshi Krieger, Heidi Langenbacher, Adrianne Larsen, McLeod Larsen, Linda Lee, Adam Lewis, Lindy Lingren, Dave Madison, Nathan Margoliash, James McBryan, Monica McEldowney, Tim McMahon, Sarah Miller, Clay Mitchell, Meredith Moran, Dallas Morrison, Christine Moser, Kendra Nelson, Rochelle Ng, Kyle O'Brien, Luke Oeding, Jocelyn Price, Dan Printz, Rachel Redler, Michelle Richter, Aimee Ring, Devon Ring, Keely Ring, Karissa Ringel, Kristyn Ringgold, Jessica Riojas, Sonia Robinson, Natasha Roseberry, Rebecca Roston, Valarie Rothfuss, Katy Rullman, Scott Sablan, Rasna Sandhu, Dexter Santos, Jennifer Schmidt, Emily Jo Seminoff, Gary Sharpe, Tia Shelley, Jen ``Skittles'' Sherman, Rachel Silverman, Dennis Simmons, Stephanie Smith, Kristin Sorci, Lora Spencer, Yvetta Surovec, Grace Swickard, Brandon Tearse, Alyssa Teddy, Deanna Thompson, Ron Thompson, Jamie Toulze, Ashley Treece, David Trinh, Dirk Tuell, Barrie Valencia, Michelle Vaughan, Matthew Vicksell, Gina Villagomez, Koren Wake, Naomi Walenta, Bo Wang, Katherine Wessel, Cassie Wicken, Johnna Williams, Blythe Wilson, Chana Winger, Wyatt Winnie, Emily Wolfe, Charlie Yarbrough, Christie Young, and Christine Young. I especially thank Stephanie Jordan, Elizabeth Webb, and Ashley Lambert for being there every step of the way (no pun intended). The three of you were never more than a phone call away.

I want to thank the following non-exhaustive list of people for helping me in the toughest of times. Thanks for serving me and allowing me to serve you. I knew that within this community, I could ask for anything I needed at any moment, and you would help me. I offer my thanks to Barry Abrams, Ellen Abrams, Joey Abrams, Michelle Adams, Jackie Ailes, Emily Allen, Mark Allen, Mark Almlie, Janelle Alvstad-Mattson, Steven Andersen, Kelly Arispe, Sergio Arispe, Dana Armstrong, Kiki Arrasmith, Erica Back, Megan Baer, Michelle Balazs, Carrie Bare, Alyssa Barlow, Brenna Barsam, Alex Barsoom, James Basta, Lauren Basta, Cassie Bauman, Emily Beal, Courtney Beed, Bryan Bell, Christina Benton, Naomi Berg, Christine Berlier, Joe Biggs, Jenny Bjerke, Mark Bjerke, Derek Blevins, Tami Bocash, Caitlyn Bollinger, Jessica Bondi, Jamie Boone, Susie Booth, Dave Boughton, Nico Bouwkamp, Deb Bradbury, Andrea Braunstein, Dan Britts, Hannah Brodersen, Nick Brown, Ray Brown, Teresa Bulich, Erik Busby, Elizabeth Busch, Tara Butterworth, Erica Byrd, Kendra Cavecche, Joe Cech, Mary Cech, Mara Chambers, Ashley Champagne, Andrew Chan, Bryan Chan, Carlton Chan, Daniela Chan, Joey Chapdelaine, Crystal Chau, Andrew Cheng, Jenn Cheng, John Cheng, Luke Cheng, Lisa Chow, Tyler Chuck, Angie Chung, Evan Chung, Kristina Coale, Chris Connell, Lisa Cook, Lisa Corsetto, Michael Corsetto, Kristin Cranmer, Katye Crawford, Rachel Crombie, Jamie Crook, Megan Cser, Tash Dalde, Adam Darbonne, Lauren DaSilva, Diane Davis, Monique De Barruel, Nicole de la Mora, Mois\'es de la Torre, Chris Dietrich, Carol Dillard, Michael Dillard, Chris Dombrowski, Daniel Donnelly, David Dorroh, Jesse Doty, Jason Draut, Amy Duffy, Cassia Edwards, Noah Elhardt, Megan Ellis, Nathan Ely, Nancy Emery, Bryan Enderle, Peggy Enderle, Brandon Ertis, Zach Evans, Rick Fasani, Caitlin Flint, Shannon Flynn, Chuck Foster, Ian Foster, Julie Foster, Ben Fowler, Bryan Fowler, Anda Fox, Andrew Frank, Al Frankeberger, Michelle Freeman, Kira Fuerstenau, Elsie Gabby, Jason Galbraith, Josh Galbraith, Angela Garcia, Cindy Garcia, Amy Gaudard, Rusty Gaudard, David Getchel, Stanford Gibson, Diane Gilmer, Kristi Gladding, Sanford Gladding, Jaime Glahn, Kate Green, Stu Gregson, Lisa Greif, Ryan Greif, Jenna Groom, Dana Gross, Doug Gross, Stephanie Gross, Erin Guerra, Jerry Guerzon, Kelsey Guindon, Erica Guo, Christian Guth, Tapua Gwarada, Bonnie Hammond, Eliza Haney, Jacob Hansen, Erik Hanson, Gail Hatch, Peter Hatch, Erin Hawkes, Carolyn Heinz, Jonnalee Henderson, Andrew Hershberger, Katie Hobbs, Jeff Hodges, Christy Holcomb, David Holcomb, Ralph Holderbein, Farris Holliday, Carole Hom, Robert Hom, Yana Hook, Gail Houck, Peter Houck, Jessica Hsiang, Andy Hsieh, Alicia Hunt, Carla Hunt, Charlie Hunt, Alicia Inn, Jeff Irwin, Jennifer Jeske, Leray Jize, Nichole Jize, Nathan Joe, Diane Johnston, Don Johnston, Arend Jones, Laura Judson, Jocelynn Jurkovich-Hughes, Rutendo Kashambwa, David Kellogg, Clyde Kelly, Molly Kinnier, Ian Kinzel, Adam Kistler, Daniel Kistler, Brooke Kline, Thomas Kline, Kate Kootstra, Bethany Kopriva, Christina Kopriva, Julianne Kopriva, Michael Kopriva, Josh Krage, Sara Krueger, Mark Labberton, Velma Lagerstrom, Michael Lahr, Susan Larock, Devon Latzen, Brian Lawrence, Christina Lawrence, Pat Lazicki, Vanessa Lazo, Bronwyn Lea, Jeremy Lea, Austin Lee, Sharon Lee, Stephen Lee, Marguerite Leoni, Lisa Liang, Angela Liao, Kirsten Lien, Christine Lim, Aleck Lin, Monika Lin, Vicky Lin, Shannon Little, Keith Looney, Stacy Looney, Anna Loscutoff, Bill Loscutoff, Carol Loscutoff, Paul Loscutoff, Emily Loui, Lisa Louie, Mary Lowry, Stephanie Luber, Peter Ludden, Danae Lukis, Jason Lukis, Marissa Lupo, Katie Mack, Paul Mackey, Tyler Mackey, Marc Madrigal, Semra Madrigal, Jerrica Mah, Kate Mallison, Nic Mallory, Thea Mangels, Autumn Martinelli, Dale Mathison, Neil Mattson, Brittany Maxwell, Tom McCabe, Ali McKenna, Katy McLaughlin, Joshua McPaul, Jordan Mee, Brent Meyer, Taryn Micheff, Heidi Mills, Natalia Mogtader, Monica Monari, Daniel Mori, Amy Morice, Chris Muelder, Stacy Muelder, Cuka Muhoro, Joshua Mullins, Bronwyn Murphy, Jarrod Murphy, Liz Myer, Carrie Naylor, Dan Naylor, Matt Naylor, Corey Neu, Donna Neu, Phil Neu, Glen Nielsen, Hannah Nielsen, Karen Nielsen, Stephen Nielsen, Jason O'Brien, Robin Oas, James Obert, Paul Ogden, Betsy Onstad, David Ormont, Paul Otteson, Alan Parkin, Andy Patrick, Bryan Pellissier, Chris Pennello, Kristy Perano, Elizabeth Perry, Liz Perry, Sean Pierce, Lindsey Pitman, Jim Plaskett, David Polanco, Weston Powell, Michael Powers, Annie Prentice, Carrol Quivey, David Quivey, Jessica Radon, Michael Raines, Joshua Ralston, Cory Randolph, Dominic Reisig, Rebecca Reisig, Chic Rey, John Ridenour, Marjorie Ridenour, Joy Robbins, Matt Robbins, Jessica Roberts, Brandt Robinson, Chris Rodgers, Robin Lynn Rodriguez, Ra\'ul Romero, David Ronconi, Ellen Rosenberg, Gary Rossetto, Mary Lou Rossetto, Sarah Rundle, Claire Ruud, Paul Ruud, Lena Rystrom, Peter Rystrom, Moriah Saba, Eddie Sanchez, Alex Sasser, Phil Schaecher, Eddie ``Lemma~\ref{lemma:littleeddielemma}'' Schaff, Sara Schaff, Ulrich Schaff, Samantha Schmidt, Sarah Schnitker, Laura Schoenhoff, Gail Schroeder, Allison Seitz, Dan Seitz, Beth Sekishiro, Angie Sera, Emma Shandy, Tim Shaw, Diane Sherwin, Jean Siemens, John Siemens, Steve Simmonds, Leslie Simmons, Julia Skorczewski, Ben Smith, Warren Smith, Serena Smith-Patten, Kristen ``Pearl'' Snow, Glen Snyder, Chris Solis, Kiho Song, Bryce Spycher, Katie Stabler, Katie Stafford, Claire Stanley, Ali ``currently with a Squeak'' Steele, Greg Steele, Jennifer Stephenson, Jen Sterkel, Sarah Stevenson, Carrie Strand, Erik Strand, Nancy Streeter, Jenna Strong, John Strong, Nancy Strong, Nathan Strong, Mary Stump, Noah Suess, Tom Tandoc, Kristin Taniguchi, Mina Tavatli, Shabnam Tavatli, Farrah Tehrani, Jessica Tekawa, Casey Temple, Drew Temple, Elise ter Haar, Emily Thompson, Kyle Thomsen, Grace Tirapelle, Andrew Tkach, Melly Totman, Lily Toumani, Stephanie Towne, Andromeda Townsend, Paul Tshihamba, Yumi Tuttle, Denh Tuyen, Saskia Van Donk, Sabrina Vigil, Nate Vilain, Catherine Vitt, Jeff Vitt, Luke Wagner, Lewis Waha, Jessica Wald, Erin Wang, Regina Wang, Sammy Wang, Amy Ward, Anna Warde, Mark Webb, Debbie Whaley, Tony Whiteford, Pam Whitney, Dave Wieg, Kate Wieking, Erica Williams, Casie Wilson, Kim Windall, Tom Windall, Libby Wolf, Katelyn Wolfe, Cathy Wolfenden, Joey Wolhaupter, Gabe Wong, Alex Wright, Jackie Wu, Tricia Yarnal, Albert Yee, Angela Yee, David Yoo, John Yoo, Josh Yoon, Danny Yost, Chad Young, Doug Yount, and Jerri Zhang. I especially want to thank Briggs (see~\cite{Sekishiro:Briggs}) for his gift in fostering a community within a community. There are particular individuals who have been there for me, including Kevin Kaub, Laeya Kaufman, Karl Schwartz, Dave Tan, Jeff Tan, and Ben Wilson. Thanks for your love. 

While already mentioned, there are several people who really took care of the little things for me while I was nearing the end of the thesis writing project. While something as small as a simple meal may seem like nothing to you, at the moment, it meant to world to me. Thank you, Christine Berlier, Katie Ridenour, and Ben Wilson, among many others. Your patience and flexibility to take care of others is an example worth emulation. 

To Jan Canfield, I want to thank you for being an early mathematical inspiration to me. In your unique place at the School of Humanities, you had to work hard to convince people who did not like math that math can be enjoyable. As a secondary benefit, I was inspired by your teaching to pursue mathematics for its beauty and enjoyment. I would not have made the choice to pursue degrees in mathematics if it were not for your inspiration. I always have your example in mind when I teach mathematics, and I hope that my influence will be as effective as yours was.

I could not imagine getting to this point without the support of my family. To my cousins Melissa, Andy, Bobby, James, Jennifer, Richard, Stephanie, Tom, Joyce, Chris, and Michelle, thanks for being there all of those years. I thank my aunts and uncles for their interest in my academic career. I only hope that I can continue to deserve the pride they bestow. I am deeply thankful for my grandparents, especially in their service to the community and encouragement to work hard in school. To my brother John, thanks for your encouragement over the years: your own passion to aspire for more has been a good example for me. To my parents, thank you for your confidence in me as I pursued my dreams. Thank you for many years of love, care, and teaching.

Finally, I express my gratitude to Katie Ridenour for being so patient, kind, and loving as I finished this work. Thank you so much for your self-sacrificial character -- I have much to learn and emulate.

\begin{flushright}
Eddie\makebox[1.0in]{}
\end{flushright}

{\singlespacing{\sl\noindent{}Noticing that the hundred-day-old baby didn't grab any of the superstition-filled objects placed before him, Harabuji offered him the pencil, which represents scholarship.}}
\vskip1.0in
\begin{center}
\includegraphics[width=2.5in]{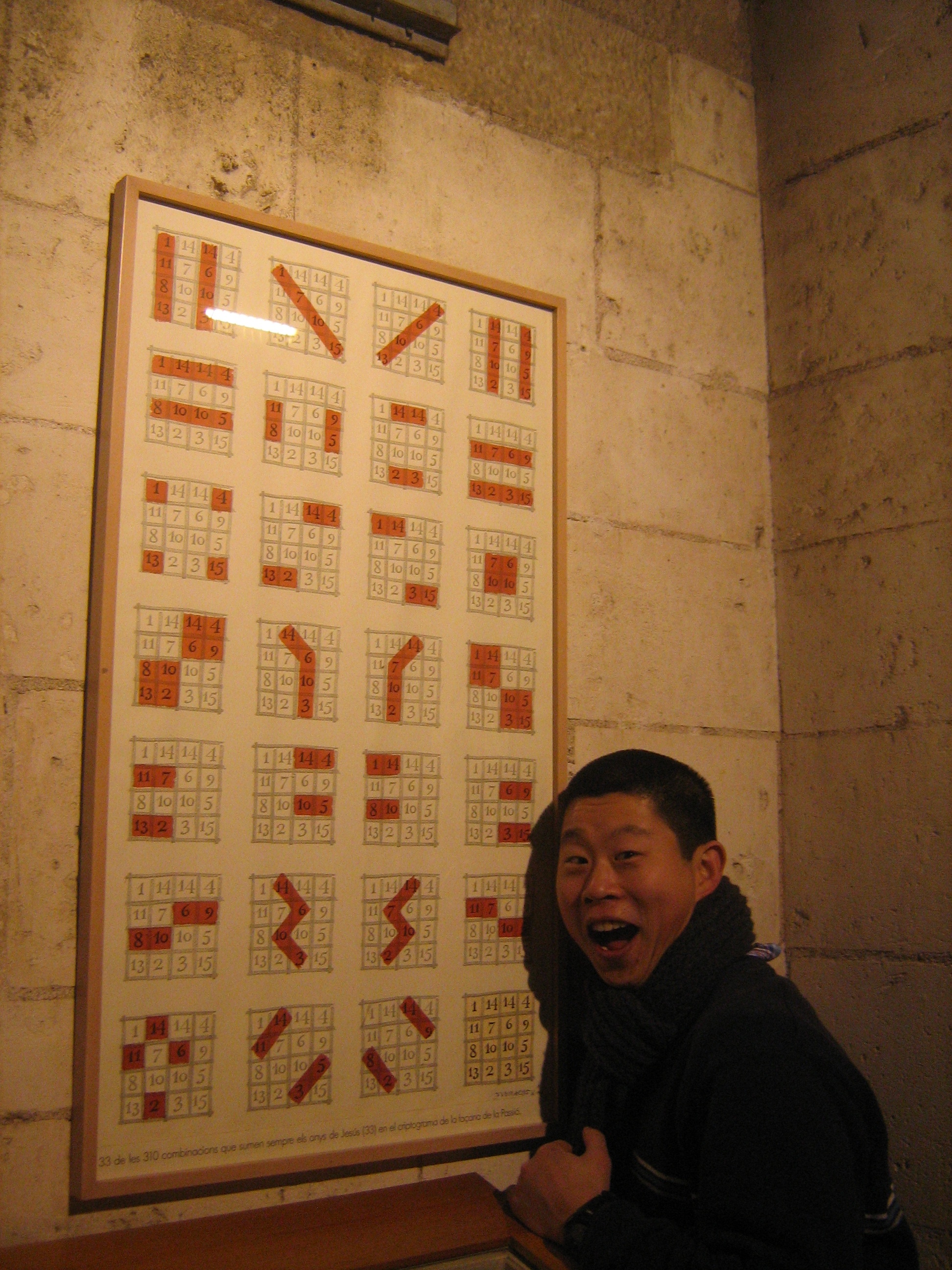}
\end{center}

   
   \mainmatter
   
   \pagestyle{maintext}
   
   \fancypagestyle{plain}{%
      \renewcommand{\headrulewidth}{0pt}
      \fancyhf{}
      \rhead{\thepage}
   }

\chapter[Polytopes, Polyhedra, and Optimization]{Polytopes, Polyhedra, and Applications to Optimization}\label{chapter:polytopes}

\setcounter{theorem}{0} 

A polytope is the generalization of two-dimensional convex polygons and three-dimensional convex polyhedra in higher dimensions. Polytopes appear as the central objects of study in the area of geometric combinatorics, but they also appear in mathematical fields as diverse as algebraic geometry (see, e.g.,~\cite{Oda-book},~\cite{Stanley:CombinatoricsCommutativeAlgebra}, and~\cite{Sturmfels:GrobnerPolytopes}), representation theory (see, e.g.,~\cite{Bjorner-Brenti},~\cite{FominZelevinski-associahedra}, and~\cite{FominZelevinski-clusteralgebrasIV}), lattice point enumeration (see, e.g.,~\cite{Barvinok:Integer-points-in-polyhedra-BOOK},~\cite{Beck-Robins},~\cite{deloera-latticepoints},~\cite{latte}, and~\cite{Yoshida:Barvinoks-Rational}), and optimization (see, e.g.,~\cite{Dantzig:Linear},~\cite{Grotschel:Geometric-Algorithms}, and~\cite{Schrijver:LinearProgramming}). This dissertation discusses connections between optimization and the geometric combinatorics of polytopes. 

Before formally introducing polytopes, we discuss some concepts from affine linear algebra that will appear throughout this dissertation. Fix a positive integer $c \in \Z_{\geq 0}$. We will typically work in the $c$-dimensional Euclidean space $\R^c$. A set $X \subseteq \R^c$ is \defn{convex} if $\lambda x + (1-\lambda)x' \in X$ for all pairs $(x,x') \in X \times X$ and every $\lambda \in [0,1]$. The \defn{convex hull} $\conv(X)$ of a set $X \subseteq \R^c$ is the smallest convex set that contains $X$. That is to say, the convex hull of $X \subseteq \R^c$ is defined as:
\begin{equation*}
\conv(X) = \bigcap_{\genfrac{}{}{0pt}{}{X \subseteq C}{C \text{ convex}}} C.
\end{equation*}
The convex hull of a convex set is the set itself. (For more on convexity, refer to~\cite{Barvinok:CourseConvexity}, Chapter~1 in~\cite{Matousek:LecturesDiscreteGeometry}, or~\cite{Wets:Optimization}.)

Now we give our first examples of convex sets. A set $X \subseteq \R^c$ is called an \defn{affine subspace} if it is a parallel translate of a linear subspace. The dimension $\dim(X)$ of an affine subspace $X$ is the dimension of the parallel linear vector subspace. The zero-dimensional affine subspaces are the \defn{points} and the one-dimensional affine subspaces are the \defn{lines}. The affine subspaces of dimension $c-1$ are called \defn{hyperplanes}. (In other words, we say that hyperplanes are codimension one affine subspaces.) The \defn{affine hull} $\aff(X)$ of a set $X \subseteq \R^c$ is the smallest affine subspace that contains $X$.

Formally, a \defn{polytope} $P$ is the convex hull of a finite number of points in $\R^c$. (According to our definition, a polytope is always convex, so we do not distinguish between \defn{polytopes} and \defn{convex polytopes}.) The integer $c \in \Z_{\geq 0}$ is called the \defn{ambient dimension} of the polytope. This number may differ from the dimension of the polytope: the \defn{dimension} $\dim(P)$ of a polytope $P$ is the dimension $\dim(\aff(P))$ of its affine hull $\aff(P)$.  We typically use $d$ to denote the dimension of a polytope. A $d$-dimensional polytope is also called a \defn{$d$-polytope}. When $c=d$, we say that the polytope $P$ is \defn{full-dimensional}. Otherwise, the number $c-d > 0$ is the \defn{codimension} of $P$.

A \defn{polyhedron} $P$ is the intersection of finitely many closed half-spaces in $\R^c$. A polytope is also a polyhedron: more precisely, polytopes are those subsets of $\R^c$ that are both bounded and polyhedra (see, e.g.,~\cite{Ziegler:Lectures}). The dimension, ambient dimension, and codimension of a polyhedron are defined in the same way as for a polytope. A $d$-dimensional polyhedron is also called a \defn{$d$-polyhedron}.%

\begin{remark}
We have given two equivalent definitions of a polytope. A polytope is either the convex hull of a finite set or the bounded intersection of a finite number of closed half-spaces. Though the definitions are equivalent (due to the Weyl-Minkowski Theorem -- see, e.g.,~\cite{Ziegler:Lectures}), from a computational point of view it makes a difference whether a certain polytope is represented as a convex hull or via linear inequalities: the size of one description cannot be bounded polynomially in the size of the other, if the dimension $d$ is not fixed (see~\cite{Avis:How-good-are-Convex}). For polynomial-time convex hull algorithms on a finite set of points in dimension two, see~\cite{deBerg:ComputationalGeometry} or Chapter~33 of~\cite{Cormen:Algorithms}. (For algorithms in higher dimensions, see, e.g.,~\cite{Chazelle:An-optimal-convex}.)
\end{remark}

A hyperplane is the set of points $x \in \R^c$ satisfying $a_1x_1 + \cdots a_cx_c = m$ for some non-zero vector $a = (a_1,\ldots,a_c) \in \R^c$ and a real number $m$. (A \defn{linear hyperplane} is a hyperplane that contains the origin, i.e., $m=0$.) Since a hyperplane is the intersection of the two half-spaces 
\begin{equation*}
\{x \in \R^c \mid a_1x_1 + \ldots a_cx_c \leq m\}
\text{ and }
\{x \in \R^c \mid a_1x_1 + \ldots a_cx_c \geq m\},
\end{equation*}
we can also say that a polyhedron is the intersection of a finite number (possibly zero) of linear equations and a finite number of linear inequalities.
\begin{remark}
Though we have already given one definition for a hyperplane and two definitions for a polytope, it will be helpful to have in mind a second definition for a hyperplane and a third definition for a polytope. For this, we need to introduce affine and convex combinations.

Let $x_1,\ldots,x_n$ be a finite collection of $n$ points in $\R^c$. A point of the form
\begin{equation}\label{equation:affinecombination}
x = \sum_{i=1}^n \lambda_i x_i
\end{equation}
where $\sum_{i=1}^n \lambda_i = 1$ is called an \defn{affine combination} of the points $x_1,\ldots,x_n$. The collection of points $x_1,\ldots,x_n$ are said to be \defn{affinely independent} if none of the points can be written as an affine combination of the remaining $n-1$ points. A hyperplane is the set of all possible affine combinations of $c$ affinely independent points in $\R^c$. More generally, an affine subspace is the set of all possible affine combinations of $n$ points in $\R^c$.

An affine combination of the points $x_1,\ldots,x_n$ in $\R^c$ of the form \eqref{equation:affinecombination} that meets the additional condition that each $\lambda_i \geq 0$ is called a \defn{convex combination} of the points $x_1,\ldots,x_n$. A polytope is the set of all possible convex combinations of a finite set of points. The collection of points $x_1,\ldots,x_n$ are said to be in \defn{convex position} if none of the points can be written as a convex combination of the remaining $n-1$ points. For example, for two distinct points $x$ and $y$ in $\R^c$, the \defn{interval} between them is the set $[x,y]=\{z \in \R^c \mid z = \lambda x + (1-\lambda)y, 0 \leq \lambda \leq 1\}$ of all possible convex combinations.
\end{remark}
We will show in Section~\ref{section:polytopesandLP} (see Lemma~\ref{lemma:standardform}) that polytopes and polyhedra in optimization are typically presented in the form
$P = \{x \in \R^c \mid Ax = b \text{ and } x \geq 0\}$
where $A$ is a matrix of size $r \times c$ and $b$ is a vector in $\R^r$.
\begin{definition}
Let $A$ be a matrix of size $r \times c$ and let $b$ b a vector in $\R^r$. A polyhedron of the form
\begin{equation}\label{equation:partitionpolyhedron}
P = \{x \in \R^c \mid Ax = b \text{ and } x \geq 0\}
  = \{x \in \R_{\geq 0}^c \mid Ax = b\}
\end{equation}
is called a \defn{partition polyhedron}. The matrix $A$ is called the \defn{constraint matrix} of the partition polyhedron $P$.
\end{definition}
The name \emph{partition polyhedron} is motivated by Lemma~\ref{lemma:partitionvertices} and the discussion in Section~\ref{section:enumerationofpartition}.
\begin{remark}
We will define transportation polytopes in Section~\ref{section:TPintro}, but for now we simply remark that they are of this form.
\end{remark}

\begin{example}\label{example:basic}
Here are examples of polytopes and polyhedra. They will reappear as running examples throughout this chapter:
\begin{itemize}
\item {\bf The $d$-simplex.}
The $d$-simplex generalizes the triangle in dimension two and the tetrahedron in dimension three. The \defn{standard $d$-dimensional simplex} $\Delta_d$ is the convex hull of the standard unit vectors $e_1,\ldots,e_{d+1}$ in $\R^{d+1}$. In this description, the ambient dimension of $\Delta_d$ is $c = d+1$. The $d$-simplex can also be defined as a bounded partition polyhedron: $\Delta_d = \{x \in \R^{d+1} \mid x_1 + \cdots + x_{d+1} = 1, 0 \leq x \leq 1\}$. See Figure~\ref{figure:examplesofsimplices}.
\begin{figure}[hbt]
\begin{center}
\includegraphics[scale=.7]{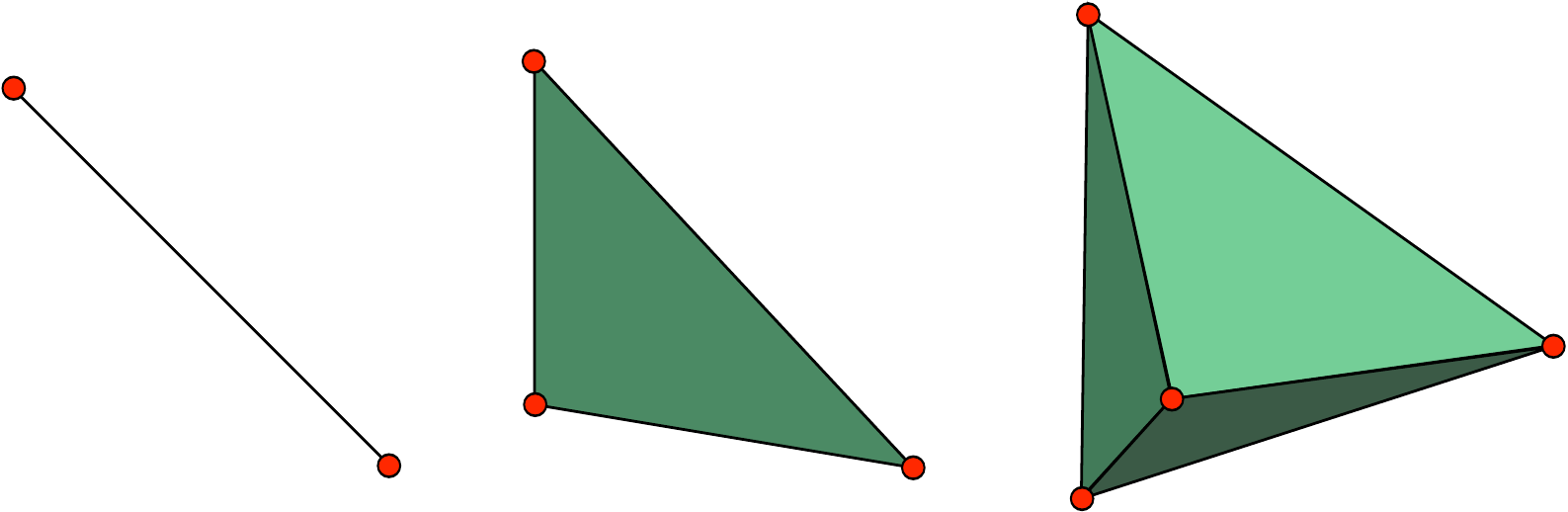}
\caption{The standard $1$-simplex, $2$-simplex, and $3$-simplex.}
\label{figure:examplesofsimplices}
\end{center}
\end{figure}

\item {\bf The $d$-cube.}
The \defn{$d$-dimensional $0$-$1$ cube} $\Box_d$ is the convex hull of the $2^d$ points with coordinates $0$ or $1$. In this description, the $d$-cube is full-dimensional, i.e., $c=d$. The $d$-cube can also be described as the intersection of $2d$ inequalities, i.e., $\Box_d = \{x \in \R^d \mid 0 \leq x_i \leq 1\ \forall i\}$. See Figure~\ref{figure:cubeexample}.
\begin{figure}[hbt]
\begin{center}
\includegraphics[scale=0.85]{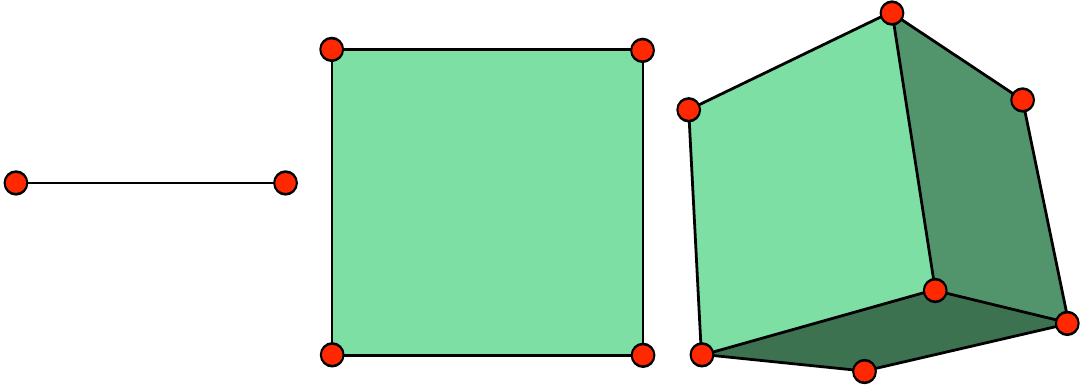}
\caption{The $0$-$1$ cubes of dimensions one, two, and three.}
\label{figure:cubeexample}
\end{center}
\end{figure}

\item {\bf The $d$-dimensional cross polytope.}
The \defn{$d$-dimensional cross polytope} $\Diamond_d$ is the convex hull of the $d$ standard unit vectors $e_1,\ldots,e_d$ and their negatives. For $d=3$, this is an octahedron. See Figure~\ref{figure:crosspolytopeexample}.

\item Let $P_{3 \times 3} = \{ x \in \R^9 \mid Ax = b, x \geq 0 \}$ where
\begin{equation}
A = \left[
\begin{array}{ccccccccc}
1&0&0&1&0&0&1&0&0 \\
0&1&0&0&1&0&0&1&0 \\
0&0&1&0&0&1&0&0&1 \\
1&1&1&0&0&0&0&0&0 \\
0&0&0&1&1&1&0&0&0 
\end{array}
\right]
\quad
\text{\rm and }
\quad
b = \left[
\begin{array}{c}
2 \\ 7 \\ 2 \\ 5 \\ 5 
\end{array}
\right ]
.
\end{equation}
We will prove later that the polyhedron $P_{3 \times 3}$ is bounded and is, therefore, a polytope. (In fact, we will later show that it is a $3 \times 3$ transportation polytope.) Since the $5 \times 9$ matrix $A$ has full row rank, the solution set of the matrix equation $Ax = b$ is an affine subspace of $\R^9$ of dimension $9-5$. Thus, $P_{3 \times 3}$ is a four-dimensional polytope defined in a nine-dimensional ambient space.

\end{itemize}

\begin{figure}[hbt]
\begin{center}
\includegraphics[scale=1.0]{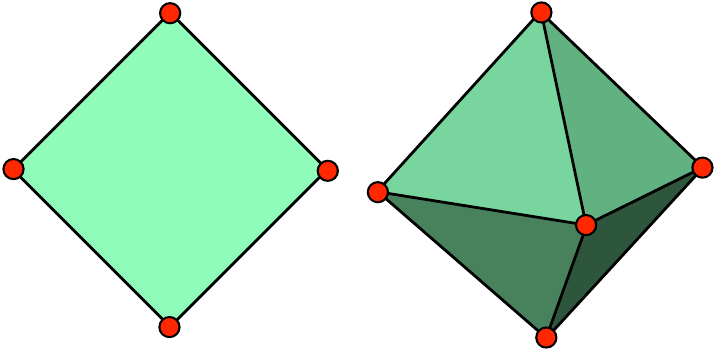}
\caption{The cross polytopes of dimensions two and three.}
\label{figure:crosspolytopeexample}
\end{center}
\end{figure}

\end{example}
The main examples of unbounded polyhedra we will see are the polyhedral cones. A subset of $\R^c$ that is closed under addition and under multiplication by non-negative scalars is called a \defn{cone}. Equivalently, a cone is the intersection of linear half-spaces. (A \defn{linear half-space} is a half-space defined by an equation of the form $a_1x_1 + \cdots + a_cx_c \geq 0$, for some non-zero vector $a \in \R^c$.) For any set $X \subseteq \R^c$, the \defn{cone generated by $X$}, denoted $\cone(X)$, is the set of all points that can be expressed as non-negative linear combinations of vectors in $X$. A cone generated by a finite set $X$ is called a \defn{polyhedral cone}. Equivalently, a cone is polyhedral if it is the intersection of \emph{finitely many} closed linear half-spaces. (Since we are only interested in polyhedral cones, we will use the terms ``cone'' and ``polyhedral cone'' interchangeably. That is to say, all cones we discuss are finitely generated.) A $d$-dimensional cone is called \defn{simple} if it is generated by a set $X$ of cardinality $d$.
\begin{example}
The orthant $\{x \in \R^d \mid x_i \geq 0 \text{ for all }i \in [d]\}$ is a simple $d$-dimensional cone.
\end{example}
We now introduce the primary concepts that will appear throughout this dissertation. For further details, see~\cite{Grunbaum:Polytopes} or~\cite{Ziegler:Lectures}. 

We focus on particular subsets of a polytope (or a polyhedron) $P \subseteq \R^c$ known as faces. A face $F$ of $P$ is obtained in the following way: let $a \in \R^c$ and $m \in \R$ such that 
\begin{equation}\label{equation:valid}
\langle a,x\rangle = a_1 x_1 + \cdots + a_c x_c \leq m \text{ for all } x = (x_1,\ldots,x_c) \in P.
\end{equation}
When this condition is satisfied, we say that the inequality \eqref{equation:valid} is \defn{valid} and determines the face $F = P \cap \{x \in \R^c \mid \langle a,x \rangle = m\}$. Now, suppose that $a \in \R^c$ is non-zero. Then the inequality \eqref{equation:valid} defines a closed half-space $H$ and the equation $\langle a,x\rangle = m$ is the boundary hyperplane of $H$, denoted by $\partial H$. The set $F$ is non-empty only when $\partial H$ intersects the boundary of $P$. In other words, a face is the intersection of $P$ with a \defn{supporting hyperplane}. See Figure~\ref{figure:facedefinition} for an illustration.
\begin{figure}[hbt]
\begin{center}
\includegraphics[scale=1.0]{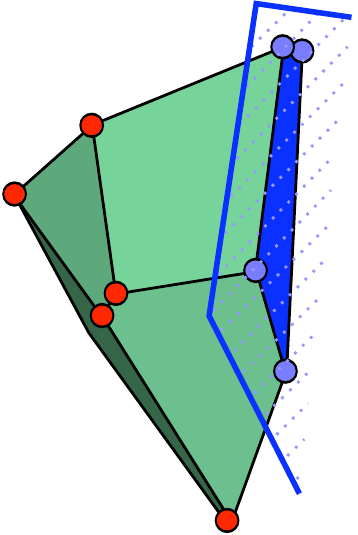}
\caption{A polytope $P$ and one of its non-empty faces. The supporting hyperplane that defines the highlighted face is shown.}
\label{figure:facedefinition}
\end{center}
\end{figure}
Since $F$ is the solution set of linear equations and inequalities, this subset of $P$ is a polyhedron as well. A face of dimension $i$ is called an \defn{$i$-face}. The faces of certain dimensions have special names. The $0$-faces are called \defn{vertices} of $P$, the $1$-faces are called \defn{edges}, the $(d-2)$-faces are called \defn{ridges}, and the $(d-1)$-faces are called \defn{facets}. What happens if we pick $a \in \R^c$ to be the zero vector? By choosing $m \in \{0,1\}$, we see that $P$ itself and the empty set $\emptyset$ are both faces of $P$. These two faces are called the \defn{non-proper} faces and the faces of intermediate dimension are called the \defn{proper} faces. By convention, the empty face $\emptyset$ is said to have dimension $-1$.

\begin{remark}
The definitions above are motivated by what faces of polytopes ``look like.'' For cones and unbounded polyhedra, we typically make a distinction between the bounded and unbounded one-dimensional faces. The \emph{bounded} faces of dimension one are called \defn{bounded edges} while the \emph{unbounded} faces of dimension one are called \defn{rays}. (A cone may have at most one bounded non-empty face, namely the origin. The unbounded faces of a cone are themselves cones. If the origin is a vertex of a cone, then it is called \defn{pointed}. The orthants are examples of pointed cones.)
\end{remark}

The relative interior of a set $X \subseteq \R^c$ is a useful refinement of the concept of the interior of $X$. First, fix any metric on $\R^c$. (Though any metric works, we should have the Euclidean metric in mind.) Recall that the \defn{interior} $\int(X)$ of a set $X \subseteq \R^c$ is
\begin{equation*}
\int(X) = \{x \in \R^c \mid \exists\, \epsilon > 0, N_\epsilon(x)  \subseteq X\},
\end{equation*}
where $N_\epsilon(x)$ is the open ball of radius $\epsilon$ centered at the point $x$. In particular, if the polyhedron $P$ is not full-dimensional, then $\int(P)=\emptyset$. (Intuitively, this is the very thing we want to avoid as we try to discuss the ``inside'' of a polytope or polyhedron. For this, we define the relative interior.) The \defn{relative interior} $\int^*(X)$ of a set $X$ is defined as its interior within its affine hull $\aff(X)$. That is,
\begin{equation*}
\int^*(X) = \{x \in \R^c \mid \exists\, \epsilon > 0, N_\epsilon(x) \cap \aff(X) \subseteq X\}.
\end{equation*}
\begin{figure}[bth]
\begin{center}
\includegraphics[scale=0.85]{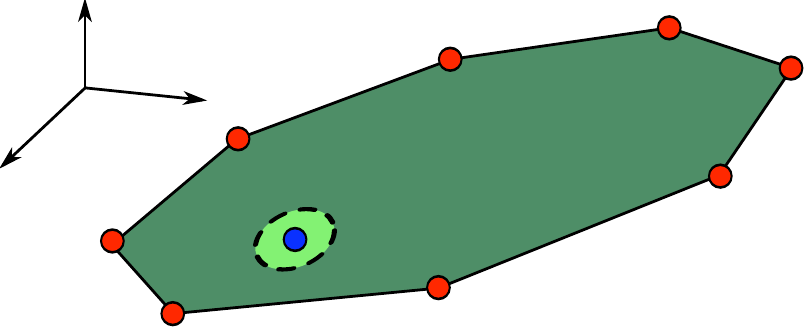}
\caption{A polygon $P$ defined in a three-dimensional ambient space and a point $x$ in the relative interior of $P$. The set $N_\epsilon(x) \cap \aff(P) \subseteq P$ is depicted.}
\label{figure:relativeinterior}
\end{center}
\end{figure}
See Figure~~\ref{figure:relativeinterior} for an example. Note that $\dim(\aff(\int^*(X))) = \dim(\aff(X))$. In particular,
the relative interior of a point $x \in \R^c$ is itself. Unlike the interior of a polyhedron $P$, the relative interior of $P$ remains invariant under embedding in higher-dimensional ambient spaces. The relative interior of a polyhedron $P$ is the set of all points in $P$ that do not belong to any proper face of $P$. The \defn{relative boundary} of a set $X$ is $\partial^*(X) = \overline{X} \setminus \int^*(X)$, where $\overline X$ denotes the closure of $X$. Since polyhedra are closed, the relative boundary of a polytope or polyhedron is the union of all of its proper faces. To make these concepts clear, we state a fundamental decomposition theorem for polytopes and polyhedra. An illustration is given for a tetrahedron in Figure~\ref{figure:polyhedrondecomposition}.
\begin{proposition}[Polyhedron Decomposition Theorem]\label{proposition:polyhedrondecomposition}
Let $P \subseteq \R^c$ be a polyhedron. Then $P$ is the disjoint union of the relative interiors of its faces.
\end{proposition}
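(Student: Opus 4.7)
The plan is to use the inequality description of $P$ to attach to each point a canonical face, then show that this face has the given point in its relative interior, and finally that the assignment is well-defined on points.

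First, I would fix an $H$-description $P = \{x \in \R^c \mid \langle a_i, x\rangle \leq b_i,\ i \in I\}$ with $I$ finite. For each $x \in P$, define the \emph{active index set} $I(x) = \{i \in I \mid \langle a_i, x\rangle = b_i\}$ and the candidate face
\begin{equation*}
F_x = \{y \in P \mid \langle a_i, y\rangle = b_i \text{ for all } i \in I(x)\}.
\end{equation*}
Each equation $\langle a_i, y\rangle = b_i$ for $i \in I(x)$ cuts out a face of $P$ (it is a supporting hyperplane since the inequality is valid and attained), and an intersection of faces is again a face; so $F_x$ is a face of $P$ containing $x$.

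Second, I would argue that $x \in \int^*(F_x)$. The idea is a perturbation argument: pick any $y \in F_x$ distinct from $x$; then $\langle a_i, x - y\rangle = 0$ for every $i \in I(x)$, while for every $j \notin I(x)$ we have $\langle a_j, x\rangle < b_j$ strictly, so $\langle a_j, x + \epsilon(x-y)\rangle < b_j$ for all sufficiently small $\epsilon > 0$. Hence the segment from $y$ through $x$ can be extended slightly past $x$ while remaining in $F_x$. Doing this along directions spanning $\aff(F_x) - x$ (one picks finitely many such $y$'s whose differences $x - y$ span the lineality of the affine hull of $F_x$) shows that a full neighborhood of $x$ in $\aff(F_x)$ lies in $F_x$, i.e., $x$ is relatively interior to $F_x$. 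This establishes that $P \subseteq \bigcup_{F} \int^*(F)$, the union being over all faces of $P$.

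Third, for disjointness, suppose $x \in \int^*(G)$ for some face $G$ of $P$, say $G = P \cap \{y \mid \langle a, y\rangle = m\}$ with $\langle a, \cdot\rangle \leq m$ valid on $P$. Because any valid inequality that is tight at some relatively interior point of $G$ must be tight on all of $G$ (otherwise one could perturb within $\aff(G)$ in violation), the inequalities in the active set $I(x)$ are tight on $G$, giving $G \subseteq F_x$; conversely $G$ contains $x$, so by minimality of the active-set description $F_x \subseteq G$. Hence $G = F_x$, and $F_x$ is the unique face whose relative interior contains $x$. Combined with the second step, this gives the claimed disjoint-union decomposition.

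The main obstacle is the relative-interior step: one must take care that the perturbation argument genuinely produces an open neighborhood in the \emph{affine hull} of $F_x$, not merely in some proper affine subspace. I would handle this by observing that $\aff(F_x)$ equals the solution set $\{y \mid \langle a_i, y\rangle = b_i,\ i \in I(x)\}$ (an inclusion is clear, and the reverse follows because the interior argument above shows $F_x$ has nonempty interior inside that affine subspace), after which the perturbation argument delivers a genuine relative neighborhood of $x$.
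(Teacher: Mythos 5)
The paper states this proposition without proof (it is only illustrated by Figure~\ref{figure:polyhedrondecomposition} and later invoked in the proof of Lemma~\ref{lemma:vertexequivalence}), so there is no in-paper argument to compare against; your active-constraint-set proof is the standard route, and its overall structure is sound: $F_x$ is a face containing $x$, $x \in \int^*(F_x)$, and $F_x$ is the unique face whose relative interior contains $x$, which together give the disjoint decomposition.

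Two steps deserve tightening. First, your closing paragraph justifies $\aff(F_x) = \{y \mid \langle a_i,y\rangle = b_i,\ i \in I(x)\}$ by appealing to ``the interior argument above,'' but that argument only uses directions $x-y$ with $y \in F_x$, i.e.\ directions inside $\aff(F_x)$, so as written this justification is circular. Fortunately the claim is not needed: since you choose the $y_j$ so that the vectors $x-y_j$ span $\aff(F_x)-x$, the convex hull of the points $x \pm \epsilon(x-y_j)$ (the minus signs coming from the segments $[x,y_j]$) already is a relative neighborhood of $x$ in $\aff(F_x)$ contained in $F_x$. Alternatively, a cleaner argument proves both assertions at once: every point of the affine subspace $\{y \mid \langle a_i,y\rangle = b_i,\ i\in I(x)\}$ within a small distance of $x$ still satisfies the inactive inequalities strictly, hence lies in $P$ and therefore in $F_x$; this simultaneously shows $x \in \int^*(F_x)$ and identifies $\aff(F_x)$ with that subspace. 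Second, in the disjointness step the inclusion $F_x \subseteq G$ is asserted ``by minimality of the active-set description,'' which is precisely the point at issue and has not been established. The correct justification is to apply your own tightness-propagation lemma a second time, now to the face $F_x$ and its relatively interior point $x$ (available from your step two): the valid inequality $\langle a,\cdot\rangle \leq m$ defining $G$ is tight at $x\in\int^*(F_x)$, hence tight on all of $F_x$, so $F_x\subseteq P\cap\{y\mid \langle a,y\rangle=m\}=G$. With these repairs the proof is complete.
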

\begin{figure}[hbt]
\begin{center}
\includegraphics[scale=0.6]{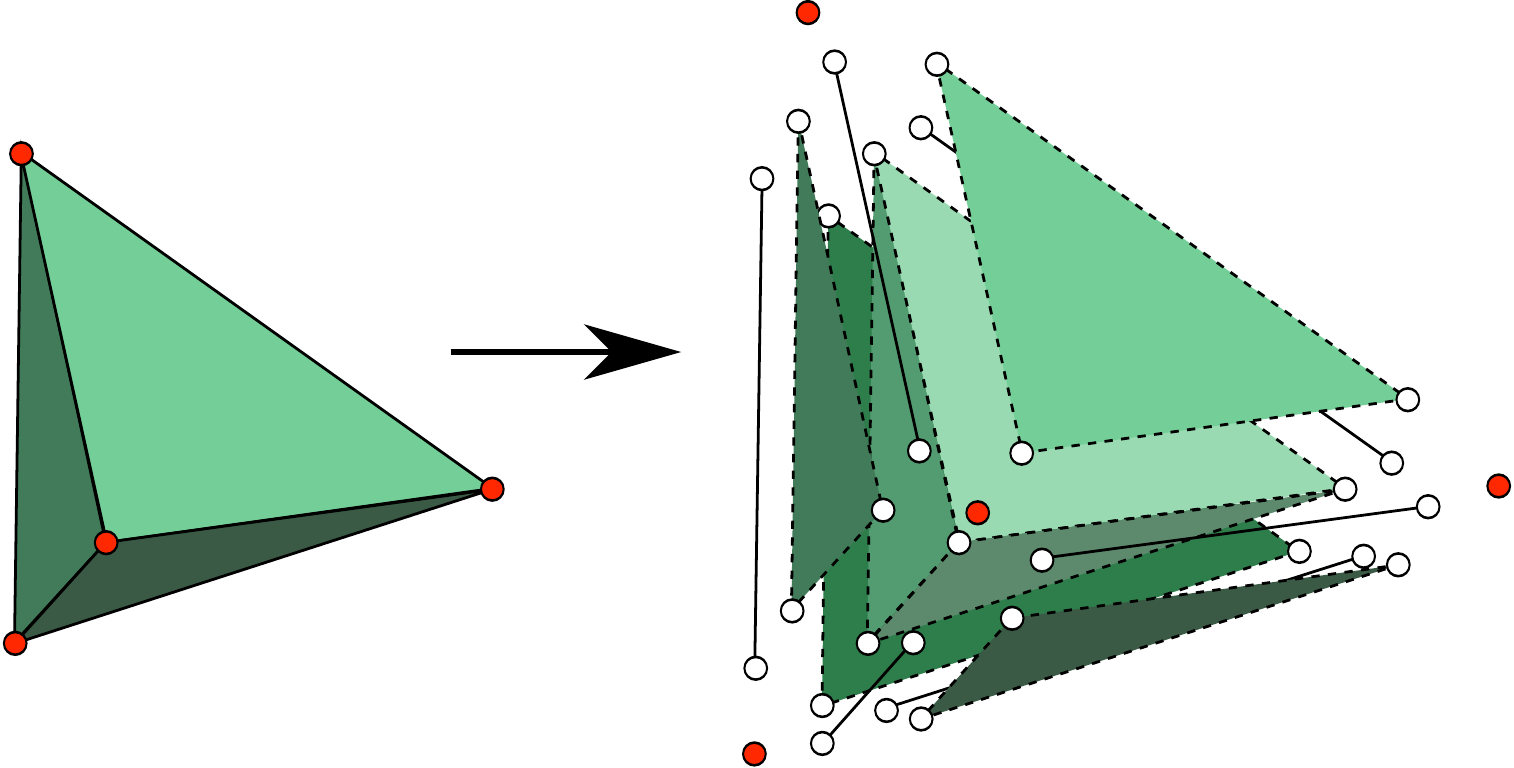}
\caption{A tetrahedron $P$ decomposed into the relative interiors of its faces.}
\label{figure:polyhedrondecomposition}
\end{center}
\end{figure}

\begin{remark}
In defining a polytope $P$ as the convex hull of a finite set $X$ of points, one may give a description of $P$ that is \defn{redundant} if the set $X$ includes points that are ``on the inside of $P$.'' Specifically, if $x \in X$ is in the relative interior of the polytope $P = \conv(X)$ (or even stronger, if $x \in X$ is not a vertex of $P$), then $P = \conv(X) = \conv(X \setminus \{x\})$. In its irredundant description, a polytope is the convex hull of the set of its vertices. (See Figure~\ref{figure:redundant}.) Similarly, in its irredundant description, a polyhedron is the intersection of its facet-defining closed half-spaces. A cone is minimally generated by its rays.
\begin{figure}[hbt]
\begin{center}
\includegraphics[scale=0.9]{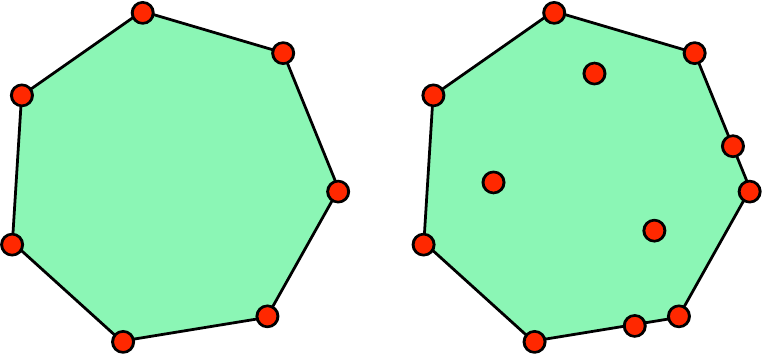}
\caption{The same polygon, in irredundant and redundant descriptions.}
\label{figure:redundant}
\end{center}
\end{figure}
\end{remark}

We present a well-known characterization of vertices of a polytope or polyhedron (see Section 2.4 of~\cite{Grunbaum:Polytopes}).
\begin{lemma}\label{lemma:vertexequivalence}
Let $P \subseteq \R^c$ be a $d$-dimensional polyhedron and suppose $x \in P$. The following conditions are equivalent:
\begin{enumerate}
\item The point $x \in \R^c$ is a vertex of $P$.
\item For every non-zero vector $y \in \R^c$, at most one of $x+y$ or $x-y$ belong to $P$.
\end{enumerate}
\end{lemma}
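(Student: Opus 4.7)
\bigskip\noindent\textbf{Proof proposal.} I plan to prove the two implications separately, using the supporting hyperplane definition of a face for one direction and the Polyhedron Decomposition Theorem (Proposition~\ref{proposition:polyhedrondecomposition}) for the other.

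For $(1) \Rightarrow (2)$, suppose $x$ is a vertex of $P$. Then $\{x\}$ is a face, so by definition there exist $a \in \R^c$ and $m \in \R$ with $\langle a, z \rangle \leq m$ for all $z \in P$ and with $P \cap \{z \mid \langle a, z \rangle = m\} = \{x\}$. Now suppose, toward a contradiction, that both $x+y$ and $x-y$ lie in $P$ for some non-zero $y \in \R^c$. Applying validity at these two points yields $\langle a, y \rangle \leq 0$ and $-\langle a, y \rangle \leq 0$, hence $\langle a, y \rangle = 0$. But then $\langle a, x+y \rangle = \langle a, x \rangle = m$, so $x+y$ lies in the supporting hyperplane and in $P$, forcing $x+y = x$, i.e.\ $y = 0$, a contradiction.

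For $(2) \Rightarrow (1)$, I prove the contrapositive: if $x$ is not a vertex, then some non-zero $y$ exhibits $\{x+y, x-y\} \subseteq P$. By Proposition~\ref{proposition:polyhedrondecomposition}, the point $x$ lies in the relative interior of a unique face $F$ of $P$. Since $x$ is not a vertex, $F$ is not a $0$-face, so $\dim(F) \geq 1$ and the linear space $L$ parallel to $\aff(F)$ is non-trivial. Pick any non-zero $v \in L$. By the definition of relative interior, there exists $\epsilon > 0$ with $N_\epsilon(x) \cap \aff(F) \subseteq F$. Set $y = \tfrac{\epsilon}{2\|v\|}\, v$; then $y \neq 0$, and both $x+y$ and $x-y$ lie in $N_\epsilon(x) \cap \aff(F) \subseteq F \subseteq P$, violating~$(2)$.

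The only mildly delicate point is the second direction, where one must ensure that the direction $y$ is genuinely non-zero and stays inside $P$. The Polyhedron Decomposition Theorem does the main work by guaranteeing that $x$ lies in the relative interior of a face, and the definition of relative interior then supplies the small ball inside $\aff(F)$ that lets us symmetrically perturb $x$ within $F$. No other structural result is needed.
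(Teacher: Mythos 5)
Your proof is correct and follows essentially the same route as the paper: the vertex-to-condition direction uses the supporting-hyperplane description of the face $\{x\}$ (you just carry out explicitly the computation the paper phrases as "any supporting hyperplane containing $x$ must contain the segment"), and the converse uses Proposition~\ref{proposition:polyhedrondecomposition} together with a small perturbation of $x$ parallel to $\aff(F)$, exactly as in the paper. Your added detail with the $\epsilon$-ball merely makes precise the paper's "sufficiently small vector $y$," so no changes are needed.
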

\begin{proof}
First, suppose that the point $x \in P$ is a vertex of $P$. Suppose, for a contradiction, that there is a non-zero vector $y \in \R^c$ such that both $x+y$ and $x-y$ belong to $P$. Then, the points $x+y$ and $x-y$ are distinct and the set $L = [x-y,x+y] \subseteq \R^c$ defines a line segment. Moreover, since $P$ is convex, it contains $L$. Thus, any supporting hyperplane of $P$ that contains $x$ also must contain $L$. But this means that any face of $P$ that contains $x$ also contains the line segment $L$, and therefore the singleton set $\{x\}$ cannot be a face of $P$, a contradiction.

Conversely, suppose that $x \in P$ is not a vertex of $P$. By Proposition~\ref{proposition:polyhedrondecomposition}, the point $x$ is in the relative interior of some $i$-face $F$ of $P$, with $i \geq 1$. Clearly, $x+y$ and $x-y$ both belong to $F$ for any sufficiently small vector $y \in \R^c$ parallel to the affine hull of $F$.
\end{proof}
\begin{example}\label{example:basicvertices}
Let us revisit some of the polyhedra of Example~\ref{example:basic}:
\begin{itemize}
\item
The $d$-simplex $\Delta_d$ has $d+1$ vertices and $d+1$ facets. Every pair of vertices is contained in an edge.

\item
The vertices of the $d$-cube $\Box_d$ are the $2^d$ points with coordinates $0$ or $1$. Two vertices $x$ and $x'$ of $\Box_d$ are contained in the same edge exactly when all but one of the coordinates of $x$ and $x'$ are the same. The facets of $\Box_d$ are given by the $2d$ inequalities $0\leq x_i\leq 1$.

Let us verify Lemma~\ref{lemma:vertexequivalence} for some points on the $3$-cube $\Box_3$. The point $x = (0,\frac{1}{2},0)$ lies on an edge of $\Box_3$. A vector such as $y = (0, \frac{1}{4},0)$ shows that condition (2) of the lemma holds. See Figure~\ref{figure:non-vertex-on-cube}. For a vertex, the condition is easiest to see at $(0,0,0)$, where one can easily check that for non-zero vectors $y$ belonging to $P$, the vector $-y$ is not in $P$.
\begin{figure}[hbt]
\begin{center}
\includegraphics[scale=1]{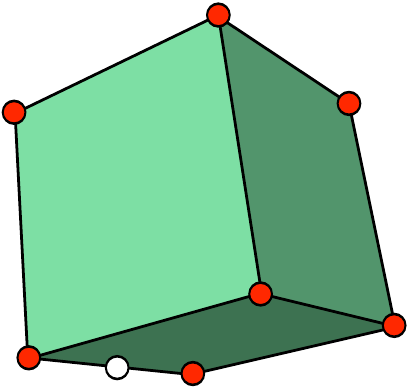}
\caption{The point $x = (0,\frac{1}{2},0)$ is not a vertex of the $3$-cube $\Box_3$. From $x$, one can move in the direction $y = (0, \frac{1}{4},0)$ and in the opposite direction $-y=(0,-\frac{1}{4},0)$.}
\label{figure:non-vertex-on-cube}
\end{center}
\end{figure}

\item
The $d$-dimensional cross polytope $\Diamond_d$ has $2d$ vertices: the $d$ standard unit vectors and their negatives. It has $2^d$ facets, one in each orthant of $\R^d$.

\item
Using the software {\tt polymake} (see~\cite{polymake}) available at~\cite{polymakeurl}, one can compute the twelve vertices of the polytope $P_{3 \times 3}$. They are: 
\begin{align*}
z_1 = (2,2,1,0,5,0,0,0,1),
z_2 = (2,1,2,0,5,0,0,1,0),
z_3 = (1,2,2,0,5,0,1,0,0),\\
z_4 = (1,4,0,0,3,2,1,0,0),
z_5 = (2,3,0,0,3,2,0,1,0),
z_6 = (2,3,0,0,4,1,0,0,1),\\
z_7 = (0,4,1,2,3,0,0,0,1),
z_8 = (0,3,2,2,3,0,0,1,0),
z_9 = (0,3,2,1,4,0,1,0,0),\\
z_{10} = (0,5,0,2,2,1,0,0,1),
z_{11} = (0,5,0,2,1,2,0,1,0),
z_{12} = (0,5,0,1,2,2,1,0,0).
\end{align*}
\end{itemize}
\end{example}

If $P$ is presented as a partition polyhedron of the form~\eqref{equation:partitionpolyhedron}, then there is another characterization for the vertices of $P$, which we describe now. If $P = \{x \in \R^c \mid Ax=b, x \geq 0 \}$ is non-empty, then we can assume that the matrix $A$ has full rank. Indeed, if $A$ is not full rank, then one or more linear equations in $Ax=b$ is redundant. So, let $A$ be an $r \times c$ matrix of full row rank and let $b \in \R^r$. Let $P = \{x \in \R^c \mid Ax=b, x \geq 0 \}$. With a slight abuse of notation, by $\cone(A)$ we mean the cone generated by the set of column vectors of the matrix $A$. A maximal linearly independent subset $\mathcal{A}$ of columns of $A$ is a \defn{basis} of $A$. Geometrically, each basis $\mathcal{A}$ of the matrix $A$ spans a simple cone inside $\cone(A)$. (Recall that a $d$-dimensional cone is simple if it has exactly $d$ rays.) Every basis $\mathcal{A}$ of $A$ defines a \defn{basic solution} of the system as the unique solution of the $r$ linearly independent equations $\mathcal{A} x_\mathcal{A}=b$ and $x_j=0$ for $j$ not in $\mathcal{A}$. A basic solution is \defn{feasible} if, in addition, $x \geq 0$. Geometrically, a basic feasible solution corresponds to a simple cone that contains the vector $b$. In fact, one can see that the polytope $P$ is non-empty if and only if $b \in \cone(A)$. A fundamental fact in linear programming is that, for a given vector $b \in \R^r$, all vertices of the polyhedron $P_b$ are basic feasible solutions (see, e.g.,~\cite{Schrijver:LinearProgramming} or~ \cite{Yemelichev:Polytopes}). So, we have proved the following characterization for vertices of partition polyhedra:
\begin{lemma}\label{lemma:partitionvertices}
Fix an $r \times c$ real matrix $A$ with full row rank $r$. Fix a vector $b \in \R^r$. Let $P$ be the partition polyhedron $P=\{x \in \R^c \mid Ax = b, x \geq 0 \}$. Then $x$ is a vertex of the polyhedron $P$ if and only if $x$ is a basic feasible solution.
\end{lemma}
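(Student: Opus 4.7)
The plan is to prove both implications using the ``straddle'' characterization of vertices provided by Lemma~\ref{lemma:vertexequivalence}: $x$ is a vertex of $P$ if and only if there is no non-zero $y \in \R^c$ with both $x+y$ and $x-y$ in $P$. The linear equality $Ax=b$ and the non-negativity constraints $x \geq 0$ together translate the existence of such a $y$ into a very rigid linear-algebraic condition, which is precisely what relates to the notion of a basis of $A$.

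For the ``if'' direction, I would start with a basic feasible solution $x$ determined by a basis $\mathcal{A}$ (so $\mathcal{A} x_\mathcal{A} = b$ and $x_j = 0$ for $j \notin \mathcal{A}$) and assume, toward a contradiction, that $x$ is not a vertex. By Lemma~\ref{lemma:vertexequivalence}, there is a non-zero $y$ with $x \pm y \in P$. Subtracting $A(x+y) = b$ from $A(x-y) = b$ gives $Ay = 0$. For each index $j \notin \mathcal{A}$, the coordinate $x_j = 0$ together with $x_j \pm y_j \geq 0$ forces $y_j = 0$. Hence $y$ is supported on $\mathcal{A}$, so $Ay = 0$ reduces to $\mathcal{A} y_\mathcal{A} = 0$; but the columns indexed by $\mathcal{A}$ are linearly independent, giving $y = 0$, a contradiction.

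For the ``only if'' direction, let $x$ be a vertex of $P$ and consider its support $S = \{j : x_j > 0\}$. I would show that the columns of $A$ indexed by $S$ are linearly independent. If they were not, there would be a non-zero vector $v$ with $v_j = 0$ for $j \notin S$ and $Av = 0$. For a sufficiently small $\epsilon > 0$, the strict positivity of $x_j$ on $S$ ensures $x \pm \epsilon v \geq 0$, and $A(x \pm \epsilon v) = b$, so both $x \pm \epsilon v$ lie in $P$, contradicting Lemma~\ref{lemma:vertexequivalence}. Since $A$ has full row rank $r$, I can then extend these $|S| \leq r$ linearly independent columns to a basis $\mathcal{A} \supseteq S$ of $A$; by construction, $x$ satisfies $\mathcal{A} x_\mathcal{A} = b$ and $x_j = 0$ for $j \notin \mathcal{A}$, so it is the basic feasible solution associated with $\mathcal{A}$.

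The main obstacle is modest and essentially bookkeeping: one must carefully manipulate the support of $x$ in relation to the basis $\mathcal{A}$, since in degenerate cases the support $S$ can be strictly smaller than $\mathcal{A}$ (so the basis associated with a vertex is not unique). The argument goes through because the characterization requires $x_j = 0$ outside $\mathcal{A}$, not $x_j > 0$ inside $\mathcal{A}$, so extending $S$ to any basis of $A$ suffices. The full-row-rank hypothesis on $A$ is what guarantees that such an extension exists.
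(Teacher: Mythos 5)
Your proof is correct, and both directions are handled cleanly: the kernel argument $Ay=0$ plus the forcing $y_j=0$ off the basis gives the ``if'' direction, and the support-independence argument plus extension of $S$ to a basis (using full row rank) gives the ``only if'' direction, with the degenerate case $|S|<r$ correctly addressed. Where you differ from the paper: the paper does not actually prove this lemma. It defines bases and basic feasible solutions, gives the geometric reformulation that a basic feasible solution corresponds to a simple cone of $\cone(A)$ containing $b$, and then invokes the statement as ``a fundamental fact in linear programming,'' citing standard references (Schrijver; Yemelichev--Kovalev--Kravtsov). Your argument supplies the missing self-contained proof, and it does so using exactly the tool the paper set up for this purpose, Lemma~\ref{lemma:vertexequivalence}; indeed the same $x\pm y$ perturbation trick is what the paper later uses to prove the support-graph characterization of vertices of classical transportation polytopes (Lemma~\ref{lemma:vertexBx}), so your proof fits the paper's toolkit while being more explicit than what the text provides. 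What the paper's framing buys instead is the geometric picture (vertices $\leftrightarrow$ simple cones of $\cone(A)$ containing $b$) that it needs later for the chamber-complex enumeration in Section~\ref{section:enumerationofpartition}; your elementary route buys completeness without any appeal to external references.
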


Polytopes and polyhedra have a finite number of vertices:
\begin{proposition}
Let $P$ be a $d$-dimensional polyhedron. Then, $P$ has a finite number of vertices.
\end{proposition}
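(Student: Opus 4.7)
The plan is to use the half-space (inequality) description of $P$ together with the vertex characterization from Lemma~\ref{lemma:vertexequivalence} to bound the number of vertices by a binomial coefficient.

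By definition of a polyhedron, we may write $P = \bigcap_{i=1}^m H_i$ where each $H_i = \{x \in \R^c \mid \langle a_i,x\rangle \leq b_i\}$ is a closed half-space determined by some non-zero $a_i \in \R^c$ and $b_i \in \R$. For any point $v \in P$, call an index $i$ \emph{active at $v$} if $\langle a_i, v\rangle = b_i$. The crux of the argument is the following claim: if $v$ is a vertex of $P$, then the set of active normals $\{a_i : i \text{ active at } v\}$ spans all of $\R^c$.

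First I would establish this claim by contrapositive. Suppose the active normals at some $v \in P$ span a proper subspace of $\R^c$. Then there exists a non-zero vector $y \in \R^c$ with $\langle a_i,y\rangle = 0$ for every active index $i$. For each inactive index $j$ we have $\langle a_j,v\rangle < b_j$, so by choosing $\varepsilon > 0$ sufficiently small we guarantee $\langle a_j, v \pm \varepsilon y\rangle < b_j$ for all inactive $j$, while $\langle a_i, v \pm \varepsilon y\rangle = b_i$ for all active $i$. Hence both $v + \varepsilon y$ and $v - \varepsilon y$ lie in $P$, and by Lemma~\ref{lemma:vertexequivalence} the point $v$ is not a vertex, proving the claim.

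Now if $v$ is a vertex, the claim produces a subset $I \subseteq \{1,\ldots,m\}$ of exactly $c$ active indices whose corresponding normals $\{a_i\}_{i \in I}$ form a basis of $\R^c$. The associated linear system $\langle a_i,x\rangle = b_i$ for $i \in I$ has a \emph{unique} solution, namely $v$ itself. Therefore the map sending a vertex $v$ to any such index set $I$ is injective once a selection rule is fixed, and the number of vertices is bounded above by $\binom{m}{c}$, which is finite.

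The main (mild) obstacle is the claim that a vertex has $c$ linearly independent active constraints; the rest is bookkeeping. I would expect the argument above using Lemma~\ref{lemma:vertexequivalence} to be the cleanest route, since the paper has just established that equivalence and has not yet developed the face lattice machinery that would give a shorter proof via facet incidences. An alternative, if one preferred, would be to invoke Lemma~\ref{lemma:partitionvertices} after reducing to partition form by the standard trick of writing $Ax \leq b$ as $Ax + s = b$, $s \geq 0$, and observing that there are only finitely many bases of the resulting constraint matrix; but the inequality-form argument above is more direct.
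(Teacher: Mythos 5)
Your proof is correct and rests on the same idea as the paper's: each vertex is pinned down by a bounded-size subset of the defining constraints, so the vertex count is at most a binomial coefficient (your $\binom{m}{c}$ over half-spaces versus the paper's asserted $\binom{n}{d+1}$ over facets). The only real difference is that you actually justify the key determinacy step — via Lemma~\ref{lemma:vertexequivalence} and the fact that the active normals at a vertex span $\R^c$ — whereas the paper states the corresponding facet-intersection claim without proof, so your write-up is, if anything, the more complete version of the same argument.
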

\begin{proof}
Clearly, a $d$-polytope $P$ has a finite number of vertices. A $d$-polyhedron $P$ defined by $n$ facets also has a finite number of vertices: since at most one vertex is found at the intersection of $d+1$ facets, the number of vertices is trivially bounded above by $\binom{n}{d+1}$.
\end{proof}
Let $\vert(P)$ denote the set of vertices of $P$. We denote its cardinality by $f_0 = |\vert(P)|$. Polytopes and polyhedra have finitely many faces:
\begin{proposition}
Let $P$ be a $d$-dimensional polyhedron. Then, $P$ has a finite number of $i$-dimensional faces, for $i \in \{-1,\ldots,d\}$.
\end{proposition}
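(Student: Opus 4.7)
The plan is to bound the total number of faces of $P$ directly from its description as a finite intersection of closed half-spaces, and then partition the count by dimension. Write $P$ in the standard form $P = \{x \in \R^c : \langle a_j, x \rangle \leq m_j \text{ for } j = 1, \ldots, n\}$ for some finite collection of vectors $a_j \in \R^c$ and reals $m_j$. For each subset $S \subseteq \{1, \ldots, n\}$, define the candidate set $F_S = \{x \in P : \langle a_j, x \rangle = m_j \text{ for all } j \in S\}$, with the convention $F_\emptyset = P$. Since there are $2^n$ subsets of $\{1,\ldots,n\}$, there are at most $2^n$ such candidate sets.

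The heart of the argument is the claim that every non-empty face $F$ of $P$ coincides with $F_S$ for some $S$. To prove this, I pick a point $x_0$ in the relative interior of $F$ (which is non-empty by Proposition~\ref{proposition:polyhedrondecomposition} applied to $F$ itself) and let $S(x_0) = \{j : \langle a_j, x_0 \rangle = m_j\}$ be the active constraint set at $x_0$. I would then show $F = F_{S(x_0)}$ in two steps. For $F \subseteq F_{S(x_0)}$, I use a perturbation argument: for any $x \in F$, since $x_0$ lies in the relative interior of $F$, the point $x_0 - \epsilon(x - x_0)$ still lies in $F \subseteq P$ for all sufficiently small $\epsilon > 0$, and substituting this into an active inequality at $x_0$ yields $\langle a_j, x - x_0 \rangle \geq 0$ for $j \in S(x_0)$, which combined with $\langle a_j, x \rangle \leq m_j$ forces equality. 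For the reverse inclusion $F_{S(x_0)} \subseteq F$, I would express the linear functional defining the supporting hyperplane of $F$ as a nonnegative combination of the active normals $\{a_j : j \in S(x_0)\}$ via a Farkas-type argument, so that every point satisfying the equalities indexing $F_{S(x_0)}$ automatically lies on that supporting hyperplane and hence in $F$.

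With the correspondence in hand, $P$ has at most $2^n + 1$ faces in total (including the empty face), which is finite. Grouping these faces by dimension, each of the $d+2$ classes indexed by $i \in \{-1, 0, \ldots, d\}$ contains only finitely many faces, proving the proposition. The main obstacle is the Farkas-type step in the reverse inclusion, which is the only nontrivial ingredient; everything else is bookkeeping. A purely combinatorial alternative would be to proceed by induction on $d$: the base case $d = 0$ is immediate since a point has only the two non-proper faces, and the inductive step uses that $P$ has at most $n$ facets (one per defining inequality, by the irredundant description) together with the fact that every proper face of $P$ is itself a face of some facet of $P$, so that finiteness propagates from facets to all lower-dimensional faces.
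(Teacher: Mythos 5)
Your proof is correct, but it follows a different route from the one in the paper. The paper counts faces through vertices: it invokes the preceding proposition that $P$ has finitely many vertices, observes that every $i$-face contains $i+1$ affinely independent vertices while distinct faces have distinct affine hulls, and concludes that the number of $i$-faces is at most $\binom{f_0}{i+1}$. You instead count through the defining inequalities: every non-empty face equals $F_S$ for some subset $S$ of constraints turned into equalities, so there are at most $2^n+1$ faces in total, and grouping by dimension finishes the argument. Your key claim (the active-set characterization of faces) is a standard fact, and your two inclusions are sound: the perturbation argument for $F\subseteq F_{S(x_0)}$ works because $x-x_0$ is parallel to $\aff(F)$, and the reverse inclusion is exactly the LP-duality/Farkas statement that a valid inequality tight at $x_0$ is a non-negative combination of the constraints active at $x_0$, with the multipliers of inactive constraints forced to vanish by complementary slackness. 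What each approach buys: yours requires the heavier duality ingredient but applies verbatim to arbitrary polyhedra, including unbounded ones and those without vertices (where the paper's vertex-based count implicitly presupposes that every $i$-face contains $i+1$ vertices, which can fail for non-pointed polyhedra); the paper's argument is shorter, stays entirely within the convex-hull picture, and gives a bound $\binom{f_0}{i+1}$ that is tailored to each dimension $i$ rather than the cruder uniform bound $2^n$. Your closing inductive sketch via facets is also viable, but as stated it again needs the fact that every proper face lies in a facet, which is essentially the same active-constraint analysis in disguise.
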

\begin{proof}
Let $f_0 \in \Z_{>0}$ denote the number of vertices of $P$.  Each $i$-face of $P$ contains $i+1$ affinely independent vertices of $P$, and different faces of $P$ have different affine hulls. Thus, the number of different $i$-faces of $P$ is bounded above by $\binom{f_0}{i+1}$.
\end{proof}
Let $f_i = f_i(P) \in \Z_{>0}$ denote the number of $i$-dimensional faces of $P$. The \defn{$f$-vector} of $P$ is $f(P) = (f_{-1}, f_0, f_1, \ldots, f_{d-1}, f_d) \in \Z_{>0}^{d+2}$. Note that $f_{-1} = f_d = 1$, but there is no complete characterization for the values of the other entries: the underlying interaction between combinatorics and algebraic geometry led to a complete characterization of the possible numbers of faces that a simplicial polytope can have (see~\cite{Stanley:The-number-of-faces}). The same question for arbitrary polytopes is open in dimension four and higher (see~\cite{Ziegler-facenumbers}). The $f$-vector of every polytope satisfies the following well-known relation (see, e.g.,~\cite{Hatcher:AlgebraicTopology}):
\begin{proposition}[Euler-Poincar\'e Relation]
Let $P$ be a polytope and let
\begin{equation*}
f(P)=(f_{-1}, f_0, f_1, \ldots, f_{d-1}, f_d)
\end{equation*}
be its $f$-vector. Then, the terms of the $f$-vector satisfy
\[\sum_{i=-1}^d (-1)^i f_i  = -f_{-1} + f_0 - f_1 + \cdots  + (-1)^d f_d = 0.\]
\end{proposition}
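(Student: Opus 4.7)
The plan is to prove the Euler-Poincar\'e Relation via the topological Euler characteristic. First, I would show that, for a $d$-polytope $P \subseteq \R^c$ and any chosen point $x$ in the relative interior of $P$, radial projection from $x$ (within $\aff(P)$) gives a homeomorphism from the relative boundary $\partial^* P$ onto the sphere $S^{d-1}$. By Proposition~\ref{proposition:polyhedrondecomposition}, every ray from $x$ parallel to $\aff(P)$ meets $\partial^* P$ in exactly one point, which is what makes this map well-defined and invertible.

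Next, I would verify that the collection of proper faces of $P$ endows $\partial^* P$ with a regular CW decomposition. For each $i \in \{0, 1, \ldots, d-1\}$, the relative interior of an $i$-face is an open $i$-cell (a bounded open subset of an affine $i$-space), and the Polyhedron Decomposition Theorem guarantees that these open cells partition $\partial^* P$ disjointly. The closure of each such cell is the face itself, whose relative boundary is a union of lower-dimensional faces; this is precisely the attaching data needed for a CW complex, transported to $S^{d-1}$ via the homeomorphism constructed above.

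Granting these topological facts, the final step is a direct appeal to the cellular computation of the Euler characteristic (see, e.g.,~\cite{Hatcher:AlgebraicTopology}): for any finite CW complex the Euler characteristic equals the alternating sum of the numbers of cells of each dimension. Since $\chi(S^{d-1}) = 1 + (-1)^{d-1}$, this yields
\begin{equation*}
\sum_{i=0}^{d-1} (-1)^i f_i \;=\; \chi(S^{d-1}) \;=\; 1 + (-1)^{d-1}.
\end{equation*}
Using $f_{-1} = 1$ and $f_d = 1$, a short rearrangement then gives the desired identity
\begin{equation*}
\sum_{i=-1}^{d} (-1)^i f_i \;=\; -1 \;+\; \bigl(1 + (-1)^{d-1}\bigr) \;+\; (-1)^d \;=\; 0.
\end{equation*}

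The main obstacle is the topological groundwork in the first two steps: identifying $\partial^* P$ with $S^{d-1}$ and checking that the face lattice of $P$ provides a genuine regular CW structure, not merely a polyhedral stratification. An alternative that sidesteps the CW technicalities would be a shelling argument in the style of Bruggesser and Mani, where one peels off the facets of $P$ one at a time and tracks the net contribution of each newly added facet to the alternating sum inductively; however, the topological route matches the proposition's citation to~\cite{Hatcher:AlgebraicTopology} most directly and generalizes cleanly to simplicial complexes and cell complexes beyond polytopes.
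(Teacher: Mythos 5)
Your argument is correct, but it takes a different route from the one the paper (implicitly) uses. The paper's justification is the remark immediately after the statement: the relative interiors of \emph{all} nonempty faces of $P$, including the open top cell $\int^*(P)$, partition $P$ by the Polyhedron Decomposition Theorem, giving a CW structure on $P$ itself; since a polytope is convex and hence contractible, its cellular Euler characteristic is $\sum_{i=0}^{d}(-1)^i f_i = \chi(P) = 1$, and the term $-f_{-1}=-1$ turns this into exactly the vanishing of the \emph{reduced} Euler characteristic, which is the identity claimed. You instead work on the boundary: you identify $\partial^*P$ with $S^{d-1}$ by radial projection, use the proper faces as a regular CW structure there, plug in $\chi(S^{d-1}) = 1 + (-1)^{d-1}$, and reinstate $f_{-1}$ and $f_d$ by hand. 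Both routes lean on the same cellular Euler-characteristic machinery from~\cite{Hatcher:AlgebraicTopology} and on the face decomposition supplying a genuine CW structure, so the ``main obstacle'' you flag is shared; what the paper's version buys is that contractibility of a convex body is immediate (no boundary-sphere homeomorphism, no separate knowledge of $\chi(S^{d-1})$, and no degenerate case at $d=0$, where your boundary is empty and the sphere argument must be checked separately, even though the formula still comes out right), while your version is the classical Euler formula for the boundary sphere and adapts more directly to statements about boundary complexes and simplicial spheres. One small slip: the fact that each ray from an interior point meets $\partial^*P$ in exactly one point comes from convexity and boundedness of $P$, not from the Polyhedron Decomposition Theorem, which only gives the disjoint-union-of-relative-interiors statement.
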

In the terminology of~\cite{Hatcher:AlgebraicTopology}, the reduced Euler characteristic of a polytope is zero, since polytopes are contractible.

\section{Graphs of polytopes and polyhedra}\label{section:graphsIntro}

We now introduce the graph of a polytope. Let $P$ be a polytope. The \defn{graph} (or \defn{$1$-skeleton}) of $P$, denoted by $G(P)$, is the following undirected, finite, simple graph:
\begin{itemize}
\item {\bf Vertices of $G(P)$.} The graph $G(P)$ has a vertex for every vertex $v$ of the polytope $P$. Let us denote the vertex of the graph $G(P)$ corresponding to the vertex $v$ of the polytope $P$ by $G(v)$.
\item {\bf Edges of $G(P)$.} Two vertices $G(v)$ and $G(v')$ in $G(P)$ are connected by an edge in $G(P)$ if there is an edge of the polytope $P$ containing the corresponding vertices $v$ and $v'$ of $P$. When this occurs, the vertices $v$ and $v'$ of the polytope $P$ are said to be \defn{neighbors}.
\end{itemize}
Figure~\ref{figure:polytopegraph} gives an intuitive example of the graph of a polytope. Let us review some basic terminology from graph theory. For more information, refer to~\cite{Bollobas:GraphTheory},~\cite{Bondy:Graph-Theory}, or~\cite{Tutte:Graph-Theory}. The \defn{distance} between two vertices in a graph is the minimum number of edges needed to go from one vertex to the other vertex. The \defn{diameter} of a graph is the maximum distance between all pairs of vertices.  The number $\diam(G(P))\in \Z_{\geq 0}$ is the \defn{diameter of the graph of $P$}. This number is also called the \defn{diameter of the polytope $P$}.
\begin{figure}[hbt]
\begin{center}
\includegraphics[scale=1]{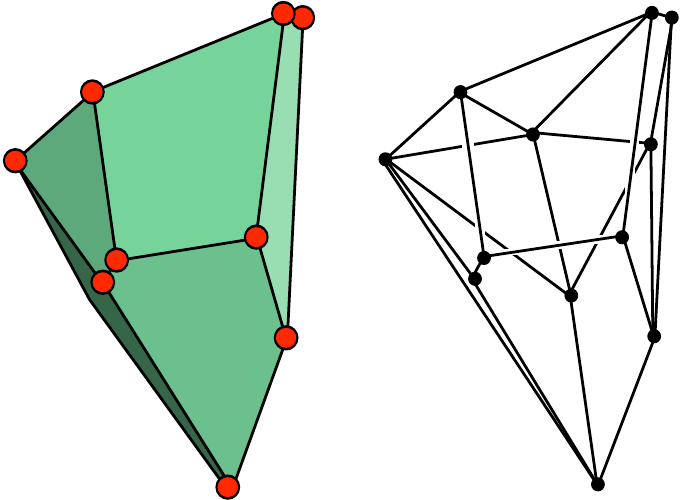}
\caption{A $3$-polytope $P$ and its graph $G(P)$.}
\label{figure:polytopegraph}
\end{center}
\end{figure}
\begin{remark}
For an unbounded polyhedron $P$, the graph $G(P)$ of $P$ is defined in the same way, but $G(P)$ contains only the \emph{bounded} edges. The graph $G(P)$ does not include the rays (the unbounded one-dimensional faces).
\end{remark}
\begin{example}
Let us revisit some of the polytopes and polyhedra of Example~\ref{example:basic}:
\begin{itemize}
\item
The graph of the $d$-simplex $\Delta_d$ is the complete graph $K_{d+1}$ so its diameter is one.

\item
The graph of the $0$-$1$ $d$-cube $\Box_d$ has diameter $d$: the steps needed to go from a vertex to another equals the number of coordinates in which the two vertices differ. 

\item
The graph of the $d$-dimensional cross polytope $\Diamond_d$ is almost complete: the only edges missing from $G(\Diamond_d)$ are those between opposite vertices. Thus, the diameter of a cross polytope is two.

\item
Using the software {\tt polymake} (see~\cite{polymake}), one can compute the graph $G(P_{3 \times 3})$ of the polytope $P_{3 \times 3}$. The graph is shown in Figure~\ref{figure:graphP33}. By inspection, the diameter of the graph is three.
\begin{figure}[hbt]
\begin{center}
\includegraphics[scale=1.0]{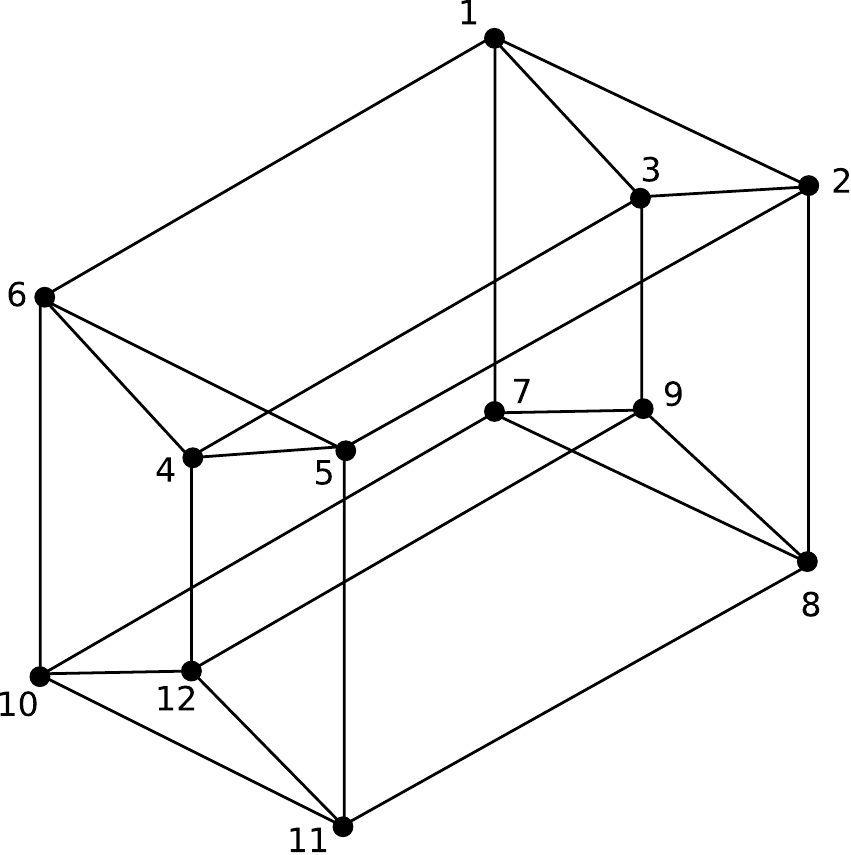}
\caption{The graph $G(P_{3 \times 3})$ of the $4$-polytope $P_{3 \times 3}$.}
\label{figure:graphP33}
\end{center}
\end{figure}

\end{itemize}
\end{example}

A graph is \defn{$d$-connected} if the graph that remains after removing any $d-1$ vertices is still connected. In~\cite{Balinski:On-the-graph-structure}, \given{Michel }Balinski proved that the graph of every $d$-polytope is $d$-connected:
\begin{theorem}[Balinski's Theorem,~\cite{Balinski:On-the-graph-structure}]\label{theorem:balinski}
Let $P$ be a $d$-polytope. Then the graph $G(P)$ is at least $d$-connected.
\end{theorem}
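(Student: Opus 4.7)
The plan is to proceed by induction on the dimension $d$, using the simplex method and a linear-functional argument. The base case $d = 1$ is immediate: a $1$-polytope is a line segment and $G(P) = K_2$ is obviously $1$-connected.

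For the inductive step, I assume Balinski's theorem for polytopes of dimension less than $d$. Let $P$ be a $d$-polytope and $S \subseteq \vert(P)$ with $|S| \leq d - 1$; the goal is to show $G(P) \setminus S$ is connected. Since $\dim \aff(S) \leq d - 2 < d$, there is a non-trivial family of linear functionals $\phi$ satisfying $\phi \equiv c_0$ on $S$ that are not constant on $P$; I pick a generic one, so that $\phi$ is uniquely maximized at a single vertex $v^+$ and uniquely minimized at a single vertex $v^-$ of $P$. Let $H = \{\phi = c_0\}$, and partition $\vert(P) \setminus S$ into $V^+ = \{v : \phi(v) > c_0\}$, $V^- = \{v : \phi(v) < c_0\}$, and $V^0 = \{v : \phi(v) = c_0\} \setminus S$.

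The simplex method on $\phi$ now implies that every $v \in V^+$ is connected, via a sequence of edge-pivots along which $\phi$ does not decrease, to $v^+$; such a path stays in $V^+ \cup \{v^+\}$ and hence avoids $S \subseteq H$. Symmetrically, every $v \in V^-$ reaches $v^-$ through a path in $V^- \cup \{v^-\}$. For $v \in V^0$, not all edges of $P$ at $v$ can lie in $H$ (otherwise the $d$-dimensional tangent cone of $P$ at $v$ would be contained in the $(d-1)$-dimensional hyperplane $H$, contradicting $\dim P = d$), so $v$ is adjacent to some vertex in $V^+ \cup V^-$.

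The remaining --- and hardest --- step is to merge the $V^+$-piece and the $V^-$-piece. I would split into two cases. Case A: $\conv(S)$ is contained in some proper face of $P$. Here $\phi$ can be chosen so that $H$ is a supporting hyperplane of $P$, forcing one of $V^+$, $V^-$ to be empty, and the argument concludes. Case B: $\conv(S)$ meets $\int^*(P)$. Then no facet $F$ of $P$ contains all of $S$ (otherwise $\conv(S) \subseteq F$ would lie in the boundary), so every facet satisfies $|S \cap \vert(F)| \leq |S| - 1 \leq d - 2$. Since $F$ is a $(d-1)$-polytope, the inductive hypothesis yields that $G(F) \setminus (S \cap \vert(F))$ is connected. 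Because $H$ is not a supporting hyperplane of $P$, some facet $F$ must be cut by $H$, placing vertices of $F$ in both $V^+$ and $V^-$; this connected subgraph of $G(P) \setminus S$ then bridges the two pieces. The main obstacle is this last step: justifying the generic choice of $\phi$ and confirming that an appropriate bridging facet always exists in Case B.
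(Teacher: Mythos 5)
The paper never proves this theorem (it is only quoted from \cite{Balinski:On-the-graph-structure}), so your argument must stand on its own. The overall plan --- a functional $\phi$ constant on $S$, monotone pivot paths to the extremes, then a bridge between the two sides --- is sound, but the two points you flag yourself are genuine gaps, and one of your assertions is false as stated. First, the genericity claim: within the family of functionals constant on $S$ you cannot always arrange a unique maximizer. If some edge of $P$ is parallel to $\aff(S)$ (take $P$ a triangular prism, $d=3$, $S$ the two endpoints of a vertical edge), then every admissible $\phi$ is constant on all vertical edges and its maximum is attained on an edge, not a vertex. This is harmless only after you replace ``the unique vertex $v^+$'' by ``the face of $P$ on which $\phi$ is maximized,'' whose graph is connected, and send $V^+$ into that face by strictly increasing paths; as written the step is unjustified.

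Second, and more seriously, the bridging claim in Case B does not follow from ``$H$ is not a supporting hyperplane.'' A hyperplane through the interior of $P$ need not put vertices of any single facet strictly on both sides: for the octahedron with $H$ its equatorial plane, every facet has one apex vertex on one side and its other two vertices on $H$. So you genuinely need (i) genericity of $\phi$ within the pencil of hyperplanes containing $\aff(S)$, so that no vertex outside $S$ lies on $H$, and (ii) the Case B hypothesis. One way to close the gap: $Q=P\cap H$ is a $(d-1)$-polytope and each facet of $Q$ has the form $F\cap H$ for a facet $F$ of $P$; if all vertices of such an $F$ lay in one closed halfspace, then $F\cap H$ would be a $(d-2)$-face of $P$ contained in $H$, so its at least $d-1$ vertices would all belong to $S$, forcing $\conv(S)$ to be a proper face of $P$ and contradicting Case B; hence some facet is strictly cut and your induction applies to it. (You should also justify that Cases A and B are exhaustive: if $\conv(S)$ misses $\int^*(P)$, the face of $P$ whose relative interior contains a relative interior point of $\conv(S)$ is proper and contains all of $S$.) Alternatively, the case analysis, the induction, and the bridging facet can all be avoided by the standard trick of choosing $H$ to pass through $S$ together with one extra vertex $u\notin S$ (at most $d$ points always lie on a hyperplane not containing $P$): then $u$ reaches the maximum face by a strictly increasing path and the minimum face by a strictly decreasing path, both avoiding $S$, and bridges the two sides directly.
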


A classic theorem of \given{Ernst }Steinitz characterizes the graphs of $3$-dimensional polytopes. Chapter~4 in~\cite{Ziegler:Lectures} contains a proof of it. No analogous theorem is known for $4$-polytopes (see, e.g.,~\cite{Ziegler-facenumbers}).
\begin{theorem}[Steinitz' Theorem,~\cite{Steinitz:Polyeder-und-Raumeinteilungen,Steinitz:Vorlesungen-uber}]\label{theorem:steinitz}
Let $G$ be a simple graph. Then $G$ is the graph of a $3$-polytope if and only if $G$ is planar and $3$-connected.
\end{theorem}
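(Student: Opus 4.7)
I would split the argument into the two implications, treating the necessity as a short geometric observation and the sufficiency as an induction on the edge count.

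For the forward direction, suppose $G = G(P)$ for some $3$-polytope $P \subseteq \R^3$. To show $G$ is planar, I would construct a Schlegel diagram: pick a facet $F$ of $P$ and a viewpoint $p \in \R^3$ placed just outside $F$, close enough that $p$ lies on the interior side of every other facet-defining hyperplane of $P$. Central projection of $\partial P \setminus \{p\text{'s half-space}\}$ from $p$ onto $\aff(F)$ is injective on the union of the faces other than $F$, so the image of the $1$-skeleton is a straight-line planar drawing of $G$ inside $F$. The $3$-connectedness is immediate: it is exactly Balinski's Theorem (Theorem~\ref{theorem:balinski}) applied with $d = 3$.

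For the converse, let $G$ be a planar $3$-connected simple graph. I would argue by induction on the number of edges that $G$ is realizable as $G(P)$ for some $3$-polytope $P$. The base case is the tetrahedral graph $K_4$, which is $G(\Delta_3)$. For the inductive step, the combinatorial core is the classical reduction lemma: every $3$-connected planar graph $G$ with more than six edges admits a local operation, either (i) contracting an edge whose endpoints have no other common neighbor, (ii) deleting an edge not contained in a triangle that separates two $3$-connected components, or (iii) a $Y\Delta$ move replacing a degree-$3$ vertex by a triangle, such that the resulting graph $G'$ remains planar and $3$-connected and has strictly fewer edges. Applying the inductive hypothesis, $G'$ is realized as $G(P')$ for some $3$-polytope $P'$, and the task is to invert the move geometrically.

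The geometric inverses are standard polytope operations. The inverse of an edge contraction is a vertex split: near the offending vertex $v$ of $P'$, one truncates $v$ by a generic hyperplane close to $v$, producing a new facet whose intersection structure supplies the needed split; this can be done so that no other facet relations of $P'$ are disturbed. The inverse of an edge deletion is stacking or gluing a shallow "tent" onto a face, and the inverse of a $Y\Delta$ move is a vertex-truncation that introduces a new triangular facet. In each case, the realization uses one of the elementary constructions on $3$-polytopes, and a standard genericity argument shows that the combinatorial type of the result is exactly $G$ and not some refinement of it.

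The principal obstacle is precisely this last point: verifying that each geometric inverse move, which involves choosing a cutting hyperplane or a new apex position, can be performed so that the combinatorial $f$-vector changes by exactly the amount predicted, with no spurious extra vertices, edges, or degenerate incidences appearing. I would handle it by working with the affine realization space of $P'$, observing that the constraints defining a "valid" truncation or tent form an open set in the parameter space, and then picking parameters generically in that open set. The $3$-connectedness of $G'$ is what keeps this open set non-empty at each step, which is why the combinatorial reduction lemma must preserve $3$-connectedness rather than just planarity.
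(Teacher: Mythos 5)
First, a point of reference: the dissertation does not prove this statement at all --- it is quoted as a classical theorem, with the proof explicitly deferred to Chapter~4 of~\cite{Ziegler:Lectures} --- so your argument has to be judged against that standard proof rather than against anything in the text. Your forward direction is fine and is the usual one: the Schlegel projection from a point just beyond a facet gives a planar straight-line drawing of $G(P)$, and $3$-connectedness is exactly Balinski's Theorem (Theorem~\ref{theorem:balinski}) with $d=3$.

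The converse, however, has genuine gaps, and they sit precisely where the real difficulty of Steinitz' theorem lies. The ``classical reduction lemma'' you invoke is not correct as stated: a $Y\Delta$ move does not strictly decrease the number of edges (it removes three and adds up to three, and removes none when all three triangle edges are new); deleting an edge of a $3$-connected graph in general destroys $3$-connectivity, and your condition (ii) does not repair this; and an edge contraction preserves $3$-connectivity only for Tutte-contractible edges, whose existence compatible with your simplicity condition you would still have to prove. The combinatorial heart of Steinitz' proof is exactly this reduction statement --- that every $3$-connected planar graph can be brought to $K_4$ by $\Delta Y$ and $Y\Delta$ exchanges (with cleanup of parallel edges and series vertices) under a carefully chosen progress measure --- and it cannot be waved through as a black box in the loose form you give. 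The geometric inverses are also off: truncating a vertex $v$ of degree $k$ by a nearby hyperplane creates a $k$-gonal facet and alters the graph by far more than un-contracting a single edge, so it is not the inverse of an edge contraction (a genuine vertex split requires a different perturbation construction, as in the Barnette--Gr\"unbaum variant of the proof). The reason the standard argument confines itself to degree-$3$ vertices and triangular facets is that these are the only local moves with easy geometric inverses: cutting off a degree-$3$ vertex realizes $Y\Delta$, and, passing to the polar polytope via Lemma~\ref{lemma:polar}, stacking a shallow pyramid over a triangular facet realizes $\Delta Y$. An openness-in-realization-space appeal cannot rescue an inverse move that is combinatorially the wrong operation; to repair the proof, restate the reduction lemma correctly and replace your truncation/tent constructions by these two dual operations.
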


\section{Geometric combinatorics of polytopes and polyhedra}

We turn now to the combinatorics of the faces of a polytope. The faces of a polytope $P$ form a \defn{poset} (or \defn{partially-ordered set}). For more on posets and lattices, see Chapter 3 in~\cite{Stanley:EnumerativeCombinatorics1}. We describe just what is needed here: for a more complete discussion on the geometric combinatorics of polytopes and their faces, see Section 2.2 in~\cite{Ziegler:Lectures}. The collection of all faces of a polytope $P$ (including the empty face $\emptyset$ and the face $P$ itself, which are the elements $\hat{0}$ and $\hat{1}$, respectively) is a poset where order relation is given by inclusion. This poset is a lattice, called the \defn{face lattice} of $P$, and denoted by $L(P)$. The lattice $L(P)$ is graded by the face dimension. A cover relation exists for two elements of $L(P)$ whenever an $(i+1)$-dimensional face contains an $i$-dimensional face. We often represent the face lattice using a \defn{Hasse diagram}, which has a node for each element of the poset and a vertical edge for each cover relation.
\begin{example}
Figures~\ref{figure:hassediagram2simplex} and~\ref{figure:hassediagram3simplex} (respectively) show examples of Hasse diagrams for the $d$-simplex with $d=2,3$ (respectively).
Figures~\ref{figure:hassediagram2cube} and~\ref{figure:hassediagram3cube} (respectively) show examples of Hasse diagrams for the $d$-cube with $d=2,3$ (respectively).
Figures~\ref{figure:hassediagram2crosspolytope} and~\ref{figure:hassediagram3crosspolytope} (respectively) show examples of Hasse diagrams for the $d$-dimensional cross polytope with $d=2,3$ (respectively).

Figure~\ref{figure:hassediagramP33} shows the Hasse diagram of the four-dimensional polytope $P_{3 \times 3}$.

The largest element $\hat{1}$ in each poset is the polytope $P$ itself, and the smallest element $\hat{0}$ is the empty face $\emptyset$. (These Hasse diagrams are traced copies of the output from the \verb+VISUAL_FACE_LATTICE+ command in {\tt polymake}. See~\cite{polymake}.)
\begin{figure}[hbt]
\begin{center}
\includegraphics{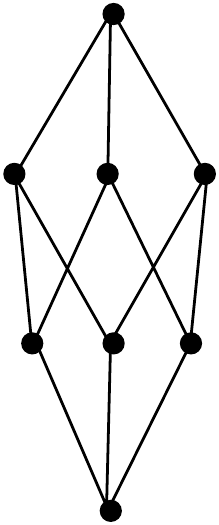}
\caption{Hasse diagram of the $2$-simplex.}
\label{figure:hassediagram2simplex}
\end{center}
\end{figure}
\begin{figure}[hbt]
\begin{center}
\includegraphics{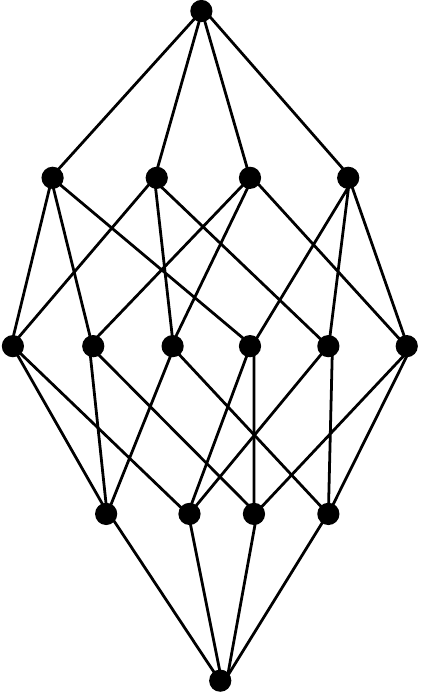}
\caption{Hasse diagram of the $3$-simplex.}
\label{figure:hassediagram3simplex}
\end{center}
\end{figure}
\begin{figure}[hbt]
\begin{center}
\includegraphics{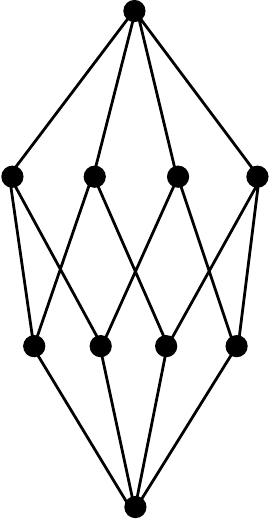}
\caption{Hasse diagram of the $2$-cube.}
\label{figure:hassediagram2cube}
\end{center}
\end{figure}
\begin{figure}[hbt]
\begin{center}
\includegraphics{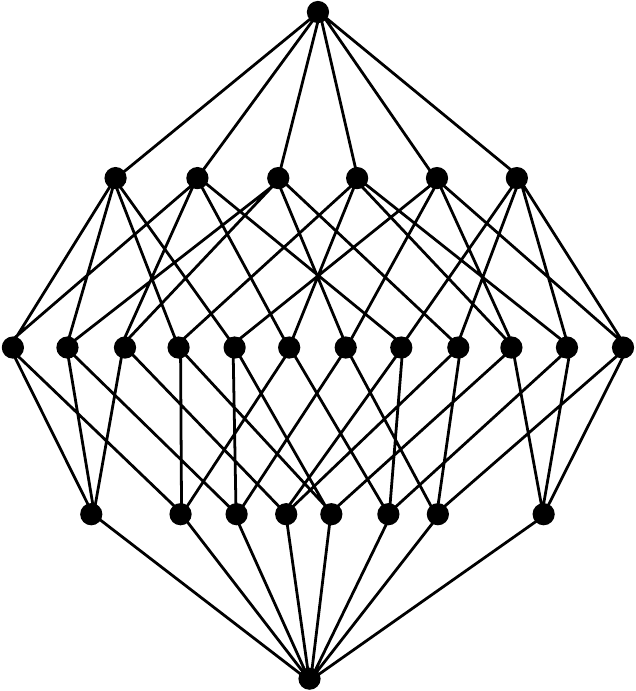}
\caption{Hasse diagram of the $3$-cube.}
\label{figure:hassediagram3cube}
\end{center}
\end{figure}
\begin{figure}[hbt]
\begin{center}
\includegraphics{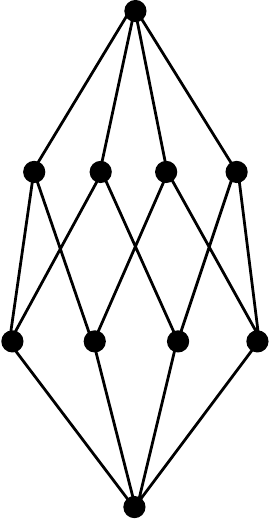}
\caption{Hasse diagram of the $2$-dimensional cross polytope.}\label{figure:hassediagram2crosspolytope}
\end{center}
\end{figure}
\begin{figure}[hbt]
\begin{center}
\includegraphics{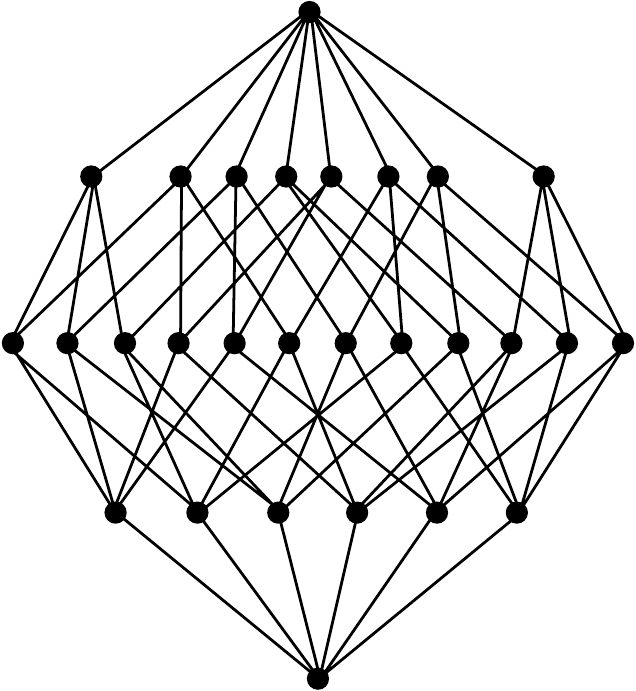}
\caption{Hasse diagram of the $3$-dimensional cross polytope.}
\label{figure:hassediagram3crosspolytope}
\end{center}
\end{figure}
\begin{figure}[hbt]
\begin{center}
\includegraphics{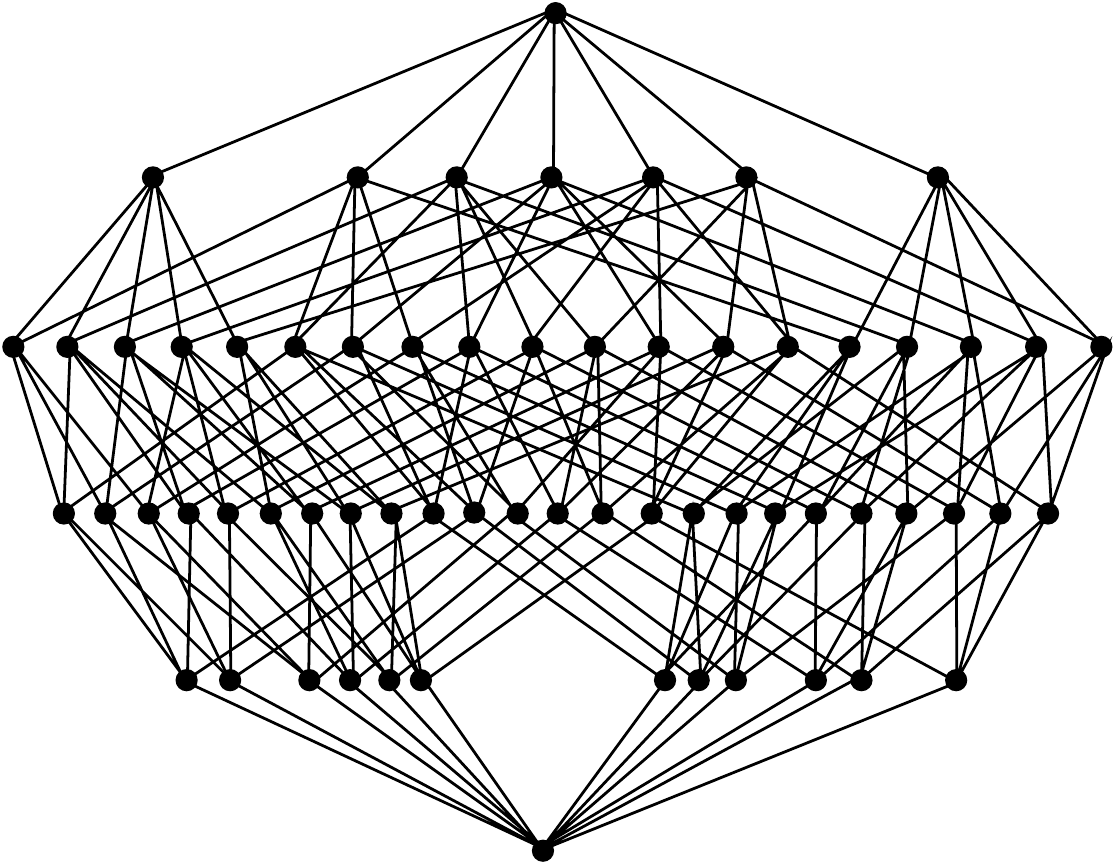}
\caption{Hasse diagram of the polytope $P_{3 \times 3}$.}
\label{figure:hassediagramP33}
\end{center}
\end{figure}
\end{example}
We say that two polytopes $P$ and $P'$ are \defn{combinatorially equivalent} if they have isomorphic face lattices, i.e., $L(P) \cong L(P')$.

\begin{remark}
Terminology for some basic polytopes is often used loosely. For example, any polytope that is combinatorially equivalent to the standard simplex $\Delta_d$ is called a simplex. (In other words, the convex hull of any $d+1$ affinely independent points in $\R^c$ is called a $d$-simplex.) Similarly, a cube is any polytope that is combinatorially equivalent to the standard $0$-$1$ cube. Any polytope that is combinatorially equivalent to a cone is called a cone, or sometimes an affine cone.
\end{remark}

Given a poset $L$, we define the \defn{opposite poset} $L^\Delta$ which has the same elements but the order relation is reversed. The Hasse diagram of the poset $L^\Delta$ is a vertical reflection of the Hasse diagram of the poset $L$.
\begin{example}
Hasse diagrams of simplices are invariant under vertical reflection, so the face lattice of any $d$-simplex is its own opposite. The face lattices of the $d$-cube and the $d$-dimensional cross polytope are opposites of each other. 
\end{example}

Many of the results that we will survey in Chapter~\ref{chapter:hirsch} are more natural in the polar setting. We now describe the polar of a polytope. This is the notion often called the \defn{dual}, but we adopt the use of the term polar to distinguish polarity from duality in the sense of linear programming. (For more on polarity, see Chapter 7 in~\cite{Wets:Optimization} or Section 2.3 in~\cite{Ziegler:Lectures}.)

We introduce the \defn{polar graph} (or \defn{dual graph}) of a polytope $P$. The \defn{polar graph} of a polytope $P$, denoted by $G^\Delta(P)$, is the following undirected, finite, simple graph:
\begin{itemize}
\item {\bf Vertices of $G^\Delta(P)$.} The graph $G^\Delta(P)$ has a vertex for every facet $f$ of the polytope $P$. Let $G^\Delta(f)$ denote the vertex of the graph $G^\Delta(P)$ corresponding to the facet $f$.
\item {\bf Edges of $G^\Delta(P)$.} Two vertices $G^\Delta(f)$ and $G^\Delta(f')$ of $G^\Delta(P)$ are connected by an edge in $G^\Delta(P)$ exactly when their corresponding facets $f$ and $f'$ intersect in a ridge of $P$. (That is to say, the face $f \cap f'$ is a ridge of $P$.) When this occurs, we say the facets $f$ and $f'$ are \defn{neighbors}.
\end{itemize}

The definition of the polar graph is motivated by the polar of a set. The \defn{polar} of a set $P \subseteq \R^c$, denoted by $P^\Delta$ is the set
\begin{equation}\label{equation:polardefiniton}
P^\Delta = \{y \in \R^c \mid \langle x, y \rangle \leq 1 \text{ for all } x \in P\}.
\end{equation}
In the definition of the polar of a set $P \subseteq \R^c$, it will be convenient to assume that $P$ is full-dimensional, and that the origin is in the interior of $P$, which can always be assumed by a suitable translation.  The polar of an arbitrary set $P \subseteq \R^c$ is a convex set. When $P$ is a polytope, then the polar $P^\Delta$ of $P$ has some nice properties:
\begin{lemma}\label{lemma:polar}
Let $P \subseteq \R^c$ be a polytope such that the origin is an interior point of $P$. Then,
\begin{enumerate}
\item\label{item:Pdualpolytope} The set $P^\Delta$ is a polytope.
\item\label{item:polarinvolution} Polarization is an involution: $P^{\Delta\Delta} = P$.
\item\label{item:duallattice} The lattices $L(P^\Delta)$ and $L^\Delta(P)$ are isomorphic.
\item\label{item:dualgraph} The graphs $G(P^\Delta)$ and $G^\Delta(P)$ are isomorphic.
\end{enumerate}
\end{lemma}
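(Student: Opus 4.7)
The plan is to prove the four parts in order, with each subsequent part leaning on the previous ones. For part \ref{item:Pdualpolytope}, I would first rewrite the definition \eqref{equation:polardefiniton} using the fact that $P = \conv(v_1,\ldots,v_n)$ for its finitely many vertices. Since the inequality $\langle x, y\rangle \leq 1$ is linear in $x$, it holds for every $x \in P$ if and only if it holds at each vertex $v_i$. Thus $P^\Delta = \{y \in \R^c : \langle v_i, y\rangle \leq 1 \text{ for } i=1,\ldots,n\}$ is the intersection of finitely many closed half-spaces, hence a polyhedron. Boundedness comes from the assumption that the origin is interior to $P$: picking $\epsilon > 0$ with $N_\epsilon(0) \subseteq P$, every $y \in P^\Delta$ must satisfy $\langle z, y\rangle \leq 1$ for all $z$ with $\|z\| \leq \epsilon$, which forces $\|y\| \leq 1/\epsilon$. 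So $P^\Delta$ is a bounded polyhedron, hence a polytope.

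For part \ref{item:polarinvolution}, the inclusion $P \subseteq P^{\Delta\Delta}$ is immediate from the symmetric definition: every $x \in P$ satisfies $\langle x, y\rangle \leq 1$ for all $y \in P^\Delta$. The reverse inclusion is where the work happens; I would prove the contrapositive by a separation argument. Given $x_0 \notin P$, since $P$ is a closed convex set, there exists a hyperplane $\langle a, \cdot \rangle = m$ strictly separating $x_0$ from $P$, with $\langle a, x\rangle \leq m$ for all $x \in P$ and $\langle a, x_0\rangle > m$. Because $0 \in \int(P)$, I can take $m > 0$ (since $\langle a, 0\rangle = 0$), then rescale to $y = a/m$ so that $\langle y, x\rangle \leq 1$ on $P$ (so $y \in P^\Delta$) while $\langle y, x_0\rangle > 1$, showing $x_0 \notin P^{\Delta\Delta}$.

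For part \ref{item:duallattice}, I would construct the order-reversing bijection explicitly. For each non-empty face $F$ of $P$, define $F^\diamond = \{y \in P^\Delta : \langle x, y\rangle = 1 \text{ for all } x \in F\}$, and send the empty face to $P^\Delta$ and the whole polytope $P$ to the empty face. Faces of $P$ are determined by supporting hyperplanes $\langle a, x\rangle = m$, and after normalizing by $m$ (assuming $m>0$, which can be arranged since $0 \in \int(P)$) one sees that $F^\diamond$ is exactly the face of $P^\Delta$ cut out by the supporting hyperplane $\langle \cdot, a/m\rangle = 1$ in $P^\Delta$. Reversal of inclusion is immediate from the definition: $F \subseteq F'$ implies $F^\diamond \supseteq F'^\diamond$. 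To check the map is a bijection, apply the same construction in reverse using part \ref{item:polarinvolution}: the inverse sends a face $G$ of $P^\Delta$ to $\{x \in P : \langle x, y\rangle = 1 \text{ for all } y \in G\}$, and $F^{\diamond\diamond} = F$ because both equal the set of points in $P$ lying in every supporting hyperplane that contains $F$. Verifying that $F^{\diamond\diamond}=F$ rigorously is the main technical step; this relies on \ref{item:polarinvolution} and the fact that any face of $P$ is the intersection of $P$ with the supporting hyperplanes containing it.

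Part \ref{item:dualgraph} then follows as a corollary of \ref{item:duallattice}. Under the isomorphism $L(P^\Delta) \cong L^\Delta(P)$, the vertices of $P^\Delta$ (faces of dimension $0$, covering $\hat 0$) correspond to the facets of $P$ (faces of dimension $d-1$, covered by $\hat 1$), and the edges of $P^\Delta$ (faces of dimension $1$) correspond to the ridges of $P$ (faces of dimension $d-2$). Since each edge of $P^\Delta$ contains exactly two vertices and each ridge of $P$ is contained in exactly two facets, the adjacency relation in $G(P^\Delta)$ matches the adjacency relation in $G^\Delta(P)$ under the vertex bijection from \ref{item:duallattice}, giving the isomorphism of graphs. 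The only obstacle I anticipate is the bookkeeping for part \ref{item:duallattice} — verifying the involution property $F^{\diamond\diamond}=F$ on the level of faces — which is where the polarity theorem really gets used.
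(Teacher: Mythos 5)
Your proposal is correct, and it is essentially the standard argument: the paper itself does not prove Lemma~\ref{lemma:polar} but defers to Section~2.3 of~\cite{Ziegler:Lectures}, and your outline (vertex inequalities plus boundedness from $0 \in \int(P)$ for part~(1), a separation argument for the bipolar identity, the conjugate-face map $F \mapsto F^\diamond$ with $F^{\diamond\diamond}=F$ for the lattice anti-isomorphism, and the graph statement as a corollary via the vertex--facet and edge--ridge correspondences) is precisely the route taken there.
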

Lemma~\ref{lemma:polar} is proved in Section 2.3 of~\cite{Ziegler:Lectures}. If $P$ is a polytope, then $P^\Delta$ is called the \defn{polar polytope}. Any polytope $Q$ that is combinatorially equivalent to $P^\Delta$ is called a \defn{combinatorial polar} of $P$.
\begin{example}
Any $d$-dimensional cube and any $d$-dimensional cross polytope are combinatorial polars of each other. Every polygon is polar to itself. Simplices in any dimension are also self-polar.
\end{example}

Part~(\ref{item:duallattice}) of Lemma~\ref{lemma:polar} says that the face lattices of a polytope $P$ and its polar $P^\Delta$ are opposites. The facets (respectively vertices) of $P^\Delta$ correspond to the vertices (respectively facets) of $P$. More generally, every $(d-i)$-face of $P^\Delta$ corresponds to a face of $P$ of dimension $i-1$, and the incidence relations are reversed. Part~(\ref{item:dualgraph}) of Lemma~\ref{lemma:polar} says that the graph $G(P^\Delta)$ of the polar $P^\Delta$ of a polytope $P$ is isomorphic to the polar graph $G^\Delta(P)$ of $P$. In particular, this means:
\begin{remark}\label{remark:polaritysimplesimplicial}
Via polarity, studying the graphs of polytopes is equivalent to studying the polar graphs of polytopes.
\end{remark}

Of special importance are the simple and simplicial polytopes. A $d$-polytope or $d$-polyhedron is called \defn{simple} if every vertex is the intersection of exactly $d$ facets. Equivalently, a $d$-polyhedron is simple if every vertex in the graph $G(P)$ has degree exactly $d$. (Thus, the graph $G(P)$ of a simple $d$-polyhedron is $d$-regular.)
\begin{example}
Simplices and cubes are simple. Cross polytopes are not simple starting in dimension three. The four-dimensional polytope $P_{3 \times 3}$ defined in Example~\ref{example:basic} is simple since every vertex in its graph (see Figure~\ref{figure:graphP33} on page~\pageref{figure:graphP33}) has exactly four neighbors. 
\end{example}

A $d$-dimensional polytope or polyhedron $P$ is \defn{simplicial} if every facet  of $P$ is a $(d-1)$-simplex. Cross polytopes are simplicial polytopes. It follows from Part~(\ref{item:dualgraph}) of Lemma~\ref{lemma:polar} that the polar of a simple polytope is a simplicial polytope. In fact, the notions of simple and simplicial are polar to each other in the following way: the polytope $P$ is simplicial if and only if the polytope $P^\Delta$ is simple. For example, the $d$-dimensional cross polytope is the polar of the $d$-cube. Since cubes are simple polytopes, cross polytopes are simplicial. The polar of a simplex is a simplex. Among polytopes of dimension three and higher, the simplices are the only polytopes which are at the same time simple and simplicial (see Exercise 0.1 in~\cite{Ziegler:Lectures}).

Since the facets of simplicial polytopes are simplices, this in turn implies that all proper faces of a simplicial polytope are simplices. This is nice because then we can forget the geometry of a simplicial $d$-polytope and look only at the combinatorics of the simplicial complex formed by its faces. This simplicial complex is a topological $(d-1)$-sphere. (For more about simplicial complexes see, e.g.,~\cite{Hatcher:AlgebraicTopology} or~\cite{Miller:CombinatorialCommutativeAlgebra}.)

In Chapter~\ref{chapter:hirsch}, we will want to consider only the graphs of simple polytopes. By Remark~\ref{remark:polaritysimplesimplicial}, for simplicial polytopes, we are interested in the polar graph $G^\Delta(P)$ of a polytope $P$, defined in Section~\ref{section:graphsIntro}.

\section{Polytopes and optimization}\label{section:polytopesandLP}

Linear programming problems are the first class of problems discussed in mathematical optimization. In this section, we describe the connections of linear and integer programming to polytopes and polyhedra. We also give an overview of how to solve a linear program.

In linear programming, one is given a system of linear equalities and inequalities, and the goal is to maximize or minimize a given linear functional. We first describe linear programs in \defn{standard form}. Fix a real-valued $r \times c$ matrix $A$, a vector $b \in \R^r$, and a linear functional $\xi : \R^c \rightarrow \R$. In its standard form, a linear program is given by $A$, $b$, $\xi$ is the following problem:
\begin{equation}
\text{Maximize } \xi(x), \text{ subject to } Ax = b \text{ and } x \geq 0.
\end{equation}
The linear functional $\xi$ is called the \defn{objective function} or the \defn{cost function}. (In linear programming, it is no different to minimize or maximize: indeed, minimizing $\xi$ is the same as maximizing $-\xi$.) The equations $Ax=b$ and inequalities $x \geq 0$ are the \defn{constraints}. The coordinates of $x=(x_1,\ldots,x_c)$ are called the \defn{decision variables}. Suppose the $r \times c$ matrix $A$ has rank $r$, with $r \leq c$, and let $d = c-r$. Then, the equality $Ax = b$ defines a $d$-dimensional affine subspace whose intersection with the linear inequalities $x\geq0$ gives the \defn{feasibility polyhedron}
\[
P = \{x \in \R^c \mid Ax = b \text{ and } x \geq 0\}.
\]
Note that the resulting polyhedron is a partition polyhedron. If the feasibility polyhedron $P$ is bounded, then it is called the \defn{feasibility polytope}. If the polyhedron $P$ is non-empty, then the linear program is called \defn{feasible}. A vector $x$ belonging to the feasibility polyhedron $P$ is called a \defn{feasible solution}. A feasible solution $x$ that maximizes the linear functional $\xi$ is called an \defn{optimal solution}. Optimal solutions are typically denoted by $x^*$. Typically, it is not enough to simply say that a linear program is feasible, and simply conclude that there \emph{is} an optimal solution \emph{somewhere}. Instead, we must actually find it! We typically want to know the actual \emph{coordinates} of an optimal solution $x^*$, and not just the maximal value $\xi(x^*) \in \R$ alone.
\begin{remark}\label{remark:unboundedLP}
If the feasibility polyhedron $P$ is feasible and unbounded, then, depending on the objective function $\xi$, we may run into the ``danger'' that there are no optimal solutions! (What could go ``wrong''? The value of $\xi(x)$ may be arbitrarily large for feasible vectors $x$ in $P$.) See Figure~\ref{figure:unboundedLP} for an example. In this case, one desires the coordinates of a feasible solution $x$ and a direction vector $y \in \R^c$ such that
\begin{enumerate}
\item vectors of the form $x + \lambda y$ belong to the polyhedron $P$ for all real $\lambda \geq 0$, and
\item the value of $\xi(x + \lambda y)$ goes to infinity as $\lambda \rightarrow \infty$.
\end{enumerate}
\begin{figure}[hbt]
\begin{center}
\includegraphics[scale=1]{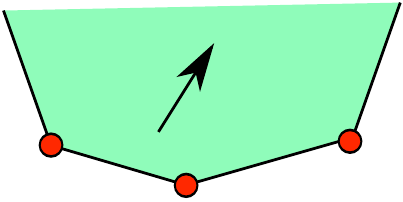}
\caption{An unbounded linear program: the arrow shows the direction in which $\xi$ increases.}
\label{figure:unboundedLP}
\end{center}
\end{figure}
When this occurs, we say the linear program is \defn{unbounded} with respect to $\xi$. When there is an optimal solution, we say the linear program is \defn{bounded} with respect to $\xi$, even if the polyhedron $P$ is unbounded. See Figure~\ref{figure:boundedLPunboundedP}.
\begin{figure}[hbt]
\begin{center}
\includegraphics[scale=1]{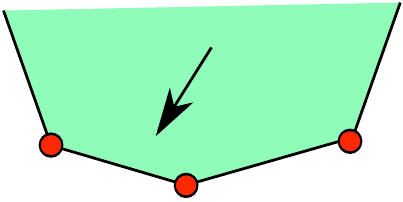}
\caption{A bounded linear program with respect to $\xi$. The feasibility polyhedron $P$ is unbounded.}
\label{figure:boundedLPunboundedP}
\end{center}
\end{figure}
\end{remark}

The feasibility polyhedron $P$ is convex and the level sets of the objective function $\xi$ are hyperplanes. It follows that an optimal solution of a linear program, if one exists, is found among the vertices of $P$. In fact, since the level sets of the linear functional $\xi$ are hyperplanes, the set of optimal solutions is a face of $P$, which is clear by the definition of a face. In particular, an optimal solution is found among the vertices of the feasibility polyhedron $P$ of a bounded linear program. If the objective function $\xi$ is sufficiently generic, and if the linear program is bounded with respect to $\xi$, then the linear program has a unique optimal solution $x^*$, and the solution $x^*$ is a vertex of $P$.

Since optimal solutions of a linear program are found among the vertices of its feasibility polyhedron, and since the number of vertices in a polyhedron is finite, this leads to a natural first algorithm to solve a linear program. First, compute all of the vertices of the feasibility polyhedron. Then output a vertex whose $\xi$-value is largest. Unfortunately, this algorithm is not very practical. As the dimension of the feasibility polyhedron grows, there are simply way too many vertices to compute. In fact, there is even a more fundamental flaw. How do you even find \emph{one} vertex of the feasibility polyhedron? Even in two dimensions, it is not obvious how to take a given set of linear inequalities and even find a solution. Consider the following example, which we will use as a running example.
\begin{example}[A sample application of linear programming]\label{example:sampleapplicationofLP}
Maxwell is opening a new restaurant. He needs to decide how much should be spent at the grocery store each month and how many hours per month to schedule employees to maximize the restaurant's profit. Let $x_1$ denote the amount to spend at the grocery store each month and let $x_2$ denote the number of labor hours per month. Suppose that the restaurant's profit is determined by the function
\begin{equation*}
\xi(x_1,x_2) = 19 x_1 - 7x_2.
\end{equation*}
Maxwell wants to know which pair $x=(x_1,x_2)$ maximizes the value $\xi(x)$ of the objective linear functional $\xi$, but there are some restrictions. Clearly, $x_1 \geq 0$ and $x_2 \geq 0$. There are more constraints Maxwell must obey. Suppose, for example, that labor laws, union rules, and other factors further restrict the choice of $x=(x_1,x_2)$ to:
\begin{align*}
5x_1 - 6x_2 &\leq 70\\
3x_1 - 2x_2 &\geq -30\\
16x_1 - 7x_2 &\leq 424\\
19x_1 + 4x_2 &\geq 180\\
11x_1 + 2x_2 &\leq 340\\
9x_1 - 16x_2 &\geq -325\\
6x_1 + 17x_2 &\leq 539\\
2x_1 - 21x_2 &\leq -176\\
11x_1 + 17x_2 &\leq 391\\
5x_1 + 4x_2 &\leq 210.
\end{align*}
In dimension two, it is easy enough to carefully graph the half-spaces, then compute the value of $\xi$ on each of the vertices. But in general, one cannot even ``graph'' the feasibility polyhedron $P$. How do you even find one feasible point $x \in P$?  The first natural idea is to travel along the $x_i$-axis (for some $i$) until you hit the boundary of $P$.

Even in this small example, that idea would fail. If it were not for the two inequalities $x_1, x_2 \geq 0$, how would we even know which direction to travel on the axes? (In fact, for this example, these two inequalities are redundant to the description of $P$.) A search along the axes would definitely fail in our example, since $P$ lies \emph{completely} in the relative interior of an orthant: the feasibility polyhedron does not even intersect the set $\{x=(x_1,x_2) \in \R^2 \mid x_1x_2 = 0\}$. Even assuming that $P$ is in the relative interior of an orthant, in a $c$-dimensional ambient space, there are $2^c$ orthants! In addition, the polyhedron $P$, if it is non-empty, may be ``far away from the origin,'' and if you try to do a search along a path that is piece-wise linear, how can you know how far to travel along a direction before turning in a new direction? (In fact, how do you know whether the current direction of travel in your path moves you towards the feasibility polyhedron $P$, or away from it? Even worse, what if you could \emph{never} find $P$ because the feasibility polyhedron in empty? How would you even be able to detect this case?)
\end{example}

To solve a linear program, we first describe how to convert \emph{any} linear program into one whose constraints are of the form $Ax = b, x \geq 0$. (See~\cite{Chvatal:Linear-Programing} or~\cite{Matousek:LinearProgramming}.)
\begin{enumerate}

\item {\bf Non-negativity.} If any decision variable $x_i$ does not have the constraint $x_i \geq 0$, then we do a variable substitution. We will replace $x_i$ by two new non-negative decision variables $x_i' \geq 0$ and $x_i'' \geq 0$. Replace every occurrence of $x_i$ by $x_i' - x_i''$. (The new decision variables $x_i'$ and $x_i''$ are called \defn{auxiliary variables}. The modified linear program no longer mentions the old decision variable $x_i$.)

\item {\bf Linear equations.} Any linear equality constraints will simply be part of the matrix equation $Ax = b$, so these should not be modified (except for any variable substitutions from the previous step).

\item {\bf Linear inequalities.} Turn each linear inequality into a linear equation by adding an auxiliary variable called a \defn{slack variable}. The linear inequality $a_1x_1 + \cdots + a_cx_c \leq b_0$ becomes $a_1x_1 + \cdots + a_cx_c + z = b_0$, with $z \geq 0$. The linear inequality $a_1x_1 + \cdots + a_cx_c \geq b_0$ becomes $a_1x_1 + \cdots + a_cx_c - z = b_0$, with $z \geq 0$.
\end{enumerate}
Putting this all together proves the following fact, which says that \emph{any} polyhedron can be written as a partition polyhedron.
\begin{lemma}\label{lemma:standardform}
Let $P \subset \R^c$ be any polyhedron. Then there is a polyhedron 
\begin{equation*}
\widetilde{P} = \{x \in \R^{\widetilde{c}} \mid Ax = b, x \geq 0 \}
\end{equation*}
 and a map $\pi : \R^{\widetilde{c}} \rightarrow \R^c$ of the form
\begin{equation*}
x=(x_1,\ldots,x_{\widetilde{p}},x'_1,\ldots,x'_q,x''_1,\ldots,x''_q) \,{\buildrel\pi\over\mapsto}\, (x_1,\ldots,x_p,x'_1-x''_1,\ldots,x'_q-x''_q)
\end{equation*}
with $\widetilde{p}\geq p$, such that the restriction $\pi|_{\widetilde{P}}$ is a bijection from $\widetilde{P}$ to $P$. The polyhedra $P$ and $\widetilde{P}$ are combinatorially equivalent.
\end{lemma}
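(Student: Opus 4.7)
The plan is to take the three-step conversion recipe that was given informally just before the statement and turn it into an explicit construction, then verify the two claims of the lemma: that $\pi|_{\widetilde{P}}$ is a bijection onto $P$, and that $P$ and $\widetilde{P}$ are combinatorially equivalent.

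First, start with an arbitrary polyhedron $P \subseteq \R^c$ described by finitely many linear constraints. Reorder coordinates so that the first $p$ already carry explicit non-negativity constraints $x_i \geq 0$ (if there are none, set $p=0$), and the remaining $q = c - p$ are free. Collect the remaining constraints into some linear equations $E_1(x)=\beta_1,\ldots,E_s(x)=\beta_s$ and some linear inequalities, each of which we normalize to the form $L_k(x) \leq \beta_k$ by flipping signs of $\geq$ constraints. Let $t$ be the number of such inequalities. Now build $\widetilde{P}$ as follows: for each free variable $x_{p+j}$ introduce a pair $x_j', x_j'' \geq 0$ and substitute $x_{p+j} = x_j' - x_j''$ everywhere; for each inequality $L_k(x) \leq \beta_k$ introduce a slack variable $z_k \geq 0$ and replace the inequality with the equation $L_k(x) + z_k = \beta_k$. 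After collecting the surviving non-negative coordinates $(x_1,\ldots,x_p,z_1,\ldots,z_t)$ as the first $\widetilde p = p+t$ coordinates and appending $(x_1',\ldots,x_q',x_1'',\ldots,x_q'')$, one obtains $\widetilde{P} = \{y \in \R^{\widetilde c} \mid Ay=b,\ y \geq 0\}$ with $\widetilde c = \widetilde p + 2q$, and the projection $\pi$ is exactly the one in the statement.

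Second, I would check the bijection. Define the candidate inverse $\sigma\colon P \to \widetilde P$ by setting $x_i$ unchanged for $i\leq p$, $z_k := \beta_k - L_k(x)$ (non-negative precisely because $x \in P$), and for each free coordinate $x_{p+j}$ choosing $x_j' := \max(x_{p+j},0)$ and $x_j'' := \max(-x_{p+j},0)$. Then $\pi \circ \sigma = \mathrm{id}_P$ is immediate, and $\sigma(\pi(y)) = y$ whenever $y\in\widetilde P$ follows from the fact that in the splitting $x_{p+j} = x_j'-x_j''$ with both summands non-negative, one may not have both positive simultaneously in $\widetilde P$ — wait, this last point is the only delicate one. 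In fact one can have both positive, so $\sigma\circ\pi$ is not the identity on $\widetilde P$. The correct statement is simply that $\pi(\widetilde P) = P$ and that $\pi$ restricted to $\widetilde P$ is a bijection, which requires showing that for each $x\in P$ there is exactly one preimage in $\widetilde P$. This is where the argument has to be done carefully: the slack variables $z_k$ are uniquely determined by $x$, and one shows that inside $\widetilde P$ the equations coming from the original system, together with $x_j',x_j''\geq 0$, force $x_j'\cdot x_j''=0$ on every vertex-exposed face structure used below; more robustly, one can just observe that at most one of $x_j',x_j''$ can be positive in any basic feasible solution, and extend linearly on faces. Alternatively — and this is cleaner — one restricts attention to the affine subspace $\{x_j'' = \max(-x_{p+j},0)\}$ inside $\widetilde P$; the bijection then follows because the restriction of $\pi$ to this subspace is an affine isomorphism onto $P$.

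Third, for combinatorial equivalence, I would exhibit an explicit isomorphism between the face lattices $L(P)$ and $L(\widetilde P)$. A face of $\widetilde P$ is cut out by taking some subset $S$ of the non-negativity constraints $y_i \geq 0$ to equality. Under $\pi$, setting an original $x_i$ ($i\leq p$) to zero corresponds to the same tight constraint in $P$; setting a slack $z_k$ to zero corresponds to making the $k$-th original inequality $L_k(x)\leq\beta_k$ tight in $P$; and the pair constraints $x_j',x_j''\geq 0$ impose no inequality on the free coordinate $x_{p+j}$. Thus to each face $F\subseteq \widetilde P$ one associates the face $\pi(F)\subseteq P$ determined by the corresponding tight inequalities; conversely, to each face $G\subseteq P$, the preimage $\sigma(G)$ where $\sigma$ is the chosen section above (applied face-wise) is a face of $\widetilde P$. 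The hardest step is verifying that these assignments are mutually inverse order-isomorphisms; this reduces to checking that $\pi$ and the section are affine, preserve dimension of faces (the free variables contribute $q$ dimensions of slack split by $x_j'-x_j''$ that are absorbed by the equations), and respect inclusion. The main obstacle, in short, is handling the splitting of free variables cleanly without introducing spurious faces; the tidiest resolution is to note that $\widetilde P$ lies inside the affine subspace where $\sigma$ above is well-defined, at which point $\pi$ becomes an affine bijection onto $P$ and combinatorial equivalence is automatic.
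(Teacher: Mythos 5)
Your construction is the same slack-variable-and-splitting recipe that the paper itself sketches just before the lemma, but the central claim, that $\pi|_{\widetilde{P}}$ is a bijection, genuinely fails for that construction, and neither of your patches closes the gap. If even one coordinate is split as $x_{p+j}=x'_j-x''_j$, then for any $y\in\widetilde{P}$ and any $t>0$ the point obtained from $y$ by adding $t$ to both $x'_j$ and $x''_j$ still satisfies all the equations (the pair enters them only through the difference) and all non-negativities, so it lies in $\widetilde{P}$ and has the same image under $\pi$; hence $\pi|_{\widetilde{P}}$ is never injective once $q\geq 1$. Your first repair, that at most one of $x'_j,x''_j$ is positive at a basic feasible solution, constrains only the vertices of $\widetilde{P}$, and no ``extension on faces'' can make a map injective that is constant along the recession direction $e_{x'_j}+e_{x''_j}$ of $\widetilde{P}$. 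Your second repair, restricting to $\{x''_j=\max(-x_{p+j},0)\}$, does not restrict to an affine subspace (the max is only piecewise linear), and in any case the lemma requires $\pi$ to be bijective on the standard-form polyhedron $\widetilde{P}$ itself, not on a preferred subset of it. The same splitting also ruins combinatorial equivalence as you set it up: $\widetilde{P}$ acquires the recession direction $e_{x'_j}+e_{x''_j}$, so it is unbounded even when $P$ is a polygon described by inequalities in free variables.

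What a correct proof needs is a construction in which the fiber of $\pi$ over each $x\in P$ is pinned down by equations, and this also exposes a hypothesis hidden in the statement: every polyhedron of the form $\{x\geq 0,\ Ax=b\}$ contains no line, and a linear map that is injective on $\widetilde{P}$ is injective on its affine hull, so the conclusion can only hold when $P$ contains no line (harmless for the paper, which applies the lemma to polytopes). For such $P$ the combined constraint matrix has trivial kernel, so every coordinate of $x$ is an affine function of the slack vector $s$ of the inequality constraints; writing each free coordinate as a difference of two affine functions of $s$ that are non-negative on $s\geq 0$ (split the coefficient vector into positive and negative parts and add a large enough constant to both halves) and adjoining the equations defining $x'_j$ and $x''_j$ as those functions of $s$, one gets a $\widetilde{P}$ of the required form on which $s$, $x'$ and $x''$ are determined affinely by $\pi(y)$. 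Then $x\mapsto(x_1,\ldots,x_p,s,x',x'')$ is an affine inverse of $\pi|_{\widetilde{P}}$, which yields the bijection and, since $\widetilde{P}$ is then an injective affine image of $P$, the combinatorial equivalence. Your instinct that the splitting step is the delicate one is exactly right, and the paper's own text only gestures at it, but as written your argument leaves that central claim unproved.
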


\begin{example}
Let us convert the linear program of Maxwell's restaurant from Example~\ref{example:sampleapplicationofLP} to standard form. Both decision variables are already non-negative, so there is nothing to do in the first step above. (In the notation of Lemma~\ref{lemma:standardform}, $q=0$.)

We convert the ten non-trivial inequality constraints. The result is the new system $Ax = b$ and $x \geq 0$, where
\begin{equation*}
A =
\left[
\begin{array}{cccccccccccc}
5& - 6 &1&0&0&0&0&0&0&0&0&0\\
3& -2 &0&-1&0&0&0&0&0&0&0&0\\
16& -7 &0&0&1&0&0&0&0&0&0&0\\
19& 4 &0&0&0&-1&0&0&0&0&0&0\\
11& 2 &0&0&0&0&1&0&0&0&0&0\\
9& -16 &0&0&0&0&0&-1&0&0&0&0\\
6& 17 &0&0&0&0&0&0&1&0&0&0\\
2& -21 &0&0&0&0&0&0&0&1&0&0\\
11& 17 &0&0&0&0&0&0&0&0&1&0\\
5& 4 &0&0&0&0&0&0&0&0&0&1
\end{array}
\right]
\quad\text{ and }\quad
b = 
\left[
\begin{array}{c}
70\\
-30\\
424\\
180\\
340\\
-325\\
539\\
-176\\
391\\
210
\end{array}
\right]
.
\end{equation*}
The linear program has been converted. Now, we want to maximize the objective function $\xi(x) = \xi(x_1,\ldots,x_{12}) = 19x_1 - 7x_2$ over the feasibility polyhedron 
\begin{equation*}
\widetilde{P} = \{x \in \R^{12} \mid Ax = b, x \geq 0\}.
\end{equation*}
The feasibility polyhedron $\widetilde{P}$ is a partition polyhedron.
\end{example}

Let $A$ be a real-valued matrix of size $r \times c$ and let $b$ be a vector in $\R^r$. We now describe how to find a feasible solution $x$ to a linear program in the standard form 
\begin{equation}\label{equation:originalsystem}
Ax = b, x \geq 0,
\end{equation}
if one exists. (See~\cite{Matousek:LinearProgramming} for more details.) This method will show that feasibility of one linear program is reduced to optimality of another linear program. (See~\cite{Boyd:ConvexOptimization} and~\cite{Renegar:A-Mathematical-View} for a detailed explanation.)

To begin, we assume, without loss of generality, that the vector $b$ is in $\R_{\geq 0}^r$. Indeed, if any coordinate of the vector $b \in \R^r$ is negative, we multiply it (and the corresponding row of the matrix $A$) by $-1$. From this set of $r$ linear equality constraints, we construct a new linear program. Add $r$ non-negative auxiliary decision variables, say, $z_1,\ldots,z_r \geq 0$. For each $i=1,\ldots,r$, modify the $i$th constraint from 
\[
a_{i,1}x_{i,1} + \cdots + a_{i,c}x_{i,c} = b_i \geq 0
\]
to 
\[
a_{i,1}x_{i,1} + \cdots + a_{i,c}x_{i,c} + z_i = b_i.
\]
In terms of matrices, the new linear system has the constraint matrix $A' = [A \mid I]$, where $I$ is the $r \times r$ identity matrix. There is a very easy initial feasible point for this modified system, namely $(x_1,\ldots,x_c,z_1,\ldots,z_r) = (0,\ldots,0,b_1,\ldots,b_r) \in \R_{\geq 0}^{c+r}$. For this system, we use any algorithm for linear programming to minimize the objective function $\xi(x,z) = z_1 + \cdots + z_r$ subject to the above constraints. If the solution gives a point $(x,z) \in \R_{\geq 0}^{c+r}$ with $\xi(x,z) = 0$, then the coordinate-erasing projection of $(x,z)$ to $x \in \R^c$ is an initial feasible solution of the original linear program with the constraints \eqref{equation:originalsystem}. If the optimal solution $(x,z)$ has strictly positive $\xi$ value $\xi(x,z) > 0$, then the original problem \eqref{equation:originalsystem} has no feasible solution.

In linear programming, one assumes that the decision variables are always real-valued quantities. This restriction is too strong in general. After all, one cannot hire half of an employee! In many settings, it is more natural to consider the situation where the decision variables must be integers. An \defn{integer program} is a linear program with the additional constraint that the vector $x$ of the decision variables has all integer coordinates. An integer program is considered solved if, given a linear functional $\xi : \R^c \rightarrow \R$ and a polyhedron $P$, one knows which \defn{integral lattice point} $x$ in $P \cap \Z^c$ has the largest value of $\xi$. We do not discuss integer programming in any detail in this dissertation. (For more on integer programming see, e.g.,~\cite{Schrijver:LinearProgramming}.)

\section{Transportation polytopes}\label{section:TPintro}

Transportation polytopes are well-known objects in operations research, mathematical programming, and statistics. In statistics, transportation polytopes are known as \defn{contingency tables}. Surveys on the research in transportation polytopes and the transportation problem are found in the book by Yemelichev, Kovalev, and Kratsov (see~\cite{Yemelichev:Polytopes}), in Vlach's survey (see~\cite{Vlach:SolutionsPlanarTransportation}), in Klee and Witzgall's article (see~\cite{Klee:FacesTransportation}), and in the recent survey of De Loera and Onn (see~\cite{De-Loera:TransportationPolytopesSurvey}).

\subsection{Classical transportation polytopes}

We begin by introducing the most well-known subfamily of transportation polytopes. Fix two integers $p, q \in \Z_{> 0}$. The \defn{classical transportation polytope} $P$ of size $p \times q$ defined by the vectors $u \in \R^p$ and $v \in \R^q$ is the polytope defined in the $pq$ variables $x_{i,j} \in \R_{\geq 0}$ ($i \in [p], j \in [q]$) satisfying the $p+q$ equations
\begin{equation}\label{equation:classicalsums}
\sum_{j=1}^q x_{i,j} = u_i\ (i \in [p])
\quad\text{and}\quad
\sum_{i=1}^p x_{i,j} = v_j\ (j \in [q])
.
\end{equation}
Since $P$ is defined by the $p+q$ linear equations in \eqref{equation:classicalsums} and the $pq$ linear inequalities $x_{i,j} \geq 0$, it is a polyhedron. Since the coordinates $x_{i,j}$ of $P$ are non-negative, the summation conditions \eqref{equation:classicalsums} imply that $P$ is bounded, so classical transportation polytopes are polytopes. (In fact, $0 \leq x_{i,j} \leq \min\{u_i,v_j\}$ for all $i \in [p],j \in [q]$.) After re-indexing the variables $x_{i,j}$ ($i \in [p], j \in [q]$) as $x_1,x_2,\ldots,x_{pq}$, the equations \eqref{equation:classicalsums} and the inequalities $x_{i,j} \geq 0$ can be rewritten in the form
\begin{equation*}
P = \{x \in \R^{pq} \mid Ax = b, x \geq 0\}
\end{equation*}
with an appropriate $0$-$1$ matrix $A$ of size $(p+q) \times pq$ and a vector $b \in \R^{p+q}$. Thus, every classical transportation polytope is presented in the form \eqref{equation:partitionpolyhedron} and is, thus, a partition polyhedron. The matrix $A$ does not have full row rank. Indeed, the sum of the rows corresponding to the $u$-sum equations is the same as the sum of the rows for the $v$-sum equations. Since this is the only linear dependence among the rows of the matrix $A$, the rank of $A$ is $p+q-1$. Therefore the dimension $d$ of the affine hull of $P$ is $pq-(p+q-1) = (p-1)(q-1)$. Therefore,
\begin{corollary}\label{corollary:dimension}
Every non-empty $p \times q$ classical transportation polytope has dimension $d=pq-p-q+1$ and ambient dimension $c=pq$.
\end{corollary}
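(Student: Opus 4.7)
The ambient dimension is immediate: $P$ sits in the $pq$-dimensional coordinate space of the variables $x_{i,j}$. So the whole content is the dimension claim, and the preceding paragraph already contains the entire idea. I would just make that idea precise.

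My plan is to show that the constraint matrix $A$ of size $(p+q) \times pq$ arising from the system~\eqref{equation:classicalsums} has rank exactly $p+q-1$. Once this is established, since $P$ is non-empty and presented as a partition polyhedron $P = \{x \in \R^{pq} \mid Ax=b,\, x\geq 0\}$, its affine hull is contained in the solution set of $Ax=b$, an affine subspace of dimension $pq - \mathrm{rank}(A) = pq-p-q+1$; and the reverse containment (that $P$ is full-dimensional inside this subspace) can be obtained by exhibiting a relative interior point, for instance the strictly positive rank-one matrix $x_{i,j} = u_i v_j / N$ where $N = \sum_i u_i = \sum_j v_j$, which clearly satisfies the row- and column-sum equations and has all entries positive whenever $P \neq \emptyset$ forces $N>0$ and each $u_i, v_j \geq 0$ (with the degenerate cases $u_i=0$ or $v_j=0$ handled separately by reducing to a smaller block).

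The rank computation has two halves. First, the linear dependence: the vector obtained by summing the $p$ row-sum equations equals $\sum_{i,j} x_{i,j}$, which is also the sum of the $q$ column-sum equations; so subtracting gives a non-trivial linear combination of the rows of $A$ equal to zero, proving $\mathrm{rank}(A) \leq p+q-1$. Second, I need to show this is the \emph{only} relation, i.e.\ that any $p+q-1$ of the rows are linearly independent. The cleanest way is to delete, say, the last column-sum row and show the remaining rows are independent: if $\sum_{i=1}^p \alpha_i R_i^{\text{row}} + \sum_{j=1}^{q-1} \beta_j R_j^{\text{col}} = 0$, then reading off the coefficient of the variable $x_{i,q}$ (which appears only in $R_i^{\text{row}}$) gives $\alpha_i = 0$ for each $i$, and then reading off the coefficient of $x_{i,j}$ for $j<q$ gives $\beta_j = 0$.

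There is no real obstacle: this is a direct linear-algebra verification, and the argument just formalises the sentence ``since this is the only linear dependence among the rows of the matrix $A$, the rank of $A$ is $p+q-1$'' that precedes the corollary. The one mildly careful point is ensuring $P$ actually attains the full dimension of the affine solution set, rather than sitting inside it in lower dimension; the product construction $x_{i,j} = u_i v_j / N$ gives an explicit strictly positive feasible point, so an open ball around it within the affine hull is contained in $P$, confirming $\dim P = pq-p-q+1$ as claimed.
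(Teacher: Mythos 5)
Your proposal is correct and follows essentially the same route as the paper: the paper likewise presents $P$ as a partition polyhedron, observes that the sum of the $u$-row equations minus the sum of the $v$-row equations is the unique dependence so that $\operatorname{rank}(A)=p+q-1$, and concludes $\dim(P)=pq-(p+q-1)$. You simply make explicit two steps the paper asserts without proof, namely the independence of any $p+q-1$ rows and the full-dimensionality of $P$ inside $\{Ax=b\}$ via the strictly positive point $x_{i,j}=u_iv_j/N$ (the latter, as you note, requiring the marginals to be strictly positive, an assumption the paper also makes only implicitly).
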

\begin{example}\label{example:classicalreindex}
Let us reconsider the polytope $P_{3 \times 3}$ from Example~\ref{example:basic}. Every point $x \in P_{3 \times 3}$ satisfies the equation $x_7+x_8+x_9=1$, so let us add in this redundant equation. We give an equivalent definition to our earlier $P_{3 \times 3}$, defining it now as $P_{3 \times 3} = \{ x \in \R^9 \mid A_{3 \times 3}x = b, x \geq 0 \}$, where
\begin{equation}
A_{3 \times 3} = \left[
\begin{array}{ccccccccc}
1&0&0&1&0&0&1&0&0 \\
0&1&0&0&1&0&0&1&0 \\
0&0&1&0&0&1&0&0&1 \\
1&1&1&0&0&0&0&0&0 \\
0&0&0&1&1&1&0&0&0 \\
0&0&0&0&0&0&1&1&1
\end{array}
\right]
\quad
\text{\rm and }
\quad
b = \left[
\begin{array}{c}
2 \\ 7 \\ 2 \\ 5 \\ 5 \\ 1
\end{array}
\right ]
.
\end{equation}
Up to permutation of rows and columns, the matrix $A_{3 \times 3}$ is the unique constraint matrix for $3 \times 3$ classical transportation polytopes. It is a $6 \times 9$ matrix of rank five. Thus, $P_{3 \times 3}$ is a four-dimensional polytope described in a nine-dimensional ambient space. By identifying the variables~$x_1, \ldots, x_9$ (respectively) with the variables $x_{1,1}, x_{1,2}, x_{1,3}, x_{2,1}, \ldots, x_{3,3}$ (respectively), the polyhedron $P_{3 \times 3}$ is a $3 \times 3$ classical transportation polytope defined by the vectors $v = (2, 7, 2)^T$ and $u = (5,5,1)^T$.

\begin{figure}[htb]
\begin{equation*}
\begin{tabular}{|c|c|c|}
\hline
$x_{1,1}$ & $x_{1,2}$ & $x_{1,3}$ \\ \hline
$x_{2,1}$ & $x_{2,2}$ & $x_{2,3}$ \\ \hline
$x_{3,1}$ & $x_{3,2}$ & $x_{3,3}$ \\ \hline
\end{tabular}
=
\begin{tabular}{|c|c|c|}
\hline
\ $2$\ &\ $2$\ &\ $1$\ \\ \hline
\ $0$\ &\ $5$\ &\ $0$\ \\ \hline
\ $0$\ &\ $0$\ &\ $1$\ \\ \hline
\end{tabular}
\end{equation*}
\caption{The vertex $z_1$ after reindexing.}\label{figure:classicalreindex}
\end{figure}

The notation $x_{i,j}$ is suggestive. We think of a point $x = (x_{i,j})_{i \in [p], j \in [q]} \in P \subseteq \R^{p \times q}$ as a $p \times q$ table. For example, the vertex $z_1 =(2,2,1,0,5,0,0,0,1)$ defined in Example~\ref{example:basicvertices}, under the identification, is shown in Figure~\ref{figure:classicalreindex}. In terms of tables, the equations in~\eqref{equation:classicalsums} are conditions on the row sums and column sums of tables that correspond to feasible points in $P$.
\end{example}
The matrix $A$ is called the \defn{defining matrix} (or the \defn{constraint matrix}) of $p \times q$ classical transportation polytopes. The vectors $u$ and $v$ are called \defn{marginals}. For $P$ to be non-empty, the vectors $u$ and $v$ should be non-negative. (The case when a coordinate $u_i$ or $v_j$ is zero is uninteresting, so we usually assume that $u \in \R_{>0}^p$ and $v \in \R_{>0}^q$.) These polytopes are called transportation polytopes because of the following scenario: consider a model of transporting goods with $p$ supply locations (with the $i$th location supplying a quantity of $u_i$), and $q$ demand locations (with the $j$th location demanding a quantity of $v_j$). The feasible points $x = (x_{i,j})_{i \in [p], j \in [q]}$ in a $p \times q$ transportation polytope $P$ model the scenario where a quantity of $x_{i,j}$ of goods is transported from the $i$th supply location to the $j$th demand location.

\begin{definition}\label{definition:supp}
If $P$ is a non-empty $p \times q$ classical transportation polytope and $x = (x_{i,j})_{i \in [p], j \in [q]}$ is in $P$, then $x_{i,j} \geq 0$ for all $i \in [p]$ and $j \in [q]$. The pairs $(i,j) \in [p] \times [q]$ where $x_{i,j}$ is \emph{strictly} positive are called \defn{support entries}. For a point $x \in P$, we define the \defn{support set} $\supp(x)$ to be $\{(i,j) \in [p]\times [q] \mid x_{i,j} > 0\}$.
\end{definition}

A necessary and sufficient condition for a classical transportation polytope to be non-empty is the sum of the supply margins equal the sum of the demand margins:
\begin{lemma}\label{lemma:littleeddielemma}
Let $P$ be the $p \times q$ classical transportation polytope defined by the marginals $u \in \R_{\geq 0}^p$ and $v \in \R_{\geq 0}^q$. The polytope $P$ is non-empty if and only if 
\begin{equation}\label{equation:classicalnonempty}
\sum_{i \in [p]} u_i = \sum_{j \in [q]} v_j.
\end{equation}
\end{lemma}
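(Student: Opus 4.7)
The plan is to prove both directions by direct verification, using the sum of all entries of a feasible table as the bridge between the row-sum marginals and the column-sum marginals.

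For the forward direction, I would assume $P$ is non-empty and pick any $x = (x_{i,j}) \in P$. The $p$ row-sum equations give $\sum_i u_i = \sum_i \sum_j x_{i,j}$, while the $q$ column-sum equations give $\sum_j v_j = \sum_j \sum_i x_{i,j}$. Since both double sums equal the total mass $\sum_{i,j} x_{i,j}$, the desired equality \eqref{equation:classicalnonempty} follows. This is essentially the observation already made when computing the rank of the constraint matrix (the row-sum rows and the column-sum rows have the same total), reflecting the single linear dependence that dropped the rank from $p+q$ to $p+q-1$.

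For the reverse direction, I would explicitly exhibit a feasible point, which also shows that the condition is sufficient in a very concrete way. Let $N = \sum_i u_i = \sum_j v_j$. If $N = 0$, then $u = 0$ and $v = 0$ (since both marginals are non-negative), and $x = 0$ is trivially in $P$. Otherwise $N > 0$ and I would take the rank-one product table
\begin{equation*}
x_{i,j} = \frac{u_i v_j}{N}, \qquad i \in [p],\ j \in [q].
\end{equation*}
Non-negativity is immediate since $u_i, v_j \geq 0$. Checking the row sums: $\sum_j x_{i,j} = \frac{u_i}{N}\sum_j v_j = \frac{u_i}{N} \cdot N = u_i$, and similarly $\sum_i x_{i,j} = v_j$. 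So $x \in P$ and $P$ is non-empty.

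There is no real obstacle here; both directions are bookkeeping, and the only subtlety is remembering to separate the trivial $N = 0$ case from the generic $N > 0$ case where the product construction $x_{i,j} = u_i v_j / N$ makes sense. An alternative to the product-table construction would be the northwest-corner rule, which produces a vertex of $P$ rather than a typically interior point; either works, but the product table is shorter to write down and already appears naturally in the statistical interpretation of transportation polytopes as contingency tables with prescribed marginals (it is the independence table).
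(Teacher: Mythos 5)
Your proof is correct, and the forward direction is essentially the same as the paper's (summing all entries two ways, which is just the single linear dependence among the constraint rows). Where you diverge is in the sufficiency direction: the paper constructs a feasible point via the northwest corner rule, applied recursively by filling the $(p,q)$ entry with $\min\{u_p,v_q\}$ and reducing to a smaller transportation polytope, whereas you exhibit the rank-one ``independence table'' $x_{i,j}=u_iv_j/N$ (with the $N=0$ case handled separately). Your construction is shorter and entirely explicit, and it is a perfectly good proof of the lemma as stated. What the paper's choice buys is that the northwest corner rule produces a \emph{vertex} of $P$ and is reused immediately afterwards: Corollary~\ref{corollary:nondegeneratesupportsize} observes that for a generic polytope the minimum in that algorithm is never attained simultaneously at $u_p$ and $v_q$, yielding a point with support of size exactly $p+q-1$, and Lemma~\ref{lemma:orderNWC} later generalizes the same algorithm to permuted orders to build Hirsch-sharp examples. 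Your product table, being typically in the relative interior with full support $pq$, would not serve those follow-up arguments, so if you adopted your route you would still need the corner rule (or Lemma~\ref{lemma:vertexBx}) for the support-size consequences.
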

The proof of this lemma uses the well-known \defn{northwest corner rule algorithm} (see survey~\cite{Queyranne:MultiIndexTransportation} or Exercise 17 in Chapter 6 of~\cite{Yemelichev:Polytopes}).
\begin{proof}
To show necessity, suppose $\sum_{i \in [p]} u_i \not= \sum_{j \in [q]} v_j.$ By substituting \eqref{equation:classicalsums}, the left and right sides of the equation \eqref{equation:classicalnonempty} are not equal. Thus, the linear system is inconsistent and there is no solution $x$ satisfying \eqref{equation:classicalsums}.

For the converse, we construct a point $x \in P$ using the northwest corner rule algorithm: let $x_{p,q} = \min \{u_p, v_q\}$. If the minimum is obtained at $u_p$, set $x_{p,j} = 0$ for all $j \not= q$ and replace $v_q$ with $v_q - u_p$. The rest of the point $x = (x_{i,j})$ is obtained recursively as a point in a $(p-1) \times q$ classical transportation polytope. Similarly, if the minimum is obtained at $v_q$, set $x_{i,q} = 0$ for all $i \not= p$ and replace $u_p$ with $u_p - v_q$. The rest of the point $x$ is obtained as a point in a $p \times (q-1)$ transportation polytope. If the minimum was obtained at $u_p= v_q$, then the rest of the point $x$ is obtained as a point in a $(p-1)\times (q-1)$ transportation polytope.
\end{proof}

\begin{definition}
A $p \times q$ classical transportation polytope $P$ is \defn{generic} if
\begin{equation}\label{equation:nondegenerateclassicalequation}
\sum_{i \in Y} u_i \not= \sum_{j \in Z} v_j.
\end{equation}
for every non-empty proper subset $Y \subsetneq [p]$ and non-empty proper subset $Z \subsetneq [q]$. (Of course, due to \eqref{equation:classicalnonempty}, we must disallow the case where $Y = [p]$ and $Z = [q]$.)
\end{definition}
\begin{remark}
Very soon we will introduce the notion of non-degenerate transportation polytopes. We will see in Lemma~\ref{lemma:genericnondegenerate} that the notions of genericity and non-degeneracy coincide for classical transportation polytopes. (But the condition defined above, which we need now, has no generalization to most multi-way transportation polytopes, which we introduce in the next section.)
\end{remark}

If $P$ is a generic $p \times q$ classical transportation polytope $P$, then in the northwest corner rule algorithm used in the proof of Lemma~\ref{lemma:littleeddielemma}, the minimum is never attained simultaneously at $u_p$ and $v_q$. This proves:
\begin{corollary}\label{corollary:nondegeneratesupportsize}
Let $P \not= \emptyset$ be a generic $p \times q$ classical transportation polytope. Then, there is a point $x \in P$ with $|\supp(x)| = p+q-1$.
\end{corollary}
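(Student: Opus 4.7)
The plan is to revisit the northwest corner rule algorithm introduced in the proof of Lemma~\ref{lemma:littleeddielemma} and count its support entries under the genericity hypothesis. Each iteration of the algorithm places one entry $x_{p,q} = \min\{u_p, v_q\}$ and reduces the problem by one row, by one column, or by both simultaneously (the last case arising only when $u_p = v_q$). Applying the genericity hypothesis to the singleton test sets $Y = \{p\}$ and $Z = \{q\}$ gives $u_p \neq v_q$, so the simultaneous reduction never occurs. Moreover, under the standing convention that $u \in \R_{>0}^p$ and $v \in \R_{>0}^q$, the entry just placed is strictly positive and hence is a genuine support entry.

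I would then run an induction on $p+q$. The base cases $p=1$ (or symmetrically $q=1$) are immediate: the unique feasible table has entries $x_{1,j}=v_j$ (respectively $x_{i,1}=u_i$), all positive, giving exactly $q = p+q-1$ (respectively $p = p+q-1$) support entries. For the inductive step, one iteration of the rule places one positive entry and leaves, without loss of generality, a $(p-1)\times q$ problem whose marginals are $(u_1,\ldots,u_{p-1})$ and $(v_1,\ldots,v_{q-1},v_q-u_p)$. Combining the placed entry with a point obtained from the reduced problem by the inductive hypothesis yields a total of $1 + (p+q-2) = p+q-1$ support entries, as desired.

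The main technical obstacle will be verifying that the reduced problem is itself generic with strictly positive marginals, so that the inductive hypothesis actually applies. Positivity is routine: the only marginal that changes, namely $v_q - u_p$, is positive precisely because $u_p < v_q$. Genericity propagation requires a short case analysis: a putative violation $\sum_{i \in Y'} u'_i = \sum_{j \in Z'} v'_j$ in the reduced problem is rewritten, according to whether $q \in Z'$ or not, as a violation $\sum_{i \in Y''} u_i = \sum_{j \in Z''} v_j$ in the original problem, where one adjoins the index $p$ to $Y'$ in the case $q \in Z'$ and leaves $Y'$ alone otherwise. One then checks that $Y''$ remains a non-empty proper subset of $[p]$ (since $Y' \subsetneq [p-1]$ was already proper) and $Z''$ a non-empty proper subset of $[q]$, contradicting the original genericity. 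With this verification in hand, the inductive step closes and the corollary follows.
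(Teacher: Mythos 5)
Your proposal is correct and follows essentially the same route as the paper: the paper proves this corollary by simply observing that, for a generic polytope, the minimum in the northwest corner rule of Lemma~\ref{lemma:littleeddielemma} is never attained simultaneously at $u_p$ and $v_q$, so each step removes exactly one row or one column. Your induction, positivity check, and verification that genericity propagates to the reduced problem just make explicit the details the paper leaves implicit.
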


Let $P$ be a $p \times q$ classical transportation polytope. To every point $x \in P$, we define a bipartite graph $B(x)$, called the \defn{support graph} of $x$. The graph $B(x)$ is the following subgraph of the complete bipartite graph $K_{p,q}$: 
\begin{itemize}
\item {\bf Vertices of $B(x)$.} The vertices of the graph $B(x)$ are the vertices of the complete bipartite graph $K_{p,q}$. That is, the graph $B(x)$ has $p$ vertices of the first kind and $q$ vertices of the second kind.
\item {\bf Edges of $B(x)$.} An edge $(i,j)$ connecting vertex $i$ of the first kind to vertex $j$ of the second kind exists if and only if $x_{i,j}$ is \emph{strictly} positive. That is to say, the set of edges is in one-to-one correspondence with the support set $\supp(x)$ defined in Definiton~\ref{definition:supp}. The value of $x_{i,j} > 0$ is called the \defn{weight} or the \defn{flow} of the edge $(i,j)$.
\end{itemize}
\begin{example}
Let us consider the point $z_1 \in P_{3 \times 3}$ from Example~\ref{example:classicalreindex} under the reindexing. Here, $\supp(z_1)= \{(1,1),(1,2),(1,3),(2,2),(3,3)\}$. Figure~\ref{figure:Bv1} depicts the graph $B(z_1)$.
\begin{figure}[hbt]
\begin{center}
\includegraphics[scale=0.7]{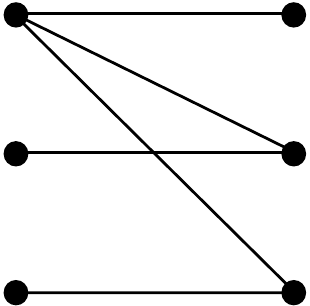}
\caption{The support graph $B(z_1)$ of the vertex $z_1 \in P_{3 \times 3}$. The nodes of $B(z_1)$ on the left correspond to the $p=3$ supplies. The nodes on the right correspond to the $q=3$ demands.}
\label{figure:Bv1}
\end{center}
\end{figure}

\end{example}
The graph properties of $B(x)$ provide a useful combinatorial characterization of the vertices of classical transportation polytopes:
\begin{lemma}\label{lemma:vertexBx}
Let $P$ be a $p \times q$ classical transportation polytope defined by the marginals $u \in \R^p_{>0}$ and $v \in \R^q_{>0}$, and let $x \in P$. Then the graph $B(x)$ is spanning. The feasible point $x$ is a vertex of $P$ if and only if $B(x)$ is a spanning forest. Moreover, if $P$ is generic, then $x$ is a vertex of $P$ if and only if $B(x)$ is a spanning tree.
\end{lemma}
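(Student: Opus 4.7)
The plan is to prove the three assertions in sequence, using Lemma~\ref{lemma:vertexequivalence} as the vertex characterization tool.

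First, to show $B(x)$ is spanning (every supply and demand node has at least one incident edge), I would argue from strict positivity of the marginals. For each supply node $i \in [p]$, the row-sum equation $\sum_{j=1}^{q} x_{i,j} = u_i > 0$ forces some $x_{i,j}$ to be strictly positive, which yields an edge of $B(x)$ incident to $i$. The symmetric argument applied to column sums handles the demand nodes.

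Second, I would prove the forest characterization by a perturbation argument. Suppose $B(x)$ contains a cycle. Since $B(x) \subseteq K_{p,q}$ is bipartite, the cycle has even length, and I would construct a table $y = (y_{i,j})$ supported on the cycle edges, assigning values $+1$ and $-1$ alternately as one traverses the cycle. At every node on the cycle, the two incident cycle-edges receive opposite signs, so all row and column sums of $y$ vanish; hence $Ay = 0$. Since $y$ is supported on $\supp(x)$ where $x$ is strictly positive, both $x + \epsilon y$ and $x - \epsilon y$ lie in $P$ for all sufficiently small $\epsilon > 0$, and by Lemma~\ref{lemma:vertexequivalence} the point $x$ is not a vertex. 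Conversely, suppose $x$ is not a vertex, so some non-zero $y$ satisfies $x \pm y \in P$. Non-negativity of $x \pm y$ forces $y_{i,j} = 0$ wherever $x_{i,j} = 0$, so $y$ is supported on the edges of $B(x)$, and the equations $A(x \pm y) = b$ give $Ay = 0$. If $B(x)$ were a forest, I would repeatedly peel off leaves: a leaf node has a unique incident edge in $B(x)$, whose $y$-value must vanish by the zero row- or column-sum condition at that node; deleting this edge leaves a smaller forest. Iterating exhausts the support of $y$, contradicting $y \neq 0$. Hence $B(x)$ contains a cycle.

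Third, for the genericity claim, I would show that a vertex's support graph must be connected. Suppose $B(x)$ has two or more components. By the spanning property proved in part one, each component contains at least one supply and at least one demand node; let $Y \subseteq [p]$ and $Z \subseteq [q]$ be the supply and demand nodes of one component. Summing the constraints \eqref{equation:classicalsums} over $Y$ on one side and $Z$ on the other, noting that flow on edges incident to $Y$ stays inside $Z$ within this component, yields $\sum_{i \in Y} u_i = \sum_{j \in Z} v_j$. Since a second component exists, both $Y \subsetneq [p]$ and $Z \subsetneq [q]$ are proper non-empty subsets, contradicting~\eqref{equation:nondegenerateclassicalequation}. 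Thus $B(x)$ is connected; combined with the forest property it is a spanning tree.

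The hardest step will be setting up the cycle perturbation cleanly in part two: one must check carefully that the alternating $\pm 1$ assignment around an arbitrary even cycle of $B(x)$ produces a table with vanishing row and column sums. Once that bookkeeping is done, the remaining arguments are routine consequences of leaf-stripping in forests and the combinatorial meaning of genericity.
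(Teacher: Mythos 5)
Your proposal is correct and follows essentially the same route as the paper's proof: spanning from strict positivity of the marginals, an alternating $\pm$ perturbation along a cycle together with Lemma~\ref{lemma:vertexequivalence} for the vertex--forest equivalence, and the component sum identity $\sum_{i \in Y} u_i = \sum_{j \in Z} v_j$ contradicting genericity for the tree statement. The only difference is that you spell out details the paper leaves implicit (the leaf-stripping argument showing a kernel vector supported on a forest vanishes, and that both $Y$ and $Z$ are proper nonempty subsets), which is fine.
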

\begin{proof}
Let $x \in P$. The marginals are strictly positive. Since $x$ is feasible, for each $i \in [p]$, there is a $j \in [q]$ such that $x_{i,j} > 0$. Similarly, for each $j \in [q]$, there is an $i \in [p]$ so that $x_{i,j} > 0$. Thus, each node in $B(x)$ is incident to an edge, so the graph $B(x)$ must be spanning.

Suppose $x$ is a vertex of $P$. We argue that the graph $B(x)$ cannot contain a cycle by contradiction. Suppose $B(x)$ has a cycle. Let $\lambda > 0$ be the minimum weight $x_{i,j}$ among all edges in the cycle. Since $B(x)$ is bipartite, the cycle has an even number of edges. Decompose the cycle as the disjoint union of two edge sets $E_+$ and $E_-$ so that every other edge along the cycle is in the set $E_+$ and every other edge is in $E_-$. Let $y = \frac{\lambda}{2}\sum_{(i,j) \in E_+} e_{i,j} - \frac{\lambda}{2}\sum_{(i,j) \in E_-} e_{i,j}$. (Here, $e_{i,j}$ is the basis unit vector in the direction of the variable $x_{i,j}$.) The vector $y$ is in the kernel of the defining matrix $A$ of the transportation polytope $P$. Therefore, both $x+y$ and $x-y$ belong to $P$. By Lemma~\ref{lemma:vertexequivalence}, the point $x \in P$ is not a vertex. Contradiction.

For the converse, suppose $x \in P$ and suppose that the graph $B(x)$ is a spanning forest. Any non-zero vector $y \in \R^{pq}$ that can be added to $x$ and stay in the polytope $P$ must be in the kernel of the defining matrix. The support of the vector $y$, thought of in terms of the bipartite graph $B(y)$ induces a cycle. Thus, it cannot be that both $x+y$ and $x-y$ belong to $P$. By Lemma~\ref{lemma:vertexequivalence}, $x$ is a vertex of $P$.

Now suppose that the transportation polytope $P$ is generic. Suppose, for a contradiction, that $B(x)$ is not a tree. Consider one of the connected components of $B(x)$, say, the subgraph induced by the nodes $Y \subseteq [p]$ and $Z \subseteq [q]$. Since $B(x)$ is not a tree, at least one of $Y$ or $Z$ is a proper subset. Then, $\sum_{i \in Y} u_i = \sum_{j \in Z} v_j$, which contradicts \eqref{equation:nondegenerateclassicalequation}. Therefore, the graph $B(x)$ is a tree if $x$ is a vertex of a generic classical transportation polytope.
\end{proof}
As an immediate corollary, we get:
\begin{corollary}\label{corollary:classicalTPsizeofsupport}
Let $P$ be a generic $p \times q$ classical transportation polytope. Let $x$ be a point in the transportation polytope $P$. Then $x$ is a vertex of $P$ if and only if $|\supp(x)| = p + q - 1$.
\end{corollary}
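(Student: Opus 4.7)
The corollary should follow as an essentially immediate consequence of Lemma~\ref{lemma:vertexBx}, so the plan is to reduce both implications to the statement that $B(x)$ is a spanning tree. Recall that a spanning tree on $n$ vertices has exactly $n-1$ edges, and that the edges of $B(x)$ are in bijection with $\supp(x)$. With this in mind, the entire proof consists of translating the characterization of Lemma~\ref{lemma:vertexBx} to the language of support entries and checking one small point in the backward direction.

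For the forward direction I would simply observe: if $x$ is a vertex of the generic polytope $P$, then by Lemma~\ref{lemma:vertexBx}, $B(x)$ is a spanning tree on the $p+q$ nodes of $K_{p,q}$, and hence has exactly $p+q-1$ edges. Since $|\supp(x)|$ equals the number of edges of $B(x)$, the equality $|\supp(x)|=p+q-1$ is immediate.

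The backward direction is where I expect the main (modest) obstacle: one must rule out the case where $B(x)$ has $p+q-1$ edges but contains a cycle, which in a graph on $p+q$ vertices would force disconnectedness. Here is where genericity enters. Suppose $|\supp(x)|=p+q-1$. By Lemma~\ref{lemma:vertexBx}, $B(x)$ is already spanning (every vertex is incident to some edge). If $B(x)$ were disconnected, pick any one connected component with vertex set $Y \cup Z$, where $Y\subseteq[p]$, $Z\subseteq[q]$. Because every vertex of $B(x)$ lies on an edge and edges of $B(x)$ are bipartite, both $Y$ and $Z$ are nonempty, and because the component is strictly smaller than $B(x)$, both $Y\subsetneq[p]$ and $Z\subsetneq[q]$ are proper. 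Summing the row equations in~\eqref{equation:classicalsums} over $i\in Y$ and the column equations over $j\in Z$, and using that $x_{i,j}=0$ whenever $(i,j)$ straddles two different components of $B(x)$, I get $\sum_{i\in Y}u_i=\sum_{(i,j)\in Y\times Z}x_{i,j}=\sum_{j\in Z}v_j$, contradicting~\eqref{equation:nondegenerateclassicalequation}.

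Thus $B(x)$ is a connected spanning subgraph on $p+q$ vertices with $p+q-1$ edges, so it is a spanning tree, and by Lemma~\ref{lemma:vertexBx} the point $x$ is a vertex of $P$. This completes the equivalence. The only nontrivial ingredient beyond the previous lemma is the graph-theoretic remark that a spanning connected graph on $p+q$ vertices with $p+q-1$ edges is necessarily a tree, together with the genericity hypothesis which forces connectedness.
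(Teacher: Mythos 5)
Your proof is correct and follows essentially the same route as the paper, which derives the corollary immediately from Lemma~\ref{lemma:vertexBx} by counting the edges of a spanning tree on $p+q$ nodes. Your connected-component sum argument for the converse is exactly the genericity argument already used inside the paper's proof of Lemma~\ref{lemma:vertexBx}; you are simply making explicit a step the paper leaves as ``immediate.''
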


We now define a notion equivalent to genericity that will be needed in the next section:
\begin{definition}
A transportation polytope is \defn{non-degenerate} if is simple and it is of 
maximal possible dimension.
\end{definition}
The condition on maximality of dimension will be important in the next section where we introduce multi-way transportation polytopes. From our corollary, we can prove that the notions of genericity and non-degeneracy are equivalent for classical transportation polytopes:
\begin{lemma}\label{lemma:genericnondegenerate}
Let $P$ be a non-empty $p \times q$ classical transportation polytope. Then $P$ is generic if and only if $P$ is non-degenerate.
\end{lemma}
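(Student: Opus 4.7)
The plan is to establish the equivalence by showing each direction separately, using Corollary~\ref{corollary:classicalTPsizeofsupport} for one direction and an explicit construction via the subset condition that defines genericity for the other. Since Corollary~\ref{corollary:dimension} tells us every non-empty classical transportation polytope already has maximal possible dimension $d = pq - p - q + 1$, the non-degeneracy condition reduces to simplicity, which, for a polyhedron presented as $\{x \in \R^{pq} \mid Ax = b,\ x \geq 0\}$, amounts to requiring that every vertex has exactly $d$ of its coordinates equal to zero, i.e., $|\supp(x)| = pq - d = p + q - 1$.

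For the forward direction, suppose $P$ is generic. Then Corollary~\ref{corollary:classicalTPsizeofsupport} applies, so every vertex $x$ of $P$ satisfies $|\supp(x)| = p + q - 1$. Equivalently, at each vertex exactly $(p-1)(q-1) = d$ of the non-negativity inequalities $x_{i,j} \geq 0$ are tight, and these pick out exactly $d$ facets of $P$ meeting at $x$. Hence $P$ is simple, and combined with maximal dimension this yields non-degeneracy.

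For the converse, suppose $P$ is not generic, so there exist non-empty proper subsets $Y \subsetneq [p]$ and $Z \subsetneq [q]$ with $\sum_{i \in Y} u_i = \sum_{j \in Z} v_j$. Using~\eqref{equation:classicalnonempty} on the total marginals, the complementary sums $\sum_{i \in [p] \setminus Y} u_i$ and $\sum_{j \in [q] \setminus Z} v_j$ are equal as well. By Lemma~\ref{lemma:littleeddielemma}, both the sub-transportation problem on $Y \times Z$ and the one on $([p] \setminus Y) \times ([q] \setminus Z)$ are non-empty, so each has a vertex $x'$ and $x''$, respectively (constructible, e.g., by the northwest corner rule). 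Assemble a point $x \in \R^{pq}$ by placing $x'$ on the $Y \times Z$ block, $x''$ on the complementary block, and zero everywhere else; then the marginal conditions~\eqref{equation:classicalsums} are satisfied, so $x \in P$. The bipartite support graph $B(x)$ is the disjoint union of the two sub-support forests, hence a spanning forest on $[p] \cup [q]$ with at least two connected components, so $|\supp(x)| \leq (|Y| + |Z| - 1) + (p - |Y| + q - |Z| - 1) = p + q - 2$. By Lemma~\ref{lemma:vertexBx}, $x$ is a vertex of $P$, but since $|\supp(x)| < p + q - 1$, the vertex $x$ lies on strictly more than $d$ coordinate hyperplanes, so $P$ is not simple and therefore not non-degenerate.

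The main obstacle I anticipate is the careful bookkeeping that translates the combinatorial support condition $|\supp(x)| = p + q - 1$ at every vertex into the geometric simplicity condition of exactly $d$ facets meeting at every vertex. In principle distinct coordinate hyperplanes can cut out the same face of $P$, so one must be confident that in the generic case the $d$ tight hyperplanes at each vertex give rise to $d$ distinct facets, and that the non-generic vertex constructed above indeed inflates the number of facet incidences beyond $d$; passing through Lemma~\ref{lemma:partitionvertices} and counting bases in the linear algebra picture is the cleanest route.
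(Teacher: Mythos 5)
Your forward direction coincides with the paper's proof, which simply cites Corollary~\ref{corollary:dimension} for maximal dimension and reads simplicity off the support-size characterization of Corollary~\ref{corollary:classicalTPsizeofsupport}; there the identification of tight inequalities with facets is harmless, because $d$ tight inequalities bound the number of facets through the vertex above by $d$, while every vertex of a $d$-polytope lies on at least $d$ facets. Your converse actually supplies a construction the paper leaves implicit: splitting along a non-generic pair $Y\subsetneq[p]$, $Z\subsetneq[q]$, gluing vertices of the two sub-transportation problems, and invoking Lemma~\ref{lemma:vertexBx} to get a vertex $x$ of $P$ with $|\supp(x)|\le p+q-2$. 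Up to that point your argument is correct and more explicit than the paper's two-line proof.

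The genuine gap is the step you flag at the end and then leave open: from ``at least $d+1$ of the inequalities $x_{i,j}\ge 0$ are tight at $x$'' you conclude ``$x$ lies on more than $d$ facets, so $P$ is not simple.'' That inference does not follow, because a tight inequality $x_{i,j}\ge 0$ need not be facet-defining, and then it contributes no facet through $x$. Concretely, take $p=2$, $q=3$, $u=(3,3)$, $v=(3,2,1)$: this is not generic ($u_1=v_1$), and the vertex with rows $(0,2,1)$ and $(3,0,0)$ has support of size $3=p+q-2$; yet, projecting away the second row, $P$ is affinely the rectangle $\{(x_{1,2},x_{1,3}) \mid 0\le x_{1,2}\le 2,\ 0\le x_{1,3}\le 1\}$, a simple $2$-polytope of maximal dimension in which every vertex lies on exactly $d=2$ facets --- the face $\{x\in P \mid x_{1,1}=0\}$ is a single vertex, not a facet. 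So with the facet-counting definition of ``simple'' your implication (and, read literally, the equivalence itself) breaks down; the argument closes only if ``non-degenerate'' is taken in the linear-programming sense that every vertex has exactly $p+q-1$ positive entries (every basic feasible solution is non-degenerate), which is the reading the paper's own proof silently adopts when it says the statement ``follows from'' Corollary~\ref{corollary:classicalTPsizeofsupport}. Your proposed repair via Lemma~\ref{lemma:partitionvertices} and counting bases only reformulates LP-degeneracy and does not bridge to facet-counting either, so that bridge (or the change of definition) must be made explicit.
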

\begin{proof}
The dimension is always maximal by Corollary~\ref{corollary:dimension}. The statement follows from the equivalence in Corollary~\ref{corollary:classicalTPsizeofsupport}.
\end{proof}

The support graph gives the following characterization of edges of classical transportation polytopes. (See Lemma 4.1 in Chapter 6 of~\cite{Yemelichev:Polytopes}.)
\begin{proposition}
Let $x$ and $x'$ be distinct vertices of a classical transportation polytope $P$. Then the vertices $x$ and $x'$ are adjacent if and only if the graph $B(x) \cup B(x')$ contains a unique cycle.
\end{proposition}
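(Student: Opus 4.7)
The plan is to reduce adjacency of $x$ and $x'$ to a dimension count for the smallest face of $P$ containing the midpoint $m := \tfrac{1}{2}(x+x')$, and then to read off that dimension from the cycle space of the bipartite graph $G := B(x) \cup B(x')$. Since $m$ lies in the relative interior of the segment $[x,x']$, Proposition~\ref{proposition:polyhedrondecomposition} singles out a unique face $F$ of $P$ with $m \in \int^*(F)$: this $F$ is the smallest face of $P$ containing both $x$ and $x'$, and the vertices $x$ and $x'$ are adjacent precisely when $\dim F = 1$. From $\supp(m) = \supp(x) \cup \supp(x') = E(G)$, the face $F$ is identified as the slice of $P$ obtained by setting $w_{i,j} = 0$ for every non-edge $(i,j) \notin E(G)$.

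Next I would compute $\dim F$. Letting $A$ be the defining matrix of $P$ (as in Example~\ref{example:classicalreindex}) and $A_S$ its submatrix of columns indexed by $S := E(G)$, the face $F$ is cut out by $A_S w_S = b$ and $w_S \geq 0$; because $m_S$ is strictly positive coordinate-wise, one gets $\dim F = \dim \ker(A_S)$. Now $A_S$ is precisely the (unsigned) vertex-edge incidence matrix of the bipartite graph $G$: the column for an edge $(i,j)$ has a single $1$ in the supply row $i$ and a single $1$ in the demand row $j$. Flipping the signs of the $q$ demand rows---an operation that preserves the kernel---turns $A_S$ into the signed incidence matrix under the orientation sending every edge from its supply endpoint to its demand endpoint, whose kernel is the real cycle space of $G$. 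The standard cycle-rank formula then gives
\[
\dim F \;=\; \dim\ker(A_S) \;=\; |E(G)| - |V(G)| + c(G),
\]
the first Betti number of $G$; by Lemma~\ref{lemma:vertexBx} no vertex of $G$ is isolated, so $|V(G)| = p+q$.

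Putting the two reductions together, $x$ and $x'$ are adjacent if and only if this Betti number equals $1$, if and only if $G$ contains a unique simple cycle (two distinct simple cycles would give two linearly independent signed indicator vectors in $\ker A_S$). The step I expect to require the most care is the identification of $\ker(A_S)$ with the real cycle space of $G$: the bipartite structure is what makes the row-sign-flip trick work cleanly, and one must keep track of the fact that $B(x)$ and $B(x')$ are spanning forests so that $G$ has no isolated rows to worry about. Everything else---setting up the midpoint, pinpointing $F$ from $\supp(m)$, and translating ``cycle space of dimension $1$'' into ``unique cycle''---is routine once this algebraic--graph-theoretic bridge is in place.
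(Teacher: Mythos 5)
Your proposal is correct and takes essentially the same route as the paper's elementary proof: both pass to the face $P \cap \{w \mid w_{i,j}=0 \text{ for all } (i,j)\notin \supp(x)\cup\supp(x')\}$, which is the smallest face containing $x$ and $x'$, and reduce adjacency to this face being one-dimensional. The only difference is that the paper dismisses the dimension count with a ``clearly,'' while you supply the justification via $\dim\ker(A_S)$ and the cycle-space rank formula for the bipartite incidence matrix.
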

This can be seen since the bases corresponding to the vertices $x$ and $x'$ differ in the addition and the removal of one element (see~\cite{Matousek:LinearProgramming} or~\cite{Schrijver:LinearProgramming}). For an elementary proof:
\begin{proof}
Let $E$ consist of all edges in the union of $B(x)$ and $B(x')$, and define $F=\overline{E}$ to be the complement of $E$ in $[p] \times [q]$. Clearly, the polytope $P \cap \{x \in \R^{p \times q} \mid x_{i,j} = 0 \text{ for all } (i,j) \in F \}$ is one-dimensional.
\end{proof}

We now introduce the \emph{Birkhoff polytope}, introduced by Birkhoff in~\cite{Birkhoff:TresObservaciones}. We will discuss new properties of Birkhoff polytopes in Section~\ref{section:regulartriangulationsBirkhoff}.
\begin{definition}\label{definition:BirkhoffPolytope}
The \defn{$p$th Birkhoff polytope}, denoted by $B_p$, is the $p \times p$ classical transportation polytope with margins $u=v=(1,1,\ldots,1)^T$.
\end{definition}
The Birkhoff polytope is also called the \defn{assignment polytope} or the \defn{polytope of doubly stochastic matrices}. It is the perfect matching polytope of the complete bipartite graph $K_{p,p}$. The following theorem states that the vertices of the Birkhoff polytope are the permutation matrices, and therefore that any doubly stochastic matrix may be represented as a convex combination of permutation matrices.
\begin{theorem}[Birkhoff-von Neumann Theorem]\label{theorem:birkhoffvonneumann}
The $p!$ vertices of the $p$th Birkhoff polytope $B_p$ are the permutation matrices of size $p \times p$.
\end{theorem}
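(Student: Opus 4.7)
The plan is to apply Lemma~\ref{lemma:vertexBx} as the central tool: since $B_p$ has margins $u=v=(1,\ldots,1)^T$, the vertices of $B_p$ are exactly those doubly stochastic matrices $x$ whose support graph $B(x)$ is a spanning forest in $K_{p,p}$. Note that $B_p$ is \emph{not} generic in the sense of~\eqref{equation:nondegenerateclassicalequation} (take any $Y \subsetneq [p]$ and $Z \subsetneq [p]$ with $|Y|=|Z|$), so one cannot use the spanning-tree half of Lemma~\ref{lemma:vertexBx}; the spanning-forest half is what drives the argument.

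First I would show that every $p \times p$ permutation matrix $P_\sigma$ is a vertex of $B_p$. Each $P_\sigma$ belongs to $B_p$ (row and column sums are $1$), and its support graph $B(P_\sigma)$ consists of the $p$ disjoint edges $\{(i,\sigma(i)) : i \in [p]\}$, i.e., a perfect matching in $K_{p,p}$. A perfect matching is a spanning forest (with $p$ components, each a single edge), so Lemma~\ref{lemma:vertexBx} gives that $P_\sigma$ is a vertex.

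The main step is the converse: if $x$ is a vertex of $B_p$, then $x = P_\sigma$ for some permutation $\sigma$. By Lemma~\ref{lemma:vertexBx}, $B(x)$ is a spanning forest, and in particular every connected component of $B(x)$ is a tree that contains at least one leaf. I would argue that every edge of $B(x)$ must be a connected component on its own: take a leaf, say a left-vertex $i$ whose unique edge in its component is $(i,j)$; then $x_{i,j}$ is the only nonzero entry of row $i$, so the row-sum condition forces $x_{i,j}=u_i=1$. The column-sum condition at $j$ then reads $1 + \sum_{i' \neq i, (i',j) \in B(x)} x_{i',j} = v_j = 1$, and since all support entries are strictly positive, $j$ can have no other incident edges. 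Hence $\{i,j\}$ is an isolated edge of $B(x)$. Applying this to every component shows $B(x)$ is a perfect matching with all support values equal to $1$, so $x$ is a permutation matrix. The main obstacle here is keeping track of which characterization to use when $B_p$ is degenerate; the leaf-peeling argument is short but must handle both left and right leaves symmetrically.

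Finally, the count is classical: the $p \times p$ permutation matrices are in bijection with the symmetric group $S_p$, which has cardinality $p!$, giving the stated number of vertices.
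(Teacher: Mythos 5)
Your proof is correct, but note that the paper does not actually prove Theorem~\ref{theorem:birkhoffvonneumann}: it states the result and attributes it to Birkhoff and von Neumann (with earlier equivalent results of Steinitz and K\H{o}nig), so there is no internal proof to compare against. What your argument buys is a short, self-contained derivation from the paper's own machinery. You correctly observe that $B_p$ is \emph{not} generic in the sense of~\eqref{equation:nondegenerateclassicalequation}, so only the spanning-forest half of Lemma~\ref{lemma:vertexBx} is available; the forward direction (a permutation matrix has a perfect matching as support graph, hence a spanning forest, hence is a vertex) is immediate, and your leaf-peeling argument for the converse is sound: a leaf $i$ of a component forces $x_{i,j}=1$ by its row (or column) sum, and then the opposite margin at $j$ together with strict positivity of support entries forces $j$ to have no other incident edges, so every component of the spanning forest is a single edge with weight $1$ and $x$ is a permutation matrix. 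Combined with the bijection with $S_p$ this gives the count $p!$. The one point worth stating explicitly if you write this up is that every component of the spanning forest contains at least one edge (since $B(x)$ is spanning with strictly positive margins), so a leaf always exists and the peeling applies to each component; with that remark the proof is complete.
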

This theorem was stated in the 1946 paper~\cite{Birkhoff:TresObservaciones} by Birkhoff and proved independently by von Neumann in 1953 (see~\cite{Neumann:A-certain-zero-sum}). Equivalent results were shown earlier in the 1894 thesis~\cite{Steinitz:Uber-die-Konstruction} of Steinitz, and the theorem also follows from the 1916 papers~\cite{Konig:Grafok-es-alkalmazasuk} and~\cite{Konig:Uber-Graphen} by K{\H{o}}nig. (For a more complete discussion on the history of the Birkhoff-von Neumann Theorem, see the preface to~\cite{Lovasz:Matching}.) The vertices of a Birkhoff polytope are examples of \defn{semi-magic squares}:
\begin{definition}
Let $p,\sigma \in \Z_{>0}$. A $p \times p$ \defn{semi-magic square} of order $\sigma$ is a $p \times p$ table $x=(x_{i,j})_{i\in[p],j\in[p]}$ of numbers in $\Z_{\geq 0}$ such that
\begin{equation*}\label{equation:semimagicsquare}
\sum_{j=1}^p x_{i,j} = \sigma\ (i \in [p])
\quad\text{and}\quad
\sum_{i=1}^p x_{i,j} = \sigma\ (j \in [p])
.
\end{equation*}
That is to say, a semi-magic square is an integral lattice point in a transportation polytope where every row and column sum is the same, namely $\sigma$. The number $\sigma$ is called the \defn{magic number}.
\begin{figure}[hbt]
\begin{center}
\includegraphics[scale=.5]{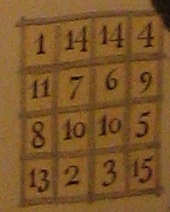}
\caption{A $4 \times 4$ magic square of $\sigma = 33$.}
\label{figure:magic-square-example}
\end{center}
\end{figure}

A $p \times p$ \defn{magic square} of order $\sigma$ is a semi-magic square that also satisfies the two additional equations 
\begin{equation*}\label{equation:magicsquare}
\sum_{j=1}^p x_{i,i} = \sigma
\quad\text{and}\quad
\sum_{i=1}^p x_{i,p-i+1} = \sigma
.
\end{equation*}
In other words, magic squares have the additional condition that the two diagonals also sum to the magic number $\sigma$. See Figure~\ref{figure:magic-square-example} for an example.
\end{definition}

\begin{definition}[Generalized Birkhoff polytopes]\label{definition:generalizedbirkhoffclassical}
We can generalize the definition of the Birkhoff polytope to rectangular arrays. The \defn{generalized Birkhoff $p \times q$ polytope} is the $p \times q$ classical transportation polytope with $u_1= \cdots = u_p = q$ and $v_1 = \cdots = v_q = p$. (This polytope is also known as the \defn{central transportation polytope} of size $p \times q$.)
\end{definition}

\subsection{Multi-way transportation polytopes}

Classical transportation polytopes are also called \defn{$2$-way transportation polytopes} since the coordinates $x_{i,j}$ have two indices. We can consider generalizations of $2$-way transportation polytopes by considering coordinates in three or more indices (e.g., $x_{i,j,k}$ or $x_{i,j,k,l}$, etc.). We introduce two natural generalizations of $2$-way transportation polytopes to $3$-way transportation polytopes, whose feasible points are $p \times q \times s$ tables of non-negative reals satisfying certain sum conditions:
\begin{itemize}
\item First, consider the $3$-way transportation polytope of size $p \times q \times s$ defined by \defn{$1$-marginals}: Let $u=(u_1,\dots,u_p) \in \R^p$,
$v=(y_1,\dots,y_q) \in \R^q$, and $w=(w_1,\dots,w_s) \in \R^s$ be three vectors. Let $P$ be the polyhedron defined by the following $p+q+s$ equations in the $pqs$ variables $x_{i,j,k} \in \R_{\geq 0}$ ($i\in[p], j\in[q], k\in[s]$):
\begin{equation}\label{equation:1marginals}
\sum_{j,k} x_{i,j,k} = u_{i}, \forall i \qquad 
\sum_{i,k} x_{i,j,k} = v_{j}, \forall j \qquad 
\sum_{i,j} x_{i,j,k} = w_{k}, \forall k.
\end{equation}
Observe that a necessary and sufficient condition for the polytope $P$ to be non-empty is that
\[
\sum_{i=1}^p u_p = \sum_{j=1}^q v_j = \sum_{k=1}^s w_k,
\label{equation:axialsizigies}
\]
and consequently the polytope $P$ is defined by only $p+q+s-2$ independent equations. (In the book~\cite{Yemelichev:Polytopes}, $3$-way transportation polytopes defined by $1$-marginals are known as \defn{$3$-way axial transportation polytopes}.)

\item Similarly, a $3$-way transportation polytope of size $p \times q \times s$ can be defined by specifying three real-valued matrices $U$, $V$, and $W$ (respectively) of sizes $q \times s$, $p \times s$, and $p \times q$ (respectively). These three matrices specify the line-sums resulting from fixing two of the indices of entries and adding over the remaining index. That is to say, the polyhedron $P$ is defined by the following $pq+ps+qs$ equations, called the \defn{$2$-marginals}, in the $pqs$ variables $x_{i,j,k} \in \R_{\geq 0}$ satisfying:
\[
\sum_{i} x_{i,j,k} = U_{j,k}, \forall j,k\qquad 
\sum_{j} x_{i,j,k} = V_{i,k}, \forall i,k\qquad 
\sum_{k} x_{i,j,k} = W_{i,j}, \forall i,j.
\label{equation:2marginals}
\]
One can see that in fact only $pq + ps + qs - p - q - s + 1$ of the defining equations are linearly independent for feasible systems. (In~\cite{Yemelichev:Polytopes}, the $3$-way transportation polytopes defined by $2$-marginals are called \defn{$3$-way planar transportation polytopes}.)
\end{itemize}
\begin{remark}
Of course, one can easily generalize these concepts to $\omega$-way tables for any integer $\omega$ and $\mu$-marginals for any $1 \leq \mu < \omega$. In~\cite{Yemelichev:Polytopes}, $\omega$-way transportation polytopes defined by $1$-marginals are called \defn{axial transportation polytopes} while $\omega$-way transportation polytopes defined by $(\omega-1)$-marginals are called \defn{planar transportation polytopes}. Clearly, $\omega$-way transportation polytopes are partition polyhedra.
\end{remark}

Observe that the $3$-way transportation polytopes of size $p \times q \times s$ defined by $1$-marginals generalize the classical transportation polytope of size $p \times q$, when $s=1$ and $w_1=\sum u_i=\sum v_j$. A less trivial rewriting of the classical $p \times 2$ transportation polytope as a $3$-way transportation polytope of size $p \times 2 \times 2$ defined by $2$-marginals is given in Theorem~\ref{theorem:22n}.

Some of the proofs will require our $3$-way transportation polytopes to be non-degenerate. We use the same definition as we did for classical transportation polytopes:
\begin{definition}
A multi-way transportation polytope is \defn{non-degenerate} if is simple and it is of maximal possible dimension.
\end{definition}
The maximum possible dimension for $p \times q \times s$ transportation polytopes defined by $1$-marginals is $pqs-p-q-s+2$. The maximum possible dimension for $p \times q \times s$ transportation polytopes defined by $2$-marginals is $(p-1)(q-1)(s-1)$. Graphs of non-degenerate transportation polytopes are of particular interest because they have the largest possible number of vertices and largest possible diameter among the graphs of all transportation polytopes of given type and parameters (e.g., $p$q$q$, and $s$). Indeed, if $P$ is a degenerate transportation polytope, by carefully perturbing the marginals that define $P$ we can get a non-degenerate polytope $P'$. (A careful explanation of how to do the perturbation in the case of classical transportation polytopes is presented as Lemma 4.6 in Chapter 6 of~\cite{Yemelichev:Polytopes} on page 281.) The perturbed marginals are obtained by taking a feasible point $x$ in $P$, perturbing the entries in the table and using the recomputed sums as the new marginals for $P'$. The graph of $P$ can be obtained from that of $P'$ by contracting certain edges, which cannot increase either the diameter nor the number of vertices.

\begin{definition}\label{definition:generalizedbirkhoffmultiway}
In Definition~\ref{definition:generalizedbirkhoffclassical}, we presented a generalization of Birkhoff polytopes to $p \times q$ rectangular arrays. The Birkhoff polytope has the following generalizations in the $3$-way setting:
\begin{enumerate}
\item
The \defn{generalized Birkhoff $3$-way axial polytope} is the axial $3$-way transportation $p \times q \times s$ polytope whose $1$-marginals are given by the vectors $u = (qs, \ldots, qs) \in \R^p$, $v = (ps, \ldots, ps) \in \R^q$, and $w = (pq, \ldots, pq) \in \R^s$.
\item
The \defn{generalized Birkhoff $3$-way planar polytope} is the planar $3$-way transportation $p \times q \times s$ polytope whose $2$-marginals are given by the $q \times s$ matrix $U_{j,k}=p$, the $p \times s$ matrix $V_{i,k}=q$, and the $p \times q$ matrix $W_{i,j}=s$.
\end{enumerate}
\end{definition}

\subsection{The transportation problem and related problems}

The transportation problem was formulated by Hitchcock in 1941 (see~\cite{Hitchcock:The-Distribution-of-a-Product}). A similar problem was studied by Monge in 1781 (see~\cite{Monge:Memoire-sur-la-theorie} and pages 227--228 in the book~\cite{Berge:The-Theory-of-Graphs} by Berge). Kantorovich studied Monge's formulation of the problem in 1942 (see~\cite{Kantorovich:On-the-translocation-of-masses}). A basic summary of the problem and some algorithms is found in~\cite{Reeb:Transportation-Problem}. In its original form, the standard \defn{transportation problem} is defined in the following way: given a $p \times q$ classical transportation polytope $P$ defined by the marginals $u \in \R^p$ and $v \in \R^q$ and a $p \times q$ matrix $\xi = (\xi_{i,j})_{i\in [p], j \in [q]}$ with real entries, solve the linear program
\begin{equation*}
\text{Minimize } \sum_{i\in[p]} \sum_{j\in[q]} \xi_{i,j}x_{i,j} \text{ subject to } x = (x_{i,j}) \in P.
\end{equation*}
The transportation problem is a special case of the minimum cost flow problem (see, e.g.,~\cite{Ahuja:Network} or~\cite{Baldoni-Silva:Counting-integer}).

We describe some variants of the transportation problem. In the usual transportation problem, the only variables are $x_{i,j}$, which is to say that the model only allows transportation of goods between sources and demands. In many applications, this assumption is too strong. One may desire more flexibility, such as allowing goods to be transported between supplies and between demands. There might also be points through which goods can be shipped from a supply to a demand. These more general problems are called \defn{transshipment problems}. Any transshipment problem can be easily converted into an equivalent transportation problem (see~\cite{Ahuja:Network}).

Another variant of the transportation problem is the assignment problem. An \defn{assignment problem} is an integer programming problem on the Birkhoff polytope. (Since the constraint matrix for classical transportation polytopes is totally unimodular, i.e., every non-singular submatrix has determinant $\pm 1$, the integer programming problem is equivalent to the linear programming relaxation (see, e.g.,~\cite{Giles:TDIandZPolyhedra}), but this is not true for general integer programs.) It is the integer version of the transportation problem. To be clear, the problem is: given a $p \times p$ classical transportation polytope $P$ defined by the marginals $u = (1,\ldots,1) \in \R^p$ and $v = (1,\ldots, 1) \in \R^p$ and a $p \times p$ matrix $\xi = (\xi_{i,j})_{i\in [p], j \in [p]}$ with real entries, solve the integer program
\begin{equation*}
\text{Minimize } \sum_{i\in[p]} \sum_{j\in[p]} \xi_{i,j}x_{i,j} \text{ subject to } x = (x_{i,j}) \in P \cap \Z^{p \times p}.
\end{equation*}
This problem is also known as the \defn{linear assignment problem}. See the recent book~\cite{Burkard:Assignment-Problems} by Burkard et al. on the assignment problem. Since $u=v=(1,1,\ldots,1) \in \R^p$, it follows that $P \cap \Z^{p \times p} = P \cap \{0,1\}^{p \times p}$, and therefore feasible solutions of this integer program are $0$-$1$ matrices. In fact, by Theorem~\ref{theorem:birkhoffvonneumann}, they are permutation matrices. The problem is called the assignment problem since it models the situation where $p$ people need to be assigned to $p$ jobs. In the model, each person is assigned exactly one job, and each job is performed by exactly one person. The $i$th person is assigned to the $j$th job exactly when $x_{i,j}=1$. Otherwise, $x_{i,j}=0$. By Lemma~\ref{lemma:vertexBx}, the problem can be converted into one of finding a maximum weight matching in a weighted bipartite graph.

\subsection{A survey of transportation polytopes and the transportation problem}

Chapter~\ref{chapter:transportation} presents our new results on  transportation polytopes. In this section, we survey the state of the art in transportation polytopes and the transportation problem. The survey~\cite{Vlach:SolutionsPlanarTransportation} by \given{Milan }Vlach, the 1984 monograph by Yemelichev, Kovalev, and Kratsov (see~\cite{Yemelichev:Polytopes}), and the paper~\cite{Klee:FacesTransportation} by Klee and Witzgall summarized the status of transportation polytopes up to the 1980s. See also the recent survey of De Loera and Onn (see~\cite{De-Loera:TransportationPolytopesSurvey}) on recent advances in transportation polytopes.

\subsubsection*{Applications of transportation polytopes to statistics}

Transportation polytopes appear in statistics, where they are called \defn{contingency tables}. See the survey by Diaconis and Gangolli (see~\cite{Diaconis:Rectangular}). A major practical application for transportation polytopes is in the study of statistical data security. The table entry security problem has been studied by Chowdhury et al. (see~\cite{Chowdhury:Disclosure-detection}), Duncan (see~\cite{Duncan:Disclosure} and~\cite{Duncan:DisclosureMultiple}) and Irving (see~\cite{Irving:DataSecurity}), among others. Heuristic algorithms that study table entry security have been studied by Fienberg (see~\cite{Fienberg:Frechet-and-Bonferroni}) and Buzzigoli and Giusti (see~\cite{Buzzigoli:LowerUpper}). In~\cite{Cox:ContingencyBounds}, Cox shows that these heuristic algorithms are not exact and presents an alternative algorithm.

Table entry security problems are related to the study of magic squares and lattice points in transportation polytopes. In~\cite{Cox:PropertiesStatistical}, Cox characterizes conditions needed on multi-way tables to guarantee that continuous bounds on integer-valued entries exist and are integral. The statistical data security problem is also related to the study of bounds on a particular entry (see, e.g.,~\cite{Dobra:Bounds-for-cell} by Dobra and Fienberg).

In~\cite{DeLoera:ComplexityStatistical}, De Loera and Onn give a complete description of the complexity of existence, counting, and entry-security in multi-way tables. Mehta and Patel (see~\cite{Mehta:NetworkAlgorithmFisher}) describe an exact test of significance for the independence of rows and columns of $p \times q$ contingency tables.

\subsubsection*{Transportation polytopes and optimization}

Before discussing the transportation problem, we discuss other questions in optimization theory related to the geometry and combinatorics of transportation polytopes. In~\cite{Pak:FourQuestions}, Pak analyzes the simplex method for the $p$th Birkhoff polytope $B_p$. Pak defines a certain linear functional $\xi$ on $\R^{p \times p}$ so that the $p$th Birkhoff polytope $B_p$ has a $\xi$-monotone path of length $C \cdot n!$, for a universal constant $C > 0$. However, Pak (see~\cite{Pak:FourQuestions}) also shows that the expected average running time of the simplex method on the Birkhoff polytope with respect to $\xi$ is in $O(p \log p)$.

Karp (see~\cite{Karp:Reducibility-Among}) proved that solving assignment programs with an arbitrary cost function on generalized $3$-way $p \times p \times p$ planar Birkhoff polytopes is NP-hard. (For more on NP-hardness and other complexity classes, see~\cite{Cormen:Algorithms}.)

De~Loera, Hemmecke, Onn, and Weismantel (see~\cite{DeLoera:NFoldInteger}) prove there is a polynomial time algorithm that, given $s$ and fixing $p$ and $q$, solves integer programming problems of $3$-way transportation polytopes of size $p \times q \times s$ defined by $2$-marginals, over any integer objective.

In~\cite{De-Loera:Convex-Integer} De~Loera, Hemmecke, Onn, Rothblum, and Weismantel present a polynomial oracle-time algorithm to solve convex integer maximization over $3$-way planar transportation polytopes, if two of the margin sizes remain fixed. More recently (in~\cite{Hemmecke:A-polynomial-oracle-time}) Hemmecke, Onn, and Weismantel prove a similar result for convex integer minimization.

In~\cite{Cuturi:Permanents-Transportation}, Cuturi analyzes the Monge-Kantorovich distance between the vectors $u$ and $v$ in terms of the transportation polytope with margins $u$ and $v$.

\subsubsection*{Solving the transportation and assignment problems}

Many researchers have studied algorithms to solve the transportation problem (see, e.g.,~\cite{Koopmans:ModelTransportation}, \cite{Proll:TransportationVariants}, \cite{Sreenivas:Probabilistic-Transportation}, \cite{Srinivasan:Accelerated-algorithms}, and~\cite{Totshek:An-Investigation-of-Real-Time}). See the recent book~\cite{Burkard:Assignment-Problems} for more on the assignment problem. In 1955, Kuhn (see~\cite{Kuhn:The-Hungarian-method}) introduced the Hungarian method, a combinatorial algorithm for solving the assignment problem, named in honor of Hungarian mathematicians K\H{o}nig and Egerv\'ary. Later, Kuhn (see~\cite{Kuhn:Variants-of-the-Hungarian}) developed a geometric model for variants of the algorithm. In 1957, Munkres (see~\cite{Munkres:AlgorithmsAssignmentTransportation}) proved that the algorithm is strongly polynomial. Edmonds proved (see~\cite{Edmonds:Paths-Trees}) Kuhn's original algorithm was in $O(n^4)$. Edmonds and Karp (see~\cite{Edmonds:improvementsAlgorithm}) and independently Tomizawa (see~\cite{Tomizawa:n3}) modified the algorithm to $O(n^3)$. (In~\cite{Balinski:A-primal-method}, Balinski and Gomory present an algorithm which is dual to the Hungarian algorithm.) An extension to the Munkres Hungarian algorithm is presented in ~\cite{Fran:An-Extension-of-the-Munkres}. Variants of the Hungarian algorithm are studied in~\cite{Klafszky:Variants-of-the-Hungarian}. In~\cite{Gill:The-k-assignment-polytope}, Gill and Linusson study a multi-way analogue of the assignment polytope. In~\cite{Queyranne:MultiIndexTransportation}, Queyranne and Spieksma describe formulations and applications for multi-index transportation problems. In~\cite{Ford:Solving-the-Transportation}, Ford and Fulkerson describe a simplified computing procedure for the transportation problem based on Kuhn's combinatorial algorithm for the assignment problem and a labeling process for solving maximal flow problems in networks. See the books~\cite{Edmonds:Matching} and~\cite{Lovasz:Matching} for more details on the matching problem. Hoffman (see~\cite{Hoffman:What-the-transportation}) summarizes significant advances in research on the transportation problem. 
In~\cite{Srinivasan:Benefit-cost-analysis}, Srinivasan and Thompson present a computer code for the transportation problem that is more efficient than the primal-dual method. In~\cite{Harris:A-Code-for-the-Transportation}, \given{Britton }Harris describes a variant of the code by Srinivasan and Thompson which greatly reduces the computation time for long and narrow transportation problems. A summary of computational results on various algorithms for the assignment problem is given in~\cite{Florian:An-experimental-evaluation}. A similar study for transportation problems is given in~\cite{Lee:University-of-California}. M\"uller-Merbach (see~\cite{Muller-Merbach:An-improved-starting} and~\cite{Muller-Merbach:VerschiedeneNaherungsverfahren}) describes an improved starting algorithm for the Ford-Fulkerson method.

In~\cite{Bertsekas:The-auction-algorithm}, \given{Dimitri }Bertsekas and \given{David }Castanon solve the transportation problem by converting it into an assignment problem and using a generalization of Bertsekas' auction algorithm. Each iteration of the auction algorithm of Bertsekas never decreases the linear functional being maximized. \given{Dimitri }Bertsekas and \given{David }Castanon (see~\cite{Bertsekas:The-auction-algorithm}) show that this modified version of the auction algorithm is very efficient for certain types of transportation problems.

Since the transportation problem is a linear programming problem, one can solve the problem using the simplex method. Dantzig (see~\cite{Dantzig:SimplexTransportation}) studied the behavior of the simplex method on transportation problems. In \cite{Arsham:A-simplex-type}, Arsham and Kahn present a simplex-type alternative to the stepping stone method of Charnes (see~\cite{Charnes:The-stepping-stone-method}) to solve the transportation problem. In~\cite{Papamanthou:Computational-experience}, Papamanthou et al. present an experimental computational study to compare the classical primal simplex algorithm and the exterior point algorithms for the transportation problem. In~\cite{Ji:A-dual-matrix-approach}, the authors describe what the so-called dual matrix approach to solving the transportation problem.

In~\cite{Nunkaew:Multiobjective-Programming}, \given{Wuttinan }Nunkaew and \given{Busaba }Phruksaphanrat solve a particular two-objective version of the transportation problem using lexicographic goal programming. For more on multi-objective programming, see the book~\cite{Ehrgott:Multicriteria-Optimization} by Ehrgott.

\subsubsection*{Solving special cases of the transportation problem}

Special cases of the transportation problem can sometimes be solved faster than the general transportation problem. For example, in~\cite{Barnes:TransporationUpper}, Barnes and Hoffman describe two special families of transportation problems that can be solved using a greedy algorithm. Glicksman et al. (see~\cite{Glicksman:Coding-the-transportation}) considers the special case for which the number of demands is many times greater than the number of supplies. The transportation problem with exclusionary side constraints has been studied by Goossens and Spieksma (see~\cite{Goossens:The-Transportation-Problem}) and Sun (see~\cite{Sun:The-transportation-problem}).

Signature algorithms solve certain classes of transportation problems in a number of steps bounded by the diameter of the dual polyhedron. Using signature algorithms, Balinski and Rispoli (see~\cite{Balinski:SignatureTransportation}) prove the Monotone Hirsch Conjecture holds for a certain class of classical transportation polytopes called signature polytopes.

\subsubsection*{Solving generalizations and variants of the transportation problem}

Many generalizations and variants of the standard transportation problem have been studied. Appa (see~\cite{Appa:Transportation}) surveys the solutions to 81 practical variants of the transportation problem by considering negative costs, and variants obtained by inequality constraints on the margins. In this section, we survey many of the generalizations and variants of the transportation problem.

In~\cite{Bammi:A-generalized-indices-transportation}, Bammi formulates a generalized-indices transportation problem and presents an algorithm for its solution. In~\cite{Baiou:StableAllocationProblem}, Ba\"iou and Balinski study the stable allocation problem, which generalizes $0$-$1$ stable matching problems to real-valued quantities.

In~\cite{Finkelshtein:An-iterative-method}, Finkelshtein modifies the standard $p \times q$ transportation problem by adding a decision variable $y \geq 0$, a $p \times q$ matrix $Z$, a constant $z$, and the additional constraint $\sum_{i\in[p],j\in[q]} Z_{i,j}x_{i,j} - y = z$. Finkelshtein (see~\cite{Finkelshtein:An-iterative-method}) presents an iterative method for its solution. In the note~\cite{Shkurba:On-the-solution-of-the-transportation}, Shkurba presents an exact method to solve this modified transportation problem. Lourie (see~\cite{Lourie:Topology-and-computation}) analyzes the stepping-stone method for solving a generalized transportation problem.

In general, the quadratic assignment problem is NP-hard (see~\cite{Sahni:P-complete-approximation}). In~\cite{Onn:ConvexCombinatorial}, Onn and Rothblum show that certain instances of the positive definite quadratic assignment problem are tractable by converting these problems into convex combinatorial optimization problems.

The \defn{transportation paradox} is the phenomenon in the transportation problem that the largest total transportation cost may not occur at the highest total quantities shipped. The paradox was discovered by Charnes and Klingman (see~\cite{Charnes:The-more-for-less-paradox}), and independently by Szwarc (see~\cite{Szwarc:The-Transportation-Paradox}). A sufficient condition for the paradox to occur was studied by Adlakha and Kowalski (see~\cite{Adlakha:A-quick-sufficient}). In~\cite{Storoy:TransportationParadox}, Stor{{\o}}y extends the work of Deineko, Klinz, and Woeginger (see~\cite{Deineko:Which-matrices}), and which present instances of the transportation problem that are immune to the transportation paradox. Liu (see~\cite{Liu:The-total-cost}) investigates the paradox when the demand and supply quantities are varying. Arsham (see~\cite{Arsham:Postoptimality-analyses}) studies the paradox via post-optimality analysis methods.

In~\cite{Lin:Solving-the-Transportation}, Lin introduces a genetic algorithm to solve transportation problem with fuzzy objective functions. Lin and Tsai (see~\cite{Lin:A-TWO-STAGE-GENETIC}) investigate solving the transportation problem with fuzzy demands and fuzzy supplies using a two-stage genetic algorithm. Li, Ida, and Gen (see~\cite{Li:Improved-genetic}) present an improved genetic algorithm for solving the fuzzy multiobjective solid transportation problem. When the transportation problem is associated with additional fixed cost for establishing the facilities or fulfilling the demand of customers, then it is called fixed charge transportation problem. Jo, Li, and Gen (see~\cite{Jo:Nonlinear-fixed}) apply the spanning tree-based genetic algorithm approach for solving the non-linear fixed charge transportation problem. The fixed-charge problem is a non-linear programming problem of practical interest in business and industry. One of its variations, the fixed-charge transportation problem, where fixed cost is incurred for every route that is used in the solution, along with the variable cost that is proportional to the amount shipped is studied by Kowalski and Lev (see~\cite{Kowalski:On-step-fixed-charge}).

In~\cite{Currin:Transportation-Problems}, Currin studies the transportation problem with inadmissible routes. In~\cite{Kuno:A-decomposition-algorithm}, Kuno and Utsunomiya address a method for solving two classes of production-transportation problems with concave production cost. In~\cite{Imam:Solving-Transportation}, Imam et al. describe a method of solving the transportation problem based on the object-oriented programming model.
In~\cite{Hartwick:A-Generalization-of-the-Transportation}, \given{John }Hartwick generalizes the Hitchcock-Koopmans analysis to take account of variable supplies and demand for a product at diverse geographically-separated locations.

\subsubsection*{Combinatorics of transportation polytopes}

The geometric combinatorial structure of transportation polytopes is a very active area of study. In~\cite{Klee:FacesTransportation}, Klee and Witzgall examine the combinatorial structure (and in particular, the number of vertices) of transportation polytopes. Klee and Witzgall conjectured (see~\cite{Klee:FacesTransportation}) and Bolker proved (see~\cite{Bolker:Transportation-polytopes}) that when $p$ and $q$ are relatively prime, the maximum possible number of vertices among $p \times q$ classical transportation polytopes is achieved by the generalized $p \times q$ Birkhoff polytope. In~\cite{Pak:Number}, Pak presents an efficient algorithm for computing the $f$-vector of the generalized Birkhoff polytope of size $p \times q$ when $q=p+1$. Hartfiel (see~\cite{Hartfiel:Full-patterns}) and Dahl (see~\cite{Dahl:Transportation-matrices}) describe the supports of certain feasible points in classical transportation polytopes.

The diameters of classical transportation polytopes have been studied extensively. Balinski and Rispoli (see~\cite{Balinski:SignatureTransportation}) explain why Kravtsov's supposed proof (see~\cite{Kravtsov:A-proof-of-the-maximal}) of the Hirsch Conjecture for transportation polytopes is incomplete. In~\cite{Cryan:RandomTransportation}, Cryan et al. analyze a natural random walk on the graph of the transportation polytopes. (In~\cite{Pak:FourQuestions}, Pak proves that nearest neighbor random walk does not mix fast on all $0$-$1$ polytopes, but that it does mix fast on Birkhoff polytopes.) In~\cite{Balinski:DualTransportation}, Balinski proves that the Hirsch Conjecture holds and is tight for dual transportation polyhedra. In~\cite{Balinksi:Faces-of-dual}, Balinski and Russakoff compute $f$-vectors of dual transportation polyhedra by analyzing partitions of $p+q-1$. McKeown and Rubin (see~\cite{McKeown:Adjacent-vertices}) and Oviedo (see~\cite{Oviedo:Adjacent-Extreme}) analyze the adjacencies of vertices in transportation polytopes.

Dyer and Frieze's (see~\cite{Dyer:RandomWalks}) polynomial diameter bound for totally unimodular polytopes based on random walks applies to classical transportation polytopes. Yemelichev, Kovalev, and Kravtsov (see Theorem 4.6 in Chapter 6 of~\cite{Yemelichev:Polytopes}) and Stougie (see~\cite{Stougie:PolynomialBound}) present improved polynomial bounds. This was improved to a quadratic bound by van~den~Heuvel and Stougie in~\cite{vandenHeuvel:QuadraticBound}. The first linear upper bound on the diameter classical transportation polytopes was proved in 2006 by \given{Graham }Brightwell, \given{Jan }van den Heuvel, and \given{Leen }Stougie (see~\cite{Brightwell:LinearTransportation}), although this was recently improved by Hurkens (see~\cite{Hurkens:Diameter4p}). 

In~\cite{Balinski:DualTransportation}, \given{Michel }Balinski proved that the Hirsch Conjecture holds for the bounded polytopes resulting from the intersection of a dual transportation polyhedron with a certain hyperplane. In joint work with \given{Jes\'us }De~Loera, \given{Shmuel }Onn, and \given{Francisco }Santos (see~\cite{DeLoera:GraphsTP} and Section~\ref{section:axialdiam} in this thesis), we prove a quadratic bound on the diameter of $3$-way axial transportation polytopes (see Theorem~\ref{theorem:main}).

In~\cite{Vlach:SolutionsPlanarTransportation}, Vlach surveys conditions for the non-emptiness of the $3$-way planar transportation polytope. Schell (see~\cite{Schell:Distribution-of-a-product}), Haley (see~\cite{Haley:The-multi-index-problem} and~\cite{Haley:Note-on-the-Letter}), Moravek and Vlach (see~\cite{Moravek:On-the-necessary-conditions} and~\cite{Moravek:On-Necessary-ConditionsClass}), and Smith (see~\cite{Smith:Further-necessary},~\cite{Smith:A-Procedure-for-Determining}, and~\cite{Smith:On-the-Moravek-and-Vlach}) prove necessary but not sufficient conditions on the margins for a $3$-way transportation polytope defined by $2$-marginals to be non-empty. The note~\cite{Emelichev:Multi-index-planar} presents criterion for a polytope to belong to the class of multi-index planar transportation polytopes with a maximum number of vertices. Kravtsov et al. (see~\cite{Kravtsov:On-some-properties},~\cite{Kravtsov:Asymptotics-of-multi-index}, and~\cite{Kravtsov:PolyhedralCombinatoricsMultiTP}) investigates combinatorial properties of multi-way transportation polytopes.

The $3$-way transportation polytopes are very interesting because of the following universality theorem of \given{Jes\'us }De~Loera and \given{Shmuel }Onn in~\cite{DeLoera:Universality}.
\begin{theorem}\label{theorem:universality}
Let $P$ be a rational convex polytope. Then, there is a $3$-way planar transportation polytope $\widetilde{P}$ isomorphic to $P$. Moreover, there is a $3$-way axial transportation polytope $\widetilde{P}$ which has a face $F$ isomorphic to $P$.
\end{theorem}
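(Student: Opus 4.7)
The plan is to prove the two assertions in sequence: first build a $3$-way planar transportation polytope linearly isomorphic to $P$, then leverage this construction to realize $P$ as a face of a $3$-way axial transportation polytope. Throughout, the governing idea is that the $2$-marginal (respectively $1$-marginal) conditions of a $3$-way table of non-negative reals are rich enough to encode arbitrary rational linear equations once we allow ourselves enough auxiliary cells.

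By Lemma~\ref{lemma:standardform} and the rationality of $P$, I would begin by writing $P$ in the partition form $\{x \in \R_{\geq 0}^c \mid Ax = b\}$ with $A$ an integer $r \times c$ matrix and $b$ an integer vector; since $P$ is bounded we may also adjoin integer upper bounds $x_i + s_i = U$. The next step is a preprocessing reduction that replaces each variable $x_j$ by its binary (or small-radix) digit expansion $x_j = \sum_\ell 2^\ell y_{j,\ell}$, which has the effect of turning the constraint matrix of $P$ into a $0$/$1$ matrix at the cost of introducing many more auxiliary variables. After this reduction, every equation of the system says that a certain set of variables sums to a prescribed integer, which is exactly the shape of a line-sum condition in a $3$-way table.

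The central construction then places these variables (together with additional ``filler'' variables used to complete the line sums) into distinguished cells of a $3 \times q \times s$ table, so that each equation $\sum_j A_{kj}x_j = b_k$ becomes a $2$-marginal $U_{j,k}$, $V_{i,k}$, or $W_{i,j}$ of the table. The margins are dictated by the constants $b$ and by the slack bounds $U$. The verification has two parts: a feasibility check, that every feasible $x \in P$ lifts to a non-negative completion of the table; and an injectivity check, that the projection from the table onto the coordinates encoding $x$ is a bijection onto $P$. The latter is what guarantees that the resulting planar transportation polytope is \emph{isomorphic} to $P$, not merely a projection.

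For the axial statement, the idea is to observe that $1$-marginals can enforce a face condition which in turn simulates $2$-marginals. Concretely, I would construct an axial $p \times q \times s$ polytope whose $1$-marginals $u,v,w$ are chosen so that the maximum of each line sum is attained only when a prescribed collection of cells vanishes; the face of the axial polytope obtained by forcing those cells to $0$ is then cut out precisely by the $2$-marginal conditions of the planar polytope built above, so $P$ appears as a face. The main obstacle, and where most of the care is needed, is the encoding in the previous paragraph: handling arbitrary integer coefficients via digit expansion is routine, but arranging the cells of the $3$-way table so that the auxiliary entries are \emph{uniquely determined} by the variables of $P$, and simultaneously so that all required line sums balance, demands careful combinatorial bookkeeping of which cells play which role.
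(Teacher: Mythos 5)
First, note that the paper does not prove this statement at all: Theorem~\ref{theorem:universality} is quoted as a known result of De~Loera and Onn, with the proof living in \cite{DeLoera:Universality} (see also \cite{DeLoera:UniversalitySlim}). So your proposal can only be measured against that cited proof, whose broad architecture (preprocess the system, encode equations as line sums of a $3$-way array, then realize planar polytopes as faces of axial ones) your outline does echo. The problem is that the one step you treat as routine is precisely the content of the theorem, and as you state it, it fails. Digit expansion does \emph{not} turn $Ax=b$ into a system in which ``every equation says that a certain set of variables sums to a prescribed integer'': writing $x_j=\sum_\ell 2^\ell y_{j,\ell}$ forces you to carry linking relations of the form $y_{j,\ell+1}=2y_{j,\ell}$ (equivalently, relations with a coefficient $2$ or a $-1$), which are not line-sum conditions, and no further digit trick removes them, since enforcing equality of two cells is itself not of sum-to-constant form. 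If your claim were true, the theorem would be nearly immediate; the actual proof in \cite{DeLoera:Universality} spends its effort exactly here, reducing coefficients only to $\{0,1,2\}$, then encoding such systems as $2$-way transportation polytopes with entry-wise \emph{upper bounds} (where the row/column structure plus capacities can simulate the coefficient-$2$ and equality relations), and finally showing that every capacitated $2$-way transportation polytope is a slim $3$-way planar transportation polytope. Your proposal never supplies the cell layout and margin choices that make any of this work, and ``careful combinatorial bookkeeping'' is where the theorem lives, not an afterthought.

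The axial half has a similar issue, plus a conceptual wobble. Since the only facet-defining inequalities of an axial transportation polytope are $x_{i,j,k}\ge 0$, a face is obtained simply by declaring a set of cells to be zero; there is no need for (and no clear meaning to) choosing $1$-marginals ``so that the maximum of each line sum is attained only when a prescribed collection of cells vanishes.'' What actually has to be proved is that one can design a larger array and an explicit set of forbidden cells so that, on the face where those cells vanish, the plane sums ($1$-marginals) force exactly the line sums ($2$-marginals) of the planar polytope you built in the first half — again an explicit combinatorial construction that your sketch defers. As it stands, then, the proposal reproduces the strategy of the cited proof only at the level of headings, and the two encoding constructions that constitute the proof are missing, with the stated reduction to a $0$--$1$ system being incorrect.
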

Isomorphic here means that, in particular, the polytope $\widetilde{P}$ or its face $F$ have the same face poset as $P$. (The polytope $\widetilde{P}$ is presented in the form shown in Lemma~\ref{lemma:standardform}.) The result in~\cite{DeLoera:Universality} also says that, given the polytope $P$, there is a polynomial time algorithm to construct $\widetilde{P}$ and the isomorphism mentioned. (See also~\cite{DeLoera:MarkovBases} and~\cite{DeLoera:UniversalitySlim}.)

In~\cite{Bulut:An-axial-four-index}, Bulut and Bulut study the axial $4$-way transportation polytopes and study algebraic characterizations of them. In~\cite{Bulut:Construction-and-algebraic}, the authors describe a network flow problem as a planar transportation polytope problem and solve the problem in terms of eigenvectors of certain matrices.

Bolker (see~\cite{Bolker:Simplical-Geometry}) defines an analogue of the support graph $B(x)$ for $\omega$-way transportation polytopes and studies its homological properties. (See~\cite{Hatcher:AlgebraicTopology} for an in-depth treatment of homology.) Lenz (see~\cite{Lenz:Toric-Ideal}) proves a degree bound on a certain reduced Gr\"obner basis for classical transportation polytopes.

In~\cite{DeLoera:EffectiveLatticePointCounting}, De Loera et al. report on, among other things, the computational results of counting lattice points in multi-index transportation polytopes using the software {\tt LattE} (see~\cite{latte}). In~\cite{Haase:Grobner-Basis}, Haase and Paffenholz prove that the toric ideals of every $3 \times 3$ transportation polytope $T$ is quadratically generated, if $T$ is not a multiple of the third Birkhoff polytope $B_3$.

\subsubsection*{Birkhoff polytopes}

The computation of volumes and triangulations of the Birkhoff polytope is related to the problem of generating a random doubly stochastic matrix (see~\cite{Chan:VolumePolytope}). Here, we survey past results on the triangulations, volumes, and lattice point enumeration of Birkhoff polytopes. In Section~\ref{section:regulartriangulationsBirkhoff}, we present our new results on the existence of non-regular triangulations of Birkhoff polytopes.

The explicit volume of the $p$th Birkhoff polytope $B_p$ is known (see~\cite{Pixton:The-Volumes-of-Birkhoff}) up to $p = 10$. In~\cite{Pak:FourQuestions}, Pak proves that the volume of the Birkhoff polytope is equal to the volume of a different polytope. \given{E. Rodney }Canfield and \given{Brendan }McKay (see~\cite{Canfield:VolumeBirkhoff}) present an asymptotic formula for the volume of the $p$th Birkhoff polytope $B_p$.

In~\cite{De-Loera:A-generating-function}, De~Loera, Liu, and Yoshida present a generating function for the number of semi-magic squares. Using this generating function, De~Loera et al. (see~\cite{De-Loera:A-generating-function}) present formulas for the coefficients of the Ehrhart polynomial of the $p$th Birkhoff polytope $B_p$, and a combinatorial formula for the volume of the $p$th Birkhoff polytope $B_p$ for all $p$. Barvinok (see~\cite{Barvinok:AsymptoticEstimatesNumberTransportation}) presents an asymptotic upper and lower bounds for the volumes of $p \times q$ classical transportation polytopes and the number of $p \times q$ semi-magic rectangles. In~\cite{Carlitz:EnumerationSymmetricArrays}, Carlitz describes lattice points of dilations of the Birkhoff polytope using exponential generating functions.

Counting magic squares and lattice points in (dilations of) Birkhoff polytopes is useful for computing their volumes. This problem has been studied by Ahmed (see~\cite{Ahmed:Algebraic-Combinatorics} and~\cite{Ahmed:Polytopes-of-Magic}), Beck and Pixton (see~\cite{Beck:EhrhartBirkhoff}), Beck, Cohen, Cuomo, and Gribelyuk (see~\cite{Beck:NumMagic}), Chan and Robbins (see~\cite{Chan:VolumePolytope}), Diaconis and Gamburd (see~\cite{Diaconis:Random-Matrices}), Halleck (see~\cite{Halleck:MagicSquaresDiophantine}), Hemmecke (see~\cite{Hemmecke:On-the-computation-of-Hilbert}), and Stanley (see~\cite{Stanley:LinearHomDioMagicLabel} and~\cite{Stanley:Magic-labelings}), among others.

In~\cite{Brualdi:DStochasticI}, Brualdi and Gibson use the permanent function to determine geometric properties of the Birkhoff polytope. In~\cite{Brualdi:DStochasticII}, Brualdi and Gibson study the graph of the Birkhoff polytope. In~\cite{Brualdi:DStochasticIII}, Brualdi and Gibson investigate the affine and combinatorial properties of the Birkhoff polytope. In~\cite{Brualdi:DStochasticIV}, Brualdi and Gibson investigate the extreme points, faces and their dimensions of the convex polytope of doubly stochastic matrices which are invariant under a fixed row and column permutation. In~\cite{Escolano:Birkhoff-polytopes}, Escolano et al. establish a link between Birkhoff polytopes and heat kernels on graphs.

Additional results are known for subpolytopes of the Birkhoff polytope. One can consider permutation polytopes, obtained as the convex hull of some vertices of a Birkhoff polytope. (Note that this notion of permutation polytope is distinct from the permutation polytopes of Billera and Sarangarajan in~\cite{Billera:CombinatoricsPermutationPolytopes}.) In~\cite{Onn:Geometry-Complexity}, Onn analyzes the geometry, complexity and combinatorics of permutation polytopes. In~\cite{Baumeister:On-permutation-polytopes}, Baumeister et al. study the faces and combinatorial types that appear in small permutation polytopes. Brualdi (see~\cite{Brualdi:Convex-polytopes}) investigates the faces of the convex polytope of doubly stochastic matrices which are invariant under a fixed row and column permutation. The \defn{$p$th tridiagonal Birkhoff polytope} is the convex hull of the vertices of the Birkhoff polytope whose support entries are in $\{(i,j) \in [p] \times [p] \mid |i-j| \leq 1\}$. In~\cite{da-Fonseca:Fibonacci-numbers}, da Fonseca et al. count the number of vertices of tridiagonal Birkhoff polytopes. In~\cite{Costa:The-number-of-faces}, Costa et al. present a formula that counts the number of faces tridiagonal Birkhoff polytopes.

Costa et al. (see~\cite{Costa:The-diameter-of-the-acyclic}) define a \defn{$p$th acyclic Birkhoff polytope} to be any polytope that is the convex hull of the set of matrices whose support corresponds to (some subset of) the edges (including loops) of a fixed tree graph. In~\cite{Costa:Face-counting}, Costa et al. count the faces of acyclic Birkhoff polytopes. In~\cite{Costa:The-diameter-of-the-acyclic}, Costa et al. prove an upper bound on the diameter of acyclic Birkhoff polytopes, which generalizes the diameter result of Dahl in~\cite{Dahl:Tridiagonal-doubly}.

Let the \defn{$p$th even Birkhoff polytope} be the convex hull of the $\frac12p!$ permutation matrices corresponding to even permutations. In~\cite{Cunningham:On-the-even-permutation}, Cunningham and Wang confirm a conjecture of Brualdi and Liu (see~\cite{Brualdi:The-polytope-of-even}) that the $p$th even Birkhoff polytope cannot be described as the solution set of polynomially many linear inequalities. In~\cite{Hood:Some-facets}, Hood and Perkinson describe some of the facets of the even Birkhoff polytope and prove the conjecture of Brualdi and Liu (see~\cite{Brualdi:The-polytope-of-even}) that the number of facets of the $p$th even Birkhoff polytope is not polynomial in $p$. In~\cite{Below:On-a-theorem-of-L.-Mirsky}, von Below shows that the condition of Mirsky given in~\cite{Mirsky:Even-doubly} is not sufficient for determining membership of a point in an even Birkhoff polytope. Cunningham and Wang (see~\cite{Cunningham:On-the-even-permutation}) also investigate the membership problem for the even Birkhoff polytope. In~\cite{Below:Even-and-odd-diagonals}, von~Below and R\'enier describe even and odd diagonals in even Birkhoff polytopes.

In~\cite{Cho:Convex-polytopes}, Cho and Nam introduce a signed analogue of the Birkhoff polytope.


\section[Enumeration of partition polytopes]{Enumeration of partition polytopes with fixed constraint matrix}\label{section:enumerationofpartition}

In this section, we discuss the necessary mathematical background to justify the proof of the following theorem:
\begin{theorem}
Given a fixed $r \times c$ real-valued matrix $A$, there is a finite algorithm that outputs a list of simple polytopes $\{P_1,\ldots,P_z\}$ so that \emph{any} simple polytope of the form
\begin{equation*}
P = \{x \in \R^c \mid Ax = b, x \geq 0\}
\end{equation*}
is combinatorially equivalent to one of the $P_i$'s.
\end{theorem}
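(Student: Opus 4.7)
The plan is to exploit the chamber complex of $\cone(A)$. First I would enumerate all bases of $A$, i.e. the maximal linearly independent subsets of columns. There are at most $\binom{c}{r'}$ such bases, where $r'=\operatorname{rank}(A)$, so this step is finite. By Lemma~\ref{lemma:partitionvertices}, a basis $\mathcal{A}$ contributes a vertex of $P_b = \{x \in \R^c \mid Ax=b, x \geq 0\}$ exactly when the corresponding basic solution is feasible, equivalently when $b \in \cone(\mathcal{A})$. Moreover, when $P_b$ is simple, the correspondence between vertices and feasible bases is a bijection, so the vertices are in explicit bijection with $\{\mathcal{A} : b \in \cone(\mathcal{A})\}$.

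Next I would form the common refinement of the finite arrangement $\{\cone(\mathcal{A})\}_\mathcal{A}$ of simple cones sitting inside $\cone(A)$; this is a polyhedral fan with finitely many relatively open cells, the chambers $C_1,\ldots,C_N$. The crucial point is that if $b$ lies in the relative interior of a full-dimensional chamber $C_k$, then the combinatorial type of $P_b$ depends only on $k$. Indeed, the set of feasible bases is constant on $C_k$, and for simple partition polytopes the whole face lattice is determined by this set: two vertices $v_\mathcal{A}$ and $v_{\mathcal{A}'}$ are joined by an edge exactly when $\mathcal{A}$ and $\mathcal{A}'$ differ by a single pivot (a condition intrinsic to the bases), and more generally a subset $I\subseteq [c]$ of the non-negativity inequalities cuts out a face of $P_b$ of the expected codimension $|I|$ iff $[c]\setminus I$ contains a feasible basis. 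Since these conditions are chamber-constant, the face lattice $L(P_b)$ is determined by $C_k$.

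The algorithm follows: enumerate the bases, compute the chamber decomposition by intersecting the finitely many simple cones, pick one representative $b_k$ in the relative interior of each top-dimensional chamber, construct $P_{b_k}$ (say by listing its vertices $\mathcal{A}^{-1}b_k$ over feasible bases $\mathcal{A}$, extended by zero coordinates), test simplicity (each vertex must lie on exactly $r$ tight non-negativity constraints), and return the resulting list, pruning combinatorial duplicates using any standard face-lattice isomorphism test. All subroutines are finite.

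The main obstacle is to justify rigorously that the combinatorial type is constant on the relative interior of a chamber. The cleanest way is a continuity argument: as $b$ varies continuously inside the relative interior of $C_k$, each vertex $v_\mathcal{A}(b)$ (defined by $\mathcal{A} v_\mathcal{A}(b)_{\mathcal{A}} = b$ and $v_\mathcal{A}(b)_{[c]\setminus \mathcal{A}}=0$) varies continuously and strictly satisfies $v_\mathcal{A}(b)>0$ on the support of $\mathcal{A}$, so no vertex is created, destroyed, or coalesced with another; combined with the fact that the graph of a simple polytope determines its entire face lattice (Blind--Mani--Kalai), this gives constancy of $L(P_b)$ on $C_k$. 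Chambers of lower dimension would correspond precisely to degeneracies (non-simple $P_b$) and are therefore excluded by the hypothesis of the theorem.
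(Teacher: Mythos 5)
Your overall strategy is the same as the paper's: decompose $\cone(A)$ into the chamber complex induced by the cones of bases, observe that the set of feasible bases (and hence, for simple $P_b$, the vertex--facet data and, via Blind--Mani--Kalai~\cite{Blind:On-puzzles-and-polytope,Kalai:A-simple-way-to-tell}, the whole face lattice) is constant on an open chamber, and pick one representative $b$ per chamber; this is exactly Proposition~\ref{proposition:everychamber}, and your justification of chamber-constancy is in fact spelled out in more detail than the paper's. Where you diverge is in how the chambers are enumerated: you form the common refinement of the finitely many basis cones directly, whereas the paper dualizes via a Gale transform $\widehat{A}$ and enumerates the regular triangulations of $\widehat{A}$ by bistellar flips (Lemma~\ref{lemma:secondary}), which is both what makes the enumeration implementable in practice ({\tt TOPCOM}, {\tt transportgen}) and what reduces regularity checks to linear programming feasibility. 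For the existence of \emph{a} finite algorithm, either route suffices.

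There is, however, one step in your argument that is not correct as stated: the closing claim that lower-dimensional chambers ``correspond precisely to degeneracies (non-simple $P_b$)'' and are therefore excluded by hypothesis. A vector $b$ lying on a wall of the chamber complex can still define a simple polytope, because the extra vanishing coordinates at a degenerate basic solution need not contribute extra facets. A concrete instance from the paper's own setting: for $A$ the constraint matrix of $2\times 2$ classical transportation polytopes and all margins equal to $1$, the polytope $P_b$ is the Birkhoff polytope $B_2$, a segment and hence simple, yet its vertices have support of size $2 < p+q-1$, so they are degenerate basic solutions and $b$ lies on $\partial\Sigma_A$. Your continuity argument only covers $b$ in the relative interior of a full-dimensional chamber, so such simple-but-non-generic $P_b$ are not accounted for. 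The missing ingredient is the observation the paper uses at this point: if $P_b$ is simple, then a sufficiently small perturbation of $b$ does not change the combinatorial type (simplicity gives local stability, with coincident or vertex-tight non-facet constraints $x_i\geq 0$ becoming redundant rather than creating new facets), so any such $b$ can be pushed into an adjacent open chamber whose representative realizes the same combinatorial type. With that supplement, your proof is complete and matches the paper's; without it, the algorithm's output list is still correct, but the stated justification does not cover all simple polytopes of the given form.
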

We will describe how to implement this procedure. To make everything precise, we will illustrate everything using a running example. Our running example for the matrix $A$ will be the constraint matrix $A = A_{2 \times 3}$ of $2 \times 3$ classical transportation polytopes. In particular, using this methodology, we computed a \emph{complete} catalogue of non-degenerate transportation polytopes with small margin sizes:
\begin{theorem}\label{theorem:genericcase}
The following tables (see Appendix~\ref{appendix:catalogTP}) give a complete catalogue of transportation polytopes of small sizes:
\begin{itemize}
\item The only possible numbers $f_0$ of vertices of non-degenerate 
$2 \times 3$, 
$2 \times 4$, 
$2 \times 5$,
$3 \times 3$, and 
$3 \times 4$ 
classical transportation polytopes are those given in Tables~\ref{table:classical-2-3},
\ref{table:classical-2-4}, 
\ref{table:classical-2-5}, 
\ref{table:classical-3-3}, and 
\ref{table:classical-3-4}, respectively.

\item The only possible numbers $f_0$ of vertices of non-degenerate 
$2 \times 2 \times 2$ and 
$2 \times 2 \times 3$ 
axial transportation polytopes are those given in Tables~\ref{table:axial-2-2-2} and~\ref{table:axial-2-2-3}, respectively.

Every non-degenerate $2 \times 2 \times 4$ axial transportation polytope has between $32$ and $504$ vertices.  Every non-degenerate $2 \times 3 \times 3$ axial transportation polytopes has between $81$ and $1056$ vertices.  The number of vertices of non-degenerate $3 \times 3 \times 3$ axial transportation polytopes is at least $729$.

\item The only possible numbers $f_0$ of vertices of non-degenerate 
$2 \times 2 \times 2$ and
$2 \times 3 \times 3$
planar transportation polytopes are those given in Tables~\ref{table:planar-2-2-3} and~\ref{table:planar-2-3-3}, respectively.

Every non-degenerate $2 \times 3 \times 4$ planar transportation polytope has between $7$ and $480$ vertices.
\end{itemize}
\end{theorem}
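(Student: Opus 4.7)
The proof is essentially an application of the enumeration engine provided by the preceding theorem, carried out separately for each table size listed in the statement. The plan is as follows.

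First, fix the constraint matrix $A$ for each of the sizes mentioned. For classical $p\times q$ transportation polytopes this is the $0$-$1$ row-and-column incidence matrix $A_{p\times q}$ illustrated in Example~\ref{example:classicalreindex}; for the $3$-way axial and planar cases one takes the analogous $0$-$1$ matrix encoding the $1$-marginal or $2$-marginal equations, respectively. In each case $A$ is determined by the size parameters up to permutation of rows and columns.

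Second, apply the enumeration algorithm of the preceding theorem to each fixed $A$. The key observation is that, for a fixed constraint matrix, the combinatorial type of the non-empty partition polyhedron
\begin{equation*}
P_b \;=\; \{x\in\R^c \mid Ax=b,\ x\geq 0\}
\end{equation*}
is constant as $b$ varies within the relative interior of a full-dimensional cell of the chamber complex of $\cone(A)$, namely the common refinement of all simple subcones spanned by bases of the column matroid of $A$. By Lemma~\ref{lemma:partitionvertices}, vertices of $P_b$ correspond to bases whose simple cone contains $b$, and $P_b$ is non-degenerate (simple and of maximal possible dimension) exactly when $b$ lies in the relative interior of a chamber, so that each feasible basis is determined uniquely. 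It therefore suffices, for each chamber, to pick a rational representative $b$, explicitly construct $P_b$, and compute its $f$-vector with standard software such as \texttt{polymake}; the resulting vertex counts, collected over all chambers, exhaust the possibilities for non-degenerate polytopes with that $A$.

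Third, assemble the output. For the sizes $2\times 3,\,2\times 4,\,2\times 5,\,3\times 3,\,3\times 4$ classical, $2\times 2\times 2$ and $2\times 2\times 3$ axial, and $2\times 2\times 2$ and $2\times 3\times 3$ planar, the chamber complex has a manageable number of cells and the exhaustive computation yields precisely the entries displayed in Tables~\ref{table:classical-2-3}--\ref{table:planar-2-3-3}. For the remaining listed sizes the chamber enumeration is infeasible, so only bounds are recorded: lower bounds come from exhibiting specific non-degenerate instances (small perturbations of the generalized Birkhoff polytopes of Definition~\ref{definition:generalizedbirkhoffmultiway}, whose vertex counts realize the stated minima), while upper bounds come from the chambers that the computation could reach together with the trivial bound $\binom{c}{r}$ on the number of basic feasible solutions.

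The main obstacle is computational rather than conceptual: the number of chambers of $\cone(A)$ grows very quickly with the size parameters, so the exact enumeration breaks down for $2\times 2\times 4$, $2\times 3\times 3$, and $3\times 3\times 3$ axial polytopes, as well as for $2\times 3\times 4$ planar polytopes, which is precisely the reason the statement reports bounds rather than complete lists in those cases.
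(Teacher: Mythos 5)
Your first two steps are essentially the paper's method: fix the constraint matrix $A$ determined by the size and margin type, and enumerate one marginal vector $b$ per chamber of the chamber complex of $A$ (the paper does this, via Lemma~\ref{lemma:secondary}, by listing the regular triangulations of the Gale transform $\widehat{A}$ with {\tt TOPCOM} and then computing each representative polytope with {\tt polymake}; your direct chamber description is equivalent). For the sizes where the full list of chambers is computable, this reproduces the tables, and your appeal to Lemma~\ref{lemma:partitionvertices} and to the constancy of the combinatorial type within a chamber is exactly the justification used in Proposition~\ref{proposition:everychamber}.

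The gap is in your third step, the sizes where enumeration breaks down ($2\times 2\times 4$, $2\times 3\times 3$, $3\times 3\times 3$ axial and $2\times 3\times 4$ planar). The theorem asserts \emph{universal} bounds: every non-degenerate polytope of those sizes has at least (respectively at most) the stated number of vertices. Exhibiting specific instances (perturbed generalized Birkhoff polytopes) only shows that certain vertex counts are \emph{attained}, which bounds the minimum from above, not from below; and ``chambers the computation could reach'' plus the trivial count of bases cannot produce the stated upper bounds --- for instance, for $2\times 2\times 4$ axial polytopes the trivial bound on basic feasible solutions is $\binom{16}{6}=8008$, far above the claimed $504$. The paper instead obtains these numbers from the universal polytope of Billera--Filliman--Sturmfels: the number of vertices of a non-degenerate $P_b$ equals the number of bases in its chamber, i.e., the value of the linear functional $\psi(x)=\sum_{\mathcal{A}}x_{\mathcal{A}}$ at the corresponding $0$-$1$ vertex of $U_A$, so minimizing and maximizing $\psi$ over $U_A$ (in practice, over an explicit inequality description of its integer points, solved with integer programming or its LP relaxation) yields rigorous lower and upper bounds valid for \emph{all} chambers, including those never visited. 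Without this, or some comparable argument covering every chamber, your proposal does not establish the bound statements in the theorem.
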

The catalogue (see Appendix~\ref{appendix:catalogTP}) was obtained through an exhaustive enumeration of combinatorial types of transportation polytopes whose foundation is the theory of secondary polytopes and parametric linear programming. In some cases when the full enumeration was impossible we can at least obtain lower and upper bounds for the number of vertices that these polytopes can have.

We begin by recalling Lemma~\ref{lemma:partitionvertices}, which said that $x \in \R^c$ is a vertex of the partition polyhedron $P = \{x \in \R^c \mid Ax = b, x \geq 0 \}$ if and only if $x$ is a basic feasible solution corresponding to the basis $\mathcal{A}$ of $A$. Moreover, if the polyhedron $P$ is assumed to be \emph{simple} then the basic feasible solution must be \emph{strictly} positive on the entries corresponding to the basis $\mathcal{A}$. Geometrically, a basis $\mathcal{A} \subseteq A$ produces a vertex of a {simple} polyhedron $P$ if and only if $b$ lies in the \emph{interior} of the cone generated by $\mathcal{A}$. In conclusion, the vertices of a simple partition polyhedron $P$ are in bijection with the bases that define cones in $\R^r$ containing the vector $b \in \R^r$ within their interior.

We discuss the enumeration of partition polyhedra (as all transportation polytopes are) by discussing what happens to the combinatorics of $P_b = \{x \in \R^c \mid Ax=b, x \geq 0 \}$ as the vector $b \in \R^r$ changes while the $r \times c$ matrix $A$ remains fixed. (For the case of transportation polytopes, the matrix $A$ describes the type and size of the transportation polytope, and $b$ is the vector given by the margins.) This study, for general matrices, is known as \defn{parametric linear programming} (see Chapter 1 Section 2 and Chapter 9 Section 5 of~\cite{DeLoera:Triangulations}). Since the vector $b$ will vary, we use the notation $P_b$ to describe the partition polyhedron given by $b \in \R^r$ for our fixed matrix $A$.

What happens when we let the vector $b$ vary? If the polyhedron $P_b$ is simple and the change in $b$ is small, the facets of $P_b$ move but the combinatorial type of $P_b$ does not change. Only when a basic solution changes from being feasible to not feasible, or vice versa, the combinatorics of $P_b$ (that is, the face lattice and, in particular, the graph of $P_b$) can change (see~\cite{Blind:On-puzzles-and-polytope} and~\cite{Kalai:A-simple-way-to-tell}).

Put differently: Let $\Sigma_A$ denote the set of all cones generated by bases of $A$. Let $\partial \Sigma_A$ denote the union of the boundaries of all elements of $\Sigma_A$. The connected components of $\cone(A) \setminus \partial \Sigma_A$ are open convex polyhedral cones called the \defn{chambers} of $A$. (To be precise, the connected components of $\cone(A) \setminus \partial \Sigma_A$ are the relative interiors of polyhedral cones.) We call the \defn{chamber associated to} a given vector $b \in \R^r$ the intersection of the interiors of simple cones that contain the vector $b$ in their interior. Every sufficiently generic vector $b \in \cone(A)$ is in a chamber (as opposed to lying on $\partial \Sigma_A$). Two vectors $b$ and $b'$ in the same chamber define simple polytopes $P_{b}$ and $P_{b'}$ that are equivalent up to combinatorial type. The collection of all the chambers is the \defn{chamber complex} (or \defn{chamber system}) associated with $A$.  Putting all this together we conclude:
\begin{proposition}\label{proposition:everychamber}
Fix a matrix $A$ of size $r \times c$ of full row rank $r$. To represent every possible combinatorial type of simple polytope of the form $P_b = \{x \in \R^c \mid Ax=b, x \geq 0 \}$, for a fixed $A$ and over varying vector $b \in \R^r$, it is enough to choose one $b \in \R^r$ from each chamber of the chamber complex of $A$.
\end{proposition}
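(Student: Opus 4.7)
The plan is to argue that if $b$ and $b'$ lie in the same chamber $C$ of the chamber complex of $A$, then the simple polytopes $P_b$ and $P_{b'}$ are combinatorially equivalent; combined with the fact that every simple $P_b$ has its defining vector $b$ in a chamber (and not on $\partial \Sigma_A$), this establishes the proposition.

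First I would use Lemma~\ref{lemma:partitionvertices} to identify vertices of $P_b$ with bases $\mathcal{A}$ of $A$. Simplicity of $P_b$ forces each such basic feasible solution to be \emph{strictly} positive on its basis $\mathcal{A}$, because a zero entry among the basis coordinates would put the vertex on an extra non-negativity facet hyperplane, contradicting the requirement that each vertex of a simple $d$-polyhedron lies on exactly $d = c - r$ facets. Equivalently, a basis $\mathcal{A}$ contributes a vertex of a simple $P_b$ if and only if $b \in \int(\cone(\mathcal{A}))$. In particular, if $P_b$ is simple then $b$ avoids $\partial \Sigma_A$ and lies inside some chamber.

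Next I would observe that, by definition of the chamber $C$, the set of simple cones $\cone(\mathcal{A})$ whose interiors contain the point does not change as the point varies over $C$. By the previous step, the vertex sets of the simple polytopes $P_b$ and $P_{b'}$ are then in natural bijection via the identification $\mathcal{A} \leftrightarrow \mathcal{A}$. The hard part, and the main obstacle I foresee, is to promote this bijection of $0$-faces into an isomorphism of the full face lattices $L(P_b) \cong L(P_{b'})$. My approach would be to encode each face $F$ of $P_b$ by the subset $S_F \subseteq \{1,\ldots,c\}$ of indices whose non-negativity constraints are tight on $F$, so that $F = \{x \in P_b \mid x_j = 0 \text{ for all } j \in S_F\}$. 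In a simple polyhedron, each vertex lies on exactly $c - r$ non-negativity facets, so this encoding is an injection onto the face lattice, and the face $F_S$ associated to $S$ is non-empty exactly when there exists a basis $\mathcal{A} \subseteq \{1,\ldots,c\}\setminus S$ with $b \in \int(\cone(\mathcal{A}))$. Since this criterion depends only on the chamber, the same index sets $S$ encode non-empty faces of $P_{b'}$; because inclusion of faces corresponds to reverse inclusion of the $S_F$'s, a purely combinatorial datum, one obtains the desired isomorphism $L(P_b) \cong L(P_{b'})$.

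Combining the two observations proves the proposition: every simple $P_b$ arises from a $b$ in some chamber, and within each chamber the combinatorial type of $P_b$ is constant, so selecting one representative $b$ from each chamber captures every possible combinatorial type of simple partition polytope with constraint matrix $A$.
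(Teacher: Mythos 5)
Your proof is correct in substance, and its second half takes a genuinely different route from the paper. The first half is the same: you identify vertices of a simple $P_b$ with bases $\mathcal{A}$ such that $b \in \int(\cone(\mathcal{A}))$, arguing that simplicity forces strict positivity on the basis entries and hence that $b$ lies off $\partial\Sigma_A$, inside a chamber --- this is exactly the assertion the paper makes before the proposition (and both of you gloss over the same low-dimensional degenerate corner cases, so no complaint there). Where you diverge is the passage from a bijection of vertices to an isomorphism of face lattices. The paper handles this step by the parametric-LP observation that the combinatorics of $P_b$ can only change when a basic solution changes feasibility, and it leans on the cited results of Blind--Mani and Kalai that the graph of a simple polytope determines its entire face lattice; in effect, constancy of the set of feasible bases (hence of the graph) across a chamber is what yields constancy of the combinatorial type. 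You instead give a direct, self-contained argument: encode each face $F$ by the set $S_F$ of coordinates vanishing identically on it, note that $F$ is recovered from $S_F$, and observe that non-emptiness of $F_S$ --- equivalently, its vertex set, namely the bases $\mathcal{A}\subseteq\{1,\dots,c\}\setminus S$ with $b\in\int(\cone(\mathcal{A}))$ --- depends only on the chamber, so the whole lattice is chamber-determined. Your approach avoids invoking the nontrivial reconstruction theorem and is more elementary; the paper's is shorter given that citation. The only place your sketch is compressed is the final bookkeeping: to see that the order-reversing correspondence of tight sets really is a lattice isomorphism, you should say explicitly either that each nonempty face is determined by its vertex set (which you have shown is chamber-determined), or that the closure operator $S \mapsto S_{F_S} = \{1,\dots,c\}\setminus\bigcup\{\mathcal{A} : \mathcal{A}\cap S=\emptyset,\ b\in\int(\cone(\mathcal{A}))\}$ depends only on the chamber; either remark is one line and completes the argument.
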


We illustrate Proposition~\ref{proposition:everychamber} through an example. The discussion here gives a full description of the software {\tt transportgen} (see~\cite{transportgen}).
\begin{example}
\label{example:triprism}
Consider the $5 \times 6$ defining matrix $A$ of all classical $2 \times 3$ transportation polytopes. That is:
\[
A=\left [
\begin {array}{cccccc} 
1&1&1&0&0&0\\
0&0&0&1&1&1\\
1&0&0&1&0&0\\
0&1&0&0&1&0\\
0&0&1&0&0&1
\end {array}\right ].
\]
Up to permutation of coordinates, the system $\{x \in \R^6 \mid Ax=b,\ x \geq 0\}$ defines all classical $2 \times 3$ transportation polytopes with marginals given by the vector $b \in \R^5$.

The matrix $A$ does not have full row rank, so instead we remove the last row. Let $A_{2 \times 3}$ be the $4 \times 6$ matrix
\[
A_{2 \times 3}=\left [
\begin {array}{cccccc} 
1&1&1&0&0&0\\
0&0&0&1&1&1\\
1&0&0&1&0&0\\
0&1&0&0&1&0
\end {array}\right ].
\]
The columns of the matrix $A_{2 \times 3}$ span a four-dimensional cone in $\R^5$. It will be relevant later that if we slice this cone by an affine hyperplane (such as $\sum_i x_i =1$) we obtain the three-dimensional triangular prism shown in Figure~\ref{figure:chambers23}, but embedded in $\R^4$.
 
The chamber complex can be obtained by slicing the prism with the six planes containing a vertex of the prism and the edge ``opposite'' to it.  The resulting chamber complex is hard to visualize or draw, even in this small case, but we will see later how to recover the structure of the chamber complex for this example using {Gale transforms}. In particular, as we will see, this decomposes the triangular prism into $18$ chambers.
\end{example}

It is very easy to ``sample'' inside the chamber complex and find chambers of different numbers of bases, i.e., transportation polytopes with different numbers of vertices. One can simply throw random positive values to the cell entries of a $3$-way $p \times q \times s$ table and then compute the $1$-marginals or $2$-marginals associated to it. But with this method it is not obvious how to guarantee that one has obtained \emph{all} the possible chambers. We want to ensure that we have visited \emph{every} chamber. To do this, we use the following approach based on Gale transforms and regular triangulations. (For a detailed treatment on triangulations, see~\cite{DeLoera:Triangulations}.)

Let $A$ be a vector configuration of $c$ vectors in $\R^r$. We usually think of the column vectors of a matrix $A$ of size $r \times c$. A vector configuration $B$ of $c$ vectors in $\R^g$ is called a \defn{Gale transform} of the vector configuration $A$ if the row space of the matrix with columns given by $B$ is the orthogonal complement in $\R^c$ of the row space of the matrix with columns given by $A$. (Here, we assume that the matrix $A$ is of full row rank $r$ so that $g=c-r$.) Gale transforms are essential tools in the study of convex polytopes because the combinatorial properties
of $A$ and $B$ are intimately related (see Chapter 6 in~\cite{Ziegler:Lectures} for details). Proofs of the following statements can be found in~\cite{Billera:SecondaryPolyhedra} and~\cite{Gelfand:Discriminants}. (See also Chapters 4 and 5 of~\cite{DeLoera:Triangulations},~\cite{Lee:Regular-triangulations}, and~\cite{Thomas:Lectures-in-Geometric}.)
\begin{lemma}\label{lemma:secondary}
Let $A$ be a vector configuration and $\widehat{A}$ be a Gale transform of $A$.
\begin{enumerate}

\item\label{item:regulartriangulationscorrespondence} 
The chambers of $A$ are in bijection with the regular triangulations of the Gale transform $\widehat{A}$ of $A$: From a chamber in $A$, one can recover a regular triangulation of $\widehat{A}$ via complementation, namely for a basis $\mathcal{A}$ of vectors in $A$ the elements of $\widehat{A}$ not belonging to $\mathcal{A}$ form a basis for $\widehat{A}$.  The collection of those bases gives a triangulation of $\widehat{A}$.

\item
There exists a polyhedron, the \defn{secondary polyhedron}, whose vertices are in bijection with the regular triangulations of the Gale transform $\widehat{A}$.

\item
The face lattice of the chamber complex of the vector configuration $A$ is anti-isomorphic to the face lattice of the secondary polyhedron of the Gale transform $\widehat{A}$ of $A$. The latter is, in turn, isomorphic to the refinement poset of all regular subdivisions of $\widehat{A}$.

\item
If the cone generated by $A$ is pointed (for example, if all of its entries are non-negative as it is the case for transportation polytopes), then its Gale transform $\widehat{A}$ is a totally cyclic vector configuration. (In other words, the cone generated by $\widehat{A}$ is all of $\R^g$.)

\end{enumerate}
\end{lemma}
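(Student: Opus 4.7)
The plan is to derive all four parts from the classical Gale duality dictionary, for which the cleanest references are~\cite{Billera:SecondaryPolyhedra} and Chapter~5 of~\cite{DeLoera:Triangulations}. Throughout, I will identify a vector configuration with the column set of a matrix of full row rank and use the fact that, by definition of the Gale transform, a subset $\mathcal{A} \subseteq A$ is a basis (respectively, a circuit) of $A$ if and only if its complement $\widehat{\mathcal{A}} = [c] \setminus \mathcal{A}$ is a cobasis (respectively, a cocircuit) in $\widehat{A}$. The oriented-matroid content on both sides is the same up to this complementation, and this is the source of all four statements.

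For part~(\ref{item:regulartriangulationscorrespondence}), fix a vector $b$ in a chamber of $A$ and let $\mathcal{A}_1, \ldots, \mathcal{A}_k$ be the bases of $A$ whose simple cones contain $b$ in their interior. I will show that $\widehat{\mathcal{A}}_1, \ldots, \widehat{\mathcal{A}}_k$ is a triangulation of $\widehat{A}$ by checking that the $\widehat{\mathcal{A}}_i$'s intersect in common faces and cover $\conv(\widehat{A})$; both properties translate under the Gale correspondence to the chamber condition that the basic cones $\cone(\mathcal{A}_i)$ cover a neighborhood of $b$ without overlap (see Lemma~4.1.43 in~\cite{DeLoera:Triangulations}). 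To verify regularity I will exhibit an explicit height function: any vector $b$ in the chamber, viewed as a linear functional on the row space of $A$, lifts (via a chosen pseudoinverse) to a height vector $\omega \in \R^c$ on $\widehat{A}$, and the lower envelope of the lifted configuration is exactly the triangulation $\{\widehat{\mathcal{A}}_i\}$. Conversely, any regular triangulation of $\widehat{A}$ arises in this way, so the map is a bijection.

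Parts~(2) and~(3) then follow from the GKZ construction. The secondary polytope $\Sigma(\widehat{A})$ is defined as the convex hull of the GKZ vectors $\phi_T = \sum_{\sigma \in T} \mathrm{vol}(\sigma)\, \mathbf{e}_\sigma$ over all triangulations $T$ of $\widehat{A}$, and the classical theorem of Gelfand--Kapranov--Zelevinsky (see~\cite{Gelfand:Discriminants}) asserts that its vertices are exactly the $\phi_T$ for regular triangulations and that its face lattice is isomorphic to the poset of regular polyhedral subdivisions of $\widehat{A}$ ordered by refinement. Combined with part~(1), each face of the chamber complex of $A$ (chambers, walls, etc.) corresponds by the same complementation rule to a regular subdivision of $\widehat{A}$, with finer chambers corresponding to finer subdivisions; this is the anti-isomorphism claimed in part~(3). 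The main subtlety here is making sure the dimensions match and the order is reversed correctly, which I will handle by inspecting the closure relations on both sides.

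Finally, for part~(4), suppose $\cone(A)$ is pointed, i.e.\ there is a linear functional $\ell$ strictly positive on every non-zero vector of $\cone(A)$; equivalently, $A$ admits a positive linear dependence only trivially, while in dual language this says $\widehat{A}$ has no positive linear functional vanishing on it nontrivially, which is precisely the condition that the vectors of $\widehat{A}$ positively span all of $\R^g$, i.e.\ $\widehat{A}$ is totally cyclic. The hardest conceptual step in the whole lemma is the regularity statement in part~(1): one must check that the height function constructed from $b$ genuinely realizes the claimed triangulation, and that every regular triangulation of $\widehat{A}$ comes from a point in some chamber of $A$. The cleanest way is to invoke the Cayley trick or the duality between the fiber polytope of~\cite{Billera:SecondaryPolyhedra} and the chamber complex, rather than checking the lifted lower envelope by hand.
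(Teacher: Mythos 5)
Your sketch is correct in outline and coincides with the paper's treatment: the paper does not prove Lemma~\ref{lemma:secondary} itself but cites the standard Gale-duality and secondary-polytope literature (\cite{Billera:SecondaryPolyhedra}, \cite{Gelfand:Discriminants}, and Chapters 4 and 5 of \cite{DeLoera:Triangulations}), and your argument is exactly a reconstruction of those standard proofs --- complementation of bases, the height function obtained from a preimage of $b$ under $A$, the GKZ/fiber-polytope construction for parts (2)--(3), and the dependence/cocircuit duality for part (4). Nothing further is needed beyond the bookkeeping you already flag (the sign and order-reversal conventions in parts (1) and (3)), so there is no substantive difference from the paper's approach to record.
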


To summarize, this means:
\begin{corollary}
To enumerate all simple partition polyhedra $P$ of the form $P = \{x \in \R^c \mid Ax = b, x \geq 0\}$ with fixed constraint matrix $A$ and vary vector $b \in \R^r$ is the same as enumerating all the chambers of the chamber complex of $A$, which is the same as enumerating all of the regular triangulations of $\widehat{A}$.
\end{corollary}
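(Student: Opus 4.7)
The proof plan is to chain together the two preceding structural results already established in the excerpt, namely Proposition~\ref{proposition:everychamber} and Part~(\ref{item:regulartriangulationscorrespondence}) of Lemma~\ref{lemma:secondary}, and then verify that the correspondences are bijective at the level of combinatorial types.

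First, I would establish the first equivalence. By Lemma~\ref{lemma:partitionvertices}, the vertices of a simple partition polyhedron $P_b$ are in bijection with the bases $\mathcal{A}$ of $A$ such that $b$ lies in the interior of the simple cone $\cone(\mathcal{A})$. This means that the set of bases producing vertices of $P_b$ is constant as $b$ varies within a single chamber of the chamber complex of $A$, so in particular the face lattice, and hence the combinatorial type, of $P_b$ is constant on each chamber. Conversely, when $b$ crosses from one chamber to another, at least one basic solution changes from feasible to infeasible (or vice versa), so at least one vertex appears or disappears. This shows that the map $b \mapsto [P_b]$ descends to a well-defined injection from the set of chambers of $A$ to the set of combinatorial types of simple partition polyhedra with constraint matrix $A$, and Proposition~\ref{proposition:everychamber} shows it is surjective. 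Hence enumerating simple partition polyhedra (up to combinatorial equivalence) for fixed $A$ is the same as enumerating chambers.

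Next, I would establish the second equivalence by directly invoking Part~(\ref{item:regulartriangulationscorrespondence}) of Lemma~\ref{lemma:secondary}. That part provides an explicit bijection between chambers of $A$ and regular triangulations of the Gale transform $\widehat{A}$: a chamber is encoded by the collection of bases $\mathcal{A}$ of $A$ whose simple cones contain that chamber, and the corresponding triangulation of $\widehat{A}$ is obtained by taking, for each such basis $\mathcal{A}$, the complementary set $\widehat{A} \setminus \{ \widehat{a}_j : a_j \in \mathcal{A}\}$ as a simplex. Composing this bijection with the previous one yields the desired three-way equivalence.

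The only delicate point, and what I would regard as the main thing to check carefully (rather than a deep obstacle), is the passage from the pointwise statement ``$b$ and $b'$ in the same chamber give the same set of feasible bases'' to the combinatorial equivalence $L(P_b) \cong L(P_{b'})$. One has to verify that every face of $P_b$ is determined by the set of feasible bases, which follows from the standard fact that a face of a simple partition polyhedron is obtained by setting a subset of the slack variables to zero and then taking the convex hull of the vertices surviving this restriction; since the surviving vertices correspond to bases that are both feasible and contain the prescribed support, the face lattice depends only on the collection of feasible bases. Together with Lemma~\ref{lemma:secondary}, this completes the proof of the corollary.
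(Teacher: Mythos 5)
Your overall route is the same as the paper's: the corollary is stated there as an immediate summary, obtained by chaining Proposition~\ref{proposition:everychamber} (one representative $b$ per chamber captures every combinatorial type of simple $P_b$) with Part~(\ref{item:regulartriangulationscorrespondence}) of Lemma~\ref{lemma:secondary} (chambers of $A$ biject with regular triangulations of $\widehat{A}$ via complementation of bases), and your use of Lemma~\ref{lemma:partitionvertices} to explain why the combinatorial type is constant on a chamber is exactly the discussion preceding the corollary.

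One claim in your writeup is both unnecessary and false as stated: the map from chambers to combinatorial types is \emph{not} an injection. Crossing a wall changes the set of feasible bases, but two different chambers can still yield combinatorially equivalent polytopes; the paper itself points this out for the $18$ chambers of the $2\times 3$ case (see Figure~\ref{figure:chambers23}), where ``repeated combinatorial types occur.'' For the corollary only surjectivity matters --- every combinatorial type of simple $P_b$ is realized by some chamber, so enumerating chambers (equivalently, regular triangulations of $\widehat{A}$) produces a complete, though possibly redundant, list. Dropping the injectivity claim leaves your argument correct and aligned with the paper.
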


\begin{example}
We take again the $4 \times 6$ matrix $A_{2 \times 3}$ of rank four defined in Example~\ref{example:triprism}. A Gale transform $B_{2 \times 3} = \widehat{A}_{2 \times 3}$ of $A_{2 \times 3}$ consists of the columns of the $2 \times 6$ matrix
\[
B_{2 \times 3}
=
\widehat{A}_{2 \times 3}
=
\left[
\begin{array}{cccccc}
1&-1&0&-1&1&0\\
1&0&-1&-1&0&1
\end{array}
\right].
\]

In Figure~\ref{figure:chambers23} we represent the Gale diagram $\widehat{A}_{2 \times 3}$ and its $18$ regular triangulations, each one providing a combinatorial type of non-degenerate $2 \times 3$ classical transportation polytope, although repeated combinatorial types occur. (A representative vector in each of the $18$ chambers is chosen in the presentation of the $18$ non-degenerate $2 \times 3$ classical transportation polytopes presented in~\cite{TPDB}.) The chamber adjacency, which corresponds to bistellar flips, is indicated by dotted edges.
\begin{figure}[hbt]
\begin{center}
\includegraphics[scale=.25]{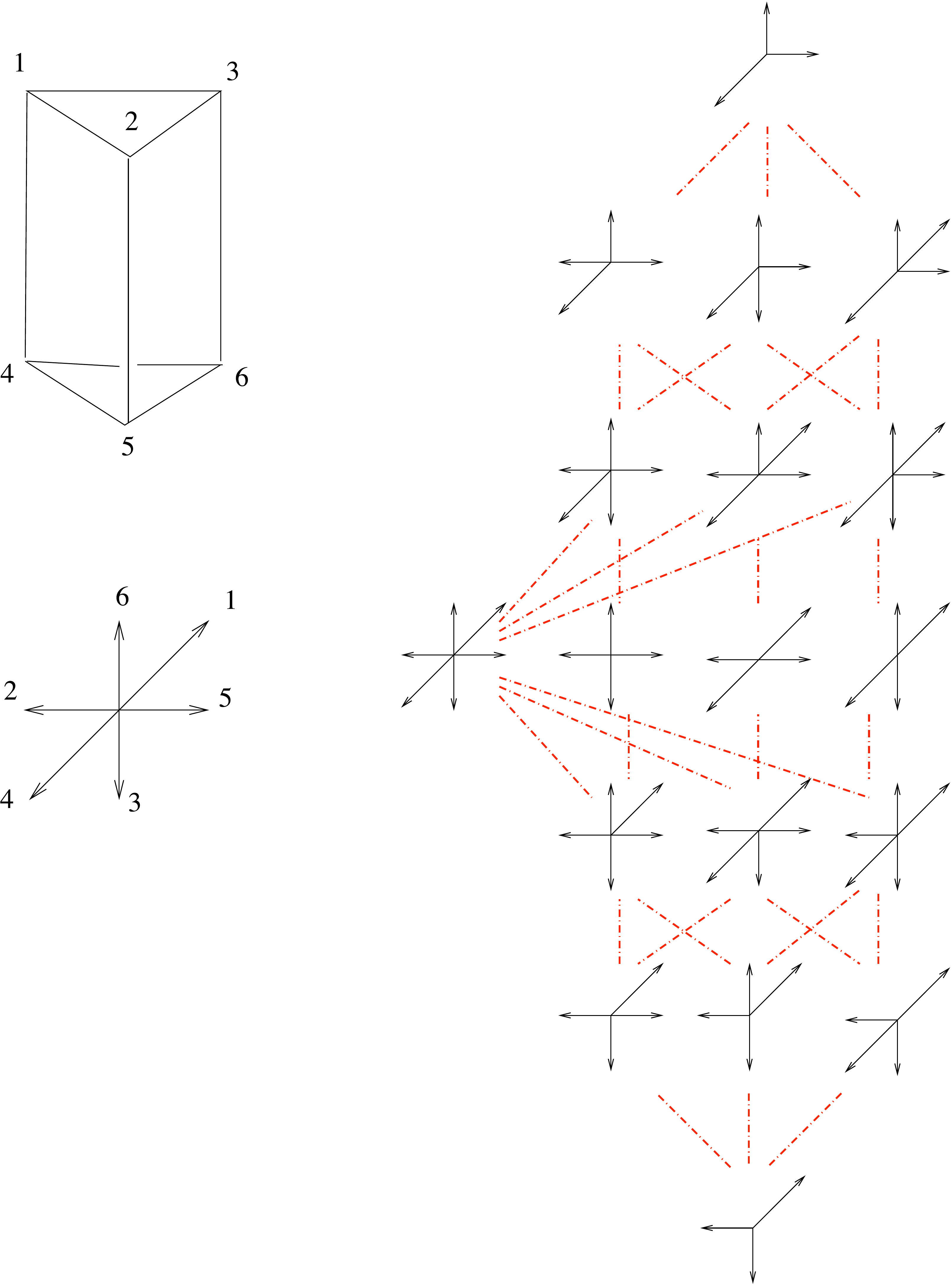}
\caption{The $18$ regular triangulations of $\widehat{A}_{2 \times 3}$, which correspond to the $18$ chambers, each defining a non-degenerate $2 \times 3$ transportation polytope.}
\label{figure:chambers23}
\end{center}
\end{figure}
\end{example}

Thus, generating all the combinatorial types of non-degenerate transportation polytopes is the same as listing the distinct regular triangulations of the Gale transform of the defining matrix $A$. In the case of transportation polytopes, the matrix $A$ depends only on the margin type ($1$-marginals, $2$-marginals, etc.) and the margin sizes ($p$, $q$, $s$, etc.).

Now, it is well-known that the regular triangulations of a vector
configuration can all be generated by applying bistellar flips to a seed regular triangulation (see, e.g.,~\cite{Billera:SecondaryPolyhedra},~\cite{DeLoera:Triangulations},~\cite{Ziegler:Lectures}). Bistellar flips are combinatorial operations that transform one triangulation into another and regularity of triangulations can be determined by checking feasibility of a certain linear program. In our case, the linear program is the very one that defines the polytope $P_b$. An example of this linear programming feasibility problem is shown in Example 5.2.10 in~\cite{DeLoera:Triangulations}.

\begin{example}[Example~\ref{example:triprism} continued]
\label{example:triprism2b}
Consider the only triangulation of $\widehat{A}_{2 \times 3}$ with six cones (the leftmost one in the middle row in Figure~\ref{figure:chambers23}). The necessary and sufficient conditions in the non-negative vector $a=(a_1,a_2,\ldots,a_6)$ in order to produce this triangulation are that each $a_i$ be smaller than the sum of the two adjacent to it. That is,
\begin{eqnarray*}
a_1 < a_5 + a_6, & a_2 < a_4 + a_6, & a_3 < a_4 + a_5, \cr
a_4 < a_2 + a_3, & a_5 < a_1 + a_3, & a_6 < a_1 + a_2. 
\end{eqnarray*}
Thus, these conditions on the marginals characterize the $2 \times 3$ transportation polytopes that are hexagons. 
\end{example}

This method is implemented in the {\tt C++} program {\tt transportgen} (see~\cite{transportgen}). This program calls {\tt TOPCOM} (see~\cite{topcom}), a package for triangulations that computes, among other things, the list of all regular triangulations of a given point configuration $B$. (In our case, $B$ is the Gale transform $\widehat{A}$ of $A$.) The program calls {\tt polymake} (see~\cite{polymake}) for the Gale transform. The input to this program is a matrix $A$ of size $r \times c$. The program outputs one vector $b \in \R^r$ per chamber in the chamber complex of $A$. The result is a list of transportation polytopes, one per chamber, given in the {\tt polymake} file format (see~\cite{polymake}) representing all possible combinatorial types of simple transportation polytopes defined by $A$.

\subsection{Lower and upper bounds via integer programming}

Even for seemingly small cases, such as $3 \times 3 \times 3$ transportation polytopes defined by $1$-marginals, listing all chambers (and thus all combinatorial types of transportation polytopes) is practically impossible. In these cases we can follow a different approach to at least obtain upper and lower bounds for the number of vertices of transportation polytopes. By the discussion above, this is the same as finding bounds for the number of simplices in triangulations of the Gale transform $\widehat{A}$. Here we follow the method proposed in~\cite{DeLoera:AllTriangulations}, based on the universal polytope. This universal polytope, introduced by Billera, Filliman, and Sturmfels in~\cite{Billera:SecondaryComplexity}, has \emph{all} triangulations (regular or not) of a given vector configuration $B$ in $\R^g$ as vertices, and projects to the secondary polytope. The universal polytope has much higher dimension than the secondary polytope. In fact, its ambient dimension is the number of possible bases of the configuration $A$, thus no more than $\binom{|A|}{g+1}$. It has the advantage that the number of simplices in different triangulations is given by the values of a certain linear functional $\psi$.

More precisely, we think of the chambers inside $\cone(A)$ as the vertices of the following high-dimensional $0$-$1$ polytope: Assume $A$ is a vector configuration of $c$ vectors in $\R^r$.  Let $N$ be the number of $r$-dimensional simple cones in $A$.  We define $U_A$ as the convex hull in $\R^N$ of the set of incidence $0$-$1$ vectors of all chambers of $A$. For a chamber $T$ the \defn{incidence vector} $v_T$ has coordinates $\,(v_T)_\mathcal{A} = 1$ if the basis $\mathcal{A} \in T$ and $\,(v_T)_\mathcal{A} = 0$ if $\mathcal{A}$ is not a basis of $T$.  The polytope $U_A$ is the \defn{universal polytope} defined in general by Billera, Filliman, and Sturmfels in~\cite{Billera:SecondaryComplexity}, although there it is defined in terms of the triangulations of the Gale transform of $A$.

In~\cite{DeLoera:AllTriangulations}, it was shown that the vertices of the universal polytope $U_A$ of $A$ are exactly the integral points inside a polyhedron that has a simple inequality description in terms of the oriented matroid of $A$. (See~\cite{Bokowski:Computational-Oriented},~\cite{DeLoera:AllTriangulations}, or~\cite{Ziegler:Lectures} for more on oriented matroids.) The concrete integer programming problems in question were solved using the {\tt CPLEX} Linear Solver${}^\textrm{TM}$.  The program to generate the linear constraints is a small {\tt C++} program available on the web (see~\cite{UniversalGenerator}).

\begin{example}
\label{example:triprism3}
Let $A$ be the $4 \times 6$ matrix $A_{2 \times 3}$ from Example~\ref{example:triprism}. Let $S$ denote the set of all bases that can be defined in $A$. Then $N = |S| = \binom{c}{g} = \binom{6}{2} = 15$. So the universal polytope $U_A$ is defined in $\R^{15}$, where each coordinate is indexed by a $2$-subset $\mathcal{A}$ of $\{1,\ldots,6\}$. Thus $U_A$ is the convex hull in $\R^N$ of the incidence vectors $v_T$ corresponding to the $18$ chambers of $A$. By Lemma~\ref{lemma:secondary}, this is equivalent to the convex hull of the incidence vectors $v_T$ of the $18$ triangulations of $\widehat{A}$.  For example, the triangulation $T = \{ \{1,2\},\{1,3\}, \{2,4\}, \{3,4\} \}$ in Figure~\ref{figure:chambers23} gives the incidence vector $v_T = e_{\{1,2\}} + e_{\{1,3\}} + e_{\{2,4\}} + e_{\{3,4\}}$ (where $e_\gamma$ is the basis unit vector in the direction $\gamma$) as one of the vectors of the convex hull.  

The convex hull of these $18$ incidence vectors is a $6$-dimensional $0$-$1$ polytope $U_A$ in $\R^{15}$. That the dimension is (at most) six follows from the following considerations:
\begin{itemize}
\item Since the pairs $\{1,4\}$, $\{2,5\}$ and $\{3,6\}$ are not full-dimensional and thus never appear as a simplex in any triangulation $T$ of $\widehat{A}$, the polytope $U_A$ is contained in the subspace $x_{\{1,4\}} = x_{\{2,5\}} = x_{\{3,6\}} = 0$.  
\item Since the vector $1$ has $2$ and $6$ on one side and $5$ and $3$ on the other, in every triangulation the sum  $x_{\{1,2\}} + x_{\{1,6\}}$ equals the sum $x_{\{1,3\}} + x_{\{1,5\}}$ (and it equals zero or one depending on whether the triangulation uses the vector $1$ or not). This implies the first of the following equalities, the rest being the analogous statement for the other five vectors:
\begin{eqnarray*}
x_{\{1,2\}} + x_{\{1,6\}} - x_{\{1,3\}} - x_{\{1,5\}} = &0,\\
x_{\{2,3\}} + x_{\{2,4\}} - x_{\{1,2\}} - x_{\{2,6\}} = &0,\\
x_{\{1,3\}} + x_{\{3,5\}} - x_{\{2,3\}} - x_{\{3,4\}} = &0,\\
x_{\{3,4\}} + x_{\{4,5\}} - x_{\{2,4\}} - x_{\{4,6\}} = &0,\\
x_{\{1,5\}} + x_{\{5,6\}} - x_{\{3,5\}} - x_{\{4,5\}} = &0,\\
x_{\{2,6\}} + x_{\{4,6\}} - x_{\{1,6\}} - x_{\{5,6\}} = &0.
\end{eqnarray*}
Observe that one of these equations is redundant, since the sum of the left-hand sides is already zero.

\item Since every triangulation needs to cover the angle between, for example, vectors $1$ and $6$, and this angle is covered only by the cones $16$, $12$ and $56$ (see Figure~\ref{figure:chambers23}), we have that
\[x_{\{1,6\}} + x_{\{1,2\}} + x_{\{5,6\}} =  1.\]
\end{itemize}
The results in~\cite{DeLoera:AllTriangulations} say that the polytope $U_A$ is the convex hull of the non-negative integer points in $\R^{15}$ satisfying this list of equations.

We now denote by $\psi : \R^N \rightarrow \R$ the cost vector defining the linear functional
\[\psi(x) = (1,1,\ldots,1) \cdot x = \sum_{\mathcal{A} \in S} x_\mathcal{A}.\]
Then the values $\psi(x)$ of the linear functional $\psi$ on $U_A \cap \{0,1\}^N$ are the only possible values for the number of vertices of non-degenerate polytopes of the form $P_b = \{x \in \R^c \mid Ax=b, x \geq 0 \}$.  In particular, the solutions to the linear programming relaxations: ``minimize (respectively maximize) $\psi(x)$ subject to $x \in U_B$'' give lower (respectively upper) bounds to the possible values for the number $f_0$ of vertices of non-degenerate polytopes of the form $P_b = \{x \in \R^c \mid Ax=b, x \geq 0 \}$.  In the running example, $3 \leq \psi(x) \leq 6$ whenever $x \in U_A \cap \{0,1\}^N$.  From Table~\ref{table:classical-2-3}, we observe that the number of vertices of a non-degenerate $2 \times 3$ transportation polytope equals $3$, $4$, $5$, or $6$.
\end{example}


\chapter[The Hirsch Conjecture Today]{The Hirsch Conjecture Today}\label{chapter:hirsch}

\setcounter{theorem}{0} 

The Hirsch Conjecture was posed in 1957 in a letter from \given{Warren }Hirsch to \given{George }Dantzig (see page 168 of~\cite{Dantzig:Linear}). Besides its implications in linear programming, which motivated the conjecture in the first place, it is one of the most fundamental open questions in combinatorial geometry. It states that the graph of a $d$-dimensional polytope with $n$ facets cannot have diameter greater than $n-d$. That is to say, we can go from any vertex of the polytope to any other vertex using at most $n-d$ edges.
\begin{conjecture}[Hirsch Conjecture]\label{conjecture:hirsch}
Let $n > d \geq 2$. Let $P$ be a $d$-dimensional polytope with $n$ facets, and let $G(P)$ be its graph. Then $\diam(G(P)) \leq n - d$.
\end{conjecture}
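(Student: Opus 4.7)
The plan is to combine two classical reductions and then attempt an induction, while being realistic that the remaining core case is exactly where the difficulty of this fifty-year-old problem concentrates.

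First, I would reduce to the simple case. By a small perturbation of the facet-defining hyperplanes one obtains a simple $d$-polytope $P'$ whose vertex set refines that of $P$: every vertex of $P$ that lies on more than $d$ facets splits into several simple vertices joined by short edges in $P'$. This perturbation preserves $n$, and the graph $G(P)$ is recovered from $G(P')$ by contracting the new short edges, so $\diam(G(P)) \leq \diam(G(P'))$. Hence it suffices to prove $\diam(G(P')) \leq n-d$ for simple $d$-polytopes. Second, I would apply the Klee--Walkup reduction to the $d$-step conjecture: for simple polytopes, the full Hirsch bound $n-d$ follows from its tight instance $n=2d$, namely that a simple $d$-polytope with $2d$ facets has diameter at most $d$. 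So the goal becomes to show that any two vertices $u,v$ of such a polytope are joined by a path of length at most $d$.

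The main inductive step would proceed by analyzing the facet sets $F_u$ and $F_v$ containing $u$ and $v$ respectively; by simplicity, $|F_u|=|F_v|=d$. In the easy case $F_u \cap F_v \neq \emptyset$, any $F$ in the intersection is itself a simple $(d-1)$-polytope with at most $2d-1$ facets, and the inductive hypothesis produces a path from $u$ to $v$ inside $F$ of length at most $(2d-1)-(d-1)=d$. The genuinely hard case is $F_u \cap F_v = \emptyset$, where $F_u$ and $F_v$ partition the full set of $2d$ facets. Here I would try to build a non-revisiting path: starting at $u$, at each pivot leave one facet in $F_u$ and enter a new facet, in such a way that facets once left are never re-entered. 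Since $|F_u|=d$ and each pivot decreases $|F_u \setminus F_{\mathrm{current}}|$ by one while, ideally, introducing a facet of $F_v$, this would deliver a path of length exactly $d$. By Balinski's theorem (Theorem~\ref{theorem:balinski}) the graph is $d$-connected, so there is no shortage of possible pivots; the challenge is to make a consistent choice.

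The hard part, and the reason this conjecture has resisted every attack so far, is precisely the non-revisiting step. Any naive pivot rule can be forced to re-enter an abandoned facet, and all known combinatorial abstractions of the problem (abstract polytopes, oriented matroid programs, connected layer families) admit examples realizing the Kalai--Kleitman quasi-polynomial upper bound but no linear one. A successful proof would seem to require a genuinely new invariant controlling re-entry, perhaps tuned to the realizability constraints of actual polytopes rather than their purely combinatorial shadows. Failing that, I would regard it as plausible that the conjecture is false in high dimension, and I would shift effort toward constructing a counterexample — for instance, by assembling a non-Hirsch polytope from a carefully engineered ``bad'' prismatoid and taking iterated wedges, exploiting the fact that wedging preserves counterexamples while increasing $d$ and $n-d$ in tandem.
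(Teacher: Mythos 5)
You have not proved the statement, and no proof is possible: the statement is a conjecture, not a theorem of this thesis, and it is in fact false. Your two reductions are correct and are exactly the ones surveyed here --- perturbation to simple polytopes (Lemma~\ref{lemma:simple}), the Klee--Walkup equivalence with the $d$-step conjecture (Theorem~\ref{theorem:dstep-hirsch}), and the equivalence with the Non-revisiting Conjecture (Theorem~\ref{theorem:dstep-nonrevisiting}) --- but they only relocate the difficulty into the case you yourself flag as open: two vertices of a simple $d$-polytope with $2d$ facets whose facet sets are disjoint, joined by a non-revisiting path. That step is the entire content of the conjecture, and Balinski's $d$-connectivity gives you candidate pivots but no mechanism to avoid re-entering abandoned facets; since the non-revisiting property is \emph{equivalent} to Hirsch, any argument establishing it would already be a full proof, so nothing in your sketch closes the gap. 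One smaller structural point: in your ``easy case'' you invoke the inductive hypothesis for a facet $F$, which is a $(d-1)$-polytope with up to $2d-1=2(d-1)+1$ facets; this is not an instance of the $d$-step case in dimension $d-1$, so the induction must be organized as in the proof of Theorem~\ref{theorem:dstep-hirsch} (inducting on $n-d$ via wedging and the chain of inequalities there), not facet-count by facet-count.

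Your closing instinct is the right one, and it is precisely what happened: as recorded in Appendix~\ref{appendix:CounterHirsch}, Santos constructed a $5$-dimensional prismatoid with $48$ vertices that fails the $d$-step property (Theorem~\ref{theorem:SantosPrismatoidProperties}) and, by a dimension-raising operation in the spirit of wedging (Theorem~\ref{theorem:reduceasimpliciality}), obtained a $43$-dimensional polytope with $86$ facets and diameter strictly greater than $43$. So the obstruction you identify in the disjoint case is not merely a missing idea --- the conjectured bound $\diam(G(P))\le n-d$ genuinely fails, and the productive direction is the one the paper pursues: Hirsch-sharp constructions, the equivalences above, polynomial or special-case bounds (Sections~\ref{section:positive-results} and~\ref{section:special}), and counterexamples to the unbounded, monotone, and topological variants.
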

\begin{remark}
See Appendix~\ref{appendix:CounterHirsch} for a very recent update on the status of the conjecture. Santos (see~\cite{Santos:CounterHirsch}) announced the construction of a counter-example to the Hirsch Conjecture as this dissertation was being submitted.
\end{remark}

Despite being one of the oldest and most basic problems in polyhedral combinatorics, what we know is quite scarce. Most notably, no polynomial upper bound is known for the diameters of polytopes. That is to say, the following Polynomial Diameter Conjecture is open:
\begin{conjecture}[Polynomial Diameter Conjecture]\label{conjecture:hirsch-poly}
Is there a polynomial function $f(n,d)$ such that for any polytope (or polyhedron)
$P$ of dimension $d$ with $n$ facets, $\diam(G(P)) \leq f(n,d)$?.
\end{conjecture}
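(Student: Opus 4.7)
The Polynomial Diameter Conjecture is a famous open problem, so my ``proof'' plan is really a research strategy rather than a routine argument. The plan is to attack it along three interlocking fronts, each reinforcing the others.

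First, I would try to sharpen the best known general bound, the Kalai--Kleitman quasi-polynomial bound of roughly $n^{\log_2 d + 1}$. Their argument is combinatorial: for any two vertices $u,v$ of $P$, look at the set $F_k$ of facets reachable from $u$ in at most $k$ steps, and show that if $k$ is as small as half the diameter then $|F_k| \leq n/2$, so one can recurse on a lower-dimensional face. I would revisit this recursion and look for structural hypotheses (e.g.\ on the facet-ridge incidence graph, or on the distribution of vertices among facets) under which the halving step can be replaced by a constant-factor reduction in a more delicate ``budget'' $g(n,d)$, yielding a bound of the form $n^{O(1)} d^{O(1)}$. A key sub-step is to understand when Kalai--Kleitman is tight on an abstract combinatorial object, such as Eisenbrand--H\"ahnle--Razborov--Rothvo\ss{} connected layer families; matching or beating their constructions in the abstract setting has to come before one can hope to prove polynomiality for honest polytopes.

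Second, I would work within specific but generous classes where one already has linear or near-linear control, and try to abstract out what makes them tractable. The totally unimodular case (Dyer--Frieze), the transportation-polytope case (Brightwell--van den Heuvel--Stougie, Hurkens), and the recent work on acyclic and tridiagonal Birkhoff polytopes (Costa et al.) all go through via structured flow augmentation or signature arguments. I would try to identify the weakest combinatorial property of the constraint matrix $A$ (some ``bounded subdeterminant plus connectivity'' hypothesis, perhaps generalising total unimodularity to bounded $\Delta$) that still forces a polynomial diameter, and then argue by perturbation that every partition polyhedron $\{x\geq 0 : Ax=b\}$ is close to such a structured one via the chamber-complex machinery of Section~\ref{section:enumerationofpartition}.

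Third, and running in parallel, I would use the polarity dictionary: by Remark~\ref{remark:polaritysimplesimplicial} bounding $\diam(G(P))$ for simple $P$ is equivalent to bounding the ridge-graph diameter of a simplicial sphere on $n$ vertices. This reformulates the conjecture as a question about $(d-1)$-dimensional shellable simplicial spheres and invites topological and algebraic tools (shellings, $h$-vectors, Stanley--Reisner rings) that are invisible on the primal side. A natural intermediate target is to prove the conjecture for \emph{flag} simplicial spheres, or for boundaries of simplicial polytopes realised by $0/1$-matrices, which already covers a large slice of the examples appearing in transportation problems.

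The main obstacle, as it has been for half a century, is that none of these attacks is known to close even the exponential gap: Santos's counterexample (see Appendix~\ref{appendix:CounterHirsch}) kills the Hirsch bound $n-d$ but is entirely consistent with any polynomial $f(n,d)$, so the obstruction is not a counterexample but the absence of any recursive or global invariant that decreases by a definite amount along long paths. Any successful plan must either exhibit such an invariant --- a ``geometric potential function'' on vertices whose level sets shrink polynomially --- or reduce the problem to a combinatorial model (connected layer families, zonotopal tilings) in which the analogous polynomial bound can be proved unconditionally; I would consider this last reduction the single hardest step.
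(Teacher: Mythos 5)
The statement you were asked about is not a theorem of the paper at all: it is posed there as an open question (Conjecture~\ref{conjecture:hirsch-poly}), and the paper offers no proof of it --- only the best known partial results, namely the Kalai--Kleitman quasi-polynomial bound $n^{\log_2(d)+2}$ (Theorem~\ref{theorem:quasipolynomial}), Larman's bound $n2^{d-3}$, linear only in fixed dimension (Theorem~\ref{theorem:linear-in-fixed-d}), and polynomial bounds for special classes such as $0$-$1$ polytopes, transportation polytopes, network flow polytopes, and totally unimodular systems. So there is no proof in the paper against which your argument can be matched, and your text, by your own admission, is a research program rather than a proof: it establishes nothing about $\diam(G(P))$ for general polytopes, and must be judged as leaving the full statement open.

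The concrete gap is that none of your three fronts, as described, can close on its own. Sharpening the Kalai--Kleitman recursion ``under structural hypotheses'' on the facet-ridge graph begs the question of why every polytope satisfies those hypotheses; worse, the abstraction route is blocked by the very result you invoke --- Eisenbrand, H\"ahnle, Razborov, and Rothvo\ss{} exhibit abstract objects satisfying the hypotheses of Theorem~\ref{theorem:eisenbrand} with diameter of order $n^{3/2}$, so (as Remark~\ref{remark:limitsofabstraction} already warns) the purely combinatorial properties exploited in the proofs of Theorems~\ref{theorem:quasipolynomial} and~\ref{theorem:linear-in-fixed-d} cannot by themselves yield a polynomial bound, and any successful argument must use genuinely geometric information that your plan does not identify. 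The special-class and polarity fronts cover only restricted families (bounded-subdeterminant constraint matrices, flag spheres, $0$-$1$ realizations), and the universality theorem of De~Loera and Onn (Theorem~\ref{theorem:universality}) shows that ``structured'' classes like faces of axial $3$-way transportation polytopes are already as hard as the general case, so no perturbation or chamber-complex reduction to a tame subclass is available without proving the full conjecture. In short: your proposal is a sensible survey of known attack lines, but it contains no step that proves the conjectured bound, and the conjecture remains open in the paper as well.
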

In contrast, very few polytopes are known where the bound $n-d$ is attained. This chapter surveys the state of the art on the Hirsch Conjecture as of today. An earlier survey on the geometry of the Hirsch Conjecture was written by \given{Victor }Klee and \given{Peter }Kleinschmidt (see~\cite{Klee:The-d-step-conjecture}) in 1987. (On a related note, the surveys by Megiddo (see~\cite{Megiddo:On-the-complexity-of-linear}) and Todd (see~\cite{Todd:The-many-facets}) describe the complexity of linear programming algorithms.) This chapter is based on a new survey written jointly with \given{Francisco }Santos (see~\cite{KimSantos:HirschSurvey}). 

Any polytope or polyhedron $P$, given by its facet-description, can be perturbed to a simple one $P'$ by a generic and small change in the coefficients of its defining inequalities. This will make non-simple vertices ``explode'' and become clusters of new vertices, all of which will be simple. This process cannot decrease the diameter of the graph, since we can recover the graph of $P$ from that of $P'$ by collapsing certain edges.
\begin{proposition}
The diameter of $P'$ is an upper bound on the diameter of $P$.
\end{proposition}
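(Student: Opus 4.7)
The plan is to exhibit a surjective map $\pi: \vert(P') \to \vert(P)$ induced by the perturbation and show that it is a ``graph contraction,'' so that any path in $G(P')$ projects to a walk of at most the same length in $G(P)$. Concretely, a vertex of $P'$ is determined by the choice of exactly $d$ facet-defining inequalities of $P'$ that are tight at it. Since the perturbation is obtained by slightly changing the coefficients of the original facet-defining inequalities (preserving the correspondence of inequalities), each such $d$-tuple also determines a well-defined subset of $d$ facet-defining inequalities of $P$. Setting $\pi(v')$ to be the unique vertex of $P$ lying on all of these facets gives the desired map, and the construction of $P'$ as a ``vertex explosion'' ensures that $\pi$ is surjective and that $\pi^{-1}(v)$ is precisely the cluster into which $v$ bursts.

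The next step is the core claim: if $v'$ and $w'$ are endpoints of an edge of $P'$ and $\pi(v') \neq \pi(w')$, then $\pi(v')$ and $\pi(w')$ are joined by an edge in $G(P)$. An edge of $P'$ is the intersection of some $d-1$ facets of $P'$, so both $v'$ and $w'$ lie on these $d-1$ facets. Applying $\pi$, both $\pi(v')$ and $\pi(w')$ lie on the corresponding $d-1$ facets of $P$. Their intersection with $P$ is a face of dimension at least $1$ (since it contains two distinct vertices), and since an intersection of $d-1$ facets has dimension at most $1$, it is an edge of $P$, as desired. On the other hand, if $\pi(v') = \pi(w')$, then the edge collapses to a single vertex of $P$ under $\pi$; this is the case of ``internal'' edges of a cluster.

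The conclusion follows by a standard projection-of-paths argument. Given any two vertices $v, w \in \vert(P)$, pick preimages $v' \in \pi^{-1}(v)$ and $w' \in \pi^{-1}(w)$ and a shortest path $v' = u_0', u_1', \ldots, u_\ell' = w'$ in $G(P')$ of length $\ell \leq \diam(G(P'))$. Its image $v = \pi(u_0'), \pi(u_1'), \ldots, \pi(u_\ell') = w$ is a walk in $G(P)$: consecutive vertices are either equal (edges internal to a cluster) or joined by an edge of $G(P)$ (edges between clusters) by the previous paragraph. Deleting repetitions yields a genuine path in $G(P)$ from $v$ to $w$ of length at most $\ell \leq \diam(G(P'))$, whence $\diam(G(P)) \leq \diam(G(P'))$.

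The main technical point, and the one that carries all of the work, is the second step: verifying that edges of $P'$ whose endpoints lie in different clusters truly descend to edges of $P$. This relies on the perturbation being sufficiently small and generic, so that the combinatorial correspondence between the facet inequalities of $P$ and $P'$ is faithful and no ``spurious'' edge of $P'$ appears between clusters that are not themselves joined by an edge in $P$. Once this is secured, the path-projection argument is routine.
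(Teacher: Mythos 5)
Your argument is correct and is essentially the paper's own: the text justifies this proposition by exactly the observation you formalize, namely that $G(P)$ is recovered from $G(P')$ by collapsing the clusters into which each vertex of $P$ explodes, and your map $\pi$ together with the edge-projection and path-projection steps simply makes that sketch explicit. The one caveat (shared with the paper, and which you partly acknowledge) is that the well-definedness of $\pi$ and the claim that $d-1$ facets of $P$ meet in a face of dimension at most one are not automatic for a non-simple $P$ with dependent facet normals; they rely on the perturbation being small and generic enough that the facets tight at a cluster vertex correspond to facets through the original vertex with the required independence.
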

Hence, to study the Hirsch Conjecture, one only needs to consider simple polytopes:
\begin{lemma}
\label{lemma:simple}
The diameter of any polytope $P$ is bounded above by the diameter of some simple polytope $P'$ with the same dimension and number of facets.
\end{lemma}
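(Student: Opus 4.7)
The plan is to realize the perturbation alluded to in the preceding Proposition explicitly and check that it has all the properties we want. Write $P$ in its irredundant facet description $P = \{x \in \R^c \mid \langle a_i, x\rangle \leq b_i,\ i = 1, \ldots, n\}$, and consider the family $P_\epsilon = \{x \mid \langle a_i, x\rangle \leq b_i + \epsilon_i\}$ parametrized by a small vector $\epsilon = (\epsilon_1, \ldots, \epsilon_n) \in \R^n$. The normals $a_i$ are kept fixed so that $P_\epsilon$ can have at most $n$ facets, one per inequality; I would first show that for all $\epsilon$ of sufficiently small norm, each inequality remains facet-defining (so $P_\epsilon$ has exactly $n$ facets), the polytope $P_\epsilon$ remains of the same dimension as $P$, and it remains bounded if $P$ was. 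All three statements follow from the fact that these are open conditions on the coefficients of the inequality system.

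Next I would argue that for generic $\epsilon$ arbitrarily close to the origin, the polytope $P' = P_\epsilon$ is simple. Non-simplicity means that some $d+1$ of the bounding hyperplanes have a common point inside $P_\epsilon$; this is a finite union of codimension-one algebraic conditions on $\epsilon$, so a generic small choice of $\epsilon$ avoids all of them.

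The main step, and the most delicate one, is to compare the graphs. For each vertex $v$ of $P$, let $C_v \subseteq \vert(P')$ denote the set of vertices of $P'$ that converge to $v$ as $\epsilon \to 0$ along a chosen one-parameter family of generic perturbations. Intuitively, $v$ is the intersection of some $k \geq d$ facets of $P$, and locally near $v$ the polytope $P'$ looks like a simple polytope obtained by perturbing the $k$ hyperplanes through $v$; its vertices are precisely the elements of $C_v$. I would establish two facts about this decomposition $\vert(P') = \bigsqcup_{v \in \vert(P)} C_v$: (i)~each $C_v$ induces a connected subgraph of $G(P')$, because it is the vertex set of the polytope $P' \cap U$ for a sufficiently small neighborhood $U$ of $v$, whose graph is connected by Balinski's Theorem (Theorem~\ref{theorem:balinski}); and (ii)~whenever $v, w$ are joined by an edge of $P$, there exist $v' \in C_v$ and $w' \in C_w$ joined by an edge of $P'$, because the one-dimensional face $[v,w]$ of $P$ perturbs to a thin strip inside $P'$ whose graph connects $C_v$ to $C_w$ along at least one edge.

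Putting these pieces together finishes the argument: given two vertices $u, v$ of $P$, pick any $u' \in C_u$ and $v' \in C_v$, find a path of length at most $\diam(G(P'))$ in $G(P')$ joining them, and project this path to a walk in $G(P)$ by sending each $x' \in C_x$ to $x$. By (i) and (ii), consecutive vertices of the projected walk are either equal or adjacent in $G(P)$, so the projection yields a walk from $u$ to $v$ of length at most $\diam(G(P'))$. The main obstacle is making the cluster decomposition $C_v$ precise and verifying properties (i) and (ii); everything else is an exercise in small perturbations of linear inequalities.
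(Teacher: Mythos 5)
Your construction (perturbing the right-hand sides to $b_i+\epsilon_i$) is exactly the route the paper takes, and the paper itself only sketches it; your elaboration is fine up through simplicity of $P'$ and the cluster decomposition of its vertices. The genuine gap is in the last step. What the projection argument needs is: \emph{every edge of $P'$ either joins two vertices of the same cluster, or joins $C_x$ to $C_y$ where $xy$ is an edge of $P$}. This is neither (i) nor (ii), nor is it implied by them: (i) and (ii) say that $G(P)$ embeds into the quotient of $G(P')$ obtained by collapsing clusters, whereas the diameter bound needs the opposite containment, that the quotient maps into $G(P)$. So the sentence ``by (i) and (ii), consecutive vertices of the projected walk are either equal or adjacent'' is unjustified as written. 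The missing statement is true and has a short proof in your setup: an edge $x'y'$ of the simple polytope $P'$ is cut out by $d-1$ linearly independent active constraints; letting $\epsilon\to 0$, those same $d-1$ facet inequalities of $P$ are active at both limit vertices $x$ and $y$, so they cut out a face of $P$ of dimension at most one containing $x$ and $y$, whence $x=y$ or $xy$ is an edge of $P$. With this (plus the easy fact that each $C_v$ is nonempty, e.g.\ by maximizing a linear functional whose unique maximizer on $P$ is $v$), projecting a shortest $u'$--$v'$ path gives a walk from $u$ to $v$ of length at most $\diam(G(P'))$; note that (i) and (ii) are then not needed at all.

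Two smaller points. The justification of (i) via Balinski's Theorem applied to $P'\cap U$ does not quite work: $P'\cap U$ acquires extra vertices on $\partial U$, and a path in its graph may run through them, so connectivity of $G(P'\cap U)$ does not immediately give connectivity of the subgraph of $G(P')$ induced by $C_v$. (The clean statement is that $C_v$ is the vertex set of the pointed polyhedron defined by the perturbed inequalities indexed by the facets through $v$, whose graph of vertices and bounded edges is connected.) Similarly, in (ii) an edge of a non-simple $P$ may lie on more than $d-1$ facets, so ``the edge perturbs to an edge of $P'$'' requires choosing $d-1$ linearly independent facets through it and verifying feasibility after perturbation; since (i) and (ii) are superfluous for the lemma, the simplest fix is to drop them. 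Finally, you implicitly assume $P$ is full-dimensional (otherwise perturbing right-hand sides raises the dimension); this is harmless if you work inside $\aff(P)$, but it should be said.
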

Graphs of simple polytopes are better behaved than graphs of arbitrary polytopes. Their main property in the context of the Hirsch Conjecture is that if $u$ and $v$ are vertices joined by an edge in a simple polytope then there is a single facet containing $u$ and not $v$, and a single facet containing $v$ and not $u$. That is, at each step along the graph of $P$ we enter a single facet and leave another one. 

\begin{definition}
Let $H(n,d)$ denote the maximum diameter of graphs of $d$-polytopes with $n$ facets. Let $H_u(n,d)$ denote the maximum diameter of graphs of $d$-polyhedra with $n$ facets.
\end{definition}
\begin{remark}\label{remark:equivalentHnd}
The Hirsch Conjecture is, then, the assertion that $H(n,d) \leq n-d$ for all $n > d \geq 2$. Since all polytopes are polyhedra, clearly $H(n,d) \leq H_u(n,d)$ for all pairs $(n,d)$ with $n > d \geq 2$. By Lemma~\ref{lemma:simple}, the quantity $H(n,d)$ is, equivalently, the maximum diameter of graphs among \emph{simple} $d$-dimensional polytopes with $n$ facets. By Remark~\ref{remark:polaritysimplesimplicial}, this is also the maximum diameter among the \emph{dual graphs} of \emph{simplicial} $d$-polytopes with $n$ vertices.
\end{remark}

In the rest of this chapter, we survey the status of the Hirsch Conjecture today. In Section~\ref{section:HirschAndLP}, we discuss the connection between the Hirsch Conjecture and linear programming.

A polytope is called \defn{Hirsch-sharp} if it meets the Hirsch Conjecture with equality. In other words, a $d$-dimensional polytope with $n$ facets is Hirsch-sharp if its diameter is exactly $n-d$. Section~\ref{section:basic-results} discusses Hirsch-sharp polytopes. In Section~\ref{section:Q4}, we describe a special Hirsch-sharp polytope first discovered by \given{Victor }Klee and \given{David }Walkup in their seminal 1967 paper~\cite{Klee:d-step}. In Section~\ref{section:wedging}, we present useful operations which preserve Hirsch-sharpness. Section~\ref{section:many-hirsch-sharp} surveys the work of \given{Kerstin }Fritzsche, \given{Fred }Holt, and \given{Victor }Klee (see~\cite{Fritzsche99morepolytopes},~\cite{Holt:Many-polytopes}, and~\cite{Holt:Hsharpd7}) which prove that Hirsch-sharp $d$-polytopes with $n$ facets exist whenever $n > d \geq 7$.

Section~\ref{section:positive-results} surveys results in support of the Hirsch Conjecture, or at least the weaker Polynomial Diameter Conjecture (see Conjecture~\ref{conjecture:hirsch-poly}).  Section~\ref{section:equivalences} surveys results of Klee and Walkup (see~\cite{Klee:d-step}), proving the equivalence of the Hirsch Conjecture to two other natural conjectures, the $d$-step and the Non-revisiting Conjectures. Section~\ref{section:small-dimension} surveys the cases of small $d$ or $n-d$, which are some cases where the Hirsch Conjecture is known to hold. Section~\ref{section:upper-bounds} presents general upper bounds on the diameters of all polytopes. Section~\ref{section:continuous} discusses the recent work of Deza, Terlaky, and Zinchenko (see~\cite{Deza:Central-path},~\cite{Deza:The-continuous-d-step}, and~\cite{Deza:Curvature}) on a continuous analogue of the Hirsch Conjecture arising in the context of the central path method for linear programming and convex optimization. Section~\ref{section:special} surveys known diameter bounds for special classes of polytopes, including transportation polytopes.

We close this chapter with Section~\ref{section:negative}, which surveys evidence against the Hirsch Conjecture (or the Polynomial Diameter Conjecture should the Hirsch Conjecture turn out to be false). This section proves that three natural variants of the Hirsch Conjecture are false. Section~\ref{section:unbounded-monotone} presents the work of Klee and Walkup in~\cite{Klee:d-step} which shows the Unbounded Hirsch Conjecture is false. We also show Todd's result (see~\cite{Todd:The-monotonic-bounded}) that the Monotone Hirsch Conjecture is false. Section~\ref{section:combinatorial} summarizes the work of Mani and Walkup in~\cite{Mani:A-3-sphere-counterexample}, where they prove that the Topological Hirsch Conjecture is false.

\section{The Hirsch Conjecture and linear programming}\label{section:HirschAndLP}

The original motivation for the Hirsch Conjecture comes from its relation to the simplex algorithm for linear programming. The surveys by Megiddo (see~\cite{Megiddo:On-the-complexity-of-linear}) and Todd (see~\cite{Todd:The-many-facets}) describe the complexity of linear programming algorithms. In 1979, Khachiyan (see~\cite{Khachiyan}) proved that linear programming problems can be solved in polynomial time on the input size of the polyhedron via the \emph{ellipsoid method} for linear programming. In 1984, Karmarkar (see~\cite{Karmarkar}) devised the \emph{interior point method} for linear programming. (See~\cite{Renegar:A-Mathematical-View} for a complete treatment of interior point methods.)

Although the latter is more applicable (it is easier to implement and has better complexity) than the former, to this day the most commonly used method for linear programming is the \emph{simplex method}, devised by \given{George }Dantzig in 1947. \given{Jack }Dongarra and \given{Francis }Sullivan regard the simplex method among the top ten algorithms of the twentieth century (see~\cite{Dongarra:Top-Ten-Algorithms-of-the-Century}). In geometric terms, the simplex method first finds an arbitrary vertex in the feasibility polyhedron. Then, it uses local rules to move from vertex to adjacent vertex in such a way that the value $\xi(x)$ of the given linear functional $\xi$ increases at every step. When there is no such \emph{pivot step} that can increase the functional, convexity implies that we have achieved the maximum possible value of it.  (For a practical explanation of how to implement the simplex method, see the book~\cite{Chvatal:Linear-Programing}.)

Bounding the diameter of graphs of polytopes has received a lot of attention because of its connection to the performance of the simplex method for linear programming. Clearly, a lower bound for the performance of the simplex method under \emph{any} pivot rule is the diameter of the polyhedron $P$. The converse is not true, since knowing that $P$ has a small graph diameter does not in principle tell us how to go from one vertex to another in a small number of steps. In particular, many of the results on diameters of polyhedra presented later (e.g., Theorems~\ref{theorem:quasipolynomial}~and~\ref{theorem:01hirsch}) do not have direct implications for the efficiency of the simplex method: The proofs construct a short path pairs of vertices only \emph{after} specifying the coordinates of both vertices!
\begin{remark}
More relevant in the context of the simplex method is the proof that there are ``randomized'' pivot rules that get to the optimum vertex in \emph{subexponential} time. The exact bound is $e^{K\sqrt{d\log n}}$, where $K$ is a fixed constant (see~\cite{Kalai:A-subexponential-randomized} and~\cite{Matousek:A-subexponential-bound}). See Theorem~\ref{theorem:randomizedpivot}. Also interesting are results by Spielman and Teng (see~\cite{Spielman:WhySimplexUsually}) and improvements by Vershynin (see~\cite{Vershynin:BeyondHirsch}) saying that ``random polytopes'' have polynomial diameter. More precisely: any polytope can be perturbed to have a diameter that is polynomial in the values of $n$, $d$, and the inverse of the perturbation parameter (see Theorem~\ref{theorem:vershynin}). This result seems to explain why the simplex method works well in practice. 
\end{remark}

In fact, the complexity of the simplex method depends on a local rule (known as a \emph{pivot rule}) chosen to move from vertex to vertex. (In the survey~\cite{Terlaky:PivotSurvey}, Terlaky and Zhang study the finiteness of the simplex algorithm under various pivot rules.) The \emph{a priori} ``best'' pivot rule is ``move to the neighbor where the functional increases most,'' but \given{Victor }Klee and \given{George }Minty (see~\cite{Klee-Minty}) showed in 1972 that this can lead to paths of exponential length, even in polytopes that are combinatorially equivalent to cubes. The same worst-case exponential behavior has been proved for nearly every deterministic rule devised so far, although not for all of them. However, there are subexponential, but not yet polynomial, randomized pivot algorithms (see Theorem~\ref{theorem:randomizedpivot}). Still, the simplex algorithm is highly efficient \emph{in practice} on most linear optimization problems. (See~\cite{Bixby:Solving-Real-World} for more on practical performance in solving linear programs.)

There is another reason why investigating the complexity of the simplex method is important, even if we already know polynomial time algorithms. The algorithms of Khachiyan and Karmarkar are polynomial in the bit length of the input. (For a discussion of bit length, see Section 3.2 of~\cite{Schrijver:LinearProgramming}.) However, it would be interesting to know whether a polynomial algorithm for linear programming in the real number machine model of Blum, Cucker, Shub, and Smale (see~\cite{BCSS}) exists. That is to say, is there an algorithm that uses a polynomial number of arithmetic operations on the coefficients of the linear program, rather than on their bits; or, better yet, a \emph{strongly polynomial algorithm}, i.e., one that is polynomial both in the arithmetic sense and the bit sense? These two related problems were included by \given{Steve }Smale (see~\cite{Smale}) in his list of mathematical problems for the next century. A polynomial pivot rule for the simplex method would solve them in the affirmative.

In this context, the much weaker statement asserting the existence of a polynomial upper bound in $n$ and $d$ is relevant (see the Polynomial Diameter Conjecture, stated as Conjecture~\ref{conjecture:hirsch-poly} on page~\pageref{conjecture:hirsch-poly}), if the linear one turns out to be false (see, e.g., the discussion in~\cite{kalaiblog}). The best bound for \emph{all} polytopes is a quasi-polynomial bound by Kalai and Kleitman in~\cite{Kalai:Quasi-polynomial}. We present their result later as Theorem~\ref{theorem:quasipolynomial}.

\begin{remark}
Many of the algorithms for linear programming discussed here make sense (with some modifications) in the larger context of convex optimization. See, e.g.,~\cite{Boyd:ConvexOptimization},~\cite{Renegar:A-Mathematical-View},~\cite{Rockafellar:Variational}, and~\cite{Wets:Optimization}.
\end{remark}

\section{Hirsch-sharp polytopes}\label{section:basic-results}

Recall that a $d$-polytope (or polyhedron) with $n$ facets is called \defn{Hirsch-sharp} if it meets the Hirsch Conjecture with equality; that is, if its diameter is exactly $n-d$. Here we show several ways to construct Hirsch-sharp polytopes. Our current knowledge is that Hirsch-sharp $d$-polytopes with $n$ facets: 
\begin{itemize}
\item exist if one of the following three conditions holds: $n\leq 2d$, $n\leq 3d-3$, or $d\geq 7$.
\item do not exist for $d\le3$ if $n>2d$, or for $(n,d)\in\{(10,4), (11,4), (12,4)\}$.
\item are unknown, but may exist in all other cases: that is, if $d\in\{4,5,6\}$ and $n>3d-3$, except for the three pairs $(n,4)$ mentioned above.
\end{itemize}
However, all Hirsch-sharp polytopes with $n>2d$ that are known are obtained from one particular Hirsch-sharp polytope, by simple geometric operations of wedging, truncating and gluing, as we show in Section~\ref{section:many-hirsch-sharp}. That is to say: to some extent, we only know one non-trivial Hirsch-sharp polytope, the four-dimensional polytope discovered by \given{Victor }Klee and \given{David }Walkup in their seminal 1967 paper~\cite{Klee:d-step}. In Section~\ref{section:Q4} we describe this particular Hirsch-sharp polytope. Our description gives integer coordinates for it much smaller than the original ones.

\subsection{Easy constructions}\label{section:examples}

Let us start with the most basic Hirsch-sharp polytopes.

\begin{enumerate}

\item 
{\bf Cubes.} The $d$-dimensional cube $\Box_d$ has $2d$ facets. The distance between vertices is the Hamming distance (i.e., number of different coordinates); thus the diameter of $\Box_d$ is $d$. 

\item 
{\bf Products.}
One reason that the Hirsch Conjecture is natural is that it is ``invariant'' under taking products. If $P$ and $Q$ meet the Hirsch Conjecture with equality, then their Cartesian product $P\times Q$ does the same. Indeed, both the dimension and number of facets of $P\times Q$ are the sum of those of $P$ and $Q$. The diameter of the product is the sum of the diameters: if we want to go from vertex $(u_1,v_1)$ to vertex $(u_2, v_2)$ we can do so by going from $(u_1,v_1)$ to $(u_2,v_1)$ along $P\times \{v_1\}$ and then to $(u_2,v_2)$ along $\{u_2\}\times Q$; and there is no better way.

\item 
{\bf Products of simplices.}
In particular, any product of simplices of any dimension satisfies the Hirsch Conjecture with equality. Let $P = \Delta_{i_1} \times \cdots \times \Delta_{i_k}$ be the product of $k$ simplices.  The dimension of $P$ is $\sum_{j=1}^k i_j$,  $P$ has $\sum_{j=1}^k (i_j+1)$ facets, and its diameter is $k$. 

\begin{corollary}
For every $d< n \leq 2d$ there are simple $d$-polytopes with $n$ facets and diameter $n-d$.
\end{corollary}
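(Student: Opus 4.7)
The plan is to specialize the product-of-simplices construction from the preceding paragraph. Given any pair $(n,d)$ with $d < n \leq 2d$, I would set $k := n - d$, which satisfies $1 \leq k \leq d$. I then pick any decomposition $d = i_1 + \cdots + i_k$ into positive integers; for concreteness, I would take $i_1 = 2d - n + 1$ and $i_2 = \cdots = i_k = 1$. The candidate polytope is $P := \Delta_{i_1} \times \cdots \times \Delta_{i_k}$.

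The verification then breaks into three routine checks that all follow from facts already established in the excerpt. First, $\dim(P) = \sum_{j=1}^k i_j = d$, and since the facets of a Cartesian product are of the form (facet of one factor) $\times$ (the other factors), $P$ has $\sum_{j=1}^k (i_j + 1) = d + k = n$ facets. Second, $P$ is simple: each simplex $\Delta_{i_j}$ is simple (in a $d$-simplex every vertex meets exactly $d$ of the $d+1$ facets), and the Cartesian product of simple polytopes is simple, since a vertex $(v_1,\ldots,v_k)$ of $P$ lies on exactly $\sum_j i_j = d$ facets of $P$ (namely $i_j$ facets contributed by each factor). Third, the diameter equals $k = n - d$ by the very computation given in the previous paragraph for products of simplices.

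The construction works throughout the full range $d < n \leq 2d$ precisely because the condition $k = n - d \in \{1, 2, \ldots, d\}$ is exactly what allows $d$ to be partitioned into $k$ positive parts; when $n = d+1$ we recover $P = \Delta_d$, and at the other extreme $n = 2d$ we recover the cube $P = \Box_d = \Delta_1^{\,d}$. Consequently there is no real obstacle to overcome: the only constraint the argument imposes is $n \leq 2d$, which is exactly the hypothesis of the corollary, and the upper bound $n \leq 2d$ is in fact the reason this simple product construction cannot be pushed further. Producing Hirsch-sharp polytopes in the regime $n > 2d$ requires genuinely new ideas, which is the subject of Sections~\ref{section:Q4} and~\ref{section:many-hirsch-sharp}.
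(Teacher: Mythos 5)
Your proof is correct and follows exactly the paper's route: set $k=n-d$, partition $d$ into $k$ positive parts, and take the corresponding product of simplices, whose dimension, facet count, simplicity, and diameter are read off from the product-of-simplices discussion. The extra explicit checks (simplicity of the product, the concrete choice of partition) are fine but not a different argument.
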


\begin{proof}
Let $k=n-d\leq d$ and let $i_1,\dots,i_k$ be any partition of $d$ into $k$ positive integers (that is, $i_1 + \cdots +i_k=d$). Let  $P=\Delta_{i_1} \times \cdots \times \Delta_{i_k}$.
\end{proof}

\item {\bf Polyhedra with $n\leq 2d$.}
There is another way to construct Hirsch-sharp polytopes when $d < n \leq 2d$. Let $k = n-d$ and $u$ be the origin in $\R^d$. Let $v=(1,\dots,1,0,\dots,0)$ be the point whose first $k$ coordinates are $1$ and whose remaining $d-k$ coordinates are $0$.

Consider the polytope $P$ defined by the following $d+k$ inequalities:
\[
x_i \geq 0,\quad \forall i; \qquad \psi_j(x)\geq 0, \quad j=1,\dots, k,
\]
where the $\psi_i$ are affine linear functionals that vanish at $v$ and are positive at $u$. No matter what choice we make for the $\psi_j$'s, as long as they are sufficiently generic to make $P$ simple, $P$ will have diameter (at least) $k$; to go from $v$ to $u$ we need to enter the $k$ facets $x_j=0$, $j=1,\dots, k$, and each step gets you into at most one of them. 

In principle, $P$ may be an unbounded polyhedron; but if one of the $\psi_j$'s is, say, $k- \sum x_i $, then it will be bounded.

\end{enumerate}

So, Hirsch-sharp polytopes with $n \leq 2d$ are easy to find. We consider them ``trivial'' Hirsch-sharp polytopes. In the following sections we show examples of ``non-trivial'' ones. But before that let us mention that unbounded Hirsch-sharp polyhedra with any number of facets are also easy to obtain:

\begin{proposition}
For every $n \geq d$ there are simple unbounded $d$-polyhedra with $n$ facets and diameter $n-d$.
\end{proposition}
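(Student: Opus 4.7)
The plan is to realize $P$ as a direct product $P = P_2 \times \R_{\geq 0}^{d-2}$ (interpreting $\R_{\geq 0}^0$ as a point when $d=2$), where $P_2 \subseteq \R^2$ is a carefully chosen simple unbounded $2$-polyhedron with exactly $n-d+2$ facets and diameter $n-d$. Products of simple polyhedra are simple, the facets of $P$ decompose as facets-of-one-factor crossed with the other factor, the vertices multiply, and the bounded edges of $P$ come only from bounded edges of either factor. Since $\R_{\geq 0}^{d-2}$ has a single vertex (the origin) and no bounded edges, these bookkeeping facts will force $P$ to have $(n-d+2)+(d-2) = n$ facets and $G(P) \cong G(P_2)$, so the diameter will transfer verbatim from $P_2$ to $P$.

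First I would construct $P_2$ as the epigraph of a piecewise-linear convex function on the line. Pick strictly increasing slopes $\alpha_0 < \alpha_1 < \cdots < \alpha_{n-d+1}$ and intercepts $\beta_0, \ldots, \beta_{n-d+1}$ (for example $\alpha_i = i$ and $\beta_i = \binom{i}{2}$ works, making the $i$-th affine piece active on $[i-1,i]$ for $1 \leq i \leq n-d$, with the outer two as unbounded rays) so that the function
\[
g(x) \;=\; \max_{0 \leq i \leq n-d+1} \bigl(\alpha_i x - \beta_i\bigr)
\]
has each of its $n-d+2$ affine pieces active on an interval of positive length. Setting $P_2 = \{(x,y) \in \R^2 : y \geq g(x)\}$ yields a simple unbounded $2$-polyhedron whose $n-d+2$ facets are the portions of the boundary $y = g(x)$ on which each affine piece is realized, and whose $n-d+1$ vertices are the consecutive breakpoints of $g$. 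The graph $G(P_2)$ is the path on these breakpoints, so $\diam(G(P_2)) = n-d$.

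Next I would let $P = P_2 \times \R_{\geq 0}^{d-2}$. The bookkeeping mentioned in the opening paragraph gives immediately that $P$ is a simple unbounded $d$-polyhedron with $n$ facets and $G(P) \cong G(P_2)$, so $\diam(G(P)) = n-d$, as required. The boundary case $n = d$ is handled uniformly: when $n-d+2 = 2$ the function $g$ has a single breakpoint and $P_2$ is a planar wedge with one vertex, so $P$ is affinely equivalent to the positive orthant $\R_{\geq 0}^d$, with the desired diameter $0 = n-d$.

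There is no serious obstacle here. The two routine verifications are (i) that our choice of $\alpha_i,\beta_i$ makes $P_2$ simple with the claimed face structure (which follows from strict monotonicity of the slopes and from each piece of $g$ being active on an interval of positive length), and (ii) that facets, vertices, and bounded edges of a direct product of polyhedra behave as described. In contrast to the bounded case (which is the content of the Hirsch Conjecture itself), the unbounded case thus admits this completely elementary construction.
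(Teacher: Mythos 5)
Your proof is correct, but it takes a genuinely different route from the paper. The paper argues by induction on $n$: the base case is the orthant $\R_{\geq 0}^d$, and the inductive step takes a simple $d$-polyhedron with $n-1$ facets together with two vertices $u,v$ at distance $n-d-1$ such that $v$ lies on an unbounded ray $l$, then cuts with a slightly tilted supporting hyperplane of $l$; this adds one facet and one new vertex $v'$ whose only neighbor is $v$, so the diameter grows by one, and the new vertex again lies on a ray, keeping the induction alive. You instead produce the example in one shot as $P = P_2 \times \R_{\geq 0}^{d-2}$ with $P_2$ the epigraph of a convex piecewise-linear function having $n-d+2$ affine pieces, each active on an interval of positive length. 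Your bookkeeping is sound: the facets of the product are facets of one factor crossed with the other factor, giving $(n-d+2)+(d-2)=n$; each vertex $(v,0)$ lies on exactly $2+(d-2)=d$ facets, so $P$ is simple; and since the orthant factor has a single vertex and no bounded edges, and the paper's convention is that $G(P)$ of an unbounded polyhedron retains only bounded edges, $G(P)$ is the path $G(P_2)$ on $n-d+1$ vertices, so the diameter is exactly $n-d$, with the case $n=d$ degenerating to (an affine copy of) the orthant. What your construction buys is explicitness: concrete inequalities, no induction, and no tilting or genericity to verify, with the entire diameter computation reduced to a path graph in the plane. What the paper's construction buys is that its key device --- cutting near a ray through a farthest vertex, and carrying along the data that the farthest vertex sits on a ray --- is the same kind of manipulation reused later in the chapter (e.g.\ when passing between bounded and unbounded examples around the Klee--Walkup polytope), so the inductive proof doubles as a warm-up for those arguments. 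Both proofs implicitly assume $d \geq 2$, as the paper does throughout.
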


\begin{proof}
The proof is by induction on $n$, the base case $n=d$ being the orthant $\R_{\geq 0}^d =\{x \in \R^d \mid x_i\geq 0, \forall i\}$. Our inductive hypothesis is not only that we have constructed a $d$-polyhedron $P$ with $n-1$ facets and diameter $n-d-1$; also, that vertices $u$ and $v$ at distance $n-d-1$ exist in it with $v$ incident to some unbounded ray $l$. Let $H$ be a supporting hyperplane of $l$, and tilt it slightly at a point $v'$ in the interior of $l$ to obtain a new hyperplane $H'$. (See Figure~\ref{figure:unbounded-hirsch-sharp}.) Then, the polyhedron $P'$ obtained cutting $P$ with the tilted hyperplane has $n$ facets and diameter $n-d$; $v$ is the only vertex adjacent to $v$ in the graph, so we need at least $1 + (n-d-1)$ steps to go from $v'$ to $u$.
\end{proof}
\begin{figure}[hbt]
\begin{center}
\includegraphics[scale=.61]{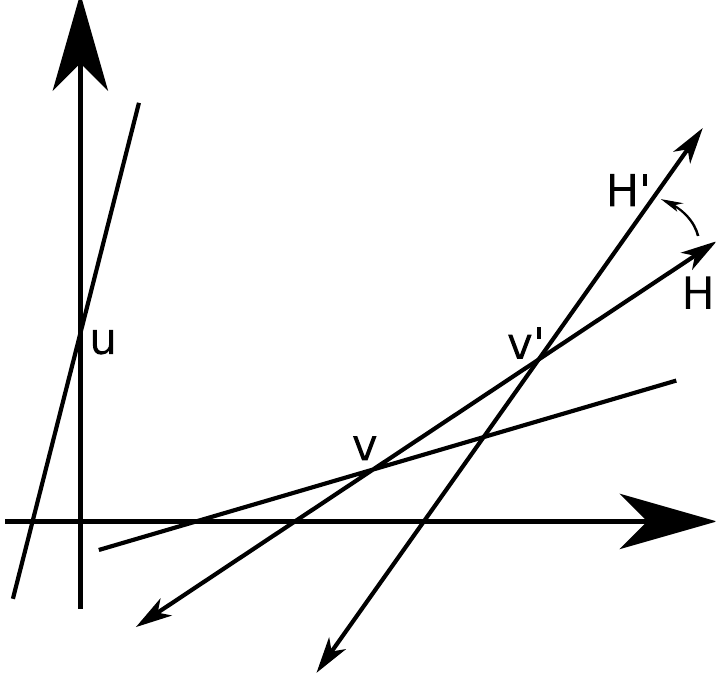}
\end{center}
\caption{Tilting the hyperplane $H$: an example in dimension two.}\label{figure:unbounded-hirsch-sharp}
\end{figure}

\subsection{The Klee-Walkup polytope $Q_4$} \label{section:Q4}

In their seminal 1967 paper~\cite{Klee:d-step} on the Hirsch Conjecture and related issues, \given{Victor }Klee and \given{David }Walkup describe a $4$-dimensional polytope called $Q_4$ with nine facets whose diameter is five. Innocent as this might look, this first ``non-trivial'' Hirsch-sharp polytope is at the basis of the construction of every remaining Hirsch-sharp polytope known to date (see Section~\ref{section:many-hirsch-sharp}). It is also instrumental in disproving the unbounded and monotone versions of the Hirsch Conjecture, which we will discuss in Section~\ref{section:unbounded-monotone}. Moreover, its existence is something of an anomaly: Altshuler, Bokowski, and Steinberg (see~\cite{Altshuler:The-classification-of-simplicial}) list all $1296$ combinatorial types of simplicial spheres with nine vertices. Of them, $1142$ are polytopal (that is, are the boundary complex of some polytope) and the polar of $Q_4$ is the only one that is Hirsch-sharp.

We prefer to describe the polytope in the polar view (that is, we will describe $Q_4^\Delta$), where it is simplicial instead of simple and the diameter is considered for the polar graph $G^\Delta(Q_4^\Delta)$, which is we know by Part~(\ref{item:dualgraph}) of Lemma~\ref{lemma:polar} is isomorphic to the graph $G(Q_4)$ of $Q_4$. So, let $Q_4^\Delta$ be the convex hull of the following nine points in $\R^4$. The coordinates here are much smaller than the original ones in~\cite{Klee:d-step}:
\[
\begin{tabular}{lll}
$a:= (-3,3,1,2)$,&&$e:= (3,3,-1,2)$,\\
$b:=(3,-3,1,2),$&&$f:=(-3,-3,-1,2),$\\
$c:=(2,-1,1,3)$,&&$g:=(-1,-2,-1,3)$,\\
$d:=(-2,1,1,3)$,&&$h:=(1,2,-1,3)$,\\
&$w:=(0,0,0,-2)$.
\end{tabular}
\]
The polar (simple) polytope $Q_4 = Q_4^{\Delta\Delta}$ of $Q_4^\Delta$ (see Part~(\ref{item:polarinvolution}) of Lemma~\ref{lemma:polar}) is obtained by converting each vertex $v$ of the simplicial polytope $Q_4^\Delta$ into an
inequality $v \cdot x\leq 1$. For example, the inequality corresponding to vertex $a$ below is $-3x_1+3 x_2+ x_3+ 2x_4\leq 1$. 

The key property of the polytope $Q_4^\Delta$ is:
\begin{theorem}[Klee-Walkup~\cite{Klee:d-step}]\label{theorem:klee-walkup}
Any path in the simplicial polytope $Q_4^\Delta$ from the tetrahedron $abcd$ to the tetrahedron $efgh$ needs at least five steps.
\end{theorem}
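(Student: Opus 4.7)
The plan is to translate the statement to a distance computation in the polar simple polytope $Q_4 = Q_4^{\Delta\Delta}$. By Part~(\ref{item:dualgraph}) of Lemma~\ref{lemma:polar}, the graph $G(Q_4)$ is isomorphic to the polar graph $G^\Delta(Q_4^\Delta)$, so the claim is equivalent to $\dist_{G(Q_4)}(p,q) \geq 5$, where $p$ and $q$ are the two vertices of $Q_4$ lying on the facet-sets $\{a,b,c,d\}$ and $\{e,f,g,h\}$ respectively. Since $Q_4^\Delta$ is simplicial, $Q_4$ is simple, so each edge through a vertex of $Q_4$ enters exactly one facet and leaves exactly one other. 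Thus a path of length $k$ from $p$ to $q$ makes exactly $k$ facet-entries and $k$ facet-exits, and since each of the facets $e,f,g,h$ must be entered at least once and each of the facets $a,b,c,d$ must be left at least once, we obtain the trivial lower bound $k \geq 4$.

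The core of the proof is to rule out $k=4$. A length-$4$ path achieves the trivial bound with equality, so (i) it never enters the facet $w$, and (ii) at each step it swaps one element of $\{a,b,c,d\}$ for one element of $\{e,f,g,h\}$. Writing the corresponding sequence of tetrahedra (which are facets of $Q_4^\Delta$, adjacent in the polar graph iff they share a triangle) as $abcd = T_0,T_1,T_2,T_3,T_4=efgh$, there are exactly $4!=24$ such candidate sequences, one for each order in which $e,f,g,h$ can be introduced. The plan is to compute the facet list of $Q_4^\Delta$ from the nine given coordinates (for instance using {\tt polymake}) and verify that each of the $24$ candidate sequences contains at least one intermediate $T_i$ that fails to be a facet of $Q_4^\Delta$. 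The obvious coordinate symmetries swapping $\{a,b,c,d\}$ with $\{e,f,g,h\}$ and fixing $w$ can be exploited to cut the casework in half.

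The main technical obstacle is this facet enumeration: listing the triangle-facets of the convex hull of nine points in $\R^4$ is a finite but non-trivial convex-hull calculation, and extracting exactly which $4$-subsets span facets. Once the facet structure is in hand, checking the $24$ candidates is immediate, and eliminating all of them upgrades the trivial lower bound $k \geq 4$ to $k \geq 5$, which is exactly the assertion of the theorem.
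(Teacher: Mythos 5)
Your overall strategy is essentially the paper's own hybrid computer-and-hand argument: the trivial count gives $k\geq 4$, a hypothetical path of length four can never pass through a tetrahedron containing $w$ (each of its four steps must introduce one of $e,f,g,h$), and the remaining finite check is made against the facet list of $Q_4^\Delta$ computed from the nine coordinates. The paper packages that last step slightly differently: it lists the $15$ tetrahedra of the anti-star of $w$ (the facets avoiding $w$), draws the dual graph of this subcomplex, and observes that no tetrahedron lies within two steps of both $abcd$ and $efgh$; your plan is to enumerate candidate length-four paths directly. These are the same idea in different clothing.

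There is, however, a concrete error in your casework that leaves the verification incomplete as described. In the dual graph of a simplicial polytope each step replaces exactly one vertex, so a length-four path from $abcd$ to $efgh$ is determined by the order in which $e,f,g,h$ are inserted \emph{and} the order in which $a,b,c,d$ are deleted: the intermediate tetrahedra depend on which old vertices have already been removed, not only on which new ones have been added (already $T_1$ can be any of the $16$ sets $abcd-x+y$ with $x\in\{a,b,c,d\}$ and $y\in\{e,f,g,h\}$). So the candidate sequences are indexed by pairs of permutations, $(4!)^2=576$ of them, not $4!=24$; checking one deletion order per insertion order examines only a small fraction of the candidates, and the conclusion $k\geq 5$ does not follow from the check you describe. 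The repair is routine: either enumerate all $576$ pairs (the symmetry you invoke does exist -- the linear map $(x_1,x_2,x_3,x_4)\mapsto(x_2,-x_1,-x_3,x_4)$ permutes the nine points, swaps $\{a,b,c,d\}$ with $\{e,f,g,h\}$ and fixes $w$ -- and can be used to shrink the casework), or, more cleanly and as the paper does, list the facets of $Q_4^\Delta$ that avoid $w$ and check by inspection or breadth-first search of their dual graph that no such facet is within distance two of both $abcd$ and $efgh$.
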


To prove this, you may simply  input these coordinates into any software able to compute the (polar) graph of a polytope. Our suggestion for this is {\tt polymake} (see~\cite{polymake}). However, understanding this polytope can be the key to the construction of counter-examples to the Hirsch Conjecture, so it is worth presenting the following hybrid computer-human proof. 

\begin{proof}
When pivoting from the tetrahedron $abcd$ to the tetrahedron $efgh$ we already need four steps to introduce, one by one, the four vertices $e$, $f$, $g$ and $h$. Hence, all paths that use the extra vertex $w$ have necessarily length five of bigger. This means we can concentrate on the subcomplex $K$ of $\partial Q_4^\Delta$ consisting of tetrahedra that do not use the vertex $w$. This subcomplex is called the \defn{anti-star} of $w$ in $\partial Q_4^\Delta$. We claim that this subcomplex consists of the $15$ tetrahedra in Figure~\ref{figure:polarK}.
\begin{figure}[hbt]
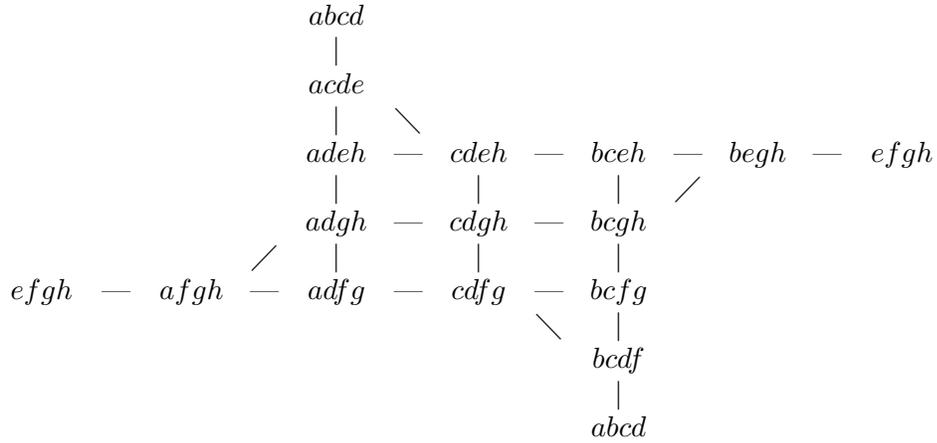

\[
\begin{matrix}
 & & abcd && && &&&\\
 && | & & && &&  &\\
 & & acde && && &&&\\
  && | & \diagdown & &&&& &\\
  && adeh &\text{---} & cdeh &\text{---} & bceh &\text{---} & begh&\text{---}\quad  efgh \\
  && | && | && | & \diagup & &\\
  && adgh &\text{---} & cdgh &\text{---} & bcgh  && &\\
   & \diagup & | && | && | &&&\\
 efgh\quad\text{---}\quad afgh & \text{---} & adfg &\text{---} & cdfg &\text{---} & bcfg &&&\\
   && && & \diagdown & |  &&&\\
  && && & & bcdf & &&\\
   && && & & |  &&&\\
     && && & & abcd &  &&\\
 \end{matrix}
\]
\caption{The polar graph of the subcomplex $K$}\label{figure:polarK}
\end{figure}

Figure~\ref{figure:polarK} shows adjacencies among tetrahedra; that is, it is the polar graph of $K$. Observe that $abcd$ and $efgh$ are repeated in the figure, to better reflect symmetry. From the picture it is easy to conclude our statement: there is no tetrahedron that can be reached in two steps from both $abcd$ and $efgh$, so the diameter is at least five.
\end{proof}

From the picture we can also see which tetrahedra of $\partial Q_4^\Delta$ use the vertex $w$: there is one for each triangle that appears only once in the list. For example, since the tetrahedron $abcd$ is adjacent only to $acde$ and $bcdf$, the triangles $abc$ and $bcd$ are joined to the vertex $w$. The boundary of the anti-star of the vertex $w$, that is, the \defn{link} of $w$ in $Q_4^\Delta$ turns out to be, combinatorially, the triangulation of the boundary of a cube displayed in Figure~\ref{figure:klee-walkup-cube}.
\begin{figure}[htb]
\begin{center}
\includegraphics[width=1.5in]{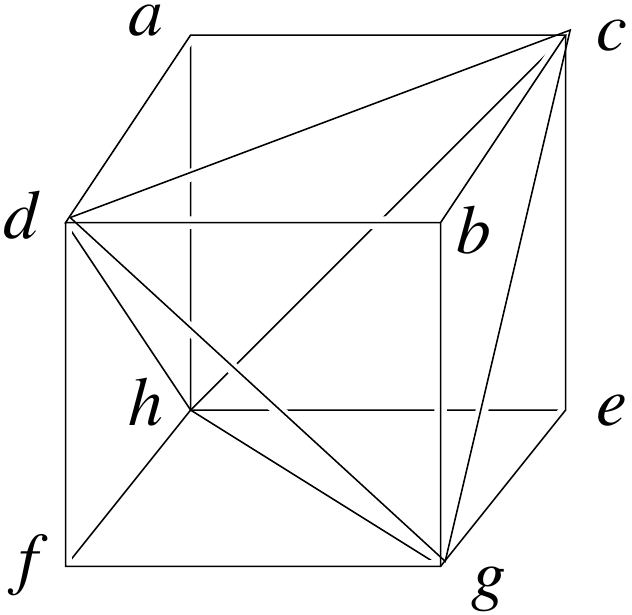}
\caption{The link of $w$ in $Q_4^\Delta$ is combinatorially a triangulation of the boundary of a cube.}
\label{figure:klee-walkup-cube}
\end{center}
\end{figure}
The anti-star $K$ of $w$ in the boundary $\partial Q_4^\Delta$ of $Q_4^\Delta$ is a topological triangulation of the interior of the cube. But we need to deform the cube a bit to realize this triangulation geometrically. This is shown in Figure~\ref{figure:klee-walkup}: the quadrilaterals $abcd$ and $efgh$ are displayed separately as lying in two different horizontal planes (so that the two relevant tetrahedra $abcd$ and $efgh$ degenerate to flat quadrilaterals), and the central part of the figure shows the intersection of the complex $K$ with their bisecting plane. Tetrahedra with three points on one plane and one in the other appear as triangles and tetrahedra with two points on either side appear as quadrilaterals. The tetrahedra $abcd$ and $efgh$ do not show up in the figure, since they do not intersect the intermediate plane. For the interested reader, this picture is an example of a \defn{mixed subdivision} of the Minkowski sum of two polygons. The fact that triangulations of polytopes with their vertices lying in two parallel hyperplanes can be pictured as mixed subdivisions is the \emph{polyhedral Cayley trick} (see, e.g.,~\cite{Santos:CayleyTrick} or Chapter 9 of~\cite{DeLoera:Triangulations}).

\begin{figure}[htb]
\begin{center}
\includegraphics[width=4in]{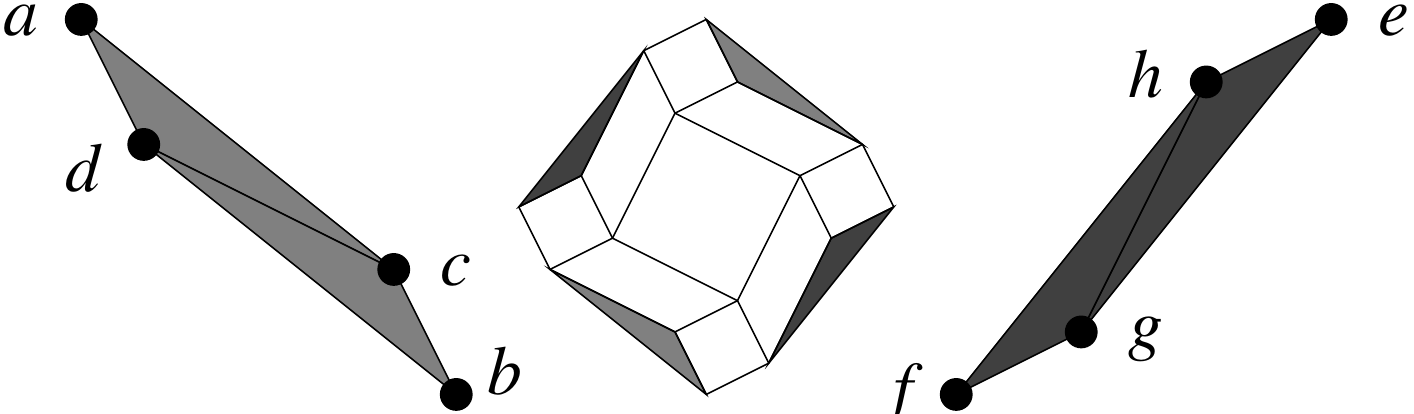}
\caption{The Klee-Walkup complex as a mixed subdivision. The shadowed triangles represent tetrahedra adjacent to $abcd$ and $efgh$}
\label{figure:klee-walkup}
\end{center}
\end{figure}

\subsection{Wedge and one-point suspension}\label{section:wedging}

Here we describe a very basic, yet extremely fruitful, operation that one can do to a polytope. In the simple version it is called \defn{wedging} and its simplicial counterpart (i.e., in the polar setting) is the \defn{one-point suspension}. We will prove that the Hirsch Conjecture is invariant under wedges and truncations. (To simplify our exposition, in this section we assume that all our polytopes are full-dimensional, i.e., the ambient dimension $c$ is always equal to the dimension $d$.)

\subsubsection*{Wedging}

Let $F$ be a facet of a polytope $P$, and let $f(x)\leq b$ be the inequality corresponding to $F$. The wedge of $P$ over $F$ is the polytope 
\[
\wed_F(P) := P \times [0,\infty) \cap \{ (x,t)\in \R^d\times \R : f(x) + t \leq b \}.
\]
Put differently, $\wed_F(P)$ is formed by intersecting the half-cylinder $C:=P \times [0,\infty)$ with a closed half-space $J$ in $\R^{d+1}$ such that:
\begin{itemize}
\item the intersection $J \cap C$ is bounded and has nonempty interior, and
\item the boundary hyperplane $H := \partial J$ is such that $H \cap C = F$.
\end{itemize}

\begin{lemma}
\label{lemma:wedge}
Let $P$ be a $d$-polytope with $n$ facets. Let  $\wed_F(P)$ be its wedge on a certain facet $F$. Then, $\wed_F(P)$ has dimension $d+1$, $n+1$ facets, and
\[
\diam(\wed_F(P)) \geq \diam(P).
\]
\end{lemma}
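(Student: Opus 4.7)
The plan is to verify the three assertions by analyzing the intersection $\wed_F(P) = C \cap J$, where $C = P \times [0,\infty)$ is the half-cylinder. For the dimension claim, $C$ is $(d+1)$-dimensional, and $C \cap J$ remains so since it has nonempty interior by hypothesis.

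For the facet count, I would enumerate the facets of $C$ (one facet $F' \times [0,\infty)$ per facet $F'$ of $P$, plus the bottom facet $P \times \{0\}$, totaling $n+1$) and then trace what happens under intersection with $J$. We gain one new facet $H \cap \wed_F(P)$ coming from $\partial J = H$. The cylindrical facet over $F$ collapses: on $F \times [0,\infty)$ we have $f(x) = b$, so the defining inequality $f(x) + t \leq b$ forces $t = 0$, giving $F \times [0,\infty) \cap J = F \times \{0\}$, which has dimension $d-1$ and is therefore only a ridge of $\wed_F(P)$. All remaining cylindrical facets $F' \times [0,\infty)$ with $F' \neq F$ survive as $d$-dimensional facets (since $f < b$ holds somewhere on each), so the total is $(n-1) + 1 + 1 = n+1$.

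For the diameter inequality, I would use a coordinate projection argument based on $\pi \colon (x, t) \mapsto x$. The vertex set of $\wed_F(P)$ is given by $v_{\text{bot}} := (v, 0)$ for each $v \in \vert(P)$, together with $v_{\text{top}} := (v, b - f(v))$ for each $v \in \vert(P) \setminus F$. I would then verify that every edge of $G(\wed_F(P))$ either connects a pair of vertices with the same $\pi$-image (a ``vertical'' edge inside one of the side cylindrical facets) or else projects to an edge of $G(P)$ (the edges lying in the bottom or top facet, both of which are combinatorially isomorphic to $P$). Consequently, any path in $G(\wed_F(P))$ from $u_{\text{bot}}$ to $v_{\text{bot}}$ projects to a walk in $G(P)$ from $u$ to $v$ of no greater length. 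Taking $u, v$ to realize $\diam(P)$ gives $\diam(\wed_F(P)) \geq d_P(u, v) = \diam(P)$.

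The main obstacle I anticipate is the facet bookkeeping, specifically justifying that $F \times [0,\infty)$ is the unique cylindrical facet that drops in dimension upon intersection with $J$ while all others survive; once the facial structure is pinned down, the coordinate projection argument for the diameter inequality is essentially immediate.
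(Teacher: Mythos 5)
Your proposal is correct and takes essentially the same route as the paper's proof: the paper likewise counts facets of the half-cylinder $P\times[0,\infty)$ cut by $J$ (the cylindrical facet over $F$ degenerating so that the two cutting facets meet in a ridge projecting to $F$), and proves the diameter bound by observing that every edge of $\wed_F(P)$ projects under $(x,t)\mapsto x$ to either an edge or a vertex of $P$. Your write-up simply fills in the bookkeeping that the paper leaves implicit.
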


\begin{proof}
The wedge increases both the dimension and the number of facets by one. Indeed, the polytope $\wed_F(P)$ has a vertical facet projecting to each facet of the polytope $P$ other than the facet $F$, plus the two facets that cut the cylinder $P \times \R$, and whose intersection projects to $F$. See Figure~\ref{figure:wedge} for an example.

\begin{figure}[hbt]
\begin{center}
\includegraphics[scale=0.60]{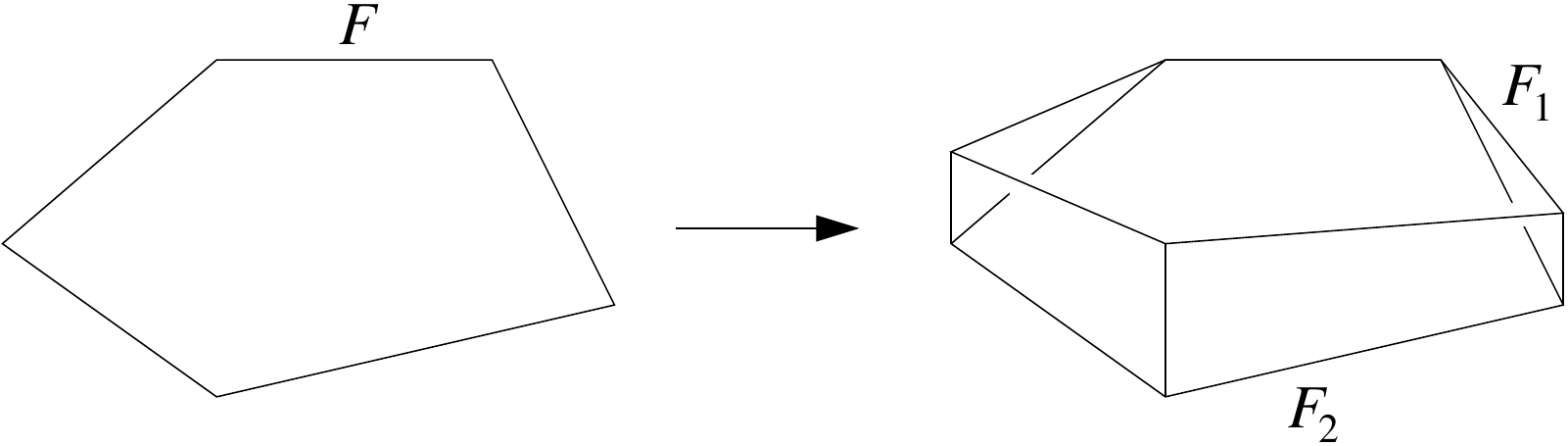}
\caption{A $5$-gon and a wedge on its top facet}\label{figure:wedge}
\end{center}
\end{figure}

For the diameter, since every edge of the polytope $\wed_F(P)$ projects either to an edge of $P$ or to a vertex of $P$, the diameter of $\wed_F(P)$ is at least that of $P$. 
\end{proof}

In particular, if the polytope $P$ is Hirsch-sharp, then the polytope $\wed_F(P)$ is either Hirsch-sharp or a counter-example to the Hirsch Conjecture. The properties that $P$ would need for the latter to be the case will be made explicit in Remark~\ref{remark:non-hirsch-wedge}.

\subsubsection*{One-point suspension}

It will be useful to consider the same operation in the polar setting, where it is known as the \defn{one-point suspension}. We refer the reader to Section 4.2 of~\cite{DeLoera:Triangulations} for an expanded overview. Let $w$ be a vertex of the polytope $P$. The one-point suspension of $P\subset \R^d$ at the vertex $w$ is the polytope
\[
\ops_w(P) := \conv\big( (P \times \{0\}) \cup (\{w\} \times \{-1,+1\})\big)\subset\R^{d+1}.
\]
That is, $\ops_w(P)$ is formed by taking the convex hull of $P$ (in an ambient space of one higher dimension) with a ``raised'' and ``lowered'' copy of the vertex $w$. See Figure~\ref{figure:ops} for an example.

\begin{figure}[hbt]
\begin{center}
\includegraphics[scale=0.6]{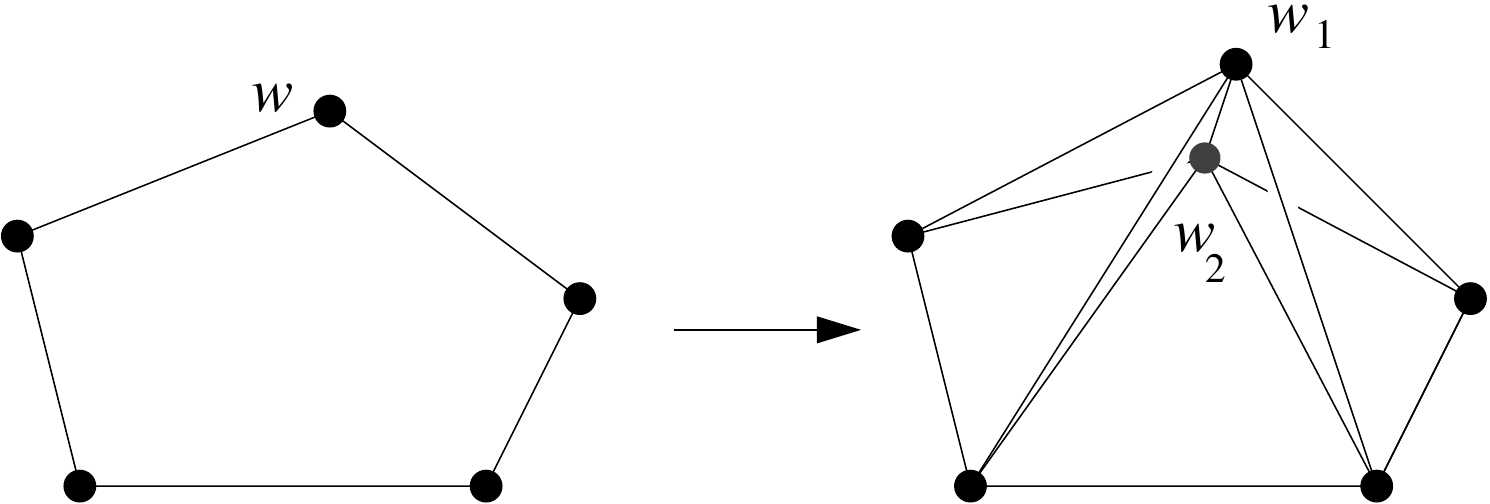}
\caption{The simplicial version of Figure~\ref{figure:wedge}: a $5$-gon and a one-point suspension on its topmost vertex}\label{figure:ops}
\end{center}
\end{figure}

Recasting Lemma~\ref{lemma:wedge} in the polar setting gives:
\begin{lemma}\label{lemma:ops}
Let $P$ be a $d$-polytope with $n$ vertices. Let $\ops_w(P)$ be its one-point suspension on a certain vertex $w$. Then $\ops_w(P)$ is a $(d+1)$-dimensional polytope with $n+1$ vertices, and the diameter of the polar graph $G^\Delta(\ops_w(P))$ of $\ops_w(P)$ is at least the diameter of the polar graph of $P$.
\end{lemma}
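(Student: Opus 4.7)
The plan is to deduce Lemma~\ref{lemma:ops} directly from Lemma~\ref{lemma:wedge} by passing through polarity, since (as the name and Figure~\ref{figure:ops} suggest) the one-point suspension and the wedge are polar operations. By a suitable translation we may assume the origin lies in the interior of $P$, so Lemma~\ref{lemma:polar} applies.

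First I would verify the numerical statements directly from the definition. The polytope $\ops_w(P)$ lives in $\R^{d+1}$ and contains $P \times \{0\}$ together with the two points $(w,1)$ and $(w,-1)$. Since these two points lie off the hyperplane $\{x_{d+1}=0\}$ containing $P \times \{0\}$, the affine hull of $\ops_w(P)$ has dimension $d+1$. For the vertex count, the segment $[(w,-1),(w,+1)]$ is contained in $\ops_w(P)$ and passes through $(w,0)$ in its relative interior; hence $(w,0)$ is not a vertex of $\ops_w(P)$ (apply Lemma~\ref{lemma:vertexequivalence}). Every other vertex of $P \times \{0\}$ remains a vertex because it is the unique point of $\ops_w(P)$ minimizing an appropriate linear functional (take the functional that was supporting at $v$ in $P$ and add a large multiple of $|x_{d+1}|$), and the two new points $(w,\pm 1)$ are vertices because they are the unique maximizers/minimizers of $x_{d+1}$. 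This gives exactly $(n-1)+2=n+1$ vertices.

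Next I would establish the combinatorial polar relationship: if $F$ is the facet of $P^\Delta$ dual to the vertex $w$ of $P$ under the isomorphism $L(P^\Delta) \cong L^\Delta(P)$ of Lemma~\ref{lemma:polar}(\ref{item:duallattice}), then $\ops_w(P)$ is combinatorially polar to $\wed_F(P^\Delta)$. The cleanest route is to describe the face lattice of $\ops_w(P)$ directly: the faces of $\ops_w(P)$ are, on the one hand, faces of $P$ that do not contain $w$ (each appears as a ``horizontal'' face) and, on the other hand, the joins of each face $G$ of $P$ containing $w$ with either of $\{(w,+1)\}$, $\{(w,-1)\}$, or the segment between them. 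Dualizing this description term by term and comparing against the explicit description of $\wed_F(P^\Delta)$ in the proof of Lemma~\ref{lemma:wedge} (a ``vertical'' facet over each facet of $P^\Delta$ other than $F$, plus two facets whose intersection projects onto $F$) gives a lattice isomorphism $L(\ops_w(P)) \cong L^\Delta(\wed_F(P^\Delta))$.

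Finally I would combine everything. By Lemma~\ref{lemma:polar}(\ref{item:dualgraph}), the polar graph $G^\Delta(\ops_w(P))$ is isomorphic to the graph $G((\ops_w(P))^\Delta)$, which by the previous paragraph is isomorphic to $G(\wed_F(P^\Delta))$. Lemma~\ref{lemma:wedge} gives $\diam(G(\wed_F(P^\Delta))) \geq \diam(G(P^\Delta))$, and a second application of Lemma~\ref{lemma:polar}(\ref{item:dualgraph}) identifies $G(P^\Delta)$ with $G^\Delta(P)$. Chaining these isomorphisms and the inequality proves $\diam(G^\Delta(\ops_w(P))) \geq \diam(G^\Delta(P))$. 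The main (though mild) obstacle is the lattice-level verification that one-point suspension is dual to wedge; everything else is bookkeeping with dimensions, vertex counts, and the polarity dictionary already established in Lemma~\ref{lemma:polar}.
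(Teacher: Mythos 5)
Your proof is correct and follows essentially the same route as the paper: the paper gives no separate argument for Lemma~\ref{lemma:ops}, simply presenting it as the recasting of Lemma~\ref{lemma:wedge} in the polar setting, which is exactly what you do via Lemma~\ref{lemma:polar}. Your extra verifications (dimension, the $n+1$ vertex count, and the lattice-level duality between $\ops_w$ and $\wed_F$) just spell out the bookkeeping the paper leaves implicit.
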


Since the wedge of a simple polytope produces a simple polytope, the one-point suspension of a simplicial polytope is a simplicial polytope. In fact, the one-point suspension can be described for abstract simplicial complexes: Let $L$ be a simplicial complex and $w$ a vertex of it. Recall that the anti-star $\ast_L(w)$ of $w$ is the subcomplex consisting of simplices not using $w$ and the link $\lk_L(w)$ of $w$ is the subcomplex of simplices not using $w$ but joined to $w$. If $L$ is a piecewise-linear $k$-sphere, then $\ast_L(w)$  and $\lk_L(w)$ are a $k$-ball and a $(k-1)$-sphere, respectively. The one-point suspension of $L$ at $w$ is the following complex:
\begin{equation}\label{equation:opswL}
\ops_w(L):=(\ast_L(w) * w_1)\cup (\ast_L(w) * w_2) \cup (\lk_L(w)* \overline{w_1w_2}).
\end{equation}
Here $*$ denotes the \defn{join} operation: $L*K$ has as simplices all joins of one simplex of $L$ and one of $K$. In Figure~\ref{figure:ops} the three parts of the formula are the three triangles using $w_1$ but not $w_2$, the three using $w_2$ but not $w_1$, and the two using both, respectively.

In the next section we will make use of an iterated one-point suspension. That is, in $\ops_w(P)$ we take the one-point suspension over one of the new vertices $w_1$ and $w_2$, then again in one of the new vertices created, and so on. We leave it to the reader to check that, at the level of simplicial complexes, the one-point suspension iterated $k$ times on the simplicial complex $L$ produces the simplicial complex $\ops_w(L)^{(k)}$ below, where $\Delta_k$ is a $k$-simplex with vertices $w_1,\dots,w_{k+1}$ and $\partial \Delta_k$ is its boundary:
\[
\ops_w(L)^{(k)}:= (\ast_L(w) * \partial \Delta_k) \cup (\lk_L(w)*{\Delta_k}).
\]
Observe that this generalizes the formula for $\ops_w(L)$ presented above in~\eqref{equation:opswL}.

\subsection{Many Hirsch-sharp polytopes}\label{section:many-hirsch-sharp}

In Section~\ref{section:Q4} we saw the first example of a non-trivial Hirsch-sharp polytope, the Klee-Walkup $4$-polytope $Q_4$ with $9$ facets. In this section we see constructions of other Hirsch-sharp polytopes, which together prove the following:

\begin{theorem}[\cite{Fritzsche99morepolytopes,Holt:Hsharpd7,Holt:Many-polytopes}]
\label{theorem:hirsch-sharp}
Hirsch-sharp $d$-polytopes with $n$ facets exist in at least the following cases:
\begin{enumerate}
\item\label{item:HSn2d} $n \leq 2d$;
\item\label{item:HSn3d3} $n \leq 3d-3$; and
\item\label{item:HSd7} $d \geq 7$.
\end{enumerate}
\end{theorem}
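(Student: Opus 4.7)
My plan is to handle the three ranges of $(d, n)$ separately, using products and the Klee-Walkup polytope $Q_4$ as fundamental building blocks.

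Case (\ref{item:HSn2d}) is essentially already done in Section~\ref{section:examples}: with $k = n - d$ and any decomposition $d = i_1 + \cdots + i_k$ into positive integers, the product $\Delta_{i_1} \times \cdots \times \Delta_{i_k}$ is a simple $d$-polytope with $n$ facets and diameter $k = n-d$, since dimension, facet count, and diameter are all additive under Cartesian products.

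For case (\ref{item:HSn3d3}), the new range $2d < n \leq 3d - 3$ is non-empty only for $d \geq 4$, and the base case $(d, n) = (4, 9)$ is $Q_4$ itself (Section~\ref{section:Q4}). For $d \geq 5$, I would form products $Q_4 \times \Delta_{i_1} \times \cdots \times \Delta_{i_\ell}$ with $i_j \geq 1$ and $\sum i_j = d-4$, obtaining a Hirsch-sharp $d$-polytope with $n = d + 5 + \ell$ facets; varying $\ell$ from $0$ to $d-4$ realizes all $n - d \in \{5, \ldots, d+1\}$. To reach still larger $n - d$ (up to $2d-3$), I would use several copies of $Q_4$ jointly with simplex factors -- for instance $Q_4 \times Q_4$ in dimension $8$ -- and enumerate over partitions $d = 4a + \sum b_j$ of $d$ into $a \geq 1$ factors of type $Q_4$ and simplex factors of dimensions $b_j \geq 1$, checking that every admissible $(d, n)$ with $d < n \leq 3d-3$ is realized in this way.

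For case (\ref{item:HSd7}), I would iterate the wedge operation of Section~\ref{section:wedging} on $Q_4$. By Lemma~\ref{lemma:wedge}, each wedge raises $d$ and $n$ by one (preserving $n - d = 5$) and does not decrease the diameter; provided the diameter also does not \emph{strictly} increase, the result is again Hirsch-sharp. Combining such iterated wedges with products by simplices of various dimensions then fills in every remaining value of $n - d$ for $d \geq 7$; the threshold $d \geq 7$ reflects the freedom needed to accommodate both the wedge tower and enough simplex factors to reach each admissible $n$, as in the constructions of~\cite{Holt:Many-polytopes,Fritzsche99morepolytopes,Holt:Hsharpd7}.

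The main obstacle, common to cases (\ref{item:HSn3d3}) and (\ref{item:HSd7}), is the upper bound on the diameter of an iterated wedge. Lemma~\ref{lemma:wedge} supplies only the lower bound, and one cannot appeal to the Hirsch Conjecture itself, since its validity is the very content being sought. Instead, the matching upper bound must be derived by a combinatorial path analysis, exploiting the natural projection $\wed_F(P) \to P$ to lift short paths in the wedge to paths in $P$ of no greater length; this is where the technical core of the cited papers lies.
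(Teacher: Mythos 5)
Your case~(\ref{item:HSn2d}) matches the paper, but your plan for case~(\ref{item:HSn3d3}) has a genuine gap: products alone cannot reach $n=3d-3$. For a Cartesian product the quantity $n-2d$ is additive over the factors; each $Q_4$ factor contributes $9-8=1$ while consuming four dimensions, and each simplex factor $\Delta_{i_j}$ contributes $1-i_j\leq 0$. Hence any product of copies of $Q_4$ and simplices satisfies $n-2d\leq \lfloor d/4\rfloor$, whereas $n=3d-3$ requires $n-2d=d-3$; already $(d,n)=(5,12)$ is unreachable (the best product in dimension five is $Q_4\times\Delta_1$ with $11$ facets), so ``enumerating over partitions'' cannot succeed for any $d\geq 5$. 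The missing idea is the Holt--Klee wedge-plus-truncation step of Lemma~\ref{lemma:3d-3}: because $n>2d$ there is a facet $F$ containing neither of the two vertices at distance $n-d$, the wedge over $F$ turns them into two edges still at mutual distance $n-d$, and truncating one or both endpoints of these edges adds one or two further facets while preserving sharpness. Thus each increase of the dimension by one can be accompanied by an increase of $n$ by up to three, which is exactly what keeps pace with the bound $3d-3$; the paper proves~(\ref{item:HSn3d3}) by iterating this lemma starting from $Q_4$, and truncation (absent from your proposal) is the decisive ingredient.

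Case~(\ref{item:HSd7}) has a more basic problem: for a \emph{fixed} $d\geq 7$ the theorem requires Hirsch-sharp $d$-polytopes with \emph{every} $n>d$, but both wedging and taking products raise the dimension, so ``iterated wedges combined with simplex factors'' yield only finitely many polytopes in each fixed dimension and cannot realize arbitrarily large $n$. One needs an operation that adds facets without changing the dimension; the paper (following Fritzsche--Holt) works polarly and glues copies of the fourth one-point suspension $P'$ of $Q_4^\Delta$ along facets, using Lemma~\ref{lemma:ops-iterated} to produce two $5$-tuples of mutually adjacent facets pairwise at Hirsch distance, and the fast/slow-neighbor matching of Corollary~\ref{corollary:glue} so that each glued copy adds five vertices and five to the dual diameter, with stellar subdivisions (truncations, in the simple picture) supplying the intermediate values of $n$; wedging then handles $d>8$, and $d=7$ is Holt's more technical blending, which the paper only cites. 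So the threshold $7$ comes from this gluing construction living in dimension eight, not from bookkeeping of a wedge tower. Your closing concern about upper bounds on the diameter is legitimate but secondary (the paper itself only verifies the lower bound $n-d$, the point being that an overshoot would itself be a counter-example to the Hirsch Conjecture); the real technical core you are missing is truncation in case~(\ref{item:HSn3d3}) and gluing/blending in case~(\ref{item:HSd7}).
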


The ``trivial'' case $n \leq 2d$ was shown in Section~\ref{section:examples}. The case $n\leq 3d-3$ was first proved in 1998 in~\cite{Holt:Many-polytopes}, together with the case $d\geq 14$. The latter was then improved to $d\geq 8$ in~\cite{Fritzsche99morepolytopes}, and to $d\geq 7$ in~\cite{Holt:Hsharpd7}. We also know that Hirsch-sharp polytopes of dimensions two and three exist only when $n\leq 2d$ (see Section~\ref{section:small-dimension}). But the existence of Hirsch-sharp polytopes with many facets in dimensions four to six remains open.
\begin{openproblem}
Are there Hirsch-sharp $4$-polytopes with $13$ or more facets? Are there Hirsch-sharp $5$-polytopes with $13$ or more facets? Are there Hirsch-sharp $6$-polytopes with $16$ or more facets?
\end{openproblem}
We do know that they do not exist in dimension four with $10$, $11$ or $12$ facets, since Goodey (see~\cite{Goodey}), Schuchert (see~\cite{Schuchert}), and Bremner et al. (see~\cite{Bremner:MoreBounds}) proved that $H(10,4)=5$, $H(11,4)=6$ and $H(12,4)=7$. (Recall that $H(n,d)$ is the maximum diameter among $d$-polytopes with $n$ facets.)

Part~(\ref{item:HSn3d3}) of Theorem~\ref{theorem:hirsch-sharp} follows from the iterated application of the next lemma to the Klee-Walkup polytope $Q_4$.

\begin{lemma}[Holt-Klee~\cite{Holt:Many-polytopes}]
\label{lemma:3d-3}
If there are Hirsch-sharp $d$-polytopes with $n > 2d$ facets, then there are also Hirsch-sharp $(d+1)$-polytopes with $n+1$, $n+2$, and $n+3$ facets.
\end{lemma}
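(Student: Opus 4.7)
The plan is to construct Hirsch-sharp $(d+1)$-polytopes with $n+1$, $n+2$, and $n+3$ facets by applying to the given Hirsch-sharp $d$-polytope $P$, respectively, a single wedge, a wedge followed by one vertex truncation, and a wedge followed by two well-chosen vertex truncations. Both operations preserve simplicity. Let $u$ and $v$ be vertices of $P$ realizing the diameter $d_P(u, v) = n - d$. Since $P$ is simple, $u$ and $v$ each lie on exactly $d$ facets; combined with $n > 2d$, we may choose a facet $F$ of $P$ containing $u$ but not $v$. Form the wedge $W_1 := \wed_F(P)$, which by Lemma~\ref{lemma:wedge} is a simple $(d+1)$-polytope with $n+1$ facets and $\diam(W_1) \geq n-d$, the Hirsch bound. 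The graph of $W_1$ consists of two copies of $G(P)$ glued along the $F$-vertices, with vertical edges connecting the two lifts $a^-, a^+$ of each vertex $a$ off $F$. For any pair of vertices $a^\epsilon, b^{\epsilon'}$ of $W_1$, one can traverse horizontally within a copy (cost $d_P(a,b)$ when $\epsilon = \epsilon'$), route through a common $F$-vertex $c$ (cost $d_P(a,c) + d_P(c,b)$), or use at most one vertical step (cost at most $d_P(a,b) + 1$). Since $u \in F$, routing through $c = u$ handles the diameter-realizing pair $\{u, v\}$ in exactly $n-d$ steps, while for every other pair $d_P(a, b) \leq n-d-1$ already suffices. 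Therefore $\diam(W_1) = n - d$, so $W_1$ is Hirsch-sharp.

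For $n+2$, truncate the lift $v^- \in W_1$ to obtain a simple $(d+1)$-polytope $W_2$ with $n+2$ facets, in which $v^-$ is replaced by a $d$-simplex $\Sigma$ of $d+1$ new vertices $x_0, \ldots, x_d$, one for each neighbor $y_0, \ldots, y_d$ of $v^-$ in $W_1$. The contraction map $W_2 \to W_1$ collapsing $\Sigma$ back to $v^-$ shows $d_{W_1}(v^-, y) \leq d_{W_2}(x_i, y) \leq d_{W_1}(v^-, y) + 1$ and, more generally, that truncation increases distances between surviving vertices by at most one. Hence $\diam(W_2) \leq \diam(W_1) + 1 = n-d+1$, matching the Hirsch bound of $W_2$. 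For the matching lower bound, let $\tilde u$ denote the unique lift of $u$ in $W_1$: the vertical-edge neighbor $y_0 = v^+$ of $v^-$ satisfies $d_{W_1}(v^+, \tilde u) = n-d$ via the top copy of $P$, and a case analysis then yields $d_{W_2}(x_0, \tilde u) = n-d+1$. Hence $W_2$ is Hirsch-sharp. For $n+3$, apply a further well-chosen vertex truncation to $W_2$ --- for instance at a vertex lying on the last remaining class of shortest paths that realize $\diam(W_2)$ --- to produce $W_3$ with $n+3$ facets; an analogous analysis yields $\diam(W_3) = n-d+2 = (n+3) - (d+1)$, so $W_3$ is Hirsch-sharp.

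The technical heart of the argument is verifying that each operation produces a diameter equal to, not merely at most, the Hirsch bound. For the wedge, this requires $F$ to meet some shortest $uv$-path for every diameter-realizing pair, which the hypothesis $n > 2d$ permits by letting us place $F$ at $u$ while keeping $v$ off $F$. For each truncation, the truncation site must lie on every shortest path realizing the current diameter, so that the new simplex forces a detour, while no unintended shortcut appears across the new simplex or the previously modified region. The $n+3$ case is the most delicate: after the first truncation, multiple classes of shortest paths exist in $W_2$ (``top'' and ``bottom'' wedge paths), and the second truncation site must be chosen so that every such class is lengthened simultaneously. Identifying this site, and confirming that no shortcut arises from the interaction between the two simplex facets, is where the explicit construction of Holt and Klee enters.
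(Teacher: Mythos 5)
There is a genuine gap, and it is located exactly where you admit it: the $n+3$ case. Your choice of wedge facet is the opposite of the one the hypothesis $n>2d$ is there to provide. The paper (following Holt--Klee) chooses $F$ containing \emph{neither} $u$ nor $v$ --- possible precisely because $u$ and $v$ together lie on at most $2d<n$ facets --- so that in $\wed_F(P)$ \emph{both} endpoints are doubled into vertical edges $u_1u_2$ and $v_1v_2$, with all four distances $d(u_i,v_j)\geq n-d$ by the projection argument. That symmetric structure is what makes the rest mechanical: truncating $u_1$ forces one extra step at the $u$ end (from the new vertex adjacent to $u_2$, the first step either goes to $u_2$, which is itself at distance $\geq n-d$, or stays in the truncating simplex and costs two steps before reaching a neighbor of $u_1$), and truncating $v_1$ as well forces an extra step independently at the $v$ end, giving $n-d+2$ for the $n+3$-facet polytope. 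By instead putting $F$ through $u$, you double only $v$; your $n+1$ and $n+2$ arguments then go through (the truncation at $v^-$ works because the vertical edge $v^-v^+$ has both endpoints at distance $\geq n-d$ from $\tilde u$), but at the $u$ end there is no such edge, so there is no canonical second truncation site, and your $n+3$ paragraph reduces to ``choose a well-chosen vertex'' plus a deferral to Holt--Klee. Since the lemma explicitly claims $n+3$ facets, this is a missing step, not a detail; the fix is simply to make the paper's choice of $F$ at the outset (note that your choice never actually uses $n>2d$, which is a symptom of the problem).

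A smaller issue: your exact-diameter claims are not justified. You assert that $(u,v)$ is the only pair at distance $n-d$ in $P$ (``for every other pair $d_P(a,b)\leq n-d-1$''), which is not part of the hypothesis; if another diameter-realizing pair has both vertices off $F$, your routing argument only gives $\diam(W_1)\leq n-d+1$. What one actually proves here are the lower bounds $\geq n-d$, $\geq n-d+1$, $\geq n-d+2$; equality is then either automatic or the polytope violates Hirsch, which is the sense in which the paper uses the word Hirsch-sharp (cf.\ Lemma~\ref{lemma:wedge} and Remark~\ref{remark:non-hirsch-wedge}). Your contraction-map bounds and the two-case analysis at a truncated vertex are the right ingredients; they just need to be deployed on the wedge over a facet missing both $u$ and $v$ so that they can be applied at both ends simultaneously.
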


\begin{proof}
Let $u$ and $v$ be vertices at distance $n-d$ in a simple $d$-polytope with $n$ facets. Let $F$ be a facet not containing any of them, which exists since $n>2d$. When we wedge on $F$ we get two edges $u_1u_2$ and $v_1v_2$ with the properties that the distance from any $u_i$ to any $v_i$ is again (at least) $d$. We can then truncate one or both of $u_1$ and $v_1$ to obtain one or two more facets in a polytope that is still Hirsch-sharp. See Figure~\ref{figure:sharp-3d-3}.
\end{proof}

\begin{figure}[hbt]
\begin{center}
\includegraphics[scale=0.60]{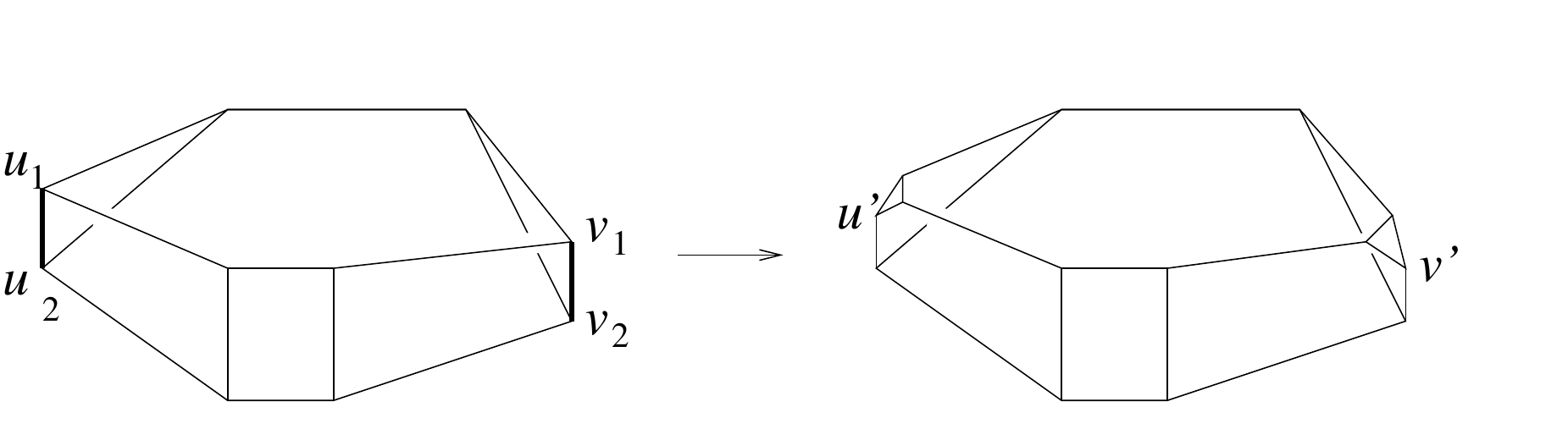}
\caption{After wedging in a Hirsch-sharp polytope we can truncate twice}\label{figure:sharp-3d-3}
\end{center}
\end{figure}

The next construction is more natural in the simplicial framework. So, as a warm-up, we include (see Figure~\ref{figure:sharp-3d-3-ops}) the simplicial version of Figure~\ref{figure:sharp-3d-3}. We already said that the polar of wedging is one-point suspension. The polar of truncation of a vertex is the \defn{stellar subdivision} of a facet by adding to our polytope a new vertex very close to that facet.
\begin{remark}
The stellar subdivision is also known as \defn{stacking}. See, e.g., page 621 in~\cite{MillerReinerSturmfels:GeomCombParkCity}.
\end{remark}

\begin{figure}[hbt]
\begin{center}
\includegraphics[scale=0.8]{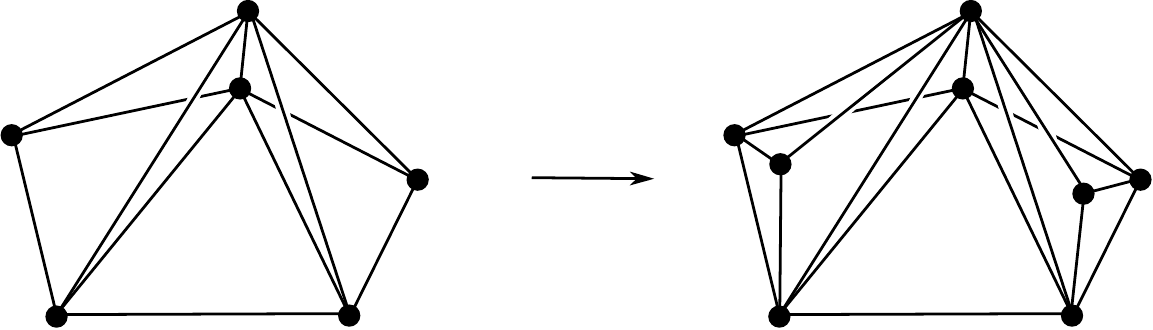}
\caption{The simplicial version of Figure~\ref{figure:sharp-3d-3}. Wedging becomes one-point suspension and truncation is stellar subdivision}\label{figure:sharp-3d-3-ops}
\end{center}
\end{figure}

The key property in the proof of Lemma~\ref{lemma:3d-3} is that the wedge and one-point suspension operations do not only preserve Hirsch-sharpness; they also increase the number of vertices or facets (respectively) that are at Hirsch distance from one another. This suggests looking at what happens when we iterate the process.
\begin{lemma}[Holt-Klee~\cite{Holt:Many-polytopes}]
\label{lemma:ops-iterated}
Let $P$ be a simplicial $d$-polytope with more than $2d$ vertices. Let $A$ and $B$ be two facets of it at distance $\delta$ in the polar graph and let $w$ be a vertex not contained in any of them. Let $P^{(k)}$ be the $k$th one-point suspension of $P$ on the vertex $w$. 

Then, $P^{(k)}$ has two $(k+1)$-tuples of facets $\{A_1,\dots,A_{k+1}\}$ and $\{B_1,\dots,B_{k+1}\}$ with every $A_i$ at distance from $\delta$ every $B_i$. All the facets in each tuple are adjacent to one another.
\end{lemma}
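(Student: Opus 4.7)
The plan is to exhibit the two $(k+1)$-tuples explicitly using the combinatorial formula
$P^{(k)} = (\ast_L(w)*\partial\Delta_k)\cup(\lk_L(w)*\Delta_k)$,
verify pairwise adjacency by a direct intersection count, and establish the distance bound $\delta$ by constructing a projection to the polar graph of $P$ which never increases distance. The whole argument reduces to simplicial bookkeeping; the main obstacle is organizing the case analysis for the projection cleanly.

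First, I would define the candidate tuples. Since $A$ and $B$ avoid the vertex $w$, both are facets of the subcomplex $\ast_L(w)$, each of size $d$. Let $\tau_i := \{w_1,\dots,w_{k+1}\}\setminus\{w_i\}$ denote the $i$-th facet of $\partial\Delta_k$, and put
$A_i := A * \tau_i$ and $B_j := B * \tau_j$.
Each has exactly $d+k$ vertices, hence is a facet of $P^{(k)}$. For pairwise adjacency within the $A$-tuple, observe that
$A_i \cap A_j = A * (\tau_i \cap \tau_j) = A \cup (\{w_1,\dots,w_{k+1}\}\setminus\{w_i,w_j\})$
has exactly $d+k-1$ vertices, so it is a ridge of $P^{(k)}$; the same count works for the $B$-tuple. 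Hence every two facets in each tuple are adjacent in the polar graph.

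The lower bound $d_{P^{(k)}}(A_i, B_j) \geq \delta$ I would obtain from a distance-nonincreasing projection $\pi$ from facets of $P^{(k)}$ to facets of $L$, defined by $\pi(\sigma * \tau) := \sigma$ on facets of the first type (with $\sigma$ a facet of $\ast_L(w)$ and $\tau$ a facet of $\partial\Delta_k$), and $\pi(\sigma*\Delta_k) := \sigma \cup \{w\}$ on facets of the second type (so $\sigma$ is a facet of $\lk_L(w)$ and $\sigma\cup\{w\}$ is a facet of $L$). To check that $\pi$ never increases polar distance, it suffices to verify that whenever two facets $F,F'$ of $P^{(k)}$ share a ridge, the images $\pi(F),\pi(F')$ are equal or adjacent in $L$. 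Three cases arise:
\begin{itemize}
\item Both of type (a): writing $F = A'*\tau$, $F' = A''*\tau'$, the ridge condition $|A'\cap A''|+|\tau\cap\tau'| = d+k-1$ with $|\tau|=|\tau'|=k$ forces either $\tau = \tau'$ (so $A',A''$ share a ridge in $L$) or $A' = A''$ (so $\pi(F)=\pi(F')$).
\item One of each type: $F = A'*\tau$ and $F' = \sigma*\Delta_k$ share $(A'\cap\sigma)*\tau$; the ridge condition forces $|A'\cap\sigma| = d-1$, so $\sigma\subset A'$, and then $A'$ and $\sigma\cup\{w\}$ share the $(d-1)$-vertex simplex $\sigma$, a ridge of $L$.
\item Both of type (b): $F = \sigma*\Delta_k$, $F' = \sigma'*\Delta_k$ share $(\sigma\cap\sigma')*\Delta_k$; the ridge condition forces $|\sigma\cap\sigma'| = d-2$, so $\sigma\cup\{w\}$ and $\sigma'\cup\{w\}$ share the ridge $(\sigma\cap\sigma')\cup\{w\}$.
\end{itemize}

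Since $\pi(A_i) = A$ and $\pi(B_j) = B$ for every $i,j$, any polar path from $A_i$ to $B_j$ in $P^{(k)}$ projects to a polar walk from $A$ to $B$ in $P$ of the same length, giving $d_{P^{(k)}}(A_i,B_j) \geq d_L(A,B) = \delta$, as required. (Iterated application of Lemma~\ref{lemma:wedge}, in its polar form Lemma~\ref{lemma:ops}, shows the distance does not grow beyond what is needed for the applications in the next subsection.) The hypothesis that $P$ has more than $2d$ vertices is not actually needed for this combinatorial statement; it is the condition that ensures a vertex $w$ avoiding both $A$ and $B$ exists, which is what makes the construction non-trivial for the Hirsch-sharpness applications.
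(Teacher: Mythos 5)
Your proposal takes essentially the same route as the paper: the paper's proof simply invokes the formula $\ops_w(L)^{(k)}=(\ast_L(w)*\partial\Delta_k)\cup(\lk_L(w)*\Delta_k)$, names the two tuples $A*\partial\Delta_k$ and $B*\partial\Delta_k$, and leaves the details to the reader, and those details are exactly the adjacency vertex-count and the distance-nonincreasing projection you supply, so your write-up is a correct filling-in of the same argument. The only point you merely gesture at is the upper bound (distance $\leq\delta$), for which Lemma~\ref{lemma:ops} is not the right citation (it gives a lower, not an upper, bound); that direction follows by lifting a shortest $A$--$B$ path of $\partial P$ joined with a fixed facet of $\partial\Delta_k$, and in any case the lower bound you prove carefully is the part actually used in the subsequent gluing construction.
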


\begin{proof}
We use the following formula, from Section~\ref{section:wedging}, for the iterated one-point suspension of the simplicial complex $L=\partial P$:
\[
\ops_w(L)^{(k)}:= (\ast_{L}(w) * \partial \Delta_k) \cup (\lk_{L}(w)* {\Delta_k}).
\]
Here $\Delta_k$ is a $k$-simplex. The two groups of facets in the statement are $A * \partial \Delta_k$ and  $B * \partial \Delta_k$. The details are left to the interested reader.
\end{proof}

This is the basis for the following result of Fritzsche and Holt in~\cite{Fritzsche99morepolytopes}. From this, we can get the same for every $d \geq 8$ via wedging. The result has been improved to dimension $d=7$ by Holt (see~\cite{Holt:Hsharpd7}) with a generalization of these same arguments, but we skip this part since it is a bit more technical. That proves part 3 of Theorem~\ref{theorem:hirsch-sharp}.

\begin{corollary}[Fritzsche-Holt~\cite{Fritzsche99morepolytopes}]\label{corollary:glue}
There are Hirsch-sharp $8$-polytopes with any number of facets.
\end{corollary}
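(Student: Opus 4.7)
The plan is to iterate the one-point-suspension lemma to arrive at dimension $8$, and then perform a sequence of truncations that preserve Hirsch-sharpness while increasing the facet count by one at each step.

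Begin with the Klee-Walkup polar $Q_4^\Delta$, a simplicial $4$-polytope with $9$ vertices in which the facets $A = abcd$ and $B = efgh$ sit at distance $5$ in the polar graph, and in which the ninth vertex $w$ belongs to neither of them. Apply Lemma~\ref{lemma:ops-iterated} with $k = 4$ at the vertex $w$. This produces a simplicial $8$-polytope $R$ with $13$ vertices, together with two $5$-tuples of pairwise adjacent facets $\{A_1,\dots,A_5\}$ and $\{B_1,\dots,B_5\}$ such that every pair $(A_i,B_j)$ has polar distance $5$. Passing to the polar simple $8$-polytope $R^\Delta$, one obtains a Hirsch-sharp simple $8$-polytope with $13$ facets and graph diameter exactly $5 = 13 - 8$. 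The polar images of the two $5$-tuples are two $5$-cliques of vertices in $G(R^\Delta)$, and every pair consisting of one vertex from each clique realizes the diameter.

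The second step is an iterated truncation argument. Truncating a vertex $v$ of a simple $8$-polytope with $n$ facets yields a simple $8$-polytope with $n + 1$ facets: $v$ is replaced by an $8$-clique of new vertices lying on the new truncation facet, each adjacent to exactly one of the old neighbors of $v$. If every shortest path from $v$ to the antipodal diameter-realizing cluster passes through a single specific neighbor $v'$ of $v$, then only the unique new vertex adjacent to $v'$ is at the old distance, while the other seven new vertices are strictly farther from the antipodal cluster by exactly one edge. Consequently, the diameter increases by exactly one, producing a new Hirsch-sharp $8$-polytope with $n + 1$ facets. The iterated one-point suspension provides precisely this structure: the $5$-clique near the $A$-side acts as a reservoir of truncation targets, and each truncation spawns a fresh $8$-clique that can serve as the target for the next iteration. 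Carrying out $m$ successive truncations starting from $R^\Delta$ yields a Hirsch-sharp $8$-polytope with $13 + m$ facets for every $m \geq 0$. Combining this with the trivial Hirsch-sharp polytopes of Section~\ref{section:examples}, which cover the range $9 \leq n \leq 16 = 2d$, the conclusion follows.

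The main technical obstacle is to verify that the truncation process can be iterated indefinitely without collapsing the diameter. After the $m$th truncation, one must ensure both that the newly spawned $8$-clique contains a vertex whose neighborhood still forces every geodesic to the $B$-cluster to pass through it, and that no short-circuit paths arise through the rest of the polytope. This amounts to a careful bookkeeping of the polar graph under iterated stellar subdivision of facets, demonstrating that the geodesic structure inherited from the iterated one-point suspension of $Q_4^\Delta$ persists under the subdivision, and that each geodesic between the evolving $A$-cluster and the fixed $B$-cluster acquires exactly one additional edge per truncation. Once this combinatorial persistence is established, Hirsch-sharp $8$-polytopes exist for every $n \geq 9$.
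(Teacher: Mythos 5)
Your first step coincides with the paper's: the fourth one-point suspension of $Q_4^\Delta$ at $w$, via Lemma~\ref{lemma:ops-iterated}, gives a simple Hirsch-sharp $8$-polytope with $13$ facets. But from there you diverge, and the divergence is where the proof breaks. You propose to reach every $n\geq 13$ by truncating vertices one at a time, claiming that each truncation raises the diameter by exactly one and that the freshly created $8$-clique ``regenerates'' a vertex with the same funnelling property, indefinitely. That persistence claim is exactly the content of the corollary and you do not prove it; you explicitly defer it as ``the main technical obstacle,'' which means the argument is incomplete at its crux. Worse, there is good reason to believe it is false as stated: the only truncations that the wedge/one-point-suspension machinery licenses are the ones in Lemma~\ref{lemma:3d-3}, namely at the two ``ends'' created by each wedge, so a four-fold suspension yields a truncation budget of at most eight extra facets, not an unlimited reservoir. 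After truncating $v$ with gateway $v'$, the old neighbor $v''$ of the new ``far'' vertex may itself lie at distance exactly $\delta$ from the target cluster, giving alternative geodesics that prevent the next truncation from gaining another unit; nothing in the suspension structure rules this out forever. A concrete warning sign is dimension four: $Q_4$ itself is Hirsch-sharp with $9$ facets and diameter $5$, yet $H(10,4)=5$, $H(11,4)=6$, $H(12,4)=7$ (Goodey, Schuchert, Bremner et al.), so truncation does \emph{not} in general preserve Hirsch-sharpness, and if your iteration scheme were sound in the abstract it would have to explain why it fails there but succeeds in dimension eight -- which is precisely the verification you have not supplied.

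The paper's proof (following Fritzsche--Holt and Holt--Klee) avoids this entirely: it glues (connected-sums) many copies of the suspended polytope $P'$ along facets, a $B_i$ of one copy to an $A_i$ of the next. Naively each gluing adds five facets but only four to the diameter; the point of Lemma~\ref{lemma:ops-iterated} is the fast/slow-neighbor dichotomy, which lets one arrange every gluing so that a fast neighbor meets a slow one, recovering the missing unit and keeping the result Hirsch-sharp with $13+5k$ facets. Truncation (stellar subdivision in the simplicial picture) is then used only in a bounded way -- at most eight times, at the two unglued ends -- to fill in the intermediate values of $n$ between consecutive multiples of five. If you want to salvage your approach, you would need either to prove the geodesic-persistence statement you postulate (which would be a substantially stronger result than the corollary), or to reintroduce the gluing step and restrict truncation to the bounded role it plays in the paper.
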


\begin{proof}
We look at a new operation on polytopes. We call it gluing and it is simply a combinatorial/geometric version of the \emph{connected sum} of topological manifolds (see Chapter 3 of~\cite{Hatcher:AlgebraicTopology}). Let $P_1$ and $P_2$ be two simplicial $d$-polytopes and let $F_1$ and $F_2$ be respective facets. The manifolds are $\partial P_1$ and $\partial P_2$ (two $(d-1)$-spheres); from them we remove  the interiors of $F_1$ and $F_2$ after which we glue their boundaries. See Figure~\ref{figure:gluing}, where the operation is performed on two facets of the same polytope. On the top part we glue the polytopes ``as they come,'' which does not preserve convexity. But if projective transformations are made on $P_1$ and $P_2$ that send points that are close to  $F_1$ and $F_2$ to infinity, then the gluing  preserves convexity, so it yields a polytope that we denote $P_1 \# P_2$. This is shown on the bottom part of Figure~\ref{figure:gluing}.

\begin{figure}[hbt]
\begin{center}
\includegraphics[scale=0.60]{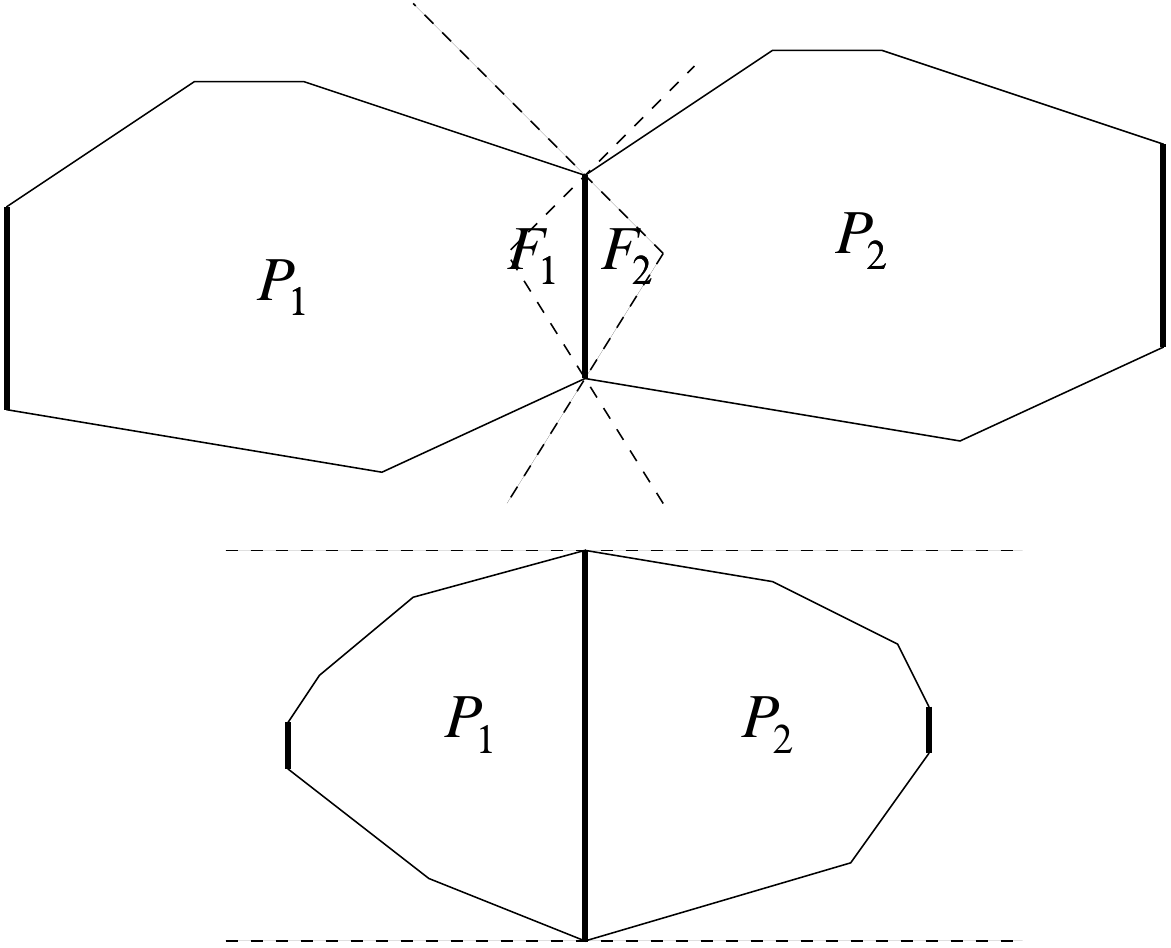}
\caption{Gluing two simplicial polytopes along one facet. In the version of the bottom, a projective transformation is done to $P_1$ and $P_2$ before gluing, to guarantee convexity of the outcome}\label{figure:gluing}
\end{center}
\end{figure}

Gluing \emph{almost} adds the diameters of the two original polytopes. Suppose that the facets $F_1$ and $F_2$ are at distances $\delta_1$ and $\delta_2$ to certain facets $F'_1$ and $F'_2$ of $P_1$ and $P_2$. Then, to go from $F'_1$ to $F'_2$ in $P_1 \# P_2$ we need at least $(\delta_1 -1) + 1 + (\delta_2-1) = \delta_1 + \delta_2 - 1$ steps.

But we can do better if we combine gluing with the iterated one-point suspension.
Consider the simplicial Klee-Walkup $4$-polytope $Q_4^\Delta$ described in Section~\ref{section:Q4} and let $A$ and $B$ two facets of it at distance five. Let $P'$ be the fourth one-point suspension of it on the vertex $w$ not contained in $A\cup B$. Observe that the polytope $P'$ has 13 vertices and dimension eight. By the lemma, $P'$ has two groups of five facets, $\{A_1,\dots,A_{5}\}$ and $\{B_1,\dots,B_{5}\}$, with every $A_i$ at Hirsch distance from every $B_i$ and all the facets in each group adjacent to one another. 

We now glue several copies of $P'$ to one another, a $B_i$ from each copy glued to an $A_i$ of the next one.  Each gluing adds five vertices and, in principle, four to the diameter. But Lemma~\ref{lemma:ops-iterated} implies the following nice property for $P'$: half of the eight facets adjacent to each $A_i$ are at distance four to half of the facets adjacent to each $B_i$. Using the language of \given{Kerstin }Fritzsche, \given{Fred }Holt, and \given{Victor }Klee (in~\cite{Fritzsche99morepolytopes},~\cite{Holt:Many-polytopes}, and~\cite{Holt:Hsharpd7}), we call those facets the \defn{slow neighbors} of each $A_i$ or $B_i$, and call the others \defn{fast}. Since half of the total neighbors are slow, we can make all gluings so that every fast neighbor is glued to a slow one and vice-versa. This increases the diameter by one at every gluing, and the result is Hirsch-sharp.

The above construction yields Hirsch-sharp polytopes of dimension eight with $n = 13 + 5k$ vertices, for every $k \geq 0$. We can get the intermediate values of $n$ too, via truncation. By Lemma~\ref{lemma:3d-3}, every time we do a one-point suspension on a Hirsch-sharp simplicial polytope we can increase the number of facets by one or two via a stellar subdivision at each end. Since the polytope $P'$ we are gluing is a four-fold one-point suspension, and since there are two ends that remain unglued (the $A$-face of the first copy and the $B$-face of the last) we can do up to eight stellar subdivisions to it and still preserve Hirsch-sharpness.
\end{proof}

\begin{remark}
\given{Fred }Holt, \given{Victor }Klee, and \given{Kerstin }Fritzsche prove these results via simple polytopes, rather than simplicial. The analogue of ``gluing along a simplicial facet'' is called ``blending at a simple vertex'' in~\cite{Fritzsche99morepolytopes},~\cite{Holt:Many-polytopes}, and~\cite{Holt:Hsharpd7}. The trick in the proof is called ``fast-slow blending.''
\end{remark}

\section{Positive results}\label{section:positive-results}

In this section, we give evidence for the Hirsch Conjecture, or at least its polynomial version, by presenting special cases for which it holds, three upper bounds for diameters of polytopes (one after perturbation), and its equivalence to two other seemingly natural conjectures. We also summarize some results on a continuous analogue of the conjecture.

\subsection{The $d$-step and Non-revisiting Conjectures}\label{section:equivalences}

The equivalence of the Hirsch Conjecture to other two conjectures that we discuss in this section shows that the conjectured bound of $n-d$ is a natural bound to consider.

We start with the $d$-step Conjecture. This is simply the Hirsch Conjecture restricted to polytopes whose number of facets is twice their dimension:
\begin{conjecture}[The $d$-step Conjecture]\label{conjecture:d-step}
For every $d \geq 2$, $H(2d,d) \leq d$.
\end{conjecture}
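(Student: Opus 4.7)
The plan is to prove $H(2d,d) \leq d$ by induction on $d$, working with simple polytopes and relying on the equivalence (Klee-Walkup) between the $d$-step bound, the Hirsch Conjecture, and the Non-revisiting Conjecture.

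First, by Lemma~\ref{lemma:simple} it is enough to work with simple $d$-polytopes $P$ with exactly $2d$ facets. The base cases $d=2$ and $d=3$ are handled directly: a simple $2$-polytope with $4$ facets is a quadrilateral of diameter $2$, and for $d=3$ any simple $3$-polytope with $6$ facets has diameter at most $3$ (an elementary graph-theoretic consequence of Steinitz' Theorem~\ref{theorem:steinitz} together with Euler's formula). Assume $H(2k,k)\leq k$ for every $2\leq k<d$.

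Second, let $u$ and $v$ be vertices of a simple $d$-polytope $P$ with exactly $2d$ facets, and let $\mathcal{F}_u$ and $\mathcal{F}_v$ denote the $d$ facets containing $u$ and $v$, respectively. If $\mathcal{F}_u\cap\mathcal{F}_v\neq\emptyset$, choose a common facet $F$. Both $u$ and $v$ lie in $F$, which is a simple $(d-1)$-polytope with at most $2d-1$ facets. Using the inductive hypothesis together with the Klee-Walkup equivalence of the $d$-step and Hirsch Conjectures in dimensions below $d$, the distance between $u$ and $v$ inside $F$ (and hence inside $P$) is at most $(2d-1)-(d-1)=d$, as required.

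Third, the remaining case, where $\mathcal{F}_u\cap\mathcal{F}_v=\emptyset$, is the heart of the matter: the facet-sets of $u$ and $v$ partition the $2d$ facets of $P$. Here I would attempt to build a non-revisiting path from $u$ to $v$. Every edge-step in a simple polytope exits exactly one facet and enters exactly one new facet; a path that never re-enters a previously-exited facet uses at most $|\mathcal{F}_u|=d$ steps before $u$'s entire facet-set has been exhausted, forcing arrival at $v$. I would construct such a path greedily: at step $i$, choose a neighbor of the current vertex $v_i$ that exits a facet of $\mathcal{F}_u$ not yet left, and whose selection leaves $v$ reachable inside the intersection of all facets not yet exited --- a lower-dimensional simple polytope to which the inductive hypothesis applies. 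Balinski's Theorem~\ref{theorem:balinski} guarantees enough local connectivity for such a choice to be available at each step.

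The hard part --- and the source of the Hirsch Conjecture's notorious resistance --- is proving that the greedy non-revisiting construction actually succeeds in the antipodal case. Locally the graph is $d$-connected, but globally one must rule out ``trap'' configurations in which every legal pivot at $v_i$ is incompatible with completing some later step of the path. The Klee-Walkup polytope $Q_4$ of Section~\ref{section:Q4} shows, in a precise sense, how close one can come to such a trap already when $n>2d$; controlling its analogues in the $n=2d$ regime, and verifying that the face-lattice of every simple $d$-polytope with $2d$ facets admits at least one non-revisiting $u$-to-$v$ path, appears to require deeper structural information than Balinski's Theorem alone provides, and is the principal obstacle to completing the argument.
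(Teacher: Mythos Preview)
The statement you are attempting to prove is a \emph{conjecture}, not a theorem: the paper does not provide a proof of it, because none exists. In fact, as noted in Appendix~\ref{appendix:CounterHirsch}, Santos' construction of a $43$-dimensional counter-example to the Hirsch Conjecture, combined with the Klee--Walkup equivalence of Theorem~\ref{theorem:dstep-hirsch}, shows that the $d$-step Conjecture is \emph{false}. So any purported proof must contain an error or an unfillable gap.

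Your write-up is honest about where the gap lies: the ``antipodal'' case $\mathcal{F}_u\cap\mathcal{F}_v=\emptyset$ is precisely the case that cannot be reduced to lower dimension, and your greedy non-revisiting construction does not come with any mechanism guaranteeing that a legal pivot is always available. The appeal to Balinski's Theorem is insufficient --- $d$-connectivity of $G(P)$ says nothing about the existence of paths confined to prescribed decreasing collections of facets, which is what non-revisiting requires. Your final paragraph correctly identifies this as ``the principal obstacle''; Santos' example confirms that the obstacle is genuine and the argument cannot be completed.
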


At first, it is seemingly just a special case of the Hirsch Conjecture. Its equivalence to the full Hirsch Conjecture follows from:
\begin{theorem}[Klee-Walkup~\cite{Klee:d-step}]
\label{theorem:dstep-hirsch}
Let $k$ be fixed. Then,
\begin{equation*}
\max_{d \geq 2} H(k+d,d) = H(2k,k).
\end{equation*}
In particular, the Hirsch Conjecture (Conjecture~\ref{conjecture:hirsch}) holds if and only if the $d$-step Conjecture (Conjecture~\ref{conjecture:d-step}) holds.
\end{theorem}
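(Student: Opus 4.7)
The plan is to prove the two inequalities $\max_{d \geq 2} H(k+d, d) \geq H(2k, k)$ and $\max_{d \geq 2} H(k+d, d) \leq H(2k, k)$ separately. The first is immediate, since substituting $d = k$ in $H(k+d, d)$ yields exactly $H(2k, k)$, which is therefore at most the maximum. For the second, I intend to show $H(k+d, d) \leq H(2k, k)$ for every $d \geq 2$ by pushing the pair $(n, d) = (k+d, d)$ to the balanced pair $(2k, k)$ via one of two monotone operations, depending on whether $d < k$ (in which case we raise the dimension by wedging) or $d > k$ (in which case we lower the dimension by restricting to a shared facet).

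For the case $d < k$, I will iterate the wedge construction. By Lemma~\ref{lemma:wedge}, wedging on any facet $F$ of a $d$-polytope $P$ with $n$ facets yields a $(d+1)$-polytope $\wed_F(P)$ with $n+1$ facets satisfying $\diam(\wed_F(P)) \geq \diam(P)$. Maximising over $P$ gives the inequality $H(n, d) \leq H(n+1, d+1)$, and iterating this $k - d$ times starting from $(k+d, d)$ reaches $(2k, k)$ and yields $H(k+d, d) \leq H(2k, k)$.

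For the case $d > k$, equivalently $n = k+d < 2d$, I plan to establish the complementary contraction inequality $H(n, d) \leq H(n-1, d-1)$ by a pigeonhole argument on shared facets. By Lemma~\ref{lemma:simple} the maximum $H(n, d)$ is realised by a simple $d$-polytope $P$ with $n$ facets, so every vertex of $P$ lies in exactly $d$ facets. Given any two vertices $u, v$ of $P$, if $F(u)$ and $F(v)$ denote the facet sets containing $u$ and $v$ respectively, then $|F(u)| = |F(v)| = d$ and $|F(u) \cup F(v)| \leq n < 2d$, forcing $F(u) \cap F(v) \neq \emptyset$. Picking a shared facet $F$, we obtain a simple $(d-1)$-polytope with at most $n-1$ facets whose graph $G(F)$ is a subgraph of $G(P)$ (because facets of $F$ are precisely the nonempty intersections $F \cap F'$ with other facets of $P$, and edges of $F$ are edges of $P$). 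Hence $d_P(u, v) \leq d_F(u, v) \leq H(n-1, d-1)$, and maximising over pairs $(u, v)$ gives $H(n, d) \leq H(n-1, d-1)$. Iterating this from $(k+d, d)$ preserves the inequality $n < 2d$ at each step until one arrives at $(2k, k)$.

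The main thing to verify carefully is that a shared facet $F$ of a simple polytope $P$ is itself a simple polytope of dimension $d-1$ with at most $n-1$ facets and has graph embedded in that of $P$; these are standard consequences of simplicity, so I expect no real obstruction. Conceptually, the entire argument is just the observation that the ``Dantzig figure'' regime $n = 2d$ governs the Hirsch problem in all dimensions: above it one can wedge, below it one can slip into a face, and only at the balanced point can neither operation be performed.
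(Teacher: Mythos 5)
Your proposal is correct and follows essentially the same route as the paper: the wedge operation (Lemma~\ref{lemma:wedge}) gives $H(n,d)\leq H(n+1,d+1)$ for the case $d<k$, and the shared-facet argument when $n<2d$ gives $H(n,d)\leq H(n-1,d-1)$, exactly as in the paper's two chains of inequalities. The only cosmetic difference is that you invoke simplicity to get a pigeonhole on exactly $d$ facets per vertex, while the paper only needs that every vertex lies on at least $d$ facets.
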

\begin{proof}
We are going to show that:
\begin{equation}\label{equation:inequalities2}
\cdots \leq H(2k+2,k+2) \leq H(2k+1,k+1) \leq H(2k,k)
\end{equation}
and
\begin{equation}\label{equation:inequalities}
\cdots \leq H(2k-2,k-2) \leq H(2k-1,k-1) \leq H(2k,k) \leq H(2k+1,k+1) \leq \cdots
\end{equation}
In \eqref{equation:inequalities2}, we are looking at polytopes with $n \leq 2d$. In \eqref{equation:inequalities}, $n$ and $d$ are arbitrary.

To prove \eqref{equation:inequalities2}, let $P$ be a polytope with $n < 2d$ and let $u$ and $v$ be vertices of it.
Then $u$ and $v$ have some facet $F$ in common, since each vertex is incident to at least $d$ facets. The facet $F$ has dimension $d-1$, and each facet of it is the intersection of $F$ with another facet of $P$. Hence, $F$ has at most $n-1$ facets.
Since every path on $F$ is a path on $P$, we get \eqref{equation:inequalities2}.

For \eqref{equation:inequalities}, we utilize the wedge operation. As we said in Lemma~\ref{lemma:wedge} this operation applied to $P$ increases the dimension and number of facets by one, and it gives a polytope with the same or bigger diameter.
\end{proof}

One can interpret the Hirsch Conjecture as saying that if one wishes to go from vertex $u$ to vertex $v$ of a polytope $P$, one does not expect to have to enter and leave the same facet several times.
This suggests the following conjecture:
\begin{conjecture}[The Non-revisiting Conjecture]\label{conjecture:nonrevisiting}
Let $P$ be a simple polytope. Let $u$ and $v$ be two arbitrary vertices of $P$. Then, there is a path from $u$  to $v$ which at every step enters a different facet of $P$.
\end{conjecture}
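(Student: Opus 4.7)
The plan is to prove the Non-revisiting Conjecture by establishing its equivalence with the Hirsch Conjecture (and hence, via Theorem~\ref{theorem:dstep-hirsch}, with the $d$-step Conjecture), and then to attack the extremal case $n = 2d$.

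The easy implication is that non-revisiting implies Hirsch. Suppose a non-revisiting $u$-to-$v$ path exists in a simple $d$-polytope $P$ with $n$ facets. Each edge-step of the path leaves one facet containing the current vertex and enters a new facet. Since every vertex of a simple $d$-polytope lies in exactly $d$ facets, the facets entered along the path all fail to contain $u$, and there are exactly $n - d$ such facets. Thus the path has length at most $n - d$, giving the Hirsch bound.

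For the reverse implication (Hirsch implies non-revisiting), I would argue by induction on $n - d$. Given vertices $u, v$ in a simple $d$-polytope $P$ with $n$ facets, consider the number $k$ of facets containing both $u$ and $v$. If $k \geq 1$, pick one such common facet $F$; then $F$ is itself a simple $(d-1)$-polytope with at most $n - 1$ facets, and both $u$ and $v$ are vertices of $F$. By induction applied to $F$, there is a non-revisiting $u$-to-$v$ path in $F$; since any such path stays inside the facet $F$ and never enters a facet of $P$ disjoint from $F$, the path is automatically non-revisiting in $P$. If on the other hand $u$ and $v$ share no common facet, then the two $d$-element facet-sets incident to $u$ and $v$ are disjoint, forcing $n \geq 2d$; this is exactly the $d$-step situation.

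The main obstacle is this extremal case, where $u$ and $v$ share no facet and a non-revisiting path would need length exactly $d$, swapping one facet for another at every step. The sub-problem is equivalent to the $d$-step Conjecture itself, for which no general proof is known. I would try to attack it via the wedge operation of Lemma~\ref{lemma:wedge} and its simplicial counterpart, the one-point suspension of Lemma~\ref{lemma:ops}, aiming to reduce to a smaller dimension by lifting non-revisiting paths through these operations; but since wedging increases both $n$ and $d$ by one, and can only increase the diameter, such a reduction does not terminate on its own and would have to be combined with finer geometric information on the combinatorial structure of simple polytopes at $n = 2d$. This is precisely the step where no current technique suffices, and is the content of the $d$-step Conjecture that the rest of the chapter surveys.
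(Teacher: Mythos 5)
The statement you are asked to prove is a conjecture, not a theorem: the paper never proves Conjecture~\ref{conjecture:nonrevisiting}, it only proves the Klee--Walkup equivalences with the Hirsch and $d$-step Conjectures (Lemma~\ref{lemma:nonrevisiting} and Theorem~\ref{theorem:dstep-nonrevisiting}, together with Theorem~\ref{theorem:dstep-hirsch}). Your proposal is honest about this and stops exactly where it must: the direction ``non-revisiting implies Hirsch'' and the reduction of the remaining difficulty to the case $n=2d$ of disjoint facet-sets are the same reductions the paper carries out, and the residual $d$-step case is precisely the open content. Indeed, by Appendix~\ref{appendix:CounterHirsch}, Santos's counter-example to the Hirsch Conjecture shows (via the equivalence) that the Non-revisiting Conjecture is in fact \emph{false}, so no completion of your outline, or of any other proof strategy, is possible.

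One technical remark on the part you did sketch. In your induction for ``Hirsch implies non-revisiting,'' when $u$ and $v$ lie on a common facet $F$ you pass to $F$, which is a simple $(d-1)$-polytope with at most $n-1$ facets, and invoke induction on $n-d$. But $F$ may have exactly $n-1$ facets, in which case the parameter $(n-1)-(d-1)=n-d$ does not decrease and the induction stalls. This is exactly the obstruction that forces the wedge construction in the paper's proof of Theorem~\ref{theorem:dstep-nonrevisiting}: one wedges $F$ over a facet $F'$ of $F$ containing neither $u$ nor $v$ (Lemma~\ref{lemma:wedge}), obtaining a $d$-polytope with $2d$ facets in which the lifted vertices share one fewer facet, and inducts on the number of common facets rather than on $n-d$; a non-revisiting path upstairs then projects to one in $F$, and hence in $P$ (your observation that facets of $P$ meeting $F$ correspond to facets of $F$ is correct for simple polytopes, since nonempty intersections of distinct facets are ridges). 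So even as a conditional argument, your reduction needs this extra device; with it, what remains is exactly the $d$-step statement, which cannot be supplied.
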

The Non-revisiting Conjecture asserts that for every two vertices $u$ and $v$ of a polytope $P$, there is a path in the graph of $P$ that never revisits a facet that it has previously abandoned. Paths with the conjectured property are called \defn{non-revisiting paths}. These paths are also called \defn{$W_v$-paths} and Conjecture~\ref{conjecture:nonrevisiting} is also known as the $W_v$ Conjecture. The length of non-revisiting paths is bounded by $n-d$: at each step we must enter a different facet, and the $d$ facets that our initial vertex lies in cannot be among them. Klee and Walkup (see~\cite{Klee:d-step}) proved the Non-revisiting Conjecture is equivalent to the Hirsch Conjecture. We outline the proof, beginning with the following lemma:
\begin{lemma}
\label{lemma:nonrevisiting}
Let $n$ denote the number of facets of a polytope $P$ and let $d$ denote its dimension. The following properties are equivalent:
\begin{enumerate}
\item Every simple polytope $P$ has the non-revisiting property.
\item Every simple polytope $P$ with $n=2d$ has the non-revisiting property.
\end{enumerate}
\end{lemma}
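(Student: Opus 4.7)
My plan for Lemma~\ref{lemma:nonrevisiting} is as follows. The implication $(1)\Rightarrow(2)$ is trivial because (2) is merely the restriction of (1) to polytopes with $n=2d$. To prove $(2)\Rightarrow(1)$, I would take an arbitrary simple $d$-polytope $P$ with $n$ facets and vertices $u,v$, and construct a non-revisiting $u$-$v$ path by reducing to the case $n=2d$ (where hypothesis (2) applies directly), either by passing to lower-dimensional facets when $n<2d$ or by passing to higher-dimensional wedges when $n>2d$.

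For the case $n\le 2d$, I would iterate the following facet-restriction step. Since $P$ is simple, each of $u$ and $v$ lies in exactly $d$ facets, so when $n<2d$ they necessarily share a common facet $F$. The facet $F$ is itself a simple $(d-1)$-polytope with at most $n-1$ facets, and any non-revisiting $u$-$v$ path inside $F$ is automatically non-revisiting in $P$: it never leaves $F$ (hence never re-enters it), and for any other facet $G\ne F$ of $P$, the intersection $G\cap F$ is either empty or a facet of $F$, so non-revisiting in $F$ translates directly into non-revisiting in $P$. Repeating this reduction strictly decreases $d$ at each step, and it terminates either at a simplex (trivially non-revisiting) or at a polytope with $n'=2d'$, at which point hypothesis (2) applies.

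For the case $n>2d$, I would wedge. Because $u$ and $v$ together lie in at most $2d<n$ facets, I can pick a facet $F$ of $P$ containing neither and form $\wed_F(P)$, a simple $(d+1)$-polytope with $n+1$ facets (Lemma~\ref{lemma:wedge}) in which $u$ and $v$ each lift to a pair of vertices. Iterating this wedge $n-2d$ times -- at each stage, a fresh facet disjoint from the current lifts of $u$ and $v$ is available as long as $n'>2d'$ -- produces a simple polytope $P^*$ of dimension $d^*=n-d$ with exactly $n^*=2d^*$ facets, in which $u,v$ have canonical ``bottom'' lifts $u^*,v^*$. Hypothesis (2) supplies a non-revisiting $u^*$-$v^*$ path in $P^*$.

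The main obstacle will be showing that this non-revisiting path descends, one wedge at a time, to a non-revisiting $u$-$v$ path in $P$. The key structural fact is that at each wedge step $\wed_{F'}(Q)\to Q$ the two new facets -- the ``top'' $T$ and ``bottom'' $B$ -- both project to $F'$. Because the bottom lifts of $u$ and $v$ both lie in $B$, the non-revisiting property applied to $B$ forces the set of steps inside $B$ to be an interval that includes both endpoints, hence the entire path stays inside $B$; restricting to $B$, which is canonically a copy of $Q$, yields a path in $Q$ from $u$ to $v$. For each facet $G\ne F'$ of $Q$, non-revisiting of the corresponding vertical facet $G'$ in $\wed_{F'}(Q)$ translates directly to non-revisiting of $G$ in $Q$. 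For the wedged-over facet $F'$ itself, the vertices of $B$ projecting into $F'$ are precisely the points of $B\cap T$, so non-revisiting of $T$ in $\wed_{F'}(Q)$ forces the set of steps when the projected path is in $F'$ to be an interval, giving non-revisiting with respect to $F'$ as well. Iterating this descent down the stack of wedges then yields the desired non-revisiting path in $P$.
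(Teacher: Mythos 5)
Your proof is correct, and it uses the same two-step skeleton as the paper---restrict to a common facet when $n<2d$, wedge over a facet missing $u$ and $v$ when $n>2d$---but you run it in the opposite logical direction and with a different key observation at the wedge step. The paper argues contrapositively: from a pair $u,v$ admitting no non-revisiting path in $P$ it builds, one facet restriction or one wedge at a time, such a pair in a polytope one step closer to $n=2d$; crucially it lifts $u$ and $v$ to the two \emph{different} cutting facets $F_1$ and $F_2$ of $\wed_F(P)$, so that any path between the lifts projects to a $u$--$v$ walk in $P$ and every revisit downstairs (of $F$ or of any other facet) forces a revisit of some facet upstairs. You instead wedge all the way up to $n^*=2d^*$ in one go, put \emph{both} lifts on the bottom facet $B$, invoke property (2), and observe that a non-revisiting path whose two endpoints lie in the common facet $B$ can never leave $B$; it therefore restricts to a path in the copy of the lower polytope, where non-revisiting with respect to the vertical facets and the top facet translates into non-revisiting with respect to the old facets and $F'$ respectively. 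Your version buys a very clean descent and sidesteps the bookkeeping the paper leaves implicit (projected paths are only walks once vertical edges collapse), while the paper's asymmetric choice of lifts is the one that propagates counterexamples upward, which is exactly the form reused in Remark~\ref{remark:non-hirsch-wedge} and in the proof of Theorem~\ref{theorem:dstep-nonrevisiting}. Two small points, neither a gap: simplicity of $\wed_F(P)$ is not asserted in Lemma~\ref{lemma:wedge} itself but in the surrounding text, and your phrase that $B$ and $T$ ``both project to $F'$'' should say that their \emph{intersection} projects to $F'$, which is the fact your argument actually uses.
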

\begin{proof}
One direction is obvious. For the other, let $P$ be a polytope with $n\ne 2d$ and suppose it does not have the non-revisiting property. That is, there are vertices $u$ and $v$ such that every path from $u$ to $v$ revisits some facet that it previously abandons.
We will construct another polytope $P'$ without the non-revisiting property and with:
\begin{itemize}
\item One less facet and dimension than $P$ if $n < 2d$, and
\item One more facet and dimension than $P$ if $n > 2d$.
\end{itemize}

\begin{figure}[hbt]
  \begin{center}
    \includegraphics[scale=0.83]{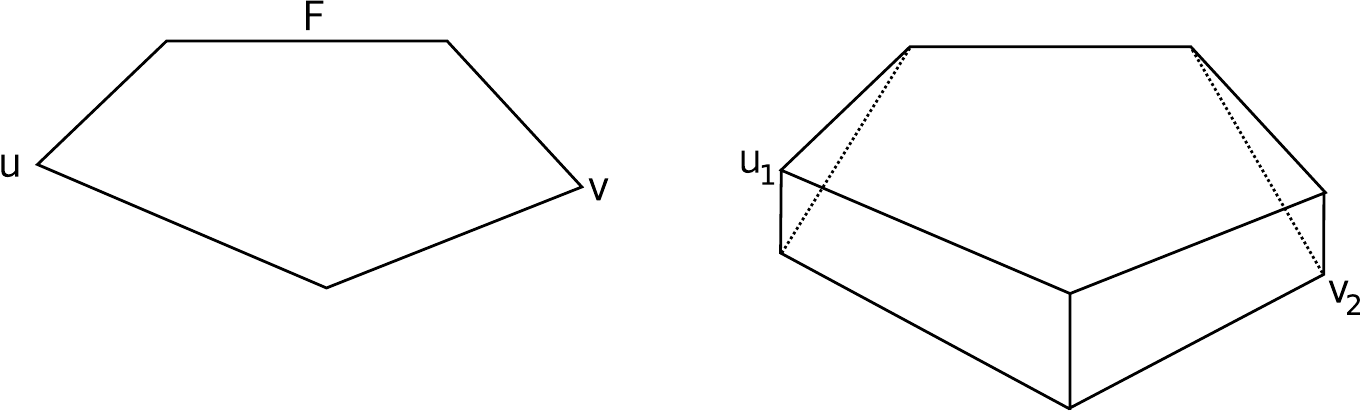}
    \caption{The pentagon $P$ and the wedge $P'=\wed_F(P)$ over its facet $F$: the upper pentagonal facet of $P'$ is $F_1$ and the lower pentagonal facet is $F_2$.}
       \label{figure:wedge-when-n-larger-2d}
  \end{center}
\end{figure}

In the first case, $u$ and $v$ lie in a common facet $F$ and we simply let $P'=F$. 

In the second case, let $F$ be a facet not containing $u$ nor $v$ and let $P'=\wed_F(P)$ be the wedge over $F$. Let $F_1$ and $F_2$ be the two facets of $P'$ whose intersection projects to $F$ (see Figure~\ref{figure:wedge-when-n-larger-2d}). Let $u_1$ and $v_2$ be the vertices of $P'$ that project to $u$ and $v$ and lie, respectively, on $F_1$ and $F_2$. Now, consider a path from $u_1$ to $v_2$ on $P'$ and project it to a path from $u$ to $v$ on $P$:
\begin{itemize}
\item If the path on $P$ revisits a facet (call it $G$) other than $F$, then the path on $P'$ revisits the facet that projects to the facet $G$.

\item If the path on $P$ revisits $F$, then the path on $P'$ revisits either $F_1$ or $F_2$.
\end{itemize}
In either case, the polytope $P'$ does not have the non-revisiting property.
\end{proof}

\begin{remark}
\label{remark:non-hirsch-wedge}
In the proof of Lemma~\ref{lemma:wedge} we noted that, applied to a Hirsch-sharp polytope $P$ the wedge operator $W_F$ produced either another Hirsch-sharp polytope or a counter-example to the Hirsch Conjecture. The last proof shows that the latter can happen only if $P$ does not have the non-revisiting property.
\end{remark}

\begin{theorem}[Klee-Walkup~\cite{Klee:d-step}]
\label{theorem:dstep-nonrevisiting}
The Non-revisiting Conjecture (Conjecture~\ref{conjecture:nonrevisiting}) holds if and only if the $d$-step Conjecture (Conjecture~\ref{conjecture:d-step}) holds.
\end{theorem}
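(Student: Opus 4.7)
The plan is to leverage two already established tools: Theorem~\ref{theorem:dstep-hirsch}, which identifies the $d$-step Conjecture with the full Hirsch bound $\diam(P) \leq n-d$ in every dimension, and Lemma~\ref{lemma:nonrevisiting}, which reduces the Non-revisiting Conjecture to the case $n = 2d$.

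For the easy direction, Non-revisiting $\Rightarrow$ $d$-step, in any simple $d$-polytope with $n$ facets a non-revisiting path starts at a vertex lying in exactly $d$ facets and, at each edge, must enter a facet not previously visited (since once a facet is abandoned it can never be re-entered). Its length is therefore at most $n - d$, and specializing to $n = 2d$ yields $H(2d,d) \leq d$.

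For the converse, assume the $d$-step Conjecture. By Theorem~\ref{theorem:dstep-hirsch} the Hirsch bound $\diam(P) \leq n - d$ is then available in every dimension, and by Lemma~\ref{lemma:nonrevisiting} it suffices to verify non-revisiting for simple polytopes with $n = 2d$, which I plan to do by induction on $d$. Let $P$ be such a polytope and let $u, v$ be two of its vertices. If $u$ and $v$ lie on a common facet $F$, then $F$ is a simple $(d{-}1)$-polytope, and the inductive hypothesis combined with Lemma~\ref{lemma:nonrevisiting} applied in lower dimensions supplies a non-revisiting path between $u$ and $v$ inside $F$. That path is automatically non-revisiting in $P$: for every facet $G$ of $P$, the intersection $G \cap F$ is a face of $F$, hence an intersection of facets of $F$, and since the intersection of contiguous subpaths of a path is again contiguous, the vertices of the path lying in $G$ form a contiguous subpath.

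If instead $u$ and $v$ share no facet, then each lies in $d$ facets and the two sets of $d$ facets are disjoint and exhaust all $2d$ facets of $P$. The Hirsch bound produces a path from $u$ to $v$ of length at most $d$, while any path from $u$ to $v$ must leave each of $u$'s $d$ facets and enter each of $v$'s $d$ facets; since $P$ is simple, every edge leaves exactly one facet and enters exactly one, so the path has length at least $d$ and hence exactly $d$. In such a path each of the $2d$ facets is touched at exactly one step, either as the unique facet left or as the unique facet entered, so no facet is ever revisited. The main obstacle is a bookkeeping point in the induction: a common facet $F$ of a simple $(2d,d)$-polytope can itself carry up to $2d - 1$ facets, so the inductive hypothesis in dimension $d-1$ must cover simple polytopes with an arbitrary number of facets, not only those meeting the $n' = 2d'$ condition. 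Making this precise amounts to carefully interleaving the induction on $d$ with Lemma~\ref{lemma:nonrevisiting}, which relates non-revisiting at parameters $(n', d')$ to non-revisiting at the diagonal point $(2(n' - d'), n' - d')$.
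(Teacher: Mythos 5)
Your easy direction is fine, and your treatment of the case where $u$ and $v$ share no facet is correct and actually more complete than the paper's one-line assertion: your counting argument (each of the $d$ steps must enter a distinct facet containing $v$ and leave a distinct facet containing $u$, so nothing is ever revisited) is exactly the justification the paper leaves implicit. The genuine gap is in the common-facet case, and it is not mere bookkeeping. Your induction is on $d$, and when $u$ and $v$ lie on a common facet $F$ of the simple $d$-polytope $P$ with $2d$ facets, $F$ is a simple $(d-1)$-polytope with as many as $2d-1$ facets, so its excess $n_F-d_F$ can equal $d$. Your proposed repair is to interleave the induction with Lemma~\ref{lemma:nonrevisiting}; but that lemma sends the problematic instance $(n_F,d_F)=(2d-1,\,d-1)$ to the diagonal instance $(2(n_F-d_F),\,n_F-d_F)=(2d,d)$, which is exactly the class of polytopes you are in the middle of handling. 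No measure decreases, so the argument is circular precisely in the extremal case. (When $F$ has fewer than $2d-1$ facets your plan does work, since then $n_F-d_F<d$; this is why the paper's outer induction is on $n-d$ rather than on $d$.)

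The missing idea, which is how the paper closes the extremal case, is a second induction on the number of facets containing both $u$ and $v$, driven by a wedge. If $F$ has $2d-1$ facets, then since $u$ and $v$ each lie on only $d-1$ facets of $F$, there is a facet $F'$ of $F$ containing neither; form $P'=\wed_{F'}(F)$, which by Lemma~\ref{lemma:wedge} is again a simple $d$-polytope with $2d$ facets, and lift $u$ and $v$ to vertices $u_1\in F_1$ and $v_2\in F_2$ on the two non-vertical facets of the wedge. These lifts have strictly fewer common facets than $u$ and $v$ did, so the inner induction (whose base case, no common facets, is exactly your geodesic argument) supplies a non-revisiting path in $P'$; projecting it into $F$, and hence into $P$, preserves the non-revisiting property, by the same contiguity argument you already give for paths inside a facet. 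Without this wedge step, or some substitute that strictly decreases a well-founded quantity in the case $n_F=2d-1$, your induction does not get off the ground, so as written the proof is incomplete.
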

\begin{proof}
Clearly, if $P$ has the non-revisiting property, then it satisfies the $d$-step Conjecture.
To prove the converse, we assume without loss of generality that $P$ is a simple $d$-polytope with $n$ facets, where $n=2d$. We also assume, by induction, that all polytopes with number of facets minus dimension smaller than $d$ have the non-revisiting property.

Let $u$ and $v$ be two vertices of $P$. We argue by induction on the number of common facets of $u$ and $v$. In the base case where $u$ and $v$ do not share any facet, any path of length $n-d=d$ is non-revisiting. For the inductive step, assume that $u$ and $v$ are vertices of a facet $F$ of $P$:
\begin{itemize}
\item If the facet $F$ has less than $n-1$ facets itself, then $F$ has the non-revisiting property by induction on ``number of facets minus dimension,'' and we are done.

\item If $F$ has $n-1$ facets, since it has dimension $d-1$ there is a facet $F'$ of $F$ not containing $u$ nor $v$. Let $P'=\wed_{F'}(F)$. Let $u_1$ and $v_2$ be vertices of $P'$ projecting to vertices $u$ and $v$ of $P$ such that $F_1$ contains $u_1$ and $F_2$ contains $v_2$. As in the proof of Lemma~\ref{lemma:nonrevisiting}, $F_1$ and $F_2$ denote the non-vertical facets of the wedge $P'$. Then $P'$ has dimension $d$ and $2d$ facets, but $u'$ and $v'$ have one less facet in common than $u$ and $v$ had. By the inductive hypothesis, there is a non-revisiting path from $u'$ to $v'$ in the polytope $P'$. When this path is projected to $F$, it retains the non-revisiting property, and it is also a non-revisiting path on the original polytope $P$.
\end{itemize}
Thus, a $d$-dimensional polytope with $2d$ facets satisfying the $d$-step Conjecture is a polytope that has the non-revisiting property.
\end{proof}

\subsection{Small dimension or few facets}\label{section:small-dimension}

Recall that $H(n,d)$ denotes the maximum diameter among the graphs of $d$-dimensional polytopes with $n$ facets. An exact formula for $H(n,d)$ when $d = 3$ was first proved by \given{Victor }Klee in~\cite{Klee:PathsII}.
\begin{theorem}[Klee~\cite{Klee:PathsII}]\label{theorem:hirschford3}
$H(n,3) = \lfloor \frac{2n}{3} \rfloor - 1$.
\end{theorem}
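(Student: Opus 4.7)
By Lemma~\ref{lemma:simple} it suffices to consider simple $3$-polytopes $P$. For such $P$ with $n$ facets, Steinitz's Theorem (Theorem~\ref{theorem:steinitz}) and Balinski's Theorem (Theorem~\ref{theorem:balinski}) tell us that $G(P)$ is $3$-connected and planar, while Euler's formula combined with simplicity yields $f_0(P) = 2n - 4$ and $f_1(P) = 3n - 6$. In particular each vertex of $P$ is incident to exactly three facets and each edge to exactly two.

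For the upper bound, I fix two vertices $u, v$ with $\dist_{G(P)}(u,v) = k$ and a shortest path $\pi = (v_0, \ldots, v_k)$. The target is to show $n \geq \lceil 3(k+1)/2\rceil$, which is equivalent to $k \leq \lfloor 2n/3 \rfloor - 1$. Since $d=3$ is small, the Non-revisiting Conjecture (Conjecture~\ref{conjecture:nonrevisiting}) can be verified here, so I may assume $\pi$ is non-revisiting; then at each interior step along $\pi$ exactly one facet is "lost" and exactly one is "gained", producing precisely $k + 3$ distinct facets incident to $\pi$. This already gives the Hirsch bound $k \le n-3$, but to reach $\lfloor 2n/3\rfloor-1$ I plan to exploit the planar embedding of $G(P)$ on $\partial P \cong S^{2}$: the path $\pi$, together with the closing of $\pi$ inside a facet at each endpoint, splits the sphere into two closed disks $D_1, D_2$. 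The main step will be a double-counting of edge-facet incidences, using that every facet has at least three boundary edges and that facets incident to $\pi$ can "face" at most one of the two disks at each interior path-vertex. Combining these two observations forces the existence of roughly $(k-1)/2$ additional facets not incident to $\pi$, producing the inequality $n \geq (k+3) + (k-1)/2 = (3k+5)/2$ and hence $k \leq \lfloor 2n/3\rfloor - 1$.

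For the matching lower bound, I will construct for each $n \geq 4$ a simple $3$-polytope $P_n$ with $n$ facets and $\diam G(P_n) = \lfloor 2n/3\rfloor - 1$. A natural family of candidates is a "zigzag chain" of triangular prisms: starting from the triangular prism ($n=5$, diameter $2$), one iteratively glues a further triangular prism along a rectangular facet so that each attachment adds three facets and two units of graph distance between the chain endpoints, achieving the desired ratio $2/3$. A single vertex truncation at one end adjusts $n$ modulo $3$ and adds one unit of diameter when needed. The verification that the two endpoints of the zigzag are at the claimed distance, and not closer, uses the $3$-connectivity of $G(P_n)$ and a short combinatorial argument on the planar structure.

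The delicate step is the double-counting in the upper bound: one must ensure that facets of size exactly three are not over-counted across the two disks $D_1$ and $D_2$, and that facets incident to $u$ or $v$ but not to any interior $v_i$ are treated correctly. I expect this to be the main obstacle, and plan to handle it either by a careful case analysis at the endpoints of $\pi$ or, alternatively, by an induction on $n$ in which one removes an appropriately chosen facet (for instance, a facet disjoint from some shortest $uv$-path of maximum length) while controlling the change in both $n$ and the diameter.
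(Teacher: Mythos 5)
There are genuine gaps in both halves of your plan. For the upper bound, the quantitative target you set out to prove is false: you aim to force roughly $(k-1)/2$ facets \emph{not} incident to the geodesic $\pi$, hence $n \geq (3k+5)/2$, but the $3$-cube already refutes this. It is simple, has $n=6$ facets and diameter $k=3=\lfloor 12/3\rfloor-1$, and a geodesic between antipodal vertices meets \emph{all} six facets, so there are zero additional facets and $(3k+5)/2=7>6$. Whatever edge--facet double counting you set up on the two disks, it cannot yield that inequality; the only correct target is $n\geq\lceil 3(k+1)/2\rceil$, and the mechanism you invoke (``facets incident to $\pi$ can face at most one of the two disks at each interior path-vertex'') is left unproven and must in any case fail at the endpoints, as the cube shows. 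Moreover, the step ``the Non-revisiting Conjecture can be verified here'' is itself a nontrivial theorem for $3$-polytopes; neither your sketch nor this paper proves it, so you cannot simply assume a non-revisiting geodesic. The paper's own upper-bound argument bypasses all of this: a simple $3$-polytope with $n$ facets has $2n-4$ vertices by Euler's relation, Balinski's Theorem~\ref{theorem:balinski} gives three internally disjoint paths between $u$ and $v$, these use at most $2n-6$ intermediate vertices in total, so the shortest has at most $\lfloor 2n/3\rfloor-2$ interior vertices and hence length at most $\lfloor 2n/3\rfloor-1$.

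The lower-bound construction also does not work as described. Gluing a triangular prism onto a quadrilateral facet adds three facets but only two new vertices, and those two vertices lie at distance one from the glued quadrilateral, so each attachment buys about one unit of diameter, not two. Concretely, two prisms glued along a square facet give a polytope with $8$ facets whose graph has diameter $3$, whereas $H(8,3)=\lfloor 16/3\rfloor-1=4$; since the growth rate of the chain is wrong, a single truncation at the end cannot repair it. (The glued polytopes are also not simple --- the four identified vertices acquire degree four --- which is permissible for a lower bound but indicates the ``three facets, two units of distance'' accounting was never verified.) The paper instead works in the polar, simplicial picture: concentric triangles in which each new layer of three vertices and six triangles adds two to the dual-graph diameter, with a stellar subdivision, respectively a two-vertex subdivision of the inner triangle, handling the residues $n\equiv 1,2\pmod 3$. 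If you want to keep a construction in the simple world, you would need to check that your family actually realizes distance $\lfloor 2n/3\rfloor-1$, which the prism chain does not.
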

\begin{proof}
We first show that $H(n,3) \leq \lfloor \frac{2n}{3} \rfloor - 1$.
Let $P$ be a simple $3$-polytope with $n$ facets. By Euler's formula, $P$ has $2n-4$ vertices. Let $v$ and $v'$ be two distinct vertices of $P$. By Balinski's Theorem (see Theorem~\ref{theorem:balinski}, which \given{Michel }Balinski proved in~\cite{Balinski:On-the-graph-structure}), the graphs of $3$-polytopes are $3$-connected. Thus, there are three disjoint paths that go from $v$ to $v'$. These paths use at most $2n-6$ intermediate vertices. Therefore, the shortest of these paths goes through at most $\lfloor \frac{2n}{3}\rfloor-2$ vertices, so it has at most $\lfloor \frac{2n}{3} \rfloor - 1$ edges.

\begin{figure}[hbt]
  \begin{center}
    \includegraphics[scale=0.70]{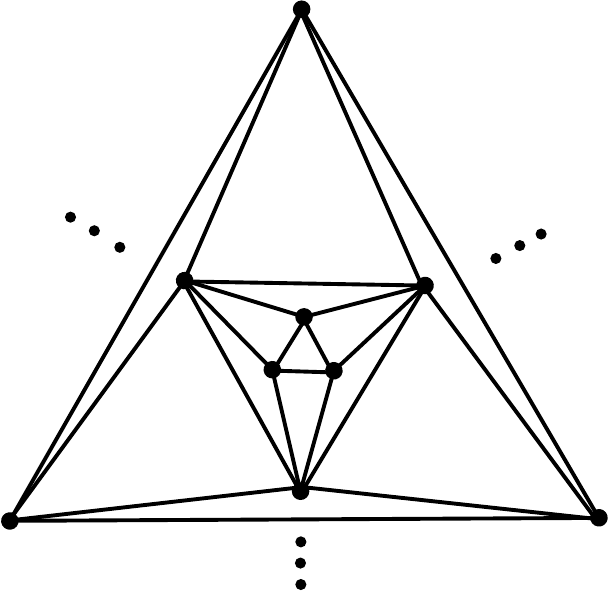}
    \caption{A family of simplicial $3$-dimensional polytopes with $3+3k$ vertices and diameter $2k+1$. Two steps are needed to cross each layer of six triangles.}
       \label{figure:d3lowerbound}
  \end{center}
\end{figure}

To prove $H(n,3) \geq \lfloor \frac{2n}{3} \rfloor - 1$, we first consider the case where $n$ is a multiple of three. We construct a family of simplicial $3$-polytopes with $3+3k$ vertices 
with diameter $2k+1$ for every $k \geq 0$. The diagram of Figure~\ref{figure:d3lowerbound} shows the case $k=2$. Starting from the triangle depicted in the center, one needs a minimum of three steps to reach a facet outside of the next equilateral triangle, and two more for each extra layer of three vertices and six triangles.
\begin{figure}[hbt]
  \begin{center}
    \includegraphics[scale=0.60]{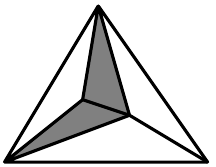}
    \caption{Modification needed to the inner triangle of Figure~\ref{figure:d3lowerbound} for $5+3k$ vertices}
       \label{figure:d3lowerboundmod2}
  \end{center}
\end{figure}

The case of $4+3k$ (where the bound does not increase) is obtained by any stellar subdivision of the picture in the previous case. For $5+3k$ we subdivide the inner triangle of Figure~\ref{figure:d3lowerbound} with two more vertices, as depicted in Figure~\ref{figure:d3lowerboundmod2}, which makes one more step necessary to exit that triangle from any of the two shaded triangles.
\end{proof}

The Hirsch Conjecture is also known to hold when $P$ does not have too many facets. By Theorem~\ref{theorem:dstep-hirsch}, if we fix $k$, the Hirsch Conjecture for $d$-polytopes with $d+k$ facets follows from the conjecture for $k$-polytopes with $2k$ facets. Hence, the case $n - d \leq 3$ follows from Theorem~\ref{theorem:hirschford3}. The cases when $n - d \leq 5$ were proved by \given{Victor }Klee and \given{David }Walkup in~\cite{Klee:d-step}. The case $n - d = 6$ was recently proved by \given{David }Bremner and \given{Lars }Schewe in~\cite{Bremner:DiameterFewFacets}.
\begin{theorem}[\cite{Klee:d-step},~\cite{Bremner:DiameterFewFacets}]
$H(n,d) \leq n-d$ when $n-d \leq 6$.
\end{theorem}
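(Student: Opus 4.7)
The plan is to reduce to the $d$-step conjecture and then dispatch each small value of $k := n-d$ individually. By Theorem~\ref{theorem:dstep-hirsch}, for every fixed $k$ we have
\[
H(k+d,d) \leq H(2k,k) \quad\text{for all } d \geq k.
\]
Hence it suffices to establish $H(2k,k) \leq k$ for each $k \in \{1,2,3,4,5,6\}$. The cases $k \in \{1,2\}$ are trivial: a simple $1$-polytope with $2$ facets is a segment (diameter $1$), and a simple $2$-polytope with $4$ facets is a quadrilateral (diameter $2$). For $k = 3$, apply Theorem~\ref{theorem:hirschford3} directly: $H(6,3) = \lfloor 12/3\rfloor - 1 = 3$.

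For $k \in \{4,5\}$ I would invoke the Klee-Walkup arguments from~\cite{Klee:d-step}. The natural route is through the equivalent non-revisiting formulation (Theorem~\ref{theorem:dstep-nonrevisiting}): one must show that between any two vertices $u,v$ of a simple $k$-polytope $P$ with $2k$ facets there exists a $W_v$-path. Induct on the number of facets common to $u$ and $v$, using the wedge construction exactly as in the proof of Theorem~\ref{theorem:dstep-nonrevisiting}, together with a finite case analysis of how a potential revisiting path interacts with the $k$ facets through $u$ and the $k$ facets through $v$. In dimensions $k = 4$ and $k = 5$ the combinatorial possibilities for the overlap pattern of these two facet stars are few enough that the existence of a non-revisiting detour can be verified by hand.

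The main obstacle is $k = 6$, which is the recent theorem of Bremner and Schewe~\cite{Bremner:DiameterFewFacets}. Here the combinatorial explosion defeats any clean inductive argument, and the known proof is computer-assisted. The strategy is: (i)~encode the possible vertex-facet incidence matrices of a simple $6$-polytope with $12$ facets as $0/1$-matrices satisfying the combinatorial axioms of the dual simplicial $5$-sphere (matroid-exchange compatibility, Eulerian face counts, Balinski-connectivity, etc.); (ii)~impose on top of this encoding the hypothesis that some pair of vertices lies at graph distance $\geq 7$, together with symmetry-breaking clauses to cut down the search space; (iii)~hand the resulting Boolean formula to a SAT solver and verify that it is unsatisfiable. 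Unsatisfiability certifies $H(12,6) \leq 6$, closing the last case. The technical heart of the obstacle is producing an encoding small enough for the solver to finish yet faithful enough that satisfiability genuinely corresponds to the existence of a realizable polytope (or at least of a strongly connected dual manifold, which suffices to bound $H(12,6)$ from above). Once $H(12,6) \leq 6$ is in hand, combining the six cases yields $H(n,d) \leq n-d$ whenever $n-d \leq 6$.
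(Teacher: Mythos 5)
Your proposal follows essentially the same route as the paper: reduce via Theorem~\ref{theorem:dstep-hirsch} to the $d$-step cases $H(2k,k)\leq k$ for $k\leq 6$, settle $k\leq 3$ with Theorem~\ref{theorem:hirschford3}, and defer $H(8,4)=4$, $H(10,5)=5$ to Klee--Walkup~\cite{Klee:d-step} and $H(12,6)=6$ to the computer-assisted work of Bremner--Schewe~\cite{Bremner:DiameterFewFacets}, exactly as the survey does. One small remark: your displayed inequality restricts to $d\geq k$, but the reduction is needed for all $d$ (e.g.\ $H(10,4)$ when $k=6$), which Theorem~\ref{theorem:dstep-hirsch} does supply since the maximum there is over all $d\geq 2$.
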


It is worth noting that this and Theorem~\ref{theorem:hirschford3} exhaust all the pairs $(n,d)$ where the Hirsch Conjecture is known to hold. In particular, the Hirsch Conjecture is open for $4$-polytopes with $13$ facets.

In summary, the pairs $(n,d)$ for which the Hirsch Conjecture is known to be true is:
\begin{enumerate}
\item $d\leq 3$, and $n$ arbitrary;

\item $n\leq d+6$ and $d$ arbitrary. This follows from Theorem~\ref{theorem:dstep-hirsch} by showing $H(8,4)=4$, $H(10,5)=5$ and $H(12,6)=6$. The first two were proved by \given{Victor }Klee and \given{David }Walkup in~\cite{Klee:d-step}. The third was proved by \given{David }Bremner and \given{Lars }Schewe in~\cite{Bremner:DiameterFewFacets}.

\item $H(11,4)=6$ was proved by \given{Peter }Schuchert (see~\cite{Schuchert}). Following up on~\cite{Bremner:DiameterFewFacets}, Bremner, Deza, Hua, and Schewe (see~\cite{Bremner:MoreBounds}) proved $H(12,4)=H(12,5)=H(13,6)=7$.

\end{enumerate}

Combining this with the information on Hirsch-sharp polytopes we can give a ``plot'' of the function $H(n,d) - (n-d)$ in Table~\ref{table:hirsch-tight-table}. The horizontal coordinate is $n-2d$, so that the column marked ``0'' corresponds to the polytopes relevant to the $d$-step Conjecture.
\begin{table}[htb]
\begin{center}
\begin{tabular}{|c|cccccccccc|}
\hline
$n -2d$ & -1&0&1&2&3&4&5&6&7& $\cdots$\\
\hline
$\underline{\quad d\quad}$ &&&&&&&&&&\\
2  &=&$=$ & $<$ & $<$ & $<$ & $<$ & $<$ & $<$ & $<$  & $\cdots $ \\
3  &=&$=$& $<$ & $<$ & $<$  & $<$ & $<$ & $<$ & $<$ & $\cdots $ \\
4  &=&$=$& $=$ & $<$ & $<$ & $<$ & ? & ? & ? & $\cdots $   \\
5  &=&$=$ &$=$&$=$&?&?&?&?&?&$\cdots $\\
6  &=&$=$ &$=$&$\ge$&$\ge$&?&?&?&?&$\cdots $\\
7  &=& $\ge$ & $\ge$ & $\ge$ & $\ge$ & $\ge$ & $\ge$  & $\ge$ & $\ge$ & $\cdots $   \\
8  &$\ge$& $\ge$ & $\ge$ & $\ge$ & $\ge$ & $\ge$ & $\ge$  & $\ge$ & $\ge$ & $\cdots $   \\
$\vdots$   &$\vdots$&$\vdots$&$\vdots$&$\vdots$&$\vdots$&$\vdots$&\ \ $\vdots$\ \ &$\vdots$\ \ &$\vdots$& $\ddots$\\
\hline
\end{tabular}
\caption{$H(n,d)$ versus $n-d$, the state of the art}
\end{center}
\end{table}\label{table:hirsch-tight-table}

\subsection{General upper bounds on diameters}\label{section:upper-bounds}

The best upper bound known for the diameter of arbitrary polytopes is given in~\cite{Kalai:Quasi-polynomial} by Kalai and Kleitman.  This subexponential bound holds even for unbounded polyhedra. The proof is so short (it is only half a page!) that we reproduce it here:
\begin{theorem}[Kalai-Kleitman~\cite{Kalai:Quasi-polynomial}]
\label{theorem:quasipolynomial}
Every polyhedron of dimension $d$ and with $n$ facets has diameter bounded above by $n^{\log_2(d)+2}= n^2 d^{\log_2n}$.
\end{theorem}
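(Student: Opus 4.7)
The plan is to prove the bound by establishing a recursive inequality for the function $H_u(n,d)$ and then verifying that $n^{\log_2 d + 2}$ satisfies it via a double induction on $d$ and $n$. Concretely, the key recursion I aim to establish is
\begin{equation*}
H_u(n, d) \;\leq\; 2\, H_u(\lfloor n/2 \rfloor, d) \,+\, H_u(n-1, d-1) \,+\, 2,
\end{equation*}
with base cases ($d=1$ or $n \leq 2d$) handled trivially using the observations from Section~\ref{section:examples}. By Lemma~\ref{lemma:simple} we may assume throughout that the polyhedron in question is simple.

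To derive the recursion, fix a $d$-polyhedron $P$ with $n$ facets and let $u, v$ be vertices realizing the diameter. For each integer $k \geq 0$ and each vertex $w$, write $B_k(w)$ for the set of vertices of $P$ at graph-distance at most $k$ from $w$, and write $\mathcal{F}_k(w)$ for the collection of facets of $P$ incident to at least one vertex of $B_k(w)$. Let $k_u$ be the smallest integer with $|\mathcal{F}_{k_u}(u)| > n/2$, and define $k_v$ analogously. Since $|\mathcal{F}_{k_u}(u)| + |\mathcal{F}_{k_v}(v)| > n$ while both are subsets of the $n$-element facet set of $P$, the two collections must share a common facet $F^{*}$. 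Selecting $u' \in B_{k_u}(u)$ and $v' \in B_{k_v}(v)$ both incident to $F^{*}$, and noting that $F^{*}$ is a $(d-1)$-polyhedron with at most $n-1$ facets, concatenation of shortest paths $u \leadsto u'$ (inside $P$), $u' \leadsto v'$ (inside $F^{*}$), and $v' \leadsto v$ (inside $P$) yields $d_P(u,v) \leq k_u + k_v + H_u(n-1, d-1)$.

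The main obstacle, on which I would focus the most effort, is the claim that $k_u \leq H_u(\lfloor n/2 \rfloor, d) + 1$, and symmetrically for $k_v$. By minimality of $k_u$, every vertex of $B_{k_u - 1}(u)$ meets only facets from a subcollection $\mathcal{F}'$ of size at most $\lfloor n/2 \rfloor$. Consider the polyhedron $P'$ obtained from $P$ by retaining only the facet-defining inequalities in $\mathcal{F}'$; then $P'$ has at most $\lfloor n/2 \rfloor$ facets and dimension $d$, and contains $P$. The delicate point is to relate graph-distances in $P$ to those in $P'$: I need to produce a vertex at $P$-distance $k_u - 1$ from $u$ whose analogous distance in $P'$ is no smaller, so that the inductive hypothesis applied to $P'$ bounds $k_u - 1$ by $H_u(\lfloor n/2 \rfloor, d)$. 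The comparison exploits simplicity (every vertex of $P$ whose $d$ incident facets lie in $\mathcal{F}'$ remains a vertex of $P'$, and the corresponding edges are preserved) and is the step most prone to subtle error.

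Once the recursion is secured, the proof concludes with a direct induction. Assuming $H_u(m, d') \leq m^{\log_2 d' + 2}$ for all admissible smaller pairs $(m, d')$, the identity $2^{\log_2 d + 1} = 2d$ gives
\begin{equation*}
2\,\lfloor n/2 \rfloor^{\log_2 d + 2} \;\leq\; \frac{n^{\log_2 d + 2}}{2d},
\end{equation*}
while the lower-dimensional term contributes at most $(n-1)^{\log_2(d-1) + 2} \leq n^{\log_2(d-1) + 2}$, which is a strictly smaller power of $n$. Combining these, the right-hand side of the recursion fits comfortably inside $n^{\log_2 d + 2}$ for $n$ and $d$ sufficiently large, with the base cases handling the remaining small parameter ranges. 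This yields the desired bound $H_u(n,d) \leq n^{\log_2 d + 2} = n^{2} d^{\log_2 n}$.
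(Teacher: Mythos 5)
Your derivation of the recursion is in substance identical to the paper's: you grow balls around $u$ and $v$ until the facets they touch exceed $n/2$, find a common facet by pigeonhole, and bound each ball radius by the diameter of the auxiliary polyhedron cut out by only those facets meeting the ball --- your $P'$ is exactly the paper's $Q$, and your remark about which vertices and edges survive in $P'$ is the same point the paper glosses with ``and vice-versa'' (your recursion's $H_u(n-1,d-1)$ in place of $H_u(n,d-1)$ is an inessential sharpening). Where you genuinely diverge is the endgame. The paper divides the recursion by $n$, substitutes $n=2^k$ to get $h(k,d)\leq h(k-1,d)+h(k,d-1)$ and hence $H_u(2^k,d)\leq 2^k\binom{k+d}{d}$, which settles the range $n\leq 2^d$, and then invokes Larman's bound (Theorem~\ref{theorem:linear-in-fixed-d}) for $n\geq 2^d$; you instead verify directly that $n^{\log_2 d+2}$ satisfies the recursion, which buys a self-contained proof with no appeal to Larman. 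Your verification does go through, but the justification as written is too loose: ``a strictly smaller power of $n$'' is not by itself enough, since the exponent gap $\log_2\frac{d}{d-1}$ shrinks as $d$ grows, so you cannot wave at ``$n$ and $d$ sufficiently large.'' What actually closes the induction is the quantitative pair of estimates $2\lfloor n/2\rfloor^{\log_2 d+2}\leq n^{\log_2 d+2}/(2d)$ and, for all $n\geq 2$, $n^{\log_2(d-1)+2}\leq\frac{d-1}{d}\,n^{\log_2 d+2}$ (the latter because $n^{\log_2 d-\log_2(d-1)}\geq 2^{\log_2 d-\log_2(d-1)}=\frac{d}{d-1}$); together they bound the right-hand side by $\frac{2d-1}{2d}n^{\log_2 d+2}+2$, and the leftover $\frac{1}{2d}n^{\log_2 d+2}\geq\frac{4d}{2d}=2$ absorbs the additive constant, so the inequality holds for every $n>d\geq 2$ with no asymptotic hedge --- only the genuine base cases $d=1$ and $n\leq 2d$ (where two vertices share a facet and $H_u(n,d)\leq H_u(n-1,d-1)$, or the recursion still applies at $n=2d$ by simplicity) remain, as you indicated.
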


\begin{proof}
Let $P$ be a $d$-dimensional polyhedron with $n$ facets, and let
$v$ and $u$ be two vertices of $P$. Let $k_v$ (respectively $k_u$) be the maximal positive number such that the union of all vertices in all paths in $G(P)$ starting from $v$ (respectively $u$) of length at most $k_v$ (respectively $k_u$) are incident to at most $\frac{n}{2}$ facets. Clearly, there is a facet $F$ of $P$ so that we can reach $F$ by a path of length $k_v+1$ from $v$ and a path of length $k_u+1$ from $u$.

Recall that $H_u(n,d)$ denotes the maximum diameter among all $d$-dimensional polyhedra with $n$ facets. We claim that $k_v\leq H_u(\lfloor \frac{n}{2} \rfloor,d)$. (Of course, this implies that the same bound holds for $k_u$.)

To prove this, let $Q$ be the polyhedron defined by taking only the inequalities of $P$ corresponding to facets that can be reached from $v$ by a path of length at most $k_v$. By construction, all vertices of $P$ at distance at most $k_v$ from $v$ are also vertices in $Q$, and vice-versa. In particular, if $w$ is a vertex of $P$ whose distance from $v$ is $k_v$ then its distance from $v$ in $Q$ is also $k_v$. Since $Q$ has at most $n/2$ facets, we get $k_v\leq H_u(\lfloor \frac{n}{2} \rfloor,d)$.

The claim implies the following recursive formula for $H_u$:
\begin{eqnarray*}
H_u(n,d)
&\leq & 2 H_u\left(\left\lfloor \frac{n}{2} \right\rfloor,d\right)  + H_u(n,d-1) + 2,
\end{eqnarray*}
which we can rewrite as
\[
\frac{H_u(n,d)+1}{n}
\leq  \frac{H_u\left(\left\lfloor \frac{n}{2} \right\rfloor,d\right)+1}{n/2}  + \frac{H_u(n,d-1)+1}{n}.
\]

This suggests calling $h(k,d):=(H(2^k,d) - 1)/ 2^k$ and applying the recursion with $n=2^k$, to get:
\[
h(k,d)\leq h(k-1,d) + h(k,d-1).
\]
This implies $h(k,d)\leq \binom{k+d}{d}$, or
\[
H_u(2^k,d) \leq 2^k\binom{k + d}{d}.
\]
From this the statement follows if we assume $n\leq 2^d$ (that is, $k\leq d$). For $n\geq 2^d$ we use Larman's bound 
$H_u(n,d)\leq n 2^d \leq n^2$, proved below.
\end{proof}

The diameters of polytopes of a fixed dimension $d$ admit a linear bound. An upper bound of $n3^{d-3}$ was found by Barnette in 1967 (see~\cite{Barnette}). Larman (see~\cite{Larman}) then improved the bound to $n 2^{d-3}$. The proof presented here is taken from a very recent paper (see~\cite{Eisenbrand:Diameter-of-Polyhedra}) by Eisenbrand, H\"ahnle, Razborov, and Rothvo\ss{}. As in the previous result, we prove it for all polyhedra, not just bounded polytopes.

\begin{theorem}[Larman~\cite{Larman}]
\label{theorem:linear-in-fixed-d}
For every $n>d\geq 3$, the maximal diameter $H_u(n,d)$ of $d$-dimensional polyhedra with $n$ facets is no more than $n 2^{d-3}$.
\end{theorem}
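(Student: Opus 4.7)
The plan is to prove the bound by induction on the dimension $d$, with base case $d=3$.

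For the base case $d=3$, Theorem 2.3.1 of Klee already gives $H(n,3) = \lfloor 2n/3 \rfloor - 1 \le n = n \cdot 2^{d-3}$ for bounded $3$-polytopes. I would extend this to unbounded $3$-polyhedra using a very similar planarity plus Euler's formula argument, or alternatively by a perturbation argument: any unbounded $3$-polyhedron with $n$ facets and a pair of vertices $u,v$ can be cut by a hyperplane far from $u$ and $v$ to produce a bounded $3$-polytope with at most $n+1$ facets in which the graph distance between $u$ and $v$ is unchanged, reducing the problem to Theorem 2.3.1.

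For the inductive step, let $P$ be a $d$-polyhedron with $n$ facets and $u,v$ two of its vertices; I want to bound $\operatorname{dist}_{G(P)}(u,v)$ by $n\cdot 2^{d-3}$. The strategy is to reduce from dimension $d$ to dimension $d-1$ via BFS, in the spirit of the Kalai--Kleitman argument already displayed in the proof of Theorem 2.3.1's predecessor. Concretely, for each $k\ge 0$ let $\mathcal F_k(u)$ denote the set of facets of $P$ incident to some vertex within graph distance $k$ of $u$, and define $\mathcal F_k(v)$ analogously. Let $k_u$ be the largest $k$ with $|\mathcal F_k(u)|\le n/2$, and $k_v$ analogously. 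By the definition of $k_u$ and $k_v$, one can find a facet $F$ that is reachable from both $u$ and $v$ by short paths, namely within $k_u+1$ and $k_v+1$ steps respectively. The novelty over the Kalai--Kleitman proof is that here one exploits the fact that the subgraph of $P$ consisting of vertices lying on facets in $\mathcal F_k(u)$ is governed by the diameter of a face: one can pick the witnessing facet $F$ so that, after reaching it, the residual distance from that landing vertex to $v$ is bounded by $H_u(n-1,d-1)$, since $F$ itself is a $(d-1)$-polyhedron with at most $n-1$ facets. This yields the recursion
\[
H_u(n,d) \;\le\; 2\, H_u\!\left(\lfloor n/2\rfloor,\,d\right) \;+\; H_u(n-1,\,d-1) \;+\; 2,
\]
which by the inductive hypothesis $H_u(n-1,d-1)\le (n-1)\cdot 2^{d-4}$ and a routine telescoping over the halving recursion in $n$ gives $H_u(n,d)\le n\cdot 2^{d-3}$.

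The principal obstacle is the dimensional-reduction step, namely justifying that a path saturating $n/2$ facets from $u$ can be spliced with a path inside a single facet $F$ to reach $v$ in only $H_u(n-1,d-1)$ additional steps. Establishing this requires showing that the ``frontier'' of the BFS from $u$ shares a facet with some facet containing $v$, which in turn uses Balinski's Theorem (Theorem 1.1.5) to guarantee connectivity of the relevant sub-configurations once one has exhausted half of the facets. Once this splicing lemma is in hand, the rest is bookkeeping on the recursion, and the base case $d=3$ anchors the induction.
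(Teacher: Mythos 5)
There is a genuine gap in your inductive step: the recursion you propose cannot yield a bound that is linear in $n$. Test it directly: assuming the desired bound $H_u(m,d)\leq m\,2^{d-3}$ and the inductive hypothesis $H_u(n-1,d-1)\leq (n-1)2^{d-4}$, the right-hand side of your recursion is at least
\[
2\cdot\left\lfloor \tfrac{n}{2}\right\rfloor 2^{d-3} \;+\; (n-1)2^{d-4} \;+\;2 \;\approx\; n\,2^{d-3}+n\,2^{d-4},
\]
which exceeds $n\,2^{d-3}$, so the induction on $n$ does not close. More structurally, a recursion of the shape $f(n)\leq 2f(\lfloor n/2\rfloor)+cn+2$ solves to $\Theta(n\log n)$ for fixed $d$; ``routine telescoping'' gives roughly $(\log_2 n)\,n\,2^{d-4}$, not a linear bound. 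This is exactly the Kalai--Kleitman recursion (the one the paper uses for the quasi-polynomial bound), and it is intrinsically too weak for Larman's theorem; the splicing lemma and the appeal to Balinski's theorem do not change this, since the loss is in the arithmetic of the recursion, not in the geometric splicing.

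The paper's proof (following Eisenbrand et al.'s rendering of Larman's argument) uses a different decomposition whose whole point is to avoid the halving-in-$n$ recursion. Fix $u$ and build facets $F_1,F_2,\dots$ greedily: $F_1$ is a facet through $u$ containing a vertex as far from $u$ as possible (distance $\delta_1$), and inductively $F_{i+1}$ is a facet containing a vertex at distance $\delta_i+1$ from $u$ and reaching as far as possible (distance $\delta_{i+1}$). Stratify the vertices into layers $V_i=\{v:\dist(u,v)\in(\delta_{i-1},\delta_i]\}$. The crucial combinatorial fact, forced by the greedy choice, is that no facet is active in more than two layers, so $\sum_i n_i\leq 2n$ where $n_i$ counts facets active in $V_i$. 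Within each layer, the advance $\delta_i-\delta_{i-1}-1$ is at most the diameter of the $(d-1)$-polyhedron $Q_i$ obtained from $F_i$ by dropping the inactive constraints, which by induction on $d$ is at most $2^{d-4}(n_i-1)$; summing over $i$ and using $\sum n_i\leq 2n$ gives $n\,2^{d-3}$. The ``each facet charged at most twice'' accounting is the missing idea that delivers linearity in $n$, and nothing in your plan substitutes for it.
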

\begin{proof}
The proof is by induction on $d$. The base case $d=3$ was Theorem~\ref{theorem:hirschford3}.

Let $u$ be an initial vertex of our polytope $P$, of dimension $d>3$. For each other vertex $v\in \vert(P)$ we consider its distance $\dist(u_1,v)$, and use it to construct a sequence of facets $F_1,\dots,F_k$ of $P$ as follows:

\begin{itemize}
\item Let $F_1$ be a facet that reaches ``farthest from $u$'' among those containing $u$. That is, 
let $\delta_1$ be the maximum distance to $u$ of a vertex  sharing a facet with $u$, and let $F_1$ be that facet. 

\item Let $\delta_2$ be the maximum distance to $u$ of a vertex  sharing a facet with some vertex at distance $\delta_1+1$ from $u$, and let $F_2$ be that facet. 

\item Similarly, while there are vertices at distance $\delta_i + 1$ from $u$,  let $\delta_{i+1}$ be the maximum distance to $u$ of a vertex  sharing a facet with some vertex  at distance $\delta_i+1$ from $u$, and let $F_{i+1}$ be that facet.
\end{itemize}

We now stratify the vertices of $P$ according to the distances $\delta_1,\delta_2,\dots,\delta_k$ so obtained. 
Observe that $\delta_k$ is the diameter of $P$. By convention, we let $\delta_0=-1$:
\[
V_i:=\{v\in \vert(P) : \dist(u,v) \in (\delta_{i-1}, \delta_{i}]\}.
\]
We call a facet $F$ of $P$ \defn{active in $V_i$} if it contains a vertex of $V_i$. The crucial property that our stratification has is that no facet of $P$ is active in more than two $V_i$'s. Indeed, each facet is active only in $V_i$'s with consecutive values of $i$, but a facet intersecting $V_i$, $V_{i+1}$ and $V_{i+2}$ would contradict the choice of the facet $F_{i+1}$. In particular, if $n_i$ denotes the number of facets active in $V_i$ we have 
\[
\sum_{i=1}^k n_i \leq 2n.
\]

Since each $F_i$ has vertices with distances to $u$ ranging from at least $\delta_{i-1}+1$ to $\delta_i$, 
we have that $\diam(F_i)\geq \delta_{i} - \delta_{i-1} -1$. Even more,
let $Q_i$, $i=1,\dots,k$ be the polyhedron obtained by removing from the facet-definition of $F_i$ the equations of facets of $P$ that are not active in $V_i$ (which may exist since $F_i$ may have vertices in $V_{i-1}$). By an argument similar to the one used for the polyhedron $Q$ of the previous proof, $Q_i$ has still diameter at least $\delta_{i} - \delta_{i-1} -1$. But, by inductive hypothesis, we also have that the diameter of $Q_i$ is at most  $2^{d-4} (n_i-1)$, since it has dimension $d-1$ and at most $n_i-1$ facets.
Putting all this together we get the following bound for the diameter  $\delta_k$ of $P$:

\begin{eqnarray*}
\delta_k &=& \sum_{i=1}^k (\delta_i - \delta_{i-1} -1)  +(k-1) \\
&< & \sum_{i=1}^k 2^{d-4} (n_i - 1) + k \\
&= & 2^{d-4} \sum_i n_i  - k (2^{d-4} -1)
\leq 2^{d-3} n.
\end{eqnarray*}

\end{proof}

\begin{remark}
\label{remark:limitsofabstraction}
It has been pointed out repeatedly that the proofs of Theorems~\ref{theorem:quasipolynomial} and~\ref{theorem:linear-in-fixed-d} use only very limited properties of graphs of polytopes. For example, \given{Victor }Klee and \given{Peter }Kleinschmidt (see Section 7.7 in~\cite{Klee:The-d-step-conjecture}) show that Theorem~\ref{theorem:linear-in-fixed-d} holds for the ridge-graphs of all pure simplicial complexes, and more general objects. Recently, Eisenbrand et al.~\cite{Eisenbrand:Diameter-of-Polyhedra} 
have shown the following similar result:
\end{remark}

\begin{theorem}[Eisenbrand et al.~\cite{Eisenbrand:Diameter-of-Polyhedra}]
\label{theorem:eisenbrand}
Let $G$ be a graph whose vertices are certain  subsets of size $d$ of the $n$-element set $\{1,\dots,n\}$. Assume that for every pair of vertices $u$ and $v$ in $G$ there is a path from $u$ to $v$ using only vertices that contain $u\cap v$.  Then:
\begin{enumerate}
\item $\diam(G)\leq n^{1+\log d}$.
\item $\diam(G)\leq n 2^{d-1}$.
\end{enumerate}
\end{theorem}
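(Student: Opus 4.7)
The plan is to mirror the two classical proofs---Kalai-Kleitman for part~(1) and Larman's for part~(2)---in the abstract setting. The role of a facet of a polytope is played by an element $i \in [n]$, and ``vertex $v$ is incident to facet $F$'' is replaced by $i \in v$. The one geometric input that really needs abstracting is the fact that faces of a polytope are themselves polytopes; the analogous statement here is that, for each $i \in [n]$, the star $\operatorname{st}(i) := \{w \in G : i \in w\}$, viewed as a graph of $(d-1)$-subsets of $[n]\setminus\{i\}$, inherits the hypothesis of the theorem. Indeed, if $a,b \in \operatorname{st}(i)$ then $i \in a\cap b$, so any $G$-path from $a$ to $b$ through vertices containing $a\cap b$ automatically stays inside $\operatorname{st}(i)$; in particular, each $\operatorname{st}(i)$ is connected.

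For part~(1), let $f(n,d)$ denote the supremum of $\diam(G)$ over all admissible graphs with parameters $(n,d)$. Fix $u,v \in G$ and, for $k \geq 0$, set $S_k(u) := \bigcup_{w\in B_k(u)} w$, where $B_k(u)$ is the ball of radius $k$ around $u$. Let $k_u$ be maximal with $|S_{k_u}(u)| \leq n/2$, and define $k_v$ symmetrically. Then $|S_{k_u+1}(u)| + |S_{k_v+1}(v)| > n$, so some element $i$ lies in both; pick $u' \in B_{k_u+1}(u)$ and $v' \in B_{k_v+1}(v)$ with $i \in u'\cap v'$. Routing the path from $u$ to $v$ through $u'$, across $\operatorname{st}(i)$ from $u'$ to $v'$, and then to $v$ yields
\[
\dist_G(u,v) \leq (k_u+1) + f(n-1,d-1) + (k_v+1).
\]
Combined with the bounds $k_u,k_v \leq f(\lfloor n/2\rfloor, d)$ discussed below, this is precisely the Kalai-Kleitman recursion of Theorem~\ref{theorem:quasipolynomial}, yielding $f(n,d) \leq n^{1+\log d}$; the base case $f(n,1) \leq n-1$ is just connectedness.

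For part~(2), I would imitate Larman's stratification (Theorem~\ref{theorem:linear-in-fixed-d}). Fix $u \in G$ and build, inductively, a strictly increasing sequence $\delta_0 = -1 < \delta_1 < \delta_2 < \cdots$ together with elements $j_1,j_2,\ldots \in [n]$, where $\delta_{i+1}$ is the maximum distance from $u$ to any vertex containing some element $j_{i+1}$ shared with a vertex at distance $\delta_i + 1$. Partition the vertex set into layers $V_i := \{w : \delta_{i-1} < \dist_G(u,w) \leq \delta_i\}$. The crucial lemma is that no element of $[n]$ is active in three distinct layers: if $a \in V_i$ and $b \in V_{i+2}$ both contain $j$, the connectedness of $\operatorname{st}(j)$ produces an $\operatorname{st}(j)$-path from $a$ to $b$ along which $\dist_G(u,\cdot)$ changes by at most one per step, so some vertex on the path sits at distance exactly $\delta_i+1$ from $u$---contradicting the choice of $\delta_{i+1}$. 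Hence $\sum_i n_i \leq 2n$, where $n_i$ is the number of elements active in $V_i$. Bounding each gap $\delta_i - \delta_{i-1} - 1$ by the induction hypothesis applied to a sub-star of $\operatorname{st}(j_i)$ with parameters $(n_i-1, d-1)$ and summing produces the announced bound $\delta_k \leq n\cdot 2^{d-1}$.

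The main obstacle, shared by both parts, is certifying that the subgraph one recurses on actually satisfies the hypothesis of the theorem. For the stars $\operatorname{st}(i)$ this is free, as noted. However, to conclude $k_u \leq f(\lfloor n/2\rfloor, d)$ in part~(1), and likewise to apply induction within each layer in part~(2), one must show that the hypothesis descends to an appropriate subgraph living over a universe of size roughly $n/2$ (respectively $n_i$). Direct restriction to the ball $B_{k_u}(u)$ does not obviously inherit the hypothesis: a $G$-path from $a$ to $b$ through vertices containing $a\cap b$ may wander out of the ball. The fix---analogous to the auxiliary polyhedron $Q$ in Kalai-Kleitman---is to recurse on the enlarged subgraph of all $w \in G$ with $w \subseteq S_{k_u}(u)$, and verify that distances from $u$ to vertices of $B_{k_u}(u)$ are unchanged by this enlargement. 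Working out this inheritance precisely is where the abstract version demands strictly more care than its polytope counterparts; once it is in place both recursions close on $(n,d)$ and deliver the two stated bounds.
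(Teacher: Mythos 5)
Your high-level plan is the right one --- this theorem is stated in the thesis without proof, quoting Eisenbrand et al., and their argument is indeed an adaptation of the Kalai--Kleitman and Larman proofs in which ``facet'' becomes ``symbol $i\in[n]$'' and the star $\operatorname{st}(i)$ plays the role of a facet of the polytope. But the step you explicitly leave open is not a technicality; it is the whole difficulty, and the patch you propose does not close it. To conclude $k_u\leq f(\lfloor n/2\rfloor,d)$ you must exhibit an \emph{admissible instance} of the theorem, on a ground set of size at most $n/2$, whose diameter is at least $k_u$. Your candidate, the family $G'=\{w\in G : w\subseteq S_{k_u}(u)\}$, has the right ground set, but there is no reason it satisfies the hypothesis: for $a,b\in G'$ the path through vertices containing $a\cap b$ is only guaranteed in $G$, and its intermediate vertices may use symbols outside $S_{k_u}(u)$, hence lie outside $G'$ --- exactly the failure you noted for the ball $B_{k_u}(u)$. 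Checking that ``distances from $u$ are unchanged'' is beside the point; what is missing is admissibility of the restricted object, and the polyhedral move you are imitating (dropping the inequalities of unreached facets to form the auxiliary polyhedron $Q$) has no counterpart here, since in the abstract setting one can only delete vertices, never ``relax constraints.'' The same gap reappears in your part~(2) at the phrase ``induction hypothesis applied to a sub-star of $\operatorname{st}(j_i)$ with parameters $(n_i-1,d-1)$'': the vertices of $V_i$ containing $j_i$, viewed as a graph, again need not satisfy the hypothesis, because the connecting path inside $\operatorname{st}(j_i)$ may leave the layer $V_i$.

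The way Eisenbrand et al. resolve this is to change what the induction is carried on. From the graph hypothesis one first derives a \emph{layer} property: fixing the base vertex $u$ and stratifying by $\dist(u,\cdot)$, for any vertices $A,B$ and any integer $k$ between $\dist(u,A)$ and $\dist(u,B)$ there is a vertex at distance exactly $k$ from $u$ containing $A\cap B$ (discrete intermediate values along your connecting path). The quantity one bounds recursively is the maximal number of layers of such ``connected layer families,'' not the diameter of admissible graphs. This formulation inherits trivially under the two restrictions the proof needs: a consecutive block of layers is again a connected layer family (the witnesses live in intermediate layers, which are kept), which yields $k_u\leq f(\lfloor n/2\rfloor,d)$; and the vertices containing a fixed symbol $i$, restricted to a block of layers and with $i$ deleted, form a connected layer family of $(d-1)$-sets on $n-1$ symbols, which yields both the middle term of the Kalai--Kleitman recursion and the per-layer estimate in the Larman argument (your ``no symbol active in three layers'' lemma is fine and survives unchanged). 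Without this reformulation --- or some other argument making the restricted objects admissible --- neither of your two recursions actually closes, so as written the proposal has a genuine gap at its central step.
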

\begin{remark}
Kalai remarks that the first bound $n^{1+\log d}$ holds for unbounded polyhedra.
\end{remark}
The novelty in~\cite{Eisenbrand:Diameter-of-Polyhedra} is that there are graphs with the hypotheses of Theorem~\ref{theorem:eisenbrand} and with $\diam(G)\geq c n^{3/2}$, for a certain constant $c$. It is not clear whether this is support against the Hirsch Conjecture or it simply indicates that the arguments in the proofs of Theorems~\ref{theorem:quasipolynomial} and~\ref{theorem:linear-in-fixed-d} do not take advantage of  additional properties that graphs of polytopes have and which may prevent their diameters from growing. For example, in Eisenbrand et al.'s setting for $d=1$, any connected graph with vertex set $\{1,\dots,n\}$ is valid. (Slightly different formulations for graphs of abstract polytopes are studied in, e.g.,~\cite{Adler:Existence-of-A-avoiding},~\cite{Adler:Maximum-diameter}, and~\cite{Lawrence-Jr.:Abstract-polytopes}.)

Since the Hirsch Conjecture is strongly motivated by the simplex algorithm of linear programming, it is natural to ask questions about the number of iterations needed under particular pivot rules. Kalai, and independently, Matou\v{s}ek, Sharir and Welzl (see~\cite{Kalai:A-subexponential-randomized} and~\cite{Matousek:A-subexponential-bound}, respectively) proved the existence of randomized pivot rules for the simplex method with subexponential running time.
\begin{theorem}[Kalai~\cite{Kalai:A-subexponential-randomized}; Matou\v{s}ek, Sharir, and Welzl~\cite{Matousek:A-subexponential-bound}]
\label{theorem:randomizedpivot}
There exist randomized algorithms where the expected number of arithmetic operations needed in the worst case by a linear programming problem with $d$ variables and $n$ inequalities is at most $\exp(K\sqrt{d \log n})$, where $K$ is a fixed constant.
\end{theorem}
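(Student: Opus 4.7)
The plan is to describe and analyze the \textsc{Random-Facet} algorithm of Kalai (with the equivalent \textsc{Subex-Lp} analysis of Matou\v{s}ek, Sharir, and Welzl in the LP-type framework), since this is the known construction achieving the stated subexponential bound. I would begin with a preprocessing reduction: using the Phase~I trick described in Section~\ref{section:polytopesandLP}, one may assume the LP is given in the form of maximizing a linear functional $\xi$ over a simple $d$-polytope $P$ with $n$ facets together with an initial vertex $v_0$ of $P$. One may further assume $\xi$ is generic, so that the improving neighbors at every vertex are totally ordered and $P$ has a unique $\xi$-maximizer $v^*$.

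Next I would define \textsc{Random-Facet}$(P, v)$ recursively as follows. If $v$ has no $\xi$-improving neighbor in $G(P)$, return $v$. Otherwise, let $\mathcal{F}(v)$ be the set of $d$ facets of $P$ containing $v$; pick $F \in \mathcal{F}(v)$ uniformly at random; recursively compute $v' = \textsc{Random-Facet}(F, v)$, which is an LP of one smaller dimension with at most $n-1$ facets. If $v'$ maximizes $\xi$ on $P$ (i.e., has no improving neighbor in $G(P)$), return it; otherwise pivot along the unique improving edge of $P$ leaving $v'$ to obtain a new vertex $w$, and return $\textsc{Random-Facet}(P, w)$. Correctness is immediate because at every outer recursive call the starting vertex has strictly larger $\xi$-value, so the algorithm terminates at the unique optimum.

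The main work is to bound the expected number $T(n,d)$ of pivot steps in the worst case. The crucial combinatorial observation, which Kalai and Matou\v{s}ek--Sharir--Welzl exploit, is that the random facet $F$ can equivalently be chosen by first computing the (random) rank of the facet of $v^*$ that one would discover when walking along the induced hidden dimension, and then conditioning on it. Let $k$ denote the random index (between $1$ and $d$) measuring how many of the $d$ facets through $v$ still contain $v^*$ in the subproblem at the moment of the recursive call. Averaging over this rank yields a recurrence of the Matou\v{s}ek--Sharir--Welzl type,
\begin{equation*}
T(n,d) \;\leq\; T(n-1,d-1) \;+\; 1 \;+\; \tfrac{1}{d}\sum_{k=1}^{d} T(n-1,\, d-k+1),
\end{equation*}
with base cases $T(n,0)=0$ and $T(0,d)=0$. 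I would verify this recurrence by arguing that, conditioned on the random choice of $F$, the second recursive call takes place on an LP with at most $n-1$ facets in which the dimension has effectively dropped by $k-1$ (because $k-1$ of the original constraints incident to $v$ are now known to be tight at the optimum and may be treated as equalities).

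The final step is to solve the recurrence. I would prove by induction on $n+d$ that $T(n,d) \leq \exp\!\big(K\sqrt{d\ln n}\big)$ for a universal constant $K$ (for instance $K=2$ works). The inductive step reduces to showing that the function $f(n,d) = \exp(K\sqrt{d\ln n})$ satisfies $f(n-1,d-1) + 1 + \frac{1}{d}\sum_{k=1}^{d} f(n-1,d-k+1) \leq f(n,d)$; this follows from a standard concavity estimate applied to the sum, together with the inequality $\sqrt{d\ln n} - \sqrt{(d-1)\ln(n-1)} \geq c/\sqrt{d\ln n}$ for suitable $c>0$. The hard part, and where I would need to be most careful, is exactly this recurrence analysis: one must carry out the bounding of the average $\frac{1}{d}\sum_k f(n-1,d-k+1)$ tightly enough to absorb the $+1$ and the $f(n-1,d-1)$ terms inside the gap between $f(n-1,\cdot)$ and $f(n,d)$, and it is precisely the choice of the ansatz $\exp(K\sqrt{d\ln n})$ (rather than anything polynomial in $d$ or purely exponential in $\sqrt{d}$) that makes both sides balance. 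Since each pivot step costs $O(nd)$ arithmetic operations, multiplying $T(n,d)$ by $O(nd)$ keeps the expected count within $\exp(K'\sqrt{d\ln n})$ after adjusting the constant, which yields the theorem.
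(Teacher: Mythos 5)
The paper itself does not prove this statement: Theorem~\ref{theorem:randomizedpivot} is quoted as a known result of Kalai and of Matou\v{s}ek--Sharir--Welzl, with no argument given, so there is no internal proof to compare against. Your proposal does identify the right object (the \textsc{Random-Facet} rule, equivalently the MSW \textsc{Subex} algorithm) and the right overall strategy (set up a recurrence for the expected number of pivots and verify the ansatz $\exp(K\sqrt{d\ln n})$ by induction), which is indeed how the cited results are proved.

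However, there is a genuine gap exactly at the heart of the argument: the derivation of the recurrence. You claim that, conditioned on the rank $k$ of the randomly chosen facet, the continuation after the pivot is an LP of dimension $d-k+1$ ``because $k-1$ of the original constraints incident to $v$ are now known to be tight at the optimum and may be treated as equalities.'' That is false for this algorithm: solving the subproblem on $F$ and pivoting never certifies that any constraint is tight at the global optimum (if it did, $d-1$ such certificates would essentially solve the problem outright). In Kalai's primal analysis the quantity that drops is not the dimension but the number of facets that remain reachable: ranking the $d$ facets through $v$ by their maxima, if the chosen facet is the $i$-th lowest, then after the improving pivot those $i$ facets can never be visited again, giving a recurrence of the shape $f(d,n)\le f(d-1,n-1)+1+\tfrac1d\sum_{i=1}^d f(d,n-i)$, with the facet count, not $d$, decreasing. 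In the MSW dual (LP-type) analysis the drop is in the ``hidden dimension,'' the random element is drawn from the $n-d$ constraints outside the current basis (not from the $d$ facets through $v$), and the expensive second recursive call occurs only with probability at most $k/(n-d)$; the hidden-dimension decrease comes from the fact that a violated constraint, once forced into the basis, belongs to the optimal basis of every subsequent subproblem on that branch. Your recurrence is a hybrid of the two that is asserted rather than established, and since the entire subexponential bound rests on having a valid recurrence of this kind (the closing induction is routine once one is in hand), the proposal as written does not constitute a proof. The final reduction from pivot count to arithmetic operations also needs more care than ``multiply by $O(nd)$'' in the regime $\log n \gg d$, but that is a secondary issue compared with the recurrence step.
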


Also, though polynomial bounds on the diameter of all polytopes are unknown, there are results saying that random polytopes have polynomial diameter. One way to formalize this is by considering perturbations of the facet-defining inequalities of the given polytope. Specifically, let $P$ be the feasibility polyhedron
\[
P = \{x \in \R^d \mid \langle a_i, x\rangle \leq b, (i=1,\ldots, n)\}
\]
of a certain linear program. Then $P$ has dimension $d$ and (at most) $n$ facets. If we replace the vectors $a_i \in \R^d$ and $b \in \R^n$ with independent Gaussian random vectors with means $\mu_i=a_i$ and $\mu=b$ (respectively), and standard deviations $\sigma(\max_i \|(\mu_i,\mu)\|)$ we say that we have \defn{perturbed randomly within a parameter $\sigma$}.
In~\cite{Spielman:WhySimplexUsually}, Spielman and Teng proved that the expected diameter of a linear program that is perturbed within a parameter $\sigma$ is polynomial in $d$, $n$, and $\sigma^{-1}$. 
In~\cite{Vershynin:BeyondHirsch}, \given{Roman }Vershynin improved the bound to be polylogarithmic in the number $n$ of inequality constraints.
\begin{theorem}[Vershynin~\cite{Vershynin:BeyondHirsch}]
\label{theorem:vershynin}
If the original linear program is perturbed randomly within a parameter $\sigma$, the expected diameter is $O(d^9 \sigma^{-4} \log^4 d \, \log^2 n)$.
\end{theorem}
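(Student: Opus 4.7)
The plan is to follow Vershynin's two-phase interpolation strategy, which refines the smoothed analysis of Spielman–Teng. First, I would reduce bounding the diameter to bounding the expected size of the \emph{shadow} of $P$ along a suitable two-dimensional projection. Recall that in the shadow vertex simplex method, given two vertices $u$ and $v$ of $P$ and two linear functionals $\xi_u$, $\xi_v$ maximized at $u$ and $v$ respectively, one projects $P$ onto the plane $\operatorname{span}(\xi_u,\xi_v)$; the image is a polygon whose edges lift to a path in $G(P)$ from $u$ to $v$. Hence it suffices to bound the expected number of vertices of such a shadow polygon when the inequalities of $P$ are Gaussian-perturbed within parameter $\sigma$. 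Spielman and Teng (see~\cite{Spielman:WhySimplexUsually}) proved a bound of the form $\mathrm{poly}(d,n,\sigma^{-1})$ on this quantity; a careful accounting of their argument gives a polynomial dependence on $n$, which is what we must eliminate.

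Second, to remove the polynomial dependence on $n$ and replace it by a polylogarithmic factor, I would not try to connect $u$ to $v$ by a single shadow path. Instead, I would choose a \emph{sequence of intermediate objective functions} $\xi_0=\xi_u,\xi_1,\ldots,\xi_L=\xi_v$, obtained by interpolating between $\xi_u$ and $\xi_v$ along a curve in the space of linear functionals, and build the path as a concatenation of shadow paths between successive maximizers. The curve should be chosen so that the distribution of each $\xi_i$, conditional on the perturbed inequalities, is sufficiently close to Gaussian for the Spielman–Teng-style estimate to apply \emph{locally}, while the total number $L$ of segments is small. The key technical ingredient is a Gaussian concentration estimate bounding the probability that a single shadow segment has many vertices in terms of $\sigma$ and $d$ only, together with a union bound that costs only $\log n$ factors when the interpolation is discretized appropriately.

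Third, I would assemble the bound. The number of edges contributed by each of the $L$ pieces is, in expectation, at most $O(d^{c_1}\sigma^{-c_2}\log^{c_3} n)$ for explicit small constants; taking $L=O(\log n)$ interpolation steps and optimizing the constants recovers the stated bound $O(d^{9}\sigma^{-4}\log^4 d\,\log^{2} n)$. The exponents on $d$ and $\sigma^{-1}$ come from tracking, through the perturbation analysis, how the angular spacing of consecutive edges of the shadow polygon depends on the smallest singular values of $d\times d$ submatrices of the constraint matrix, which is controlled by standard Gaussian anti-concentration (smallest singular value) estimates.

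The hard part will be the local shadow-size estimate and the way its parameters combine when many shadows are chained together. Specifically, the challenge is to prove that along the interpolating curve of objectives, the conditional distribution of the normals defining each intermediate optimal vertex remains ``Gaussian enough'' so that the same quantitative control applies at every stage, and that the dependence of these estimates on $d$ and $\sigma$ is multiplicative rather than additive across the $O(\log n)$ pieces. This requires a delicate decoupling argument to handle the correlations introduced by reusing the same random inequalities across all pieces, and is where Vershynin's proof departs substantially from the one-shot analysis of Spielman–Teng.
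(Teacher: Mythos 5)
First, a point of reference: the dissertation does not prove this theorem at all; it is quoted as a survey statement with a citation to Vershynin, and the only commentary given is that Vershynin's proof is algorithmic (the shadow-vertex simplex method actually finds the short path). So your proposal has to be measured against Vershynin's argument in~\cite{Vershynin:BeyondHirsch}, not against anything in this text. Your opening reduction -- bound the expected number of vertices of the two-dimensional shadow of the perturbed polytope on the plane spanned by functionals maximized at $u$ and at $v$ -- is indeed the common starting point of Spielman--Teng and Vershynin, and is fine.

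The genuine gap is in how you propose to get the dependence on $n$ down to polylogarithmic. In the Spielman--Teng analysis~\cite{Spielman:WhySimplexUsually} the polynomial dependence on $n$ sits inside the shadow-size bound itself: each of the $n$ smoothed constraints can contribute an edge to the shadow polygon, and their estimate on the expected number of shadow edges is polynomial in $n$ as well as in $d$ and $\sigma^{-1}$. Consequently, chaining $L=O(\log n)$ shadow segments, each controlled ``locally'' by a Spielman--Teng-style estimate, still yields a bound that is polynomial in $n$; no union bound over a discretized family of interpolated objectives, and no ``careful accounting,'' removes that factor. The number of phases was never the source of the $\mathrm{poly}(n)$ loss, so shrinking it to $O(\log n)$ does not help. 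What Vershynin actually does is different in both of the places where your sketch is vague: (a) he proves a \emph{new} shadow-size estimate, for shadows onto planes in suitably random position, whose dependence on $n$ is only polylogarithmic (this is where the $\log$ factors in the theorem originate), and (b) he replaces the interpolation-style Phase~I by adding $d$ additional smoothed random constraints that form an initial vertex in random position, which makes the shadow plane essentially independent of the original constraint data and thereby dissolves exactly the conditioning problem you defer to a ``delicate decoupling argument.'' Your proposal supplies no mechanism for either ingredient: the local estimate you plan to invoke does not have the claimed $n$-dependence, and the correlations created by reusing the same random constraints across your $O(\log n)$ dependent segments are left unresolved. As written, the argument would prove at best a $\mathrm{poly}(n,d,\sigma^{-1})$ bound, i.e., Spielman--Teng again, not the stated $O(d^9 \sigma^{-4}\log^4 d\,\log^2 n)$.
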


It is worth noticing that this result is not only structural. Its proof  shows that the simplex method can find a path of that length in the perturbed polyhedron.

\subsection{A continuous Hirsch Conjecture}\label{section:continuous}

Here we summarize some recent work of Deza, Terlaky, and Zinchenko (see~\cite{Deza:Central-path},~\cite{Deza:The-continuous-d-step}, and~\cite{Deza:Curvature}) in which they propose interesting continuous analogues of the Hirsch Conjecture and the $d$-step Conjecture. The analogy comes from looking at the central path method for linear and convex optimization. As in the simplex method, the idea is to move from a feasible point to another feasible point on which the given objective linear functional is improved. In contrast to the simplex method, where the path travels from vertex to neighboring vertex along the graph of the feasibility polyhedron $P$, this method follows a certain curve through the strict  interior of the polytope.

Inspired by work of Dedieu, Malajovich, and Shub (see~\cite{Dedieu}), Deza et al. consider the total curvature of a central path on a polytope to be an analogue of the graph-diameter, and show lower bounds that can be interpreted as ``continuous Hirsch-sharpness,'' as well as a result analogue to the equivalence between the Hirsch Conjecture and the $d$-step Conjecture.
More precisely, to each linear program,
\[
\text{ Minimize } \xi(x), \text{ subject to } Bx = a \text{ and } x \geq 0,
\]
the method associates a \emph{(primal) central path}  $\gamma_\xi : [0, \beta) \rightarrow \R^d$ which is an analytic curve through the interior of the feasible region and such that $\gamma_\xi(0)$ is an optimal solution of the problem. The central path is well-defined and unique even if the program has more than one optimal solution, but its definition is implicit, so that there is no direct way of computing $\gamma_\xi(0)$. To get to $\gamma_\xi(0)$, one starts at any feasible solution and tries to follow a curve that approaches more and more the central path, using for it certain barrier functions. (Barrier functions play a role similar to the choice of pivot rule in the simplex method. The standard barrier function is the logarithmic function $f(x) = -\sum_{i=1}^n \ln(A_i x - b_i)$.)  For further description of the method, refer to~\cite{Boyd:ConvexOptimization} and~\cite{Renegar:A-Mathematical-View}.

In practice, one does not follow the curve exactly. Rather, one does Newton-like iteration steps trying not to get too far from the actual curve. How much can one improve in a single step is related to the curvature of the central path, if the path is rather straight one can do long steps without deviating too far from it. Otherwise, one needs to use shorter steps. Thus, the \defn{total curvature} $\lambda_\xi(P)$ of the central path, defined in the usual differential-geometric way, is an important parameter which can be considered a continuous analogue of the diameter of the polytope $P$, or at least of the maximum distance from 
any vertex to a vertex maximizing the functional $\xi$.

Theorem~\ref{theorem:arrangement-curvature} below yields an upper bound that is in $O(n^d)$ for $\Lambda(n,d)$, but it had been conjectured that $\lambda_\xi(P)$ is bounded by a constant for each dimension $d$ and that it grows at most linearly with varying $d$. Deza et al.~have disproved both statements: in~\cite{Deza:Central-path}, they constructed polytopes for which $\lambda_\xi(P)$ grows exponentially with $d$. More strongly, in~\cite{Deza:Curvature} they construct a family of polytopes that show that the total curvature $\lambda_\xi$ cannot be bounded only in terms of $d$. To state this result, let $\Lambda(n,d)$ denote the largest total curvature of the central path over all polytopes $P$ defined by $n$ inequalities in dimension $d$ and over all linear objectives $\xi$. They show:

\begin{theorem}[\cite{Deza:Curvature}]\label{theorem:continuous-lower-bound}
For every fixed dimension $d\geq 2$, $\liminf_{n \rightarrow \infty} \frac{\Lambda(n,d)}{n} \geq \pi$.
\end{theorem}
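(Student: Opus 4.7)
The goal is to exhibit, for each large $n$ and fixed $d \ge 2$, a polytope $P_n \subset \R^d$ with $n$ facets and a linear objective $\xi_n$ whose central path $\gamma_{\xi_n}$ has total curvature $\lambda_{\xi_n}(P_n) \ge \pi n - o(n)$. This suffices since $\Lambda(n,d) \ge \lambda_{\xi_n}(P_n)$ by definition. The natural strategy splits into a two-dimensional construction that drives all of the curvature, followed by a lifting argument to raise the dimension while preserving the curvature accumulated in the plane.

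In dimension $d=2$, the plan is to build a family of polygons $P_n^{(2)}$ whose central paths wind back and forth across a narrow region. Each of the $n$ facets contributes an ``attracting'' logarithmic term $-\mu \log(b_i - \langle a_i, x\rangle)$ to the barrier, and by clever placement we can arrange the facets so that as the barrier parameter $\mu$ decreases along the central path, the dominant pull on the minimizer switches from one facet to the next, forcing the tangent vector of $\gamma_{\xi}$ to swing through close to $\pi$ radians at each transition. Concretely, I would lay out the facets as slight perturbations of the two long sides of a long, thin polygon (say an elongated rectangle), with the perturbations placed alternately on the top and bottom and progressing from one end to the other. The objective $\xi$ points along the long axis. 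A careful analysis using the implicit central-path equation $\nabla \xi + \mu \nabla B(x) = 0$ shows that each pair of ``opposing'' facets induces one hairpin swing of total angle approaching $\pi$, and with $n$ facets arranged this way the cumulative turning angle tends to $\pi n + o(n)$.

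For $d \ge 3$, the lifting is by a simple product construction: set $P_n := P_{n-(d-2)}^{(2)} \times \Delta_{d-2}$, which is a $d$-polytope with $n$ facets (the $n-(d-2)$ vertical facets coming from the 2D construction plus the $d-1$ facets of the simplex factor). Extend $\xi$ by $0$ on the simplex coordinates. Because the log-barrier for a product is the sum of the log-barriers and the objective acts only on the first factor, the minimizer of the barrier-plus-objective problem factors as the product of the central path of $P_{n-(d-2)}^{(2)}$ and the analytic center of $\Delta_{d-2}$. Hence $\gamma_{\xi_n}(P_n)$ is isometric to $\gamma_\xi(P_{n-(d-2)}^{(2)}) \times \{\text{point}\}$, and the total curvature is preserved; we obtain $\lambda_{\xi_n}(P_n) \ge \pi(n-(d-2)) - o(n) = \pi n - o(n)$, yielding the desired $\liminf$.

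The main obstacle is the two-dimensional estimate: total curvature is an integral of an implicitly defined quantity, and it is not a priori obvious that the naive counting of ``one near-$\pi$ turn per facet'' actually adds up. One must verify that the swings at successive facets do not cancel (they accumulate in absolute value because the polygon is designed so the tangent rotates monotonically in short intervals around each facet), and that the contributions of transition regions between facets are negligible. I expect this requires a quantitative estimate on how sharply the central path changes direction as the ``active'' barrier term switches — essentially an asymptotic analysis of the path in a neighborhood of each facet, combined with a uniform bound on the intermediate arcs so that they contribute only $o(n)$ to the integral $\int |\dot\gamma \wedge \ddot\gamma|/|\dot\gamma|^3 \, dt$.
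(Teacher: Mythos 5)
First, a point of comparison: this dissertation does not prove Theorem~\ref{theorem:continuous-lower-bound} at all; it is quoted from Deza, Terlaky, and Zinchenko \cite{Deza:Curvature}, so your proposal can only be judged against that source. Your overall architecture does match theirs: all of the curvature is generated by a planar example whose central path zigzags, and the case of fixed $d\geq 3$ is handled by a trivial lifting in which the objective ignores the extra coordinates. Your product argument is sound, since the log-barrier of $P^{(2)}\times\Delta_{d-2}$ is separable and the path of the product is the planar path times the analytic center of the simplex factor (though note the facet count is off by one: $(n-(d-2))+(d-1)=n+1$, so you should start from a polygon with $n-(d-1)$ facets; this is harmless for a $\liminf$).

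The genuine gap is the planar estimate, which is the entire content of the theorem, and your sketch has two concrete problems there. (i) The bookkeeping does not reach the constant $\pi$: you claim that each \emph{pair} of opposing facets produces one hairpin of angle approaching $\pi$, which with $n$ facets yields only about $\pi n/2$. Since the objective value is strictly monotone along the central path, the unit tangent is confined to an open half-circle of directions, so no single swing can reach $\pi$; to get total curvature $\pi n-o(n)$ you must produce essentially one full near-$\pi$ swing \emph{per facet}, i.e.\ the extremal regime, and your stated per-pair accounting is inconsistent with your claimed total. (ii) ``Slight perturbations of the two long sides of a thin rectangle'' will not force these swings. The mechanism that bends the central path sharply is scale separation in the constraint data: the coefficients of successive constraints must differ by many orders of magnitude so that, over disjoint ranges of the barrier parameter $\mu$, a different single constraint dominates the barrier gradient and redirects the path — this is how the construction in \cite{Deza:Curvature}, like the redundant Klee--Minty constructions in \cite{Deza:Central-path}, is calibrated, and it is where all the quantitative work lies. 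If the perturbations are of comparable size, the barrier contributions of the many near-parallel facets are of the same order at every $\mu$, and the path drifts smoothly down the middle of the strip with total turning $O(1)$, not $\Omega(n)$. You correctly identify that the integral of curvature must be estimated, but without exhibiting explicit constraint data with this scale separation, proving the per-facet turning bound $\pi-o(1)$, and bounding the transition arcs by $o(n)$, the proposal remains a plausible plan rather than a proof.
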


Deza et al.~consider this result a continuous analogue of the existence of Hirsch-sharp polytopes. Motivated by this they pose the following conjecture:

\begin{conjecture}[Continuous Hirsch Conjecture]
$\Lambda(n,d) \in O(n)$. That is, there is a constant $K$ such that $\Lambda(n,d)\leq Kn$ for all $n$ and $d$.
\end{conjecture}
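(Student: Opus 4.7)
The plan is to establish the conjecture in two stages: first, a continuous analogue of the Klee-Walkup reduction (Theorem~\ref{theorem:dstep-hirsch}) cutting the problem down to polyhedra with $n=2d$, and second, a direct curvature bound for this sub-family. Deza, Terlaky, and Zinchenko already observed a continuous $d$-step equivalence, so I would begin by formalising it as $\sup_{n>d} \Lambda(n,d)/n = \sup_d \Lambda(2d,d)/(2d)$. The argument should mirror the two inequalities in the proof of Theorem~\ref{theorem:dstep-hirsch}: passing to a common facet reduces $(n,d)$ to $(n{-}1,d{-}1)$ when $n<2d$, while a wedge-like construction (replacing a facet inequality by two inequalities that pinch the polyhedron along it) reduces arbitrary $(n,d)$ to $(2(n-d),n-d)$. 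The key point is that neither operation increases the total curvature of the central path, since the wedge projects the new central path onto the old one and a facet-restriction only restricts the path. If this reduction works cleanly, the conjecture becomes equivalent to the single numerical bound $\Lambda(2d,d)\le 2Kd$.

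To attack the case $n=2d$, I would parametrise the primal central path $\gamma_\xi(\mu)$ by the barrier parameter $\mu\in(0,\infty)$ and use the identity $\dot\gamma = -H(\gamma)^{-1}\xi$, where $H$ is the Hessian of the logarithmic barrier. Writing the total curvature as $\int_0^\infty \|\ddot\gamma_\perp\|/\|\dot\gamma\|^2\, d\mu$, the integrand becomes a rational function in the slacks $s_i(\mu) := b_i - \langle a_i,\gamma(\mu)\rangle$. The strategy is to dissect the interval $\mu\in(0,\infty)$ into at most $2n$ sub-intervals, each associated with the approach to, and departure from, a single facet $F_i$; then to prove that the curvature accumulated on each sub-interval is at most a universal constant (conjecturally $\pi$, which would match the lower bound of Theorem~\ref{theorem:continuous-lower-bound} exactly).

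The sub-interval bound itself should come from a boundary-layer analysis: when the path approaches facet $F_i$, the Hessian $H$ is dominated by the rank-one term $s_i^{-2}a_ia_i^T$, so a local change of coordinates reduces the trajectory to a planar hyperbolic arc whose total curvature is at most $\pi$. Patching these local pictures together via matched asymptotics, and using the fact that for $n=2d$ at most $2d$ facets can interact with the path, yields the conjectured $O(n)$ bound. A cleaner incarnation of the same idea would be an integral-geometric identity of Crofton type, expressing $\int\|\kappa\|$ as a count, weighted by sign, of tangencies of $\gamma_\xi$ with the facet hyperplanes; transversality and the simplicity assumption would cap the number of such tangencies by $O(n)$.

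The hard part, and the reason the conjecture is open, is the \emph{bulk region}: showing that the central path does not accumulate significant curvature while transitioning between boundary layers. Equivalently, one needs to rule out the path ``twisting'' inside the polyhedron many times while approaching only $2d$ facets. Closing this gap would almost certainly require a monotonicity property along $\gamma_\xi$ that plays the role of a continuous analogue of the Non-revisiting Conjecture (Conjecture~\ref{conjecture:nonrevisiting}) --- for instance, monotonicity of each slack $s_i(\mu)$ after a bounded number of sign changes of $\dot s_i$. Absent such a structural property, even the reduction step is suspect: a wedge could conceivably introduce new oscillations invisible under the projection. I expect that resolving this monotonicity question, rather than the local boundary-layer estimate, is the genuine obstacle between the current lower bound $\pi n$ and a matching linear upper bound.
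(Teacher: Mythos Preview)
The statement you are attempting to prove is a \emph{conjecture}, not a theorem: the paper presents it explicitly as an open problem posed by Deza, Terlaky, and Zinchenko, and offers no proof. There is therefore nothing in the paper to compare your argument against. Your proposal is not a proof but a research outline, and you yourself acknowledge this when you write ``the reason the conjecture is open'' and identify the bulk-region monotonicity question as ``the genuine obstacle.''

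That said, a few remarks on the outline itself. The reduction step you sketch is already in the literature: the paper cites the result of Deza, Terlaky, and Zinchenko that the Continuous Hirsch Conjecture and the Continuous $d$-step Conjecture are equivalent, so your first stage is not new content but a restatement of their theorem. Your second stage --- the boundary-layer decomposition with a $\pi$-per-facet local bound --- is plausible heuristics, but as you correctly diagnose, it does not control the transitions between layers. The analogy with the Non-revisiting Conjecture is suggestive, yet the discrete Non-revisiting Conjecture is itself equivalent to the (now disproved) Hirsch Conjecture, so invoking a continuous analogue of it as the missing ingredient is circular unless you can supply an independent reason why central paths should satisfy such a monotonicity property. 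In short: your proposal is an honest sketch of where the difficulty lies, but it does not constitute a proof, and the paper makes no claim that one exists.
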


Theorem~\ref{theorem:continuous-lower-bound} says that if the Continuous Hirsch Conjecture is true, then it is (asymptotically) tight.
Deza et al.~also conjecture a continuous version of the $d$-step Conjecture, and show it to be equivalent to the Continuous Hirsch Conjecture, thus providing an analogue of Theorem~\ref{theorem:dstep-hirsch}:

\begin{conjecture}[Continuous $d$-step Conjecture]
The function $\Lambda(2d,d)$ grows linearly in its input. That is to say, $\Lambda(2d,d)$ is $O(d)$.
\end{conjecture}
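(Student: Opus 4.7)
The plan is to prove $\Lambda(2d,d) = O(d)$ directly, rather than by reducing to the Continuous Hirsch Conjecture. First I would work in primal-dual coordinates: write the central path $\gamma_\xi(\mu) = (x(\mu), s(\mu), y(\mu))$, where $\mu > 0$ is the barrier parameter, $s = b - Ax \in \R^{2d}$ is the slack vector, and the path is determined by $Ax + s = b$, $A^T y = \xi$, and $x_i s_i = \mu$ for each $i \in [2d]$. Differentiating these equations along $\mu$ yields explicit formulas for $\dot\gamma$ and $\ddot\gamma$ in terms of the log-barrier Hessian $H(x) = \sum_{i=1}^{2d} s_i^{-2}\, a_i a_i^T$, so the total curvature $\lambda_\xi(P) = \int_0^\infty \kappa(\mu)\, \|\dot x(\mu)\|\, d\mu$ becomes an integral depending only on $A$, $b$, $\xi$, and the trajectory of the slack vector $s(\mu)$.

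The key step is to exploit the rigidity forced by $n = 2d$. I would decompose the central path into \emph{phases}, indexed by a subset $I(\mu) \subseteq [2d]$ consisting of the indices whose slacks $s_i(\mu)$ are within a constant factor of $\max_j s_j(\mu)$ (equivalently, the indices not close to being active). Within each phase the Hessian $H$ is, up to a bounded conditioning factor, a multiple of the Gram matrix of the rows $a_i$ with $i \in I(\mu)$; a self-concordance estimate should then bound the contribution of the phase to the total curvature by a universal constant $K_0$ independent of $d$. Since $|I(\mu)|$ is between $d$ and $2d$ at a generic $\mu$ and each phase transition corresponds to one slack crossing a threshold, I would then argue that the number of distinct phases encountered is at most $2d$ by an analytic-sliding argument that tracks $s(\mu)$ monotonically in $\log\mu$, rather than by invoking any discrete diameter bound.

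The hard part will be this phase-count step. The lower bound of Deza, Terlaky, and Zinchenko (Theorem~\ref{theorem:continuous-lower-bound}) shows that each phase can contribute up to nearly $\pi$ to the total curvature, so a linear bound in $d$ is the best one can hope for and forces the number of phases to be $O(d)$. The principal obstacle is that, unlike in the discrete simplex setting where each facet-visit is a combinatorial event, a single slack $s_i(\mu)$ is not monotone in $\mu$ in general, so an index can enter and leave the ``active'' regime repeatedly; controlling these oscillations and showing that the total number of phase entries is still $O(d)$ when $n = 2d$, without smuggling in a combinatorial Hirsch-type bound, appears to demand a new structural invariant of $n = 2d$ polytopes tailored to interior-point rather than edge-based methods.
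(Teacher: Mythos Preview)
This statement is labeled a \emph{conjecture} in the paper, not a theorem: the paper does not prove it and indeed presents it as open, attributing the formulation to Deza, Terlaky, and Zinchenko and noting only that it is equivalent to the Continuous Hirsch Conjecture. So there is no proof in the paper to compare your proposal against.

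As for the proposal itself, you have correctly identified that it is not a proof but a research outline with an explicit gap. Your decomposition into phases and the per-phase curvature bound via self-concordance are plausible heuristics, but the entire argument rests on the claim that the number of phases is $O(d)$ when $n=2d$, and you concede in your final paragraph that you do not know how to establish this: slacks are not monotone in $\mu$, indices can re-enter the active set, and you have no mechanism to bound the total number of transitions without appealing to something Hirsch-like. That is precisely the content of the conjecture, so the proposal as written does not reduce the problem. If a direct analytic bound on phase count for $n=2d$ polytopes were available, it would resolve both continuous conjectures at once; absent that, what you have is a reformulation rather than progress toward a proof.
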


\begin{theorem}[\cite{Deza:The-continuous-d-step}]
The Continuous Hirsch Conjecture is equivalent to the Continuous $d$-step Conjecture. That is, if $\Lambda(d,2d) \in O(d)$ for all $d$, then $\Lambda(n,d)\in O(n)$ for all $d$ and $n$.
\end{theorem}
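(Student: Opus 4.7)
The plan is to follow the structural template of Klee and Walkup's proof (Theorem~\ref{theorem:dstep-hirsch}) that the discrete Hirsch and $d$-step Conjectures are equivalent, transporting each ingredient to the continuous setting. One direction is immediate: if $\Lambda(n,d)\le Kn$ for all pairs $(n,d)$, then substituting $n=2d$ gives $\Lambda(2d,d)\le 2Kd\in O(d)$, so the Continuous $d$-step Conjecture follows from the Continuous Hirsch Conjecture.

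For the nontrivial direction, assume $\Lambda(2d,d)\le Kd$ for a universal constant $K$. The goal is to prove a pair of inequalities analogous to~\eqref{equation:inequalities2} and~\eqref{equation:inequalities} from the proof of Theorem~\ref{theorem:dstep-hirsch}, namely
\begin{equation*}
\Lambda(n,d)\ \le\ \Lambda(n-1,d-1)\quad\text{when } n<2d,
\end{equation*}
and
\begin{equation*}
\Lambda(n,d)\ \le\ \Lambda(n+1,d+1)\quad\text{when } n\ge 2d.
\end{equation*}
Granting these, an inductive chain reduces any pair $(n,d)$ to the ``balanced'' pair $(2(n-d),n-d)$: when $n<2d$ we repeatedly restrict to a facet, and when $n>2d$ we repeatedly apply a wedge-type operation. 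In either case we conclude
\begin{equation*}
\Lambda(n,d)\ \le\ \Lambda\bigl(2(n-d),\,n-d\bigr)\ \le\ K(n-d)\ \le\ Kn,
\end{equation*}
which is the Continuous Hirsch Conjecture.

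The two inequalities themselves require continuous analogues of the two geometric operations used by Klee and Walkup. For the facet restriction, the observation in the discrete case is that when $n<2d$, any two vertices share a common facet $F$, and any path on $F$ is a path on $P$. In the continuous setting, the analogue is to take the limiting central path obtained by driving the objective functional toward a face-defining inequality: the central path of $P$ then converges to the central path of a facet, and its total curvature is at least that of the facet's central path, which has one fewer facet and one less dimension. For the wedge direction, one uses the explicit construction $\wed_F(P)=P\times[0,\infty)\cap\{f(x)+t\le b\}$ and shows that the central path of $\wed_F(P)$ with respect to a suitably chosen linear functional projects coordinate-wise onto the central path of $P$, so that the total curvature can only grow (or is preserved) under wedging; this is the continuous version of Lemma~\ref{lemma:wedge}.

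The hardest step will be verifying the wedging inequality $\Lambda(n,d)\le\Lambda(n+1,d+1)$ for the curvature, because while the discrete statement for the graph diameter reduces to a purely combinatorial fact about projection of edges, the central path is defined analytically by barrier/log functions and its curvature is a differential-geometric invariant. One must therefore write down the log-barrier for $\wed_F(P)$ explicitly, exploit the product-with-a-half-line structure to factor the barrier, and then show that the induced central path in $\wed_F(P)$, when projected onto the first $d$ coordinates, agrees up to reparametrization with the central path of $P$ (possibly after a perturbation of the objective); the extra coordinate only adds a monotonically tracing component that cannot decrease curvature. Once this analytic lemma is in place, both inequalities and hence the theorem follow in exactly the inductive pattern of Theorem~\ref{theorem:dstep-hirsch}.
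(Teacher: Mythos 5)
Your plan borrows the Klee--Walkup scaffolding of Theorem~\ref{theorem:dstep-hirsch}, and the easy direction (specialize $n=2d$) is of course fine; but note that this thesis does not reproduce a proof of the theorem at all -- it is quoted from~\cite{Deza:The-continuous-d-step} -- and the two monotonicity inequalities on which your whole induction rests are precisely the hard analytic content of that paper, not routine transcriptions of the combinatorial lemmas. As written, neither of your justifications holds up. For the facet step, the discrete inequality $H(n,d)\le H(n-1,d-1)$ for $n<2d$ works because a shortest edge-path between two vertices can be rerouted inside a common facet; the central path admits no such localization. It runs through the interior of $P$, is determined by \emph{all} $n$ barrier terms, and does not converge, under any tilting of the objective, to the central path of a facet regarded as a stand-alone $(d-1)$-polytope with its own barrier (the facet's barrier omits the other $n-1$ terms). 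Even if some limit curve existed, you would still need an inequality between total curvatures in the correct direction, which requires a semicontinuity statement you neither formulate nor prove. Incidentally, this step can be bypassed: for $d<n\le 2d$, monotonicity of $\Lambda$ in the number of inequalities alone (append redundant constraints and let them recede to infinity) would give $\Lambda(n,d)\le\Lambda(2d,d)\le Kd\le Kn$ -- but that monotonicity is itself a nontrivial analytic fact about central paths that has to be established, since adding even a redundant inequality changes the barrier and hence the path.

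The wedge step has a more concrete flaw. For $\wed_F(P)$, eliminating the new variable $t$ from the log-barrier (minimizing $-\ln t-\ln\bigl(b-f(x)-t\bigr)$ over $t$) effectively doubles the weight of the barrier term of the facet $F$; hence the projection of the central path of $\wed_F(P)$ to the $x$-coordinates is a \emph{weighted} central path of $P$, not the central path of $P$, so the analogue of Lemma~\ref{lemma:wedge} does not come from the product structure the way you hope. Moreover, even if the projection were exactly the central path, total curvature is not monotone under coordinate projection (projections can increase as well as decrease curvature), so ``the path projects onto the central path of $P$, hence curvature can only grow'' is not an argument in either direction; what you actually need is $\lambda_\xi(P)\le\lambda_{\xi'}\bigl(\wed_F(P)\bigr)$ for some lifted objective $\xi'$, i.e., a construction inside the lifted polytope whose central path is provably at least as curved as that of $P$. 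Establishing the simultaneous monotonicity $\Lambda(n,d)\le\Lambda(n+1,d+1)$ by such a limiting construction, together with the required continuity properties of total curvature under perturbation of the constraints, is exactly where the cited proof does its work; without these two lemmas your inductive chain $\Lambda(n,d)\le\Lambda\bigl(2(n-d),n-d\bigr)\le K(n-d)$ is scaffolding with nothing inside it.
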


The curvature of the central path had also been studied by 
Dedieu, Malajovich, and Shub in~\cite{Dedieu}. But instead of looking at a single polytope, they consider the average total curvature of the central paths of all bounded cells in a simple arrangement. 

An arrangement $\mathcal{A}$ of $n$ hyperplanes in dimension $d$ is called \defn{simple} if every $n$ hyperplanes intersect at a unique point. (Any sufficiently generic collection of $n \geq d+1$ hyperplanes in $d$-dimensional space is simple.) It is easy to show that any simple arrangement of $n$ hyperplanes in $\R^d$ has exactly $\sigma=\binom{n-1}{d}$ bounded full-dimensional cells. For a simple arrangement $\mathcal{A}$ with bounded cells $P_1,\dots,P_\sigma$
and a given objective function $\xi$, Dedieu et al. define:
\[
\lambda_\xi(\mathcal{A})= \frac{1}{\sigma} \sum_{i=1}^{\sigma} \lambda_c(P_i).
\]
They prove:
\begin{theorem}[\cite{Dedieu}]
\label{theorem:arrangement-curvature}
$\lambda_\xi(\mathcal{A}) \leq 2\pi d$, for every simple arrangement.
\end{theorem}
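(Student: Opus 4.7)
The plan is to establish the bound via an integral-geometric reduction from $d$-dimensional curves to planar convex curves. First, I would recall the key local fact: the total curvature of a smooth curve $\gamma:[0,\beta)\to\R^d$ admits a Cauchy-Crofton-type representation as an average of the total curvatures of its orthogonal projections onto $2$-planes $\pi \subset \R^d$ (suitably normalized by the Grassmannian measure on $G(2,d)$). Thus if every planar projection of every central path is convex (or nearly so) and hence has planar total curvature at most $2\pi$, then each individual $\lambda_\xi(P_i)$ will be bounded in terms of $2\pi$ and a factor accounting for the Grassmannian averaging constant, yielding the $d$ scaling after summing contributions transverse to the objective direction.

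Next, I would analyze the central path $\gamma_\xi$ itself. Using the barrier characterization $\gamma_\xi(t)=\arg\min\{t\xi(x)+f(x):x\in \operatorname{int}P_i\}$ where $f$ is the logarithmic barrier, one can show that $\gamma_\xi$ is a real-analytic, monotone curve inside each bounded cell $P_i$, with endpoints at the analytic center and at the $\xi$-optimal vertex. The crucial convexity-type property is that when projected onto a generic $2$-plane, the projection of $\gamma_\xi$ sweeps through at most a bounded angular variation, because the tangent direction $\dot\gamma_\xi(t)$ is determined by $-(\nabla^2 f(\gamma_\xi(t)))^{-1}\xi$, and this direction rotates in a controlled manner as $t$ varies.

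Then I would exploit the averaging over the $\sigma=\binom{n-1}{d}$ bounded cells of the simple arrangement $\mathcal{A}$. The key combinatorial-geometric fact is that the disjoint union of central paths, one per bounded cell, can be jointly projected onto a generic $2$-plane, and the sum of the planar total curvatures of these projections is bounded by $2\pi$ per bounded cell of the \emph{projected} arrangement. Since the projection of a simple arrangement of $n$ hyperplanes in $\R^d$ is a line arrangement whose number of bounded cells grows in a controlled way relative to $\sigma\cdot d$, one obtains the summed bound $\sigma\cdot 2\pi d$, and dividing by $\sigma$ gives $\lambda_\xi(\mathcal{A})\le 2\pi d$.

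The main obstacle, in my view, is justifying the last step rigorously: the claim that summing planar total curvatures of the projected central paths (over all $\sigma$ cells) can be controlled by $2\pi d$ times the number of bounded cells. This requires either a careful double-counting argument using the fact that in a generic $2$-plane projection each bounded cell contributes a convex planar arc (total curvature $\le 2\pi$), together with a combinatorial identity relating $\sigma(d,n)$ to $\sigma(2,n)$ after projection, or else a direct Gauss-Bonnet style argument on the union of central paths. Handling degeneracies at shared facets, and making precise the ``$d$'' factor (as opposed to $\binom{d}{2}$) arising from the Grassmannian integration constant, is where the technical work lies.
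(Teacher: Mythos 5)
The paper does not actually prove this statement: it is quoted from Dedieu, Malajovich and Shub~\cite{Dedieu}, so your proposal has to be measured against their argument. The central gap in your plan is that it rests on a per-cell estimate: you assume that a generic planar projection of the central path of each bounded cell is (nearly) convex, hence has planar total curvature at most $2\pi$, and that averaging over $2$-planes then costs only a factor of order $d$. No such per-cell bound can exist, and the paper says so explicitly right before the theorem: by Theorem~\ref{theorem:continuous-lower-bound} a \emph{single} cell can have central path of total curvature at least linear in $n$ for fixed $d$, so for large $n$ its generic planar projections must also have total curvature growing with $n$, contradicting your ``bounded angular variation'' claim. The whole content of Theorem~\ref{theorem:arrangement-curvature} is that the averaging over the $\sigma=\binom{n-1}{d}$ bounded cells is essential -- curvature concentrated in one cell must be compensated by the other cells of the same arrangement -- and your outline never supplies a mechanism for that compensation. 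The projection/double-counting step is also not viable as stated: the image of a simple arrangement of hyperplanes under projection to a $2$-plane is not a line arrangement (a hyperplane not containing the fibre direction projects onto the whole plane), the projected cells overlap, and a planar arrangement of $n$ lines has only $\binom{n-1}{2}$ bounded cells versus $\binom{n-1}{d}$ upstairs, so the combinatorial identity you would need does not hold.

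The actual proof in~\cite{Dedieu} is algebraic rather than convex-geometric, and this is the ingredient your sketch is missing. The central paths of all the bounded cells of the arrangement (for the fixed objective $\xi$) are arcs of one real algebraic curve, the central curve of the arrangement, whose degree is controlled by $n$ and $d$. The total curvature of each arc is the spherical length of its tangent (Gauss) indicatrix; the spherical Cauchy--Crofton formula converts this length into $\pi$ times an average number of intersections of the Gauss curve with great hyperspheres; and each such intersection is a solution of a polynomial system attached to the central curve, so the \emph{total} number of intersections, summed over all cells simultaneously, is bounded by a B\'ezout-type degree count. That global count, divided by $\sigma=\binom{n-1}{d}$, yields the average bound $2\pi d$, even though individual summands can be as large as $\Omega(n)$. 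If you want to rescue your approach, the step you must add is precisely this global degree bound on the union of all central paths; integral geometry applied cell by cell cannot produce it.
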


That is, even if individual cells can give total curvature linear in $n$ (and perhaps worse) by Theorem~\ref{theorem:continuous-lower-bound}, the average over all cells of a given arrangement is bounded by a function of $d$ alone.
By analogy, Deza et al. (see~\cite{Deza:Curvature}) consider the average diameter of the graphs of all bounded cells in a simple arrangement $\mathcal{A}$. Denote it $\delta(\mathcal{A})$ and let $H_\mathcal{A}(n,d)$ be the maximum of $\delta(\mathcal{A})$ over all  simple arrangements defined by $n$ hyperplanes in dimension $d$. They conjecture that:

\begin{conjecture}[Hyperplane Diameter Conjecture]
$H_\mathcal{A}(n,d) \leq d$.
\end{conjecture}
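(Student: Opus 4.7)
Since the statement is an average over cells, my first move is to recast it as a total: show that $\sum_{i=1}^\sigma \diam(G(P_i)) \leq d\binom{n-1}{d}$, where $\sigma = \binom{n-1}{d}$. The trivial bound $\diam(G(P_i)) \leq f_0(P_i) - 1$ together with the face-counting fact that each vertex of a simple arrangement lies in exactly $2^d$ cells gives only $\delta(\mathcal{A}) \leq 2^d \binom{n}{d}/\binom{n-1}{d} \sim 2^d$, which confirms plausibility but is off by an exponential factor. The gap shows that most cells must be far from Hirsch-sharp in an averaged sense, so the plan is to exploit a global ``balancing'' across cells rather than treating each cell as an arbitrary polytope.

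My primary attack would be a double-counting/charging scheme. For each bounded cell $P_i$, fix a pair of vertices $u_i, v_i$ realizing the diameter and a shortest path $\gamma_i$ in $G(P_i)$. Each edge of $\gamma_i$ is an edge of the arrangement $\mathcal{A}$ lying on exactly $d-1$ of the hyperplanes; each bounded edge of $\mathcal{A}$ is shared by at most $2^{d-1}$ cells. I would charge each (cell, path-edge) incidence to an ``unused facet'' of $P_i$ along $\gamma_i$, in the spirit of the Non-revisiting Conjecture (Conjecture~\ref{conjecture:nonrevisiting}). If each facet of each cell receives at most a bounded number of charges on average, the total sum becomes a constant multiple of $\sum_i |\text{facets}(P_i)|$, which by standard arrangement enumeration is $O(n \binom{n-1}{d-1})$; careful choice of constants would then hopefully recover precisely the factor $d$.

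A complementary strategy is induction on $d$. The base case $d=2$ is accessible via Euler's formula for planar line arrangements: if $p_k$ is the number of bounded $k$-gons, then $\sum_k p_k = \binom{n-1}{2}$ and a count of bounded edges (each shared by at most two bounded cells) gives $\sum_k k \cdot p_k \leq 2n(n-2)$, so $\sum_k \lfloor k/2 \rfloor p_k \leq 2\binom{n-1}{2}$, yielding the desired $\delta(\mathcal{A}) \leq 2$. For the inductive step, pick any $H \in \mathcal{A}$ and form the induced arrangement $\mathcal{A}|_H$, a simple arrangement of $n-1$ hyperplanes in $\R^{d-1}$. Cells of $\mathcal{A}$ adjacent to $H$ have a facet which is a cell of $\mathcal{A}|_H$, and bounding the diameter of a $d$-polytope in terms of the diameter of one of its facets plus a controlled additive term, then averaging over all $n$ choices of $H \in \mathcal{A}$, should reduce the bound by one dimension.

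The principal obstacle, and the reason this conjecture remains open, is that the diameter is a global graph-theoretic quantity: a cell $P_i$ may have many facets lying on distant hyperplanes, yet its diameter depends on the combinatorial structure of its vertex-edge graph, so no purely local charging captures it tightly enough. Relating $\diam(P_i)$ to $\diam(F)$ for a facet $F$ of $P_i$ is precisely the kind of step that can fail (cf. Remark~\ref{remark:non-hirsch-wedge} on wedging), so the induction will require careful accounting of how much diameter can be ``gained'' perpendicular to a slicing hyperplane. Moreover, the continuous analogue bound of Dedieu, Malajovich, and Shub (Theorem~\ref{theorem:arrangement-curvature}) is proved by integration techniques with no obvious combinatorial counterpart, so I expect the correct proof will hinge on a genuinely arrangement-theoretic identity for bounded faces rather than a cell-by-cell estimate.
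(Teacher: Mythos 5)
You have not produced a proof, and neither does the paper: the statement you were given is the Hyperplane Diameter Conjecture of Deza, Terlaky, and Zinchenko, which the dissertation records as open. The only thing the paper proves in its vicinity is the conditional proposition immediately following it: \emph{if} the Hirsch Conjecture (Conjecture~\ref{conjecture:hirsch}) holds, then $H_\mathcal{A}(n,d) \leq d + \frac{2d}{n-1}$, obtained by exactly the kind of global facet count you sketch --- each bounded facet of the arrangement lies in at most two bounded cells, bounded facets on a hyperplane $H$ are bounded cells of the induced arrangement $\mathcal{A}\cap H$, so $\sum_i n_i \leq 2n\binom{n-2}{d-1}$, and then one divides by $\sigma = \binom{n-1}{d}$ after invoking $\diam(P_i) \leq n_i - d$ cell by cell. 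Your charging scheme is essentially a rediscovery of this computation, but it inherits both of its weaknesses: it needs a Hirsch-type bound per cell (which, per Appendix~\ref{appendix:CounterHirsch}, is now known to be false in general, though a per-cell bound of the form $n_i - d + \text{const}$ would still suffice for an $O(d)$ statement), and even granting that, the averaging only yields $d + \frac{2d}{n-1}$, not $d$.

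The concrete error in your write-up is the base case $d=2$. A simple arrangement of $n$ lines has $\binom{n-1}{2}$ bounded cells and $n(n-2)$ bounded edges, each on at most two bounded cells, so $\sum_k k\,p_k \leq 2n(n-2)$ and hence $\sum_k \lfloor k/2\rfloor p_k \leq n(n-2)$, which is \emph{larger} than $2\binom{n-1}{2} = (n-1)(n-2)$; your claimed inequality does not follow, and what you actually get is $\delta(\mathcal{A}) \leq 2 + \frac{2}{n-1}$, matching the paper's conditional bound rather than the conjecture. So even in the plane the local charging loses precisely the slack you would need to remove, and your inductive step compounds the problem, since (as you note yourself) controlling $\diam(P_i)$ by the diameter of a facet plus an additive term is the step that fails for wedge-like cells. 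Your closing assessment is the honest one: a proof would require a genuinely new arrangement-level identity, analogous to the integral-geometric argument behind Theorem~\ref{theorem:arrangement-curvature}, and nothing in your proposal (or in the paper) supplies it.
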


They notice that this inequality, modulo a linear factor, would follow from the Hirsch Conjecture:

\begin{proposition}
If the Hirsch Conjecture holds, then $H_\mathcal{A} (n,d) \leq d + \frac{2d}{n-1}$.
\end{proposition}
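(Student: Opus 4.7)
The plan is to bound the average diameter by an averaging/double-counting argument, using the Hirsch bound on each cell and then counting (bounded cell, facet) incidences via the hyperplanes of $\mathcal{A}$.

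First, let $P_1,\dots,P_\sigma$ be the bounded full-dimensional cells of $\mathcal{A}$, where $\sigma=\binom{n-1}{d}$, and let $n_i$ denote the number of facets of $P_i$. Since $\mathcal{A}$ is simple, each $P_i$ is a simple $d$-polytope with $n_i$ facets, so assuming the Hirsch Conjecture gives $\diam(G(P_i))\le n_i-d$. Summing and dividing by $\sigma$,
\[
\delta(\mathcal{A})=\frac{1}{\sigma}\sum_{i=1}^{\sigma}\diam(G(P_i))\;\le\;\frac{1}{\sigma}\sum_{i=1}^{\sigma}n_i\;-\;d.
\]
Thus the whole argument reduces to bounding the total facet count $\sum_i n_i$, i.e.\ the number of incidences between bounded $d$-cells and their facets.

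Second, I would count these incidences hyperplane by hyperplane. Every facet of a bounded $d$-cell is itself bounded (being a face of a bounded polytope) and lies on exactly one hyperplane $H_j$ of $\mathcal{A}$. The trace of $\mathcal{A}$ on $H_j$ is a simple arrangement of $n-1$ hyperplanes in $\R^{d-1}$, so by the standard count it has exactly $\binom{n-2}{d-1}$ bounded $(d-1)$-cells. Each such bounded $(d-1)$-cell is a facet of exactly two $d$-cells of $\mathcal{A}$ (one on each side of $H_j$), and therefore contributes at most $2$ to $\sum_i n_i$. Summing over the $n$ hyperplanes,
\[
\sum_{i=1}^{\sigma}n_i\;\le\;2n\binom{n-2}{d-1}.
\]

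Finally, I would combine the two estimates. Using the identity
\[
\frac{\binom{n-2}{d-1}}{\binom{n-1}{d}}=\frac{d}{n-1},
\]
which is a direct computation from the factorial formulas, we get
\[
\delta(\mathcal{A})\;\le\;\frac{2n\binom{n-2}{d-1}}{\binom{n-1}{d}}-d\;=\;\frac{2nd}{n-1}-d\;=\;\frac{d(n+1)}{n-1}\;=\;d+\frac{2d}{n-1},
\]
which gives the desired bound on $H_\mathcal{A}(n,d)$. The only nontrivial step is the incidence count, but it is really an elementary observation once one notes that facets of bounded cells must themselves be bounded cells of the induced $(d-1)$-dimensional arrangement on each hyperplane; the rest is arithmetic. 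The main obstacle, conceptually, is recognizing that the right way to combine Hirsch with an averaging statement is to count facet incidences globally rather than cell by cell.
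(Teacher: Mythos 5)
Your proof is correct and follows essentially the same route as the paper's: apply the Hirsch bound $n_i-d$ to each bounded cell, average over the $\binom{n-1}{d}$ bounded cells, and bound $\sum_i n_i \leq 2n\binom{n-2}{d-1}$ by identifying bounded facets with bounded cells of the induced $(n-1)$-hyperplane arrangements and noting each belongs to at most two cells. The concluding arithmetic $\frac{2nd}{n-1}-d = d+\frac{2d}{n-1}$ matches the paper as well.
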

\begin{proof}
Let $\{P_i \mid i \in I\}$ be the collection of bounded cells of $\mathcal{A}$. Let $n_i\leq n$ denote the number of facets of the bounded cell $P_i$. If the Hirsch Conjecture holds, we have $\delta(P_i) \leq n_i - d$, which implies $\delta(\mathcal{A}) \leq \frac{\sum_{i=1}^I (n_i - d)}{I} = \frac{\sum_{i=1}^I n_i}{I} - n$. Since a facet belongs to at most two cells, $\sum_{i=1}^I n_i$ is less than twice the number of bounded facets of $\mathcal{A}$. Now since a bounded facet contained in a hyperplane $H$ of the arrangement $\mathcal{A}$ corresponds to a bounded cell of the simple arrangement $\mathcal{A} \cap H$ of one dimension less, we have $\sum_{i=1}^I n_i \leq 2n \binom{n-2}{d-1}$. Thus we obtain $\delta(\mathcal{A}) \leq \frac{2n \binom{n-2}{d-1}}{\binom{n-1}{d}} - d = \frac{2nd}{n-1} - d$, which implies the bound.
\end{proof}

\subsection{Special classes of polytopes}\label{section:special}

Where the conjectured upper bound cannot be proved, it is interesting to study upper bounds for special families of polytopes. Many of the polytopes appearing in combinatorial optimization belong to the class of~\emph{network flow polytopes}, which include~\emph{transportation polytopes}. In this section we mention that polynomial bounds are known for these and for some generalizations and related classes of polytopes. In addition, polytopes whose vertex coordinates are all zeroes and ones satisfy the Hirsch Conjecture.

\subsubsection*{Integer vertices}

A polytope $P$ is called a \defn{$0$-$1$ polytope} if every coordinate of every vertex of $P$ is either $0$ or $1$. That is to say, $P$ is the convex hull of some of the vertices of a cube. In~\cite{Naddef:Hirsch01}, \given{Denis }Naddef proved that the Hirsch Conjecture holds for $0$-$1$ polytopes.
\begin{theorem}[Naddef~\cite{Naddef:Hirsch01}] 
\label{theorem:01hirsch}
Let $P$ be a $d$-polytope with $n$ facets such that every vertex of $P$ is in $\{0,1\}^d$.  Then $\diam(P) \leq n - d$.
\end{theorem}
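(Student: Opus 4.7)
My plan is to use the Klee-Walkup equivalences (Theorems~\ref{theorem:dstep-hirsch} and~\ref{theorem:dstep-nonrevisiting}) to reduce the theorem to proving that $0$-$1$ polytopes have the non-revisiting property. I will establish the latter by induction on the dimension $d$, exploiting the structural fact that every face of a $0$-$1$ polytope is again a $0$-$1$ polytope (of the same or smaller dimension). The base case $d=1$ is immediate.

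Given distinct vertices $u$ and $v$ of a $0$-$1$ polytope $P$ of dimension $d$, pick any coordinate $i$ on which they disagree; by relabeling, assume $u_i=0$ and $v_i=1$. Consider the face $F := P \cap \{x_i = 1\}$, which contains $v$, omits $u$, is itself a $0$-$1$ polytope, and has dimension at most $d-1$. If $u$ has a neighbor $u^\dagger$ in $F$, then by induction there is a non-revisiting path from $u^\dagger$ to $v$ inside $F$, and prepending the edge $u \to u^\dagger$ produces a path in $P$ from $u$ to $v$. I claim this combined path is non-revisiting in $P$: at the first step we enter every facet of $P$ that contains $F$, and no such facet is ever abandoned afterward because the remainder of the path lies in $F$; meanwhile, the non-revisiting property of facets of $F$ on the inductive portion translates directly to non-revisiting of the corresponding facets of $P$, since each facet of $F$ is cut out by exactly the facets of $P$ that meet $F$ in it, and entering/leaving such a facet of $F$ is the same event as entering/leaving the associated facets of $P$.

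The main obstacle, and the only place where the $0$-$1$ hypothesis enters in an essential way, is producing the neighbor $u^\dagger$. The heuristic reason it exists is that the coordinate $x_i$ takes only two values on vertices of $P$, so any edge at $u$ on which $x_i$ strictly increases must terminate at a vertex of $F$. It therefore suffices to exhibit an edge at $u$ along which $x_i$ strictly increases. In the simple case, this follows from the standard fact that the affine span of $u$ and its $d$ neighbors equals all of $\R^d$, so not every neighbor can satisfy $x_i=0$; otherwise the local picture at $u$ would lie entirely in the hyperplane $\{x_i=0\}$, contradicting the existence of $v$.

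The general (possibly non-simple) $0$-$1$ case is more delicate, and I expect this to be the hard part of the proof. The obstruction is that, while Lemma~\ref{lemma:simple} allows one to reduce the general Hirsch Conjecture to simple polytopes by perturbation, a perturbation of a $0$-$1$ polytope is no longer a $0$-$1$ polytope, so the standard reduction fails here. Naddef's approach bypasses this by treating the existence of $u^\dagger$ through a direct combinatorial exchange argument on the vertex set $P \cap \{0,1\}^d$, leveraging the fact that the difference $v - u$ has support in a specific coordinate set to produce the desired local move at $u$. Making this exchange argument precise, and verifying that it remains valid in the presence of high-degree (non-simple) vertices, is the technical heart of the proof.
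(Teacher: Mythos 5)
Your proposal is not a complete proof: the step you yourself identify as ``the technical heart'' --- the existence of a neighbor $u^\dagger$ of $u$ lying in $F = P \cap \{x_i = 1\}$ when $P$ is not simple --- is exactly the point you leave unproven, deferring it to an unspecified ``combinatorial exchange argument.'' As written, then, the argument establishes the theorem only for simple $0$-$1$ polytopes, and (as you correctly observe) the perturbation reduction of Lemma~\ref{lemma:simple} is unavailable because it destroys $0$-$1$-ness. Ironically, the missing step is not where the real difficulty lies: for \emph{any} polytope the tangent cone at a vertex is generated by the edge directions at that vertex, so if every neighbor of $u$ had $i$th coordinate $0$ (all vertices being $0$-$1$ points, so $x_i \geq 0$ on $P$ and $u_i = 0$), then $P$ would be contained in $\{x_i = 0\}$, contradicting $v_i = 1$; no simplicity is needed. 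A second, unacknowledged gap sits in your non-revisiting bookkeeping: the inductive hypothesis controls only the facets \emph{of $F$}, i.e.\ facets $G$ of $P$ with $G \cap F$ a facet of $F$. A facet $G$ of $P$ that contains $u$, does not contain $u^\dagger$ (hence is abandoned on the first edge), and meets $F$ only in a lower-dimensional face is not governed by the induction, and your path inside $F$ could re-enter it --- precisely the kind of revisit you must exclude. So even granting $u^\dagger$, the transfer of the non-revisiting property from $F$ to $P$ is not established.

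The route is also considerably heavier than necessary. The paper's proof never mentions the non-revisiting property or the Klee--Walkup equivalences (Theorems~\ref{theorem:dstep-hirsch} and~\ref{theorem:dstep-nonrevisiting}); it is a direct induction on $d$ using faces of the \emph{cube} rather than faces of $P$: assume $P$ full-dimensional and $u = (0,\ldots,0)$; if $v_i = 0$ for some $i$, then $u$ and $v$ lie in the face $P \cap \{x_i = 0\}$, a lower-dimensional $0$-$1$ polytope, and one concludes by induction; otherwise $v = (1,\ldots,1)$, and any neighbor $v'$ of $v$ has some coordinate equal to $0$, so after one pivot $u$ and $v'$ again lie in a common lower-dimensional $0$-$1$ polytope. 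If you want to salvage your plan, supply the tangent-cone argument above for $u^\dagger$ and then either repair the facet accounting in the non-revisiting transfer or, more simply, abandon the non-revisiting framework and run the dimension induction directly on the distance, keeping track of how many facets a proper face of $P$ can have relative to $n$ and $d$.
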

\begin{proof}
We assume that $P$ is full-dimensional. This is no loss of generality since, if the dimension of $P$ is strictly less than $d$, then $P$ can be isomorphically projected to a face of the cube $[0,1]^d$.

Let $u$ and $v$ be two vertices of $P$. By symmetry, we may assume that $u = (0,\ldots,0)$. If there is an $i$ such that $v_i=0$, then $u$ and $v$ are both on the face of the cube corresponding to $\{x \in \R^d \mid x_i = 0\}$, and the statement follows by induction.  Therefore, we assume that $v = (1,\ldots,1)$.
Now, pick any neighboring vertex $v'$ of $v$. There is an $i$ such that $v'_i=0$.  Then, $u$ and $v'$ are vertices of a lower-dimensional $0$-$1$ polytope and we have performed one pivot from $v$ to $v'$. The result follows by induction on $d$.
\end{proof}

In~\cite{Onn:ConvexCombinatorial}, Onn and Rothblum prove the following strengthening of Theorem~\ref{theorem:01hirsch}:
\begin{theorem}[Onn and Rothblum~\cite{Onn:ConvexCombinatorial}]
Let $P$ be a $d$-polytope with $n$ facets such that every vertex of $P$ is in $\{0,1\}^d$. Then, under any linear functional $\xi$, there is a non-decreasing path from any vertex $v$ to any vertex $v^*$ maximizing $\xi$, of length at most $d$ using each edge-direction at most once.
\end{theorem}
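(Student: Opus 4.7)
The plan is to prove this by strengthening the inductive argument that Naddef used for Theorem~\ref{theorem:01hirsch}, tracking $\xi$-monotonicity and the distinctness of edge-directions throughout the induction on $d$. The base case is trivial, and for the inductive step it is useful to phrase the induction hypothesis broadly enough to cover $0$-$1$ polytopes that need not be full-dimensional in their ambient cube, so that faces cut out by coordinate hyperplanes can be fed back into the hypothesis.

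First I would reduce to $v = 0$ via the coordinate-complementing involution $\varphi$ of $[0,1]^d$ that maps $v$ to $0$. The image $\varphi(P)$ is again a $0$-$1$ polytope, the objective $\xi$ becomes a new linear functional $\xi'$, and edge-directions transform by a diagonal $\pm 1$ linear map; in particular, a $\xi'$-monotone path in $\varphi(P)$ with distinct edge-directions pulls back to a $\xi$-monotone path in $P$ with distinct edge-directions. So assume $v = 0$ and let $v^* \in \{0,1\}^d$ be a $\xi$-maximizer. If there is a coordinate $i$ with $v^*_i = 0$ (\emph{Case A}), then $v$ and $v^*$ both lie in the proper face $F = P \cap \{x_i = 0\}$, a $0$-$1$ polytope of dimension at most $d-1$ in which $v^*$ remains the $\xi$-maximum; the induction hypothesis applied to $F$ produces the required path of length at most $d-1$, and every edge-direction used has its $i$-th coordinate $0$. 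Otherwise $v^* = (1,\ldots,1)$ (\emph{Case B}). Since $\xi(v) < \xi(v^*)$, some neighbor $v'$ of $v$ in $P$ satisfies $\xi(v') > \xi(v)$; set $e = v' - v = v'$ and choose any index $i$ with $e_i = 1$. The face $F = P \cap \{x_i = 1\}$ contains both $v'$ and $v^*$ and has dimension at most $d-1$; by induction there is a $\xi$-monotone path in $F$ from $v'$ to $v^*$ of length at most $d-1$ using distinct edge-directions, each of whose $i$-th coordinate is $0$. Prepending the step $v \to v'$ gives a $\xi$-monotone path of length at most $d$ in $P$, and since its first direction $e$ has $e_i = 1$, it is distinct from every subsequent direction.

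The main obstacle I expect lies in clean bookkeeping around the inductive descent: I must confirm that faces cut by coordinate hyperplanes really are proper (using full-dimensionality of $P$) and that restricting to such a face does not silently reuse an edge-direction already spent outside of it. Both concerns reduce to the single elementary observation that every edge of $P \cap \{x_i = c\}$ has $i$-th direction coordinate $0$, which simultaneously forces the dimension drop needed for induction and separates the pivot direction $e$ used in Case B from all directions used after it.
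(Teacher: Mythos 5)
Your overall strategy is sound, and it is essentially the strengthened version of the induction the paper uses to prove Naddef's bound (Theorem~\ref{theorem:01hirsch}); the paper does not actually prove the Onn--Rothblum statement but only cites it, so the extra bookkeeping you add (monotonicity plus distinctness of edge-directions) is exactly what is needed, and your Case~A / Case~B split, the observation that every edge of $P \cap \{x_i = c\}$ has $i$-th direction coordinate $0$, and the complementation reduction to $v=\mathbf{0}$ all work as you describe.

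The one step that fails as written is the opening of Case~B: you assert ``Since $\xi(v) < \xi(v^*)$, some neighbor $v'$ of $v$ in $P$ satisfies $\xi(v') > \xi(v)$.'' The theorem only asks for a \emph{non-decreasing} path, so it must also cover the situation in which $v$ is itself a maximizer of $\xi$ (for instance $\xi \equiv 0$, or any $\xi$ attaining its maximum at both $\mathbf{0}$ and $\mathbf{1}$); then $\xi(v) = \xi(v^*)$, no strictly improving neighbor exists, and your pivot step has nothing to pivot to. The gap is genuine but local and easily repaired: when $\xi(v)=\xi(v^*)$, both vertices lie on the optimal face $F^*=\{x \in P \mid \xi(x)=\max_P \xi\}$, whose graph is connected, so $v$ has a neighbor $v'$ of $v$ inside $F^*$ with $\xi(v')=\xi(v)$ (a tie step is still non-decreasing); since $v'\neq \mathbf{0}$ there is a coordinate $i$ with $v'_i=1=v^*_i$, and the induction then proceeds exactly as in your Case~B. (Alternatively, apply the induction hypothesis directly to $F^*$, projected isomorphically onto a lower-dimensional $0$-$1$ cube when $\xi$ is non-constant on $P$, and handle constant $\xi$ by the same induction with an arbitrary neighbor as the pivot.) With this degenerate case handled --- and with the induction hypothesis stated, as you yourself note, for possibly non-full-dimensional $0$-$1$ polytopes with the path length bounded by the ambient cube dimension --- the argument is complete.
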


As a generalization of Theorem~\ref{theorem:01hirsch}, in~\cite{Kleinschmidt:Diameter} \given{Peter }Kleinschmidt and \given{Shmuel }Onn prove the following bound on the diameter of lattice polytopes in $[0,k]^d$. A polytope is called a \defn{lattice polytope} if every coordinate of every vertex is integral.

\begin{theorem}[Kleinschmidt and Onn~\cite{Kleinschmidt:Diameter}]
The diameter of a lattice polytope contained in $[0,k]^d$ cannot exceed $kd$.
\end{theorem}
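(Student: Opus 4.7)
I propose to prove the bound by induction on $d$, with the trivial base case $d = 1$ where $P$ is a lattice segment in $[0, k]$ of diameter at most $1 \le k$. For the inductive step, let $u, v$ be two vertices of $P\subseteq[0, k]^d$. Observe that the tight axis-parallel bounding box of $P$ has integer endpoints $a_i \le b_i$ with $0 \le a_i$ and $b_i \le k$, so the hyperplanes $\{x_i = a_i\}$ and $\{x_i = b_i\}$ support $P$ and define nonempty faces $F^-_i := P \cap \{x_i = a_i\}$ and $F^+_i := P \cap \{x_i = b_i\}$. Each such face is itself a lattice polytope inside a box of dimension $d-1$ with side lengths at most $k$, so the inductive hypothesis will apply to it.

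The heart of the argument is to construct a path of the form $u \to u' \to v' \to v$ in $G(P)$ by walking monotonically in a single coordinate to land on a common cube-aligned face, then concatenating with an induced path inside that face. Fix any coordinate $i$. Using the standard LP fact that a non-optimal vertex of a polytope admits a $\xi$-improving neighboring edge (since otherwise the tangent cone at that vertex would lie in the closed half-space $\langle \nabla \xi, \cdot \rangle \ge 0$ and the vertex would be a global optimum), a sequence of $x_i$-decreasing pivots from $u$ reaches a vertex of $F^-_i$ in at most $u_i - a_i$ steps, because integrality of vertex coordinates forces each strict pivot in a lattice polytope to change $x_i$ by at least one; symmetrically from $v$ in at most $v_i - a_i$ steps. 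Analogous $x_i$-increasing walks terminate on $F^+_i$ in at most $b_i - u_i$ and $b_i - v_i$ steps. Since the two pairs of totals sum to $2(b_i - a_i)\le 2k$, the smaller of the two choices (walk down or walk up) gives combined walks of length at most $b_i - a_i \le k$, and the endpoints $u', v'$ lie in a common face $F \in \{F^-_i, F^+_i\}$.

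Finally, the inductive hypothesis applied to $F$ yields a path from $u'$ to $v'$ inside $F$ of length at most $k(d-1)$; since $F$ is a face of $P$, this is automatically a path in $G(P)$. Concatenation produces a path $u \to u' \to v' \to v$ of total length at most $k + k(d-1) = kd$. The principal obstacle, and the step requiring the most care, is the justification that the monotone walks truncate at the claimed bounds: this rests precisely on the LP fact about improving edges (which is geometric/convexity-based) together with the lattice integrality forcing discrete jumps of size at least one. Once those ingredients are in place, the rest is a matter of choosing the walk direction so the two walk-lengths sum to at most $k$, and invoking the induction.
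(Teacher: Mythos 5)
Your argument is correct. Note, however, that the dissertation does not actually prove this statement: it is quoted from Kleinschmidt and Onn~\cite{Kleinschmidt:Diameter} without proof, so there is no in-paper proof to compare against. What you have written is essentially the standard argument for this bound, and it is a clean generalization of Naddef's proof of Theorem~\ref{theorem:01hirsch} (the case $k=1$) that the paper does include: the two ingredients you flag as delicate are both sound. A vertex that does not minimize $x_i$ over $P$ has a neighbor with strictly smaller $x_i$ (local optimality at a vertex implies global optimality, via the tangent cone being generated by edge directions), and since both endpoints of an edge are lattice points, each strictly improving pivot drops $x_i$ by at least $1$; hence the strictly monotone walk from $u$ terminates on $F^-_i$ (respectively $F^+_i$) within $u_i-a_i$ (respectively $b_i-u_i$) steps. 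Your averaging observation that the ``down'' and ``up'' totals sum to $2(b_i-a_i)\le 2k$, so one choice costs at most $k$ and puts both endpoints on a common face lying in a copy of $[0,k]^{d-1}$, together with the fact that paths in a face are paths in $P$, completes the induction on the ambient box dimension and gives $k+k(d-1)=kd$. The only cosmetic caveats are degenerate ones you have implicitly handled: if $a_i=b_i$ the walk costs $0$ and the induction proceeds directly, and the base case $d=1$ (or $k=0$) is trivial.
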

Using this theorem, one can obtain a bound for the diameter of all rational $d$-dimensional polytopes with $n$ facets. However, the bound obtained is not polynomial in $n$ and $d$, and it does not improve on the bound in~\cite{Kalai:Quasi-polynomial} of Kalai and Kleitman (see~\cite{Onn:KOboundNotPolynomial}).

\subsubsection*{Transportation Polytopes}

Assuming the Hirsch Conjecture is true, every $p \times q$ transportation polytope has diameter at most $p+q-1$.
\given{Graham }Brightwell, \given{Jan }van den Heuvel, and \given{Leen }Stougie (see~\cite{Brightwell:LinearTransportation}) prove the diameter to be at most eight times that. This has been improved by \given{Cor }Hurkens (see~\cite{Hurkens:Diameter4p}) to:
\begin{theorem}[Hurkens~\cite{Hurkens:Diameter4p}]
The diameter of any $p \times q$ transportation polytope is at most $3(p+q-1)$.
\end{theorem}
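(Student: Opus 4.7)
My plan is to reduce to the non-degenerate case and then transform one vertex into the other by a sequence of pivots, each interpreted as a modification of the associated support tree. By the perturbation argument sketched after Definition~3.1.32 (and Lemma~\ref{lemma:vertexBx}), we may assume that $P$ is a non-degenerate $p \times q$ classical transportation polytope, so that every vertex $x$ of $P$ corresponds, via its support graph $B(x)$, to a spanning tree of $K_{p,q}$. Adjacent vertices $x,x'$ correspond to trees that differ by the addition of one edge and the deletion of another (the \emph{unique-cycle} property), so a path in $G(P)$ from a vertex $u$ to a vertex $v$ is a sequence of tree exchanges transforming $B(u)$ into $B(v)$.

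The core idea is to think of both $B(u)$ and $B(v)$ as spanning trees of $K_{p,q}$ and to produce a transformation $B(u)=T_0\to T_1\to \cdots \to T_N=B(v)$ where $N\leq 3(p+q-1)$. Let $E_v:=B(v)\setminus B(u)$ and let $F_v$ denote the set of edges of $B(u)$ not in $B(v)$; both have size at most $p+q-1$. The greedy strategy is: at each step, pick an edge $e\in E_v\setminus T_i$, add $e$ to $T_i$, which creates a unique cycle $C$ in $T_i\cup\{e\}$, and remove some edge $f\in C\setminus B(v)$. When such an $f$ exists, this pivot is legal (the new support tree corresponds to an adjacent vertex of $P$), it strictly increases $|T_i\cap B(v)|$, and hence the number of such ``good'' pivots is bounded by $|E_v|\leq p+q-1$.

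The main obstacle — and the source of the factor~3 — is the \emph{blocked} configuration in which every edge of the cycle $C$ created by adding $e\in E_v$ already lies in $B(v)$. In this situation, no single pivot can keep the edge $e$. The essential lemma to prove is that the blockage can be resolved by a short detour of at most three pivots whose net effect is still to increase $|T_i\cap B(v)|$ by at least one. To produce this detour, one chooses instead an edge $f\in F_v$ on the path in $B(v)$ joining the endpoints of a carefully selected $e\in E_v$, performs a bookkeeping pivot on $f$ (this may temporarily decrease $|T_i\cap B(v)|$, but only by one), and then executes up to two ``catch-up'' pivots that restore the invariant and add the intended $B(v)$-edge. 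A charging/amortization argument then shows that across the whole transformation each edge of $E_v$ is responsible for at most three pivots, for a total of at most $3(p+q-1)$ pivots.

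The hardest technical step is the blockage lemma: precisely characterizing when a greedy pivot is forbidden and showing that every such blockage can be cleared with at most two auxiliary pivots while preserving non-degeneracy of the intermediate vertices (to ensure each intermediate support graph is a tree and each pivot is a genuine edge of $G(P)$). Once this is established, combining it with the counting argument above delivers the bound $\operatorname{diam}(G(P))\leq 3(p+q-1)$, and the general (possibly degenerate) case follows because perturbation can only decrease the diameter of the contracted graph.
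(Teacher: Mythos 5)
First, a note on the comparison: the paper does not prove this theorem at all — it is quoted from Hurkens (a 2007 personal communication, improving the $8(p+q-2)$ bound of Brightwell, van den Heuvel, and Stougie), so there is no in-paper proof for your argument to match. Your proposal therefore has to stand on its own, and as written it has a genuine gap.

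The gap is that you treat a pivot as a free spanning-tree exchange in $K_{p,q}$: ``add $e$, then remove \emph{some} edge $f\in C\setminus B(v)$ of your choosing.'' Two problems follow. First, your \emph{blocked configuration} cannot occur: since $e\in E_v\subseteq B(v)$, if every other edge of the cycle $C$ also lay in $B(v)$ then $B(v)$ would contain the cycle $C$, contradicting that $B(v)$ is a spanning tree (Lemma~\ref{lemma:vertexBx}). So the mechanism you invoke to explain the factor $3$ addresses a non-existent obstruction, and your greedy count would then give at most $p+q-1$ steps — i.e.\ it would prove the Hirsch bound outright, which is a strong signal that the difficulty has been mislocated. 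Second, and this is the real issue, not every spanning tree of $K_{p,q}$ is the support of a vertex of $P$: the tree determines the table entries uniquely from the marginals, and these entries may be negative, in which case the ``new vertex'' is infeasible and the move is not an edge of $G(P)$. Equivalently, in a genuine pivot the leaving edge is forced on you by nonnegativity (the minimum-ratio rule among the edges of $C$ whose values decrease); you cannot choose $f$ freely. Controlling which edge is forced out — and amortizing the damage when it is an edge of $B(v)$ you wanted to keep — is precisely the content of the Brightwell--van den Heuvel--Stougie argument and of Hurkens' refinement to $3(p+q-1)$; your ``blockage lemma'' is exactly this missing core, and it is asserted rather than proved. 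The perturbation reduction to the non-degenerate case is fine, but the proposal as it stands is a plan around the wrong obstacle rather than a proof.
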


The dual transportation polyhedron given by the $p \times q$ matrix $C = (C_{ij})$ is the unbounded polyhedron $D_{p,q}(C) = \{(u_1,\dots,u_p,v_1,\ldots,v_q) \in \R^{p+q} \mid u_i + v_j \leq C_{ij} \text{ for all } i,j\}$.
Duality here is in the sense of linear programming. In~\cite{Balinski:DualTransportation}, \given{Michel }Balinski proved that the Hirsch Conjecture holds for the bounded polytopes resulting from the intersection of a dual transportation polyhedron $D_{p,q}(C)$ with a certain hyperplane.
\begin{theorem}[Balinski~\cite{Balinski:DualTransportation}] 
Let $C$ be an $p \times q$ matrix. The diameter of the polytope 
\[D_{p,q}(C) \cap \{(u_1,\dots,u_p,v_1,\ldots,v_q) \in \R^{p+q} \mid u_1 = 0\}\]
is at most $(p-1)(q-1)$. This bound is the best possible.
\end{theorem}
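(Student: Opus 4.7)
The plan is to reduce this to a combinatorial statement about spanning trees of the complete bipartite graph $K_{p,q}$. First I would identify the vertices of $P := D_{p,q}(C) \cap \{u_1 = 0\}$ with the dual-feasible spanning trees of $K_{p,q}$. Since $P$ has dimension $p+q-1$ and $pq$ facet-defining inequalities $u_i + v_j \leq C_{ij}$, each vertex is determined (for generic $C$) by exactly $p+q-1$ tight inequalities whose index set forms a spanning tree $T$ of $K_{p,q}$; the coordinates $(u,v)$ are uniquely recovered from $u_1 = 0$ together with the equalities $u_i + v_j = C_{ij}$ along $T$. The non-generic case can be handled by perturbation, since a small perturbation of $C$ only increases the diameter. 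In this description, two vertices of $P$ are adjacent exactly when their spanning trees differ by a single edge exchange $T' = (T \setminus \{e\}) \cup \{f\}$.

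The combinatorial upper bound is then a short double-counting. For any two spanning trees $T, T'$ of $K_{p,q}$ we have $|T| = |T'| = p+q-1$ and $|E(K_{p,q})| = pq$, so
\[
|T \cap T'| \;\geq\; |T| + |T'| - pq \;=\; 2(p+q-1) - pq,
\]
whence
\[
|T \setminus T'| \;=\; (p+q-1) - |T \cap T'| \;\leq\; pq - (p+q-1) \;=\; (p-1)(q-1),
\]
provided the preceding lower bound is nonnegative, i.e., $(p-2)(q-2) \leq 2$. In the remaining cases the trivial estimate $|T \setminus T'| \leq p+q-1$ already suffices, since $p+q-1 \leq (p-1)(q-1)$ exactly when $(p-2)(q-2) \geq 2$.

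The main technical obstacle will be to show that this exchange distance is actually realized by a path inside the graph of $P$: that is, that there exist feasibility-preserving single-edge swaps $T = T_0, T_1, \ldots, T_k = T'$ of length $k = |T \setminus T'|$. Abstractly, the matroid strong-exchange property guarantees such a sequence, but each intermediate tree $T_\ell$ must also be dual-feasible (i.e., $u_i + v_j \leq C_{ij}$ must hold on all non-tree edges). My approach would be to use a signature pivot rule of Balinski--Rispoli type~\cite{Balinski:SignatureTransportation}: fix a total order on $T' \setminus T$, and at each step let the entering edge $f$ be the next unprocessed edge of $T' \setminus T_\ell$ and pick the leaving edge $e$ in the unique cycle of $T_\ell + f$ by a ratio test that maintains dual feasibility. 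The hard part is showing that such a valid $e$ always lies in $T_\ell \setminus T'$, so that each swap strictly decreases $|T_\ell \setminus T'|$ by one; this will use the bipartite/tree structure of $K_{p,q}$ together with the fact that once an edge of $T'$ enters the basis it never needs to leave.

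Finally, for the sharpness claim, I would construct $C$ and a pair of vertices attaining the bound: when $(p-2)(q-2) \geq 2$ there exist two edge-disjoint spanning trees of $K_{p,q}$, and one can choose $C$ realizing both as dual-feasible vertices at the extremal symmetric difference; the small remaining cases are handled by explicit inspection.
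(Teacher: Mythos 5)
Your vertex--tree dictionary, the adjacency description, and the perturbation remark are all fine, and the double counting correctly shows $|T\setminus T'|\leq (p-1)(q-1)$; but that is a statement about spanning trees, not about the graph of $P$, and the step you defer as ``the hard part'' --- that the leaving edge can always be chosen in $T_\ell\setminus T'$, so that the graph distance equals the exchange distance $|T\setminus T'|$ --- is not merely unproven, it is false, and the theorem you are proving tells you so. Since $|T\setminus T'|\leq |T| = p+q-1$, your scheme would bound the diameter of \emph{every} dual transportation polytope by $p+q-1$ (degenerate $C$ included, by your own perturbation argument). But the sharpness clause says the diameter reaches $(p-1)(q-1)$, and $(p-1)(q-1) > p+q-1$ as soon as $(p-2)(q-2)>2$ (e.g.\ $9>7$ for $p=q=4$); so there are instances whose diameter strictly exceeds the largest possible symmetric difference of two trees. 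The underlying misconception is that in a simple polytope the symmetric difference of the tight-facet sets is only a \emph{lower} bound on the distance, not an upper bound (two antipodal vertices of a hexagon have disjoint facet sets but distance three): dual feasibility can force a pivot to expel an edge of $T'$ that is already in the current tree, and no ordering or ratio test for the entering edges can prevent this in general. Note also that the present thesis states the theorem without proof, citing~\cite{Balinski:DualTransportation}; Balinski's argument there is not a monotone-exchange argument at all but the signature method (cf.~\cite{Balinski:SignatureTransportation}): one tracks the degree sequence of the row nodes of the current tree and shows that a tree with any prescribed signature can be reached, with a pivot count that sums to $(p-1)(q-1)$.

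The same conflation sinks your sharpness plan. Two edge-disjoint spanning trees have symmetric difference exactly $p+q-1$, so even if you realized both as vertices and even if distance equalled symmetric difference, the best lower bound this could ever certify is $p+q-1$, which falls short of $(p-1)(q-1)$ precisely in the range where the claim has content. A genuine tightness construction must exhibit matrices $C$ for which every path between two particular vertices is forced to make ``wasted'' pivots --- entering facets containing neither endpoint and later abandoning them --- which is exactly the behavior your monotone-decrease framework is designed to exclude. So both halves of the proposal need to be replaced, not patched: the upper bound by a signature-type induction (or another argument that does not bound distance by $|T\setminus T'|$), and the lower bound by an explicit family where forced detours accumulate to $(p-1)(q-1)$.
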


In joint work with \given{Jes\'us }De~Loera, \given{Shmuel }Onn, and \given{Francisco }Santos (see~\cite{DeLoera:GraphsTP}), we prove a quadratic bound (in $p+q+s$) on the diameter of $3$-way axial $p \times q \times s$ transportation polytopes. By the universality theorem of De~Loera and Onn in~\cite{DeLoera:Universality} (see also Theorem~\ref{theorem:universality}), a generalization of this bound to \emph{faces} of these polytopes would prove the polynomial Hirsch Conjecture.
\begin{theorem}[\cite{DeLoera:GraphsTP}]\label{theorem:originalaxial}
The diameter of every $3$-way axial $p \times q \times s$ transportation polytope is at most $2(p+q+s-3)^2$.
\end{theorem}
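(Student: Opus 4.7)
The plan is to prove the bound by induction on the parameter $N := p+q+s$, using a ``reduction lemma'' which says that in a small number of pivot steps we can reach a vertex whose support hypergraph has a particular structural feature that lets us pass to a face isomorphic to a strictly smaller 3-way axial transportation polytope. Writing $f(N)$ for the maximum diameter of a non-degenerate $3$-way axial $p\times q\times s$ polytope with $p+q+s = N$, the target is a recursion of the shape $f(N) \leq c(N-3) + f(N-1)$ with $c \leq 4$, which when unrolled yields $f(N) \leq 2(N-3)^2$.

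The first ingredient is a combinatorial description of vertices analogous to Lemma~\ref{lemma:vertexBx}. To each $x$ in a $p\times q\times s$ axial polytope $P$, associate the $3$-partite $3$-uniform hypergraph $H(x)$ on vertex classes $[p]\cup[q]\cup[s]$ whose hyperedges are the triples in $\supp(x)$. Mimicking the bipartite argument, I would prove that $x$ is a vertex iff $H(x)$ is a ``hyperforest'' (i.e., no non-trivial circuit in the linear-algebraic sense), and that two vertices $x, x'$ are adjacent iff $H(x)\cup H(x')$ has exactly one such circuit, which corresponds to a single pivot that exchanges one hyperedge for another. Just as in the classical case, a non-degenerate vertex then has $|\supp(x)| = p+q+s-2$.

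The key reduction lemma is the following: for any vertex $x$, there is a path of length at most $2(N-3)$ in $G(P)$ from $x$ to a vertex $x'$ that has some cell $x'_{i_0,j_0,k_0}$ equal to one of the three marginals $u_{i_0}, v_{j_0}, w_{k_0}$. When $x'_{i_0,j_0,k_0} = u_{i_0}$, for instance, the entire $i_0$-slab of $x'$ is concentrated on the row $(i_0,j_0,k_0)$, so $x'$ lies on a face $F$ of $P$ which (after subtracting off the forced contribution) is isomorphic to a $(p-1)\times q\times s$ axial transportation polytope with marginals $u', v', w'$ satisfying $\sum u'_i = \sum v'_j - u_{i_0}\cdot[\ldots]$, and analogously for the other two kinds of marginals. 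The path from $x$ to $x'$ is constructed by pushing mass onto a chosen cell using the circuit-exchange pivots in the hyperforest $H(x)$: at each stage one circuit in $H(x)\cup\{(i_0,j_0,k_0)\}$ can be closed with a single pivot, and the number of pivots needed is controlled by the ``depth'' of the hyperforest, which is bounded by its size $p+q+s-2$.

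Given the reduction lemma, the inductive step is then: apply the lemma to both $x$ and $y$, aiming for a common cell $(i_0,j_0,k_0)$ and a common value at that cell. This costs at most $2\cdot 2(N-3)$ pivots total and lands both endpoints on a common face of $P$ of axial type with $p'+q'+s' = N-1$, where by induction the distance is at most $2(N-4)^2$. Summing, $f(N) \leq 4(N-3) + 2(N-4)^2 \leq 2(N-3)^2$ as desired. The main obstacle is the reduction lemma: while the ``spanning forest'' picture for classical transportation polytopes makes the analogous reduction visually transparent, in the 3-way setting one must argue carefully that the required circuit-exchange pivots can always be chosen to stay inside the polytope (i.e., preserve non-negativity), and that concentrating mass on a single cell can actually be achieved in linearly many pivots rather than forcing an exponential search through the hyperforest. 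Carrying this out is where the bulk of the technical work in the proof lives; one expects it to mirror, but be more delicate than, the non-revisiting-path arguments of~\cite{Brightwell:LinearTransportation} for the $2$-way case.
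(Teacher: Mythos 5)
Your skeleton is essentially the paper's: the paper also works with non-degenerate polytopes, repeatedly forces a corner cell to absorb an entire marginal, and recurses into the resulting smaller axial polytope, the canonical target being the unique well-ordered (northwest-corner) vertex of Lemma~\ref{lemma:well-ordered}; your ``reduction lemma'' is in substance Lemma~\ref{lemma:decrease-p}. The genuine gap is that this lemma is where all the content lives, and you do not prove it: the sketch ``each circuit in $H(x)\cup\{(i_0,j_0,k_0)\}$ can be closed with a single pivot, and the number of pivots is controlled by the depth of the hyperforest'' does not survive scrutiny. Saturating the chosen cell means emptying one of the three sets $S_1,S_2,S_3$ of support triples sharing an index with it, and the paper's Lemma~\ref{lemma:decrease-S} shows this requires a delicate case analysis: when the three ``private'' sets $R_1,R_2,R_3$ are all nonempty, the natural mass-pushing move changes six entries at once and lands in the relative interior of an edge of a two-dimensional face of $P$ (which can be an octagon), so that a single ``circuit-closing'' step may cost two polytope edges; one must also verify feasibility of the intermediate points and do the support bookkeeping to get the linear count $2|S_1\cup S_2\cup S_3|-3$. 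None of this is delivered by ``depth bounded by $p+q+s-2$,'' and you yourself flag that the bulk of the work is missing — so as it stands this is an outline of the known proof, not a proof.

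Two further points. First, your meeting-on-a-common-face step silently assumes you can prescribe, for both endpoints, which marginal is saturated at the common cell; the pivoting argument only yields that \emph{some} marginal at that cell is saturated. This is repairable — at a fixed cell $(i_0,j_0,k_0)$ only the minimum of $u_{i_0},v_{j_0},w_{k_0}$ can be attained, and by genericity it is unique, so both endpoints are forced onto the same face — but you must say so (the paper sidesteps the issue entirely by routing both endpoints to the single well-ordered vertex). Second, the arithmetic in the inductive step is off: $4(N-3)+2(N-4)^2=2(N-3)^2+2$, and unrolling $f(N)\le 4(N-3)+f(N-1)$ gives $2(N-2)(N-3)$, which proves the weaker bound of Theorem~\ref{theorem:main} but not the stated $2(p+q+s-3)^2$. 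To recover the sharper constant you need the per-level cost $2(z-4)$ at level $z$, plus a single initial pivot to bring the corner cell into the support, which is exactly the count established in Lemma~\ref{lemma:decrease-p} and summed in the paper's proof.
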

In Chapter~\ref{chapter:transportation}, we examine the diameters of $3$-way transportation polytopes, where we will present and prove Theorem~\ref{theorem:originalaxial}. See Section~\ref{section:axialdiam}.

\subsubsection*{Network flow polytopes}

Network flow polytopes are another generalization of transportation polytopes. Let $G=(V,E)$ be a graph with $v=|V|$ nodes and $e=|E|$ directed arcs. For each arc $(i,j) \in E$, fix a capacity lower bound $l_{ij}$ and upper bound $u_{ij}$. The network flow polytope $P$ determined by the directed graph $G$ with capacity bounds $\{(l_{ij},u_{ij}) \mid (i,j) \in E\}$ and demand function $c : V \rightarrow \R$ is a polytope in the $e$ variables $x_{ij}$ ($(i,j) \in E$) with the $2e$ inequalities
\[ l_{ij} \leq x_{ij} \leq u_{ij} \text{, for each } (i,j) \in E\]
and the $v$ equations
\[\sum_{\{j \in V \mid (i,j) \in E\}} x_{ij} - \sum_{\{j \in V \mid (j,i) \in E\}} x_{ji} = c(i) \text{, for each } i \in V.\]
Note that the $p \times q$ transportation polytope defined by vectors $a \in \R^p$ and $b \in \R^q$ can be obtained as the network flow polytope associated to the complete bipartite graph $G$ on $p$ and $q$ nodes with all edges directed in the same direction, by taking $l_{ij}=0$, $u_{ij}=\infty$ and 
\[
c_i = -a_i \text{ for } i =1,\ldots,p,\qquad
c_{i+p} = b_{i} \text{ for } i=1,\ldots,q.
\]

Every sufficiently generic set of parameters produces a simple $(e-v+1)$-dimensional polytope $P$ with at most $2e$ facets. The results in~\cite{Cunningham:Theoretical-properties},~\cite{Goldfarb:Polynomial-simplex}, and~\cite{Orlin:PolytimeNetworkSimplex} prove the following upper bound on the diameter of network flow polytopes.
\begin{theorem}
The diameter of the network flow polytope on a directed graph $G=(V,E)$ is $O(ev \log v)$.
\end{theorem}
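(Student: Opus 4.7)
The plan is to prove this constructively by exhibiting, for any two vertices $x^*$ and $y^*$ of the network flow polytope $P$, an explicit walk of length $O(ev\log v)$ between them in the graph $G(P)$. The proof should proceed in three stages, and the bound comes directly from counting pivot steps of a specialized network simplex method.

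First I would set up the correspondence between the combinatorics of $G$ and the polytope $P$. A basic feasible solution corresponds to the choice of a spanning tree $T\subseteq E$ together with a specification, for every non-tree arc, of whether its flow is pinned at its lower bound $l_{ij}$ or upper bound $u_{ij}$; the tree arcs are then uniquely determined by the conservation equations at the nodes in $V$. Moreover, two vertices of $P$ are adjacent in $G(P)$ precisely when their associated bases differ by a single arc exchange (one tree arc is swapped for one non-tree arc together with possibly toggling the status of that non-tree arc). Consequently, a path of length $k$ in $G(P)$ is the same data as a sequence of $k$ network-simplex pivots.

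Next, fix a generic linear functional $\xi$ on $\R^e$ whose unique minimizer over $P$ is $y^*$ and reinterpret the desired walk from $x^*$ to $y^*$ as the iteration history of a min-cost-flow solver applied to $\xi$ starting from the basis at $x^*$. I would then invoke Orlin's strongly polynomial network simplex framework from~\cite{Orlin:PolytimeNetworkSimplex}, which proceeds in $O(\log v)$ \emph{scaling phases}: at each phase the arc costs and residual capacities are rescaled, and an approximately optimal spanning tree is refined to a tree that is optimal for the current scale. The scaling framework, layered on top of Goldfarb's polynomial bound from~\cite{Goldfarb:Polynomial-simplex} and Cunningham's strongly feasible pivot rule from~\cite{Cunningham:Theoretical-properties}, shows that each phase terminates after $O(ev)$ pivots, so the total pivot count, and hence the length of the walk from $x^*$ to $y^*$, is $O(ev\log v)$.

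The main obstacle will be the potential-function analysis inside a single scaling phase. One must show that in one phase no arc enters or leaves the basis more than $O(v)$ times, which requires a careful choice of pivot rule (a strongly feasible one in the sense of Cunningham) together with a Lyapunov-style quantity built from dual variables and reduced costs: degenerate pivots are forbidden from cycling by the tie-breaking rule, and non-degenerate pivots must strictly decrease the potential by an amount bounded below in terms of the scale. I would follow Cunningham's original pivot-selection analysis to control degeneracy, Goldfarb's argument to bound pivots in the uncapacitated subproblems obtained after fixing the status of saturated arcs, and Orlin's scaling argument to glue the phases together and deliver the $O(ev\log v)$ bound on the total number of edges traversed, proving the theorem.
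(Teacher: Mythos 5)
Your overall route is the same one the paper intends: the thesis never proves this statement, it simply quotes it from the literature (in the survey chapter it cites Cunningham, Goldfarb, and Orlin; in Chapter~3 the bound is attributed directly to Orlin's pivot-count theorem for the primal network simplex method), and your plan of bounding the diameter by the number of network-simplex pivots needed to reach a vertex $y^*$ that is the unique optimum of a suitably chosen cost vector, starting from the tree basis of $x^*$, is exactly the reconstruction that citation presupposes. The tree-basis/vertex correspondence and the fact that each pivot traverses at most one edge of $G(P)$ (degenerate pivots traverse none, which only helps) are fine.

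There is, however, one step that fails as written: you take a \emph{generic} linear functional $\xi$ uniquely minimized at $y^*$ and then claim Orlin's framework runs in $O(\log v)$ scaling phases. The number of cost-scaling phases in that analysis is $O(\log(vC))$, where $C$ is the largest cost magnitude, and for a generic $\xi$ this is not bounded by any function of $v$ and $e$; the cost-independent (strongly polynomial) version of Orlin's pivot bound is $O(e^2 v \log v)$, which is weaker than the theorem you are proving. To get $O(ev\log v)$ you must exploit the freedom in choosing $\xi$: take the node potentials to be zero and set $c_{ij}=0$ on the arcs of the spanning tree associated with $y^*$, $c_{ij}=+1$ on nontree arcs at their lower bound, and $c_{ij}=-1$ on nontree arcs at their upper bound, so that $y^*$ is the unique optimum and $C=1$. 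Then $\log(vC)=O(\log v)$, the phase count is $O(\log v)$, the $O(ev)$ pivots per phase give a walk of length $O(ev\log v)$ from $x^*$ to $y^*$, and the argument goes through. With that one-line repair your sketch matches the machinery the paper cites.
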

We note that $ev \log v$ is $O(n^2 \log n)$, where $n$ is the number of facets of $P$.

A matrix is totally unimodular if all of its subdeterminants are $0$, $1$, or $-1$. \given{Martin }Dyer and \given{Alan }Frieze (see~\cite{Dyer:RandomWalks}) gave a polynomial upper bound on the diameter of polyhedra whose defining matrix is totally unimodular, a case that includes all transportation and network flow polytopes.
\begin{theorem}[\cite{Dyer:RandomWalks}]
Let $A$ be a totally unimodular $n \times d$ matrix and let $b \in \R^n$.  The diameter of the polyhedron $P = \{x \in \R^d \mid Ax \leq b \}$ is in $O(d^{16}n^3(\log(dn))^3)$.
\end{theorem}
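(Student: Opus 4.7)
The plan is to combine the rigid structural constraints that total unimodularity places on the vertices of $P$ with a random-walk (conductance) argument for bounding the graph-diameter. First I would perform standard reductions: by Lemma~\ref{lemma:simple}, I may assume $P$ is simple, and after a small rational perturbation of $b$ (which does not destroy total unimodularity of $A$), I can assume $P$ is bounded in every coordinate direction in which it was bounded, with no vertex degeneracies. This reduces the task to bounding $\diam(G(P))$ for a simple $d$-polytope with at most $n$ facets, with the crucial extra datum that vertices are basic feasible solutions $x=A_B^{-1}b_B$ of a $d\times d$ subsystem $A_B$ with $\det(A_B)=\pm1$.

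Second, I would exploit total unimodularity to control the geometry of $P$ quantitatively. Cramer's rule gives every coordinate $x_j=\pm\det(A_B^{(j)})$ where $A_B^{(j)}$ replaces column $j$ by $b_B$, so each $x_j$ is an integer combination of entries of $b$ whose coefficients are $0$ or $\pm1$. This has three consequences that I would assemble as lemmas: (i) vertices of $P$ lie on a well-spread rational grid whose step-size is controlled in each coordinate direction; (ii) slicing $P$ by any axis-parallel hyperplane $H_k=\{x_j=\alpha_k\}$ at a set of $O(n)$ carefully-chosen levels decomposes $P$ into $O(n)$ ``slabs'' each containing at most $\binom{n}{d-1}$ ``slice-vertices''; (iii) adjacent slabs share a common face whose vertices are all within one pivot of some vertex of each slab, because adjacent vertices of a simple polytope differ in exactly one tight facet.

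Third, inside each slab I would apply a random-walk argument in the spirit of Sinclair–Jerrum: define the lazy simple random walk on $G(P\cap\mathrm{slab})$, and show via a conductance bound that its mixing time is polynomial in $n$ and $d$. The conductance is estimated by an edge-isoperimetric inequality for simple polytopes, using Balinski's $d$-connectivity (Theorem~\ref{theorem:balinski}) to produce many disjoint edge-paths between any two sets of vertices, combined with bounds on the number of vertices in any face coming from consequence~(i). Converting mixing time to diameter via a standard coupling or via the ``doubling trick'' (any two states reachable by a walk of length $T$ with positive probability are at graph-distance $\le T$) contributes one of the polynomial factors. Finally, concatenating paths across the $O(n)$ slabs, each of length $O(d^{15}n^2\log^3(dn))$ (the mixing-time bound inside a slab), yields the overall $O(d^{16}n^3\log^3(dn))$ estimate.

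The hardest step, and the one that determines the large exponent on $d$, will be the isoperimetric / conductance bound inside a slab. Unlike a generic convex body, the graph of a polytope has no obvious volumetric isoperimetric inequality; one has to \emph{simulate} a volume by weighting vertices according to the determinantal structure produced by total unimodularity, and then verify a Cheeger-style inequality for that weighted walk. This is where every polynomial factor in $d$ appears, because the determinantal weights depend on the $d\times d$ minors of $A$ and on the number of facets meeting each vertex (which is exactly $d$ by simplicity), and several polynomial estimates (coordinate spread, vertex count in a face, number of near-tight constraints) compound multiplicatively. Once that conductance bound is proved, the passage to the global diameter via slab concatenation is routine.
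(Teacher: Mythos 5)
There is a genuine gap, and it sits exactly where you place it: the ``conductance bound inside a slab'' is not proved, it is only announced as the hardest step. A Cheeger-type lower bound on the conductance of the vertex--edge graph of a (simple) polytope, polynomial in $n$ and $d$, is not available from anything you invoke: Balinski's theorem gives only $d$ vertex-disjoint paths between two vertices, which is far too weak to control the edge-boundary of sets that may contain exponentially many vertices, and ``weighting vertices according to the determinantal structure'' is a hope, not an argument. Since a polynomial conductance (or even edge-expansion) bound for polytope graphs would by itself imply polynomial diameter bounds of the kind one is trying to prove, your plan essentially defers the whole theorem to an unproven claim of comparable difficulty. A second, more localized problem is step (ii) of your structural reductions: for totally unimodular $A$, Cramer's rule indeed gives $x_j$ as a $0,\pm1$ combination of entries of $b$, but the number of distinct values such combinations take can be exponential in $n$, so there is no justification for decomposing $P$ into only $O(n)$ slabs by axis-parallel cuts; the slab-concatenation count at the end therefore has no basis.

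For comparison, the argument of Dyer and Frieze does not bound the conductance of the graph of $P$ at all. They run a random walk in the \emph{continuous} body (where isoperimetric inequalities for convex sets, in the style of the volume-computation literature, are available), use total unimodularity to control the geometry (integrality and conditioning of the basis matrices, hence widths and rounding radii), and then convert the trajectory of the walk into a polynomially long sequence of simplex pivots between vertices --- this is the ``randomised dual simplex algorithm'' of their title. So the random-walk idea is the right instinct, but it must be applied to the body, where isoperimetry is a theorem, rather than to the skeleton, where it is an open problem; as written, your proposal does not yield the stated $O(d^{16}n^3(\log(dn))^3)$ bound.
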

In the statement, the polyhedron $P$ can be assumed to be a $d$-polytope with $n$ facets. 

\section{Negative results}\label{section:negative}

To contrast the previous section, we present counter-examples to three natural variants of the Hirsch Conjecture. 

\subsection{The Unbounded and Monotone Hirsch Conjectures are false}\label{section:unbounded-monotone}

In the Hirsch Conjecture as we have stated it, we only consider bounded polytopes. However, in the context of linear programming the feasible region may well not be bounded, so the conjecture is equally relevant for \emph{unbounded} polyhedra. In fact, that is how Hirsch originally posed the question (see page 168 of~\cite{Dantzig:Linear}).

Moreover, for the simplex method in linear programming one follows \emph{monotone paths}: starting at an initial vertex $x$ of the feasibility polyhedron $P$, one does pivot steps (that is, one moves along edges) always increasing the value of the linear functional $\xi$ to be maximized, until one arrives at a vertex $x^*$ where no pivot step gives a greater value to $\xi$. Convexity then implies that the vertex $x^*$ is the global maximum for $\xi$ in the feasible region. This raises the question whether a \emph{monotone} version of the Hirsch Conjecture holds. Both variants of the Hirsch Conjecture fail. But before going any further, let us state both conjectures carefully:
\begin{itemize}

\item The Unbounded Hirsch Conjecture: {\sl Is the diameter of every (bounded or not) $d$-polyhedron $P$ with $n$ facets at most $n-d$?}

\item The Monotone Hirsch Conjecture: {\sl Is there, for every $d$-polytope $P$ with $n$ facets and every linear functional $\xi$,  a $\xi$-monotone path with at most $n-d$ edges from any vertex $v$ to a vertex $v^*$ where $\xi$ is maximized?} \emph{Monotonically} means that we require the value of $\xi$ to increase at every step.
\end{itemize}

Here we show that the unbounded and monotone versions of the Hirsch Conjecture fail. Both proofs are based on the Hirsch-sharp polytope $Q_4$ described in Section~\ref{section:Q4}. In fact, we want to emphasize that knowing the mere existence of such a polytope is enough. We are not going to use any property of $Q_4$ other than the fact that it is Hirsch-sharp, simple, and has $n>2d$. Simplicity is not a real restriction since it can always be obtained without decreasing the diameter (see Lemma~\ref{lemma:simple}). The inequality $n>2d$, however, is essential as the constructions below would not work with, for example, a $d$-cube.

\begin{theorem}[Klee-Walkup~\cite{Klee:d-step}]
\label{theorem:unbounded}
There is a simple unbounded polyhedron $\tilde Q_4$ with eight facets and dimension four and whose graph has diameter five.
\end{theorem}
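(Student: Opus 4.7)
The plan is to take the Klee--Walkup polytope $Q_4$ and push one of its facets off to infinity via a projective transformation, producing an unbounded $4$-dimensional polyhedron $\tilde Q_4$ with only eight facets while preserving the pair of vertices at distance five.

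First I would locate a facet $F$ of $Q_4$ that misses the two diameter-realizing vertices. In the polar (simplicial) description from Section~\ref{section:Q4}, Theorem~\ref{theorem:klee-walkup} tells us that the tetrahedra $abcd$ and $efgh$ of $Q_4^\Delta$ are at polar-graph distance five; equivalently, the corresponding vertices $u, v$ of the simple polytope $Q_4$ are at distance five in $G(Q_4)$. Since $Q_4$ is simple and $d=4$, the vertex $u$ lies on exactly four facets (namely those dual to $a,b,c,d$) and $v$ lies on four others (those dual to $e,f,g,h$). These account for eight of the nine facets of $Q_4$, so the one remaining facet---the one dual to the ``central'' vertex $w$ of $Q_4^\Delta$---contains neither $u$ nor $v$. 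Call this facet $F$.

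Next I would apply a projective transformation sending the affine hull of $F$ to the hyperplane at infinity, equivalently, simply drop the facet-defining inequality of $F$ from the system presenting $Q_4$. Let $\tilde Q_4$ be the resulting polyhedron. It is $4$-dimensional (we removed only one inequality), has eight facets (the facets of $Q_4$ other than $F$), and is unbounded (otherwise $F$ would not have been a facet of $Q_4$). Simpleness is a local condition at each vertex, and the vertices of $\tilde Q_4$ are exactly the vertices of $Q_4$ \emph{not} lying on $F$; at each of them, the four incident facets of $Q_4$ are still the four incident facets in $\tilde Q_4$, so $\tilde Q_4$ is simple.

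Finally I would verify that $\diam(G(\tilde Q_4)) = 5$. The bounded edges of $\tilde Q_4$ are exactly the edges of $Q_4$ joining two vertices not on $F$, so $G(\tilde Q_4)$ is the subgraph of $G(Q_4)$ induced on the vertices whose dual tetrahedron does not use $w$. This is precisely the polar graph of the anti-star of $w$ in $\partial Q_4^\Delta$, which is depicted in Figure~\ref{figure:polarK}. Since $u,v$ remain vertices and deleting vertices cannot create shortcuts, $d_{\tilde Q_4}(u,v) \geq d_{Q_4}(u,v) = 5$; on the other hand, Figure~\ref{figure:polarK} exhibits explicit paths of length five between $abcd$ and $efgh$ avoiding $w$, giving $d_{\tilde Q_4}(u,v) = 5$ and hence $\diam(G(\tilde Q_4)) \geq 5$. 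The opposite inequality $\diam(G(\tilde Q_4)) \leq 5$ follows from the same figure (one checks by inspection that every pair of tetrahedra in the anti-star of $w$ is joined by a path of length at most five). The main (and only non-routine) obstacle is the last verification, but it is precisely the content of Theorem~\ref{theorem:klee-walkup} and is visible from Figure~\ref{figure:polarK}; hence $\tilde Q_4$ violates the Hirsch bound $n-d = 8-4 = 4$.
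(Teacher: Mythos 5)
Your proposal is correct and follows essentially the same route as the paper: by simplicity the two vertices at distance five lie on at most eight of the nine facets, so the remaining facet (dual to $w$) avoids both, and sending it to infinity by a projective transformation yields an unbounded simple $4$-polyhedron with eight facets whose graph is a subgraph of $G(Q_4)$ still containing both vertices, hence of diameter at least five. Your extra observations (identifying $G(\tilde Q_4)$ with the polar graph of the anti-star of $w$ in Figure~\ref{figure:polarK}, and checking the matching upper bound there) only sharpen the same argument.
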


\begin{proof}
Let $Q_4$ be the simple Klee-Walkup polytope with nine facets,  and let $u$ and $v$ be vertices of $Q_4$ at distance five from one another. By simplicity, the vertices $u$ and $v$ lie in (at most) eight facets in total and there is (at least) one facet $F$ not containing $u$ nor $v$. 

Let $\tilde Q_4$ be the unbounded polyhedron obtained by a projective transformation that sends this ninth facet to infinity. The graph of $\tilde Q_4$ contains both $u$ and $v$, and is a subgraph of that of $\tilde Q_4$, hence its diameter is still at least five. See Figure~\ref{figure:unbounded} for a schematic rendition of this idea.
\end{proof}

\begin{figure}[htb]
\begin{center}
\includegraphics[width=4in]{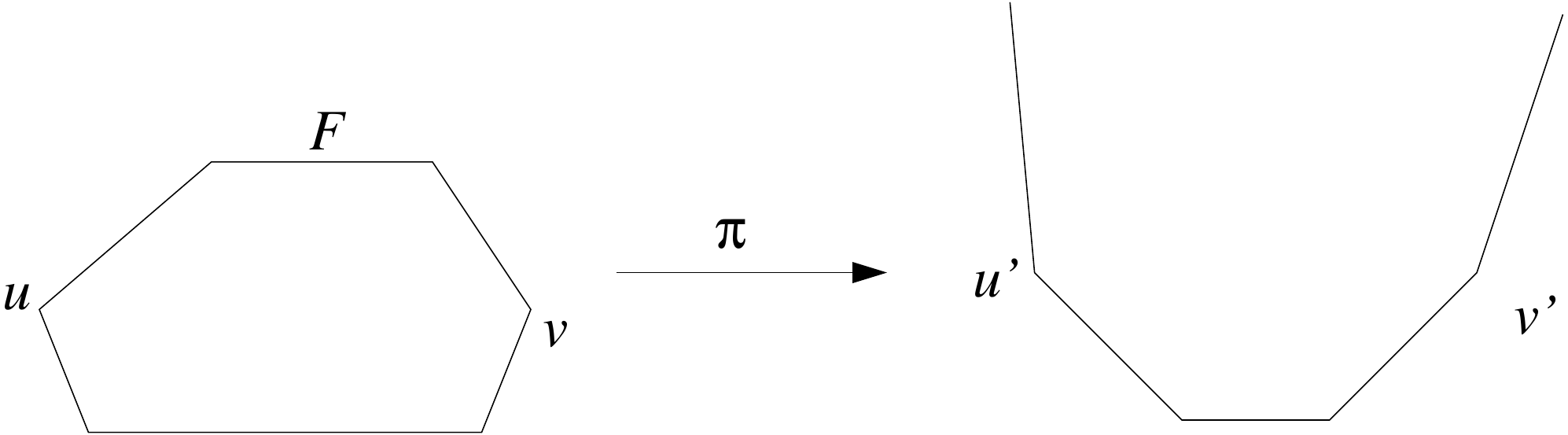}
\caption{Disproving the Unbounded Hirsch Conjecture}
\label{figure:unbounded}
\end{center}

\end{figure}

\begin{remark}
It is interesting to observe that the ``converse'' of the above proof also works: from any non-Hirsch unbounded polyhedron $\tilde Q$ with eight facets and dimension four, one can build a bounded polytope with nine facets and diameter still five, as follows:

Let $u$ and $v$ be vertices of $\tilde Q$ at distance five from one another.
Construct the polytope $Q$ by cutting $\tilde Q$ with a hyperplane that leaves all the vertices of $\tilde Q$ on the same side. This adds a new facet and changes the graph, by adding new vertices and edges on that facet. But $u$ and $v$ will still be at distance five: to go from $u$ to $v$ either we do not use the new facet $F$ that we created (that is, we stay in the graph of $\tilde Q_4$) or we use a pivot to enter the facet $F$ and at least another four to enter the four facets containing $v$: since the Hirsch Conjecture holds for 3-dimensional polyhedra, $u$ and $v$ cannot lie in a common facet of $\tilde Q$.
\end{remark}

We now turn to the Monotone Hirsch Conjecture:
\begin{theorem}[Todd~\cite{Todd:The-monotonic-bounded}]
\label{theorem:monotone}
There is a simple bounded $4$-polytope $P$ with eight facets, two vertices $u$ and $v$ of $P$, and a linear functional $\xi$ such that:
\begin{enumerate}
\item $v$ is the only maximal vertex for $\xi$.
\item Any edge-path from $u$ to $v$ and monotone with respect to $\xi$ has length at least five.
\end{enumerate}
\end{theorem}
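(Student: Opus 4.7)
My plan is to adapt the projective construction of Theorem~\ref{theorem:unbounded} by installing a linear functional on the unbounded $8$-facet polyhedron $\tilde{Q}_4$ and then gently deforming one facet-defining hyperplane to recover boundedness without increasing the facet count. As in Theorem~\ref{theorem:unbounded}, start with the Klee-Walkup polytope $Q_4$ of Section~\ref{section:Q4}: a simple $4$-polytope with $9$ facets containing vertices $u$ and $v$ at graph distance $5$. Simplicity implies that $u$ and $v$ each lie on exactly four facets, hence on at most eight together, so some facet $F$ contains neither. A projective transformation sending $F$ to infinity yields an unbounded simple polyhedron $\tilde{Q}_4$ with $8$ facets whose graph is obtained from $G(Q_4)$ by deleting the vertices on $F$; in particular, $u$ and $v$ remain at graph distance at least $5$ in $\tilde{Q}_4$.

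Next, I would choose a linear functional $\xi$ on $\R^4$ satisfying: (a) $\xi$ is bounded above on $\tilde{Q}_4$; (b) $v$ is the unique vertex of $\tilde{Q}_4$ at which $\xi$ attains its maximum; and (c) $\xi(u)<\xi(v)$. Such a $\xi$ exists because the recession cone of $\tilde{Q}_4$ (the cone of directions ``through'' the ex-facet $F$) is a pointed cone of $\R^4$, so its polar is full-dimensional; a generic gradient in the interior of this polar gives (a), and a subsequent small perturbation enforces (b) and (c). To convert $\tilde{Q}_4$ into a bounded polytope $P$ with still only $8$ facets, I would tilt one of the hyperplanes defining an unbounded facet of $\tilde{Q}_4$ by a small inward rotation, so that $P$ becomes bounded without adding or removing any facet. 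The rotation is chosen so that all new vertices of $P$ appear with $\xi$-value strictly less than $\xi(u)$, and so that the subgraph of $G(P)$ induced by the vertices with $\xi$-value at least $\xi(u)$ coincides with the corresponding subgraph of $G(\tilde{Q}_4)$. After a final generic perturbation, $P$ is a simple bounded $4$-polytope with exactly $8$ facets, in which $u$ and $v$ are vertices and $v$ is the unique $\xi$-maximum.

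It remains to show that every $\xi$-monotone edge-path in $P$ from $u$ to $v$ has length at least $5$. Such a path can only visit vertices with $\xi$-value at least $\xi(u)$, which by construction all belong to the ``old'' vertex set of $\tilde{Q}_4$. Hence the path is a path in the subgraph of $G(P)$ induced by these vertices, which coincides with the corresponding subgraph of $G(\tilde{Q}_4)$, and therefore with a subgraph of $G(Q_4)$. Since $u$ and $v$ are at graph distance $5$ in $Q_4$, any $u$-to-$v$ path in this subgraph has at least $5$ edges, giving the required lower bound. The main obstacle will be the tilting step: one must verify that the small rotation can be arranged so that $u$ and $v$ survive as vertices, the upper part of the graph is combinatorially unchanged, and no new short monotone $u$-to-$v$ path is created near the tilted facet. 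This is a local verification, made tractable by the explicit description of the vertex-facet incidences of $Q_4$ encoded in Figure~\ref{figure:polarK}.
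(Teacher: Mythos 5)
Your overall strategy (bound the unbounded $8$-facet example, then choose $\xi$ so that everything new lies below $\xi(u)$) is in the right spirit, but the bounding step has a genuine gap. The recession cone of $\tilde{Q}_4$ is the cone over the facet $F$ that was sent to infinity, hence a \emph{full-dimensional} pointed cone in $\R^4$. Consequently no ``small inward rotation'' of a single facet-defining hyperplane can make $\tilde{Q}_4$ bounded: after a small perturbation of one inequality the recession cone still contains a full-dimensional subcone, so the polyhedron stays unbounded. To kill the recession cone by altering one inequality you would need the new normal to be strictly positive on the whole recession cone of the remaining seven constraints, i.e.\ a drastic repositioning that replaces the old unbounded facet by a ``cap'' across the unbounded end. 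Once the tilt is that large, the structural claims you rely on are no longer a ``local verification'': all vertices and edges lying on the tilted facet are destroyed or re-routed, and--crucially--since $u$ and $v$ share no facet of $Q_4$ (if they did, that facet would be a simple $3$-polytope with at most $8$ facets, and Theorem~\ref{theorem:hirschford3} would force their distance to be at most $4$), the eight facets of $\tilde{Q}_4$ are exactly the four at $u$ together with the four at $v$. So \emph{whichever} facet you tilt contains $u$ or $v$; its vertex figure and incident edges change, and both the survival of $u,v$ as vertices, the claim that the induced subgraph on $\{\xi\ge\xi(u)\}$ is unchanged, and the final ``subgraph of $G(Q_4)$, hence distance $\ge 5$'' conclusion all need arguments your proposal does not supply.

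The paper circumvents exactly this difficulty by never modifying any of the eight retained inequalities. Instead of working with $\tilde{Q}_4$ (hyperplane of $F$ at infinity), it sends to infinity a different hyperplane $H_0$: one containing $H_1\cap H_2$, lying just beyond a supporting hyperplane $H_1$ at $v$, chosen so that the wedge between $H_0$ and $H_1$ contains no $4$-fold intersection of facet hyperplanes. After this projective transformation one simply \emph{drops} the ninth inequality; dropping (rather than tilting) leaves every face of $Q_4'$ not contained in $F'$ intact, the wedge condition gives boundedness with exactly eight facets, and since $H_1'$ and $H_2'$ become parallel, a functional $\xi$ constant on $H_1'$ and maximized at $v'$ automatically places all new vertices below the $\xi$-level of $F'$, hence below $\xi(u')$, so monotone paths from $u'$ to $v'$ live in $G(Q_4)$. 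In short, the bounded $8$-facet model is obtained by moving the hyperplane at infinity (a projective change made \emph{before} deleting the facet), not by an affine tilt of one facet afterwards; that is the missing idea. (A smaller fixable point: to get $v$ as the unique maximizer you should take $\xi$ in the interior of the normal cone of $v$, which is automatically contained in the polar of the recession cone; a generic functional in the polar of the recession cone has a unique maximizer, but not necessarily $v$.)
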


\begin{proof}
Let $Q_4$ be the Klee-Walkup polytope. 
Let $F$ be the same ``ninth facet'' as in the previous proof, one that is not incident to the two vertices $u$ and $v$ that are at distance five from each other. Let $H_2$ be the supporting hyperplane containing $F$ and let $H_1$ be any supporting hyperplane at the vertex $v$. Finally, let $H_0$ be a hyperplane containing the (codimension two)
intersection of $H_1$ and $H_2$ and which lies ``slightly beyond $H_1$,'' as in Figure~\ref{figure:monotone}. (Of course, if $H_1$ and $H_2$ happen to be parallel, then $H_0$ is taken to be parallel to them and close to $H_1$.)
 The exact condition we need on $H_0$ is that it does not intersect $Q_4$ and the small, wedge-shaped region between $H_0$ and $H_1$ does not contain the intersection of any 4-tuple of facet-defining hyperplanes of $Q_4$. 

\begin{figure}[htb]
\begin{center}
\includegraphics[width=0.9\textwidth]{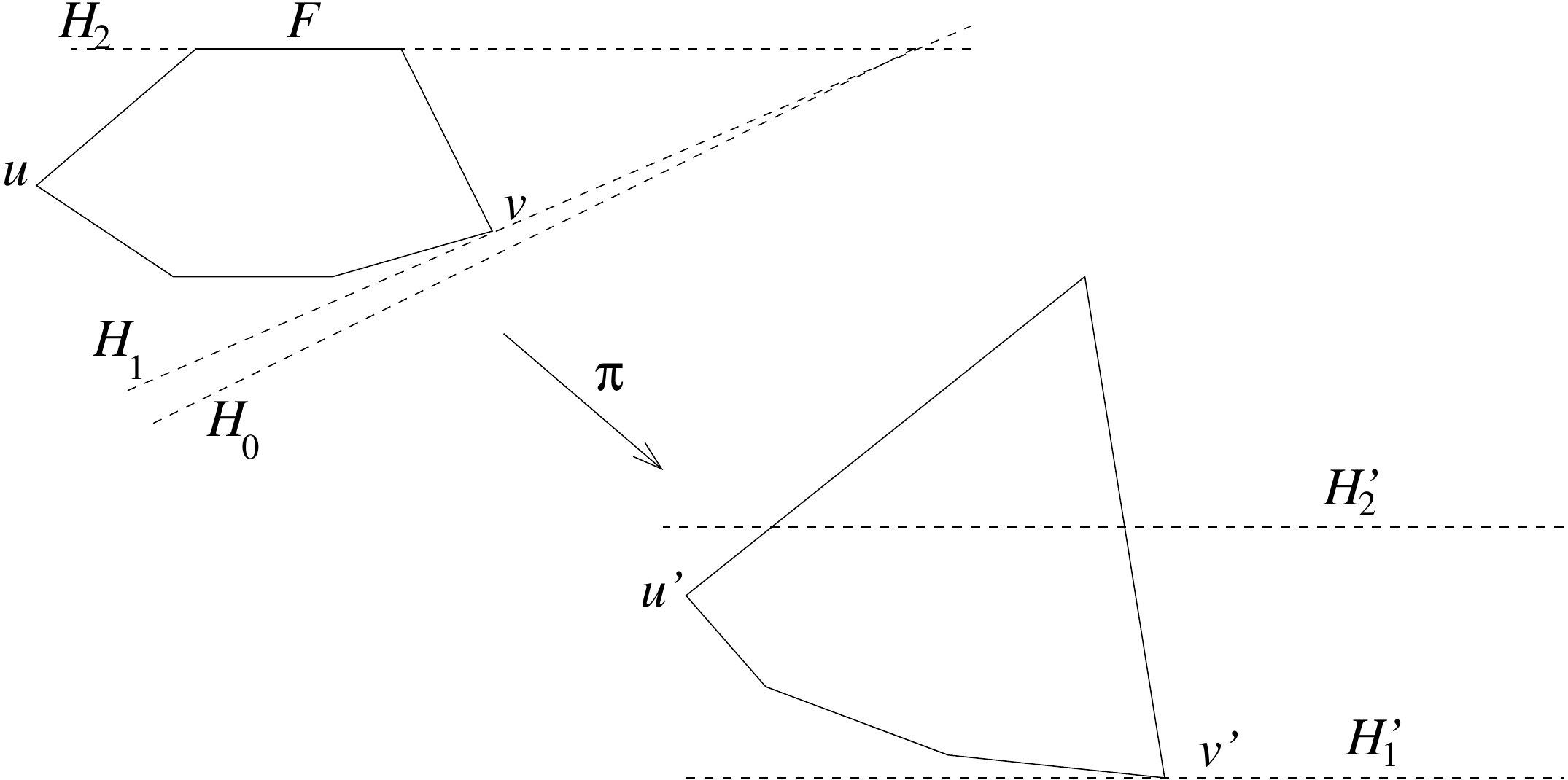}
\caption{Disproving the Monotone Hirsch Conjecture}
\label{figure:monotone}
\end{center}
\end{figure}

We now make a projective transformation $\pi$ that sends $H_0$ to be the hyperplane at infinity. In the polytope $Q'_4=\pi(Q_4)$ we ``remove'' the facet $F'=\pi(F)$ that is not incident to the two vertices $u'=\pi(u)$ and $v'=\pi(v)$. That is, we consider the polytope $Q''_4$ obtained from
$Q'_4$ by forgetting the inequality that creates the facet $F'$ (see Figure~\ref{figure:monotone} again).  Then $Q''_4$ will have new vertices not present in $Q'_4$, but it also has the following properties:

\begin{enumerate}
\item $Q''_4$ is bounded. Here we are using the fact that the wedge between $H_0$ and $H_1$ contains no intersection of facet-defining hyperplanes: this implies that no facet of $Q''_4$ can go ``past infinity.''

\item It has eight facets: four incident to $u'$ and four incident to $v'$.

\item The functional $\xi$ that is maximized at $v'$ and constant on its supporting hyperplane $H'_1=\pi(H_1)$ is also constant on $H'_2=\pi(H_2)$, and $u'$ lies on the same side of $H'_1$ as $v'$.
\end{enumerate}

In particular, no $\xi$-monotone path from $u'$ to $v'$ crosses $H'_1$,
which means it is also a path from $u'$ to $v'$ in the polytope $Q'_4$, combinatorially isomorphic to  $Q_4$.
\end{proof}
The diameter of Todd's polytope is four, so it does not give a counter-example to the Hirsch Conjecture.

In both the constructions of Theorems~\ref{theorem:unbounded} and~\ref{theorem:monotone} one can glue several copies of the initial block $Q_4$ to one another. The basic idea is (the polar of) the same one used in Corollary~\ref{corollary:glue}. We skip details, but in both cases we increase the number of facets by four and the diameter by five, per $Q_4$ glued, obtaining:
\begin{theorem}[Klee-Walkup, Todd]
The Unbounded and Monotone Hirsch Conjectures are false:
\begin{enumerate}
\item There are unbounded $4$-polyhedra with $4+4k$ facets and diameter $5k$, for every $k\geq 1$.

\item There are bounded $4$-polyhedra with $5+4k$ facets and vertices $u$ and $v$ of them with the property that any monotone path from $u$ to $v$ with respect to a certain linear functional $\xi$ maximized at $v$ has length at least $5k$.
\end{enumerate}
\end{theorem}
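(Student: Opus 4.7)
The plan is to iterate the basic constructions of Theorems~\ref{theorem:unbounded} and~\ref{theorem:monotone} by gluing multiple copies of the building blocks end-to-end, in direct analogy with the chaining technique used in the proof of Corollary~\ref{corollary:glue}, just now executed in the primal (simple) setting rather than the simplicial one.

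For part~(1), I would start with the unbounded $4$-polyhedron $\tilde Q_4$ (with $8$ facets and vertices $u,v$ at distance $5$). Since $\tilde Q_4$ is simple and $4$-dimensional, each of $u$ and $v$ lies on exactly $4$ facets, and the two $4$-tuples are disjoint. I would take $k$ copies $\tilde Q_4^{(1)},\ldots,\tilde Q_4^{(k)}$ and, after applying suitable projective transformations so that the four facets through $v^{(i)}$ of $\tilde Q_4^{(i)}$ match the four facets through $u^{(i+1)}$ of $\tilde Q_4^{(i+1)}$, identify them pairwise. The resulting object remains an unbounded simple $4$-polyhedron, and a direct count gives $8 + 4(k-1) = 4+4k$ facets, since at each of the $k-1$ junctions the two $4$-tuples of facets merge. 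For the diameter, one argues that any edge-path from $u^{(1)}$ to $v^{(k)}$ must cross each junction and hence restrict to a path in each block from its ``input vertex'' to its ``output vertex''; applying Theorem~\ref{theorem:klee-walkup} to each of the $k$ blocks gives a lower bound of $5k$ on the length, which is achieved by concatenating five-edge paths in each block.

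For part~(2), the strategy is parallel but uses Todd's bounded polytope $Q_4''$ from Theorem~\ref{theorem:monotone} as the building block. We chain $k$ copies together exactly as above, taking care that the linear functionals $\xi^{(i)}$ and $\xi^{(i+1)}$ are aligned at each junction so that they patch into a single linear functional $\xi$ on the glued polyhedron, and that $\xi$ attains strictly larger values in block $i+1$ than in block $i$. An extra cap-off facet must be added at one end to maintain boundedness after the iterated projective transformations, which accounts for the $+5$ rather than $+4$ in the count $5+4k$. Since within each block a $\xi$-monotone path from the entry vertex to the exit vertex requires at least five edges by Theorem~\ref{theorem:monotone}, and since $\xi$-monotonicity forbids ever re-entering an earlier block, any $\xi$-monotone path from $u^{(1)}$ to the maximizer $v^{(k)}$ has length at least $5k$.

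The principal obstacle in both parts is to guarantee that the geometric gluing is actually realizable in $\R^4$ as a convex polyhedron, and that the combinatorial identification does not create shortcuts through the junctions. The existence of the required projective transformations follows by adjusting parameters as in the proofs of Theorems~\ref{theorem:unbounded} and~\ref{theorem:monotone} (choosing auxiliary hyperplanes that meet the polytope in the desired way while avoiding spurious incidences). Ruling out shortcuts amounts to showing that the four-facet interface at each junction separates the graph of the block on one side from the graph on the other, so that every (monotone) edge-path must cross the interface at a single junction vertex; this can be established by an application of Balinski-style connectivity in each block, or equivalently by the non-revisiting analysis of Section~\ref{section:equivalences} restricted to each individual copy. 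In the monotone case, verifying that $\xi$ remains strictly increasing across junctions is a further mild constraint on the choice of projective transformations, but follows from the flexibility already used in the proof of Theorem~\ref{theorem:monotone}.
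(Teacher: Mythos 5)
Your overall architecture---chaining copies of the Klee-Walkup block by the polar of connected sum (blending at simple vertices), gaining four facets per copy and aiming for five units of diameter per copy---is exactly the route the paper sketches, but your justification of the crucial bound $5k$ has a genuine gap. You claim that any path from $u^{(1)}$ to $v^{(k)}$ ``restricts to a path in each block from its input vertex to its output vertex'' and then invoke Theorem~\ref{theorem:klee-walkup} once per block. However, in the blending operation the two identified vertices $v^{(i)}$ and $u^{(i+1)}$ do not survive: the blend deletes them and joins the four edges formerly incident to $v^{(i)}$ to the four edges formerly incident to $u^{(i+1)}$ by a matching, so a path crossing the $i$-th junction enters block $i+1$ at a former \emph{neighbor} of $u^{(i+1)}$ and leaves at a former neighbor of $v^{(i+1)}$. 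Each intermediate block is therefore only guaranteed to contribute $5-2=3$ interior steps plus one junction edge, and the count your argument actually yields is $4k+1$, not $5k$. This is precisely the loss recorded in the paper's own discussion before Corollary~\ref{corollary:glue} (gluing ``almost adds'' the diameters, giving $\delta_1+\delta_2-1$); recovering the lost unit per junction is the real content, and in Corollary~\ref{corollary:glue} it required the iterated wedge and the fast/slow-neighbor property of Lemma~\ref{lemma:ops-iterated} so that junctions could be matched fast-to-slow. You would need the analogous extra property of the Klee-Walkup block here (for part (1)), and a monotone version of it (for part (2)); your appeal to Balinski connectivity and non-revisiting only rules out paths that bypass a block altogether, which is a different issue.

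Two further points in part (2) are asserted rather than proved. Theorem~\ref{theorem:monotone} bounds $\xi$-monotone paths between the two \emph{specific} vertices $u'$ and $v'$ of Todd's block, but after blending the path never visits those vertices, so you need a statement about monotone paths between neighbors of $u'$ and neighbors of $v'$, compatible with your requirement that $\xi$ increase across junctions (note that if all $\xi$-values in block $i+1$ exceeded those in block $i$, the junction edges themselves would have to respect this, which constrains the matching). Also the facet count $5+4k$ is explained only by an ad hoc ``cap-off facet.'' In short, the approach is the one the paper intends (and explicitly leaves as ``details skipped''), but the exact additivity of five per glued block---the only nontrivial point of the statement---is not established by the argument you give.
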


This leaves us with the following questions:
\begin{openproblem}
Improve these constructions so as to get the ratio of ``diameter versus facets'' bigger than $5/4$. Can the ratio be as big as two? That is: Is there a $d$-dimensional polytope $P$ with $n$ facets having the property that, for every linear functional $\xi$ on $P$, there is a $\xi$-monotone path with at least $2(n-d)$ from any vertex $v$ to a vertex $v^*$ where $\xi$ is maximized? Is there an unbounded $d$-dimensional polyhedron $P$ with $n$ facets whose diameter is $2(n-d)$ or greater?
\end{openproblem}
A ratio bigger than two for the Unbounded Hirsch Conjecture would probably yield counter-examples to the bounded Hirsch Conjecture.

\given{G\"unter }Ziegler (see page 87 of~\cite{Ziegler:Lectures}) poses the following conjecture stronger than the Monotone Hirsch Conjecture: 
\begin{conjecture}[\emph{Strict} Monotone Hirsch Conjecture]
For every linear functional $\xi$ on a $d$-polytope with $n$ facets there is a $\xi$-monotone path of length at most $n-d$.
\end{conjecture}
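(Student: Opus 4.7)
Since the statement is an open conjecture, my plan is necessarily speculative; I will outline the most natural line of attack and be explicit about where it breaks down. I interpret the conjecture in its most restrictive form: for every simple $d$-polytope $P$ with $n$ facets and every generic linear functional $\xi$, there is a $\xi$-monotone edge path from the unique $\xi$-minimum vertex $v_{\min}$ to the unique $\xi$-maximum vertex $v_{\max}$ of length at most $n-d$. The first reductions are the standard ones: by Lemma~\ref{lemma:simple} and a small perturbation, we may assume $P$ is simple; by a further perturbation of $\xi$ (which cannot shorten the longest monotone path), we may assume $\xi$ is in general position, so every edge of $P$ acquires a well-defined orientation and $v_{\min},v_{\max}$ are unique. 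Equivalently, we work with an acyclic orientation $G_\xi(P)$ of the graph of $P$ having a unique source and unique sink, and we ask for a directed $v_{\min}$-to-$v_{\max}$ path of length at most $n-d$.

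The natural strategy is to mimic the Klee--Walkup equivalence between Hirsch and the $d$-step Conjecture (Theorems~\ref{theorem:dstep-hirsch}, \ref{theorem:dstep-nonrevisiting}), adapting the reduction to preserve the orientation. If $v_{\min}$ and $v_{\max}$ share a common facet $F$, then $F$ is a simple $(d-1)$-polytope with at most $n-1$ facets and the restriction $\xi|_F$ is a generic linear functional on $F$ with $v_{\min}$ and $v_{\max}$ still as extremal vertices; induction on $d$ finishes this case. Otherwise $v_{\min}$ and $v_{\max}$ lie in $2d$ distinct facets, so $n\geq 2d$, and we would like to reduce to $n=2d$ by an ``oriented wedge'' construction: if $F$ is any facet missed by both extremal vertices, wedging on $F$ (Lemma~\ref{lemma:wedge}) produces a simple $(d+1)$-polytope $\wed_F(P)$ with $n+1$ facets in which one can lift $\xi$ to a functional $\tilde\xi$ whose extrema project to $v_{\min},v_{\max}$. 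One hopes to show that a shortest $\tilde\xi$-monotone $v_{\min}$-to-$v_{\max}$ path in $\wed_F(P)$ projects to a $\xi$-monotone path of the same length or shorter in $P$, reducing the problem to the ``strict monotone $d$-step'' case. A complementary attack is the shadow-vertex approach: for a well-chosen auxiliary functional $\eta$, the projection $\pi : P \to \mathrm{span}(\xi,\eta)$ yields a polygon whose ``right boundary'' pulls back to a canonical $\xi$-monotone $v_{\min}$-to-$v_{\max}$ path, so the problem becomes choosing $\eta$ so that this boundary contains at most $n-d+1$ vertices.

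The main obstacle is to exploit that the starting vertex is \emph{specifically} $v_{\min}$ rather than an arbitrary vertex, because Todd's Theorem~\ref{theorem:monotone} already disproves the analogous bound for general starting vertices, and his construction inflates to unboundedly large ratios. Inspecting Todd's polytope, the vertex $u$ from which every monotone path is long is \emph{not} $\xi$-minimal: it lies above the supporting hyperplane $H_1$ at $v$, and indeed the $\xi$-minimum sits on the projectively-sent-to-infinity facet in a controlled way. So any proof must use some combinatorial invariant --- perhaps an orientation-respecting analogue of the non-revisiting property --- that is preserved on descending from the minimum but not from an arbitrary vertex. I do not know a candidate invariant, and finding one is where I expect the attempt to stall; without it, both the oriented wedge and the shadow-vertex routes only recover the weaker (false) monotone conjecture, and a genuine proof will require a new structural ingredient attached to the source vertex of $G_\xi(P)$.
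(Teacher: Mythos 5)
You have not produced a proof, and indeed there is no proof in the paper to compare against: the statement you were given is Ziegler's \emph{Strict} Monotone Hirsch Conjecture, which the paper records as an open conjecture (and which, in light of Santos' counter-example to the bounded Hirsch Conjecture discussed in Appendix~\ref{appendix:CounterHirsch}, is if anything more doubtful now). Your submission correctly recognizes this and is, by your own admission, a plan rather than an argument, so the ``genuine gap'' here is the entire proof: the central ingredient you yourself identify --- an orientation-respecting, source-anchored analogue of the non-revisiting property that would survive the wedge or shadow-vertex reductions --- is never supplied, and without it your two routes collapse, as you note, to the ordinary Monotone Hirsch Conjecture, which Theorem~\ref{theorem:monotone} (Todd) refutes.

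That said, the framing is sound and your diagnostic remarks are accurate. The observation that in Todd's construction the bad starting vertex $u$ is \emph{not} $\xi$-minimal (its $\xi$-minimum is controlled by the facet sent to infinity) is exactly the reason the strict version is not immediately killed by Theorem~\ref{theorem:monotone}, and it matches the paper's own gloss that the strict conjecture adds the requirement that the start vertex also has a supporting hyperplane on which $\xi$ is constant. Two smaller cautions on the reductions you sketch: the perturbation of $\xi$ to general position needs a word of care, since a path monotone for the perturbed functional is only weakly monotone for the original one when $\xi$ is constant on some edges; and the ``oriented wedge'' step runs in the wrong direction for a Klee--Walkup-style reduction --- Lemma~\ref{lemma:wedge} gives $\diam(\wed_F(P)) \geq \diam(P)$, so to reduce the strict conjecture to an $n=2d$ case you must show that a short $\tilde\xi$-monotone path \emph{upstairs} projects to a short $\xi$-monotone path downstairs with the correct endpoints, which requires specifying the lift $\tilde\xi$ and verifying where its extrema sit relative to the two facets $F_1,F_2$ replacing $F$; this is plausible but is precisely the kind of bookkeeping that the monotone setting makes delicate, and it is left unverified in your outline.
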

Put differently, in the Monotone Hirsch Conjecture we add the requirement that not only $v$ but also $v^*$ has a supporting hyperplane where $\xi$ is constant.

Finally, even though the Unbounded and Monotone Hirsch Conjectures are false in general, it would be interesting to know if there are any non-trivial families of polytopes (or spheres) where it is true:
\begin{openproblem}
Are there any non-trivial (infinite) families of polyhedra for which the Unbounded Hirsch Conjecture holds?
\end{openproblem}
\begin{openproblem}
Are there any non-trivial (infinite) families of polyhedra for which the Monotone Hirsch Conjecture holds?
\end{openproblem}

\subsection{The Topological Hirsch Conjecture is false}\label{section:combinatorial}

We discuss a topological generalization of the Hirsch Conjecture. Since (the boundary of) every simplicial $d$-polytope is a topological triangulation of the $(d-1)$-dimensional sphere, we can ask whether the simplicial version of the Hirsch Conjecture, the one where we walk from simplex to simplex rather than from vertex to vertex, holds for arbitrary triangulations of spheres. As in the previous section, to be precise:
\begin{itemize}
\item The Topological Hirsch Conjecture: {\sl If $T$ is topological triangulation of the $(d-1)$-sphere with $n$ vertices and $G^\Delta(T)$ is its polar graph, is $\diam(G^\Delta(T))$ at most $n-d$?}
\end{itemize}
Here, $\diam(G^\Delta(T))$ denotes the diameter of the polar graph, as in the case of simplicial polytopes. This generalizes the Hirsch Conjecture since the proper faces of a simplicial $d$-polytope form a triangulation of the $(d-1)$-sphere. But the converse is not true: starting in $d=3$ and with $8$ vertices there are \emph{non-polytopal} triangulations of $d$-spheres, that are not combinatorially isomorphic to the boundary of any polytope.

The first counter-example to this statement was a simplicial non-Hirsch $27$-sphere $Z$ with $56$ vertices found by \given{David }Walkup in 1979 (see~\cite{Walkup}), but simpler counter-examples were soon constructed by Walkup and Mani in~\cite{Mani:A-3-sphere-counterexample}. (The analogous problem for polyhedral maps on general surfaces is studied in~\cite{Pulapaka:Nonrevisiting-Paths}.) Both constructions are based on the equivalence of the Hirsch Conjecture to the Non-revisiting Conjecture (see Theorem~\ref{theorem:dstep-nonrevisiting}). The proof of the equivalence is purely combinatorial, so it holds true for topological spheres. Mani and Walkup's counter-examples are $3$-spheres $C$ and $D$ (respectively) with $20$ and $16$ vertices (respectively). We only describe $D$ here, which is obtained from $C$ by contracting four edges and deleting degenerate tetrahedra. Wedging on $D$ eight times produces a non-Hirsch $11$-sphere $E$ with $24$ vertices.
\begin{theorem}[Mani-Walkup~\cite{Mani:A-3-sphere-counterexample}]
\label{theorem:mani-walkup}
There is a triangulated $3$-sphere $D$ with $16$ vertices and without the non-revisiting property. Wedging on it eight times produces a non-Hirsch $11$-sphere $E$ with $24$ vertices. The dual diameter of $E$ is more than $12$.
\end{theorem}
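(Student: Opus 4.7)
My plan is to establish the theorem in four steps: exhibit $D$ explicitly, verify failure of non-revisiting between two disjoint tetrahedra of $D$, extend to $E$ by eight one-point suspensions, and finally convert the preserved non-revisiting failure into a dual distance strictly greater than $12$. For Step~1, I would present $D$ as an explicit list of tetrahedra on the vertex set $\{1,2,\dots,16\}$, arranged so that $u := \{1,2,3,4\}$ and $v := \{5,6,7,8\}$ are two disjoint facets and the remaining eight vertices form the obstruction region. Mani and Walkup produce such a sphere by four edge-contractions of a $20$-vertex $3$-sphere $C$; I would reproduce their incidence data and verify that $D$ is a PL $3$-sphere by checking that every triangle lies in exactly two tetrahedra, computing the Euler characteristic, and invoking the PL Poincar\'e theorem after checking that the $2$-skeleton is simply connected.

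For Step~2, I would exhibit a subset $W \subseteq \{9,\dots,16\}$ of bottleneck vertices and show that every dual path from $u$ to $v$ must enter a tetrahedron containing some $w \in W$, leave the star of $w$, and later re-enter a tetrahedron containing $w$. This reduces to a finite case analysis in the spirit of the proof of Theorem~\ref{theorem:klee-walkup}: one catalogues, in a Hasse-type diagram, every tetrahedron at dual-distance at most $2$ from each of $u$ and $v$ and verifies that no two such tetrahedra are linked by an adjacency that avoids revisiting a bottleneck vertex. For Step~3, set $D_0 := D$ and $D_{i+1} := \ops_{w_i}(D_i)$, where $w_i$ is any vertex of $D_i$ not in the current facets $u^{(i)}$ or $v^{(i)}$; define $u^{(i+1)} := u^{(i)} \cup \{w_i^{(1)}\}$ and $v^{(i+1)} := v^{(i)} \cup \{w_i^{(2)}\}$, with $w_i^{(1)}, w_i^{(2)}$ the two new vertices created by the suspension. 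A bookkeeping check shows that the number of vertices of $D_i$ lying outside $u^{(i)} \cup v^{(i)}$ equals $8-i$, so eight suspensions are possible (just barely), yielding the $11$-sphere $E := D_8$ with $24$ vertices and two distinguished facets $u^* := u^{(8)}$ and $v^* := v^{(8)}$, each a $12$-vertex simplex with $u^* \cap v^* = \emptyset$. The projection argument from the proof of Lemma~\ref{lemma:nonrevisiting} transfers verbatim to the simplicial setting: any candidate non-revisiting dual path from $u^{(i+1)}$ to $v^{(i+1)}$ projects to a dual path in $D_i$ from $u^{(i)}$ to $v^{(i)}$ whose forced revisit must be witnessed either by an unchanged vertex or by one of $w_i^{(1)}, w_i^{(2)}$; in either case the lifted path revisits, a contradiction.

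For Step~4, observe that the dual simple polytope of $E$ has $n = 24$ facets and dimension $d = 12$, giving Hirsch bound $n - d = 12$. Because $u^*$ and $v^*$ have disjoint vertex sets in $E$, they share no common facet in the dual. A hypothetical dual path of length $\ell \le 12$ from $u^*$ to $v^*$ must introduce all $12$ facets of $v^*$ by the final step, and since $\ell \le 12$ this is possible only if each step enters a genuinely new facet and no facet is ever revisited. Such a path would be non-revisiting, contradicting Step~3. Hence the dual distance between $u^*$ and $v^*$ exceeds $12$, as required. The main obstacle is Step~2: explicitly certifying that $D$ forces non-revisiting to fail between a \emph{disjoint} pair of tetrahedra is the technical heart of Mani and Walkup's construction and relies on a careful description of a topological bottleneck that has no analogue in the polytopal world. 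Steps~3 and~4 are essentially mechanical once the right bookkeeping on vertex counts, dimensions, and intersection patterns is in place.
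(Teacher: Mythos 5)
Your Steps 3 and 4 are sound and essentially the paper's route: one-point suspensions at vertices outside the two distinguished facets preserve the failure of the non-revisiting property (the projection argument of Lemma~\ref{lemma:nonrevisiting}, which is purely combinatorial and hence valid for spheres), the vertex bookkeeping $16+8=24$, $3+8=11$ is right, and in the resulting $n=2d$ situation a dual path of length at most $12$ between the two complementary $12$-vertex facets would automatically be non-revisiting, giving the contradiction. The genuine gap is Step 2 (with Step 1 feeding it): this is the entire content of the theorem, and your proposal supplies no argument for it, only a promise of a finite case analysis over an incidence list you would ``reproduce'' from Mani and Walkup. A plan to ``exhibit a subset $W\subseteq\{9,\dots,16\}$ of bottleneck vertices'' is not a proof, and, more importantly, the mechanism you conjecture does not match how the obstruction actually works. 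In the Mani--Walkup sphere the forced revisit is of a vertex of the \emph{source tetrahedron itself}, not of one of the eight vertices outside $u\cup v$: the paper isolates a $12$-vertex subcomplex $K$ consisting of two triangulated bipyramids over the octagons $ambncodp$ and $aobpcmdn$, glued along the eight octagon vertices but along no edge. Any dual path from $abcd$ to $mnop$ must leave the first bipyramid through a triangle joining an octagon edge to an apex, thereby abandoning three of $a,b,c,d$ while picking up one vertex of $mnop$; to enter the second bipyramid it must cross a triangle joining an edge of the \emph{other} octagon to an apex, and since the two octagons share no edge this forces re-entering the star of an already-abandoned vertex (in the concrete case, exiting through $amr$ forces entry through $mcs$, $mcq$, $mds$ or $mdq$, revisiting $c$ or $d$). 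This structural argument shows that \emph{every} $3$-sphere containing $K$ fails the non-revisiting property (Lemma~\ref{lemma:mani}, parts 1 and 2); the existence of a $16$-vertex completion (part 3) is quoted from Mani--Walkup, with Whitehead's completion lemma available as a fallback at the cost of the explicit vertex count. Your case-analysis-in-the-style-of-Theorem~\ref{theorem:klee-walkup} would, even if carried out, have to rediscover exactly this two-octagon structure, and as formulated (bottlenecks among $\{9,\dots,16\}$ whose stars are exited and re-entered) it is aimed at the wrong place.

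Two smaller points. First, in Step 4 you speak of ``the dual simple polytope of $E$''; no such polytope exists -- indeed the paper later proves (following Altshuler) that $C$, $D$, and $E$ are not polytopal -- so the argument must be phrased, as it can be, purely in terms of the dual graph of the simplicial sphere and the combinatorial Hirsch/non-revisiting equivalence. Second, your induction in Step 3 should note explicitly that the new facets $u^{(i)}\cup\{w_i^{(1)}\}$ and $v^{(i)}\cup\{w_i^{(2)}\}$ are facets of $\ops_{w_i}(D_i)$ because $u^{(i)}$ and $v^{(i)}$ lie in the anti-star of $w_i$; this is exactly why the hypothesis that $w_i$ avoids both facets is needed, and it is what makes the count ``eight suspensions, just barely'' work.
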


The part of the Mani-Walkup $3$-sphere $D$ that implies failure of the non-revisiting property involves only $12$ of the $16$ vertices. More precisely, Mani and Walkup show the following:

\begin{lemma}
\label{lemma:mani}
Let $K$ be the three-dimensional simplicial complex on the vertices $a$, $b$, $c$, $d$, $m$, $n$, $o$, $p$, $q$, $r$, $s$, and $t$ consisting of the following $26$ tetrahedra:
\[
\begin{tabular}{ccc}
& $abcd$ &\\
$abcr$ & & $acdr$\\
$abdt$ & & $bcdt$\\
$abmr$ & & $bcnr$\\
$cdor$ & & $dapr$\\
$abmt$ & & $bcnt$\\
$cdot$ & & $dapt$\\
\end{tabular}
\qquad \qquad
\begin{tabular}{ccc}
& $mnop$ &\\
$mnoq$ & & $mopq$\\
$nops$ & & $mnps$\\
$anoq$ & & $bopq$\\
$cpmq$ & & $dmrq$\\
$anos$ & & $bops$\\
$cpms$ & & $dmrs$\\
\end{tabular}
\]
Then:
\begin{enumerate}
\item The complex $K$ can be embedded in a $3$-sphere.
\item No triangulation of the $3$-sphere containing $K$ as a subcomplex has the non-revisiting property.
\item There is a triangulation of the $3$-sphere with $16$ vertices and containing $K$ as a subcomplex.
\end{enumerate}
\end{lemma}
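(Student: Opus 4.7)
The plan is to handle the three statements separately. Parts~(1) and~(3) are essentially explicit constructions and finite verifications, while Part~(2) is the combinatorial heart of the argument.

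For Part~(1), I would first verify that $K$ is a simplicial pseudomanifold with boundary: each $2$-face of $K$ lies in at most two tetrahedra of $K$. This is a finite check on the $4 \cdot 26 = 104$ face incidences, and it is made easier by the visible bipartite-style symmetry of the list (the two columns in the statement). The $2$-faces lying in exactly one tetrahedron constitute the boundary complex $\partial K$. I would then check that $\partial K$ is a triangulated $2$-sphere by verifying connectedness, that every edge of $\partial K$ is in exactly two of its triangles, and that its Euler characteristic equals $2$. Next, I would compute the link of each vertex of $K$ and check it is a $2$-sphere (interior vertex) or a $2$-disk (boundary vertex); this makes $K$ a PL $3$-manifold with boundary $S^2$. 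Finally, a collapse argument (or a direct check of simple-connectedness from the $2$-skeleton) shows $K$ is a PL $3$-ball, so it embeds in $S^3$ by capping $\partial K$ with any PL $3$-ball.

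For Part~(2), assume toward contradiction that some triangulation $T$ of $S^3$ containing $K$ has the non-revisiting property. Then there is a non-revisiting polar path in $T$ from the tetrahedron $abcd$ to the tetrahedron $mnop$. The strategy is to track the \emph{departure times} of the four vertices $a,b,c,d$ (each of which must leave exactly once) and the \emph{arrival times} of $m,n,o,p$ (each of which must appear exactly once), together with the usage intervals of the bridges $q,r,s,t$. The combinatorial point is that in $K$ the bridges $r,t$ are the only vertices that can coexist with all of $a,b,c,d$ in a single tetrahedron, while $q,s$ play the symmetric role for $m,n,o,p$; moreover, the ``cross-tetrahedra'' of $K$ realize a rigid matching between $\{a,b,c,d\}$ and $\{m,n,o,p\}$ via each individual bridge. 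I would analyze the path in cases based on which bridges of $\{r,t\}$ are active at the moment the last of $a,b,c,d$ is discarded and which of $\{q,s\}$ are active when the first of $m,n,o,p$ is introduced, and show that in every case one of the four bridges must be re-entered after being discarded, contradicting the non-revisiting property. The full symmetry group of $K$ (in particular the involution swapping the two blocks of tetrahedra and the pairing $r \leftrightarrow t$, $q \leftrightarrow s$) should cut the casework down substantially. The main obstacle will be paths that leave $K$ into $T \setminus K$ and later return; here I would use that any such excursion must cross $\partial K$ twice, and use the fact that $\partial K$ shares many vertices with the relevant tetrahedra to argue that the non-revisiting constraint is still violated on $K$ itself.

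For Part~(3), I would produce an explicit triangulation of $S^3$ containing $K$ by adjoining four new vertices, say $e,f,g,h$, and tetrahedra on $\{e,f,g,h\} \cup V(\partial K)$ whose union with $K$ is a closed pseudomanifold PL-homeomorphic to $S^3$. Since $\partial K$ is a $2$-sphere and the PL $3$-ball it bounds has a lot of symmetry, a natural strategy is to place $e,f,g,h$ in roles mirroring those of $r,s,t,q$ on the ``outside,'' producing an outer block that looks combinatorially like $K$ with the bridges renamed. I would then verify that every interior $2$-face lies in exactly two tetrahedra, confirming the result is a triangulated $3$-sphere. The tight vertex budget (only four new vertices) is what makes this construction delicate, and it is the reason $K$ must have the rigid symmetric form above; this step should be straightforward once the right correspondence between the inner and outer bridges is chosen. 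Overall, Part~(2) is the genuinely hard step, while Parts~(1) and~(3) amount to verification of a carefully engineered example.
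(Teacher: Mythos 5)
There is a genuine gap, and it starts with the structure of $K$ itself. $K$ is \emph{not} a PL $3$-manifold with boundary, let alone a $3$-ball: it is two triangulated bipyramids, one over the octagon $ambncodp$ with apices $r,t$ and one over the octagon $aobpcmdn$ with apices $q,s$, glued to each other only along the eight octagon vertices $a,b,c,d,m,n,o,p$ (the two octagons are deliberately arranged to share no edge). Consequently the link in $K$ of a glued vertex is disconnected --- for instance the link of $a$ contains the triangles $bcd$, $bcr$, $bmr$, $dpt$, \dots{} from the first block and the triangles $noq$, $nos$ from the second, and these two pieces have no vertex in common. So your Part~(1) plan (check that all vertex links are spheres or disks, conclude $K$ is a ball, cap off $\partial K$) fails at the very check you propose, and the same false premise (``$\partial K$ is a $2$-sphere bounding a PL $3$-ball'') underlies your Part~(3) sketch. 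The paper's Part~(1) is instead a purely topological gluing argument: embed the two bipyramids disjointly in $S^3$ and ``pinch'' the eight octagon vertices of one onto their counterparts in the other, which is unobstructed precisely because no edge is shared. (Note also that the paper never proves Part~(3); it only cites the Mani--Walkup construction and observes that Whitehead's completion lemma would give a completion with uncontrolled vertex count, so an explicit $16$-vertex completion is genuinely delicate and cannot be waved through as ``straightforward.'')

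For Part~(2) your bookkeeping of the bridges $q,r,s,t$ aims at the wrong obstruction, and the claim that ``some bridge must be re-entered'' is neither proved nor evidently true. The actual argument is localized at the two boundary crossings and concerns the octagon vertices, not the apices: any dual path from $abcd$ to $mnop$ must leave the first bipyramid through a boundary triangle of the form (octagon-$1$ edge)$\,*\,$(apex $r$ or $t$), say through $amr$; at that moment $b,c,d$ have all been abandoned and the path has entered $m$, which it may never abandon again since $m$ is a vertex of the target $mnop$ (abandoning it would force a revisit). Later the path must enter the second bipyramid through a boundary triangle of the form (octagon-$2$ edge)$\,*\,$($q$ or $s$); since $m$ is held on both sides of that crossing, the crossing triangle contains $m$, and its second octagon vertex lies in $\{a,b,c,d\}$ and must be unabandoned, hence equals $a$. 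Thus $\{a,m\}$ would have to be an edge of \emph{both} octagons, which is impossible by construction. This also disposes of your acknowledged difficulty about excursions into $T\setminus K$: the argument only uses the two crossing moments and the non-revisiting constraint, so no analysis of what the path does outside $K$ is needed. Without identifying this ``common edge of the two octagons'' obstruction, your proposed case analysis has no engine to produce the contradiction.
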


We are not going to prove part 3 of the lemma. The construction is somehow complicated and, moreover, in a sense that part is irrelevant. Indeed, once we know that the complex $K$ can be embedded in a $3$-sphere we can rely on Whitehead's Completion Lemma (see~\cite{Whitehead}) to conclude that the complex $K$ can be completed to a triangulation of the whole $3$-sphere. The only drawback of this approach is that we cannot control \emph{a priori} the number of extra vertices needed in the completion, but that will only affect the number of vertices of the final $3$-sphere (and the number of wedges needed to get a non-Hirsch sphere from it).

\begin{proof}[Proof of parts 1 and 2 of Lemma~\ref{lemma:mani}]
The proof follows from the following description of the simplicial complex $K$: it consists of two triangulations of  bipyramids over the octagons $ambncodp$ and $aobpcmdn$, glued along the eight vertices of the octagons. See Figure~\ref{figure:mani-walkup-pyr-o}.
\begin{figure}[hbt]
\begin{center}
\includegraphics[scale=.40]{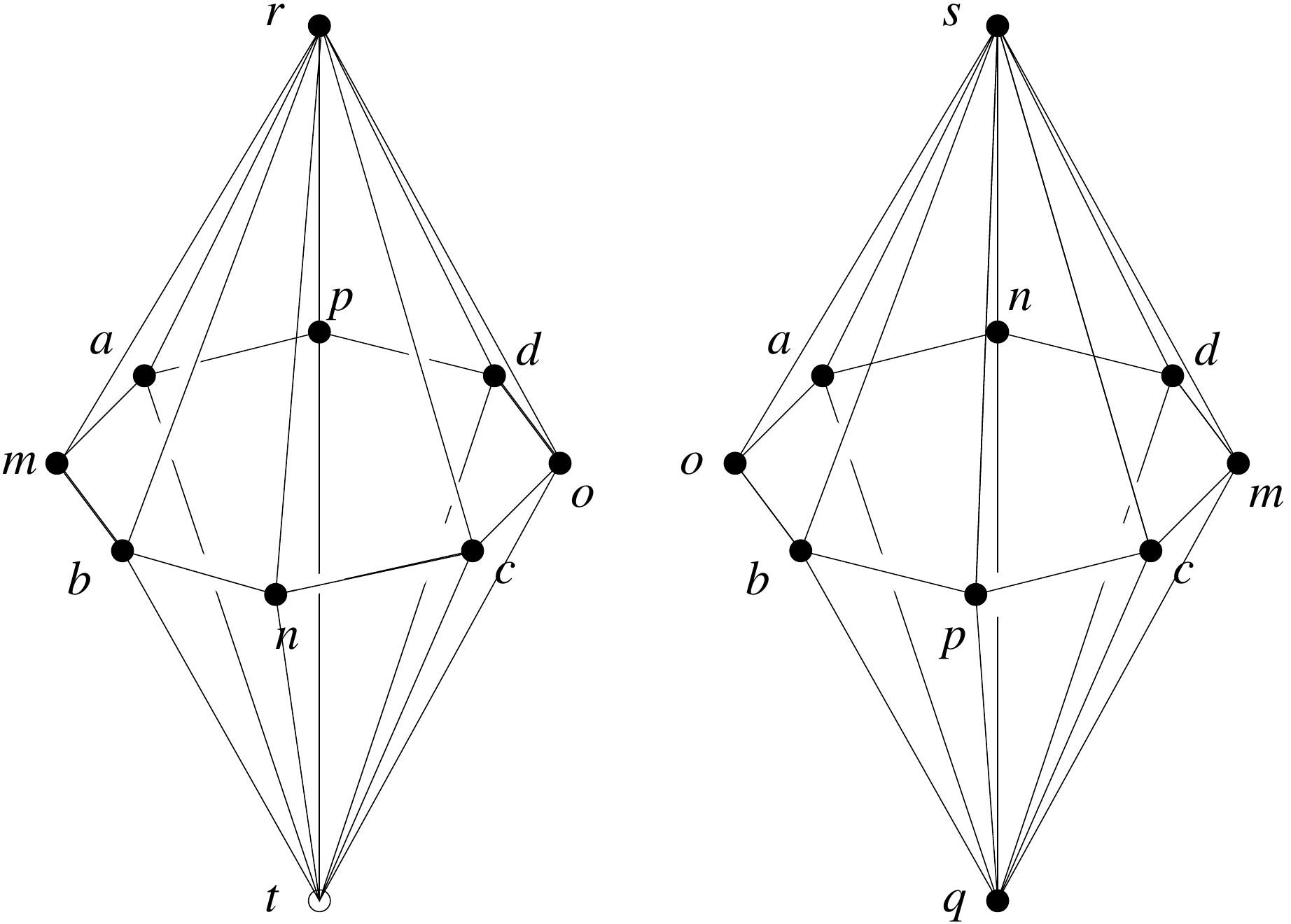}
\caption{Two octagonal bipyramids in the triangulation of $S$ depicting the outside-most $16$ tetrahedra}
\label{figure:mani-walkup-pyr-o}
\end{center}
\end{figure}

Once this is shown, part 1 is easy. Since we are living in a topological world, we can ``pinch'' the equatorial vertices of one of the octagons and there is no obstruction to glue them to their counterparts in the other octagon. One key property is that we are not gluing any of the edges: the order of vertices in the two octagons is not the same, and it is designed so that no edge appears in both. 

For part 2, let us see the construction in more detail. It starts with a core tetrahedron inside each bipyramid, namely $abcd$ and $mnop$. See Figure~\ref{figure:mani-walkup-tetra}.

\begin{figure}[hbt]
\begin{center}
\includegraphics[scale=.40]{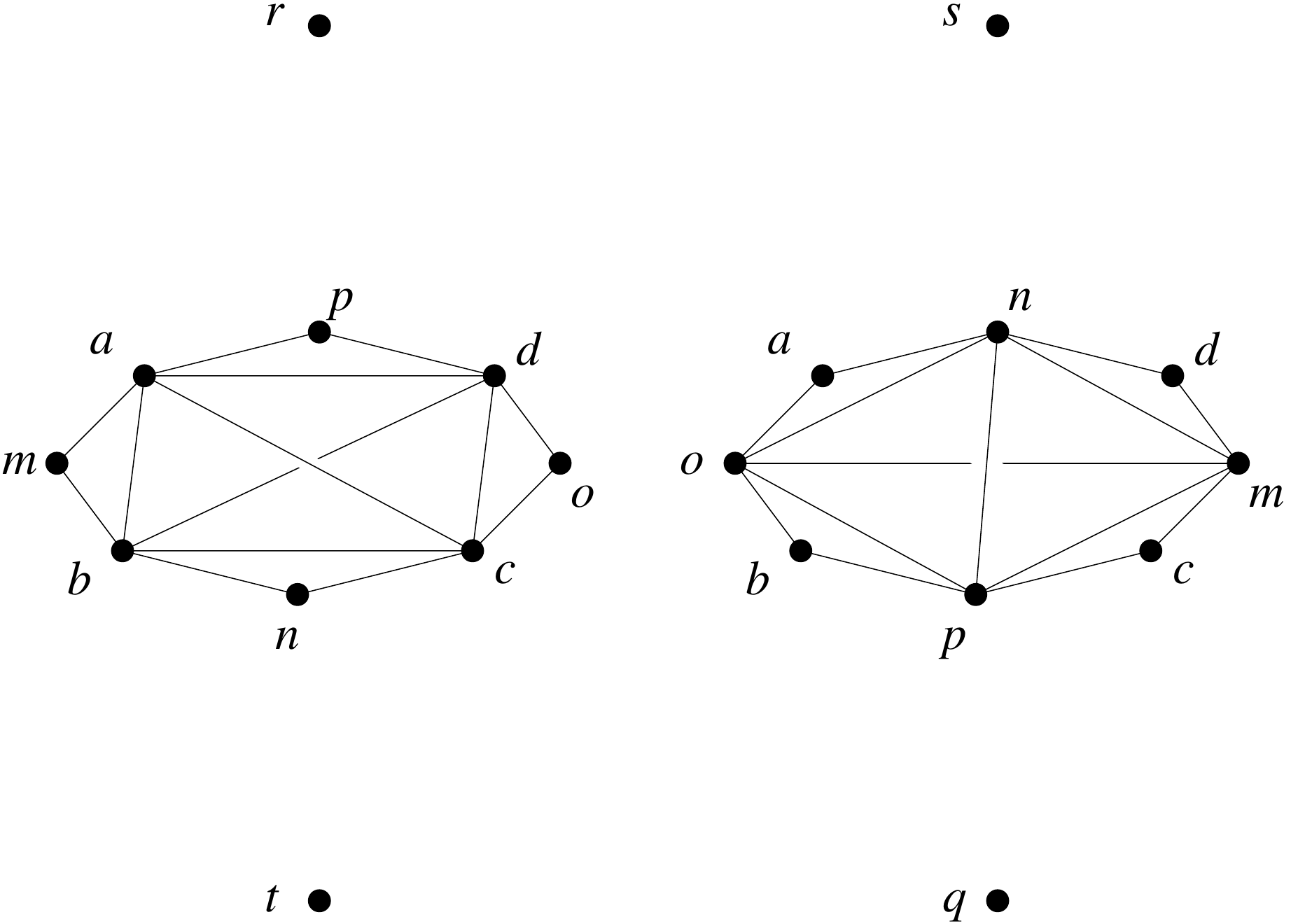}
\caption{The simplices $abcd$ and $mnop$}
\label{figure:mani-walkup-tetra}
\end{center}
\end{figure}

Each of these tetrahedra is surrounded by two tetrahedra joined to each apex of the corresponding bipyramid, as shown in Figure~\ref{figure:mani-walkup-pyr-t}. These are the tetrahedra in the second and third line of the statement and together with the initial ones they triangulate two octahedra.
Finally, these octahedra are each surrounded by eight more tetrahedra each: those obtained joining the four triangles left uncovered in each octagon (see Figure~\ref{figure:mani-walkup-pyr-t} again) to the two apices of their bipyramid.
\begin{figure}[hbt]
\begin{center}
\includegraphics[scale=.40]{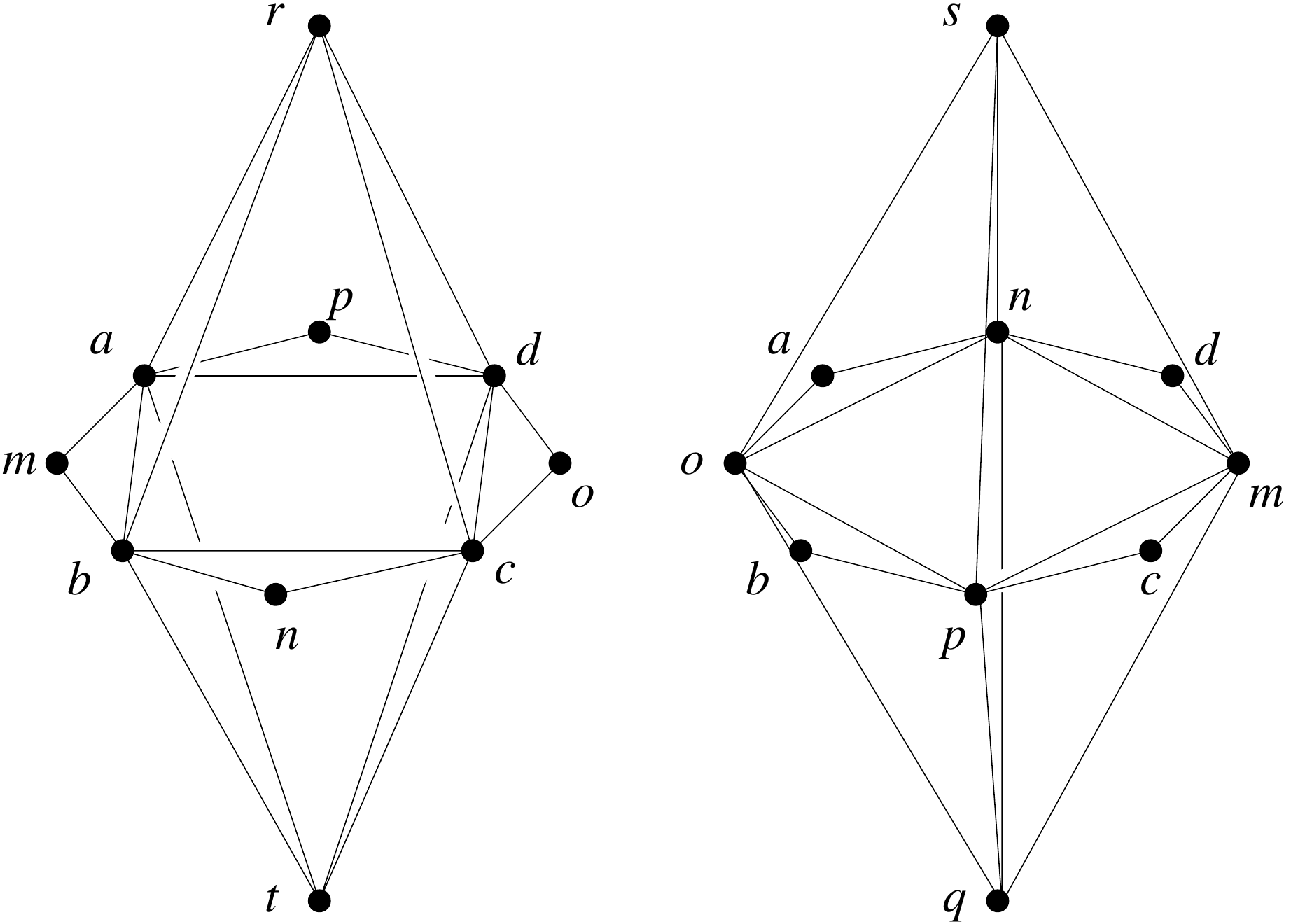}
\caption{Eight additional simplices in the Mani-Walkup triangulation}
\label{figure:mani-walkup-pyr-t}
\end{center}
\end{figure}

From this description it is easy to prove part 2 of the lemma, as follows: Every path from the tetrahedron $abcd$ to the tetrahedron $mnop$ must leave the bipyramid on the left of Figure~\ref{figure:mani-walkup-pyr-o}, and it will do so through one of the sixteen boundary triangles. These triangles are the joins of the eight edges of the octagon to the two apices. In particular, our path will at this point have abandoned three of the vertices of $abcd$ and entered one of $mnop$. For the non-revisiting property to hold, the abandoned ones should not be used again, and the entered one should not be abandoned, since it is a vertex of our target tetrahedron. But then it is impossible for us to enter the second bipyramid: we should do so via another triangle that joins an octagon edge to an apex, and non-revisiting implies that this edge should use the same vertex form $abcd$ and the same vertex from $mnop$. This is impossible since the two octagons have no edge in common.

Let us explain this in a concrete example. By symmetry, there is no loss of generality in assuming that we exit from the left bipyramid via the triangle $amr$. Since we cannot abandon $m$, we must enter the second bipyramid via one of the boundary triangles using $m$, namely one of $mcs$, $mcq$, $mds$ or $mdq$. This violates the non-revisiting property, since $c$ and $d$ had already been abandoned.
\end{proof}

Unfortunately, this triangulated $3$-sphere does not give a counter-example to the Hirsch Conjecture. It would give a counter-example if it were \emph{polytopal} (that is, if 
it were combinatorially isomorphic to the boundary complex of a four-dimensional polytope). However, \given{Amos }Altshuler (see~\cite{Altshuler:NotPolytopal}) has shown that (for the explicit completion of the subcomplex $K$ given in~\cite{Mani:A-3-sphere-counterexample}) this is not the case. We summarize the proof here.
\begin{theorem}[Altshuler~\cite{Altshuler:NotPolytopal}]
The topological triangulated spheres $C$, $D$, and $E$ are not polytopal.
\end{theorem}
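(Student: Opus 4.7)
The plan is to reduce the non-polytopality of $C$, $D$, and $E$ to the single case of $D$, and then to extract an oriented-matroid obstruction from the subcomplex $K\subset D$ described in Lemma~\ref{lemma:mani}. The reduction for $E$ is immediate: one checks from the formula $\ops_w(L)=(\ast_L(w)*w_1)\cup(\ast_L(w)*w_2)\cup(\lk_L(w)*\overline{w_1w_2})$ that the link of the new vertex $w_1$ in $\ops_w(D)$ is combinatorially $\ast_D(w)\cup(\lk_D(w)*w_2)$, which is isomorphic to $D$ under $w_2\mapsto w$. Hence if $\ops_w(D)$ were the boundary of a $4$-polytope, the vertex figure at $w_1$ would yield a polytopal realization of $D$. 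Iterating, polytopality of $E=\ops_w^{(8)}(D)$ forces polytopality of $D$, so it suffices to prove that $D$ is not polytopal. The case of $C$ is handled in the same spirit, either by locating the subcomplex $K$ inside a suitable vertex figure of $C$ or by running the same oriented-matroid argument on $C$ directly, since $C$ inherits the same ``twisted octagonal gluing'' as $D$.

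The heart of the argument is therefore to show that $D$ admits no realization as the boundary of a convex $4$-polytope. Assume for contradiction that $D=\partial P$ for some convex $4$-polytope $P\subset\mathbb{R}^4$. The twelve labelled vertices $a,b,c,d,m,n,o,p,q,r,s,t$ of $K$ then sit in convex position and the $26$ tetrahedra listed in Lemma~\ref{lemma:mani} must be geometric simplices lying on $\partial P$. I would encode these incidences as a chirotope, namely a sign function on the brackets $[xyzw]$ of ordered $4$-tuples of the twelve points, recording on which side of each facet-hyperplane the remaining vertices lie. The crucial structural input is that the two octahedral bipyramids are glued along the same eight equatorial vertices but with cyclic orderings $ambncodp$ and $aobpcmdn$ that differ by the permutation swapping $m\leftrightarrow o$ and $n\leftrightarrow p$; this twist shares every equatorial vertex but no equatorial edge, and is the geometric source of the obstruction.

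The main technical step is to verify that the resulting sign system is infeasible. The natural method is to exhibit a non-realizable minor of the oriented matroid on a small subset of the twelve points; concretely, one runs Grassmann--Pl\"ucker three-term syzygies of the form $[abcx][abdy]-[abcy][abdx]+[abxy][abcd]=0$ across the twisted gluing and shows that the sign pattern forced by the tetrahedra of $K$ has no solution. In practice this amounts to isolating a contradictory Radon partition among $\{a,b,c,d,m,n,o,p\}$ or a small V\'amos-type circuit witnessing non-realizability. The hard part will be pinning down the certificate by hand: the combinatorics of $K$ rigidifies many bracket signs but leaves a delicate residual case analysis, which Altshuler originally carried out for the specific completion of $K$ constructed in~\cite{Mani:A-3-sphere-counterexample}, and which in a modern treatment would be discharged by an oriented-matroid solver. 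Once the certificate is in place, $D$ is non-polytopal, and by the reductions above the conclusion propagates to both $E$ and $C$.
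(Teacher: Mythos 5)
There is a genuine gap: your central step is never carried out, and the strategy behind it aims at a statement strictly stronger than the theorem. You propose to extract a chirotope/oriented-matroid infeasibility certificate from the $26$ tetrahedra of the subcomplex $K$ alone, i.e.\ to show that $K$ cannot lie on the boundary of \emph{any} simplicial $4$-polytope, and you concede that ``pinning down the certificate'' is the hard part, to be discharged by a case analysis or a solver. No such certificate is exhibited, so nothing is proved; worse, whether such an obstruction exists is precisely what is left open after Altshuler's work. The theorem only asserts that the \emph{specific} Mani--Walkup completions $C$, $D$ (and the suspension $E$) are non-polytopal, and it remains an open question whether some other completion of $K$ to a $3$-sphere is polytopal, and even whether $K$ embeds in $\R^3$ with linear tetrahedra --- a necessary condition for polytopality via the Schlegel construction. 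If your $K$-only Grassmann--Pl\"ucker argument worked, it would settle those open problems negatively, so you should not expect it to be a routine verification. Your treatment of $C$ is also unsubstantiated: $D$ is obtained from $C$ by contracting four edges and deleting degenerate tetrahedra, not as a vertex figure, and contraction does not transport polytopality, so ``locating $K$ inside a suitable vertex figure of $C$'' or ``running the same argument on $C$ directly'' are hopes, not reductions. (Your reduction of $E$ to $D$ via iterated vertex figures of the one-point suspension is fine and agrees with the paper.)

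Altshuler's actual argument, as summarized in the paper, is quite different and does not reason about realizability of $K$ in the abstract: it exploits the distinguished vertex $t$ of the explicit completion. Assuming $C=\partial P$ is polytopal, one forms $Q=\conv(\vert(P)\setminus\{t\})$; since $Q$ can be triangulated without new vertices, the star $\star_C(t)$ can be replaced by a complex $C'$ on the vertices of $C$ with the same boundary. The triangle $abd$ lies in $\lk_C(t)$, so some tetrahedron $abdx$ with $x\in\{c,i,j,k,m,n,o,p,s\}$ must occur in $C'$; but for each candidate $x$ the required edge ($ac$, $di$, $dj$, etc.) already lies in the anti-star $\ast_C(t)$ and hence cannot appear in $C'$, a contradiction. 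The same argument applies verbatim to $D$, and $E$ is handled through the one-point suspensions. If you want to salvage your write-up, you should either reproduce this beneath-beyond style argument tied to the vertex $t$ of the explicit spheres, or actually produce the bracket-sign certificate you invoke --- but in the latter case be aware you would be proving a new result, not Altshuler's.
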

\begin{proof}
Suppose for a contradiction that the $3$-sphere $C$ with $20$ vertices had a polytopal realization $P$. Define $Q$ to be the convex hull of all vertices of $P$ except for the vertex $t$ (defined in~\cite{Mani:A-3-sphere-counterexample}). The polytope $Q$ has a triangulation using no additional vertices, so the star $\star_C(t)$ of $t$ can be replaced by a simplicial complex $C'$ which only uses vertices of $C$ and whose boundary complex coincides with that of $\star_C(t)$. Altshuler proves that the triangle $abd$ is in the link $\lk_C(t)$ of $t$. Then, one of $abdc$, $abdi$, $abdj$, $abdk$, $abdm$, $abdn$, $abdo$, $abdp$, or $abds$ must be a tetrahedron in $C'$. But, since the edges $ac$, $di$, $dj$, $aj$, $ak$, $bl$, $dm$, $an$, $ao$, $bp$,  and $as$ appear in the anti-star $\ast_C(t)$, none of them can appear in the complex $C'$, which means that none of the ten tetrahedra could exist. Contradiction.

The same proof works on the $3$-sphere $D$ with $16$ vertices. The polyhedron $Q$ is again defined by taking the convex hull without the vertex $t$. The $11$-sphere $E$ is not polytopal since it is built up from $D$ by repeated applications of the one-point suspension.
\end{proof}
It remains an open question if \emph{any} completion of $K$ to the $3$-sphere is polytopal.
\begin{openproblem}
Find a completion of the complex $K$ to a $3$-sphere that is polytopal.
\end{openproblem}
Even more strongly, it is probably the case that $K$ cannot be embedded in $\R^3$ with linear tetrahedra, a necessary condition for polytopality by the Schlegel construction (see~\cite{Ziegler:Lectures}).
\begin{openproblem}
Embed the complex $K$ in $\R^3$ using linear tetrahedra.
\end{openproblem}
As in the monotone and bounded cases, several copies of the construction can be glued to one another. Doing so provides triangulations of the $11$-sphere with $12 +12k$ vertices and diameter at least $13k$, for any $k$. It remains to know how far this false conjecture is from being true. Examples whose diameter is $2(n-d)$ or more would be very significant. Current ones achieve $\frac{13}{12}(n-d)$.
\begin{openproblem}
Is there a triangulation of a $(d-1)$-dimensional sphere using $n$ vertices whose dual graph has diameter $2(n-d)$ or greater?
\end{openproblem}
Even though the Topological Hirsch Conjecture is false in general, it would be interesting to know if there are any non-trivial families of spheres where it is true.
\begin{openproblem}
Are there any non-trivial families of triangulated spheres for which the Topological Hirsch Conjecture holds?
\end{openproblem}


\chapter[Geometric Combinatorics of Transportation Polytopes]{Geometric Combinatorics of Transportation Polytopes}\label{chapter:transportation}

\setcounter{theorem}{0} 

This chapter discusses new results on the combinatorial properties of $2$-way and $3$-way transportation polytopes. In Section~\ref{section:enumerationofpartition}, we discussed our process of classifying all transportation polytopes of a certain size. Based on the data we collected (see Appendix~\ref{appendix:catalogTP}), we discovered and proved the following results:
\begin{theorem}
\label{theorem:gcd}
The number of vertices of a non-degenerate $p \times q$ classical
transportation polytope is divisible by the greatest common divisor $\gcd(p,q)$ of $p$ and $q$.
\end{theorem}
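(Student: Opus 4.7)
The plan is to construct a free action of the cyclic group $\Z/g\Z$ on the vertex set $V(P)$, where $g = \gcd(p,q)$; the orbit--stabilizer theorem then forces $g \mid |V(P)|$. First, I would translate the problem into pure combinatorics. By Corollary~\ref{corollary:classicalTPsizeofsupport} and Lemma~\ref{lemma:vertexBx}, each vertex $x \in V(P)$ is determined by its support graph $B(x)$, a spanning tree of $K_{p,q}$. Call a spanning tree $T$ of $K_{p,q}$ \emph{$(u,v)$-feasible} if the unique solution of~\eqref{equation:classicalsums} supported on $T$ has non-negative entries. Then $f_0(P)$ equals the number of $(u,v)$-feasible spanning trees, so it suffices to construct a free $\Z/g\Z$-action on this set.

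The natural candidate is the cyclic shift defined as follows. Write $p = g p'$ and $q = g q'$, and let $\sigma$ be the permutation of $[p] \sqcup [q]$ sending a supply index $i$ to $i + p' \pmod p$ and a demand index $j$ to $j + q' \pmod q$. Then $\sigma$ has order $g$ on each side of $K_{p,q}$ with no fixed vertex, and it extends to an order-$g$ action on spanning trees via $\sigma(T) = \{(\sigma(i), \sigma(j)) : (i,j) \in T\}$. This action on the set of \emph{all} spanning trees of $K_{p,q}$ is free when $g > 1$: a $\sigma$-invariant tree $T$ would admit a free action of $\Z/g\Z$ on its vertex set and hence on its edge set, which is impossible because the number of edges $p+q-1$ satisfies $p+q-1 \equiv -1 \pmod g$ and so is not divisible by $g$.

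\textbf{The main obstacle} is that $\sigma$ does not in general preserve $(u,v)$-feasibility: it sends a tree feasible for $(u,v)$ to one feasible for the permuted marginals. To overcome this, I would try to compose $\sigma$ with a canonical sequence of pivot moves along fundamental cycles of the tree that ``rotates back'' the marginal information, so that the composite action preserves $(u,v)$-feasibility while retaining order $g$ and having no fixed points. Non-degeneracy of $P$ (see Lemma~\ref{lemma:genericnondegenerate}) should ensure that the required pivot moves are uniquely determined. I expect this to be the technical heart of the proof and its most delicate step.

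Should a direct action prove elusive, an alternative is to show that every entry $h_i$ of the $h$-vector of $P$ is divisible by $g$; because $P$ is simple, $f_0(P) = \sum_{i=0}^d h_i$, which would yield the theorem. Empirical verification and guidance on the orbit structure are available from the tables in Appendix~\ref{appendix:catalogTP} generated by {\tt transportgen}, where every listed value of $f_0$ is indeed divisible by the corresponding $\gcd(p,q)$.
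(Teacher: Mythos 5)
Your reduction of the count to $(u,v)$-feasible spanning trees (via Lemma~\ref{lemma:vertexBx} and Corollary~\ref{corollary:classicalTPsizeofsupport}) is fine, and your observation that the cyclic shift $\sigma$ acts freely on the set of \emph{all} spanning trees of $K_{p,q}$ is correct. But the argument stops exactly where the theorem lives. For a fixed generic pair $(u,v)$, $\sigma$ sends a tree feasible for $(u,v)$ to one feasible for the shifted marginals, and your proposed repair --- composing $\sigma$ with ``a canonical sequence of pivot moves along fundamental cycles'' --- is never specified: you do not say which pivots are performed, why the result lands back in the set of $(u,v)$-feasible trees (resolving one negative entry by pivoting around its fundamental cycle can create new negative entries elsewhere, and there is no canonical choice of entering/leaving edges), why the composite map is a bijection of that set, why it still has order $g=\gcd(p,q)$, or why it is fixed-point free. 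Note that your freeness argument used the action of $\sigma$ as a graph automorphism of $K_{p,q}$; once you compose with tree-dependent pivots that argument no longer applies. Since the divisibility statement is essentially equivalent to exhibiting such a free $\Z/g\Z$-action, the entire difficulty has been deferred to this undefined construction; as written this is a plan, not a proof.

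The fallback is not merely unproven but false: every simple polytope has $h_0=1$, so the entries of the $h$-vector cannot all be divisible by $g$ when $g>1$ (the identity $f_0=\sum_i h_i$ for simple polytopes is correct, but the proposed strengthening already fails at $i=0$). For comparison, the paper proves the theorem without producing any symmetry of an individual polytope: vertices of $P_b$ correspond to bases of the configuration of vertices of $\Delta_{p-1}\times\Delta_{q-1}$ whose cone contains $b$; the lexicographic chamber lies in exactly $p^{q-1}$ such bases (Lemma~\ref{lemma:lex-chamber}), a multiple of $\gcd(p,q)$, and crossing a wall of the chamber complex changes the count by $|c_0|\,(|C_+|-|C_-|)$, where $|C_+|-|C_-| = |V_+^{(p)}|\cdot q - |V_+^{(q)}|\cdot p \equiv 0 \pmod{\gcd(p,q)}$ by the cut description of cocircuits of $K_{p,q}$ (Lemma~\ref{lemma:chambertochamber}); connectivity of the chamber complex then yields the result for every non-degenerate choice of marginals. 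In other words, the invariance modulo $\gcd(p,q)$ is located in wall-crossing between different marginals rather than in a group action on a single vertex set, which is exactly how the paper sidesteps the construction your plan still owes.
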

\begin{theorem}
\label{theorem:22n}
The $p \times 2 \times 2$ planar transportation polytopes are in $1$-$1$ correspondence with the $p \times 2$ classical transportation polytopes, with corresponding pairs being linearly isomorphic.
\end{theorem}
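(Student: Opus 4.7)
My plan is to exploit the fact that each $2 \times 2$ slice of a $p \times 2 \times 2$ table with prescribed line-sum marginals is determined by a single free parameter. Fix a $p \times 2 \times 2$ planar transportation polytope $P$ with $2$-marginals $U,V,W$. For each $i \in [p]$, the four entries $x_{i,j,k}$ with $j,k \in [2]$ satisfy the four equations
\begin{equation*}
x_{i,1,1}+x_{i,1,2}=W_{i,1}, \quad x_{i,2,1}+x_{i,2,2}=W_{i,2}, \quad x_{i,1,1}+x_{i,2,1}=V_{i,1}, \quad x_{i,1,2}+x_{i,2,2}=V_{i,2},
\end{equation*}
which are of rank three (the consistency $W_{i,1}+W_{i,2}=V_{i,1}+V_{i,2}$ is forced on $P$). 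Setting $t_i := x_{i,1,1}$ thus determines the whole $i$th slice via $x_{i,1,2}=W_{i,1}-t_i$, $x_{i,2,1}=V_{i,1}-t_i$, and $x_{i,2,2}=W_{i,2}-V_{i,1}+t_i$. Non-negativity becomes $t_i \in [a_i,b_i]$ with $a_i:=\max\{0,V_{i,1}-W_{i,2}\}$ and $b_i:=\min\{V_{i,1},W_{i,1}\}$, and, again using marginal consistency, the four $U$-marginal conditions $\sum_i x_{i,j,k}=U_{j,k}$ collapse to the single equation $\sum_i t_i = U_{1,1}$.

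Next I would upgrade this observation to an explicit linear bijection. Define $\phi:\R^{4p}\to\R^p$ by $\phi(x)_i := x_{i,1,1}$. By the formulas above, $\phi|_P$ maps $P$ onto
\begin{equation*}
Q = \bigl\{\, t \in \R^p \mid a_i \leq t_i \leq b_i \text{ for all } i,\ \textstyle\sum_i t_i = U_{1,1} \,\bigr\},
\end{equation*}
and the formulas for $x_{i,1,2}, x_{i,2,1}, x_{i,2,2}$ provide an affine left inverse that recovers $x$ from $t$. Shifting by $s_i := t_i - a_i$ turns $Q$ into $Q' = \{\, s \in \R^p \mid 0 \leq s_i \leq u_i^*,\ \sum_i s_i = v_1^* \,\}$, where $u_i^* := b_i - a_i$ and $v_1^* := U_{1,1}-\sum_i a_i$. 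A short case analysis using $W_{i,1}+W_{i,2}=V_{i,1}+V_{i,2}$ will show that $u_i^* = \min\{W_{i,1},W_{i,2},V_{i,1},V_{i,2}\}$, and, setting $v_2^* := \sum_i u_i^* - v_1^*$, one verifies $u_i^*, v_1^*, v_2^* \geq 0$ whenever $P$ is non-empty. Finally, identifying $s_i$ with $y_{i,1}$ and $u_i^*-s_i$ with $y_{i,2}$ exhibits $Q'$ (and hence $P$) as linearly isomorphic to the $p \times 2$ classical transportation polytope with row marginals $u^*$ and column marginals $v^*$.

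For the converse, given a $p \times 2$ classical transportation polytope with marginals $u \in \R_{\geq 0}^p$ and $v \in \R_{\geq 0}^2$, I would construct a $p \times 2 \times 2$ planar transportation polytope by setting $W_{i,k}=V_{i,k}=u_i$ for all $i,k$ and $U_{1,1}=U_{2,2}=v_1$, $U_{1,2}=U_{2,1}=v_2$; one checks that these marginals are consistent. Applying the construction of the previous paragraph to this planar polytope yields $a_i=0$, $b_i=u_i$, $u_i^*=u_i$, and $v_1^*=v_1$, so we recover precisely the original classical polytope. The two constructions are thus mutually inverse on equivalence classes and realize the asserted $1$-$1$ correspondence with linearly isomorphic corresponding pairs. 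The main obstacle will be the careful bookkeeping of marginal consistency and non-negativity, together with the case analysis establishing $u_i^* = \min\{W_{i,1},W_{i,2},V_{i,1},V_{i,2}\}$; once these technicalities are dispatched, the linear-isomorphism claim follows essentially automatically from the explicit formulas above.
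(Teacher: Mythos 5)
Your proposal is correct and takes essentially the same route as the paper: solve each $2\times 2$ slice for the single free entry $x_{i,1,1}$, project onto those coordinates to obtain a hyperplane slice of a box, translate the lower bounds to zero, and recognize the result as a $p\times 2$ classical transportation polytope, with an explicit converse embedding. The only cosmetic difference is that the paper routes the last identification through a separate lemma (bounded-entry $p\times q$ polytopes $=$ planar $2\times p\times q$ polytopes), whereas you introduce the slack column $y_{i,2}=u_i^*-s_i$ directly.
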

Theorem~\ref{theorem:gcd} is presented in Section~\ref{section:gcd} and Theorem~\ref{theorem:22n} is proved in Section~\ref{section:identicalclassesTP}. Note that Theorem~\ref{theorem:22n} is best possible in the sense that for $p,q \geq 3$ there are many more types of planar $p \times q \times 2$ transportation polytopes than types of $p \times q$ transportation polytopes. (See the complete list of $3 \times 3$ classical and $2 \times 3 \times 3$ planar transportation polytopes in Tables~\ref{table:classical-3-3} and~\ref{table:planar-2-3-3}, respectively.)

In Section~\ref{section:regulartriangulationsBirkhoff}, we discuss the existence of non-regular triangulations for the vertices of Birkhoff polytopes $B_n$, for $n \geq 4$. We prove that the polytope $B_4$ has non-regular triangulations. Together with previously-known results, this proves that the polytope $B_n$ only has regular triangulations if and only if $n \leq 3$.

We present new bounds for the diameters of transportation polytopes and subpolytopes of them. In Section~\ref{section:manyHirschsharpTPs}, we construct infinitely-many Hirsch-sharp transportation polytopes. In Section~\ref{section:diameter-p2}, we prove the Hirsch Conjecture for $p \times 2$ classical transportation polytopes. In Section~\ref{section:axialdiam}, we prove the following diameter bound for $3$-way transportation polytopes defined by $1$-marginals:
\begin{theorem}
\label{theorem:main}
The graph of every $3$-way transportation polytope of size $p \times q \times s$ defined by $1$-marginals has diameter at most $2(p+q+s-2)^2$.
\end{theorem}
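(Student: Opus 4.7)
The plan is to prove Theorem~\ref{theorem:main} by an iterative ``support-alignment'' procedure: I would repeatedly modify a source vertex $x$ so that its support progressively matches that of a target vertex $y$, and bound the length of each modification phase.

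First, I would reduce to the non-degenerate case via the standard perturbation argument described before Lemma~\ref{lemma:simple}: if the bound holds for all non-degenerate $p \times q \times s$ axial transportation polytopes, then it holds in general, since perturbing the margins to remove degeneracy can only increase the diameter, and contracting edges of the resulting graph recovers the original. In a non-degenerate axial $p \times q \times s$ polytope every vertex $v$ has support of size exactly $p+q+s-2$, matching the rank of the constraint system.

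The core of the argument is a support-growth lemma: given two vertices $x, y$ of $P$, there is a pivot path from $x$ to some vertex $x'$ of length at most $2(p+q+s-2)$ with $|\supp(x') \cap \supp(y)| \geq |\supp(x) \cap \supp(y)| + 1$. Given the lemma, the theorem follows immediately: the intersection grows from $0$ up to its maximum value $p+q+s-2$ in at most $p+q+s-2$ phases; once $\supp(x') = \supp(y)$, both vertices are the unique basic feasible solution with that support (by non-degeneracy) and hence coincide. Summing lengths yields the claimed bound $(p+q+s-2) \cdot 2(p+q+s-2) = 2(p+q+s-2)^2$.

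To prove the support-growth lemma I would proceed as follows. Pick a coordinate $(i_0, j_0, k_0) \in \supp(y) \setminus \supp(x)$ and examine the difference $y - x$, which lies in the kernel of the constraint matrix $A$ of $P$. By the standard circuit decomposition of elements in $\ker A$, we may write $y - x$ as a non-negative combination of signed circuits of the oriented matroid of $A$, at least one of which, call it $C$, contains the entry $(i_0, j_0, k_0)$ in its positive support. Circuits of an axial transportation polytope correspond to minimal alternating closed walks in the tripartite incidence structure on $[p] \cup [q] \cup [s]$ associated with the entries, and their support size is bounded above by $p+q+s-1$. Pivoting along $C$ introduces $(i_0, j_0, k_0)$ into the support; however, a single such pivot may leave the tripartite support structure of an intermediate iterate no longer a vertex-support, so one performs a sequence of at most $p+q+s-2$ restoring pivots to return to a vertex whose support still contains $(i_0, j_0, k_0)$ and as many of the original alignments as possible. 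Combining the introductory pivots with the restoring pivots gives the factor of $2$ in the bound.

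The main obstacle will be the support-growth lemma, specifically the control on the pivot sequence length. Unlike the $2$-way classical case, where the support of a vertex is a spanning forest in a bipartite graph and the relevant circuits are simple cycles of length at most $p+q$, the support structure of a vertex of an axial $3$-way transportation polytope is an incidence structure in a tripartite hypergraph whose combinatorial type is more subtle (it is \emph{not} simply a tree or forest in any graph-theoretic sense). The delicate point is to choose the circuit $C$ carefully---so that the restoring pivots do not undo the progress measured by $|\supp(x') \cap \supp(y)|$---and to verify that the alternating structure of the circuit keeps all intermediate iterates feasible after scaling.
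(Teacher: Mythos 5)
Your overall architecture (reduce to the non-degenerate case, then bound the distance by a number of ``phases'' times a per-phase cost) is reasonable, but the heart of your argument --- the support-growth lemma --- is exactly the hard content, and the sketch you give for it does not work as stated. Two concrete problems. First, moving ``along'' a circuit $C$ taken from a conformal decomposition of $y-x$ is not an edge pivot of $P$: an edge of a non-degenerate transportation polytope corresponds to exchanging a \emph{single} support element, namely pivoting in one entry and letting the ratio test decide which entry leaves, and the circuit traversed is the unique circuit in $\supp(x)\cup\{(i_0,j_0,k_0)\}$, not a circuit of your choosing from $y-x$. A circuit $C$ of the decomposition typically contains many entries outside $\supp(x)$, so the segment from $x$ in direction $C$ runs through the relative interior of a higher-dimensional face and is not a path in $G(P)$. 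Second, even granting that you can introduce $(i_0,j_0,k_0)$ by one pivot, you cannot choose which variable leaves, and your ``restoring pivots'' may expel entries of $\supp(x)\cap\supp(y)$ that you had already gained; without a proof that the quantity $|\supp(x')\cap\supp(y)|$ can be made monotone across phases, the count of at most $p+q+s-2$ phases collapses. You flag this yourself as the delicate point, but it is not a technicality to be checked --- it is the theorem. (A smaller remark: your final step ``once the supports agree the vertices coincide'' is fine by non-degeneracy, but everything before it is unsupported.)

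The paper's proof sidesteps the problem of aligning with an \emph{arbitrary} target by routing every vertex to a single canonical vertex: the unique well-ordered vertex $\hat V$ whose support is a monotone staircase, produced by the northwest corner rule (Lemma~\ref{lemma:well-ordered}). The progress measure is not overlap with $y$ but the level $z$ down to which the current vertex is well-ordered; Lemma~\ref{lemma:decrease-p} shows one can lower the level by one in at most $2(z-4)$ pivots, and the per-level cost is controlled by a careful case analysis (Lemma~\ref{lemma:decrease-S}) on how the three index-classes $S_1,S_2,S_3$ interact, including a genuinely geometric step: when the natural circuit move overshoots the vertex set, one argues inside a two-dimensional face (at most an octagon) that a vertex with the desired support is at most two edges away. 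This is precisely the kind of argument your ``restoring pivots'' would have to supply, and it is where the real work lies. Summing $1+\sum_{y=5}^{p+q+s}2(y-4)\le(p+q+s-3)^2$ and doubling (go $V\to\hat V\to V'$) gives the bound; note the factor $2$ in the paper comes from concatenating two paths through $\hat V$, not from pairing introductory and restoring pivots as in your sketch.
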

This bound was first given in~\cite{DeLoera:GraphsTP}, proved in collaboration with \given{Jes\'us }De~Loera, \given{Shmuel }Onn, and \given{Francisco }Santos (see Theorem~\ref{theorem:originalaxial}). If the Hirsch Conjecture (Conjecture~\ref{conjecture:hirsch}) is true, then the diameters of $3$-way transportation polytopes of size $p \times q \times s$ defined by $1$-marginals are at most $p+q+s-2$.
\begin{lemma}
Assume the Hirsch Conjecture. Then the diameter of every $3$-way transportation polytope of size $p \times q \times s$ defined by $1$-marginals is at most $p+q+s-2$.
\end{lemma}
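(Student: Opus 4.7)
The plan is to combine the standard reduction to the non-degenerate case with a direct count of the dimension and number of facets of a $3$-way axial transportation polytope, and then invoke the Hirsch Conjecture as a black box.

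First I would reduce to the non-degenerate case. Let $P$ be an arbitrary (possibly degenerate) $3$-way transportation polytope of size $p \times q \times s$ defined by $1$-marginals. By the perturbation argument already recorded in the excerpt (the analogue of Lemma~6.4.6 of~\cite{Yemelichev:Polytopes}, stated right after Definition~3.5 of the multi-way non-degeneracy), one may perturb the marginals $u$, $v$, $w$ slightly to obtain a non-degenerate polytope $P'$ of the same size whose graph $G(P')$ contracts onto $G(P)$; in particular $\diam(G(P)) \leq \diam(G(P'))$. So it suffices to bound $\diam(G(P'))$ when $P'$ is non-degenerate, hence simple and of maximal dimension.

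Second, I would write down the dimension $d$ and the number of facets $n$ of $P'$. The polytope lives in $\R^{pqs}$ and is cut out by the $pqs$ non-negativity inequalities $x_{i,j,k}\geq 0$ together with the $p+q+s$ summation equations in \eqref{equation:1marginals}. Exactly one linear dependence among the $u$-equations, one among the $v$-equations and one among the $w$-equations (collapsing to a single global dependence as noted in \eqref{equation:axialsizigies}) reduces the number of independent equations to $p+q+s-2$, so that by the maximality of dimension
\[
d \;=\; pqs-(p+q+s-2) \;=\; pqs-p-q-s+2.
\]
Each of the $pqs$ inequalities $x_{i,j,k}\geq 0$ defines at most one facet, so $n\leq pqs$. (Equality holds generically; if some inequality happens to be redundant, that only makes the bound $n-d$ smaller and therefore stronger.)

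Third, I would apply the Hirsch Conjecture in the form $\diam(G(P'))\leq n-d$, which the statement allows us to assume. Plugging in gives
\[
\diam(G(P))\;\leq\;\diam(G(P'))\;\leq\;n-d\;\leq\;pqs-(pqs-p-q-s+2)\;=\;p+q+s-2,
\]
which is exactly the claimed bound. I do not anticipate any genuine obstacle: the only step that requires real content from elsewhere in the chapter is the perturbation-to-non-degenerate reduction, and that is already cited in the excerpt; everything else is an arithmetic bookkeeping of rank and facets that mirrors Corollary~\ref{corollary:dimension} in the classical case.
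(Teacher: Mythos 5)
Your proof is correct and follows essentially the same route as the paper's: compute $d = pqs-(p+q+s-2)$, observe that only the inequalities $x_{i,j,k}\geq 0$ can define facets so $n\leq pqs$, and apply the assumed Hirsch bound $n-d$. The preliminary perturbation to the non-degenerate case is a harmless extra precaution (the paper skips it, implicitly treating $P$ as having maximal dimension), but it does not change the argument in any essential way.
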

\begin{proof}
Let $P$ be a $3$-way $p \times q \times s$ axial transportation polytope defined by $1$-marginals. The dimension of $P$ is $pqs-(p+q+s-2)$. Since facets can only be obtained by the inequalities of the form $x_{i,j,k} \geq 0$, the number $n$ of facets of $P$ is bounded above by $pqs$. By the Hirsch Conjecture is true, the diameter of $P$ is no more than $n-d$, which is no more than $pqs-(pqs-(p+q+s-2))$.
\end{proof}
\begin{remark}
Note that the ``converse'' does not prove the Hirsch Conjecture is true for $3$-way axial transportation polytopes defined by $1$-marginals. That is to say, a result that the diameter of these polytopes is no more than $p+q+s-2$ does not prove the Hirsch Conjecture is true if the $p \times q \times s$ polytope has strictly less than $pqs$ facets.
\end{remark}

A similar result for the graph of a $p \times q$ classical transportation polytope was given by \given{Graham }Brightwell et al. (see~\cite{Brightwell:LinearTransportation}), who proved an upper bound of $8(p+q-2)$ for the diameter. More recently, \given{Cor }Hurkens (see~\cite{Hurkens:Diameter4p}) has obtained a bound of $3(p+q-1)$, a factor of three away from the predicted value of the Hirsch Conjecture.

As we observed,~\cite{Brightwell:LinearTransportation} provided the first linear bound for the diameter of the graphs of $2$-way transportation polytopes. Theorem~\ref{theorem:main} provides a quadratic bound for $3$-way transportation polytopes defined by $1$-marginals and, moreover, a sublinear one if we assume that the three parameters $p$, $q$, and $s$ are approximately the same. (Observe that the number of facets of a $3$-way transportation polytope is bounded above by the product $pqs$ of its size parameters).

Bounding the diameter of $3$-way transportation polytopes is particularly interesting because of the following results proved by \given{Jes\'us }De~Loera and \given{Shmuel }Onn in~\cite{DeLoera:UniversalitySlim}:
\begin{enumerate}

\item Any rational convex polytope can be rewritten as a face $F$ of a $3$-way transportation polytope of size $p \times q \times s$ defined by $1$-marginals.  The sizes $p,q,s$, the $1$-marginals $u,v,w$, and the entries $x_{i,j,k}$ that are prescribed to be zero in the face $F$ can be computed in polynomial time on the size of the input.

\item More dramatically, any rational polytope is isomorphically representable as a \emph{planar} $3$-way transportation polytope. 
\end{enumerate}

That is to say, a version of Theorem~\ref{theorem:main} for the $3$-way transportation polytopes by $2$-marginals, or a version for $3$-way transportation polytopes defined by $1$-marginals that allows one to prescribe some variables to be zero, would provide a polynomial upper bound on the diameter of the graph of \emph{every} convex rational polytope. Another consequence of these results is that the method of Section~\ref{section:enumerationofpartition} for enumerating all combinatorial types of planar $3$-way transportation polytopes, yields, in particular, an enumeration of \emph{all} types of rational convex polytopes.

We close with Section~\ref{section:networkflow}, which proves new diameter bounds for network flow polytopes, which are subpolytopes of transportation polytopes.

Let us finally mention that our systematic listing of non-degenerate transportation polytopes of small sizes provides the solution to at least four open problems and conjectures about transportation polytopes stated in the monograph~\cite{Yemelichev:Polytopes}:

\begin{enumerate}

\item In~\cite{Klee:FacesTransportation}, \given{Victor }Klee and \given{Christoph }Witzgall prove that the largest possible number of vertices in classical transportation polytopes of size $p \times q$ is achieved by the generalized Birkhoff $2$-way polytope (see Definition~\ref{definition:generalizedbirkhoffclassical} on page~\pageref{definition:generalizedbirkhoffclassical}). Problem~32 on page~400 of~\cite{Yemelichev:Polytopes} conjectured that the same holds in general.

But in Example~\ref{example:notBirkhoff} we provide an explicit counter-example of this for planar $3$-way transportation polytopes. (See the definition of the generalized Birkhoff $3$-way planar polytope in Definition~\ref{definition:generalizedbirkhoffmultiway} on page~\pageref{definition:generalizedbirkhoffmultiway}.)

\item Question 36 on page~396 of~\cite{Yemelichev:Polytopes} asked: \emph{Is it true that every integer of the form $(p-1)(q-1)(s-1)+t$, where $1 \leq t \leq pq+ps+qs-p-q-s$, and only these integers, can equal the number of facets of a non-degenerate planar $3$-way transportation polytope of order $p \times q \times s$ defined by $2$-marginals, where $p,q,s \geq 2$?}

For the case $p=q=2$ and $s=3$, the conjecture asks if every integer from $3$ to $11$, and only these integers, equal the number of facets of non-degenerate $2 \times 2 \times 3$ planar transportation polytopes defined by $2$-marginals. Table~\ref{table:planar-2-2-3} on page~\pageref{table:planar-2-2-3} answers the question negatively: the number $n$ of facets can be $3$, $4$, $5$, or $6$ (and these do indeed occur), but $7$ through $11$ are in fact missing.

\item Similarly, Conjecture 33 on page 400 of~\cite{Yemelichev:Polytopes} asked: \emph{Is it true that every integer from $1$ to $pq+ps+qs-p-q-s+1$, and only these numbers, are realized as the diameter of a planar $3$-way transportation polytope defined by $2$-marginals of order $p \times q \times s$?}

The same case $p=q=2$, and $s=3$ shows that this is false. The transportation polytopes obtained are polygons with up to six sides, hence of diameter at most three, instead of $10$.

\item Open problem~37 on page~396 of~\cite{Yemelichev:Polytopes} asks \emph{whether the number $f_0$ of vertices of every $p \times q \times s$ non-degenerate $3$-way planar transportation polytope defined by $2$-marginals satisfies} $(p-1)(q-1)(s-1)+1 < f_0 < 2(p-1)(q-1)(s-1)$.

We show the answer is no even in the case of non-degenerate $2 \times 2 \times 4$ planar transportation polytopes.
\end{enumerate}

In addition to the four solved problems above, Theorems
\ref{theorem:genericcase} and~\ref{theorem:gcd} are initial steps on the solution of Problem~25 in page~399 of~\cite{Yemelichev:Polytopes}. It asks to find the complete distribution of possible number of vertices for transportation polytopes.

\begin{example} \label{example:notBirkhoff} 
Here is an application of this method, which gives a counter-example to open problem 37 of~\cite{Yemelichev:Polytopes}. The $2$-marginals $U$, $V$, and $W$ below define a $3 \times 3 \times 3$ planar transportation polytope which has more vertices ($270$ vertices) than the generalized Birkhoff $3$-way planar polytope, with only $66$ vertices:
\begin{align*}
U&=\left[
\begin{array}{ccc}
      164424  &    324745  &    127239 \\
      262784  &    601074  &   9369116 \\ 
      149654  &   7618489  &   1736281    
\end{array}
\right],
\end{align*}
\begin{align*}
V&=\left[
\begin{array}{ccc}
163445   &    49395 &     403568  \\
1151824   &   767866 &    8313284 \\
1609500   &  6331023 &    1563901 
\end{array}
\right],
\end{align*}
\begin{align*}
W&=\left[
\begin{array}{ccc}
184032  &    123585 &     269245 \\ 
886393  &   6722333 &     935582 \\ 
1854344 &    302366 &    9075926 
\end{array}
\right].
\end{align*}
\end{example}

Based on the data collected from the enumeration process, we
conjecture the following to be true:

\begin{conjecture}
  The graph of every non-degenerate $p \times q$ transportation
  polytope has a Hamiltonian cycle if $pq > 4$.
\end{conjecture}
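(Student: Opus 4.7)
The plan is induction on $p+q$, with base cases supplied by Appendix~\ref{appendix:catalogTP}. The smallest case $p=2$, $q=3$ is immediate because the polytope has dimension two and its graph is itself a Hamilton cycle. For the remaining small parameters ($2\times 4$, $2\times 5$, $3\times 3$, $3\times 4$), a direct check on each combinatorial type listed in Tables~\ref{table:classical-2-4}--\ref{table:classical-3-4} suffices and can be automated with {\tt polymake}. For the infinite base family $p=2$ with $q$ arbitrary, a uniform argument is needed: by Lemma~\ref{lemma:vertexBx} the vertices of a non-degenerate $2 \times q$ polytope correspond to spanning trees of $K_{2,q}$, which are encoded by the unique ``split'' column and the bipartition of the remaining columns between the two supplies. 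A Gray-code-style traversal of these encodings should yield an explicit Hamilton cycle.

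For the inductive step with $p,q \geq 3$, I would strengthen the statement to the assertion that every edge of $G(P)$ lies on some Hamilton cycle of $G(P)$. Fix an entry $(i_0,j_0)$ achieving $\min_{i,j} \min\{u_i, v_j\}$ and let $F = \{x \in P \mid x_{i_0,j_0} = 0\}$. By the choice of $(i_0,j_0)$, the facet $F$ is linearly isomorphic to a non-degenerate classical transportation polytope of smaller total size, to which induction applies. By simplicity of $P$, the ``upper set'' $\vert(P) \setminus \vert(F)$ is in bijection with $\vert(F)$ via the \emph{flip map} $\phi$ that, for each $x \in \vert(F)$, adjoins the edge $(i_0,j_0)$ to the spanning tree $B(x)$ and deletes a suitable edge of the unique cycle thereby created. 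The induced subgraph on $\vert(P) \setminus \vert(F)$ is again the graph of a transportation-like polytope on a modified bipartite support, so a second appeal to the inductive hypothesis produces an edge-prescribed Hamilton cycle on it.

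The Hamilton cycle of $G(P)$ is then produced by splicing: remove an edge $\{x,x'\}$ of a Hamilton cycle $C_F$ of $G(F)$, cross the facet along the two flip edges $\{x, \phi(x)\}$ and $\{x', \phi(x')\}$, and traverse a Hamilton path from $\phi(x)$ to $\phi(x')$ in the upper subgraph. The main obstacle will be ensuring that a Hamilton path with these prescribed endpoints exists; this is exactly what the edge-prescribed strengthening is designed to provide, since the required path is half of an edge-prescribed Hamilton cycle through the edge $\{\phi(x),\phi(x')\}$. Reducing everything back to the stronger hypothesis requires verifying that whenever $x$ and $x'$ are adjacent in $G(F)$, their images $\phi(x)$ and $\phi(x')$ are adjacent in $G(P) \setminus G(F)$. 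By the characterization of edges via the unique cycle in $B(x) \cup B(x')$ (the proposition following Lemma~\ref{lemma:genericnondegenerate}), this amounts to analyzing how the fundamental cycle of $(i_0,j_0)$ interacts with the pivot operation taking $x$ to $x'$. The chief technical difficulty is maintaining feasibility ($x_{i,j} \geq 0$) along every tree exchange; Cummins' classical 1966 theorem asserts Hamiltonicity of the unconstrained spanning-tree exchange graph, but our polytope graph is only a subgraph of it, and the induction must certify that every swap employed stays within this subgraph.
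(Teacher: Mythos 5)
You should first note that the paper does not prove this statement at all: it is posed as a conjecture, supported only by the computational catalogue of Appendix~\ref{appendix:catalogTP}, so your argument has to stand entirely on its own --- and as written it does not. The pivotal reduction step is flawed: the facet $F=\{x\in P \mid x_{i_0,j_0}=0\}$ is \emph{not} linearly isomorphic to a classical transportation polytope of smaller size, no matter how $(i_0,j_0)$ is chosen; it is a $p\times q$ transportation polytope with one forbidden cell, i.e., a flow polytope on $K_{p,q}$ with an edge deleted. The conjecture (and hence your inductive hypothesis) says nothing about that larger class, so induction on $p+q$ cannot be invoked for $F$; you would have to strengthen the statement to all faces of transportation polytopes, a substantially stronger claim that the base-case data do not address.

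Second, the ``flip map'' $\phi$ is not a bijection between $\vert(F)$ and $\vert(P)\setminus\vert(F)$. Simplicity of $P$ does give each vertex of $F$ exactly one neighbor off $F$, but a vertex off $F$ may have many neighbors on $F$ (in a simplex the apex is adjacent to every vertex of the base facet) or none, so the two sets need not even have equal cardinality, and the induced subgraph on $\vert(P)\setminus\vert(F)$ is not the graph of any face of $P$, so no inductive hypothesis --- ordinary or edge-strengthened --- applies to it and the splicing construction collapses. Finally, the difficulty you flag at the end is the real one and is left unresolved: by Lemma~\ref{lemma:vertexBx} the graph of $P$ is the subgraph of the spanning-tree exchange graph of $K_{p,q}$ induced by the \emph{feasible} trees, and Hamiltonicity of the full exchange graph (Cummins) gives no control over an induced subgraph; even your $2\times q$ base family needs this, since a Gray code through all spanning trees of $K_{2,q}$ does not restrict to the feasible ones. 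This is precisely why the statement remains a conjecture in the paper, verified only by exhaustive enumeration for small sizes.
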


\begin{conjecture}
If $P$ is a non-degenerate $3$-way transportation polytope of size $p \times q \times s$ ($p, q, s \geq 3$) defined by $1$-marginals, then the diameter of its graph $G(P)$ is equal to $n - d$, where $d = pqs - p - q - s + 2$ is the dimension and $n \leq pqs$ is the number of facets of $P$.
\end{conjecture}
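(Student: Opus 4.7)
The plan is to prove both directions separately: the upper bound $\diam(G(P)) \leq n-d$ and the matching lower bound $\diam(G(P)) \geq n-d$. The former is a sharp Hirsch-type statement for this class of polytopes (strictly improving the quadratic bound of Theorem~\ref{theorem:main}), while the latter asserts the existence of far-apart vertex pairs realizing the bound.

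For the upper bound, I would attempt to establish the Non-revisiting Conjecture (Conjecture~\ref{conjecture:nonrevisiting}) for non-degenerate axial $3$-way transportation polytopes, which by Theorem~\ref{theorem:dstep-nonrevisiting} is equivalent to the Hirsch bound $n-d$. The key combinatorial device is a tripartite support hypergraph $B(x)$ of a vertex $x$, generalizing the bipartite support graph from Lemma~\ref{lemma:vertexBx}: the hyperedges index the positive coordinates of $x$, and for a non-degenerate vertex one obtains a spanning ``tripartite tree'' with exactly $p+q+s-2$ hyperedges. An edge of $G(P)$ corresponds to a one-in/one-out exchange of hyperedges along the unique cycle created. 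Given any two vertices $x, y$, I would construct a pivot sequence from $x$ to $y$ that enters a new zero-facet at each step and never revisits an abandoned one, by greedily introducing hyperedges of $\supp(y) \setminus \supp(x)$ and eliminating hyperedges of $\supp(x) \setminus \supp(y)$ inside ``good'' cycles. The natural induction is on $p+q+s$, peeling off the smallest supply, demand, or bin via a northwest-corner-rule reduction in the spirit of Lemma~\ref{lemma:littleeddielemma}.

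For the lower bound, I would produce vertex pairs $x, y$ realizing the distance $n-d$. In a simple polytope, each vertex lies on exactly $d$ facets, so there are precisely $n-d$ facets not containing $x$; if every such facet contains $y$ then any path from $x$ to $y$ must enter each of them, and since simplicity forces at most one new facet per pivot, the distance is at least $n-d$. When $n=pqs$ this reduces to finding $x, y \in P$ with $\supp(x) \cap \supp(y) = \emptyset$. The hypothesis $p,q,s \geq 3$ gives $pqs \geq 27$, comfortably larger than $2(p+q+s-2)$, so disjoint supports of the required size should be realizable by running the northwest-corner rule from opposite corners of the table for carefully chosen marginals; when $n < pqs$ (some $x_{i,j,k}\geq 0$ is redundant), the construction must instead target facet-incidence directly rather than support-disjointness, which will require a finer analysis of which inequalities are facet-defining.

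The main obstacle is unquestionably the upper bound: proving the Hirsch Conjecture even restricted to this class would be a significant result. The techniques behind Theorem~\ref{theorem:main} insert/delete positive entries coordinate-by-coordinate and lose a linear factor that seems structurally unavoidable without a new idea. Moreover, the universality theorem of De~Loera and Onn (Theorem~\ref{theorem:universality}) for \emph{planar} $3$-way polytopes warns that any proof of an $n-d$ bound must genuinely exploit the axial structure — tripartite support trees, the explicit rank formula $p+q+s-2$, and the simplicity of how the $p+q+s$ marginal equations interact — and cannot be a soft argument valid for all partition polyhedra. The restriction $p,q,s \geq 3$ in the statement is itself a warning: the $p \times 2$ classical case in Section~\ref{section:diameter-p2} and the isomorphism of Theorem~\ref{theorem:22n} show that ``thin'' slab cases behave differently, so any induction must handle base cases (where some parameter drops to $2$) without recursing into them.
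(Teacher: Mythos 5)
You should first be aware that the paper does not prove this statement at all: it is stated as a conjecture, formulated on the basis of the computational catalogue of Appendix~\ref{appendix:catalogTP} and the database~\cite{TPDB}, so there is no proof of the paper's to compare your argument against. Your text is, accordingly, a research plan rather than a proof, and as a proof it has genuine gaps. The upper bound $\diam(G(P))\le n-d$ is precisely the Hirsch Conjecture restricted to this class, which the paper leaves open; the best it achieves is the quadratic bound of Theorem~\ref{theorem:main}. Your route via the Non-revisiting Conjecture~\ref{conjecture:nonrevisiting} and Theorem~\ref{theorem:dstep-nonrevisiting} is a reasonable framing, but the crucial step --- that one can always pivot so as to enter a hyperedge of $\supp(y)\setminus\supp(x)$ while never revisiting an abandoned facet --- is asserted, not argued. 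The bipartite picture of Lemma~\ref{lemma:vertexBx} does not transfer: the $3$-way axial constraint matrix is not a network matrix, its circuits are not single cycles, and a single pivot can remove and insert support triplets in ways the ``unique cycle'' intuition does not predict. This is exactly why the paper's own proof of Theorem~\ref{theorem:main} has to analyze two-dimensional faces that can be octagons (Lemma~\ref{lemma:decrease-S}) and settles for two steps per support correction; your sketch gives no mechanism for avoiding that loss.

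The lower bound also has a gap as stated. The conjecture asserts the equality $\diam(G(P))=n-d$ for \emph{every} non-degenerate $p\times q\times s$ axial polytope with $p,q,s\ge 3$, so producing two vertices with disjoint supports ``for carefully chosen marginals'' only yields Hirsch-sharp examples (in the spirit of Theorem~\ref{theorem:strongTPHirschSharp} for the classical case); it does not show that an arbitrary generic marginal vector admits such a pair, and when some inequality $x_{i,j,k}\ge 0$ fails to be facet-defining ($n<pqs$) you concede you have no argument at all. Your own caveats are well taken --- the universality theorem warns against soft arguments, and the paper notes that the analogous equality already fails for some non-degenerate classical $p\times q$ polytopes with $p,q\le 5$, so any proof must exploit the axial structure essentially --- but acknowledging the obstacles does not remove them. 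As it stands, neither direction of the conjectured equality is established by your proposal, and the statement remains open.
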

Among $p \times q$ classical transportation polytopes ($p,q \leq 5$), there are non-degenerate $d$-polytopes where the diameter of the graph $G(P)$ is \emph{strictly} less than $n - d$, where $n \leq pq$ is the number of facets. (See~\cite{TPDB}.)

\section[The number of vertices of classical transportation polytopes]{\texorpdfstring{The number of vertices of $p \times q$ classical transportation polytopes}{The number of vertices of classical transportation polytopes}}\label{section:gcd}

In this section, we prove Theorem~\ref{theorem:gcd}, which says that the number of vertices of a non-degenerate $p \times q$ classical transportation polytope is divisible by $\gcd(p,q)$. Recall that $\Delta_d$ denotes the $d$-dimensional simplex, which has $d+1$ vertices. For ease of notation, we will also denote the $d$-simplex with $d+1$ vertices by $D_{d+1}$. In particular, by this shift of subscript, we have $D_{d+1} = \Delta_d$ for all $d \geq 0$. The first observation, already hinted in Example~\ref{example:triprism}, is that the vector configuration $A_{p,q}$ associated to these transportation polytopes is (a cone over) the set of vertices of the product $D_{p,q} = D_p \times D_q$ of two simplices $D_{p}$ and $D_{q}$ of dimensions $p-1$ and $q-1$, respectively. So, we are interested in the cardinalities of chambers in the product of two simplices. Here and in what follows we call the \defn{cardinality} of a chamber $c$ of $A_{p,q}$ the number of bases of $A_{p,q}$ that contain the chamber $c$. We denote it by $|c|$. The proof of Theorem~\ref{theorem:gcd} consists of the following two steps, which are established respectively in the two lemmas below:
\begin{figure}[hbt]
  \begin{center}
    \includegraphics[scale=0.78]{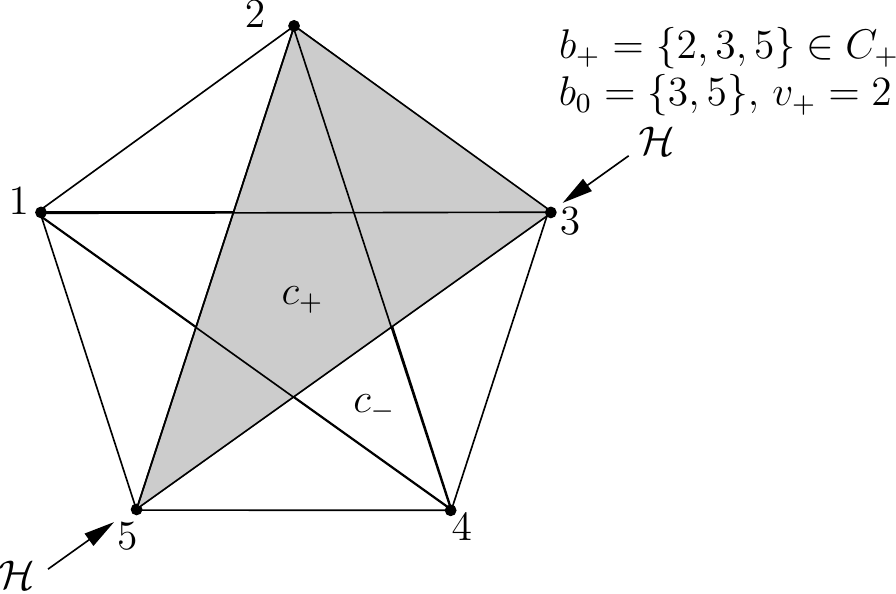}
    \caption{A cross-section of the chamber complex of some cone with two adjacent chambers} \label{figure:cutlemma}
  \end{center}
\end{figure}
\begin{itemize}
\item There is a ``seed'' chamber in $D_{p,q}$ whose cardinality is indeed a multiple of $\gcd(p,q)$.
\item The difference in the cardinalities of any two adjacent chambers of $D_p\times D_q$ is a multiple of $\gcd(p,q)$.
\end{itemize}
Since the chamber complex is a connected polyhedral complex (where two adjacent chambers are divided by a hyperplane supported on the vector configuration) the two lemmas settle the proof.

Let us define the \defn{lexicographic chamber} of $D_{p,q}$
recursively as the (unique) chamber incident to the lexicographic
chamber of $D_{p,q-1}$. The recursion starts with $D_{p,1}$,
which is an $(p-1)$-simplex and contains a unique chamber.  Observe
that the definition of the lexicographic chamber is not symmetric in
$p$ and $q$.  For example, the lexicographic chamber of the triangular
prism $D_{3,2}$ is the one incident to a basis of the prism, and
has cardinality $3$.  The lexicographic chamber of $D_{2,3}$ is
incident to one of the edges parallel to the axis of the prism, and
has cardinality four.

\begin{lemma}
\label{lemma:lex-chamber}
The lexicographic chamber of $D_{p,q}$ is contained in exactly
$p^{q-1}$ simplices.
\end{lemma}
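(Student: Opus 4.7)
I would proceed by induction on $q$. The base case $q=1$ is immediate: $D_{p,1}$ is itself an $(p-1)$-simplex, with a single chamber contained in exactly $p^0=1$ basis. For the inductive step, I would exploit the recursive definition of the lexicographic chamber: the lexicographic chamber $c_{p,q}$ of $D_{p,q}$ is the one whose closure meets the open lexicographic chamber $c_{p,q-1}$ of $D_{p,q-1}$ along the facet $\{v_q=0\}$ of the cone over $D_{p,q}$. I would then establish a bijection
\[
\bigl\{ \text{bases containing } c_{p,q} \bigr\} \;\longleftrightarrow\; [p] \times \bigl\{ \text{bases containing } c_{p,q-1} \bigr\},
\]
which, by Lemma~\ref{lemma:vertexBx}, is equivalent to a bijection between spanning trees of $K_{p,q}$ that are supports of vertices of a transportation polytope whose margins lie in $c_{p,q}$ and the indicated pairs.

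The map would send a pair $(i,T')$ to $T := T' \cup \{(i,q)\}$, with inverse removing from a spanning tree $T$ the unique edge incident to vertex $q$ on the demand side. The heart of the argument is therefore to prove that, for every basis $T$ of $c_{p,q}$, vertex $q$ has degree exactly one in $T$. This follows from a sign/limit argument: if the degree were $d\geq 2$, then cutting $T$ at $q$ yields $d$ subtrees whose net flows into $q$ sum to $v_q$; moreover each of these $d$ flows is a fixed $\pm 1$ combination of the coordinates of $(u,v_1,\dots,v_{q-1})$, hence independent of $v_q$. Taking a sequence of margins inside $c_{p,q}$ with $v_q\to 0^+$ (possible since $c_{p,q}$ meets $\{v_q=0\}$), the $d$ flows must remain non-negative with sum tending to zero, so all of them vanish in the limit. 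This forces the limit basic solution to have fewer than $p+q-2$ positive entries, contradicting non-degeneracy -- and hence, by Lemma~\ref{lemma:genericnondegenerate}, genericity -- of the lexicographic chamber $c_{p,q-1}$ of $D_{p,q-1}$.

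Granting the degree-one claim, the bijection is then straightforward: for each basis $T = T'\cup\{(i,q)\}$ of $c_{p,q}$, continuity in $v_q$ shows that $T'$ is itself a basis of $c_{p,q-1}$; conversely, given a basis $T'$ of $c_{p,q-1}$ and any $i\in[p]$, reintroducing $v_q>0$ only changes the entry $x_{i,q}$, raising it from $0$ to $v_q$, so $T'\cup\{(i,q)\}$ is a basis of $c_{p,q}$. Combined with the inductive hypothesis $|c_{p,q-1}|=p^{q-2}$, this yields $|c_{p,q}| = p \cdot p^{q-2} = p^{q-1}$, completing the induction. The main obstacle is the degree-one argument, and in particular the rigorous justification that the $d$ subtree-flows must be simultaneously non-negative throughout all of $c_{p,q}$; I expect the cleanest approach is to check that $c_{p,q}$ is ``cylindrical'' in the $v_q$-direction near its facet in $\{v_q=0\}$, so that $v_q$ may be driven to $0$ independently of the other parameters, allowing the sign contradiction to be read off from the independence of these flows from $v_q$.
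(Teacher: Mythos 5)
Your proof is correct and takes essentially the same route as the paper: induction on $q$ via the recursion $|c_{p,q}| = p\cdot|c_{p,q-1}|$, which the paper asserts in two sentences and you justify in detail through the degree-one-at-node-$q$ claim and the resulting bijection $(i,T')\mapsto T'\cup\{(i,q)\}$. Your closing worry about ``cylindricality'' in the $v_q$-direction is unnecessary: since the edge flows are linear (hence continuous) in the margins, any sequence in $c_{p,q}$ converging to a point of $c_{p,q-1}$ already yields the sign contradiction.
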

\begin{proof}
The cardinality of the lexicographic chamber of $D_{p,q}$
equals the cardinality of the lexicographic chamber of
$D_{p,q-1}$ times the number of vertices of $D_{p,q}$ not
lying in its facet $D_{p,q-1}$. The latter equals $p$.
\end{proof}

When moving from a chamber $c_-$ to an adjacent one $c_+$ we ``cross"
a certain hyperplane $\mathcal{H}$ spanned by all except one of the
elements of any basis containing $c_+$ but not containing $c_-$.  Let
us denote by $C_+$ and $C_-$ the subsets of $A_{p,q}$ lying in the
sides of $\mathcal{H}$ containing $c_+$ and $c_-$ respectively.
(Remember that, in our case, $A_{p,q}$ equals the set of vertices of
$D_{p,q}$.) Observe also that the common boundary $c_0 \subset
\mathcal{H}$ of $c_+$ and $c_-$ is a chamber in the vector configuration
$A_{p,q}\cap \mathcal H$.

\begin{lemma}
\label{lemma:chambertochamber}
Let $A_{p,q}$ is the set of vertices of $D_{p,q}$. Then,
\begin{enumerate}
\item $|c_+|- |c_-| = |c_0| (|C_+|-|C_-|)$.
\item $|C_+|-|C_-|$ is a multiple of $\gcd(p,q)$.
\end{enumerate}
\end{lemma}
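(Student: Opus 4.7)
The plan is to handle the two parts separately: part (1) is a general statement about chamber complexes that follows from a careful classification of bases, while part (2) relies on the specific structure of $A_{p,q}$ as the vector configuration of $D_p\times D_q$.

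For part (1), I would classify every basis $\mathcal{A}$ of $A_{p,q}$ by its position relative to $\mathcal{H}$. Either the relative interior of $\cone(\mathcal{A})$ meets $\mathcal{H}$ transversely, in which case $\cone(\mathcal{A})$ contains both $c_+$ and $c_-$ (so that $\mathcal{A}$ is counted in both $|c_+|$ and $|c_-|$ and cancels in the difference), or $\mathcal{H}$ supports a facet of $\cone(\mathcal{A})$. In the latter case, the $p+q-2$ vectors $\mathcal{B}=\mathcal{A}\cap\mathcal{H}$ form a basis of $A_\mathcal{H}$ and the remaining vector of $\mathcal{A}$ lies strictly on one side of $\mathcal{H}$. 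Taking a sequence of points in $c_+$ converging to the common wall $c_0$ and invoking uniqueness of the coefficients in a simplicial cone, one sees that $\mathcal{A}$ contains $c_+$ (and not $c_-$) if and only if $\mathcal{B}$ contains $c_0$ as a chamber of $A_\mathcal{H}$ and the extra vector lies in $C_+$. Conversely every such pair $(\mathcal{B},a)$ yields a basis $\mathcal{B}\cup\{a\}$ containing $c_+$ but not $c_-$. This bijection counts $|c_0|\cdot|C_+|$ bases containing $c_+$ only and, symmetrically, $|c_0|\cdot|C_-|$ bases containing $c_-$ only, giving $|c_+|-|c_-|=|c_0|(|C_+|-|C_-|)$.

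For part (2), I would use the explicit description of $A_{p,q}$ as the set of vectors $e_i+f_j\in\R^{p+q}$. Any linear hyperplane $\mathcal{H}$ has a normal $(\alpha_1,\dots,\alpha_p,\beta_1,\dots,\beta_q)$, and $e_i+f_j\in\mathcal{H}$ if and only if $\alpha_i+\beta_j=0$. The rank of a subset $S$ of edges of $K_{p,q}$ equals $|V(S)|-c(S)$, the number of vertices touched minus the number of connected components of the induced bipartite subgraph. For $\mathcal{H}$ to be an interior wall of the chamber complex, the set $S=\{(i,j):\alpha_i+\beta_j=0\}$ must both have rank $p+q-2$ and leave vectors of $A_{p,q}$ on both sides. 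A short analysis shows that the $\alpha_i$'s must take exactly two distinct values $a_1,a_2$ and the $\beta_j$'s must take exactly two distinct values $-a_1,-a_2$. Writing $R_k:=\{i:\alpha_i=a_k\}$, $C_k:=\{j:\beta_j=-a_k\}$ and $r:=|R_1|$, $s:=|C_1|$, all four sets are nonempty, $S=(R_1\times C_1)\cup(R_2\times C_2)$, and the sides of $\mathcal{H}$ are $C_+=R_1\times C_2$ and $C_-=R_2\times C_1$ (up to a choice of orientation). Therefore
\begin{equation*}
|C_+|-|C_-|\;=\;\pm\bigl(r(q-s)-(p-r)s\bigr)\;=\;\pm(rq-ps),
\end{equation*}
which is divisible by $\gcd(p,q)$ since $\gcd(p,q)$ divides both $rq$ and $ps$.

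The main obstacle will be the rigidity step in part (2): ruling out normals whose $\alpha_i$'s or $\beta_j$'s take three or more distinct values. Such a normal produces $S$ as a union of three or more disjoint rectangles $R_{a_k}\times C_{-a_k}$, yielding at least three connected components in the induced bipartite subgraph and rank at most $p+q-3$, too small to span a hyperplane. Verifying this classification carefully (including the degenerate case where one of $R_2,C_2$ is empty, corresponding to a boundary wall of $\cone(A_{p,q})$ rather than an interior wall) is the heart of the argument; after that, the congruence $rq-ps\equiv 0\pmod{\gcd(p,q)}$ is immediate.
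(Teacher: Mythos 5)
Your proof is correct. Part (1) is the same argument as the paper's, just written out in more detail: the paper simply asserts the bijection between bases containing $c_+$ but not $c_-$ and pairs $(b_0,v_+)$ with $b_0$ a basis of $A_{p,q}\cap\mathcal{H}$ containing $c_0$ and $v_+\in C_+$, which is exactly the classification you carry out. For part (2) you arrive at the same count by a genuinely more elementary route. The paper works in oriented-matroid language: it observes that $(C_+,C_-)$ is a cocircuit of $A_{p,q}$, identifies this oriented matroid with the graphic one of the complete bipartite graph $K_{p,q}$ with all edges directed from rows to columns, and quotes the fact that cocircuits of a directed graph come from vertex cuts $(V_+,V_-)$, giving $|C_+|=|V_+^{(p)}|\cdot|V_-^{(q)}|$, $|C_-|=|V_-^{(p)}|\cdot|V_+^{(q)}|$ and hence $|C_+|-|C_-|=|V_+^{(p)}|q-|V_+^{(q)}|p$. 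You instead classify the spanned hyperplanes directly from a normal vector $(\alpha,\beta)$, using the rank formula for bipartite incidence vectors to force exactly two $\alpha$-values and two matching $\beta$-values; your sets $R_1,C_1$ are precisely the paper's $V_+^{(p)},V_+^{(q)}$, and $rq-ps$ is the same expression. What your version buys is self-containedness (no appeal to cocircuits of graphic oriented matroids), at the price of the rigidity step you correctly identify as the heart of the matter: ruling out normals with three or more value classes (rank drops to at most $p+q-3$) and recognizing the one-sided configurations as boundary facets of $\cone(A_{p,q})$, which cannot occur here since $\mathcal{H}$ separates the two chambers, so $C_+$ and $C_-$ are both non-empty. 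The paper's version buys brevity by outsourcing exactly that classification to the cut description of cocircuits.
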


\begin{proof} A basis $b_+$ contains $c_+$ but not $c_-$ if and only if $b_+$
is of the form $b_0\cup \{v_+\}$, where $b_0$ is a basis of $B\cap
\mathcal H$ containing $c_0$ and $v_+$ is an element of $C_+$.  This
and the analogous property for $c_-$ proves the first part.

For the second part, we restate a few facts in the terminology of oriented matroids. This makes the proof easier to write. (For, details see~\cite{Bjorner:OrientedMatroids} or~\cite{Bokowski:Computational-Oriented}.)

\begin{itemize}
\item In oriented matroid terminology a pair $(C_+,C_-)$ consisting of
  the subconfigurations on one and the other side of a hyperplane
  $\mathcal H$ spanned by a subset of $A_{p,q}$ is called a
  \defn{cocircuit} of $A_{p,q}$. That is, part 2 is a statement about
  the cocircuits in the oriented matroid $\mathcal M_{p,q}$ associated
  to the vertices of the product of two simplices.

\item The oriented matroid $\mathcal M_{p,q}$ coincides with the one
  associated to the complete directed bipartite graph $K_{p,q}$
  (i.e., the complete bipartite graph with all of its edges oriented
  from one part to the other). Thus, part 2 is a statement about the
  cocircuits in the oriented matroid of the directed bipartite graph 
  $K_{p,q}$.

\item The cocircuits of a directed graph $G=(V,E)$ are all read off
  from \defn{cuts} in the graph. By this we mean that the vertex set
  $V$ is decomposed into two parts $(V_+,V_-)$. The cocircuit
  $(C_+,C_-)$ associated to the cut $(V_+,V_-)$ has $C_+$ consisting
  of all the edges directed from $V_+$ to $V_-$ and $C_-$ consisting
  of all the edges directed from $V_-$ to $V_+$.

\end{itemize}

Using the dictionary between the directed graph $K_{p,q}$ and the
product of simplices we can finish the proof.  Let $(V_+, V_-)$ be a
cut in the complete directed bipartite graph $K_{p,q}$. Since our
graph is bipartite, we have $V_+$ and $V_-$ naturally decomposed as
$V_+^{(p)}\cup V_+^{(q)}$ and $V_-^{(p)}\cup V_-^{(q)}$, respectively.
The sizes of $C_+$ and $C_-$ are then:
\[
|C_+| = |V_+^{(p)}| \cdot  |V_-^{(q)}|
\qquad \hbox{and}\qquad
|C_-| = |V_-^{(p)}| \cdot  |V_+^{(q)}|.
\]
Now, using that $|V_+^{(p)}| + |V_-^{(p)}| =p$ and $|V_+^{(q)}| + |V_-^{(q)}| =q$ we get:
\[
|C_+| - |C_-| = 
|V_+^{(p)}| \cdot (q - |V_+^{(q)}|) - 
|V_+^{(q)}| \cdot (p - |V_+^{(p)}|) = 
|V_+^{(p)}| \cdot q - |V_+^{(q)}| \cdot p,
\] 
which is clearly a multiple of $\gcd(p,q)$.
\end{proof}

\section{Identical classes of transportation polytopes}\label{section:identicalclassesTP}

Here, we present the proof of Theorem~\ref{theorem:22n}, which says the $p \times 2 \times 2$ planar transportation polytopes are linearly isomorphic to the $p \times 2$ classical transportation polytopes. (The theorem is obtained as joint work with De Loera, Onn, and Santos.) First, we prove the following lemma:

\begin{lemma}\label{lemma:plan22n}
The planar $2 \times p \times q$ transportation polytopes are exactly the $p \times q$ transportation polytopes with bounded entries.
\end{lemma}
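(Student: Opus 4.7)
The plan is to exhibit an explicit linear bijection by eliminating one ``layer'' of variables. Let $P$ be a planar $2 \times p \times q$ transportation polytope defined by $2$-marginals $U \in \mathbb{R}^{p \times q}$, $V \in \mathbb{R}^{2 \times q}$, $W \in \mathbb{R}^{2 \times p}$, whose feasible points are arrays $x_{i,j,k}$ with $i\in[2]$, $j\in[p]$, $k\in[q]$. The first family of defining equations, $x_{1,j,k}+x_{2,j,k}=U_{j,k}$ for all $j,k$, allows us to solve $x_{2,j,k}=U_{j,k}-x_{1,j,k}$. Hence projection onto the coordinates $y_{j,k}:=x_{1,j,k}$ is a linear bijection between $P$ and its image $\pi(P) \subseteq \mathbb{R}^{p\times q}$.

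Next I would rewrite the remaining constraints in terms of the $y_{j,k}$. The non-negativity of $x_{1,j,k}$ and $x_{2,j,k}$ translates into the box constraints $0 \leq y_{j,k} \leq U_{j,k}$. The line sum equations $\sum_j x_{i,j,k}=V_{i,k}$ become $\sum_j y_{j,k}=V_{1,k}$ for each $k$, and the ``other half'' $\sum_j y_{j,k}=\sum_j U_{j,k}-V_{2,k}$ is automatically equivalent to it because the $2$-marginals are forced to satisfy $V_{1,k}+V_{2,k}=\sum_j U_{j,k}$ for $P$ to be non-empty. Similarly the equations $\sum_k x_{i,j,k}=W_{i,j}$ collapse to $\sum_k y_{j,k}=W_{1,j}$ for each $j$. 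Thus $\pi(P)$ is exactly the $p\times q$ classical transportation polytope with row marginals $(W_{1,j})_{j\in[p]}$ and column marginals $(V_{1,k})_{k\in[q]}$, intersected with the upper-bound constraints $y_{j,k}\leq U_{j,k}$.

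For the converse, starting from any $p\times q$ classical transportation polytope $T$ with marginals $u\in\mathbb{R}^p$, $v\in\mathbb{R}^q$ together with arbitrary upper bounds $b_{j,k}\geq 0$ on its entries, I would set $U_{j,k}:=b_{j,k}$, $W_{1,j}:=u_j$, $V_{1,k}:=v_k$, and then define $W_{2,j}:=\sum_k U_{j,k}-u_j$ and $V_{2,k}:=\sum_j U_{j,k}-v_k$. One checks that these $2$-marginals are compatible (each pair of axial sums agrees on $\sum_{j,k}U_{j,k}$), and the construction above produces a planar $2\times p\times q$ polytope whose projection is $T\cap\{y\leq b\}$. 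Since the forward map $\pi$ and this inverse construction are both linear and mutually inverse on the respective polytopes, we obtain the claimed linear isomorphism. The only subtle point, and the one I would be most careful about, is verifying that these compatibility equations among the marginals are exactly what is needed so that no redundant constraint is introduced and no feasible $y$ is lost; this is a bookkeeping check along the lines of Lemma~\ref{lemma:littleeddielemma}.
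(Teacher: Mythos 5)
Your proposal is correct and follows essentially the same route as the paper's proof: projecting onto the first layer $x_{1,j,k}$ to obtain a $p\times q$ transportation polytope with entry-wise upper bounds, and conversely recovering the second layer's marginals by subtracting from the column sums of the bound matrix. The compatibility bookkeeping you flag at the end is exactly the (implicit) content of the paper's construction, so no further work is needed.
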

\begin{proof} 
Every planar $2 \times p \times q$ transportation polytope
\[P\ =\ \left\{(x_{i,j,k})\in\R_{\geq 0}^{2\times p\times q}:
\sum_k x_{i,j,k}=U_{i,j},\ \sum_j x_{i,j,k}=V_{i,k},\ x_{1,j,k}+x_{2,j,k}=W_{j,k}\right\}\]
is linearly isomorphic to a $p \times q$ transportation polytope with bounded entries,
\[Q\ =\ \left\{(x_{1,j,k})\in\R_{\geq 0}^{1 \times p\times q}:
\sum_k x_{1,j,k}=U_{1,j},\ \sum_j x_{1,j,k}=V_{1,k},\ x_{1,j,k}\leq W_{j,k}\right\}\ ,\]
via the projection $\R^{2 \times p \times q} \rightarrow \R^{p \times q}$ taking $(x_{i,j,k}) \mapsto (x_{1,j,k})$, which maps $P$ bijectively onto $Q$. Conversely, every $p \times q$ transportation polytope $Q$ with bounded entries is linearly isomorphic to a planar $2 \times p \times q$ transportation polytope $P$ by defining 
$U_{2,j}:=(\sum_k W_{j,k})-U_{1,j}$ for $j=1,\dots,p$ 
and 
$V_{2,k}:=(\sum_j W_{j,k})-V_{1,k}$ for $k=1,\dots,q$.
\end{proof}

\begin{proof}[Proof of Theorem~\ref{theorem:22n}]
Consider any planar $2\times 2\times p$ transportation polytope
\[
P\ =\ \left\{(x_{i,j,k}) \in\R_{\geq 0}^{2\times 2\times p} : \sum_k x_{i,j,k}=U_{i,j},\
x_{i,1,k}+x_{i,2,k}=V_{i,k},\ x_{1,j,k}+x_{2,j,k}=W_{j,k} \right\}
\]
defined by $1$-marginals. The equations of the last two types imply that for each $k$ we can express all the $x_{i,j,k}$ in
terms of $x_{1,1,k}$ as follows:
\begin{eqnarray*}
x_{1,2,k} &=& V_{1,k} - x_{1,1,k}, \cr
x_{2,1,k} &=& W_{1,k} - x_{1,1,k}, \\
x_{2,2,k} &=& x_{1,1,k} + W_{2,k} - V_{1,k} = x_{1,1,k} + V_{2,k} - W_{1,k}.
\end{eqnarray*}
In particular, $P$ is linearly isomorphic to its projection
\[
Q \ =\ \left\{(x_{1,1,k})  \in \R_{\geq 0}^{2\times 2\times p}:  \alpha_k \leq x_{1,1,k}\leq \beta_k, \sum_k x_{1,1,k}=U_{1,1} \right\}\ ,
\]
where $\alpha_k= \max\{0, W_{1,k} - V_{2,k}\}=\max\{0, V_{1,k} -
W_{2,k}\}$ and $\beta_k=\min \{V_{1,k},W_{1,k}\}$. Now, by applying a
translation to $Q$, there is no loss of generality in assuming that
$\alpha_k=0$ for all $k$. Then $Q$ is a $1$-way transportation polytope
with bounded entries, isomorphic (by Lemma~\ref{lemma:plan22n}) to a
$p \times 2$ classical transportation polytope.

Conversely, any $p \times 2$ classical transportation polytope of the form
\[
Q\ =\ \left\{(x_{j,k})\in\R_{\geq 0}^{2\times p}: \sum_k x_{j,k}=u_{j},\
x_{1,k} + x_{2,k}=v_{k}\right\}
\]
defined by marginals $u = (u_1,u_2) \in \R_{\geq 0}^2$ and $v = (v_1,\ldots,v_k,\ldots,v_p) \in \R_{\geq 0}^p$ is linearly isomorphic to the following planar $2\times 2\times p$
transportation polytope:
\[
P\ =\ \left\{(x_{i,j,k}) \in\R_{\geq 0}^{2 \times 2 \times p}: 
\begin{tabular}{l}
$\sum_k x_{1,j,k}=\sum_k x_{2,3-j,k}=u_{j}$, \cr
$\sum_i x_{i,1,k}= \sum_i x_{i,2,k}= 
\sum_j x_{1,j,k}= \sum_j x_{2,j,k}=v_{k}$
\end{tabular}
\right\}\ .
\]
The equations relating the solutions of $Q$ to those of $P$ are
$x_{j,k}=x_{1,j,k}=x_{2,3-j,k}$.
\end{proof}
The above result is the best possible since the list of $3 \times 3$ classical transportation problems presented in Table~\ref{table:classical-3-3} on page~\pageref{table:classical-3-3} is not the same as the list of $2 \times 3 \times 3$ planar transportation polytopes presented in Table~\ref{table:planar-2-3-3} on page~\pageref{table:planar-2-3-3}.

\section{Regular triangulations and Birkhoff polytopes}\label{section:regulartriangulationsBirkhoff}

The computation of triangulations of the $p$th Birkhoff polytope is related to the problem of generating a $p \times p$ doubly stochastic matrix uniformly at random (see, e.g.,~\cite{Chan:VolumePolytope}, or~\cite{Cappellini:Random-bistochastic} where Cappellini et al. use a probability measure on $B_p$ to estimate the volume of the Birkhoff polytope). In this section, we study the existence of non-regular triangulations for Birkhoff polytopes. (Recall that the $p$th Birkhoff polytope $B_p$, introduced in Definition~\ref{definition:BirkhoffPolytope} on page~\pageref{definition:BirkhoffPolytope}, is the convex hull of the $p!$ permutation matrices of size $p \times p$.) In 1996, De Loera (see~\cite{De-Loera:Nonregular-triangulations}) proved that the product $\Delta_3 \times \Delta_3$ of two tetrahedra has non-regular triangulations. Using this, one can show that the Birkhoff polytope $B_p$ has non-regular triangulations whenever $p \geq 8$. De Loera, Rambau, and Santos (see~\cite{DeLoera:Triangulations}) recently used the results in~\cite{De-Loera:Nonregular-triangulations} to show that the Birkhoff polytopes $B_6$ and $B_7$ have non-regular triangulations.

Here, we prove that non-regular triangulations of the Birkhoff polytope $B_4$ exist. The polytope $B_2$ is a line segment and the polytope $B_3$ is a $4$-polytope with $6$ vertices, which implies that it has only two triangulations, both of which are regular (see, e.g., Section 2.4 of~\cite{DeLoera:Triangulations}). Thus, by computing ranks, we know that the polytope $B_p$ only has regular triangulations when $p \leq 3$. Combined with the results in this section, this tells us that the Birkhoff polytope $B_p$ has non-regular triangulations if and only if $p \geq 4$.

We first define triangulations, following the book~\cite{DeLoera:Triangulations}. Since we only consider distinct points in convex position (i.e., no point is the convex combination of the remaining points), we will not need to give the definitions in their most technical forms here.
\begin{definition}
Let $A$ be a finite collection of distinct points in $\R^c$ in convex position. Let $P = \conv(A)$ and denote by $d$ the dimension of the polytope $P$. Then, a collection $\mathcal{T}$ of simplices $\sigma_1,\ldots,\sigma_S$ is a \defn{triangulation} of $A$ if:
\begin{enumerate}
\item The collection $\mathcal{T}$ is a \defn{simplicial complex}: that is, if the simplex $\sigma$ is in $\mathcal{T}$ and $\tau$ is a face of $\sigma$, then $\tau$ is in $\mathcal{T}$.
\item The vertex set of each $d$-simplex $\sigma \in \mathcal{T}$ is a subset of $A$.
\item The union of all simplices in $\mathcal{T}$ is $P$, namely: $\bigcup_{i=1}^S \sigma_i = \conv(A)$.
\item For any two simplices $\sigma_i$ and $\sigma_j$ in $\mathcal{T}$, their intersection $\tau = \sigma_i \cap \sigma_j$ belongs to $\mathcal{T}$ and is a face of both $\sigma_i$ and $\sigma_j$
\end{enumerate}
\end{definition}
In this section, we study triangulations that are regular. Let $w : A \rightarrow \R$ be a function that assigns a real number $w(a)$ to each point $a$ in $A$. The function $w$ is called a \defn{lifting}. The \defn{lower envelope} of the lifting $w$ of $A$ are the facets of the polytope $\conv\{(a,w(a)) \mid a \in A\}$ that are visible from below. (The facets of the polytope $\conv\{(a,w(a)) \mid a \in A\}$ that are ``visible from below'' are those facets whose outward normal vector $\xi = (\xi_1,\ldots,\xi_{c+1})$ has the property that the last coordinate $\xi_{c+1} \leq 0$.
\begin{definition}
Let $A$ be a finite collection of distinct points in $\R^c$ in convex position. A triangulation $\mathcal{T}$ of $A$ is called \defn{regular} if it can be obtained by projecting the lower envelope of a lifting $w$ of $A$ to $\R^{c+1}$.
\end{definition}

\begin{theorem}
The fourth Birkhoff polytope $B_4$ has a non-regular triangulation.
\end{theorem}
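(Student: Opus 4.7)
The plan is to exhibit an explicit triangulation of $B_4$ whose non-regularity is certified by a cyclic system of strict inequalities on any potential lifting. My approach mirrors the original construction of De Loera for $\Delta_3 \times \Delta_3$ in~\cite{De-Loera:Nonregular-triangulations}, adapted to exploit the affine dependencies among permutation matrices in $B_4$.

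First, I would examine the oriented matroid structure of the $24$ permutation matrices viewed as points in $\R^{16}$ (spanning a $9$-dimensional affine subspace) and search for a ``cycle of circuits'' $C_1,\ldots,C_k$. Recall that each circuit is a minimal affinely dependent subset and admits exactly two triangulations, differing by a bistellar flip. For each circuit $C_i$, choosing one side of the flip in a candidate triangulation $\mathcal{T}$ forces any regularity-witnessing lifting $w$ to satisfy a strict linear inequality $\ell_i(w) > 0$ whose coefficients come from the signed affine dependence carried by $C_i$. If one can arrange the $C_i$ so that $\sum_i \lambda_i \ell_i \equiv 0$ as a linear functional on the space of liftings with all $\lambda_i > 0$, then the simultaneous satisfaction of all these strict inequalities is impossible, certifying that $\mathcal{T}$ is non-regular.

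Next, I would construct an explicit partial triangulation of $B_4$ compatible with this cyclic choice of flips, and extend it to a complete triangulation of $B_4$ using the placing (or pulling) construction on the remaining vertices, taken in any fixed order. Placing preserves all previously-built maximal simplices, so it does not disturb the non-regularity certificate built in the previous step. Once such a $\mathcal{T}$ is in hand, verification amounts to solving a single linear program: $\mathcal{T}$ is regular if and only if the system $\{\ell_i(w) > 0 : i = 1,\ldots,k\}$ together with the analogous constraints for all other pairs of adjacent maximal simplices is feasible. Exhibiting the nonnegative combination $\sum_i \lambda_i \ell_i \equiv 0$ provides a Farkas-style infeasibility certificate.

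The main obstacle is the combinatorial search: $B_4$ has $24$ vertices in dimension $9$, so the circuit set and the collection of potential triangulations are enormous, and most random attempts will produce regular triangulations. I expect this step to require computational assistance via a package such as \texttt{TOPCOM} (see~\cite{topcom}), which can enumerate triangulations and decide regularity by solving the associated linear program. In practice, I would first restrict attention to a small subpolytope $\conv(V')$ for some $V'\subset \vert(B_4)$ large enough to carry the desired cyclic configuration -- inspired by the subconfigurations used in~\cite{De-Loera:Nonregular-triangulations} and in the $B_6$ and $B_7$ results of~\cite{DeLoera:Triangulations} -- locate a non-regular triangulation of $\conv(V')$, and then placing-extend to all of $B_4$. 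The final proof would present the explicit simplices, the circuits $C_i$, the coefficients $\lambda_i$, and the check that $\sum_i \lambda_i \ell_i = 0$.
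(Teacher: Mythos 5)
Your strategy is, at bottom, the same one the paper follows: find an explicit non-regular triangulation of a suitable subconfiguration of the vertices of $B_4$ by computer search, certify non-regularity by a Farkas-type infeasibility certificate (a positive combination of the strict regularity inequalities that is identically zero), and complete to a triangulation of all of $B_4$ by placing or pulling. The paper does exactly this on the $7$-dimensional face $F$ of $B_4$ cut out by $x_{1,1}=x_{4,4}=0$ (it has $14$ vertices): it lists a triangulation $\mathcal{T}$ with $32$ maximal simplices, passes to a $6$-dimensional Gale transform, and exhibits four facet inequalities of the complementary cones $C_1$, $C_{10}$, $C_{13}$, $C_{32}$ whose left-hand sides sum to zero, so that $\bigcap_{i}\operatorname{relint}(C_i)=\emptyset$; your circuit/lifting certificate with $\sum_i\lambda_i\ell_i\equiv 0$ is just the dual formulation of that same test.

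As written, though, the proposal has a genuine gap on two counts. First, the theorem is an existence statement whose entire content is the explicit triangulation together with its certificate; ``search with \texttt{TOPCOM} and expect to find one'' does not establish existence --- you must actually exhibit the simplices, the circuits $C_i$, and the coefficients $\lambda_i$ (the paper spends an appendix just verifying that its $32$ simplices form a triangulation of $F$). Second, the extension step is not automatic for an arbitrary $V'\subset\vert(B_4)$: if $\conv(V')$ is lower-dimensional but not a face of $B_4$, a placing extension still contains your simplices as faces, yet non-regularity of the triangulation of $V'$ need not propagate, because restrictions of regular triangulations to non-face sections can fail to be regular. The transfer is clean precisely when $\conv(V')$ is a face (the paper's choice: any lifting of $\vert(B_4)$ inducing the extension restricts to a lifting of $F$ inducing $\mathcal{T}$) or when $\conv(V')$ is full-dimensional, so that its cells remain maximal cells of the extension; you should fix such a $V'$ and argue this explicitly rather than asserting that placing ``does not disturb the certificate.''
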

To prove this theorem, we consider the face $F$ of the Birkhoff polytope $B_4$ given by the additional equations $x_{1,1} = x_{4,4} = 0$. The $14$ vertices of this $7$-dimensional face $F$ of $B_4$ are given in Figure~\ref{figure:B4facevertices}.
\begin{figure}[hbt]
\begin{equation*}
X_a = \left[\begin{array}{cccc}0&1&0&0\\0&0&0&1\\0&0&1&0\\1&0&0&0\end{array}\right] \qquad
X_b = \left[\begin{array}{cccc}0&1&0&0\\0&0&1&0\\0&0&0&1\\1&0&0&0\end{array}\right] \qquad
X_c = \left[\begin{array}{cccc}0&1&0&0\\0&0&0&1\\1&0&0&0\\0&0&1&0\end{array}\right]
\end{equation*}
\begin{equation*}
X_d = \left[\begin{array}{cccc}0&1&0&0\\1&0&0&0\\0&0&0&1\\0&0&1&0\end{array}\right] \qquad
X_e = \left[\begin{array}{cccc}0&0&1&0\\0&0&0&1\\0&1&0&0\\1&0&0&0\end{array}\right] \qquad
X_f = \left[\begin{array}{cccc}0&0&1&0\\0&0&0&1\\1&0&0&0\\0&1&0&0\end{array}\right]
\end{equation*}
\begin{equation*}
X_g = \left[\begin{array}{cccc}0&0&1&0\\0&1&0&0\\0&0&0&1\\1&0&0&0\end{array}\right] \qquad
X_h = \left[\begin{array}{cccc}0&0&1&0\\1&0&0&0\\0&0&0&1\\0&1&0&0\end{array}\right] \qquad
X_i = \left[\begin{array}{cccc}0&0&0&1\\0&0&1&0\\0&1&0&0\\1&0&0&0\end{array}\right]
\end{equation*}
\begin{equation*}
X_j = \left[\begin{array}{cccc}0&0&0&1\\0&0&1&0\\1&0&0&0\\0&1&0&0\end{array}\right] \qquad
X_{k} = \left[\begin{array}{cccc}0&0&0&1\\0&1&0&0\\1&0&0&0\\0&0&1&0\end{array}\right] \qquad
X_{l} = \left[\begin{array}{cccc}0&0&0&1\\0&1&0&0\\0&0&1&0\\1&0&0&0\end{array}\right]
\end{equation*}
\begin{equation*}
X_{m} = \left[\begin{array}{cccc}0&0&0&1\\1&0&0&0\\0&1&0&0\\0&0&1&0\end{array}\right] \qquad
X_{n} = \left[\begin{array}{cccc}0&0&0&1\\1&0&0&0\\0&0&1&0\\0&1&0&0\end{array}\right].
\end{equation*}
\caption{The $14$ vertices of the face $F$.}\label{figure:B4facevertices}
\end{figure}

After homogenization (see, e.g.,~\cite{DeLoera:Triangulations} or~\cite{Ziegler:Lectures}), we obtain $\mathcal{A}$, the vector configuration given by the columns of the matrix shown in Figure~\ref{figure:vectorconfigA-B4}.
\begin{figure}[hbt]
{\singlespacing
\begin{equation}
\left[
\begin{array}{cccccccccccccc}
1&1&1&1&1&1&1&1&1&1&1&1&1&1\\
0&0&0&0&0&0&0&0&0&0&0&0&0&0\\
1&1&1&1&0&0&0&0&0&0&0&0&0&0\\
0&0&0&0&1&1&1&1&0&0&0&0&0&0\\
0&0&0&0&0&0&0&0&1&1&1&1&1&1\\
0&0&0&1&0&0&0&1&0&0&0&0&1&1\\
0&0&0&0&0&0&1&0&0&0&1&1&0&0\\
0&1&0&0&0&0&0&0&1&1&0&0&0&0\\
1&0&1&0&1&1&0&0&0&0&0&0&0&0\\
0&0&1&0&0&1&0&0&0&1&1&0&0&0\\
0&0&0&0&1&0&0&0&1&0&0&0&1&0\\
1&0&0&0&0&0&0&0&0&0&0&1&0&1\\
0&1&0&1&0&0&1&1&0&0&0&0&0&0\\
1&1&0&0&1&0&1&0&1&0&0&1&0&0\\
0&0&0&0&0&1&0&1&0&1&0&0&0&1\\
0&0&1&1&0&0&0&0&0&0&1&0&1&0\\
0&0&0&0&0&0&0&0&0&0&0&0&0&0\\
\end{array}
\right].
\end{equation}}
\caption{The vector configuration $\mathcal{A}$, as the columns of a matrix.}\label{figure:vectorconfigA-B4}
\end{figure}

Let $\mathcal{T}$ be the triangulation of the vector configuration $\mathcal{A}$ (respectively, point configuration $\{X_a,\ldots,X_n\}$) whose highest-dimensional cones (respectively, simplices) are  displayed in Figure~\ref{figure:B4triangulationsimplices}.
\begin{figure}[hbt]
\fbox{\begin{minipage}{5.0in}
\doublespacing
\begin{equation*}
\begin{array}{llll}
\sigma_{1} : abcdefgi,&
\sigma_{2} : abdefghi,&
\sigma_{3} : acdefgik,&
\sigma_{4} : adefghik,\\
\sigma_{5} : aefghikl,&
\sigma_{6} : adeghikl,&
\sigma_{7} : afghijkl,&
\sigma_{8} : acdefikm,\\
\sigma_{9} : adefhikm,&
\sigma_{10} : acdfijkm,&
\sigma_{11} : adfhijkm,&
\sigma_{12} : aefhiklm,\\
\sigma_{13} : deghiklm,&
\sigma_{14} : adehiklm,&
\sigma_{15} : acdfjkmn,&
\sigma_{16} : adfhjkmn,\\
\sigma_{17} : aefhilmn,&
\sigma_{18} : afhiklmn,&
\sigma_{19} : adhiklmn,&
\sigma_{20} : afhijkmn,\\
\sigma_{21} : adhijkmn,&
\sigma_{22} : afhijkln,&
\sigma_{23} : abcdfijk,&
\sigma_{24} : abcdfgik,\\
\sigma_{25} : abfghijk,&
\sigma_{26} : abghijkl,&
\sigma_{27} : abdfhijk,&
\sigma_{28} : abdfghik,\\
\sigma_{29} : abdghikl,&
\sigma_{30} : abhijkln,&
\sigma_{31} : abdhikln,&
\sigma_{32} : abdhijkn.
\end{array}
\end{equation*}
\vskip0.3in
\end{minipage}}
\caption{The highest-dimensional cones $\sigma_1,\ldots, \sigma_{32}$ in the triangulation $\mathcal{T}$}\label{figure:B4triangulationsimplices}
\end{figure}
In Figure~\ref{figure:B4triangulationsimplices}, each $7$-dimensional simplex (or cone over the simplex, for the vector configuration) is defined by the indices of its vertices. For instance, the $7$-dimensional simplex $\sigma_1$ is the convex hull of the eight points $X_a$, $X_b$, $X_c$, $X_d$, $X_e$, $X_f$, $X_g$, and $X_i$. In Appendix~\ref{appendix:triangB4}, we prove that $\mathcal{T}$ is a triangulation of $\mathcal{A}$. (See page~\pageref{appendix:triangB4}.)

We will prove that the triangulation $\mathcal{T}$ of $\mathcal{A}$ is non-regular by considering a Gale transform of $\mathcal{A}$. A Gale transform $\mathcal{B}$ of $\mathcal{A}$ is given by the columns of the matrix shown in Figure~\ref{figure:B4gale}.
\begin{figure}[hbt]
\begin{equation}\label{equation:B4GaleTransformMatrix}
\left[
\begin{array}{cccccccccccccc}
 0& 0& 1&-1& 0&-1& 0& 1& 0& 0& 0& 0& 0& 0\\
 0& 0& 0& 0& 1&-1& 0& 0&-1& 1& 0& 0& 0& 0\\
 0& 1&-1& 0& 1& 0&-1& 0&-1& 0& 1& 0& 0& 0\\
-1& 1& 0& 0& 1& 0&-1& 0&-1& 0& 0& 1& 0& 0\\
 0& 1& 0&-1& 0& 0& 0& 0&-1& 0& 0& 0& 1& 0\\
-1& 1& 1&-1& 1&-1& 0& 0&-1& 0& 0& 0& 0& 1
\end{array}
\right].
\end{equation}
\caption{The Gale transform $\mathcal{B}$.}\label{figure:B4gale}
\end{figure}

We use the same indices for the vectors in $\mathcal{B}$ that we used for $\mathcal{A}$. For example, the first column in~\eqref{equation:B4GaleTransformMatrix} defines the vector $Y_a = (0,0,0,-1,0,-1) \in \R^6$, the second column defines the vector $Y_b = (0,0,1,1,1,1) \in \R^6$, the third column defines the vector $Y_c = (1,0,-1,0,0,1) \in \R^6$, and so on. We define $32$ full-dimensional simple cones $C_1, \ldots, C_{32}$ in $\R^6$. Each cone is defined by ``complementation of indices'' of the corresponding simplex. For example since $\sigma_1$ is the convex hull the $X_z$ with $z$ in $\{a,b,c,d,e,f,g,i\}$, the cone $C_1$ is generated by the vectors $Y_z$, with $z$ in $\{a,b,\ldots,n\} \setminus \{a,b,c,d,e,f,g,i\} = \{h,j,k,l,m,n\}$. (That is to say, the cone $C_1$ is just the positive orthant $\{y \in \R^6 \mid y_1,\ldots,y_6 \geq 0\}$.) Similarly, since the simplex $\sigma_2$ is given by the indices $abdefghi$, the cone $C_2$ is the set of all positive linear combinations of $Y_c$, $Y_j$, $Y_k$, $Y_l$, $Y_m$, and $Y_n$.

It is known (see, e.g., Chapter 5 of~\cite{DeLoera:Triangulations},~\cite{Lee:Regular-triangulations}, or~\cite{Thomas:Lectures-in-Geometric}, and part (\ref{item:regulartriangulationscorrespondence}) of Lemma~\ref{lemma:secondary} on page~\pageref{lemma:secondary}) that the triangulation $\mathcal{T}$ of $\mathcal{A}$ is regular if and only if the intersection
\begin{equation*}
D = \bigcap_{i=1}^{32} \operatorname{relint}(C_i)
\end{equation*}
is non-empty. It is easy to check the following facts:
\begin{itemize}
\item All points $y$ in the cone $C_{1}$ satisfy the inequality $y_3 \geq 0$.
\item All points $y$ in the cone $C_{10}$ satisfy the inequality $-y_2 -y_5 + y_6 \geq 0$.
\item All points $y$ in the cone $C_{13}$ satisfy the inequality $-y_4 + y_5 \geq 0$.
\item All points $y$ in the cone $C_{32}$ satisfy the inequality $y_2 - y_3 + y_4 - y_6 \geq 0$.
\end{itemize}
In fact, each of the four inequalities above is a facet-defining inequality of their respective cone. Now, if there is a point $y \in \R^6$ that belongs to $D$, then $y$ must satsify:
\begin{align*}
      y_3                &> 0,\\
-y_2           -y_5 +y_6 &> 0,\\
          -y_4 +y_5      &> 0,\\
y_2  -y_3 +y_4      -y_6 &> 0.
\end{align*}
But the sum of the left-hand sides is the same as the sum of the right-hand sides, so this is an inconsistent system. In particular, $D$ is empty, which proves the triangulation $\mathcal{T}$ is non-regular. (In fact, with further analysis one can show that the intersection of all $32$ cones is the single ray $\{ y \in \R^6 \mid \lambda (0, 0, 0, 0, 0, 1), \lambda \geq 0\}$.) We can complete this triangulation $\mathcal{T}$ of $\mathcal{A}$ to a triangulation of $B_4$ by pulling or placing (see~\cite{DeLoera:Triangulations}). Therefore the fourth Birkhoff polytope $B_4$ has non-regular triangulations. As an immediate corollary, we get:
\begin{theorem}
The Birkhoff polytope $B_p$ has non-regular triangulations for $p \geq 4$, and all triangulations of $B_p$ are regular for $p < 4$.
\end{theorem}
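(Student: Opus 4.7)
The plan is to handle the two ranges of $p$ separately, reducing both to observations already available in the excerpt. The case $p \geq 4$ will require only one new idea beyond what was just done for $B_4$: non-regularity of a triangulation of a face lifts to non-regularity of any extension to the ambient polytope.

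First I would dispose of $p < 4$, which is essentially recorded in the excerpt itself. The polytope $B_2$ is a one-dimensional segment, whose only triangulation is itself and is trivially regular. The polytope $B_3$ is a $4$-dimensional polytope with only six vertices; a direct inspection (or the enumeration in Section~2.4 of~\cite{DeLoera:Triangulations}) shows it admits exactly two triangulations and both are regular, since any lifting $w$ that perturbs one vertex above the hyperplane spanned by the others produces one of them and its negation produces the other. The case $p = 4$ is provided by the explicit non-regular triangulation $\mathcal T$ of the face $F \subset B_4$ cut out by $x_{1,1}=x_{4,4}=0$, completed to a triangulation of $B_4$ by pulling the remaining vertices, as indicated just above the theorem.

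For $p \geq 5$ I would first realise the face $F \subset B_4$ as a face of $B_p$. Let $\widetilde F \subset B_p$ be the set of doubly stochastic matrices satisfying $x_{i,i}=1$ for $5 \leq i \leq p$ together with $x_{1,1}=x_{4,4}=0$. Any matrix in $\widetilde F$ is block-diagonal, with $4 \times 4$ upper-left block ranging over $F$ and $(p-4) \times (p-4)$ lower-right block equal to the identity; so $\widetilde F$ is a face of $B_p$ that is affinely isomorphic to $F$, and the non-regular triangulation $\mathcal T$ of $F$ transports to a non-regular triangulation $\widetilde{\mathcal T}$ of $\widetilde F$ whose vertices are obtained from $X_a,\dots,X_n$ by adjoining the appropriate identity block.

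The final step is to extend $\widetilde{\mathcal T}$ to a triangulation $\mathcal T^\star$ of the whole $B_p$ by pulling the vertices of $B_p$ not lying in $\widetilde F$ one at a time, in any order (see Chapter~4 of~\cite{DeLoera:Triangulations}). The pulling construction guarantees that the simplices of $\mathcal T^\star$ contained in $\widetilde F$ are exactly the simplices of $\widetilde{\mathcal T}$. I would then argue that $\mathcal T^\star$ is non-regular: if a height function $w$ on the vertices of $B_p$ induced $\mathcal T^\star$, its restriction to the vertices of $\widetilde F$ would be a height function whose lower envelope projects onto the sub-complex of $\mathcal T^\star$ lying in $\widetilde F$, i.e.\ onto $\widetilde{\mathcal T}$, contradicting non-regularity of $\widetilde{\mathcal T}$. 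The main obstacle, such as it is, is this last point, but the compatibility of regularity with face restriction is a standard property of regular subdivisions; the rest is purely bookkeeping.
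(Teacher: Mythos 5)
Your proposal is correct and follows essentially the same route as the paper: the paper exhibits the explicit non-regular triangulation $\mathcal{T}$ of the face $F=B_4\cap\{x_{1,1}=x_{4,4}=0\}$, completes it to $B_4$ by pulling/placing, handles $p<4$ by the same small-configuration count for $B_2$ and $B_3$, and then states the general case as an immediate corollary. The only difference is that you spell out what ``immediate'' means for $p\geq 5$ -- realizing $F$ as a face of $B_p$ via the block-diagonal embedding, extending by pulling, and using that regularity restricts to faces -- which is exactly the intended argument (and in particular covers $p=5$, a case not reached by the previously known results for $B_6$, $B_7$, and $B_p$ with $p\geq 8$ that the paper also cites).
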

\begin{remark}
Non-regularity of a triangulation can also be checked using Theorem~8.3 in~\cite{Sturmfels:GrobnerPolytopes}. The theorem, which is proved using linear program duality, says that the regular triangulations of $\mathcal{A}$ are the initial complexes of the toric ideal $I_\mathcal{A}$. (To show that a triangulation is non-regular, one can show that the triangulation $\Delta_\prec(I_A)$ is not $\mathcal{T}$. See~\cite{Sturmfels:GrobnerPolytopes} for details.)
\end{remark}

\section{Many Hirsch-sharp transportation polytopes}\label{section:manyHirschsharpTPs}

We already have seen the results of~\cite{Fritzsche99morepolytopes},~\cite{Holt:Many-polytopes}, and~\cite{Holt:Hsharpd7}, summarized in Theorem~\ref{theorem:hirsch-sharp}, which says that there are ``many Hirsch-sharp polytopes.'' That is to say, there are Hirsch-sharp $d$-polytopes with $n > d$ facets whenever $d \geq 7$. Does this remain true if we only look at classical and $3$-way axial transportation polytopes? In this section, we prove that there are many Hirsch-sharp transportation polytopes. Recall that a polytope is Hirsch-sharp if it meets the Hirsch Conjecture with equality.

First, we prove a ``permuted version'' of Lemma~\ref{lemma:littleeddielemma}. It says that the northwest corner rule algorithm can be done out of order by specifying permutations of the rows and columns. See Exercise 17 in Chapter 6 of~\cite{Yemelichev:Polytopes}. To make the proof easier, we prove it for non-degenerate transportation polytopes.
\begin{lemma}\label{lemma:orderNWC}
Let the marginals $u \in \R_{\geq 0}^p$ and $v \in \R_{\geq 0}^q$ define a non-degenerate $p \times q$ classical transportation polytope $P \not= \emptyset$. Let $\sigma$ be a permutation of $[p]$ and $\tau$ be a permutation of $[q]$. Then the polytope $P$ has a vertex $x$ with $x_{\sigma(p),\tau(q)} > 0$.
\end{lemma}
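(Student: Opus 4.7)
The plan is to reduce to the standard northwest corner rule of Lemma~\ref{lemma:littleeddielemma} by relabeling the rows and columns via the permutations $\sigma$ and $\tau$. Concretely, I would define the coordinate-permuting linear map $\phi : \R^{p \times q} \to \R^{p \times q}$ by $(\phi(x))_{i,j} := x_{\sigma(i),\tau(j)}$ and let $P'$ denote the $p \times q$ classical transportation polytope with marginals $u' = (u_{\sigma(1)},\ldots,u_{\sigma(p)})$ and $v' = (v_{\tau(1)},\ldots,v_{\tau(q)})$. The map $\phi$ is a linear isomorphism from $P$ onto $P'$ that sends vertices to vertices, so it suffices to exhibit a vertex $y$ of $P'$ with $y_{p,q} > 0$: pulling back gives a vertex $x = \phi^{-1}(y)$ of $P$ with $x_{\sigma(p),\tau(q)} = y_{p,q} > 0$.

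Next, I would verify that $P'$ is itself non-degenerate, since this is the hypothesis we need to apply the northwest corner rule with a strict first choice. Recall from Lemma~\ref{lemma:genericnondegenerate} that for classical transportation polytopes, non-degeneracy is equivalent to genericity, i.e., to
\begin{equation*}
\sum_{i \in Y} u_i \neq \sum_{j \in Z} v_j
\end{equation*}
for every proper non-empty $Y \subsetneq [p]$ and $Z \subsetneq [q]$. For any such $Y$ and $Z$, the corresponding partial sums of $u'$ and $v'$ are simply $\sum_{i \in Y} u'_i = \sum_{i \in \sigma(Y)} u_i$ and $\sum_{j \in Z} v'_j = \sum_{j \in \tau(Z)} v_j$. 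Since $\sigma(Y)$ and $\tau(Z)$ are proper non-empty subsets of $[p]$ and $[q]$, genericity of $P$ passes directly to genericity of $P'$.

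With $P'$ non-degenerate, I would now invoke the northwest corner rule as used in the proof of Lemma~\ref{lemma:littleeddielemma}, but starting in the entry $(p,q)$ rather than $(1,1)$ (these two versions are equivalent up to reversing the order of rows and columns). Setting $y_{p,q} = \min\{u'_p, v'_q\}$, non-degeneracy guarantees $u'_p \neq v'_q$, so the minimum is attained at exactly one of them; in particular $y_{p,q} > 0$ strictly, and after zeroing out the corresponding row or column and updating the remaining marginal the algorithm proceeds recursively on a smaller non-degenerate transportation polytope. The resulting table $y$ has a support graph $B(y)$ that is a spanning tree (by the genericity version of Lemma~\ref{lemma:vertexBx}, which is exactly what Corollary~\ref{corollary:nondegeneratesupportsize} records), so $y$ is a vertex of $P'$.

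The only mild obstacle is bookkeeping: confirming that the ``start from the southeast corner'' variant of the northwest corner rule produces a vertex with $y_{p,q}>0$ whenever the polytope is non-degenerate. This is immediate from non-degeneracy because at the first step $y_{p,q} = \min\{u_p,v_q\}>0$, and the step is legitimate (the resulting reduced marginals are non-negative and sum to the same total on both sides). Combining this vertex $y$ with the pullback by $\phi^{-1}$ produces the desired vertex $x$ of $P$ with $x_{\sigma(p),\tau(q)}>0$.
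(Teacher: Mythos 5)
Your proposal is correct and is essentially the paper's argument: the paper simply runs the northwest corner rule directly in the order prescribed by $\sigma$ and $\tau$ (starting at entry $(\sigma(p),\tau(q))$), which is the same construction as your relabel-by-$\phi$-then-apply-standard-NWC formulation. The extra checks you spell out (genericity is preserved under permuting the marginals, strict positivity of the first entry, and vertex-hood via the acyclic support) are left implicit in the paper's shorter proof.
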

\begin{proof}
We construct a point $x \in P$ using the following modified form of the well-known northwest corner rule algorithm. Let $\sigma$ be any permutation on $[p]$ and let $\tau$ be any permutation of $[q]$.

Let $x_{\sigma(p),\tau(q)} = \min \{u_{\sigma(p)}, v_{\tau(q)}\}$. If the minimum is obtained at $u_{\sigma(p)}$, set $x_{\sigma(p),j} = 0$ for all $j \not= \tau(q)$ and replace $v_{\tau(q)}$ with $v_{\tau(q)} - u_{\sigma(p)}$. The rest of the point $x = (x_{i,j})$ is obtained recursively as a point in a $(p-1) \times q$ classical transportation polytope. Similarly, if the minimum is obtained at $v_{\tau(q)}$, set $x_{i,\tau(q)} = 0$ for all $i \not= \sigma(p)$ and replace $u_{\sigma(p)}$ with $u_{\sigma(p)} - v_{\tau(q)}$. The rest of the point $x$ is obtained as a point in a $p \times (q-1)$ transportation polytope.
\end{proof}
Note that this generalizes Lemma~\ref{lemma:littleeddielemma}, which is the special case when $\sigma$ and $\tau$ are both their respective identity permutations.
\begin{example}
\begin{figure}[hbt]
\begin{tabular}{|c|c|c|c|c|}
\hline
$x_{1,1}$ & $x_{1,2}$ & $x_{1,3}$ & $x_{1,4}$ & $x_{1,5}$ \\ \hline
$x_{2,1}$ & $x_{2,2}$ & $x_{2,3}$ & $x_{2,4}$ & $x_{2,5}$ \\ \hline
$x_{3,1}$ & $x_{3,2}$ & $x_{3,3}$ & $x_{3,4}$ & $x_{3,5}$ \\ \hline
\end{tabular}
\caption{Layout of the coordinates for the generalized Birkhoff polytope of size $3 \times 5$.}\label{figure:exampleTP35}
\end{figure}
As an example, let $P$ be the $3 \times 5$ generalized Birkhoff polytope. That is to say, we are looking at the polytope whose variables are $x_{i,j}$ arranged according to Figure~\ref{figure:exampleTP35} satisfying the summation conditions that each row must sum to $5$ and each column must add up to $3$.
Let $\sigma$ be the permutation on $[3]= \{1,2,3\}$
\begin{equation}\label{equation:sigma-permutation}
\sigma = \left(
\begin{array}{ccccc}
1&2&3\\
2&3&1
\end{array}
\right),
\end{equation}
and let $\tau$ be the permutation on $[5]$
\begin{equation}\label{equation:tau-permutation}
\tau = \left(
\begin{array}{ccccc}
1&2&3&4&5\\
5&1&2&3&4
\end{array}
\right).
\end{equation}
To fill in the table shown in Figure~\ref{figure:exampleTP35} according to the proof of Lemma~\ref{lemma:orderNWC}, we start with the $(\sigma(p),\tau(q))=(\sigma(3),\tau(5))=(1,4)$ entry of the table. This entry is highlighted in boldface in Figure~\ref{figure:exampleTP35bold}.
\begin{figure}[hbt]
\begin{tabular}{|c|c|c|c|c|}
\hline
$x_{1,1}$ & $x_{1,2}$ & $x_{1,3}$ & {\bf $x_{1,4}$} & $x_{1,5}$ \\ \hline
$x_{2,1}$ & $x_{2,2}$ & $x_{2,3}$ & $x_{2,4}$ & $x_{2,5}$ \\ \hline
$x_{3,1}$ & $x_{3,2}$ & $x_{3,3}$ & $x_{3,4}$ & $x_{3,5}$ \\ \hline
\end{tabular}
\caption{The first entry to fill in is $x_{1,4}$.}\label{figure:exampleTP35bold}
\end{figure}
What is the largest entry that can fit here? Since the rows must sum to $5$ and the columns must sum to $3$, the largest possible value here is $3$. Then, since the minimum of $\min\{u_{\sigma(p),\tau(q)}\}$ was obtained at $v_{\tau(q)}$, the rest of our point is found as a point in a $3 \times 4$ transportation polytope. The next entry we examine is the $(\sigma(p),\tau(q-1))=(\sigma(3),\tau(4))=(1,3)$ entry (which we can see has to be the minimum of $5-3=2$ and $3$) of our $3 \times 5$ table. When the whole algorithm completes, we obtain the vertex shown in Figure~\ref{figure:exampleTP35permutedvertex}.
\begin{figure}[hbt]
\begin{tabular}{|c|c|c|c|c|}
\hline
$0$ & $0$ & $2$ & $3$ & $0$ \\ \hline
$2$ & $0$ & $0$ & $0$ & $3$ \\ \hline
$1$ & $3$ & $1$ & $0$ & $0$ \\ \hline
\end{tabular}
\caption{The vertex obtained by using $\sigma$ and $\tau$.}\label{figure:exampleTP35permutedvertex}
\end{figure}
Compare this vertex to the one obtained by using the identity permutations. See Figure~\ref{figure:exampleTP35nonpermutedvertex}.
\begin{figure}[hbt]
\begin{tabular}{|c|c|c|c|c|}
\hline
$3$ & $2$ & $0$ & $0$ & $0$ \\ \hline
$0$ & $1$ & $3$ & $1$ & $0$ \\ \hline
$0$ & $0$ & $0$ & $2$ & $3$ \\ \hline
\end{tabular}
\caption{The vertex obtained by the normal northwest corner rule algorithm.}\label{figure:exampleTP35nonpermutedvertex}
\end{figure}
\end{example}

\begin{theorem}\label{theorem:strongTPHirschSharp}
Let $p$ and $q$ be relatively prime with $\min\{p,q\} \geq 3$ and $q > 2p$. Then the $p \times q$ generalized classical Birkhoff polytope $P$ is Hirsch-sharp.
\end{theorem}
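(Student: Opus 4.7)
The plan is to exhibit two vertices $v$ and $w$ of the generalized Birkhoff polytope $P$ whose supports in $[p] \times [q]$ are disjoint; then the standard simple-polytope estimate will force $\dist(v, w) \geq p + q - 1$, matching the Hirsch bound. First observe that $\gcd(p, q) = 1$ precludes $|Y| q = |Z| p$ for non-empty proper subsets $Y \subsetneq [p]$, $Z \subsetneq [q]$, so Lemma~\ref{lemma:genericnondegenerate} shows $P$ is non-degenerate. Hence $P$ is a simple polytope of dimension $d = (p-1)(q-1)$ with exactly $n = pq$ facets, and the Hirsch bound is $n - d = p + q - 1$. Since each pivot along an edge of a simple polytope enters exactly one new facet and leaves exactly one old one, the graph-distance satisfies $\dist(x, y) \geq |F(x) \setminus F(y)|$, where $F(\cdot)$ is the set of facets through a vertex. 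For $P$ we have $F(x) = \{(i,j) : x_{i,j} = 0\}$, so $\dist(x, y) \geq |\supp(y) \setminus \supp(x)|$; when $\supp(v) \cap \supp(w) = \emptyset$ this yields $\dist(v, w) \geq |\supp(w)| = p + q - 1$.

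For $v$ I take the staircase vertex produced by Lemma~\ref{lemma:orderNWC} with both permutations equal to the identity. Its support is a spanning tree of $K_{p, q}$ in which row $i$ occupies the contiguous column interval $[a_{i-1}, a_i]$ with $a_i = \lceil iq/p \rceil$, and consecutive rows share only the single transition column $a_i$. To produce $w$, I consider the face
\[
F^{*} \;=\; P \,\cap\, \bigcap_{(i, j) \in \supp(v)} \{x_{i, j} = 0\}
\]
of $P$. Since $F^{*}$ is a bounded face, any vertex of $F^{*}$ is automatically a vertex of $P$, and by construction its support is disjoint from $\supp(v)$. Hence it is enough to show $F^{*} \neq \emptyset$.

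A point of $F^{*}$ is a non-negative flow supported on $E := ([p] \times [q]) \setminus \supp(v)$ with row-sums $q$ and column-sums $p$. By the max-flow / min-cut criterion for bipartite transportation (see Chapter~6 of~\cite{Yemelichev:Polytopes}), such a flow exists if and only if $|Y| \cdot q \leq |N_E(Y)| \cdot p$ for every $Y \subseteq [p]$, where $N_E(Y)$ denotes the set of columns joined to $Y$ by some edge of $E$. Because every column of the staircase support lies in at most two rows of $\supp(v)$, one computes that $|N_E(Y)| = q$ whenever $|Y| \geq 3$ or $Y$ is a pair of non-adjacent rows, $|N_E(Y)| = q - 1$ when $Y$ is a pair of adjacent rows, and $|N_E(Y)| = q - r_i$ with $r_i \leq \lceil q/p \rceil + 1$ when $Y = \{i\}$. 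The hypotheses $\min(p, q) \geq 3$ and $q > 2p$ are precisely what is needed to make each of the resulting arithmetic inequalities hold; the tightest of them is the single-row case, which reduces to $q(p - 2) \geq 2p$. The main obstacle is this case-analysis: one must verify Hall's condition across all subsets $Y$ of rows, and the hypothesis $q > 2p$ is exactly what pushes the single-row case through. Once $F^{*} \neq \emptyset$ is established, any of its vertices $w$ has $\supp(w) \cap \supp(v) = \emptyset$, giving $\dist(v, w) \geq p + q - 1 = n - d$ and proving that $P$ is Hirsch-sharp.
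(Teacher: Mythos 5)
Your proposal is correct, and it reaches the disjoint-support pair by a genuinely different route than the paper. Both arguments share the same skeleton: coprimality gives non-degeneracy (Lemma~\ref{lemma:genericnondegenerate}), the first vertex $v$ is the northwest-corner staircase, and the final step is the observation that each edge-pivot destroys at most one support entry, so two vertices with disjoint supports are at distance at least $|\supp(w)|=p+q-1$ (Corollary~\ref{corollary:classicalTPsizeofsupport}). Where you diverge is in producing the second vertex: the paper constructs it explicitly, running the permuted northwest corner rule of Lemma~\ref{lemma:orderNWC} with a cyclic shift of the columns by roughly $q/2$, and then checks directly that the two staircases miss each other (this is where its hypothesis $q>2p$ is used). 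You instead argue non-constructively: you zero out $\supp(v)$, and show the resulting face $F^*$ is non-empty via the Gale--Hoffman supply--demand (Hall-type) criterion on the complementary bipartite graph, using that each column of the staircase meets at most two consecutive rows; the hypotheses $p\geq 3$ and $q>2p$ enter only through the single-row and adjacent-pair inequalities, and any vertex of $F^*$ then serves as $w$. Your route buys robustness and transparency --- it makes clear exactly which feasibility inequalities the hypotheses are needed for, and it would adapt easily to other margin vectors or to weakening $q>2p$ --- at the cost of an appeal to the feasibility theorem and a short case analysis, whereas the paper's construction is fully explicit (both endpoints are staircases one can write down) and needs no flow-feasibility machinery. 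Two small points: your assertion that $P$ has exactly $pq$ facets does not follow from non-degeneracy alone and should be justified (the paper cites Theorem~3.1 in Chapter~6 of~\cite{Yemelichev:Polytopes}); and, like the paper, your argument establishes the lower bound $\diam(P)\geq n-d$, which is the standard content of Hirsch-sharpness claims here.
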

\begin{proof}
Let $P$ be the $p \times q$ generalized classical Birkhoff polytope. Then the dimension of $P$ is $(p-1)(q-1)$ and the number of facets is bounded above by $pq$. By Theorem~3.1 in Chapter~6 of~\cite{Yemelichev:Polytopes}, the polytope $P$ has exactly $pq$ facets. Thus, the conjectured Hirsch bound for the diameter of $P$ is $p+q-1$.

We now construct two vertices $v$ and $v'$ of $P$ whose distance is at least $p+q-1$. Let $v$ be the vertex obtained from Lemma~\ref{lemma:orderNWC} using the identity permutations for $\sigma$ and $\tau$. Let $\sigma'$ be the identity permutation on $[p]$. If $q$ is odd, let $\tau'$ be the permutation of $[q]$ that sends $1,\ldots,q$ (respectively) to $\lceil\frac{q}{2}\rceil,\ldots,q,1,\ldots,\lfloor\frac{q}{2}\rfloor$ (respectively). If $q$ is even, let $\tau'$ be the permutation that sends $1,\ldots,q$ (respectively) to $\frac{q}{2}+1,\ldots,q,1,\ldots,\frac{q}{2}$ (respectively). Let $v'$ be the vertex obtained from Lemma~\ref{lemma:orderNWC} using the permutations $\sigma'$ and $\tau'$.

Since $p$ and $q$ are coprime, the transportation polytope $P$ is non-degenerate. Now we note that the supports of $v$ and $v'$ are disjoint (which is easy to see since $q > 2p$). Thus, the distance between the vertices $v$ and $v'$ is at least $|\supp(v)|=|\supp(v')|$, which (by Corollary~\ref{corollary:nondegeneratesupportsize}) is $p+q-1$ since $P$ is non-degenerate.
\end{proof}
\begin{remark}
The condition $q > 2p$ can be dropped in Theorem~\ref{theorem:strongTPHirschSharp}, but the proof becomes more difficult. (The definition for the permutations $\sigma$ and $\tau$ need to be given with more caution.) The case of what occurs when $p$ and $q$ are close to each other is clear by example: see Figures~\ref{figure:exampleTP35permutedvertex} and~\ref{figure:exampleTP35nonpermutedvertex} for the $3 \times 5$ case. The two vertices shown in these figures have disjoint support. Thus, in this example, we see that the diameter of the generalized $3 \times 5$ Birkhoff polytope is at least $7$. So, we have constructed a Hirsch-sharp $8$-polytope with $15$ facets.
\end{remark}

\section{\texorpdfstring{The diameter of $p \times 2$ transportation polytopes}{The diameter of p by 2 transportation polytopes}}\label{section:diameter-p2}

In this section, we show that the Hirsch Conjecture holds for $p \times 2$ transportation polytopes.
\begin{theorem}\label{theorem:thm_2n}
Let $P \not= \emptyset$ be a classical transportation polytope of size $p \times 2$ with $n \leq 2p$ facets. Then, the dimension of $P$ is $d=p-2$ and the diameter of $P$ is at most $n - d$.
\end{theorem}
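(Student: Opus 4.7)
The plan is to linearly reparametrize $P$ and then argue by induction on $p$. Via the substitution $y_i := x_{i,1}$ (which forces $x_{i,2} = u_i - y_i$), the $p \times 2$ classical transportation polytope $P$ is linearly isomorphic to
\[
\widetilde P \;:=\; \Bigl\{\, y \in \R^p \;:\; 0 \le y_i \le u_i \text{ for every } i,\ \ \sum_{i=1}^p y_i = v_1 \,\Bigr\},
\]
a polytope of dimension $p-1$ whose facets are cut out by some subset of the $2p$ inequalities $y_i \ge 0$ and $y_i \le u_i$. (By Corollary~\ref{corollary:dimension} the dimension is $p-1$, so the statement's ``$d = p-2$'' appears to be a typo; the substantive content $\diam(P)\le n-d$ is unaffected by passing from the weaker to the stronger bound.) The key hook for an induction is that intersecting $\widetilde P$ with any of its facets $\{y_i = 0\}$ or $\{y_i = u_i\}$ yields again, after the obvious reparametrization, a $(p-1)\times 2$ transportation polytope.

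By Lemma~\ref{lemma:simple} I may reduce to the case that $\widetilde P$ is simple, so every vertex has exactly one ``free'' coordinate $y_f$ strictly between its bounds while the remaining $p-1$ coordinates saturate either $0$ or $u_i$. The induction runs on $p$, with base case $p=2$ a line segment, for which $n = d+1 = 2$ and $\diam = 1 = n-d$.

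Given two vertices $v, v'$ of $\widetilde P$, I split into two cases. If $v$ and $v'$ share some facet $F$, then $F$ is a $(p-1)\times 2$ transportation polytope; moreover each facet of $F$ is inherited from a distinct facet of $\widetilde P$ other than $F$ itself (the ``opposite-bound'' inequality $y_i = u_i$ does not contribute to $F = \{y_i = 0\}$ when $u_i > 0$), so $n_F \le n-1$. The inductive hypothesis then gives a path from $v$ to $v'$ inside $F$ of length
\[
\dist_F(v,v') \;\le\; n_F - (p-2) \;\le\; (n-1)-(p-2) \;=\; n-(p-1) \;=\; n-d,
\]
as required.

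The harder case is when $v$ and $v'$ share no facet of $\widetilde P$. Writing $A(w) := \{i : w_i = 0\}$ and $B(w) := \{i : w_i = u_i\}$ for a vertex $w$, this hypothesis forces $A(v)\cap A(v') = B(v)\cap B(v') = \emptyset$, and combined with $|A(w)|+|B(w)| = p-1$ it makes the ``bounded signatures'' of $v$ and $v'$ essentially complementary: at all but at most two indices, $v_i$ and $v'_i$ sit at opposite bounds. My plan here is to produce a pivot $v \to \tilde v$ along an edge of $\widetilde P$ such that $\tilde v$ shares a facet with $v'$, and then close by applying Case~1 to $\tilde v$ and $v'$ in the smaller polytope; the $+1$ for the extra pivot is absorbed by the fact that the Case~1 bookkeeping gives slack on a polytope with fewer facets. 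The main obstacle is that no single pivot from $v$ need reach a shared-facet neighbor, as illustrated by the hexagonal $3\times 2$ example $v = (1,\frac{1}{2},0)$, $v' = (0,\frac{1}{2},1)$, which are at distance three rather than two. I expect the clean route is to introduce a potential function $\Phi(w) := |A(w)\,\triangle\, A(v')| + |B(w)\,\triangle\, B(v')|$ measuring the combinatorial disagreement of $w$ with $v'$, and to show that from any vertex $w$ with $\Phi(w) > 0$ there is always a pivot (transferring flow between a coordinate bounded in $w$ and the free coordinate, chosen with an eye toward $v'$) which decreases $\Phi$ by one; tracking $\Phi$ against the available slack $n-d$ then completes the induction.
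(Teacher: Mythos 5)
Your opening reduction is the same as the paper's: project to the first column, so $P$ becomes (affinely) a box $[0,u_1]\times\cdots\times[0,u_p]$ cut by the hyperplane $\sum_i y_i=v_1$, with the correct dimension $p-1$ (the ``$d=p-2$'' in the statement is indeed off by one, and proving the Hirsch bound with $d=p-1$ is the stronger claim). After that, however, your argument has a genuine gap precisely where the real work lies. Your Case~2 is not proved: you concede that a single pivot need not reach a vertex sharing a facet with $v'$ (your own hexagon example), and the fallback --- a potential $\Phi(w)=|A(w)\,\triangle\,A(v')|+|B(w)\,\triangle\,B(v')|$ together with the claims that some pivot always decreases $\Phi$ and that the initial value of $\Phi$ is controlled by the slack $n-d$ --- is exactly the content you would still have to establish, and it is nontrivial. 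The paper proves two lemmas of this type using side signatures in $\{0,1,*\}^d$: (i) there is always a pivot decreasing the Hamming distance of signatures by one, and (ii) the number of facets satisfies $f(P)\ge(\dim P)+\hamm(v,w)$, up to one exceptional configuration. Neither statement is automatic: for (i) one must rule out the situation in which every available pivot zeroes out a coordinate where $v'$ sits at its upper bound (the paper excludes this by showing $P$ would then be a vertex figure of the cube, contradicting the standing assumption that the hyperplane meets the relevant facets), and for (ii) the exceptional case where both vertices are free in the same coordinate and neither bounding facet is active costs exactly one facet and has to be handled separately. Your proposal asserts the conclusion of both lemmas (``I expect\dots'', ``tracking $\Phi$ against the available slack then completes the induction'') without an argument, so as written it is a plan rather than a proof.

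Two further points would need repair even in the parts you do argue. First, invoking Lemma~\ref{lemma:simple} to ``reduce to the simple case'' destroys the structure your induction relies on: the perturbed simple polytope is no longer of the box-cut-by-a-hyperplane (i.e.\ $q=2$ transportation) form, so its facets need not be smaller transportation polytopes; the correct move is to perturb the marginals (equivalently, the cutting hyperplane) only, which keeps the form, does not decrease the diameter, and does not create new facet directions. Second, the remark that ``the $+1$ for the extra pivot is absorbed by the slack'' in the pivot-then-Case~1 plan is unjustified: Case~1 only gives $n_F\le n-1$, so adding one pivot yields $n-d+1$ unless you can guarantee $n_F\le n-2$ (e.g.\ that the opposite bound $y_i\le u_i$ is genuinely a facet of $\widetilde P$), which does not follow from $n\le 2p$ alone. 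Your Case~1 computation itself, and the observation that facets of $\widetilde P$ are $(p-1)\times2$ transportation polytopes, are fine; the missing piece is the two signature lemmas that the paper devotes its proof to.
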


To prove this theorem, we note that the coordinate-erasing projection of $P$ to the coordinates $x_{1,1}, x_{2,1}, \ldots, x_{p,1}$ of the first column shows that $P$ is the intersection of a hyperplane with a rectangular prism. (In particular, if the intervals are all equal and one has a cube, then the Minkowski sum of two consecutive hypersimplices $D(p,i)$ and $D(p,i+1)$ can be realized as a transportation polytope of size $p \times 2$.) After an affine transformation, the polytope $P$ is the intersection of a hyperplane and a cube. (The transformation takes the cube $[0,u_1] \times \cdots \times [0,u_p]$ to the cube $[0,1]^p$. That is to say, the $i$th coordinate $y_i$ in the cube $[0,1]^p$ is $x_{i,1}/u_i$.) Thus, to prove Theorem~\ref{theorem:thm_2n}, we will prove that the Hirsch Conjecture holds for polytopes obtained as the intersection of a cube and a hyperplane.

Fix a dimension $d \in \N$.  Let $H = \{x \in \R^d \mid a_1x_1 + \cdots + a_dx_d = b\}$ be the hyperplane determined by the non-zero normal vector $a = (a_1, \ldots, a_d)$ and the constant $b \in \R$.  Let $\Box_d$ denote $d$-dimensional cube with $0$-$1$ vertices. Then, let $P$ denote the polytope obtained as their intersection $P = \Box_d \cap H$.

If the dimension of the polytope $P$ is less than $d-1$, then $P$ is a face of $\Box_d$. In that case, $P$ itself is a cube of lower dimension, so we assume that the polytope $P$ is of dimension $d-1$. We may also assume that the polytope $P$ is not a facet of the $d$-cube, so that $H$ intersects the relative interior of $\Box_d$.

Without assuming any genericity, a simple dimension argument shows that the vertices of the polytope $P$ are either on the relative interior of an edge of the cube $\Box_d$ or are vertices of the cube. We assume that $H$ is sufficiently generic.  Then, no vertex of the cube $\Box_d$ will be a vertex of $P$. For each vertex $v$ of $P$, we define its \defn{side signature} $\sigma(v)$ to be a string of length $d$ consisting of the characters $*$, $0$, and $1$ by the following rule:
\begin{equation}
\sigma(v)_i = \left\{
\begin{array}{ll}
0 & \hbox{if } v_i = 0,\\
1 & \hbox{if } v_i = 1,\\
* & \hbox{if } 0 < v_i < 1.\\
\end{array}
\right.
\end{equation}
By genericity, it cannot be the case that there are two vertices of $P$ with the same side signature.  Indeed, if there were two distinct vertices $v$ and $w$ with the same side signature, then $P$ will contain the entire edge of the cube containing them both, and $v$ and $w$ will not be vertices.

Let $H_{i,0}$ denote the hyperplane $\{x \in \R^d \mid x_i = 0\}$ and let $H_{i,1}$ denote the hyperplane $\{x \in \R^d \mid x_i = 1\}$. If there is an $i \in [d]$ such that the hyperplane $H$ does not intersect $H_{i,0}$ nor $H_{i,1}$, then we can project $P$ to a lower-dimensional face of $I_d$. Thus, for each $i \in [d]$, we can assume that $H$ intersects at least one of $H_{i,0}$ or $H_{i,1}$.

Given two vertices $v=(v_1,\ldots,v_d)$ and $w=(w_1,\ldots,w_d)$ of $P = I_d \cap H$, we define the \defn{Hamming distance} between them based on their side signatures: 
\begin{equation}
\hamm(v,w) = \sum_{i=1}^d \hamm( \sigma(v)_i, \sigma(w)_i ),
\end{equation}
where
\begin{equation*}
\hamm(0,1) = \hamm(1,0) = \hamm(1,*) = \hamm(*,1) = \hamm(0,*) = \hamm(*,0) = 1
\end{equation*}
and
\begin{equation*}
\hamm(0,0)=\hamm(1,1)=\hamm(*,*) = 0.
\end{equation*}

\begin{lemma}
Let $P$ defined as above using a sufficiently-generic hyperplane $H$.  Let $v$ and $w$ be two vertices of $P$.  Let $f(P)$ denote the number of facets of $P$.

If $v$ and $w$ have the $*$ in the same coordinate and $P$ does not intersect either of the two facets in that direction, i.e., there is an $i$ such that $\sigma(v)_i=*=\sigma(w)_i$ and $P \cap H_{i,1} = \emptyset = P \cap H_{i,0}$, then $f(P) \geq (d-1) + \hamm(v,w)-1$.

Otherwise, $f(P) \geq (d-1) + \hamm(v,w)$.
\end{lemma}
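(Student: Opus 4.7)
The plan is to bound $f(P)$ from below by directly enumerating facets of $P$ incident to $v$ or $w$, and then to augment the count with one extra facet harvested from a coordinate direction not controlled by either vertex. Since $H$ is sufficiently generic, every vertex of $P$ lies in the relative interior of a cube-edge, so its signature contains exactly one $*$ and $d-1$ entries in $\{0,1\}$. For a vertex $u$ of $P$, let $F(u)$ be the collection of the $d-1$ facets of $P$ of the form $P \cap H_{k,\sigma(u)_k}$ with $\sigma(u)_k \in \{0,1\}$; these are exactly the facets incident to $u$, and we have the trivial inequality $f(P) \geq |F(v) \cup F(w)|$ from which all bounds will flow.

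The first step is to compute $|F(v) \cup F(w)|$ by a case split on the locations of the $*$s in the signatures. If $\sigma(v)_i = \sigma(w)_i = *$ at a common index $i$, then for each $k \neq i$ the union $F(v) \cup F(w)$ contains one facet when $\sigma(v)_k = \sigma(w)_k$ and two facets when $\sigma(v)_k \neq \sigma(w)_k$. Since $\hamm(v,w)$ counts exactly the coordinates $k \neq i$ where the signatures differ, this yields $|F(v) \cup F(w)| = (d-1) + \hamm(v,w)$. If instead $\sigma(v)_i = *$ and $\sigma(w)_j = *$ with $i \neq j$, then directions $i$ and $j$ each contribute only one facet to the union rather than possibly two, and the analogous bookkeeping for the remaining coordinates gives $|F(v) \cup F(w)| = (d-1) + \hamm(v,w) - 1$.

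The remaining task is to upgrade this count where necessary. In the common-$*$ configuration the raw count already matches the ``otherwise'' bound of the lemma; if in addition one of $P \cap H_{i,0}$ or $P \cap H_{i,1}$ is non-empty, that facet lies in direction $i$ and is automatically outside $F(v) \cup F(w)$, supplying an extra facet with room to spare. The special subcase in which both direction-$i$ facets are missing is precisely the situation in which no such upgrade is available, and the ``$-1$'' in the lemma records this unavoidable loss. The main obstacle is the different-$*$ configuration, where the raw count falls short of the target by one; here I would exhibit one additional facet of $P$ outside $F(v) \cup F(w)$ in one of three possible places: in direction $i$ on the side opposite to $\sigma(w)_i$, in direction $j$ on the side opposite to $\sigma(v)_j$, or in some direction $k \neq i,j$ where $\sigma(v)_k = \sigma(w)_k$ on the side opposite to that common value. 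Producing this extra facet by combinatorial means, using only the standing assumption that $H$ meets a cube-facet in every coordinate direction together with the simplicity and boundedness of $P$, is the delicate step on which the final bound hinges.
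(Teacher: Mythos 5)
Your facet bookkeeping is correct, and in the common-$*$ configuration your argument is complete and coincides with the paper's (count the ``$0$-facets'' and ``$1$-facets'' through $v$ and $w$, and lose one exactly when $P$ misses both facets in the $*$-direction). The genuine gap is the step you yourself defer: in the configuration where the $*$'s occupy different coordinates $i\neq j$, the facets incident to $v$ or $w$ only give $f(P)\geq (d-1)+\hamm(v,w)-1$, and the ``one additional facet'' needed for the stated bound $(d-1)+\hamm(v,w)$ is never produced. As written, the proposal therefore does not prove the ``otherwise'' clause in that case.

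Worse, that step cannot be carried out from the standing hypotheses, because the extra facet need not exist. Take $d=2$ and $H=\{x\in\R^2 \mid x_1+x_2=\tfrac{3}{10}\}$. Then $P$ is a segment with vertices $v=(\tfrac{3}{10},0)$ and $w=(0,\tfrac{3}{10})$, so $\sigma(v)=(*,0)$ and $\sigma(w)=(0,*)$; the hyperplane $H$ meets a bounding hyperplane in every coordinate direction, and yet $f(P)=2$ while $(d-1)+\hamm(v,w)=1+2=3$. All three candidate locations you name ($P\cap H_{1,1}$, $P\cap H_{2,1}$, and a coordinate $k\neq i,j$, of which there are none here) are empty. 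The paper's own proof of this configuration does not supply the missing facet either: it counts $d$ ``$0$-facets'' and $k$ ``$1$-facets'' for the normalized signatures $(*,0,\dots,0)$ and $(0,*,0,\dots,0,1,\dots,1)$ and asserts $\hamm(v,w)=k+1$, whereas the definition of $\hamm$ given just before the lemma yields $k+2$; so that count, like yours, only establishes $(d-1)+\hamm(v,w)-1$ when the $*$'s sit in different coordinates. In short, your accounting is the accurate one, and the right move is not to hunt for the extra facet but to record that in the different-$*$ configuration only the weaker bound $(d-1)+\hamm(v,w)-1$ holds in general (the $d=2$ example attains it with equality).
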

\begin{proof}
Without loss of generality, we can assume that the side signature $\sigma(v)$ of the vertex $v$ is $(*,0,0,\ldots,0)$ and that the side signature $\sigma(w)$ of the vertex $w$ is either of the form $(0,*,0,0,\ldots,0,1,1,\ldots,1)$ with $k \geq 0$ trailing ones or of the form $(*,0,0,\ldots,0,1,1,\ldots,1)$ with $k \geq 1$ trailing ones, after applying a suitable rotation to the cube.

In the first case, $\hamm(v,w)=k+1$ and we have at least $d$ ``$0$-facets'' and $k$ ``$1$-facets.''

In the second case, we have $d-1$ ``$0$-facets'', $k$ ``$1$-facets'' and (unless there is an $i$ such that $\sigma(v)_i=*=\sigma(w)_i$ and $P \cap H_{i,1} = \emptyset = P \cap H_{i,0}$) at least one more facet.  Thus, $f(P) \geq (d-1) + k + 1 = (d-1) + \hamm(v,w)$, unless we are in the special case, in which case $f(P) \geq (d-1) + k + 1 - 1$.
\end{proof}

\begin{lemma}
Let $P$ defined as above using a sufficiently-generic hyperplane $H$.  Let $v$ and $w$ be two vertices of $P$.  Then, there is a pivot from the vertex $v$ to a vertex $v'$ with $\hamm(v',w) = \hamm(v,w)-1$.
\end{lemma}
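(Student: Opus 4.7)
By the genericity of $H$, each vertex $v$ of $P = \Box_d \cap H$ has a side signature $\sigma(v)$ with exactly one $*$-coordinate, at some position $i_v$, and $P$ is simple of dimension $d-1$. Each of the $d-1$ edges of $P$ emanating from $v$ is obtained by releasing one of the fixed coordinate constraints: the edge lies on the $2$-face of $\Box_d$ where all coordinates except the $*$-coordinate $x_{i_v}$ and the released coordinate $x_j$ ($j \neq i_v$) are kept at their current values, so only $x_{i_v}$ and $x_j$ vary linearly, constrained by the hyperplane equation. The edge terminates at the neighbor $v'$ when one of these two varying coordinates reaches a cube bound. This leads to exactly two possible transitions for the signature: either (a) $x_j$ hits its opposite bound first, in which case $\sigma(v')$ agrees with $\sigma(v)$ except that position $j$ flips between $0$ and $1$ (the $*$ stays at $i_v$); or (b) $x_{i_v}$ hits a cube bound first, in which case the $*$ migrates from $i_v$ to $j$, while $\sigma(v')_{i_v}$ becomes some $\epsilon \in \{0,1\}$.

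The plan is to choose the pivot direction $j$ according to the relationship between the $*$-positions $i_v$ and $i_w$ of $v$ and $w$. If $i_v = i_w$, then $\sigma(v)$ and $\sigma(w)$ share their $*$-position, and since $\hamm(v,w) \geq 1$ there must be a position $j \neq i_v$ with $v_j \neq w_j$; pivoting in direction $j$ under scenario (a) flips $v_j$ to match $w_j$ while leaving all other signature entries fixed, giving $\hamm(v',w) = \hamm(v,w) - 1$. If $i_v \neq i_w$, then pivot in direction $i_w$; in scenario (b) the $*$ migrates to position $i_w$, matching $\sigma(w)_{i_w}$, and the contribution of position $i_w$ to the Hamming distance drops from $1$ to $0$, while the contribution of position $i_v$ changes from $1$ (since $\sigma(v)_{i_v} = *$ and $\sigma(w)_{i_v} \in \{0,1\}$) to $0$ or $1$ depending on whether $\epsilon = \sigma(w)_{i_v}$; thus $\hamm$ decreases by $1$ or $2$.

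The main obstacle is that one cannot a priori choose which scenario, (a) or (b), occurs for a given pivot direction: which of $x_j$ and $x_{i_v}$ hits its bound first is determined by the geometry of $H$ and the starting value $v_{i_v}$. I therefore intend to show that if the direction selected above happens to produce the "wrong" scenario, an alternative direction yields the desired Hamming decrease. Specifically, in Case $i_v = i_w$ falling into scenario (b) moves the $*$ to $j$; I will argue that one can instead select $j$ to be a position where $v_j = w_j$ and use the fact that the $*$ migrating to such a position still improves matters relative to $\sigma(w)$; symmetrically in Case $i_v \neq i_w$, if direction $i_w$ gives scenario (a), then a suitable coordinate $j$ with $v_j \neq w_j$ (whose existence is guaranteed when $\hamm(v,w) \geq 3$ and is handled separately when $\hamm(v,w) = 2$, where $v$ and $w$ lie in a common $2$-face of $\Box_d$) allows an alternate pivot achieving the decrease.

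The hardest part will be the low Hamming-distance boundary case where $v$ and $w$ differ only at positions $i_v$ and $i_w$ and both direction $i_w$ and every other direction seemingly yield no progress; here I plan to exploit the fact that the $2$-face of $\Box_d$ fixed by the common coordinates contains both $v$ and $w$ on its intersection with $H$, so that the chord cut by $H$ on this $2$-face provides an edge-path of length at most two in $P$ whose intermediate vertex (if any) strictly decreases Hamming distance to $w$. Combining these arguments across all cases produces the required pivot $v \to v'$ with $\hamm(v',w) \leq \hamm(v,w) - 1$.
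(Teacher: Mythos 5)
Your description of the local geometry is accurate, and in fact more careful than the paper's own argument: at a vertex $v$ whose $*$ sits in position $i_v$, releasing a facet $x_j=v_j$ leads to exactly the two outcomes you call (a) and (b), a distinction the paper's proof glosses over (in its first case it simply asserts that pivoting in any mismatched coordinate flips that coordinate, i.e.\ it silently assumes outcome (a)). The fatal gap is in the repair step: the claim that, whenever the selected direction produces the ``wrong'' scenario, some alternative direction yields the desired decrease. That existence claim is false, because the lemma itself fails pivot-by-pivot in this generality. Concretely, take $d=3$ and $H=\{x\in\R^3 \mid x_1+1.01x_2+0.9x_3=1.4\}$. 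This is a sufficiently generic slice of $\Box_3$ satisfying all the standing assumptions (indeed, under the affine change of coordinates of this section it is a generic $3\times 2$ transportation polytope), and $P$ is a hexagon whose vertices have signatures $(*,0,1)$, $(1,0,*)$, $(1,*,0)$, $(*,1,0)$, $(0,1,*)$, $(0,*,1)$, adjacent in this cyclic order. For $v=(*,0,1)$ (the point $(0.5,0,1)$) and $w=(*,1,0)$ (the point $(0.39,1,0)$) we have $\hamm(v,w)=2$, yet the only two neighbors of $v$, namely $(1,0,*)$ and $(0,*,1)$, both have Hamming distance $3$ to $w$: every pivot from $v$ falls into your scenario (b) and strictly increases the distance. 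No case analysis over alternative pivot directions can produce the pivot the lemma promises.

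The same example defeats your specific fallbacks. In the case $i_v=i_w$, migrating the $*$ to a coordinate $j$ with $v_j=w_j$ does not ``improve matters'': position $j$ goes from contributing $0$ to contributing $1$, and position $i_v$ goes from $0$ to $1$ as well (since $w_{i_v}=*$ while $\sigma(v')_{i_v}\in\{0,1\}$), so $\hamm$ increases by two; moreover in the hexagon above there is no coordinate with $v_j=w_j$ to migrate to at all. Your treatment of $i_v\neq i_w$ likewise terminates in ``choose a coordinate where scenario (a) occurs,'' which is exactly the step that can be unavailable. So a correct write-up cannot rest on the assertion that some single pivot always decreases $\hamm$ by one; you would need either additional hypotheses excluding configurations like the hexagon, or a different progress measure (e.g.\ an induction on the cube facets met by $P$, bounding the path length against the facet count directly) --- and note that the paper's own proof of this lemma suffers from the very gap you identified, so matching it is not the goal here.
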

\begin{proof}
Without loss of generality, we can assume that $\sigma(v) = (*,0,0,\ldots,0)$ and that $\sigma(w)$ is either $(*,1,1,\ldots,1)$ or $(1,1,\ldots,1,*)$.

If the side signature $\sigma(w)$ of $w$ is $(*,1,1,\ldots,1)$, performing a pivot on the vertex $v$ in any one of the $d-1$ last coordinates reduces the Hamming distance.

Otherwise, the side signature $\sigma(w)$ of $w$ is $(1,1,\ldots,1,*)$. We now describe what can occur when pivoting from the vertex $v$ to a new vertex $v'$. We claim that at least one of the $d-1$ possible pivots on the vertex $v$ does not put a $0$ in the first coordinate of the side signature $\sigma(v')$ of the new vertex $v'$. Otherwise, the hyperplane $H$ cuts the polytope $P$ as a vertex figure: that is to say, the polytope $P$ cuts the corner $(1,0,\ldots,0)$ of the cube. See Figure~\ref{figure:cubehyperplanevertexfigure} for a picture.
\begin{figure}[hbt]
  \begin{center}
    \includegraphics[scale=0.9]{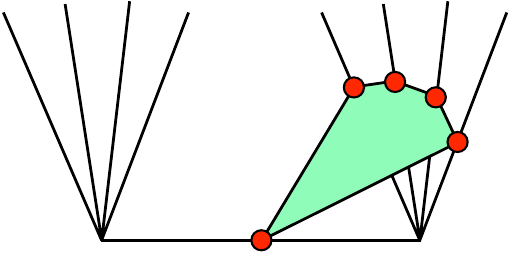}
    \caption{The vertex $v$ on the horizontal axis (the $x_1$ coordinate increases moving to the right) and its neighboring vertices on orthogonal edges of the cube.} \label{figure:cubehyperplanevertexfigure}
  \end{center}
\end{figure}

The remaining kind of pivots on $v$ that result in a new vertex $v'$ give side signatures $\sigma(v')$ of one of the following three forms:
\begin{enumerate}
\item The signature $\sigma(v')$ of the neighbor $v'$ of the vertex $v$ could be \[(1,0,0,\ldots,*,0,\ldots,0),\] which reduces the Hamming distance by one.
\item The signature $\sigma(v')$ of the neighbor $v'$ of the vertex $v$ could be \[(*,0,0,\ldots,0,1,0,0,\ldots,0),\] which reduces the Hamming distance by one.
\item Otherwise, \emph{one} remaining pivot could give the side signature \[(1,0,0,\ldots,0,*)\] for $\sigma(v')$.
\end{enumerate}
This third type of pivot does not reduce the Hamming distance. But if this is the only pivot that could give this and the first two kinds of pivots cannot be performed, then all of the remaining pivots are the kind that put $0$ in the first coordinate of the side signature $\sigma(v)$ of $v$. But this would imply that $H$ could not have intersected the hyperplane $H_{d+}$, and thus $P$ would be a $(d-1)$-cube with one vertex truncated.
\end{proof}
By applying an affine transformation to $P = \Box_d \cap H$, we obtain a $p \times 2$ classical transportation polytope, thus:
\begin{corollary}
The Hirsch Conjecture holds for $p \times 2$ classical transportation polytopes.
\end{corollary}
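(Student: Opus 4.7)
The plan is to derive this corollary from Theorem~\ref{theorem:thm_2n}, whose content is packaged by the two lemmas on cube--hyperplane intersections just proved. First I would make explicit the affine reduction sketched in the paragraph following the theorem. Given a $p \times 2$ classical transportation polytope $P$ with positive marginals $u \in \R^p_{>0}$ and $v \in \R^2_{>0}$, the coordinate-erasing projection $\pi\colon (x_{i,j}) \mapsto (x_{1,1}, \ldots, x_{p,1})$ is a linear bijection from $P$ onto $\{y \in \prod_{i=1}^p [0, u_i] : \sum_i y_i = v_1\}$, because $x_{i,2} = u_i - x_{i,1}$ is determined by $x_{i,1}$. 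The diagonal rescaling $z_i := y_i/u_i$ identifies this set with $\Box_p \cap H$ for the affine hyperplane $H := \{z \in \R^p : \sum_i u_i z_i = v_1\}$. Because the composition preserves the graph, the dimension, and the number of facets, it suffices to prove the Hirsch bound $\diam(\Box_p \cap H) \leq n - (p-1)$.

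Second, I would bring $P = \Box_p \cap H$ into the generic position required by the two lemmas. If the marginals do not already yield a simple $P$, Lemma~\ref{lemma:simple} lets me perturb them slightly to obtain a simple polytope $P'$ whose diameter is at least $\diam(P)$ and whose number of facets is no more than $n$. I would choose the perturbation so that $H$ meets at least one of $H_{i,0}$ or $H_{i,1}$ for every coordinate $i$, no vertex of $\Box_p$ lies on $H$, and no two vertices of $P'$ share a side signature; all of these conditions are open and dense in the space of defining marginals.

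Finally, I would combine the two lemmas. For any pair of vertices $v, w$ of the generic $P$, iterating the second lemma produces a sequence of pivots from $v$ that decreases the Hamming distance to $w$ by one at each step, yielding an edge-path in $G(P)$ of length exactly $\hamm(v,w)$; hence $\diam(P) \leq \max_{v,w} \hamm(v,w)$. In the main case of the first lemma, $f(P) \geq (p-1) + \hamm(v,w)$, i.e., $\hamm(v,w) \leq n - (p-1)$, which is precisely the Hirsch bound. The main obstacle is the exceptional case of the first lemma, where the inequality weakens by one. I expect to dispose of this case by exploiting its hypothesis: if there exists $i$ with $\sigma(v)_i = \sigma(w)_i = *$ and $P \cap H_{i,0} = P \cap H_{i,1} = \emptyset$, then no vertex of $P$ can have $i$-th coordinate in $\{0,1\}$, so every vertex shares $\sigma_i = *$; this simultaneously sharpens $\hamm(v,w) \leq (p-1) - 1$ (position $i$ never contributes) and shows that projecting away coordinate $i$ identifies $P$ affinely with the full cube $\Box_{p-1}$, forcing $n = 2(p-1)$ and $\diam(P) = p - 1 = n - (p-1)$, with Hirsch holding tightly. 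Either viewpoint yields the desired diameter bound, completing the proof.
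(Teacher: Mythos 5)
Your proposal is correct and follows essentially the same route as the paper: project a $p\times 2$ transportation polytope onto its first column, rescale to realize it as $\Box_p\cap H$, and then combine the facet-counting lemma with the Hamming-distance-decreasing pivot lemma, with the exceptional case disposed of separately. One small slip: in the exceptional case the sharpened bound is $\hamm(v,w)\leq p-1$ (the other $p-1$ coordinates can all differ), not $(p-1)-1$, but this is harmless since your identification of $P$ with $\Box_{p-1}$ (so $n=2(p-1)$ and $\diam(P)=p-1=n-\dim P$) already settles that case.
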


\begin{openproblem}
Prove that the Hirsch Conjecture holds for $p \times 3$ classical transportation polytopes.
\end{openproblem}
\begin{openproblem}
Prove that the Hirsch Conjecture holds for polytopes $P$ of the form $\Box_d \cap H_1 \cap H_2$, where $\Box_d$ is a $d$-dimensional cube and $H_1$ and $H_2$ are hyperplanes.
\end{openproblem}

\section[Diameter of $3$-way axial transportation polytopes]{\texorpdfstring{The diameter of $3$-way transportation polytopes defined by $1$-marginals}{The diameter of 3-way transportation polytopes defined by 1-marginals}}\label{section:axialdiam}

In this section, we prove Theorem~\ref{theorem:main}, which gives the first quadratic upper bound on the diameter of all $3$-way $p \times q \times s$ transportation polytopes defined by $1$-marginals. The contents of this section is joint work with De Loera, Onn, and Santos.

Here we consider a $3$-way $p \times q \times s$ transportation polytope $P$ defined by certain $1$-marginal vectors $u$, $v$, and $w$.  Recall that for bounding its diameter there is no loss of generality in assuming $P$ non-degenerate, that is, that $u$, $v$ and $w$ are sufficiently generic.  In the non-degenerate case, at every vertex $V$ of our polytope, exactly $pqs-(p+q+s-2)$ variables are zero, and exactly $p+q+s-2$ are non-zero. As in the case of classical transportation polytopes, the set of triplets $(i,j,k)$ indexing non-zero variables will be called the \defn{support} of the vertex $V$, denoted $\supp(V)$.

We say that a vertex $V$ of the axial transportation polytope $P$ is \emph{well-ordered} if the
triplets $(i,j,k)$ that form its support are totally ordered with
respect to the following \emph{coordinate-wise partial order}:

\begin{equation} \label{equation:order}
(i,j,k)\le (i',j',k') \quad\hbox{ if } \qquad i\le i', \ 
\hbox{and } j\le j', \ \hbox{and}\ k\le k'.
\end{equation}
Observe that a set of $p+q+s-2$ triplets satisfying this must contain
exactly one triplet $(i,j,k)$ with $i+j+k=z$ for each $z=3,\dots,
p+q+s$. Actually, supports of well-ordered vertices are the
\emph{monotone staircases} from $(1,1,1)$ to $(p,q,s)$ in the $p
\times q \times s$ grid.

\begin{lemma}
\label{lemma:well-ordered}
If $x$, $y$ and $z$ are generic, then the axial transportation polytope $P$ has a unique
well-ordered vertex $\hat{V}$.
\end{lemma}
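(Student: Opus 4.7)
The plan is to prove existence and uniqueness of the well-ordered vertex $\hat V$ simultaneously by implementing a $3$-way analogue of the northwest corner rule starting at the corner $(1,1,1)$. Given the $1$-marginals $u\in\R^p_{>0}$, $v\in\R^q_{>0}$, $w\in\R^s_{>0}$ (which by feasibility satisfy $\sum u_i=\sum v_j=\sum w_k$), I would set
\[
\hat V_{1,1,1} := \min\{u_1,v_1,w_1\},
\]
and then by genericity the minimum is uniquely attained by exactly one of the three marginals. Suppose for definiteness the minimum is $u_1$; then every other entry $\hat V_{1,j,k}$ with $(j,k)\ne(1,1)$ must be zero (to achieve the marginal $u_1$), and the residual problem is a $(p-1)\times q\times s$ axial transportation polytope with marginals $(u_2,\dots,u_p)$, $(v_1-u_1,v_2,\dots,v_q)$, $(w_1-u_1,w_2,\dots,w_s)$. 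If instead the minimum is $v_1$ (respectively $w_1$), we move to the subproblem indexed from $(1,2,1)$ (respectively $(1,1,2)$). In every case the next triplet in the support is strictly coordinate-wise larger than $(1,1,1)$, so iterating the procedure produces a sequence of triplets which is a monotone staircase ending at $(p,q,s)$, of length exactly $p+q+s-2$.

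The residual problems encountered during the recursion are themselves axial transportation polytopes, and I must check that genericity is preserved so that the recursive step is again unambiguous. This is where the main work lies. Concretely, the marginals that appear in the recursion are partial sums of the original $u_i$, $v_j$, $w_k$ with alternating signs; genericity of the original data (e.g.\ in the sense that no two distinct integer linear combinations of $u_i$, $v_j$, $w_k$ with bounded coefficients coincide, apart from the forced equality $\sum u_i=\sum v_j=\sum w_k$) guarantees that at every subsequent step the three residual ``first'' marginals have a unique minimum. This bookkeeping, showing that the finitely many subset-sum comparisons required throughout the recursion hold under generic marginals, is the principal obstacle and where I expect the argument to require the most care.

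Once the algorithm is validated, existence follows because the point $\hat V$ it outputs is clearly feasible and its support is a monotone staircase of size $p+q+s-2$, so it contains the right number of nonzero coordinates to be a vertex of the non-degenerate polytope $P$; linear independence of the corresponding columns of the constraint matrix follows from the staircase structure, since the ordering lets one triangularly solve for the entries. Thus $\hat V$ is a well-ordered vertex.

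For uniqueness, let $V$ be any well-ordered vertex, with support totally ordered as $(i_1,j_1,k_1)<\cdots<(i_N,j_N,k_N)$. Summing the indices forces $(i_1,j_1,k_1)=(1,1,1)$ and $(i_N,j_N,k_N)=(p,q,s)$, and since the support is a monotone staircase, the only support entry contributing to each of the marginals $u_1$, $v_1$, $w_1$ is $V_{1,1,1}$ for whichever of these three marginals is exhausted first. Hence $V_{1,1,1}=\min\{u_1,v_1,w_1\}$, and by genericity the minimum identifies which coordinate of the next triplet increments. Induction on $p+q+s$ applied to the residual subproblem (still generic by the argument above) shows $V=\hat V$, completing the proof.
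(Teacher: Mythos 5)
Your proof is correct and follows essentially the same route as the paper: a greedy northwest-corner-rule recursion that uses genericity to make each minimum unique, with uniqueness obtained by observing that the staircase support of any well-ordered vertex forces exactly the same choices. The only cosmetic difference is that you run the recursion from the corner $(1,1,1)$ while the paper starts at $(p,q,s)$, which is just a relabeling.
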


\begin{proof} Existence is guaranteed by the ``northwest corner rule
algorithm'', which fills the entries of the table in the prescribed
order (see the survey~\cite{Queyranne:MultiIndexTransportation} or Exercise 17 in Chapter 6 of~\cite{Yemelichev:Polytopes}). More explicitly: let $\hat{V}_{p,q,s}= \min\{x_p,y_q,z_s\}$. Genericity implies that the three values $x_p$,
$y_q$ and $z_s$ are different. Without loss of generality we assume
that the minimum is $z_s$. Then, our choice of $\hat{V}_{p,q,s}$ makes
$\hat{V}_{i,j,s}=0$ for every other pair $(i,j)$. The rest of our
vertex $\hat{V}$ is a vertex of the $p \times q\times (s-1)$ axial
transportation polytope with margins $u'=(u_1, \dots, u_{p-1}, u_l -
w_s)$, $v'=(v_1, \dots, v_{q-1}, v_q - w_s)$, and $w'=(w_1, \dots,
w_{n-1})$.
  
Uniqueness follows from the same argument, simply noticing that the
support of a well-ordered vertex always contains the entry $(p,q,s)$,
and no other entry from one of the three planes $(p,*,*)$, $(*,q,*)$
and $(*,*,s)$. This, recursively, implies that the vertex can be
obtained by the northwest corner rule.
\end{proof}

\begin{remark}
The proof of Lemma~\ref{lemma:well-ordered} above is the $3$-way analogue of Lemma~\ref{lemma:littleeddielemma}. Another proof of Lemma~\ref{lemma:well-ordered} can be obtained using the formalism of chambers developed in the previous sections: it is obvious (and is proved in~\cite{DeLoera:AllTriangulations}) that if $c$ denotes a chamber
  of $A$, and $T$ is a triangulation of $\cone(A)$, then there is a
  unique maximal-dimension simplex in $T$ that contains $c$. Thus,
  Lemma~\ref{lemma:well-ordered} follows from the fact that monotone
  staircases in the $p \times q \times s$ grid form a triangulation of
  the vector configuration $A_{p,q,s}$ of axial $p \times q \times s$
  transportation polytopes.  The latter is well-known, once we observe
  that $A_{p,q,s}$ is the vertex set of a product of three simplices.
  The triangulation in question is called the ``staircase
  triangulation'' of it (see Section 2.3 in Chapter 6 of~\cite{DeLoera:Triangulations}).
\end{remark}

\begin{example}\label{example:axial333}
  To illustrate Lemma~\ref{lemma:well-ordered} consider the
  non-degenerate $3 \times 3 \times 3$ axial transportation polytope
  the axial transportation polytope $P$ with:
\[
\sum_{j,k} x_{1,j,k} = 112\qquad 
\sum_{j,k} x_{2,j,k} = 18\qquad 
\sum_{j,k} x_{3,j,k} = 30\qquad 
\]
\[
\sum_{i,k} x_{i,1,k} = 40\qquad 
\sum_{i,k} x_{i,2,k} = 6\qquad 
\sum_{i,k} x_{i,3,k} = 114\qquad 
\]
\[
\sum_{i,j} x_{i,j,1} = 82\qquad 
\sum_{i,j} x_{i,j,2} = 44\qquad 
\sum_{i,j} x_{i,j,3} = 34\qquad 
\]

The unique well-ordered vertex $\hat{V}$ of the axial transportation polytope $P$ has the non-zero coordinates
$x_{(1,1,1)}=40$,
$x_{(1,2,1)}=6$,
$x_{(1,3,1)}=36$,
$x_{(1,3,2)}=30$,
$x_{(2,3,2)}=14$,
$x_{(2,3,3)}=4$, and
$x_{(3,3,3)}=30$.
Note that the non-zero entries of $\hat{V}$ are totally ordered (they 
are presented above in increasing order) with respect to \eqref{equation:order}.
Figure~\ref{figure:monostair} depicts the associated monotone staircase.
\begin{figure}[hbt]
  \begin{center}
    \includegraphics[scale=0.7]{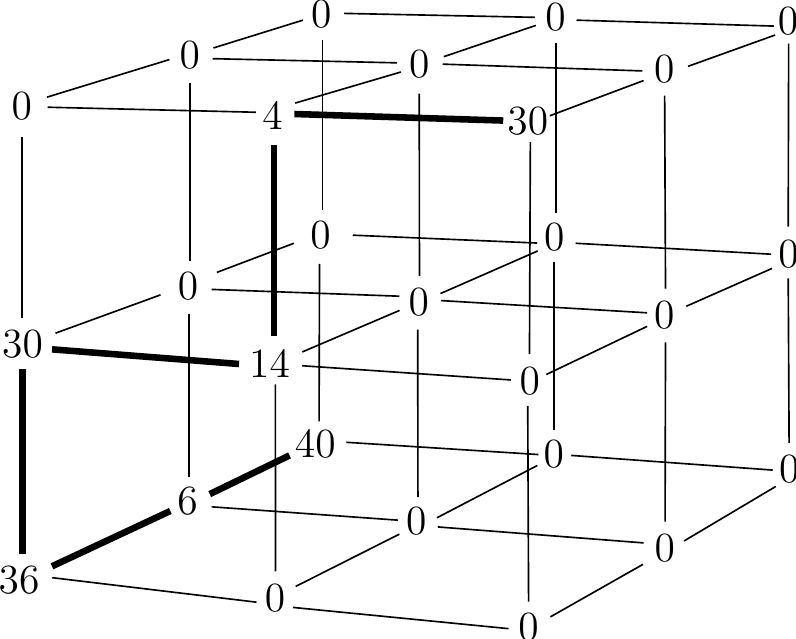}
    \caption{A well-ordered vertex and its staircase.} \label{figure:monostair}
  \end{center}
\end{figure}

\end{example}

Our bound on the diameter of the axial transportation polytope $P$ is based on an explicit path
that goes from any initial vertex $V$ of the axial transportation polytope $P$ to the unique
well-ordered vertex $\hat{V}$. To build this path we rely on the
following stratified version of the concept of well-ordered vertex.
We say that a vertex $V$ of the axial transportation polytope $P$ is \emph{well-ordered starting at level $z$}, where $z$ is an integer between 3 and $p+q+s$ if:

\begin{enumerate}
\item For each $y=z,\dots, p+q+s$, the support of $V$ contains exactly
  one triplet $(i,j,k)$ with $i+j+k = y$.
\item Those triplets are well-ordered.  (The partial
  order given in \eqref{equation:order} is a total order on these triplets.)
\item All other triplets in the support have entries which are
  index-wise smaller than or equal to those of the unique triplet
  $(i_0,j_0,k_0)$ with $i_0+j_0+k_0=z$.
\end{enumerate}

For example, the only vertex ``well-ordered starting at level 3'' is
the well-ordered vertex $\hat{V}$. Slightly less trivially, it is also
the unique vertex ``well-ordered starting at level 4.''  On the other
extreme, all vertices that contain $(p,q,s)$ as a support triplet are
well-ordered starting at $p+q+s$.  Observe that from any vertex
of the axial transportation polytope $P$ we can move, by a single pivot edge in the sense of the simplex method, to another vertex containing any prescribed entry $(i,j,k)$ to be non-zero. In particular, we can move to a vertex that
has $(p,q,s)$ in its support.  So, we can assume from the beginning
that $(p,q,s)$ is in the support of our initial vertex $V$, and will
add one to the count of edges traversed to arrive to $\hat{V}$.

\begin{lemma}\label{lemma:decrease-p}
  If $V$ is a vertex of the axial transportation polytope $P$ that is well-ordered starting at
  level $z \in\{5,\dots, p+q+s\}$, then there is a path of at most
  $2(z-4)$ edges of the axial transportation polytope $P$ that leads from $V$ to a vertex that
  is well-ordered starting at level $z-1$.
\end{lemma}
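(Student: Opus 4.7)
The plan is to work entirely inside the smaller axial transportation polytope $P'$ of size $i_0 \times j_0 \times k_0$, where $(i_0, j_0, k_0)$ is the unique level-$z$ triplet in $\supp(V)$. Because $V$ is well-ordered starting at level $z$, the staircase at levels strictly above $z$ is frozen and the restriction $V|_{P'}$ is a non-degenerate vertex of $P'$ whose support consists of $(i_0, j_0, k_0)$ together with the $z-3$ further support triplets of $V$ (all lying inside $[i_0]\times[j_0]\times[k_0]$). Every pivot edge of $P'$ extends to a pivot edge of $P$ that preserves the top staircase, so it suffices to construct a short path in $P'$ from $V|_{P'}$ to a vertex $V'|_{P'}$ whose support is $(i_0, j_0, k_0) \cup (i_0',j_0',k_0') \cup S$, where $(i_0',j_0',k_0')$ is one of the three immediate predecessors of $(i_0,j_0,k_0)$ in the partial order of \eqref{equation:order} and $S$ is a set of $z-4$ triplets contained in the even smaller sub-box $[i_0']\times[j_0']\times[k_0']$.

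The first step is a single pivot that inserts some level-$(z-1)$ triplet into the support. The only candidates are $(i_0-1, j_0, k_0)$, $(i_0, j_0-1, k_0)$, and $(i_0, j_0, k_0-1)$, and the existence of a feasible edge pivot introducing at least one of them follows from the usual circuit argument on the axial constraint matrix: $(i_0, j_0, k_0)$ has a positive coordinate in $V$, so lowering it along a direction supported on the circuit through one of the three candidates and a well-chosen auxiliary entry yields a single edge of $P'$ whose endpoint contains the desired new support triplet. Label the introduced triplet $(i_0', j_0', k_0')$.

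The second phase sweeps out every remaining support triplet that lies outside the sub-sub-box $[i_0']\times[j_0']\times[k_0']$. After Phase~1 there are at most $z-4$ ``free'' support triplets, so in the worst case $z-5$ of them violate a coordinate bound of the sub-sub-box. For each such bad triplet, I would use the circuit through that triplet, the target sub-sub-box entry it should be exchanged with, and one or two auxiliary entries from the current support inside the sub-sub-box; provided these circuits are chosen so that they avoid $(i_0,j_0,k_0)$ and $(i_0',j_0',k_0')$, a single edge pivot along the associated direction replaces the bad triplet by a triplet inside $[i_0']\times[j_0']\times[k_0']$. At most one additional ``clean-up'' pivot may be needed per offending entry to suppress a spurious level-$(z-1)$ triplet created in the process, and this adds up to at most $2(z-4)$ pivots altogether when combined with the pivot of Phase~1.

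The main obstacle is the combinatorial bookkeeping in Phase~2. A single pivot in $P'$ corresponds to a minimal linear dependence on the columns of the axial constraint matrix, which in the $3$-way setting is a tripartite hypergraph cycle; one must check that for every bad support entry one can find such a circuit whose support is contained in $[i_0']\times[j_0']\times[k_0'] \cup \{(\text{bad entry})\}$ and that does not disturb the two frozen top triplets. The existence of such circuits follows by an elementary argument working inside the three-dimensional sub-box, analogous to the way bipartite cycles are used to build pivots in classical $2$-way transportation polytopes, and this reduction is really what the bound $2(z-4)$ is measuring.
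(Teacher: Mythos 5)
Your plan has the right target (empty one maximal slab of the support and then bring in the level-$(z-1)$ triplet, working inside the $i_0\times j_0\times k_0$ sub-polytope as the paper also does), but the order of operations and the core step are where the actual difficulty lies, and both are missing. The paper inserts the level-$(z-1)$ triplet \emph{last}, only after all support triplets sharing a maximal index with $(i_0,j_0,k_0)$ have been removed: it is precisely the emptiness of that slab which guarantees that the insertion pivot keeps $x_{(i_0,j_0,k_0)}$ constant (its value is then forced by the marginal) and cannot create a second level-$(z-1)$ entry. In your Phase~1 the entering variable $(i_0',j_0',k_0')$ is pivoted in while the slabs are still occupied, so the fundamental circuit may well drive $x_{(i_0,j_0,k_0)}$ to zero or produce spurious entries, and nothing in your argument prevents this. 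More seriously, in Phase~2 you assert that for each bad support entry you can pivot along ``the circuit through that triplet, the target sub-sub-box entry and one or two auxiliary entries,'' chosen to avoid the two frozen triplets. But at a vertex you do not get to choose the circuit: you only choose the entering entry, and the circuit and leaving variable are then determined by the current support. The whole content of the paper's proof is the companion lemma (Lemma~\ref{lemma:decrease-S}) showing, by a case analysis on the sets $R_i$, $R_{ij}$, that one can always decrease $|S_1\cup S_2\cup S_3|$ in one or two controlled pivots — including the delicate case where the natural exchange direction $e_\alpha+e_{\epsilon_1}+e_{\epsilon_2}-e_\beta-e_\gamma-e_\delta$ is \emph{not} an edge direction, lands in the relative interior of an edge of a two-dimensional face (possibly an octagon), and one must argue that a vertex killing one of $\beta,\gamma,\delta$ is reachable in two steps along that face. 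Your ``elementary argument analogous to bipartite cycles'' is exactly this missing step; in the $3$-way setting supports are not spanning trees of a bipartite graph and single-pivot exchanges of the kind you describe need not exist.

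The counting also does not close as written. After your Phase~1 there can be up to $z-4$ offending triplets (all lower support entries may lie in the slab being vacated), and with your budget of up to two pivots each plus the initial insertion you get $1+2(z-4)=2z-7>2(z-4)$; your claim that at most $z-5$ entries are bad is unjustified, and the occasional extra ``clean-up'' pivot per entry is neither bounded nor shown to exist. The paper's accounting works because emptying a slab costs at most $2|S_1\cup S_2\cup S_3|-3\le 2(z-3)-3$ pivots and the final insertion costs one, giving exactly $2(z-4)$; to repair your argument you would essentially have to reproduce that slab-emptying analysis, i.e.\ the content of Lemma~\ref{lemma:decrease-S}, which your proposal currently replaces with an assertion.
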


\begin{proof} Let $(i_0,j_0,k_0)$ be the unique triplet in the support of $V$
with $i_0+j_0+k_0=p$.  We first observe that there is no loss of
generality in assuming that $z=p+q+s$ (that is,
$(i_0,j_0,k_0)=(p,q,s)$). This is because the vertices of the axial transportation polytope $P$ that are well-ordered starting at level $z$ and agree with $V$ in all
the triplets with sum of indices greater than or equal to $z$ are the
vertices of a non-degenerate $i_0 \times j_0 \times k_0$ axial
transportation polytope, obtained as in the proof of
Lemma~\ref{lemma:well-ordered}.

So, from now on we assume that $V$ is well-ordered starting at level
$z=p+q+s$.  Let $S_1$ be the set of support triplets in $V$, other
than $(p,q,s)$, that have first index equal to $l$.  Similarly, let
$S_2$ and $S_3$ be the sets of support triplets that have,
respectively, second and third indices equal to $m$ and $n$.

Our goal is to modify $V$ until one of $S_1$, $S_2$ or $S_3$ becomes
empty, but always keeping the triplet $(p,q,s)$ in the support.  Once
this is done, a single pivot step can be used to obtain a vertex that
is well-ordered starting at $(p,q,s)-1$ as follows: Without loss of
generality assume that $S_1$ is empty (the cases when $S_2$ or $S_3$
are empty are treated identically). In particular, neither $(p,q-1,s)$
nor $(p,q,s-1)$ are in the support.  If $(p-1,q,s)$ is in the support
then our vertex is already well-ordered starting at $p+q+s-1$. If
not, we do the pivot step that inserts $(p-1,q,s)$. This pivot step
cannot remove $(p,q,s)$ or insert $(p,q-1,s)$ or $(p,q,s-1)$ in the
support.  (The $(p,q,s)$ coordinate is not removed from the support
since the entry remains constant in this pivot.  The $(p,q-1,s)$ and
$(p,q,s-1)$ coordinates remain zero because only non-zero entries of
$V$ and the entry $(p-1,q,s)$ change in the pivot). This pivot
produces a vertex well-ordered starting at $p+q+s-1$.  Figure
\ref{figure:lower_p} gives a picture for this case.

\begin{figure}[hbt]
  \begin{center}
    \includegraphics[scale=0.7]{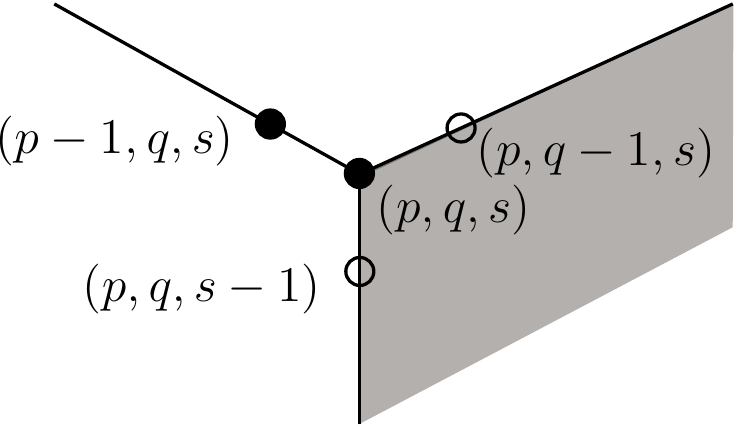}
    \caption{A well-ordered vertex starting at $p+q+s-1$.} \label{figure:lower_p}
  \end{center}
\end{figure}

Given a vertex $V$ well-ordered at $p+q+s$, we specify a sequence of
pivots in the graph of the axial transportation polytope $P$ to a vertex $V'$ such that one of
$S_1$, $S_2$ or $S_3$ is empty for $V'$.  Lemma~\ref{lemma:decrease-S}
below shows how to get such a $V'$ in a number of steps bounded above by
\[
2|S_1 \cup S_2 \cup S_3| - 3 \le 2 (z-3) - 3 = 2z - 9.
\] 
In one more step, that is, at most $2z-8$, we get to a vertex that is
well-ordered starting at $p+q+s-1$. This completes the proof of our
lemma.
\end{proof}

For Lemma~\ref{lemma:decrease-S} let us introduce the following notation:
\[
R_1:= S_1 \setminus (S_2\cup S_3),\quad
R_2:= S_2 \setminus (S_1\cup S_3),\quad
R_3:= S_3 \setminus (S_1\cup S_2),\quad
\]
\[
R_{12}:=S_1\cap S_2, \quad
R_{13}:=S_1\cap S_3, \quad
R_{23}:=S_2\cap S_3.
\]
That is, $R_i$ consists of the elements of $S_1 \cup S_2 \cup S_3$ that belong only to $S_i$,
and $R_{ij}$ of those that belong to $S_i$ and $S_j$. Observe that, by definition, no element belongs
to the three $S_i$'s, so that $S_1 \cup S_2 \cup S_3$ is the disjoint union of these six subsets.

\begin{lemma}
\label{lemma:decrease-S}
With the above notation and the conditions of the proof of the previous lemma, suppose that 
no $S_i$ is empty. Then:
\begin{enumerate}
\item If both $R_i$ and $R_{jk}$ are non-empty, with $\{i,j,k\}=\{1,2,3\}$, then there is a single pivot step that decreases $|S_1 \cup S_2 \cup S_3|$.
\item If the three $R_{ij}$'s are non-empty, then there is a sequence of two pivot steps that decreases $|S_1 \cup S_2 \cup S_3|$.
\item If the three $R_i$'s are non-empty, then there is a sequence of two pivot steps that decreases $|S_1 \cup S_2 \cup S_3|$.
\item If none of the above happens, then $S_1 \cup S_2 \cup S_3$ is contained in one of the $S_i$'s, say $S_1$. Then, there is a sequence of $|S_1|-1$ pivot steps that makes $S_1 \cup S_2 \cup S_3$ empty.
\end{enumerate}
All in all, there is a sequence of at most $2|S_1 \cup S_2 \cup S_3| - 3$ pivot steps that makes some $S_i$ empty.
\end{lemma}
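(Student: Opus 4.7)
The plan is to handle each case by constructing explicit pivot steps using small circuits anchored at the distinguished triplet $(p,q,s)$. The key tool for Cases (1)--(3) is that for any indices $i<p$, $j<q$, $k<s$, the $4$-element set $\{(p,q,s),(p,j,k),(i,q,s),(i,j,k)\}$ is a minimal linear dependence with signs $(+,+,-,-)$, as may be verified directly from the three marginal sum constraints. Since $\supp(V)$ is a basis, at most three triplets of any such quadruple lie in $\supp(V)$, and the missing one is available to enter a pivot step.

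For Case (1), take $\alpha=(p,j_1,k_1)\in R_1$ and $\beta=(i_2,q,s)\in R_{23}$, and set $\tau=(i_2,j_1,k_1)$. The triplets $\alpha$, $\beta$, $(p,q,s)$ are in $\supp(V)$, so $\tau$ is not, and the pivot direction is $+\tau+(p,q,s)-\alpha-\beta$. The leaving element must be $\alpha$ or $\beta$ (the anchor $(p,q,s)$ strictly increases and stays in the support), and since $\tau\notin S_1\cup S_2\cup S_3$, this one pivot decreases the cardinality by one. The argument is symmetric under permutations of $\{1,2,3\}$, covering all $(R_i,R_{jk})$ pairings.

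For Case (2), picking $\alpha=(p,q,k_1)\in R_{12}$, $\beta=(p,j_2,s)\in R_{13}$, $\gamma=(i_3,q,s)\in R_{23}$, and $\tau=(p,j_2,k_1)$, the same $4$-circuit construction (now entirely inside row $p$) ejects $\alpha$ or $\beta$ and introduces $\tau\in R_1$; the cardinality of $S_1\cup S_2\cup S_3$ is unchanged, but $R_1$ is now non-empty and $\gamma\in R_{23}$ remains, so Case (1) applies and a second pivot completes the reduction. Case (3) is handled by a similar preparatory pivot: the failure of Case (1), combined with $R_1,R_2,R_3$ all non-empty, forces every $R_{ij}$ to be empty, so one pivot is used to populate some $R_{ij}$, after which Case (1) or Case (2) applies.

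Case (4), where the hypothesis forces $S_1\cup S_2\cup S_3\subseteq S_1$ and $S_2=R_{12}$, $S_3=R_{13}$ (with $R_2=R_3=R_{23}$ empty), is the main technical obstacle. The plan is to iteratively pivot out the elements of $S_1$ using $4$-circuits along row $p$, ensuring that each entering triplet lies outside $S_1\cup S_2\cup S_3$. The saving of one pivot in the ``$|S_1|-1$'' count comes from the fact that once a single support element with first index $p$ (other than $(p,q,s)$) remains, the marginal constraints at row $p$ and at $(p,q,s)$ absorb it into the next stratum of Lemma~\ref{lemma:decrease-p} without an additional pivot. Combining all cases, write $m=|S_1\cup S_2\cup S_3|$: Cases (1)--(3) cost at most two pivots per unit reduction of $m$, while Case (4) costs $|S_1|-1$ pivots for a reduction of $|S_1|$. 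A careful bookkeeping argument (using that Case (4) is the eventual terminating regime, entered with cardinality $m'\ge 2$) then yields the total bound $2(m-m')+(m'-1)\le 2m-3$.
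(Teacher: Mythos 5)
Your Cases (1) and (2) are fine and essentially reproduce the paper's own technique: the anchored four-term circuit $e_{(p,q,s)}+e_\tau-e_\alpha-e_\beta$, the observation that the fourth triplet of a circuit cannot already lie in the support of a vertex, and the fact that the entering triplet either leaves $S_1\cup S_2\cup S_3$ or lands in an $R_i$ that pairs with a surviving $R_{jk}$ (your relabelled route through $R_1$ and $R_{23}$ in (2) is a harmless symmetric variant). The trouble is in Case (3), which is the heart of the lemma. You assert that ``one pivot is used to populate some $R_{ij}$,'' but no such pivot is exhibited, and the obvious candidate fails: with all $R_{ij}$ empty, the four-term circuit built from $\beta=(p,j_1,k_1)\in R_1$ and $\gamma=(i_2,q,k_2)\in R_2$ has positive entries $(p,q,k_1)$ and $(i_2,j_1,k_2)$, and when \emph{both} of these lie outside $\supp(V)$ the motion along that circuit stops in the relative interior of an edge (a point with $p+q+s-1$ support entries), not at an adjacent vertex -- so it is not a single pivot step. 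Handling exactly this degeneracy is what the paper's proof spends its effort on: it passes to the two-dimensional face with support $\supp(V)\cup\{\epsilon_1,\epsilon_2\}$, shows this face is at most an octagon, and proves that within two edge steps one reaches a vertex where one of $\beta,\gamma,\delta$ has left the support while the only possible new entries $\epsilon_1,\epsilon_2$ lie outside $S_1\cup S_2\cup S_3$. Your proposal has no substitute for this analysis, and your fallback ``Case (1) or Case (2) applies'' could cost three pivots, contradicting the two-step claim of item (3) and breaking the $2|S_1\cup S_2\cup S_3|-3$ bookkeeping.

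Case (4) is also wrong as planned. Making $S_1\cup S_2\cup S_3$ empty is impossible: if some $S_i=\emptyset$ then $(p,q,s)$ is the only support entry in the corresponding slab, forcing $x_{p,q,s}$ to equal the corresponding marginal $u_p$, $v_q$ or $w_s$, and genericity makes these pairwise distinct, so at most one $S_i$ can vanish at a vertex containing $(p,q,s)$; the usable conclusion (and what Lemma~\ref{lemma:decrease-p} needs) is that a \emph{single} $S_i$ becomes empty. Your mechanism is moreover self-contradictory: a four-term circuit ``along row $p$'' consists entirely of triplets with first index $p$, so its entering triplet lies in $S_1$, never outside $S_1\cup S_2\cup S_3$; and any four-term circuit through $(p,q,s)$ that ejects an element $(p,j,k)\in R_1$ must use a support entry of the form $(i',q,s)\in R_{23}$, which is empty in this case. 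The correct argument runs the other way: pivot on $\beta\in R_{12}=S_2$ and $\gamma\in R_{13}=S_3$ with entering triplet $(p,j_2,k_1)\in R_1$, and repeat until $R_{12}$ or $R_{13}$ -- that is, $S_2$ or $S_3$ -- is exhausted; the count $|S_1|-1$ comes from $|S_2|+|S_3|=|S_2\cup S_3|\le|S_1|$ together with stopping as soon as one of them empties, not from any ``absorption into the next stratum,'' which does not correspond to a pivot count. Since your final bound rests on these two cases, the proposal as written does not prove the lemma.
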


\begin{proof} 
Let us first show how the conclusion is obtained.  We argue by induction on $|S_1 \cup S_2 \cup S_3|$, the base case being $|S_1 \cup S_2 \cup S_3| = 2$, which is the minimum to have no $S_i$ empty. The base case implies we are in the situation of either part (1) or part (4), and a single pivot step makes an $S_i$ empty.

If $|S_1 \cup S_2 \cup S_3| > 2$ and one of the conditions (1), (2) or (3) holds, then we do the step or the two pivot steps mentioned there and apply induction. If none of these three conditions hold then it is easy to see that (4) must hold. (Remember that we are assuming that no $S_i$ is empty, and $S_i=R_i\cup R_{ij}\cup R_{ik}$).
Part (4) guarantees we have a sequence of $|S_1|-1 \le 2|S_1 \cup S_2 \cup S_3| - 3$ pivot steps that makes an $S_i$ empty.

So, let us prove each of the four items in the lemma. Let $\alpha$ denote the entry $(p,q,s)$.

\begin{enumerate} 
\item Suppose without loss of generality that $R_{12}$ and $R_3$ are not empty.
Let  $\beta\in R_{12}$ and $\gamma\in R_3$.
  The reader may find it useful to follow our proof using Figure\ref{figure:case1} which depicts the situation.
\begin{figure}[hbt]
  \begin{center}
    \includegraphics[scale=0.7]{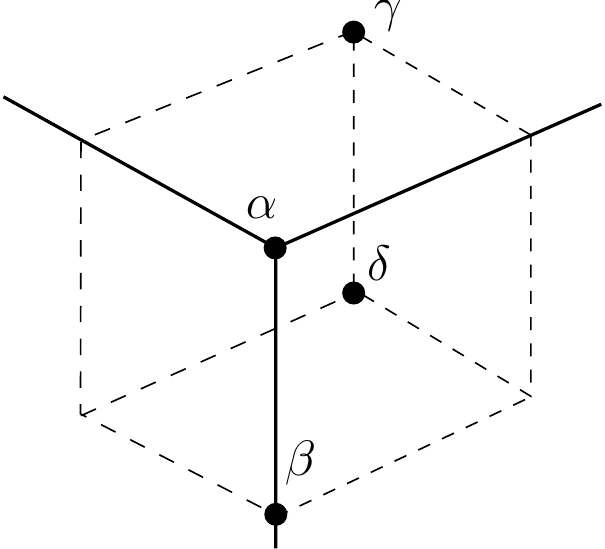}
    \caption{A layout of entries $\alpha$, $\beta$, $\gamma$ and $\delta$.} \label{figure:case1}
  \end{center}
\end{figure}
Observe
that $\alpha=(p,q,s)$ is the index-wise maximum of $\beta$ and
$\gamma$. Let $\delta$ be the index-wise minimum of them. First observe that $\delta$ is not in the support of vertex $V$. Otherwise, we could add $\pm \frac{1}{2}\min\{x_\alpha,x_\beta,x_\gamma,x_\delta\}
\left(e_\alpha+e_\delta-e_\beta-e_\gamma\right)$ to $V$ and stay in
the axial transportation polytope $P$. Hence, $V$ would be a convex combination of two other
points from the axial transportation polytope $P$ (and thus not a vertex), parallel to the
direction of $e_\alpha+e_\delta-e_\beta-e_\gamma$. (Here $e_{i,j,k}$ denotes the basis unit vector in the direction of the variable $x_{i,j,k}$.) By Lemma~\ref{lemma:vertexequivalence}, the feasible point $V$ in $P$ would not be a vertex. Therefore, the triplet $\delta$ is not in the support of the vertex $V$.

Next, consider $V' = V + \min\{x_\beta, x_\gamma\}
\left(e_\alpha+e_\delta-e_\beta-e_\gamma\right)$.  Observe that $V'$
has different support than $V$ since either $\beta$ or $\gamma$ has
been removed (not both, because $x_\beta\ne x_\gamma$ by
non-degeneracy).  Also, since $V'$ cannot have support strictly
contained in that of $V$, $\delta$ must have been added. That is, the
supports of the vertices $V$ and $V'$ differ in the deletion and
insertion of a single element, which means they are adjacent in the
graph of the polytope the axial transportation polytope $P$. As desired, when going from $V$ to $V'$
the cardinality of $S_1\cup S_2 \cup S_3$ is decreased by one.

\begin{example}[Example~\ref{example:axial333} continued]
Consider the vertex $V$ with non-zero coordinates 
$x_{(1,1,2)}=28$, 
$x_{(2,1,2)}=12$, 
$x_{(2,2,3)}=6$, 
$x_{(1,3,2)}=2$, 
$x_{(1,3,1)}=82$, 
$x_{(3,3,2)}=2$, and 
$x_{(3,3,3)}=28$.
In this example, $\alpha=(3,3,3)$, $\beta=(3,3,2)$, $\gamma=(2,2,3)$, 
and $\delta=(2,2,2)$.  After clearing $x_\beta$, we arrive at the vertex 
$V'$ with non-zero coordinates
$x'_{(1,1,2)}=28$, 
$x'_{(2,1,2)}=12$, 
$x'_{(2,2,3)}=4$, 
$x'_{(1,3,2)}=2$, 
$x'_{(1,3,1)}=82$, 
$x'_{(2,2,2)}=2$, and 
$x'_{(3,3,3)}=28$.
\end{example}

\item Suppose now that none of the $R_{ij}$'s is empty, and let $\beta\in R_{13}$, $\gamma\in R_{23}$ and $\delta' \in R_{12}$.
we apply the same pivot as in case one, which makes $\delta$, the coordinate-wise minimum of $\beta$ and $\gamma$, enter the support. This pivot does not decrease $|S_1\cup S_2 \cup S_3|$, but it leads to a situation where we have $\delta \in R_3$ and 
$\delta' \in R_{12}$. Hence, we can apply part one and decrease $|S_1\cup S_2 \cup S_3|$ with a second step.

\item[(4)] Let us prove now part (4) and leave (3), which is more complicated, for the end. Observe that if 
$S_1\cup S_2 \cup S_3 = S_1$ but $S_2$ and $S_3$ are not empty, then necessarily $R_{12}$ and $R_{13}$ are both non-empty. While this holds, we can do the same pivot steps as before with a $\beta\in R_{12}$ and a $\gamma\in R_{13}$. Each step decreases by one the cardinality of $R_{12}\cup R_{13}$, increasing the cardinality of $R_1$. The process finishes when $R_{12}$ (hence $S_2$) or $R_{13}$ (hence $S_3$) becomes empty, which happens, in the worst case, in $|S_1| -1$ steps.

\item[(3)] Finally, consider the case where the three $R_i$'s are non-empty.
 Let $\beta = (p,j_1,k_1)\in R_1$,
  $\gamma=(i_2,q,k_2)\in R_2$ and $\delta=(i_3,j_3,s)\in R_3$, and, as
  before, $\alpha=(p,q,s)$.  Figure~\ref{figure:case2} depicts the situation
  to help the reader with following our proof.
\begin{figure}[hbt]
  \begin{center}
    \includegraphics[scale=0.7]{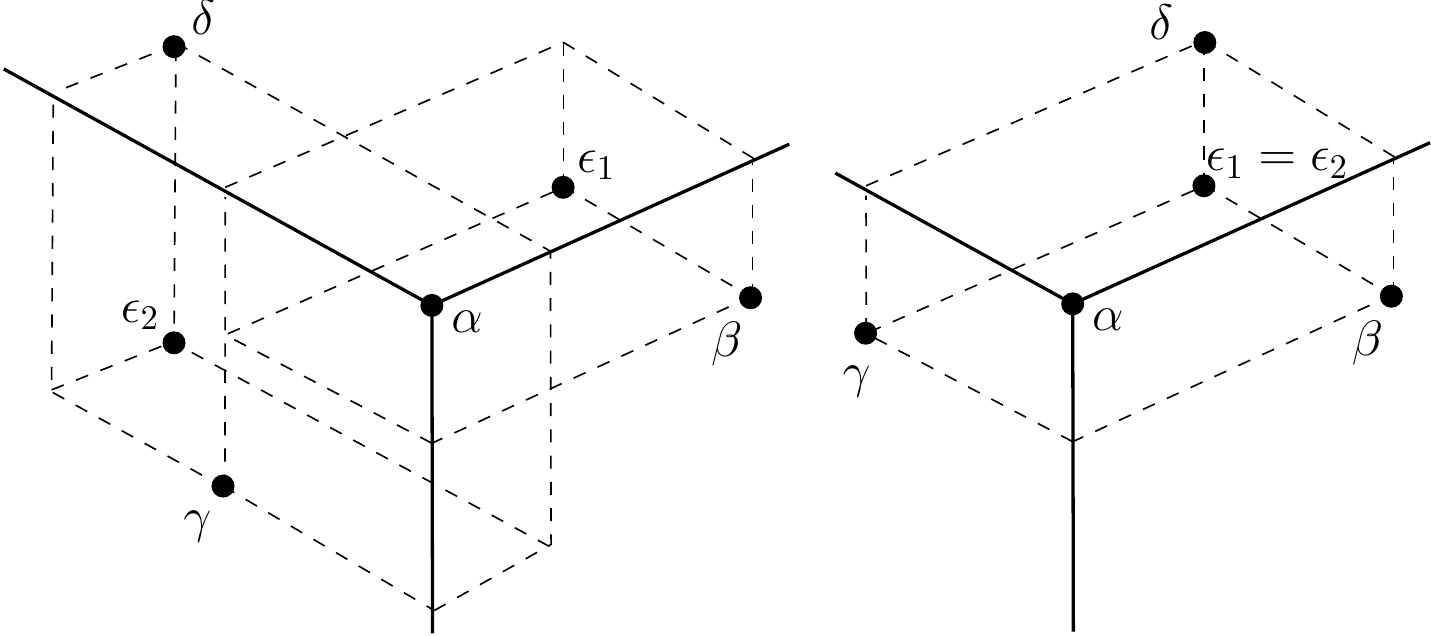}
    \caption{Possible layouts of the entries $\alpha$, $\beta$, $\gamma$, $\delta$, $\epsilon_1$ and $\epsilon_2$.}\label{figure:case2}
  \end{center}
\end{figure}
Let $\epsilon_1$ and $\epsilon_2$ be two triplets of indices with the
property that $\{\alpha,\epsilon_1,\epsilon_2\}$ and $\{\beta, \gamma,
\delta\}$ use exactly the same three first indices, the same three
second indices, and the same three third indices. For example, let us make the
choice $\epsilon_1=(i_2,j_1,k_1)$ and $\epsilon_2=(i_3,j_3,k_2)$ as in
the left part of Figure~\ref{figure:case2}. By non-degeneracy, the smallest
value among $x_\beta$, $x_\gamma$ and $x_\delta$ at $V$ is unique.  We assume, without loss of generality, that the smallest among them is
$x_\beta$. Let $W$ be the point of the axial transportation polytope $P$ obtained by changing
the following six coordinates:
\begin{align*}
x'_\alpha &= x_\alpha+ x_\beta,\\
x'_\beta &= x_\beta - x_\beta=0,\\
x'_\gamma &= x_\gamma - x_\beta,\\
x'_\delta &= x_\delta - x_\beta,\\
x'_{\epsilon_1} &= x_{\epsilon_1}+ x_\beta,\\
x'_{\epsilon_2} &= x_{\epsilon_2}+ x_\beta.
\end{align*}
It may occur that $\epsilon_1=\epsilon_2$, as shown in the right side
of Figure~\ref{figure:case2}. Then we do the same pivot except we increase
the corresponding entry $x_{\epsilon_1}= x_{\epsilon_2}$ twice as
much.

Observe that one of $\epsilon_1$ or $\epsilon_2$ may already be in the
support of $V$, but not both: Otherwise $W$ would have support
strictly contained in that of $V$, which is impossible because $V$ is
a vertex and has minimal support. If one of $\epsilon_1$ or
$\epsilon_2$ were already in the support of $V$, or if
$\epsilon_1=\epsilon_2$, then $W$ is a vertex and we take $V'=W$. As
in the first case, $V'$ is obtained from $V$ by traversing a single
edge and has one less support element in $S_1 \cup S_2 \cup S_3$ than $V$: None of $\epsilon_1$ or $\epsilon_2$
can have a common entry with $\alpha$, since none of $\beta$, $\gamma$
and $\delta$ has two common entries with $\alpha$.

However, if $\epsilon_1$ and $\epsilon_2$ are different and none of
them was in $V$, then $W$ has one-too-many elements in its support to
be a vertex, which means it is in the relative interior of an edge $E$
and $L=VW$ is not an edge.  See Figure~\ref{figure:octa}. Moreover, both $E$ and
$VW$ lie in a
two-dimensional face $F$.  
This is so because every support containing the support of a vertex defines
a face of dimension equal the excess of elements it has. In our case, $F$ is 
the face 
with support $\hbox{support}(V) \cup \hbox{support}(W) =
\hbox{support}(V) \cup \{\epsilon_1,\epsilon_2\}$.

\begin{figure}[hbt]
  \begin{center}
    \includegraphics[scale=0.7]{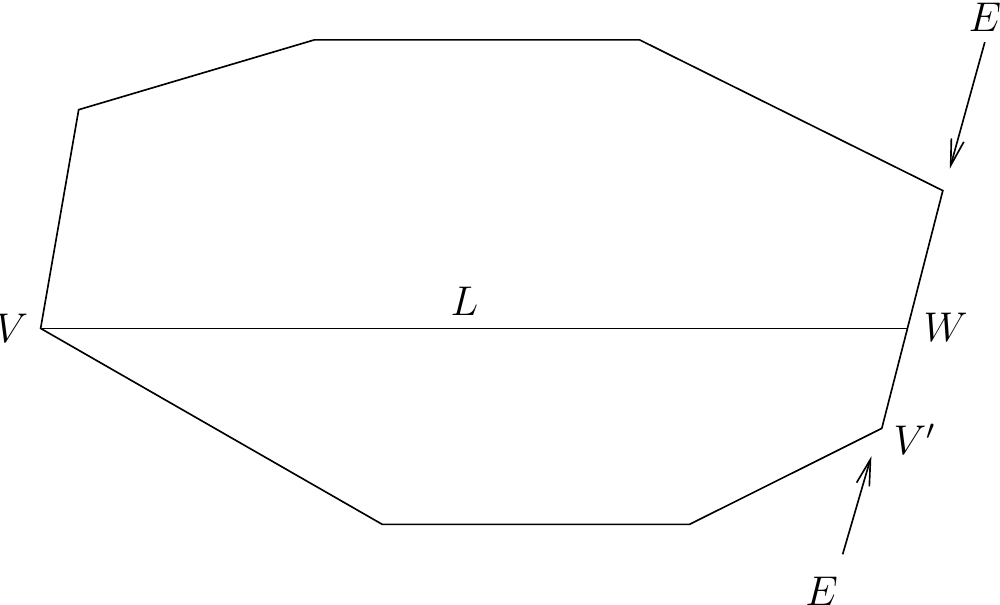}
    \caption{The octagon containing segment $VW$ arising when entries 
$\epsilon_1$ and $\epsilon_2$ are different and none of
them was in the support of $V$} \label{figure:octa}
  \end{center}
\end{figure}

We now look more closely at the structure of $F$. Each edge $H$ of $F$
is the intersection of $F$ with a facet of our transportation polytope. That is,
there is a unique variable $\eta$ that is constantly zero along $H$  but not zero as we
move on $F$ in other directions. For example, since $\epsilon_1$ and $\epsilon_2$
are zero at $V$ but not constant on $F$ (they increase along $L$),  $V$
is the common end of the edges defined by  $\epsilon_1$ and $\epsilon_2$.

Our goal is to show that there is a vertex $V'$ of $F$ at distance at most two from $V$
and incident to the edge defined by one of the variables  $\beta$, $\gamma$, and $\delta$.
At such a vertex $V'$ we will have decreased $|S_1\cup S_2\cup S_3|$ by one, as claimed.
The key remark is that there are at most two edges of $F$ not produced by one of the variables
$\alpha$, $\beta$, $\gamma$, $\delta$, $\epsilon_1$, and $\epsilon_2$:
every variable $\eta$ other than those six is constant along $L$, so it either produces an edge parallel to $L$ or no edge at all. In particular, $F$ is at most an octagon, as in Figure~\ref{figure:octa}. Now:

\begin{itemize}
\item If $F$ has  five or less edges, then every vertex of $F$ is at distance one or two from $V$.
Take as $V'$ either end of the end-point $W$ of $L$. This works because at $W$ one of $\beta$, $\gamma$
or $\delta$ is zero, by construction.

\item If $F$ has six or more edges, then the two vertices $V'$ and $V''$ of $F$ at distance two from $V$ are at distance at least two from each other. So, together they are incident to four different edges, none of which is the edge of $\epsilon_1$ or $\epsilon_2$. (Remember that the edges of $\epsilon_1$ and $\epsilon_2$ are incident to $V$).
At least one of these four edges is defined by $\beta$, $\gamma$, or $\delta$,
because there are (at most) three other possible edges: the one of $\alpha$ and two parallel to $L$.
\end{itemize}
In either case, at most two edges are needed to go from $V$ to a vertex $V'$ incident to an edge defined by one of the variables $\beta$, $\gamma$, or $\delta$.
\end{enumerate}
Thus, there is a sequence of at most $2|S_1 \cup S_2 \cup S_3| - 3$ pivot steps that makes at least one of $S_1$, $S_2$, or $S_3$ become empty.
\end{proof}

\begin{example}
To make ideas completely clear, using the same polytope the axial transportation polytope $P$ as
in Example~\ref{example:axial333}, we consider its vertex $V$ with non-zero coordinates
$v_{(1,1,3)}=25$, 
$v_{(3,1,1)}=15$, 
$v_{(3,2,1)}=6$, 
$v_{(1,3,1)}=61$, 
$v_{(1,3,2)}=26$, 
$v_{(2,3,2)}=18$ and 
$v_{(3,3,3)}=9$.
Here, $\alpha=(3,3,3)$, $\beta=(3,2,1)$, $\gamma=(2,3,2)$, $\delta=(1,1,3)$, 
$\epsilon_1=(2,2,1)$, and $\epsilon_2=(1,1,2)$.
The triplet $\beta$ is not in the support of $W$, and $W$
has non-zero coordinates
$w_{(1,1,3)}=19$, 
$w_{(1,1,2)}=6$,
$w_{(3,1,1)}=15$, 
$w_{(2,2,1)}=6$, 
$w_{(1,3,1)}=61$, 
$w_{(1,3,2)}=26$, 
$w_{(2,3,2)}=12$, and 
$w_{(3,3,3)}=15$.

The vertices of the axial transportation polytope $P$ with support contained in 
$support(V) \cup \{\epsilon_1, \epsilon_2\}$ form the $4$-gon $F=VBCD$ where
$B$ is the vertex with non-zero coordinates
\[
b_{(1,1,3)}=12, 
b_{(1,1,2)}=26, 
b_{(3,1,1)}=2, 
b_{(3,2,1)}=6, 
b_{(3,3,3)}=22, 
b_{(2,3,2)}=18,
b_{(1,3,1)}=74,
\]
$C$ is the vertex with non-zero coordinates
\[
c_{(1,1,3)}=22, 
c_{(3,1,1)}=18, 
c_{(2,2,1)}=6, 
c_{(3,3,3)}=12,
c_{(1,3,2)}=32, 
c_{(2,3,2)}=12, 
c_{(1,3,1)}=58
\]
and $D$ is the vertex with non-zero coordinates
\[
d_{(1,1,3)}=6, 
d_{(1,1,2)}=32, 
d_{(3,1,1)}=2, 
d_{(2,2,1)}=6, 
d_{(3,3,3)}=28, 
d_{(2,3,2)}=12,
d_{(1,3,1)}=74.
\]
Note $W$ is in the edge $E=CD$.  We let $V' = D$, the endpoint of $E$ 
closer to $V$.  Thus, we use one edge to go from $V$ to $V'$.
\end{example}

\begin{proof}[Proof of Theorem~\ref{theorem:main}]
Starting with any vertex ``well-ordered starting at $z=p+q+s$'' (which can be reached in a single step) we use Lemma~\ref{lemma:decrease-p} to decrease one unit by one the level at which our vertex starts to be well-ordered until we reach the unique well-ordered vertex $\hat{V}$. Thus, the number of steps needed to go from an arbitrary vertex $V$ to $\hat{V}$ is at most
\[
1 + \sum_{y=5}^z 2(y-4) =  1 + 2 \sum_{y=1}^{z-4} y
=1 + 2\binom{z-3}{2} \leq (z-3)^2.
\]
To go from one arbitrary vertex to another, twice as many steps suffice.
\end{proof}

\section{Subpolytopes of transportation polytopes}\label{section:networkflow}

In this section, we study network flow polytopes. Network flow polytopes are faces of classical transportation polytopes (see~\cite{Baldoni-Silva:Counting-integer},~\cite{Ford:A-simple-algorithm}, and~\cite{Ford-Jr.:Flows-in-Networks}). We will prove an upper bound on the diameter of two subclasses of network flow polytopes.

Let $G=(N,A)$ be a directed acyclic graph with $n$ nodes and $m$ arcs. (See Figure~\ref{figure:digraph}.) Each node $i \in N$ is assigned a number $b(i)$ which indicates its supply or demand. An arc $(i,j) \in A$ indicates a directed edge that goes from the node $i$ to the node $j$. The network flow polytopes come in many varieties. Here, we describe two specific kinds of network flow polytopes. (See~\cite{Ahuja:Network}.)
\begin{figure}[hbt]
  \begin{center}
    \includegraphics[scale=0.35]{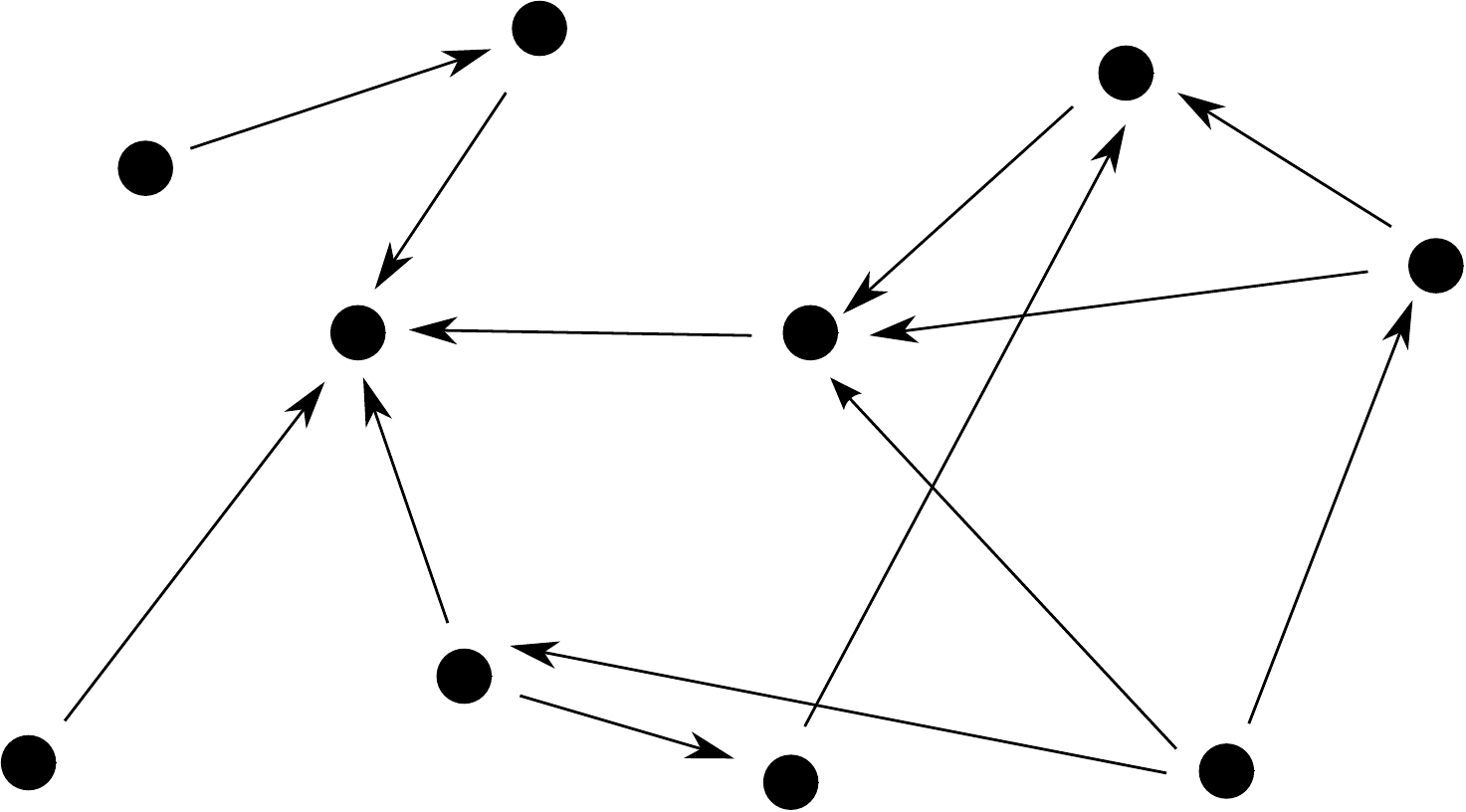}
    \caption{A directed acyclic graph} \label{figure:digraph}
  \end{center}
\end{figure}

The \defn{singly-capacitated network flow polytope} $P$ determined by $u$, $l$, $b$, and the acyclic directed graph $G$ is the polytope in the $m$ variables
\begin{equation*}
x_{i,j} \geq 0 \text{ for } (i,j) \in A
\end{equation*}
subject to the $n$ equations
\begin{equation*}
\sum_{\{j \mid (i,j) \in A\}} x_{i,j} - 
\sum_{\{j \mid (j,i) \in A\}} x_{j,i} = b(i),\ \forall\ i \in N.
\end{equation*}

\begin{remark}
We require that the directed graph $G$ is acyclic. If $G$ had a directed cycle, then we could add any positive quantity $\epsilon > 0$ to each edge in a cycle, and thus $P$ would be an unbounded polyhedron.

We can, thus, assume that the nodes are labeled from $1$ to $n$ and that directed edges $(i,j)$ from $i$ to $j$ always have the property that $i > j$.
\end{remark}

In the doubly-capacitated version, each arc $(i,j) \in A$ also has a capacity upper bound $u_{i,j}$ and a lower bound $l_{i,j}$.  The \defn{doubly-capacitated network flow polytope} $P$ determined by $G$, $u$, $l$ and $b$ is the polytope in the $m$ variables
\begin{equation*}
x_{i,j} \in [l_{i,j}, u_{i,j}]\text{ for } (i,j) \in A
\end{equation*}
subject to the $n$ equations
\begin{equation*}
\sum_{\{j \mid (i,j) \in A\}} x_{i,j} - 
\sum_{\{j \mid (j,i) \in A\}} x_{j,i} = b(i),\ \forall\ i \in N.
\end{equation*}

In~\cite{Orlin:PolytimeNetworkSimplex}, Orlin proved the following bound on the diameter of all doubly-capacitated network flow polytopes.
\begin{theorem}[Orlin~\cite{Orlin:PolytimeNetworkSimplex}]\label{theorem:orlin_bound}
The graph of every doubly-capacitated network flow polytope with $n$ nodes and $m$ arcs has diameter $O(mn \log n)$.    
\end{theorem}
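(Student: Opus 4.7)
The plan is to give a constructive proof: for any two vertices $x$ and $y$ of a doubly-capacitated network flow polytope $P$, I will exhibit an explicit sequence of at most $O(mn\log n)$ edge-pivots in $G(P)$ leading from $x$ to $y$. The first step is to recall the combinatorial characterization of vertices (see Section~11.11 of~\cite{Ahuja:Network}): every vertex of $P$ corresponds to a spanning tree $T$ of the underlying digraph $G=(N,A)$ together with a partition of the non-tree arcs $A\setminus T$ into two sets, those at their lower bound $l_{i,j}$ and those at their upper bound $u_{i,j}$. The flow on tree arcs is then determined uniquely by flow conservation. Two vertices are adjacent in $G(P)$ exactly when their spanning-tree-plus-partition data differ by one elementary pivot: either a non-tree arc enters $T$ and a tree arc leaves (swapping along the unique cycle created), or a non-basic arc flips between its lower and upper bound and a tree arc leaves to compensate.

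The second step is a cycle-decomposition observation. Both $x$ and $y$ satisfy the same conservation equations $\sum_{(i,j)\in A} x_{i,j} - \sum_{(j,i)\in A} x_{j,i} = b(i)$, so the difference $y-x$ is a feasible circulation in the residual digraph $G_x$ with capacities derived from $(l,u)$ and $x$. By standard flow-decomposition (see Theorem~3.5 in~\cite{Ahuja:Network}), this circulation can be written as a sum of at most $m$ simple cycle flows. Canceling such a cycle by augmenting the maximum possible amount along it corresponds to a single pivot operation in the network simplex method, and each such pivot saturates at least one residual arc. Naively canceling all cycles in the decomposition would yield an $O(m^2)$ bound, which is not tight enough; to reach $O(mn\log n)$ we introduce a scaling parameter.

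The third step is the scaling argument, which is the heart of Orlin's proof in~\cite{Orlin:PolytimeNetworkSimplex}. Initialize a scaling parameter $\Delta$ to the largest residual capacity present. In each scaling phase, restrict attention to pivots that augment flow by at least $\Delta$ along cycles in the current residual graph; choose the pivot rule so that within a single phase each arc is saturated at most a constant number of times, giving $O(m)$ pivots per phase. After each phase, halve $\Delta$. The crucial claim, which follows from a careful potential-function argument (see Lemma~4 and the analysis in Section~4 of~\cite{Orlin:PolytimeNetworkSimplex}), is that after $O(n\log n)$ such halvings the current vertex coincides with $y$; equivalently, the progress made in each phase can be charged against a combinatorial potential whose range is bounded by $O(n\log n)$. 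Multiplying the $O(m)$ pivots per phase by the $O(n\log n)$ phases gives the claimed $O(mn\log n)$ bound.

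The main obstacle is ensuring that the scaling argument is genuinely strongly polynomial, i.e., that the number of phases depends only on $n$ and not on the bit-length of the capacities $l$, $u$, or supplies $b$. Orlin achieves this by combining the scaling with a refined potential based on tree structure and arc contraction/deletion when an arc is forced to capacity, so that the "effective" residual network shrinks and the parameter $\Delta$ need only be halved $O(\log n)$ times per "stage." A secondary technical point is verifying that every intermediate flow constructed in the scaling procedure is an actual vertex of $P$ (not merely a feasible flow), so that each pivot is a legitimate edge traversal in $G(P)$; this is handled by maintaining a spanning-tree structure throughout and only performing pivots that preserve this structure. Since these details are carried out in full in~\cite{Orlin:PolytimeNetworkSimplex}, we omit them.
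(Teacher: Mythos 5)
The first thing to say is that the paper does not prove Theorem~\ref{theorem:orlin_bound} at all: it is stated as a quoted result of Orlin, with the reader sent to~\cite{Orlin:PolytimeNetworkSimplex}, so there is no in-paper argument to compare yours against. Your write-up is essentially at the same level of detail as the paper's treatment --- an outline of the structure of Orlin's network simplex analysis with every substantive step (the scaling phases, the potential-function argument giving $O(n\log n)$ phases of $O(m)$ pivots, the strong polynomiality, and the verification that all intermediate solutions are vertices joined by edges of $G(P)$) explicitly deferred to the reference. As a justification of a cited theorem that is acceptable; as a proof it is not self-contained, and you should present it as a citation rather than a proof.

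If you do want to flesh it out, two points in your sketch need repair. First, your second step is not correct as stated: canceling a cycle from the flow decomposition of $y-x$ by the maximal amount is in general \emph{not} a single network simplex pivot and need not move along an edge of $P$ --- a pivot augments along the fundamental cycle formed by the entering arc with the \emph{current} spanning tree, and the point reached after canceling an arbitrary cycle of the decomposition need not be a vertex. So even your ``naive $O(m^2)$'' baseline is unjustified; what the decomposition does give is that $x$ and $y$ lie on a common face described by the arcs in the support of $y-x$, not a pivot count. Second, to convert a pivot bound for an optimizing algorithm into a bound on $\operatorname{dist}(x,y)$ in $G(P)$ you should fix a linear objective uniquely optimized at the target vertex $y$ (and note that degenerate pivots do not traverse edges but also do not lengthen the path), a step you gloss over. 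Both issues are handled inside Orlin's analysis, which is precisely why the honest statement here is ``see~\cite{Orlin:PolytimeNetworkSimplex}'' rather than a proof sketch that could be mistaken for one.
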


\begin{definition}
Fix $n \in \N$ and $1 \leq k < n$ an integer. Let $G_{n,k}=(V,E)$ be the directed graph on the $n$ nodes labeled $V=\{1, \ldots, n\}$ with the directed edge set $E = \{(i,j) \in [n] \times [n] : 1 \leq j-i \leq k\}$.
\end{definition}
If the Hirsch Conjecture is true, then the diameter of the singly-capacitated network flow polytope for $G_{n,k}$ with any supply vector $b$ is no more than $|E|-(|V|-1)$. Here, we show that the diameter is no more than twice that value. We prove a new linear bound on the diameters of the polytopes given by the graphs $G_{n,k}$.
\begin{theorem}\label{theorem:gnk}
The diameter of the singly-capacitated network flow polytope $P$ of $G_{n,k}$ with any supply vector $b$ is at most $2|E|$.
\end{theorem}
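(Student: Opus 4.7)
My plan is to prove $\diam(G(P))\leq 2|E|$ by constructing, for every vertex $u$ of $P$, a path of length at most $|E|$ in $G(P)$ from $u$ to a fixed canonical vertex $u^*$; the diameter bound then follows from the triangle inequality applied to any pair of vertices of $P$.

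To construct the canonical vertex, I exploit the fact that $G_{n,k}$ is a DAG whose nodes are in topological order $1,\ldots,n$. For any supply vector $b$ making $P$ nonempty, let $u^*$ be the \emph{path vertex} defined by $x_{i,i+1}=\sum_{j\leq i}b(j)$ for $i\in\{1,\ldots,n-1\}$ and $x_e=0$ for all other edges $e$. The partial sums $\sum_{j\leq i}b(j)$ are automatically nonnegative because each cut $S=\{1,\ldots,i\}$ has only out-edges in $G_{n,k}$, so flow-conservation together with nonnegativity of flows in $P$ forces $\sum_{j\in S}b(j)\geq 0$. Hence $u^*$ is a basic feasible solution whose support is contained in the spanning path tree $T^*=\{(i,i+1):1\leq i\leq n-1\}$.

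Next, I would show that every vertex $u$ of $P$ satisfies $d_{G(P)}(u,u^*)\leq|E|$. The strategy is to process the edges of $T^*$ in the order $(1,2),(2,3),\ldots,(n-1,n)$, and for each edge of $T^*$ not already in the current tree $T$, perform a pivot that introduces it into $T$ and removes an edge outside $T^*$. The key combinatorial observation is that, at each step, the fundamental cycle created by adding the chosen edge of $T^*$ to the current tree cannot be entirely contained in $T^*$ (because $T^*$ itself is acyclic), so a non-$T^*$ edge on that cycle is always available to be removed. In the non-degenerate case this already yields $d_{G(P)}(u,u^*)\leq n-1\leq|E|$.

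The main obstacle will be handling degenerate pivots: the edge that the simplex-style feasibility rule forces to leave the basis may differ from the non-$T^*$ edge one would like to remove. In such situations, additional pivots traversing edges of $G(P)$ are needed to reroute to the desired pivot, and these can be controlled by a perturbation argument in which $b$ is slightly perturbed so that $P$ becomes non-degenerate without decreasing its diameter (in the spirit of the discussion following Definition~\ref{definition:generalizedbirkhoffmultiway}). An amortized accounting that charges at most one vertex-changing pivot to each edge of $G_{n,k}$ then yields the bound $d_{G(P)}(u,u^*)\leq|E|$, from which $\diam(G(P))\leq 2|E|$ follows by the triangle inequality.
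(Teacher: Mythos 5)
Your overall strategy coincides with the paper's: fix the canonical ``path'' vertex supported on the consecutive edges, walk an arbitrary vertex to it by installing $(1,2),(2,3),\dots$ in order (one pivot per missing consecutive edge, giving distance at most $n-1\le|E|$), and conclude by the triangle inequality; your verification that the path point is feasible (the partial sums of $b$ are nonnegative because the cuts $\{1,\dots,i\}$ have no incoming arcs) and hence a vertex is fine. The gap is in the pivoting step. You justify it by noting that the fundamental cycle created by the entering edge cannot lie entirely inside $T^*$, so ``a non-$T^*$ edge is available to be removed.'' But in a pivot you do not choose the leaving edge: pushing flow around the cycle so that the entering edge becomes positive, only the arcs traversed against the cycle direction decrease, and the ratio test forces one of those of minimum flow to leave. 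What the argument actually requires is that this forced leaving edge is never one of the already-installed consecutive edges; otherwise the invariant ``$(1,2),\dots,(i,i+1)$ remain in the support'' breaks, and neither the count of one pivot per consecutive edge nor your amortized charging is justified. Moreover this is not a degeneracy phenomenon (degenerate pivots do not move in $G(P)$ at all, so they cost nothing, and perturbing $b$ is a legitimate reduction), so your third paragraph attacks the wrong obstacle while leaving the real one unproved.

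The missing lemma is an orientation/monotonicity argument, which is how the paper argues (explicitly so in its companion result for graphs with the intermediate property). At the step installing $(i,i+1)$, the vertices $1,\dots,i$ are joined in the current tree exactly by the consecutive path, a connected subtree; hence the tree path from $i+1$ back to $i$ meets $\{1,\dots,i\}$ in a contiguous final segment $m,m+1,\dots,i$ of that path, and since every arc of $G_{n,k}$ points from a smaller to a larger index, those consecutive arcs are traversed in the same sense as the entering arc. Their flows therefore only increase during the pivot, so none of them can be the leaving edge. Note that installed consecutive edges certainly can lie on the fundamental cycle (e.g.\ when $(i-1,i)$ and $(i-1,i+1)$ are both tree edges), so the relevant fact is not the existence of a non-$T^*$ edge on the cycle but that every \emph{decreasing} edge of the cycle lies outside the installed path. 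With this lemma your induction closes: each consecutive edge is pivoted in at most once, every vertex is within $n-1\le|E|$ steps of the path vertex, and the diameter bound $2|E|$ follows exactly as you intend.
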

\begin{proof}
Let $P$ be the singly-capacitated network flow polytope corresponding to the directed graph $G_{n,k}$. We assume that the polytope $P$ is non-empty. Let $x$ be any arbitrary vertex of $P$. We will construct a sequence of pivots from the vertex $x$ to the vertex $\hat{x}$ that contains all edges $(i,j)$ of the form $j+1=i$.

Suppose the vertex $x$ does not have the edge $(i+1,i)$. Consider the minimum $i$ such that the edge $(i+1,i)$ is not in the support graph of $x$. Since the support graph of $x$ is a tree, adding the edge $(i+1,i)$ in the support graph induces a directed cycle. We pivot $x$ on that edge $(i+1,i)$ which removes some other edge. (Moreover, the edge that is removed cannot be one of $(2,1),(3,2),\ldots,(i,i-1)$ since these edges do not appear in the cycle.) The resulting vertex $x'$ has the property that the edges of the form $(2,1)$, $(3,2)$, $\ldots$, $(i,i-1)$ belong in the support graph of $x'$. Continuing in this way, we reach the vertex that only has edges of the form $(j,j-1)$.

Each edge is pivoted on at most once in going from the vertex $x$ of $P$ to the unique vertex $\hat{x}$ of $P$ whose support graph corresponds to a directed path. Thus, the length of the path on the graph of $P$ is bounded above by $|E|$. The distance between two arbitrary vertices in the polytope is at most twice that distance.
\end{proof}

More generally, we define the following subclass of directed acyclic graphs.
\begin{definition}
A directed acyclic graph on the $n$ nodes $V=\{1,\ldots,n\}$ is said to have the \defn{intermediate property} if:
\begin{itemize}
\item There is an edge from $i+1$ to $i$ for all $i =1,\ldots,n-1$,
\item The node $i$ is a sink of the $G \setminus \{1,\ldots,i-1\}$ for each $i$, and
\item If there is an edge from $j$ to $i-1$ in $G$, then there is an edge from $j$ to $i$.
\end{itemize}
See Figure~\ref{figure:networkintermediate} for an example.
\end{definition}
\begin{figure}[hbt]
  \begin{center}
    \includegraphics[scale=0.77]{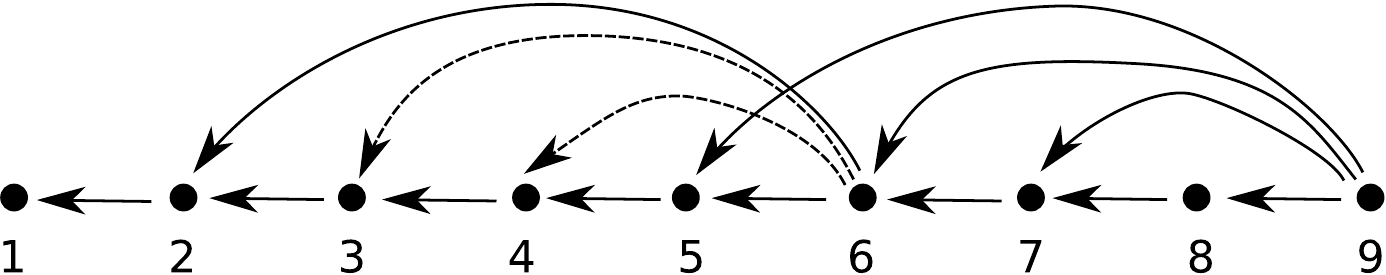}
    \caption{A directed acyclic graph that has the intermediate property: since the edge $(6,2)$ is present, the edges $(6,3)$ and  $(6,4)$, which are indicated with dotted lines, must be present.} \label{figure:networkintermediate}
  \end{center}
\end{figure}

As an extension of Theorem~\ref{theorem:gnk}, we show that:
\begin{theorem}
Let $G$ be any directed acyclic graph on $n$ nodes and $m$ edges with the intermediate property. Then, the diameter of the singly-capacitated network flow polytope $P$ on $G$ with any supply vector $b$ is no more than $2m$.
\end{theorem}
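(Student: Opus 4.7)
The plan is to mimic the proof of Theorem~\ref{theorem:gnk}: produce a canonical vertex $\hat x \in P$ whose support is the directed ``backbone'' path $n\to n-1\to \cdots \to 2\to 1$, and show that every other vertex can be joined to $\hat x$ by a path of length at most $m$ in $G(P)$. Property~1 of the intermediate property guarantees that every backbone arc $(i+1,i)$ lies in $G$, so the backbone is a spanning tree of $G$; solving the flow equations along it forces $x_{i+1,i}=\sum_{k\ge i+1}b(k)$. Property~2 (acyclicity with all arcs pointing from higher to lower index) makes the standard cut separating $\{i+1,\ldots,n\}$ from $\{1,\ldots,i\}$ into a set of only outgoing arcs, so whenever $P\neq\emptyset$ these cumulative sums are all non-negative and $\hat x\in P$; since its support is a spanning tree, $\hat x$ is a vertex of $P$.

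To reach $\hat x$ from an arbitrary vertex $x$, I would run the following algorithm: for $i=1,2,\ldots,n-1$ in order, if the current support does not contain $(i+1,i)$, perform the simplex pivot that inserts it. This uses at most $n-1\le m$ pivots, so if it really does terminate at $\hat x$ then the distance between any two vertices of $P$ is at most $2m$. Termination reduces to the following key lemma: if the current support $T$ already contains the partial backbone $B=\{(2,1),(3,2),\ldots,(i,i-1)\}$, the pivot inserting $(i+1,i)$ does not remove any edge of $B$. To prove the lemma, note that $T\cup\{(i+1,i)\}$ contains a unique undirected cycle $C$ made of $(i+1,i)$ together with the tree path $\pi$ from $i$ to $i+1$ in $T$. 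In the forest $T\setminus B$ the backbone path $1\text{-}2\text{-}\cdots\text{-}i$ is shattered into singletons, so each $j\in\{1,\ldots,i\}$ lies in its own component together with whichever non-backbone nodes are attached to it, and $i+1$ belongs to exactly one such component whose representative in $\{1,\ldots,i\}$ is some $k$. Hence $\pi$ first traverses the backbone segment from $i$ down to $k$ (the arcs $(i,i-1),(i-1,i-2),\ldots,(k+1,k)$, each crossed in the high-to-low direction matching its orientation), then follows a non-backbone sub-path from $k$ to $i+1$. Orient $C$ so that $(i+1,i)$ is traversed in its forward direction; in the standard cycle-flow analysis, arcs of $C$ whose traversal matches their orientation get sign $+1$ (flow increases under the pivot) and arcs traversed against their orientation get sign $-1$ (flow decreases and can become zero, i.e., can be the removed edge). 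Since every backbone arc in $C$ is traversed with its orientation, it has sign $+1$ and is therefore not removed, proving the lemma.

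I expect the main obstacle to be handling degenerate vertices (those whose support is only a spanning forest), where the ``pivot'' inserting $(i+1,i)$ may be degenerate and does not actually traverse an edge of $G(P)$; this is harmless for the count (it only shortens the path) but is cleanest to handle by perturbing $b$ to a generic $b'$ to obtain a non-degenerate polytope $P'$ with the same constraint matrix, applying the bound $2m$ to $P'$, and noting that $G(P)$ is obtained from $G(P')$ by contracting degenerate edges and so has no larger diameter. Property~3 of the intermediate property does not appear to be used in the pivot analysis above; it is presumably part of the hypothesis to keep the combinatorial class natural and to include $G_{n,k}$, but the bound $2m$ already follows from properties~1 and~2 alone.
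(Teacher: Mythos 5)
Your proposal is correct and follows essentially the same route as the paper: insert the backbone arcs $(i+1,i)$ in increasing order of $i$ and argue that no previously inserted backbone arc can leave the basis, so at most $n-1\le m$ pivots reach the canonical vertex $\hat{x}$ supported on the backbone, giving diameter at most $2m$. Your component-and-orientation lemma is precisely the justification that the paper compresses into the parenthetical ``by the orientation, none of the edges $(2,1),\ldots,(i+1,i)$ can be removed,'' and your perturbation remark for degenerate vertices matches the paper's standing convention, so the extra detail (including the observation that the third condition of the intermediate property is not needed for the bound) is a refinement rather than a different argument.
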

\begin{proof}
Let $x$ be any arbitrary vertex of the singly-capacitated network flow polytope $P$ corresponding to the directed graph $G$ with the intermediate property. We will construct a sequence of pivots from the vertex $x$ to the vertex $\hat{x}$ that contains all edges $(j,i)$ of the form $i+1=j$.

Suppose the vertex $x$ does not have the edge $(i+1,i)$. Consider the minimum $i$ such that the edge $(i+1,i)$ is not in the support graph of $x$. Since the support graph of $x$ is a tree, adding the edge $(i+1,i)$ in the support graph induces a directed cycle, whose existence is guaranteed by the intermediate property. We pivot $x$ on that edge $(i+1,i)$ which removes some other edge. (By the orientation, none of the edges of the form $(2,1)$, $(3,2)$, $\ldots$, $(i+1,i)$ can be removed from this pivot.) The resulting vertex $x'$ has the property that the edges of the form $(2,1)$, $(3,2)$, $\ldots$, $(i+1,i)$ belong in the support graph of $x'$. Continuing in this way, we reach the vertex that only has edges of the form $(j+1,j)$.

Each edge is pivoted on at most once in going from the vertex $x$ of $P$ to the unique vertex $\hat{x}$ of $P$ whose support graph corresponds to a directed path. Thus, the length of the path on the graph of $P$ is bounded above by $|E|$. The distance between two arbitrary vertices in the polytope is at most twice that distance.
\end{proof}


\appendix

\chapter[A Catalogue of Transportation Polytopes]{A Catalogue of Transportation Polytopes}\label{appendix:catalogTP}

\setcounter{theorem}{0} 

This appendix presents a summary of the complete catalogue of transportation polytopes of small sizes using the software {\tt{transportgen}} (see~\cite{transportgen}). Using computational tools, we give a complete catalogue of \emph{non-degenerate} $2$-way and $3$-way transportation polytopes (both axial and planar) of small sizes. This allows us to explore properties of transportation polytopes (e.g., their diameters and how close they were to the Hirsch Conjecture bound). The summary of the catalogue is stated in Theorem~\ref{theorem:genericcase}.

The catalogue was obtained via the exhaustive and systematic computer enumeration of all combinatorial types of non-degenerate transportation polytopes. The theoretical foundations of it are the notions of parametric linear programming, chamber complex, Gale diagrams and secondary polytopes (see~\cite{DeLoera:Triangulations}) are presented in Section~\ref{section:enumerationofpartition}.  A more complete catalogue of transportation polytopes is available in a database on the web (see~\cite{TPDB}).

{\singlespacing

\begin{center}
\begin{longtable}{|c|p{4.0in}|}
\caption{Possible diameters and numbers of vertices and facets for non-degenerate $2 \times 3$ classical transportation polytopes}\label{table:classical-2-3} \\
\hline \multicolumn{1}{|c|}{Property} & \multicolumn{1}{l|}{Values} \\ \hline 
\endfirsthead
\multicolumn{2}{c}%
{{\bfseries \tablename\ \thetable{} Continued}: $2 \times 3$ classical transportation polytopes} \\
\hline \multicolumn{1}{|c|}{Property} & \multicolumn{1}{l|}{Values} \\ \hline 
\endhead
\hline \multicolumn{2}{|c|}{{\bf Continued on next page}} \\ \hline
\endfoot
\endlastfoot
Diameters & 1 2 3 \\ \hline
Num. vertices & 3 4 5 6 \\ \hline
Num. facets & 3 4 5 6 \\ \hline
\end{longtable}
\end{center}

\begin{center}
\begin{longtable}{|c|p{4.0in}|}
\caption{Possible diameters and numbers of vertices and facets for non-degenerate $2 \times 4$ classical transportation polytopes}\label{table:classical-2-4} \\
\hline \multicolumn{1}{|c|}{Property} & \multicolumn{1}{l|}{Values} \\ \hline 
\endfirsthead
\multicolumn{2}{c}%
{{\bfseries \tablename\ \thetable{} Continued}: $2 \times 4$ classical transportation polytopes} \\
\hline \multicolumn{1}{|c|}{Property} & \multicolumn{1}{l|}{Values} \\ \hline 
\endhead
\hline \multicolumn{2}{|c|}{{\bf Continued on next page}} \\ \hline
\endfoot
\endlastfoot
Diameters & 1 2 3 4\\ \hline
Num. vertices & 4 6 8 10 12\\ \hline
Num. facets & 4 5 6 7 8 \\ \hline
\end{longtable}
\end{center}

\begin{center}
\begin{longtable}{|c|p{4.0in}|}
\caption{Possible diameters and numbers of vertices and facets for non-degenerate $2 \times 5$ classical transportation polytopes}\label{table:classical-2-5} \\
\hline \multicolumn{1}{|c|}{Property} & \multicolumn{1}{l|}{Values} \\ \hline 
\endfirsthead
\multicolumn{2}{c}%
{{\bfseries \tablename\ \thetable{} Continued}: $2 \times 5$ classical transportation polytopes} \\
\hline \multicolumn{1}{|c|}{Property} & \multicolumn{1}{l|}{Values} \\ \hline 
\endhead
\hline \multicolumn{2}{|c|}{{\bf Continued on next page}} \\ \hline
\endfoot
\endlastfoot
Diameters & 1 2 3 4 5\\ \hline
Num. vertices & 5 8 11 12 14 15 16 17 18 19 20 21 22 23 24 25 26 27 28 29 30 \\ \hline
Num. facets & 5 6 7 8 9 10 \\ \hline
\end{longtable}
\end{center}

\begin{center}
\begin{longtable}{|c|p{4.0in}|}
\caption{Possible diameters and numbers of vertices and facets for non-degenerate $3 \times 3$ classical transportation polytopes}\label{table:classical-3-3} \\
\hline \multicolumn{1}{|c|}{Property} & \multicolumn{1}{l|}{Values} \\ \hline 
\endfirsthead
\multicolumn{2}{c}%
{{\bfseries \tablename\ \thetable{} Continued}: $3 \times 3$ classical transportation polytopes} \\
\hline \multicolumn{1}{|c|}{Property} & \multicolumn{1}{l|}{Values} \\ \hline 
\endhead
\hline \multicolumn{2}{|c|}{{\bf Continued on next page}} \\ \hline
\endfoot
\endlastfoot
Diameters & 2 3 4\\ \hline
Num. vertices & 9 12 15 18\\ \hline
Num. facets & 6 7 8 9 \\ \hline
\end{longtable}
\end{center}

\begin{center}
\begin{longtable}{|c|p{4.0in}|}
\caption{Possible diameters and numbers of vertices and facets for non-degenerate $3 \times 4$ classical transportation polytopes}\label{table:classical-3-4} \\
\hline \multicolumn{1}{|c|}{Property} & \multicolumn{1}{l|}{Values} \\ \hline 
\endfirsthead
\multicolumn{2}{c}%
{{\bfseries \tablename\ \thetable{} Continued}: $3 \times 4$ classical transportation polytopes} \\
\hline \multicolumn{1}{|c|}{Property} & \multicolumn{1}{l|}{Values} \\ \hline 
\endhead
\hline \multicolumn{2}{|c|}{{\bf Continued on next page}} \\ \hline
\endfoot
\endlastfoot
Diameters & 2 3 4 5 6 \\ \hline
Num. vertices & 16 21 24 26 27 29 31 32 34 36 37 39 40 41 42 44 45 46 48 49 50 52 53 54 56 57 58 60 61 62 63 64 66 67 68 70 71 72 74 75 76 78 80 84 90 96\\ \hline
Num. facets & 8 9 10 11 12\\ \hline
\end{longtable}
\end{center}

\begin{center}
\begin{longtable}{|c|p{4.0in}|}
\caption{Possible diameters and numbers of vertices and facets for non-degenerate $2 \times 2 \times 2$ axial transportation polytopes}\label{table:axial-2-2-2} \\
\hline \multicolumn{1}{|c|}{Property} & \multicolumn{1}{l|}{Values} \\ \hline 
\endfirsthead
\multicolumn{2}{c}%
{{\bfseries \tablename\ \thetable{} Continued}: $2 \times 2 \times 2$ axial transportation polytopes} \\
\hline \multicolumn{1}{|c|}{Property} & \multicolumn{1}{l|}{Values} \\ \hline 
\endhead
\hline \multicolumn{2}{|c|}{{\bf Continued on next page}} \\ \hline
\endfoot
\endlastfoot
Diameters & 2 3 4\\ \hline
Num. vertices & 8 11 14\\ \hline
Num. facets & 6 7 8\\ \hline
\end{longtable}
\end{center}

\begin{center}
\begin{longtable}{|c|p{4.0in}|}
\caption{Possible diameters and numbers of vertices and facets for non-degenerate $2 \times 2 \times 3$ classical transportation polytopes}\label{table:axial-2-2-3} \\
\hline \multicolumn{1}{|c|}{Property} & \multicolumn{1}{l|}{Values} \\ \hline 
\endfirsthead
\multicolumn{2}{c}%
{{\bfseries \tablename\ \thetable{} Continued}: $2 \times 2\times 3$ axial transportation polytopes} \\
\hline \multicolumn{1}{|c|}{Property} & \multicolumn{1}{l|}{Values} \\ \hline 
\endhead
\hline \multicolumn{2}{|c|}{{\bf Continued on next page}} \\ \hline
\endfoot
\endlastfoot
Diameters & 2 3 4 5\\ \hline
Num. vertices & 18 24 30 32 36 38 40 42 44 46 48 50 52 54 56 58 60 62 64 66 68 70 72 74 76 78 80 84 86 96 108\\ \hline
Num. facets & 9 10 11 12\\ \hline
\end{longtable}
\end{center}

\begin{center}
\begin{longtable}{|c|p{4.0in}|}
\caption{Possible diameters and numbers of vertices and facets for non-degenerate $2 \times 2 \times 3$ planar transportation polytopes}\label{table:planar-2-2-3} \\
\hline \multicolumn{1}{|c|}{Property} & \multicolumn{1}{l|}{Values} \\ \hline 
\endfirsthead
\multicolumn{2}{c}%
{{\bfseries \tablename\ \thetable{} Continued}: $2 \times 2\times 3$ planar transportation polytopes} \\
\hline \multicolumn{1}{|c|}{Property} & \multicolumn{1}{l|}{Values} \\ \hline 
\endhead
\hline \multicolumn{2}{|c|}{{\bf Continued on next page}} \\ \hline
\endfoot
\endlastfoot
Diameters & 1 2 3\\ \hline
Num. vertices & 3 4 5 6\\ \hline
Num. facets & 3 4 5 6 \\ \hline
\end{longtable}
\end{center}

\begin{center}
\begin{longtable}{|c|p{4.0in}|}
\caption{Possible diameters and numbers of vertices and facets for non-degenerate $2 \times 3 \times 3$ planar transportation polytopes}\label{table:planar-2-3-3} \\
\hline \multicolumn{1}{|c|}{Property} & \multicolumn{1}{l|}{Values} \\ \hline 
\endfirsthead
\multicolumn{2}{c}%
{{\bfseries \tablename\ \thetable{} Continued}: $2 \times 3 \times 3$ planar transportation polytopes} \\
\hline \multicolumn{1}{|c|}{Property} & \multicolumn{1}{l|}{Values} \\ \hline 
\endhead
\hline \multicolumn{2}{|c|}{{\bf Continued on next page}} \\ \hline
\endfoot
\endlastfoot
Diameters & 3 4 5 6\\ \hline
Num. vertices &12 15 16 19 20 21 22 23 24 25 26 27 28 29 30 31 32 33 34
 \\ \hline
Num. facets & 7 8 9 10 11 12 \\ \hline
\end{longtable}
\end{center}
}

\chapter[A Triangulation of the Fourth Birkhoff Polytope]{A Triangulation of the Fourth Birkhoff Polytope}\label{appendix:triangB4}

\setcounter{theorem}{0} 

In Section~\ref{section:regulartriangulationsBirkhoff}, we presented a collection $\mathcal{T}$ of simplices and claimed that they were the maximal simplices of a triangulation of a certain face $F$ of the fourth Birkhoff polytope. In this appendix, we prove that the collection of simplices defined in Section~\ref{section:regulartriangulationsBirkhoff} are indeed a polyhedral subdivision. (The fact that the polyhedral subdivision $\mathcal{T}$ is a triangulation follows immediately, since all polyhedra in $\mathcal{T}$ are simplices.) To prove this, we use the following characterization of polyhedral subdivisions presented as Theorem~4.5.9 in ~\cite{DeLoera:Triangulations}.
\begin{theorem}
A non-empty set $\mathcal{T}$ of $d$-dimensional subconfigurations of a point configuration $\mathcal{A}$ in $\R^d$ is the set of maximal cells of a polyhedral subdivision of $\mathcal{A}$ if and only if it satisfies the following conditions: 
\begin{itemize}
\item For each facet $G$ of a $d$-cell $B$ in $\mathcal{T}$, either $G$ is contained in a facet of $\mathcal{A}$ or there is another $d$-cell $B$ in $\mathcal{T}$ that contains $G$ as a facet.
\item $\conv(B) \cap \conv(B') = \conv(B \cap B')$ for all $B, B'\in \mathcal{T}$.
\item For all label sets $B, B' \in \mathcal{T}$, $B \cap B'$ is a face of both $B$ and $B'$.
\end{itemize}
\end{theorem}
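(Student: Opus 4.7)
The plan is to prove the two implications separately, with the reverse direction requiring substantially more work. For the forward direction, assume that $\mathcal{T}$ is the set of maximal cells of a polyhedral subdivision $\mathcal{S}$ of $\mathcal{A}$. The second and third conditions are essentially restatements of the defining ``intersection-in-a-common-face'' property of a polyhedral complex, so they hold immediately. For the first condition, suppose $G$ is a facet of some $B \in \mathcal{T}$ that is \emph{not} contained in any facet of $\mathcal{A}$. Then $\conv(G)$ lies in the relative interior of $\conv(\mathcal{A})$, and a point $x$ in the relative interior of $\conv(G)$ has a small Euclidean neighborhood that is covered by $\mathcal{S}$ on both sides of the supporting hyperplane of $G$. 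The cells of $\mathcal{T}$ meeting that neighborhood on the opposite side of $G$ from $B$ must, by the intersection condition, meet $B$ in a face containing $G$; the only such face, by dimension, is a $d$-cell $B' \in \mathcal{T}$ having $G$ as a facet.

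For the reverse direction, I would define the full polyhedral subdivision $\mathcal{S}$ as the collection of all faces of the $d$-cells in $\mathcal{T}$ (where by face of a subconfiguration $B$ I mean the subconfiguration of points of $B$ lying on a given face of $\conv(B)$). The intersection condition for arbitrary elements $F, F' \in \mathcal{S}$ follows from the corresponding condition for the $d$-cells containing them: if $F$ is a face of $B$ and $F'$ is a face of $B'$, then $\conv(F) \cap \conv(F') = \conv(F) \cap \conv(F') \cap \conv(B) \cap \conv(B') = \conv(F) \cap \conv(F') \cap \conv(B \cap B')$, and standard polytope-face arguments show this is a common face of $F$ and $F'$. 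The non-trivial remaining task is to verify the covering property
\begin{equation*}
U := \bigcup_{B \in \mathcal{T}} \conv(B) = \conv(\mathcal{A}).
\end{equation*}

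The main obstacle will be this covering property, and I would handle it by a topological/boundary argument. Clearly $U \subseteq \conv(\mathcal{A})$ and $U$ is closed (a finite union of closed sets). Working inside $\aff(\mathcal{A})$, I would show that the relative boundary of $U$ inside $\conv(\mathcal{A})$ is empty. A boundary point $x$ of $U$ in the relative interior of $\conv(\mathcal{A})$ must lie on the relative boundary of some $\conv(B)$ with $B \in \mathcal{T}$, hence on the relative interior of some facet $G$ of $B$ (after possibly moving $x$ slightly, using the intersection conditions to control overlaps). Since $x$ is in the relative interior of $\conv(\mathcal{A})$, $G$ is not contained in any facet of $\mathcal{A}$; by the first condition there is another $B' \in \mathcal{T}$ containing $G$ as a facet, and conditions two and three force $B'$ to lie on the opposite side of $G$ from $B$. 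Thus $x$ is an interior point of $U$, contradicting $x \in \partial U$. Since $\conv(\mathcal{A})$ is connected and $U$ is a non-empty closed subset whose relative boundary lies entirely in $\partial \conv(\mathcal{A})$, we conclude $U = \conv(\mathcal{A})$.

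The delicate points will be (i) making sure ``facet of the subconfiguration $B$'' and ``facet of $\conv(B)$'' are used consistently, so the dimension count in the interior-facet argument is correct; (ii) verifying that when $B \cap B'$ is a common face of $B$ and $B'$ of dimension $d-1$, the two $d$-cells really do lie on opposite sides of the separating hyperplane (this uses that each $B \in \mathcal{T}$ is a $d$-dimensional subconfiguration, so $\conv(B)$ is full-dimensional in $\aff(\mathcal{A})$); and (iii) propagating the intersection conditions from the $d$-cells to their faces, which requires a short induction on codimension together with the standard fact that a face of a face of a polytope is a face.
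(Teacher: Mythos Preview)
The paper does not actually prove this statement: it is quoted verbatim as Theorem~4.5.9 of De~Loera, Rambau, and Santos, \emph{Triangulations: Structures for Algorithms and Applications}, and used as a black box to verify that the explicit collection of $32$ simplices in Appendix~B is a triangulation. So there is no ``paper's own proof'' to compare your proposal against.

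That said, your outline is essentially the standard argument and is on the right track. A couple of points deserve more care than you indicate. In the covering argument, a relative boundary point $x$ of $U$ in the interior of $\conv(\mathcal{A})$ need not lie in the relative interior of a \emph{facet} of any $\conv(B)$; it may sit on a lower-dimensional face. Your parenthetical ``after possibly moving $x$ slightly'' is where the real work is: you need to argue that the set of points of $\partial U \cap \operatorname{int}^*(\conv(\mathcal{A}))$ lying in the relative interior of some facet of some $B\in\mathcal{T}$ is dense in $\partial U \cap \operatorname{int}^*(\conv(\mathcal{A}))$, or else formulate the argument differently (e.g., show directly that $U$ is both open and closed in $\conv(\mathcal{A})$ relative to $\aff(\mathcal{A})$, which amounts to the same thing but makes the genericity step explicit). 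Also, your claim that $B$ and $B'$ lie on opposite sides of the common facet hyperplane uses more than full-dimensionality: you need conditions two and three to rule out $\conv(B)=\conv(B')$ or $\conv(B)\subsetneq\conv(B')$, which you should spell out. These are exactly the ``delicate points'' you flag, so you are aware of them; just be sure each one is actually discharged rather than deferred.
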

Since the points $X_a,\ldots,X_n$ are in convex position, we simply need to check the following property:
\begin{itemize}
\item For every facet $G$ of every maximal simplex $\sigma_i$ in $\mathcal{T}$, either:
\begin{itemize}
\item $G$ is contained in a facet of $F$, or
\item there is another maximal simplex $\sigma_j$ in $\mathcal{T}$  that contains $G$ as a facet, with $j \not= i$.
\end{itemize}
\end{itemize}
Below, we verify this property for each of the eight facets of the $32$ maximal simplices:

The facets of the simplex $\sigma_{1}$ are $bcdefgi$, $acdefgi$, $abdefgi$, $abcefgi$, $abcdfgi$, $abcdegi$, $abcdefi$, and $abcdefg$.
\begin{enumerate}
\item The facet $bcdefgi$ is contained in the facet of $B_4$ defined by $x_{3,3}=0$.
\item The facet $acdefgi$ is also a facet of the simplex $\sigma_{3}$.
\item The facet $abdefgi$ is also a facet of the simplex $\sigma_{2}$.
\item The facet $abcefgi$ is contained in the facet of $B_4$ defined by $x_{2,1}=0$.
\item The facet $abcdfgi$ is also a facet of the simplex $\sigma_{24}$.
\item The facet $abcdegi$ is contained in the facet of $B_4$ defined by $x_{4,2}=0$.
\item The facet $abcdefi$ is contained in the facet of $B_4$ defined by $x_{2,2}=0$.
\item The facet $abcdefg$ is contained in the facet of $B_4$ defined by $x_{1,4}=0$.
\end{enumerate}

The facets of the simplex $\sigma_{2}$ are $abdefgi$, $bdefghi$, $adefghi$, $abefghi$, $abdfghi$, $abdeghi$, $abdefhi$, and $abdefgh$.
\begin{enumerate}
\item The facet $abdefgi$ is also a facet of the simplex $\sigma_{1}$.
\item The facet $bdefghi$ is contained in the facet of $B_4$ defined by $x_{3,3}=0$.
\item The facet $adefghi$ is also a facet of the simplex $\sigma_{4}$.
\item The facet $abefghi$ is contained in the facet of $B_4$ defined by $x_{4,3}=0$.
\item The facet $abdfghi$ is also a facet of the simplex $\sigma_{28}$.
\item The facet $abdeghi$ is contained in the facet of $B_4$ defined by $x_{3,1}=0$.
\item The facet $abdefhi$ is contained in the facet of $B_4$ defined by $x_{2,2}=0$.
\item The facet $abdefgh$ is contained in the facet of $B_4$ defined by $x_{1,4}=0$.
\end{enumerate}

The facets of the simplex $\sigma_{3}$ are $acdefgi$, $cdefgik$, $adefgik$, $acefgik$, $acdfgik$, $acdegik$, $acdefik$, and $acdefgk$.
\begin{enumerate}
\item The facet $acdefgi$ is also a facet of the simplex $\sigma_{1}$.
\item The facet $cdefgik$ is contained in the facet of $B_4$ defined by $x_{3,3}=0$.
\item The facet $adefgik$ is also a facet of the simplex $\sigma_{4}$.
\item The facet $acefgik$ is contained in the facet of $B_4$ defined by $x_{2,1}=0$.
\item The facet $acdfgik$ is also a facet of the simplex $\sigma_{24}$.
\item The facet $acdegik$ is contained in the facet of $B_4$ defined by $x_{4,2}=0$.
\item The facet $acdefik$ is also a facet of the simplex $\sigma_{8}$.
\item The facet $acdefgk$ is contained in the facet of $B_4$ defined by $x_{2,3}=0$.
\end{enumerate}

The facets of the simplex $\sigma_{4}$ are $adefghi$, $adefgik$, $defghik$, $aefghik$, $adfghik$, $adeghik$, $adefhik$, and $adefghk$.
\begin{enumerate}
\item The facet $adefghi$ is also a facet of the simplex $\sigma_{2}$.
\item The facet $adefgik$ is also a facet of the simplex $\sigma_{3}$.
\item The facet $defghik$ is contained in the facet of $B_4$ defined by $x_{3,3}=0$.
\item The facet $aefghik$ is also a facet of the simplex $\sigma_{5}$.
\item The facet $adfghik$ is also a facet of the simplex $\sigma_{28}$.
\item The facet $adeghik$ is also a facet of the simplex $\sigma_{6}$.
\item The facet $adefhik$ is also a facet of the simplex $\sigma_{9}$.
\item The facet $adefghk$ is contained in the facet of $B_4$ defined by $x_{2,3}=0$.
\end{enumerate}

The facets of the simplex $\sigma_{5}$ are $aefghik$, $efghikl$, $afghikl$, $aeghikl$, $aefhikl$, $aefgikl$, $aefghkl$, and $aefghil$.
\begin{enumerate}
\item The facet $aefghik$ is also a facet of the simplex $\sigma_{4}$.
\item The facet $efghikl$ is contained in the facet of $B_4$ defined by $x_{1,2}=0$.
\item The facet $afghikl$ is also a facet of the simplex $\sigma_{7}$.
\item The facet $aeghikl$ is also a facet of the simplex $\sigma_{6}$.
\item The facet $aefhikl$ is also a facet of the simplex $\sigma_{12}$.
\item The facet $aefgikl$ is contained in the facet of $B_4$ defined by $x_{2,1}=0$.
\item The facet $aefghkl$ is contained in the facet of $B_4$ defined by $x_{2,3}=0$.
\item The facet $aefghil$ is contained in the facet of $B_4$ defined by $x_{4,3}=0$.
\end{enumerate}

The facets of the simplex $\sigma_{6}$ are $adeghik$, $aeghikl$, $deghikl$, $adghikl$, $adehikl$, $adegikl$, $adeghkl$, and $adeghil$.
\begin{enumerate}
\item The facet $adeghik$ is also a facet of the simplex $\sigma_{4}$.
\item The facet $aeghikl$ is also a facet of the simplex $\sigma_{5}$.
\item The facet $deghikl$ is also a facet of the simplex $\sigma_{13}$.
\item The facet $adghikl$ is also a facet of the simplex $\sigma_{29}$.
\item The facet $adehikl$ is also a facet of the simplex $\sigma_{14}$.
\item The facet $adegikl$ is contained in the facet of $B_4$ defined by $x_{4,2}=0$.
\item The facet $adeghkl$ is contained in the facet of $B_4$ defined by $x_{2,3}=0$.
\item The facet $adeghil$ is contained in the facet of $B_4$ defined by $x_{3,1}=0$.
\end{enumerate}

The facets of the simplex $\sigma_{7}$ are $afghikl$, $fghijkl$, $aghijkl$, $afhijkl$, $afgijkl$, $afghjkl$, $afghijl$, and $afghijk$.
\begin{enumerate}
\item The facet $afghikl$ is also a facet of the simplex $\sigma_{5}$.
\item The facet $fghijkl$ is contained in the facet of $B_4$ defined by $x_{1,2}=0$.
\item The facet $aghijkl$ is also a facet of the simplex $\sigma_{26}$.
\item The facet $afhijkl$ is also a facet of the simplex $\sigma_{22}$.
\item The facet $afgijkl$ is contained in the facet of $B_4$ defined by $x_{2,1}=0$.
\item The facet $afghjkl$ is contained in the facet of $B_4$ defined by $x_{3,2}=0$.
\item The facet $afghijl$ is contained in the facet of $B_4$ defined by $x_{4,3}=0$.
\item The facet $afghijk$ is also a facet of the simplex $\sigma_{25}$.
\end{enumerate}

The facets of the simplex $\sigma_{8}$ are $acdefik$, $cdefikm$, $adefikm$, $acefikm$, $acdfikm$, $acdeikm$, $acdefkm$, and $acdefim$.
\begin{enumerate}
\item The facet $acdefik$ is also a facet of the simplex $\sigma_{3}$.
\item The facet $cdefikm$ is contained in the facet of $B_4$ defined by $x_{3,3}=0$.
\item The facet $adefikm$ is also a facet of the simplex $\sigma_{9}$.
\item The facet $acefikm$ is contained in the facet of $B_4$ defined by $x_{3,4}=0$.
\item The facet $acdfikm$ is also a facet of the simplex $\sigma_{10}$.
\item The facet $acdeikm$ is contained in the facet of $B_4$ defined by $x_{4,2}=0$.
\item The facet $acdefkm$ is contained in the facet of $B_4$ defined by $x_{2,3}=0$.
\item The facet $acdefim$ is contained in the facet of $B_4$ defined by $x_{2,2}=0$.
\end{enumerate}

The facets of the simplex $\sigma_{9}$ are $adefhik$, $adefikm$, $defhikm$, $aefhikm$, $adfhikm$, $adehikm$, $adefhkm$, and $adefhim$.
\begin{enumerate}
\item The facet $adefhik$ is also a facet of the simplex $\sigma_{4}$.
\item The facet $adefikm$ is also a facet of the simplex $\sigma_{8}$.
\item The facet $defhikm$ is contained in the facet of $B_4$ defined by $x_{3,3}=0$.
\item The facet $aefhikm$ is also a facet of the simplex $\sigma_{12}$.
\item The facet $adfhikm$ is also a facet of the simplex $\sigma_{11}$.
\item The facet $adehikm$ is also a facet of the simplex $\sigma_{14}$.
\item The facet $adefhkm$ is contained in the facet of $B_4$ defined by $x_{2,3}=0$.
\item The facet $adefhim$ is contained in the facet of $B_4$ defined by $x_{2,2}=0$.
\end{enumerate}

The facets of the simplex $\sigma_{10}$ are $acdfikm$, $cdfijkm$, $adfijkm$, $acfijkm$, $acdijkm$, $acdfjkm$, $acdfijm$, and $acdfijk$.
\begin{enumerate}
\item The facet $acdfikm$ is also a facet of the simplex $\sigma_{8}$.
\item The facet $cdfijkm$ is contained in the facet of $B_4$ defined by $x_{3,3}=0$.
\item The facet $adfijkm$ is also a facet of the simplex $\sigma_{11}$.
\item The facet $acfijkm$ is contained in the facet of $B_4$ defined by $x_{3,4}=0$.
\item The facet $acdijkm$ is contained in the facet of $B_4$ defined by $x_{1,3}=0$.
\item The facet $acdfjkm$ is also a facet of the simplex $\sigma_{15}$.
\item The facet $acdfijm$ is contained in the facet of $B_4$ defined by $x_{2,2}=0$.
\item The facet $acdfijk$ is also a facet of the simplex $\sigma_{23}$.
\end{enumerate}

The facets of the simplex $\sigma_{11}$ are $adfhikm$, $adfijkm$, $dfhijkm$, $afhijkm$, $adhijkm$, $adfhjkm$, $adfhijm$, and $adfhijk$.
\begin{enumerate}
\item The facet $adfhikm$ is also a facet of the simplex $\sigma_{9}$.
\item The facet $adfijkm$ is also a facet of the simplex $\sigma_{10}$.
\item The facet $dfhijkm$ is contained in the facet of $B_4$ defined by $x_{3,3}=0$.
\item The facet $afhijkm$ is also a facet of the simplex $\sigma_{20}$.
\item The facet $adhijkm$ is also a facet of the simplex $\sigma_{21}$.
\item The facet $adfhjkm$ is also a facet of the simplex $\sigma_{16}$.
\item The facet $adfhijm$ is contained in the facet of $B_4$ defined by $x_{2,2}=0$.
\item The facet $adfhijk$ is also a facet of the simplex $\sigma_{27}$.
\end{enumerate}

The facets of the simplex $\sigma_{12}$ are $aefhikl$, $aefhikm$, $efhiklm$, $afhiklm$, $aehiklm$, $aefiklm$, $aefhklm$, and $aefhilm$.
\begin{enumerate}
\item The facet $aefhikl$ is also a facet of the simplex $\sigma_{5}$.
\item The facet $aefhikm$ is also a facet of the simplex $\sigma_{9}$.
\item The facet $efhiklm$ is contained in the facet of $B_4$ defined by $x_{1,2}=0$.
\item The facet $afhiklm$ is also a facet of the simplex $\sigma_{18}$.
\item The facet $aehiklm$ is also a facet of the simplex $\sigma_{14}$.
\item The facet $aefiklm$ is contained in the facet of $B_4$ defined by $x_{3,4}=0$.
\item The facet $aefhklm$ is contained in the facet of $B_4$ defined by $x_{2,3}=0$.
\item The facet $aefhilm$ is also a facet of the simplex $\sigma_{17}$.
\end{enumerate}

The facets of the simplex $\sigma_{13}$ are $deghikl$, $eghiklm$, $dghiklm$, $dehiklm$, $degiklm$, $deghklm$, $deghilm$, and $deghikm$.
\begin{enumerate}
\item The facet $deghikl$ is also a facet of the simplex $\sigma_{6}$.
\item The facet $eghiklm$ is contained in the facet of $B_4$ defined by $x_{1,2}=0$.
\item The facet $dghiklm$ is contained in the facet of $B_4$ defined by $x_{2,4}=0$.
\item The facet $dehiklm$ is also a facet of the simplex $\sigma_{14}$.
\item The facet $degiklm$ is contained in the facet of $B_4$ defined by $x_{4,2}=0$.
\item The facet $deghklm$ is contained in the facet of $B_4$ defined by $x_{2,3}=0$.
\item The facet $deghilm$ is contained in the facet of $B_4$ defined by $x_{3,1}=0$.
\item The facet $deghikm$ is contained in the facet of $B_4$ defined by $x_{3,3}=0$.
\end{enumerate}

The facets of the simplex $\sigma_{14}$ are $adehikl$, $adehikm$, $aehiklm$, $dehiklm$, $adhiklm$, $adeiklm$, $adehklm$, and $adehilm$.
\begin{enumerate}
\item The facet $adehikl$ is also a facet of the simplex $\sigma_{6}$.
\item The facet $adehikm$ is also a facet of the simplex $\sigma_{9}$.
\item The facet $aehiklm$ is also a facet of the simplex $\sigma_{12}$.
\item The facet $dehiklm$ is also a facet of the simplex $\sigma_{13}$.
\item The facet $adhiklm$ is also a facet of the simplex $\sigma_{19}$.
\item The facet $adeiklm$ is contained in the facet of $B_4$ defined by $x_{4,2}=0$.
\item The facet $adehklm$ is contained in the facet of $B_4$ defined by $x_{2,3}=0$.
\item The facet $adehilm$ is contained in the facet of $B_4$ defined by $x_{3,1}=0$.
\end{enumerate}

The facets of the simplex $\sigma_{15}$ are $acdfjkm$, $cdfjkmn$, $adfjkmn$, $acfjkmn$, $acdjkmn$, $acdfkmn$, $acdfjmn$, and $acdfjkn$.
\begin{enumerate}
\item The facet $acdfjkm$ is also a facet of the simplex $\sigma_{10}$.
\item The facet $cdfjkmn$ is contained in the facet of $B_4$ defined by $x_{4,1}=0$.
\item The facet $adfjkmn$ is also a facet of the simplex $\sigma_{16}$.
\item The facet $acfjkmn$ is contained in the facet of $B_4$ defined by $x_{3,4}=0$.
\item The facet $acdjkmn$ is contained in the facet of $B_4$ defined by $x_{1,3}=0$.
\item The facet $acdfkmn$ is contained in the facet of $B_4$ defined by $x_{2,3}=0$.
\item The facet $acdfjmn$ is contained in the facet of $B_4$ defined by $x_{2,2}=0$.
\item The facet $acdfjkn$ is contained in the facet of $B_4$ defined by $x_{3,2}=0$.
\end{enumerate}

The facets of the simplex $\sigma_{16}$ are $adfhjkm$, $adfjkmn$, $dfhjkmn$, $afhjkmn$, $adhjkmn$, $adfhkmn$, $adfhjmn$, and $adfhjkn$.
\begin{enumerate}
\item The facet $adfhjkm$ is also a facet of the simplex $\sigma_{11}$.
\item The facet $adfjkmn$ is also a facet of the simplex $\sigma_{15}$.
\item The facet $dfhjkmn$ is contained in the facet of $B_4$ defined by $x_{4,1}=0$.
\item The facet $afhjkmn$ is also a facet of the simplex $\sigma_{20}$.
\item The facet $adhjkmn$ is also a facet of the simplex $\sigma_{21}$.
\item The facet $adfhkmn$ is contained in the facet of $B_4$ defined by $x_{2,3}=0$.
\item The facet $adfhjmn$ is contained in the facet of $B_4$ defined by $x_{2,2}=0$.
\item The facet $adfhjkn$ is contained in the facet of $B_4$ defined by $x_{3,2}=0$.
\end{enumerate}

The facets of the simplex $\sigma_{17}$ are $aefhilm$, $efhilmn$, $afhilmn$, $aehilmn$, $aefilmn$, $aefhlmn$, $aefhimn$, and $aefhiln$.
\begin{enumerate}
\item The facet $aefhilm$ is also a facet of the simplex $\sigma_{12}$.
\item The facet $efhilmn$ is contained in the facet of $B_4$ defined by $x_{1,2}=0$.
\item The facet $afhilmn$ is also a facet of the simplex $\sigma_{18}$.
\item The facet $aehilmn$ is contained in the facet of $B_4$ defined by $x_{3,1}=0$.
\item The facet $aefilmn$ is contained in the facet of $B_4$ defined by $x_{3,4}=0$.
\item The facet $aefhlmn$ is contained in the facet of $B_4$ defined by $x_{2,3}=0$.
\item The facet $aefhimn$ is contained in the facet of $B_4$ defined by $x_{2,2}=0$.
\item The facet $aefhiln$ is contained in the facet of $B_4$ defined by $x_{4,3}=0$.
\end{enumerate}

The facets of the simplex $\sigma_{18}$ are $afhiklm$, $afhilmn$, $fhiklmn$, $ahiklmn$, $afiklmn$, $afhklmn$, $afhikmn$, and $afhikln$.
\begin{enumerate}
\item The facet $afhiklm$ is also a facet of the simplex $\sigma_{12}$.
\item The facet $afhilmn$ is also a facet of the simplex $\sigma_{17}$.
\item The facet $fhiklmn$ is contained in the facet of $B_4$ defined by $x_{1,2}=0$.
\item The facet $ahiklmn$ is also a facet of the simplex $\sigma_{19}$.
\item The facet $afiklmn$ is contained in the facet of $B_4$ defined by $x_{3,4}=0$.
\item The facet $afhklmn$ is contained in the facet of $B_4$ defined by $x_{2,3}=0$.
\item The facet $afhikmn$ is also a facet of the simplex $\sigma_{20}$.
\item The facet $afhikln$ is also a facet of the simplex $\sigma_{22}$.
\end{enumerate}

The facets of the simplex $\sigma_{19}$ are $adhiklm$, $ahiklmn$, $dhiklmn$, $adiklmn$, $adhklmn$, $adhilmn$, $adhikmn$, and $adhikln$.
\begin{enumerate}
\item The facet $adhiklm$ is also a facet of the simplex $\sigma_{14}$.
\item The facet $ahiklmn$ is also a facet of the simplex $\sigma_{18}$.
\item The facet $dhiklmn$ is contained in the facet of $B_4$ defined by $x_{2,4}=0$.
\item The facet $adiklmn$ is contained in the facet of $B_4$ defined by $x_{1,3}=0$.
\item The facet $adhklmn$ is contained in the facet of $B_4$ defined by $x_{2,3}=0$.
\item The facet $adhilmn$ is contained in the facet of $B_4$ defined by $x_{3,1}=0$.
\item The facet $adhikmn$ is also a facet of the simplex $\sigma_{21}$.
\item The facet $adhikln$ is also a facet of the simplex $\sigma_{31}$.
\end{enumerate}

The facets of the simplex $\sigma_{20}$ are $afhijkm$, $afhjkmn$, $afhikmn$, $fhijkmn$, $ahijkmn$, $afijkmn$, $afhijmn$, and $afhijkn$.
\begin{enumerate}
\item The facet $afhijkm$ is also a facet of the simplex $\sigma_{11}$.
\item The facet $afhjkmn$ is also a facet of the simplex $\sigma_{16}$.
\item The facet $afhikmn$ is also a facet of the simplex $\sigma_{18}$.
\item The facet $fhijkmn$ is contained in the facet of $B_4$ defined by $x_{1,2}=0$.
\item The facet $ahijkmn$ is also a facet of the simplex $\sigma_{21}$.
\item The facet $afijkmn$ is contained in the facet of $B_4$ defined by $x_{3,4}=0$.
\item The facet $afhijmn$ is contained in the facet of $B_4$ defined by $x_{2,2}=0$.
\item The facet $afhijkn$ is also a facet of the simplex $\sigma_{22}$.
\end{enumerate}

The facets of the simplex $\sigma_{21}$ are $adhijkm$, $adhjkmn$, $adhikmn$, $ahijkmn$, $dhijkmn$, $adijkmn$, $adhijmn$, and $adhijkn$.
\begin{enumerate}
\item The facet $adhijkm$ is also a facet of the simplex $\sigma_{11}$.
\item The facet $adhjkmn$ is also a facet of the simplex $\sigma_{16}$.
\item The facet $adhikmn$ is also a facet of the simplex $\sigma_{19}$.
\item The facet $ahijkmn$ is also a facet of the simplex $\sigma_{20}$.
\item The facet $dhijkmn$ is contained in the facet of $B_4$ defined by $x_{2,4}=0$.
\item The facet $adijkmn$ is contained in the facet of $B_4$ defined by $x_{1,3}=0$.
\item The facet $adhijmn$ is contained in the facet of $B_4$ defined by $x_{2,2}=0$.
\item The facet $adhijkn$ is also a facet of the simplex $\sigma_{32}$.
\end{enumerate}

The facets of the simplex $\sigma_{22}$ are $afhijkl$, $afhikln$, $afhijkn$, $fhijkln$, $ahijkln$, $afijkln$, $afhjkln$, and $afhijln$.
\begin{enumerate}
\item The facet $afhijkl$ is also a facet of the simplex $\sigma_{7}$.
\item The facet $afhikln$ is also a facet of the simplex $\sigma_{18}$.
\item The facet $afhijkn$ is also a facet of the simplex $\sigma_{20}$.
\item The facet $fhijkln$ is contained in the facet of $B_4$ defined by $x_{1,2}=0$.
\item The facet $ahijkln$ is also a facet of the simplex $\sigma_{30}$.
\item The facet $afijkln$ is contained in the facet of $B_4$ defined by $x_{3,4}=0$.
\item The facet $afhjkln$ is contained in the facet of $B_4$ defined by $x_{3,2}=0$.
\item The facet $afhijln$ is contained in the facet of $B_4$ defined by $x_{4,3}=0$.
\end{enumerate}

The facets of the simplex $\sigma_{23}$ are $acdfijk$, $bcdfijk$, $abdfijk$, $abcfijk$, $abcdijk$, $abcdfjk$, $abcdfik$, and $abcdfij$.
\begin{enumerate}
\item The facet $acdfijk$ is also a facet of the simplex $\sigma_{10}$.
\item The facet $bcdfijk$ is contained in the facet of $B_4$ defined by $x_{3,3}=0$.
\item The facet $abdfijk$ is also a facet of the simplex $\sigma_{27}$.
\item The facet $abcfijk$ is contained in the facet of $B_4$ defined by $x_{2,1}=0$.
\item The facet $abcdijk$ is contained in the facet of $B_4$ defined by $x_{1,3}=0$.
\item The facet $abcdfjk$ is contained in the facet of $B_4$ defined by $x_{3,2}=0$.
\item The facet $abcdfik$ is also a facet of the simplex $\sigma_{24}$.
\item The facet $abcdfij$ is contained in the facet of $B_4$ defined by $x_{2,2}=0$.
\end{enumerate}

The facets of the simplex $\sigma_{24}$ are $abcdfgi$, $acdfgik$, $abcdfik$, $bcdfgik$, $abdfgik$, $abcfgik$, $abcdgik$, and $abcdfgk$.
\begin{enumerate}
\item The facet $abcdfgi$ is also a facet of the simplex $\sigma_{1}$.
\item The facet $acdfgik$ is also a facet of the simplex $\sigma_{3}$.
\item The facet $abcdfik$ is also a facet of the simplex $\sigma_{23}$.
\item The facet $bcdfgik$ is contained in the facet of $B_4$ defined by $x_{3,3}=0$.
\item The facet $abdfgik$ is also a facet of the simplex $\sigma_{28}$.
\item The facet $abcfgik$ is contained in the facet of $B_4$ defined by $x_{2,1}=0$.
\item The facet $abcdgik$ is contained in the facet of $B_4$ defined by $x_{4,2}=0$.
\item The facet $abcdfgk$ is contained in the facet of $B_4$ defined by $x_{3,2}=0$.
\end{enumerate}

The facets of the simplex $\sigma_{25}$ are $afghijk$, $bfghijk$, $abghijk$, $abfhijk$, $abfgijk$, $abfghjk$, $abfghik$, and $abfghij$.
\begin{enumerate}
\item The facet $afghijk$ is also a facet of the simplex $\sigma_{7}$.
\item The facet $bfghijk$ is contained in the facet of $B_4$ defined by $x_{3,3}=0$.
\item The facet $abghijk$ is also a facet of the simplex $\sigma_{26}$.
\item The facet $abfhijk$ is also a facet of the simplex $\sigma_{27}$.
\item The facet $abfgijk$ is contained in the facet of $B_4$ defined by $x_{2,1}=0$.
\item The facet $abfghjk$ is contained in the facet of $B_4$ defined by $x_{3,2}=0$.
\item The facet $abfghik$ is also a facet of the simplex $\sigma_{28}$.
\item The facet $abfghij$ is contained in the facet of $B_4$ defined by $x_{4,3}=0$.
\end{enumerate}

The facets of the simplex $\sigma_{26}$ are $aghijkl$, $abghijk$, $bghijkl$, $abhijkl$, $abgijkl$, $abghjkl$, $abghikl$, and $abghijl$.
\begin{enumerate}
\item The facet $aghijkl$ is also a facet of the simplex $\sigma_{7}$.
\item The facet $abghijk$ is also a facet of the simplex $\sigma_{25}$.
\item The facet $bghijkl$ is contained in the facet of $B_4$ defined by $x_{2,4}=0$.
\item The facet $abhijkl$ is also a facet of the simplex $\sigma_{30}$.
\item The facet $abgijkl$ is contained in the facet of $B_4$ defined by $x_{2,1}=0$.
\item The facet $abghjkl$ is contained in the facet of $B_4$ defined by $x_{3,2}=0$.
\item The facet $abghikl$ is also a facet of the simplex $\sigma_{29}$.
\item The facet $abghijl$ is contained in the facet of $B_4$ defined by $x_{4,3}=0$.
\end{enumerate}

The facets of the simplex $\sigma_{27}$ are $adfhijk$, $abdfijk$, $abfhijk$, $bdfhijk$, $abdhijk$, $abdfhjk$, $abdfhik$, and $abdfhij$.
\begin{enumerate}
\item The facet $adfhijk$ is also a facet of the simplex $\sigma_{11}$.
\item The facet $abdfijk$ is also a facet of the simplex $\sigma_{23}$.
\item The facet $abfhijk$ is also a facet of the simplex $\sigma_{25}$.
\item The facet $bdfhijk$ is contained in the facet of $B_4$ defined by $x_{3,3}=0$.
\item The facet $abdhijk$ is also a facet of the simplex $\sigma_{32}$.
\item The facet $abdfhjk$ is contained in the facet of $B_4$ defined by $x_{3,2}=0$.
\item The facet $abdfhik$ is also a facet of the simplex $\sigma_{28}$.
\item The facet $abdfhij$ is contained in the facet of $B_4$ defined by $x_{2,2}=0$.
\end{enumerate}

The facets of the simplex $\sigma_{28}$ are $abdfghi$, $adfghik$, $abdfgik$, $abfghik$, $abdfhik$, $bdfghik$, $abdghik$, and $abdfghk$.
\begin{enumerate}
\item The facet $abdfghi$ is also a facet of the simplex $\sigma_{2}$.
\item The facet $adfghik$ is also a facet of the simplex $\sigma_{4}$.
\item The facet $abdfgik$ is also a facet of the simplex $\sigma_{24}$.
\item The facet $abfghik$ is also a facet of the simplex $\sigma_{25}$.
\item The facet $abdfhik$ is also a facet of the simplex $\sigma_{27}$.
\item The facet $bdfghik$ is contained in the facet of $B_4$ defined by $x_{3,3}=0$.
\item The facet $abdghik$ is also a facet of the simplex $\sigma_{29}$.
\item The facet $abdfghk$ is contained in the facet of $B_4$ defined by $x_{3,2}=0$.
\end{enumerate}

The facets of the simplex $\sigma_{29}$ are $adghikl$, $abghikl$, $abdghik$, $bdghikl$, $abdhikl$, $abdgikl$, $abdghkl$, and $abdghil$.
\begin{enumerate}
\item The facet $adghikl$ is also a facet of the simplex $\sigma_{6}$.
\item The facet $abghikl$ is also a facet of the simplex $\sigma_{26}$.
\item The facet $abdghik$ is also a facet of the simplex $\sigma_{28}$.
\item The facet $bdghikl$ is contained in the facet of $B_4$ defined by $x_{2,4}=0$.
\item The facet $abdhikl$ is also a facet of the simplex $\sigma_{31}$.
\item The facet $abdgikl$ is contained in the facet of $B_4$ defined by $x_{4,2}=0$.
\item The facet $abdghkl$ is contained in the facet of $B_4$ defined by $x_{3,2}=0$.
\item The facet $abdghil$ is contained in the facet of $B_4$ defined by $x_{3,1}=0$.
\end{enumerate}

The facets of the simplex $\sigma_{30}$ are $ahijkln$, $abhijkl$, $bhijkln$, $abijkln$, $abhjkln$, $abhikln$, $abhijln$, and $abhijkn$.
\begin{enumerate}
\item The facet $ahijkln$ is also a facet of the simplex $\sigma_{22}$.
\item The facet $abhijkl$ is also a facet of the simplex $\sigma_{26}$.
\item The facet $bhijkln$ is contained in the facet of $B_4$ defined by $x_{2,4}=0$.
\item The facet $abijkln$ is contained in the facet of $B_4$ defined by $x_{1,3}=0$.
\item The facet $abhjkln$ is contained in the facet of $B_4$ defined by $x_{3,2}=0$.
\item The facet $abhikln$ is also a facet of the simplex $\sigma_{31}$.
\item The facet $abhijln$ is contained in the facet of $B_4$ defined by $x_{4,3}=0$.
\item The facet $abhijkn$ is also a facet of the simplex $\sigma_{32}$.
\end{enumerate}

The facets of the simplex $\sigma_{31}$ are $adhikln$, $abdhikl$, $abhikln$, $bdhikln$, $abdikln$, $abdhkln$, $abdhiln$, and $abdhikn$.
\begin{enumerate}
\item The facet $adhikln$ is also a facet of the simplex $\sigma_{19}$.
\item The facet $abdhikl$ is also a facet of the simplex $\sigma_{29}$.
\item The facet $abhikln$ is also a facet of the simplex $\sigma_{30}$.
\item The facet $bdhikln$ is contained in the facet of $B_4$ defined by $x_{2,4}=0$.
\item The facet $abdikln$ is contained in the facet of $B_4$ defined by $x_{1,3}=0$.
\item The facet $abdhkln$ is contained in the facet of $B_4$ defined by $x_{3,2}=0$.
\item The facet $abdhiln$ is contained in the facet of $B_4$ defined by $x_{3,1}=0$.
\item The facet $abdhikn$ is also a facet of the simplex $\sigma_{32}$.
\end{enumerate}

The facets of the simplex $\sigma_{32}$ are $adhijkn$, $abdhijk$, $abhijkn$, $abdhikn$, $bdhijkn$, $abdijkn$, $abdhjkn$, and $abdhijn$.
\begin{enumerate}
\item The facet $adhijkn$ is also a facet of the simplex $\sigma_{21}$.
\item The facet $abdhijk$ is also a facet of the simplex $\sigma_{27}$.
\item The facet $abhijkn$ is also a facet of the simplex $\sigma_{30}$.
\item The facet $abdhikn$ is also a facet of the simplex $\sigma_{31}$.
\item The facet $bdhijkn$ is contained in the facet of $B_4$ defined by $x_{2,4}=0$.
\item The facet $abdijkn$ is contained in the facet of $B_4$ defined by $x_{1,3}=0$.
\item The facet $abdhjkn$ is contained in the facet of $B_4$ defined by $x_{3,2}=0$.
\item The facet $abdhijn$ is contained in the facet of $B_4$ defined by $x_{2,2}=0$.
\end{enumerate}

\chapter[A Counter-example to the Hirsch Conjecture]{A Counter-example to the Hirsch Conjecture}\label{appendix:CounterHirsch}

\setcounter{theorem}{0} 

Right before this dissertation was ready for submission, on the 10th of May 2010, Santos (see~\cite{Santos:CounterHirsch}) announced his construction of the first counter-example to the Hirsch Conjecture (see Conjecture~\ref{conjecture:hirsch}). In~\cite{Santos:CounterHirsch}, Santos describes the construction of a $43$-dimensional polytope $P$ with $86$ facets whose diameter is strictly greater than $43$. Santos' construction in~\cite{Santos:CounterHirsch} begins with a prismatoid.
\begin{definition}
A \defn{prismatoid} is a polytope $Q$ with two distinguished facets $F_1$ and $F_2$ (called the \defn{base facets}) so that every vertex of $Q$ is contained in exactly one of $F_1$ or $F_2$.
\end{definition}
Cubes and cross-polytopes are examples of prismatoids, and any pair of disjoint facets can be the base facets. The distance between the base facets in the dual graph $G^\Delta(Q)$ of a prismatoid $Q$ is the main combinatorial property Santos analyzes in~\cite{Santos:CounterHirsch}. More precisely,
\begin{definition}
Let $Q$ be a $d$-dimensional prismatoid with base facets $F_1$ and $F_2$, and let $G^\Delta(Q)$ be the polar graph of $Q$. We say that the prismatoid $Q$ \defn{satisfies the $d$-step property} (with respect to its base facets $F_1$ and $F_2$) if the distance from $F_1$ to $F_2$ in $G^\Delta(Q)$ is at most $d$. Otherwise, we say that $Q$ \defn{does not satisfy the $d$-step property}.
\end{definition}
In~\cite{Santos:CounterHirsch}, Santos constructs a $5$-dimensional prismatoid $Q$ with $48$ vertices that does not satisfy the $d$-step property. The distance between the base facets $F_1$ and $F_2$ in the dual graph $G^\Delta(Q)$ of $Q$ is strictly greater than five:
\begin{theorem}[Santos~\cite{Santos:CounterHirsch}]\label{theorem:SantosPrismatoidProperties}
There is a $5$-dimensional prismatoid $Q$ with $48$ vertices that does not satisfy the $d$-step property. The distance between the base facets $F_1$ and $F_2$ of $Q$ in the dual graph $G^\Delta(Q)$ of $Q$ is exactly six.
\end{theorem}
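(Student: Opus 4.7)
The plan is to construct the prismatoid $Q$ explicitly by specifying its $48$ vertices (divided between two parallel hyperplanes in $\R^5$) and then to show separately (i) that six suffices and (ii) that five does not suffice for the distance in $G^\Delta(Q)$ between the two base facets. The key reformulation is the following. Suppose $F_1$ lies in the hyperplane $H_1 = \{x_5 = 1\}$ and $F_2$ lies in $H_2 = \{x_5 = -1\}$. Since by definition of a prismatoid every vertex of $Q$ lies on $F_1 \cup F_2$, every facet of $Q$ other than $F_1$ and $F_2$ is ``transverse,'' meaning it meets both $H_1$ and $H_2$. Slicing $Q$ by a one-parameter family of parallel hyperplanes $\{x_5 = t\}$ for $t$ running from $1$ to $-1$ produces a family of $4$-polytopes whose combinatorial type is constant except when the sweeping hyperplane passes through a vertex. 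Dual-graph paths from $F_1$ to $F_2$ correspond to sequences of local modifications (bistellar-like flips) in this \emph{slice complex}, so the distance from $F_1$ to $F_2$ in $G^\Delta(Q)$ is at least the minimum number of such flips needed to transform the combinatorial type of $F_1$ into that of $F_2$.

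For the construction itself, I would place $24$ points in $H_1$ in a symmetric configuration forming a $4$-polytope $F_1$, and similarly $24$ points in $H_2$ forming $F_2$, with the two $4$-dimensional configurations chosen to be ``combinatorially transverse'': the normal fans of $F_1$ and $F_2$ in $\R^4$ should be rotated with respect to each other so that no small sequence of flips carries one to the other. To keep the analysis tractable I would impose a substantial symmetry group (a cyclic or dihedral action permuting the vertices), so that paths in $G^\Delta(Q)$ can be symmetrized and only essentially distinct short paths need to be checked. The upper bound $\mathrm{dist}(F_1, F_2) \leq 6$ would then be proved by exhibiting one concrete path of six facets in $G^\Delta(Q)$, which is a finite verification (feasible by hand or by software such as {\tt polymake}, as for the analysis of $Q_4$ in Section~\ref{section:Q4}).

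The main obstacle is the lower bound $\mathrm{dist}(F_1, F_2) \geq 6$. The approach would be to use the slice-complex reformulation together with the imposed symmetry to show that any path of length at most five from $F_1$ to $F_2$ gives rise to a ``short transition'' in the $4$-dimensional slice complex, and then to prove that no such short transition exists because the normal cones of certain specific facets of $F_1$ and $F_2$ are separated in the $3$-sphere $S^3 \subset \R^4$ by an ``obstruction'' that requires at least six sweeping flips to cross. One promising route is a parity or winding-number argument: attach to each facet of $Q$ (that is, to each intermediate combinatorial type in the slice complex) an integer-valued invariant that changes by at most one per flip and differs by six between $F_1$ and $F_2$. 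This is also the most delicate part of the argument, since the obstruction must be strong enough to resist every possible path; unlike the Klee--Walkup polytope $Q_4$ (Theorem~\ref{theorem:klee-walkup}), whose Hirsch-sharpness can be certified by a finite figure like the one in Figure~\ref{figure:polarK}, the size of $G^\Delta(Q)$ here is too large for a direct diagram, so a structural lower-bound argument using the prismatoid geometry is essential.
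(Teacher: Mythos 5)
Your proposal is a plan rather than a proof, and the decisive gap is exactly where you locate the difficulty: the lower bound $\operatorname{dist}_{G^\Delta(Q)}(F_1,F_2)\geq 6$. You defer it to ``a parity or winding-number argument'' attached to the slice complex, but no such invariant is defined, and the slice-complex reformulation itself is not justified: two transverse facets are adjacent in $G^\Delta(Q)$ when they share a ridge of $Q$, which is a statement about a common $3$-face, not about neighbouring combinatorial types in a monotone sweep of the hyperplanes $\{x_5=t\}$. A dual-graph path from $F_1$ to $F_2$ is free to jump between facets whose ``slice footprints'' overlap in complicated ways, so the claim that its length is bounded below by a number of sweep flips does not follow. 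The reduction that actually works (and is the heart of Santos's argument in~\cite{Santos:CounterHirsch}) is different: one passes to the normal fans of the two bases, viewed as two geodesic cell decompositions of the $3$-sphere, and shows that the distance from $F_1$ to $F_2$ equals $2$ plus a distance measured in the superposition of these two maps; the width question then becomes a finite combinatorial question about that superposition. Without this (or some other) reduction lemma, without explicit coordinates for the $48$ vertices, and without the verification that the resulting polytope is in fact a prismatoid with the claimed facial structure, neither the upper bound path nor the lower bound can be checked, and it is doubtful that an integer invariant that is $1$-Lipschitz along dual-graph edges and separates $F_1$ from $F_2$ by six exists in the simple form you hope for --- if it did, it would itself be a substantial structural theorem.

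For comparison, this dissertation does not give a structural proof either: Theorem~\ref{theorem:SantosPrismatoidProperties} is quoted from~\cite{Santos:CounterHirsch}, and the text points to explicit data files (see~\cite{Kim:SantosHirsch}) with which the facet lattice and the dual-graph distance of the concrete $48$-vertex prismatoid can be verified by software, much as you suggest doing for the upper bound. In Santos's own treatment the construction is highly symmetric (which you correctly anticipate), the upper bound is an exhibited path, and the lower bound for this particular $Q$ rests on the normal-fan/geodesic-map reduction combined with a finite, symmetry-reduced check rather than on a monotone invariant. So if you want to complete your route, the realistic options are to import that reduction and then do the finite verification, or to verify the dual graph of the explicit polytope computationally as the paper indicates; the sweep heuristic and the hoped-for winding-number argument will not close the gap as stated.
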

The data files needed to verify Theorem~\ref{theorem:SantosPrismatoidProperties} are available on the web (see~\cite{Kim:SantosHirsch}). The counterexample to the Hirsch Conjecture is obtained from the $5$-dimensional Santos prismatoid $Q$ by repeatedly applying a polytopal construction:
\begin{theorem}[Santos~\cite{Santos:CounterHirsch}]\label{theorem:reduceasimpliciality}
Let $n > 2d$. If there is a $d$-dimensional prismatoid $Q$ with $n$ vertices that does not satisfy the $d$-step property (with respect to its base facets), then there is a $(d+1)$-dimensional prismatoid $\widetilde{Q}$ with $n+1$ vertices that does not satisfy the $d$-step property (with respect to its base facets).
\end{theorem}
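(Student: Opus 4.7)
The plan is to construct $\widetilde Q$ as (a suitable modification of) the one-point suspension $\ops_w(Q)$ at a carefully chosen vertex $w$ of $Q$. Recall that $\ops_w(Q)$---defined by embedding $Q$ as $Q \times \{0\} \subset \R^{d+1}$ and replacing $w$ by the two new vertices $w^+ = (w, 1)$ and $w^- = (w, -1)$---has dimension $d+1$ and $n+1$ vertices, which is exactly what the statement demands. Since every vertex of a prismatoid lies in $F_1$ or $F_2$, I would choose $w$ in one of the base facets, say $w \in F_1$. The hypothesis $n > 2d$ enters through the requirement that $F_1$ have more than $d$ vertices, so that $F_1$ is not a simplex and removing $w$ leaves a genuinely $(d-2)$-dimensional face.

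The first main task is to exhibit a prismatoid structure on $\widetilde Q$. The natural candidate for the first base facet is
\[
\widetilde F_1 := (F_1 \setminus \{w\}) \cup \{w^+, w^-\},
\]
obtained as the intersection of $\ops_w(Q)$ with the extended supporting hyperplane $H_1 \times \R$, where $H_1$ is the affine hull of $F_1$. For the second base facet the hyperplane $H_2 \times \R$ is still supporting for $\ops_w(Q)$ but cuts out only $F_2 \times \{0\}$, which has dimension $d-1$ and is thus not a facet in dimension $d+1$. The correct $\widetilde F_2$ must instead be obtained by slightly tilting $H_2 \times \R$ so as to include exactly one of $w^+$ or $w^-$---or, more elegantly, by first perturbing $Q$ itself so that the resulting suspension automatically yields two parallel disjoint facets. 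The assumption $n > 2d$ allows room for such an adjustment.

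The second main task is to verify that $\widetilde Q$ fails the relevant step property: the dual-graph distance between $\widetilde F_1$ and $\widetilde F_2$ should exceed $d$ (or $d+1$, as the statement presumably intends). Here I would invoke the standard principle, proved in the simplicial setting of Lemma~\ref{lemma:ops}, that the one-point suspension does not decrease distances in the polar graph: every dual path in $\widetilde Q$ from $\widetilde F_1$ to $\widetilde F_2$ projects to a dual walk in $Q$ from $F_1$ to $F_2$, whose length is therefore at least the dual distance in $Q$, which strictly exceeds $d$ by hypothesis. An extra step contributed by the suspension itself accounts for the dimensional increment.

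The hard part will be the prismatoid structure: diameter-type properties transfer cleanly under one-point suspension, but disjointness and parallelism of the two base facets do not follow automatically. The bulk of the technical work therefore lies in the careful selection of $w$ (or of a small preliminary perturbation of $Q$) so that $\widetilde Q$ admits two parallel disjoint facets spanning all $n+1$ of its vertices, with one facet containing the lifted copies $w^+, w^-$ and the other obtained by tilting the second base hyperplane through exactly one of them. Once that is arranged, the distance argument above gives a $(d+1)$-dimensional polytope with $n+1$ vertices failing the step property in the desired sense.
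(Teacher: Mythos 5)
This theorem is only quoted in the appendix with a citation to Santos; the dissertation contains no proof of it, so your proposal has to be judged against Santos' known argument, whose broad outline (one-point suspension at a vertex of one base, a perturbation to restore the prismatoid structure, and a width-monotonicity argument) you have correctly guessed. The gaps are in exactly the two places you defer. First, the second base facet. In $\ops_w(Q)$ the vertices coming from $F_2$ span only a $(d-1)$-dimensional flat: $F_2\times\{0\}$ is the codimension-two face in which the two facets $F_2 * w^+$ and $F_2 * w^-$ meet. Hence no tilt of $H_2\times\R$ can cut out a facet whose vertex set is exactly the vertex set of $F_2$, and your fallback of tilting the hyperplane until it picks up one of $w^{\pm}$ is fatal rather than helpful: that vertex would then lie in both base facets, violating the defining condition that every vertex lies in exactly one of them. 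A preliminary perturbation of $Q$ inside $\R^d$ cannot repair this either, since as long as $F_2$ is a facet of the $d$-polytope $Q$ its vertices remain in a $(d-1)$-flat after suspension. What is actually required is a perturbation in $\R^{d+1}$ that moves the $F_2$-vertices out of their flat so that they span a $d$-dimensional facet parallel to $\ops_w(F_1)$ -- and this is possible only if the base \emph{not} containing $w$ has more than $d$ vertices. That is the true role of $n>2d$: it lets you choose $w$ in the base opposite to a non-simplex base. Your stated use of $n>2d$ (making $F_1$ a non-simplex, so that deleting $w$ leaves a $(d-2)$-face) plays no role in your construction, since $F_1$ is suspended with all of its vertices kept.

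Second, the distance bound. The conclusion you need is that the dual distance between the two new bases exceeds $d+1$ (the ``$d$-step property'' of the $(d+1)$-prismatoid refers to its own dimension), i.e.\ the width must go up by at least one. The projection argument behind Lemma~\ref{lemma:ops} only shows that dual distances do not decrease, so it gives width of $\widetilde{Q}$ at least the width of $Q$, which is $>d$ but not $>d+1$. Your sentence ``an extra step contributed by the suspension itself accounts for the dimensional increment'' is precisely the assertion that has to be proved; in the actual argument it comes from analyzing which facets of the perturbed polytope are adjacent to the new base and showing that any dual path must first pass through one of the two pyramids over $F_2$, forcing the additional pivot. Note also that since the new base exists only after the perturbation, the distance estimate must be carried out in the perturbed polytope; the suspension lemma cannot be invoked verbatim. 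As written, then, the proposal is a correct strategy with the two load-bearing steps -- the construction of $\widetilde{F}_2$ and the ``$+1$'' -- missing, and with the stated fixes for the first step being ones that would fail.
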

Theorem~\ref{theorem:reduceasimpliciality} reduces the \defn{asimpliciality} $s = n - 2d$ of a prismatoid. Repeated application of the theorem yields a prismatoid $Z$ whose asimpliciality $s=0$, and thus the resulting prismatoid $Z$ has $n=2d$. Santos applies this theorem $38$ times to the $5$-dimensional prismatoid $Q$ with $48$ vertices, obtaining a $43$-dimensional prismatoid $Z$ with $86$ vertices that does not satisfy the $d$-step property with respect to its base facets $F_1$ and $F_2$. That is to say, the distance between $F_1$ and $F_2$ in the dual graph $G^\Delta(Z)$ of $Z$ is strictly greater than $43$. The polar $P = Z^\Delta$ of $Z$ is a $43$-dimensional polytope with $86$ facets that does not satisfy the Hirsch Conjecture, since the distance between the vertices that are dual to $F_1$ and $F_2$ in the graph $G(P)$ of $P$ is strictly greater than $43$.

The monumental result leads to many new questions and open problems (and renews interest in the Polynomial Diameter Conjecture), including the following:
\begin{openproblem}
What is the smallest dimension $d$ for which the Hirsch Conjecture fails?
\end{openproblem}
\begin{openproblem}
Are there $d$-polytopes with $n$ facets whose diameter is greater than $\frac{3}{2}(n-d)$? Are there $d$-polytopes with $n$ facets whose diameter is greater than $(n-d)^2$?
\end{openproblem}
\begin{openproblem}
Are there $4$-dimensional prismatoids without the $d$-step property? (It is easy to prove that $3$-dimensional prismatoids satisfy the $d$-step property.)
\end{openproblem}
\begin{openproblem}
Find explicit coordinates for the non-Hirsch $43$-dimensional polytope derived from Santos' prismatoid $Q$ and Theorem~\ref{theorem:reduceasimpliciality}. According to Santos, the polytope is expected to have between $10^9$ and $10^{12}$ vertices.
\end{openproblem}
       
   \backmatter

\end{document}